\theoremstyle{plain}
\newtheorem{thm}{Theorem}[section] 
\newtheorem{theorem}[thm]{Theorem}
\newtheorem{lem}[thm]{Lemma}
\newtheorem{lemma}[thm]{Lemma}
\newtheorem{cor}[thm]{Corollary}
\newtheorem{corollary}[thm]{Corollary}
\newtheorem{prp}[thm]{Proposition}
\newtheorem{proposition}[thm]{Proposition}
\newtheorem*{claim*}{Claim} 
\newtheorem*{thm*}{Theorem}
\theoremstyle{definition}
\newtheorem{definition}[thm]{Definition}
\newtheorem{construction}[thm]{Construction}
\newtheorem{notation}[thm]{Notation}
\theoremstyle{remark}
\newtheorem{remark}[thm]{Remark}
\newtheorem{example}[thm]{Example}
\newtheorem{question}[thm]{Question}
\newtheorem{problem}[thm]{Problem}
\newcommand{\rank}{\operatorname{rank}}
\newcommand{\SL}{\operatorname{SL}}
\newcommand{\Gl}{\operatorname{GL}}
\newcommand{\Mat}{\operatorname{Mat}}
\newcommand{\U}{\operatorname{U}}
\newcommand{\PGL}{\operatorname{PGL}}
\newcommand{\PO}{\operatorname{PO}}
\newcommand{\PSp}{\operatorname{PSp}}
\newcommand{\Hom}{\operatorname{Hom}}
\newcommand{\colim}{\operatornamewithlimits{colim}}
\newcommand{\cat}[1]{\text{\bf #1}}
\newcommand{\tensor}{\otimes}
\newcommand{\spec}{\operatorname{Spec}}
\newcommand{\Spec}{\spec}
\newcommand{\m}{\mathfrak{m}}
\newcommand{\Gm}{\mathbb{G}_m}
\newcommand{\pt}{\ast}
\newcommand{\sHom}{\mathcal{H}\!om}
\newcommand{\sAut}{\mathcal{A}ut}
\newcommand{\A}{\mathbb{A}}
\newcommand{\sh}[1]{\mathcal{#1}}
\newcommand{\tr}{\text{\rm tr}}
\providecommand{\deg}{\operatorname{deg}}
\newcommand{\op}{\text{op}}
\newcommand{\GL}{\Gl}
\newcommand{\PU}{\operatorname{PU}}
\newcommand{\Br}{\operatorname{Br}}
\newcommand{\Eoh}{\mathrm{E}}
\newcommand{\Hoh}{\mathrm{H}}
\renewcommand{\top}{\mathrm{top}}
\newcommand{\Map}{\operatorname{Map}}
\newcommand{\id}{\mathrm{id}}
\newcommand{\isom}{\cong}
\newcommand{\iso}{\isom}
\newcommand{\C}{\mathbb{C}}
\newcommand{\N}{\mathbb{N}}
\newcommand{\Q}{\mathbb{Q}}
\newcommand{\Z}{\mathbb{Z}}
\newcommand{\ZZ}{\Z}
\newcommand{\CC}{\C}
\newcommand{\im}{\operatorname{im}}
\newcommand{\Aut}{\operatorname{Aut}}
\newcommand{\et}{\text{\'et}}
\newcommand{\Sp}{{\mathrm{Sp}}}
\newcommand{\Zar}{\operatorname{Zar}} 
\newcommand{\Fun}{\mathrm{Fun}}
\renewcommand{\deg}{\operatorname{deg}}
\newcommand{\Tors}{\cat{Tors}}
\newcommand{\Sh}{\cat{Sh}}
\newcommand{\fppf}{\mathrm{fppf}}
\newcommand{\Jac}{\mathrm{Jac}}
\newcommand{\ct}{\mathrm{ct}} 
\newcommand{\cores}{\operatorname{cores}}
\newcommand{\transf}{\operatorname{transf}}
\newcommand{\Az}{{\cat{Az}}}
\newcommand{\cont}{{\mathcal{C}}}
\newcommand{\Types}[2][]{{\mathrm{Typ}_{#1}({#2})}}
\newcommand{\CTypes}[2][]{{\mathrm{cTyp}_{#1}({#2})}}
\newcommand{\tp}{\mathrm{t}}
\newcommand{\cosk}{{\mathrm{cosk}}}
\newcommand{\syp}{{\mathrm{sp}}} 
\newcommand{\units}[1]{{#1^\times}}
\newcommand{\veps}{\varepsilon}
\newcommand{\nMat}[2]{\mathrm{M}_{#2}(#1)} 
\newcommand{\calC}{{\mathcal{C}}}
\newcommand{\calO}{{\mathcal{O}}}
\newcommand{\bfW}{{\mathbf{W}}}
\newcommand{\bfX}{{\mathbf{X}}}
\newcommand{\bfY}{{\mathbf{Y}}}
\newcommand{\fraka}{{\mathfrak{a}}}
\newcommand{\frakb}{{\mathfrak{b}}}
\newcommand{\frakm}{{\mathfrak{m}}}
\newcommand{\frakp}{{\mathfrak{p}}}
\newcommand{\frakq}{{\mathfrak{q}}}
\newcommand{\frakM}{{\mathfrak{M}}}
\newcommand{\suchthat}{\,:\,}
\newcommand{\where}{\,|\,}
\newcommand{\quo}[1]{\overline{#1}}
\newcommand{\Simp}{{\boldsymbol{\Delta}}}
\newcommand{\benw}[2][]{\ifdraft{\todo[linecolor=Green,backgroundcolor=Green!25,bordercolor=Green,#1]{#2---Ben W}\ }{}}
\newcommand{\uriyaf}[2][]{\ifdraft{\todo[linecolor=Red,backgroundcolor=Red!25,bordercolor=Red,#1]{#2---Uriya}\ }{}}
\numberwithin{thm}{subsection}
\begin{document}

\title{Involutions of Azumaya algebras}

\author{Uriya A.\ First} 
\address{Uriya A.\ First \\ Department of Mathematics \\ University of Haifa \\Haifa 31905\\ Israel}
\email{uriya.first@gmail.com}
\thanks{First was supported, in part, by a UBC postdoctoral fellowship.}

\author{Ben Williams}
\address{Ben Williams \\ Department of Mathematics \\ University of British Columbia \\Vancouver~BC V6T~1Z2\\ Canada }
\email{tbjw@math.ubc.ca}

\subjclass[2010]{
Primary: 
16H05, 
14F22, 
11E39, 
55P91
}

\begin{abstract}
 We consider the general circumstance of an Azumaya algebra $A$ of degree $n$ over a locally ringed topos $(\cat X, \sh
 O_\cat X)$ where the latter carries a (possibly trivial) involution, denoted $\lambda$. This generalizes the usual notion of
 involutions of Azumaya algebras over schemes with involution, which in turn generalizes the notion of involutions of
 central simple algebras. 
 We provide a criterion to determine whether two Azumaya algebras with involutions extending $\lambda$
 are locally isomorphic, describe the equivalence classes obtained by this relation,
 and settle the question of when an Azumaya algebra $A$ is Brauer equivalent to an algebra carrying an involution extending
 $\lambda$, by giving a cohomological condition. We remark that these results are novel even in the case of schemes,
 since we allow ramified, non-trivial involutions of the base object. We observe that, if the cohomological condition is
 satisfied, then $A$ is Brauer equivalent to an Azumaya algebra of degree $2n$ carrying an involution. By comparison
 with the case of topological spaces, we show that the integer $2n$ is minimal, even in the case of a nonsingular
 affine variety $X$ with a fixed-point free involution. As an incidental step, we show that if $R$ is a commutative ring with
 involution for which the fixed ring $S$ is local, then either $R$ is local or $R/S$ is a quadratic \'etale extension
 of rings.
\end{abstract}
\maketitle


\setcounter{tocdepth}{1} 

\tableofcontents

\setcounter{tocdepth}{2} 


\section{Introduction} \label{sec:Introduction}

\subsection{Motivation}
\label{subsec:motivation}

Let $A$ be a central simple algebra over a field $K$ and let $\tau:A\to A$ be an involution, i.e., an anti-automorphism
satisfying $a^{\tau\tau}=a$ for all $a\in A$. Recall that $\tau$ can be of the \textit{first kind} or of the \textit{second kind},
depending on whether $\tau$ restricts to the identity on the centre $K$ or not. We further say that $\tau$ is a
$\lambda$-involution where $\lambda=\tau|_K$.

Central simple algebras and their involutions play a major role in the theory of classical algebraic groups, and also in
Galois cohomology. For example, letting $F$ denote the fixed field of $\lambda:K\to K$, it is well-known that the
absolutely simple adjoint classical algebraic groups over $F$ are all given as the neutral connected component of projective
unitary groups of algebras with involution $(A,\tau)$ as above, where $K$ varies (here we also allow $K=F\times F$ with the switch
involution), see \cite[\S26]{knus_book_1998-1}. In fact, all simple algebraic groups of types $A$, $B$, $C$, $D$,
excluding $D_4$, can be described by means of central simple algebras with involution. Involutions of central simple
algebras also arise naturally in representation theory, either since group algebras admit a canonical involution, or in
the context of orthogonal, unitary, or symplectic representations, see, for instance, \cite{riehm_orthogonal_rep_2001}.

Azumaya algebras are generalizations of central simple algebras in which the base field is replaced with a ring, or more
generally, a scheme. As with central simple algebras, Azumaya algebras and their involutions are important in the study
of classical reductive group schemes, as well as in \'etale cohomology and in the representation theory of finite groups
over rings; see \cite{knus_quadratic_1991}.

Suppose that $K/F$ is a quadratic Galois extension of fields and let $\lambda$ denote the non-trivial $F$-automorphism of
$K$. A theorem of Albert, Riehm and Scharlau, \cite[Thm.~3.1(2)]{knus_book_1998-1}, asserts that a central
simple $K$-algebra $A$ admits a $\lambda$-involution if and only if $[A]$, the Brauer class of $A$, lies in the kernel
of the corestriction map $\cores_{K/F}:\Br(K)\to \Br(F)$. Saltman \cite[Thm.~3.1b]{saltman_azumaya_1978} later showed
that if $K/F$ is replaced with a quadratic Galois extension of rings $R/S$, then the class $[A]$ lies in the kernel of $
\cores_{R/S}:\Br(R)\to\Br(S) $ if and only if some representative $A'\in [A]$ admits a $\lambda$-involution. Here, in
distinction to the case of fields, an arbitrary representative may not posses an involution. However, a later proof by Knus,
Parimala and Srinivas \cite[Th.~4.2]{knus_azumaya_1990}, which applies to Azumaya algebras over schemes, implies that
one can take $A'\in [A]$ such that $\deg A'=2\deg A$.

The aforementioned results all have counterparts for involutions of the first kind in which the condition
$\cores_{R/S}[A]=0$ is replaced by $2[A]=0$.

In this article, we generalize this theory to more general sites and more general involutions. We have two purposes in
doing so. The first, our initial motivation, is to demonstrate that the upper bound in Saltman's theorem,
$\deg A'\leq 2\deg A$ guaranteed by \cite[Th.~4.2]{knus_azumaya_1990}, cannot be improved in general for
involutions of the second kind. The statement in the case of involutions of the first kind was established in
\cite{asher_auel_azumaya_2017}. Our general approach here is similar to \cite{asher_auel_azumaya_2017},
\cite{antieau_unramified_2014} and related works. That is, the desired example is constructed by approximating a
suitable classifying space, and topological obstruction theory is used to show that it has the required properties. In contrast with \cite{asher_auel_azumaya_2017} and
\cite{antieau_unramified_2014}, the obstruction is obtained by means 
of equivariant homotopy theory.\uriyaf{Remove
or edit this sentence if you wish.} 

We therefore introduce and study involutions of the second kind of Azumaya algebras on topological spaces. In
fact, we develop the necessary foundations in the generality of connected locally ringed topoi with involution, and show that
Saltman's theorem holds in this setting. In doing so, we stumbled into our second purpose, which we now explain.

Any involution of a field is either trivial or comes from a quadratic Galois extension, which is why the classical theory sees a dichotomy into
involutions of the first or second kind. For a ring, the analogous involutions are the trivial involutions or those
arising as the non-trivial automorphism of a quadratic \'etale extension $R/S$. Geometrically, these correspond to extreme cases where one has
either a trivial action of the cyclic group $C_2 = \{1, \lambda\}$ on a scheme, or where the action is
scheme-theoretically free. One may also view the free case as corresponding to an \emph{unramified} map
$\pi: X \to X/C_2$. This dichotomy has been preserved in the literature on involutions of Azumaya algebras over schemes,
say for instance \cite{knus_azumaya_1990} and \cite{knus_quadratic_1991}, by considering only trivial or unramified
involutions of the base ring.

There are, of course, involutions $\lambda:R\to R$ which are neither trivial nor wholly unramified. For instance, one
may encounter involutions of varieties that are generically free but fix a nonempty closed subscheme. Alternatively,
there are involutions of nonreduced rings that restrict to trivial involutions of the reduction---these are
geometrically ramified everywhere, but nonetheless non-trivial.

Our second purpose therefore became developing the theory of $\lambda$-involutions on Azumaya algebras with minimal
assumptions on $\lambda$. 
We establish a generalization of Saltman's theorem, and present a
classification of $\lambda$-involutions into \emph{types}, generalizing the classification of involutions 
of central simple algebras as
\emph{orthogonal}, \emph{symplectic}---both of the first kind---or \emph{unitary}---of the second.

In more detail, given a field $K$ of characteristic not $2$ and an involution $\lambda:K\to K$, recall that two
degree-$n$ central simple $K$-algebras with involutions extending $\lambda $ are of the same type if they become
isomorphic after base change to a separable closure of the fixed field of $\lambda$. This definition extends
naturally to the case of a general connected ring $R$ in which $2$ is a unit by replacing ``a separable closure'' by
an \'etale extension of $S$, the fixed ring of $\lambda:R\to R$. It is natural to ask how many types are obtained in
this manner, and how to distinguish them effectively. In the classically-considered cases of trivial
or 
unramified involutions on $R$, the situation is known to be similar to case of fields: When $R=S$, there are at most two types
--- the orthogonal, which occurs for all $n$, and the symplectic, which occurs only for even $n$. When $R/S$ is
quadratic \'etale, only one type, called the unitary type, occurs for all $n$.

We describe the types for arbitrary $\lambda:R\to R$ and 
give a cohomological criterion to determine when two involutions
are of the same type. 
This criterion implies in particular that the type of an Azumaya algebra with involution $(A, \tau)$ 
is determined
entirely by the restriction of $(A, \tau)$ to the ramification locus of $ \Spec R \to \Spec S$.
More than two types may occur. With this new subtlety, one can further ask, in the context
of Saltman's theorem, what are the types of $\lambda$-involutions which can be exhibited
on representatives of a given Brauer class in $ \Br R$. Our generalization of Saltman's theorem
answers this question.

To demonstrate some of the ideas above, let us consider a field $k$ of characteristic different from $2$ and the ring
$R:=k[x, x^{-1}]$ of Laurent polynomials with the involution $\lambda: x \mapsto x^{-1}$. The fixed ring is
$S:=k[x+x^{-1}]$. The map $\Spec R \to \Spec S$ is ramified at two points, $x=1$ and $x=-1$, and unramified
elsewhere. Our results show that there are $4$ types of $\lambda$-involution for even-degree algebras and $1$ type in
odd degrees. Furthermore, the type of a $\lambda$-involution is determined by the types --- orthogonal or symplectic
--- obtained by specializing to $x=1$ and $x=-1$. For example, consider the $\lambda$-involution of
$\Mat_{2\times 2}(R)$ given by
\begin{equation}
 \label{eq:7}
 \tau: \begin{bmatrix}a(x) & b(x) \\ c(x) & d(x) \end{bmatrix}
 \mapsto \begin{bmatrix} d(x^{-1}) & x^{-1} b (x^{-1}) \\ xc(x^{-1}) &
a(x^{-1}) \end{bmatrix}.
\end{equation}
Evaluating at $x=1$, the involution of \eqref{eq:7} becomes orthogonal, whereas evaluating at $x=-1$ makes
it symplectic. Our generalization of Saltman's theorem implies that if $\alpha\in \Br R$ is represented
by an Azumaya $R$-algebra admitting a $\lambda$-involution, then each of the $4$ types of $\lambda$-involutions
is the type of a $\lambda$-involution of some representative of $\alpha$.

It seems likely that our results on types could be used
to extend the theory of involutive Brauer groups, intiated in \cite{parimala_92} (see also \cite{verschoren_98}),
to schemes carrying ramified involutions. We hope to address this in subsequent work.\uriyaf{Added
this paragraph. Do we hope to address this in subsequent work?
It might make sense of the $C_2$-equivariant cohomology of $\Gm$.}

We finally note that from the point of view of group schemes, the study of $\lambda$-involutions of Azumaya algebras in
the case where is $\lambda$ neither trivial nor unramified amounts to studying certain group schemes over $\Spec R$
which are generically reductive but degenerate on a divisor. Specifically, the projective unitary group of an Azumaya
algebra with a $\lambda$-involution is generically of type $A$ and degenerates to types $B$, $C$ or $D$ on the connected
components of the branch locus of $\Spec R\to\Spec S$. The study of degenerations of reductive groups have proved useful in
many instances. Recent examples include \cite{auel_parimala_suresh_2015} and \cite{bayer_17}, but this manifests even
more in the works of Bruhat and Tits on reductive groups over henselian discretely valued fields \cite{bruhat_I_72},
\cite{bruhat_II_84}, \cite{bruhat_III_87}. Broadly speaking, degenerations of reductive groups are encountered
naturally when one attempts to extend a group scheme defined on a generic point of an integral scheme to the entire
scheme, a process which is often considered in number theory.

\subsection{Outline}

Following is a detailed account of the contents of this paper, mostly in the order of presentation. While the majority
of this work applies to schemes without assuming $2$ is invertible, we make this assumption here in order to avoid certain
technicalities.

\medskip

Section \ref{sec:preliminaries} is devoted to technical preliminaries, largely to do with non-abelian cohomology in
the context of Gorthendieck topoi. 

\medskip

Let $X$ be a scheme and let $\lambda:X\to X$ be an involution. Our first concern is to specify an appropriate quotient
of $X$ by the group $C_2=\{1,\lambda\}$. There is an evident choice when $X=\Spec R$ with $R$ a ring, since one can take
the quotient to be $\Spec S$, where $S$ is the fixed ring of $\lambda:R\to R$. 
However, at the level of generality that we
consider, there is often more than one plausible option. For instance, if the action of $\lambda$ is not free, then $[X/
C_2]$, a Deligne--Mumford stack, might serve just as well as the scheme or algebraic space $X/C_2$. The difference
between these alternatives becomes particularly striking when $C_2$ acts trivially on $X$ --- the quotient $X \to
X/C_2 = X$ can be regarded as a degenerate case of a double covering, ramified everywhere, whereas $X \to [X/ C_2]$ is a
$C_2$-Galois covering, ramified nowhere. From the point of view of the first quotient, all involutions will appear to be of the
first kind, whereas with respect to the second quotient, all involutions will appear to be of the second kind.

We are therefore led to conclude that a chosen quotient $\pi: X \to Y$, in addition to $X$ and $\lambda$, is necessary
in order to discuss involutions in a way consistent with what is already done in the cases where $\lambda$ is an
involution of a ring.

We require a quotient to satisfy certain axioms, presented in Subsection \ref{subsec:quotients}, and prove that they are
satisfied in a number of important examples, notably when the categorical quotient $X/C_2$ exists in the category of
schemes and is a \emph{good quotient}. Such quotients exist for instance if $X$ is affine or projective, see
Theorem~\ref{TH:important-exact-quotients}. Thereafter in the development of the theory, we are usually agnostic about
the quotient chosen. In examples, we often return to the motivating case of a good quotient.

Consider, therefore, a good quotient $\pi: X \to Y=X/C_2$. It is technically easier to work on $Y$ than on
$X$. Specifically, by virtue of our Theorem \ref{TH:pi-star-equiv}, there is an equivalence between Azumaya algebras
with $\lambda$-involution on $X$ on the one hand and Azumaya algebras with $\pi_* \lambda $-involution over the sheaf of
rings $R:=\pi_*(\sh{O}_X)$ on the other. We therefore study Azumaya algebras over $R$. While $Y$ does not carry an
involution, the ring sheaf $R$ has an involution, namely, $\pi_*\lambda$, 
which we abbreviate to $\lambda$. A difficulty that we encounter
here is that the sheaf of rings $R$ is not a local ring object on $Y$, but rather a sheaf of rings with involution, the
fixed subsheaf of which is the local ring object $(\pi_*\sh O_X)^{C_2} = \sh{O}_Y$. We devote considerable work to the
study of commutative rings with involutions whose fixed subrings are local in Section \ref{sec:ringsWithInvolution}, and
conclude in Theorem \ref{thm:local-involution} that any such ring is a semilocal ring, so that the sheaf $R$ may be
viewed as making $Y$ a ``semilocally ringed'' space.

\medskip

In Section~\ref{sec:types}, we introduce and study \emph{types} of $\lambda$-involutions. Specifically, we define two
Azumaya $R$-algebras with a $\lambda$-involution, $(A,\tau)$ and $(B,\sigma)$, to be of the same \textit{type} if some
matrix algebra over $(A,\tau)$ is $Y_\et$-locally isomorphic to some matrix algebra over $(B,\sigma)$. We show in
Theorem~\ref{TH:ct-determines-local-iso} and Corollary~\ref{CR:coarse-type-determines-type} 
that the collection of types forms a $2$-torsion group whose product
rule is compatible with tensor products, and when $\deg A=\deg B$, the involutions $\tau$ and $\sigma$ have the same
type if and only if $(A,\tau)$ and $(B,\sigma)$ are $Y_\et$-locally isomorphic, without the need to pass to matrix algebras.
Thus, the definition given here agrees with the definition in Subsection~\ref{subsec:motivation}.
We then turn to the problem of calculating the group of types in specific cases.

Let $W \subset Y$ denote the \textit{branch locus} of $\pi :X \to Y$. Then, away from $W$, the $C_2$-action on
$V= X - \pi^{-1}(W)$ is unramified, hence there is only one possible type of $\lambda$-involution on $A|_V$, viz.\
unitary, and all involutions on $A|_V$ are locally isomorphic to the involution
$\Mat_{n \times n}(R) \to \Mat_{n\times n}(R)$ given by applying the involution $\lambda$ to each entry in the matrix
and then taking the transpose, i.e., $M \mapsto (M^\lambda)^\tr$. In contrast, over a connected
component $Z_1$ of $Z:= \pi^{-1}(W)$, regarded as a reduced closed subscheme of $X$, the involution $\lambda$ restricts
to the identity (Proposition~\ref{PR:Z-to-W-homeomorphism}), and so $\lambda$-involutions of $A|_{Z_1}$ fall into one of
two types --- orthogonal or symplectic. This suggests that the types of $\lambda$-involutions over $X$ should be in
bijection with $\Hoh^0(Z, \mu_2)$, where $\mu_2:=\{1,-1\}$ and $1$ and $-1$
represent orthogonal and symplectic involutions respectively, and that two $\lambda$-involutions are of the same type if and only if they are of the same type when
restricted to each connected component of $Z$. 
We prove the second statement in Theorem~\ref{TH:types-for-schemes}
and establish the first under the assumption that that $Y$ is noetherian and regular
in Corollary~\ref{CR:smooth-scheme-standard}. We do not know whether the first
statement holds in general.
Determining the type of a
given involution of a given algebra, $\tau : A \to A$, can now be carried out by considering the rank of the
sheaf of $\tau$-symmetric elements on the various components of $W$; see \cite[Prp.~2.6]{knus_book_1998-1}.

\medskip

In Section \ref{sec:Saltman}, we turn to the question of when a Brauer class $\alpha=[A] \in \Br(X)$ contains an algebra
$A'$ possessing a $\lambda$-involution. Saltman \cite[Thm.~3.1]{saltman_azumaya_1978} gave necessary and
sufficient conditions for this when $\lambda$ is trivial or unramified. Specifically, $A$ is equivalent to such an
algebra if $2[A]=0 \in \Br(X)$, in the case of a trivial action, or if $\cores_{X/Y}[A] = 0 \in \Br(Y)$, in the case of an
unramified action. We unify these two results, and generalize to the cases that are neither trivial nor unramified, by
defining a transfer map $\transf: \Br(X) \to \Hoh_\et^2(Y, \Gm)$, and deducing in Theorem~\ref{TH:Saltman-ramified} that
$A$ is equivalent to an algebra admitting a $\lambda$-involution of type $t$ if and only if $\transf([A]) = \Phi(t)$,
where $\Phi(t)\in\Hoh^2(Y,\Gm)$ is a cohomology class depending on the type. In both extreme cases of trivial and
unramified actions, and in fact whenever $Y$ is a nonsingular variety, $\Phi(t)$ is necessarily $0$. Moreover, in the
 case of a trivial action, $\transf([A]) = 2[A]$, and in the unramified case, $\transf([A]) = \cores_{X/Y}[ A ]$, so we
recover Saltman's theorem as a special case. We also show that if $A$ is equivalent to an algebra with involution, then
such an algebra can be constructed to have degree twice that of $A$, thus extending the analogous result of \cite[Thms.~4.1,
4.2]{knus_azumaya_1990}. We do not, however, follow \cite{saltman_azumaya_1978} and \cite{knus_azumaya_1990} in
considering the \textit{corestriction algebra} of $A$, taking instead a purely cohomological approach. In fact, it is
not clear whether a corestriction algebra of $A$ can be defined in a meaningful way when $\lambda:X\to X$ is ramified.
This problem was considered in \cite[\S5]{auel_parimala_suresh_2015}, where some positive results are given, and we
leave its pursuit in the current level of generality to a future work.

\medskip

Section \ref{sec:exapp} gives a number of examples of the workings out of the previous theory. In particular, we give
examples of schemes $X$ with involutions $\lambda: X\to X$ that are neither unramified nor trivial, along with a
classification of the various types of $\lambda$-involutions of Azumaya algebras, e.g., Examples \ref{ex:mixed} and
\ref{ex:semistandard}.

\medskip

While this overview has so far been written in the language of schemes, the majority of the results are established in
the setting of locally ringed Grothendieck topoi, of which the \'etale ringed topoi of a scheme is a special case. The
advantage of this generality is that all the results above also apply, essentially verbatim, to Azumaya algebras with
involution over a topological $C_2$-space, or to Azumaya algebras with involutions on algebraic stacks.
The applicability of our
results in the context of other 
sites associated with schemes, e.g., the Zariski site, the fppf site, the Nisnevich site
and some large sites, is discussed in Subsection~\ref{subsec:examples-of-exact-quo}.

Comparison of Azumaya algebras over schemes with topological Azumaya algebras has proved
useful in the
past, for instance in \cite{antieau_unramified_2014}, \cite{antieau_topology_2015}.
Having the previous theory available also in the topological context,
we consider a 
finite type, regular $\CC$-algebra $R$ with an unramified involution $\lambda$ and compare the theory of Azumaya
$R$-algebras with involutions restricting to $\lambda$ on the centre with the theory of topological Azumaya algebras
with involution on the complex manifold $(\Spec R)_{\text{an}}$. This is carried out in 
Subsection~\ref{subsec:morphisms}, specifically in
Example \ref{ex:complexRealization}.

By such comparison, we produce an example of an Azumaya algebra $A$ of degree $n$, over a ring $R$ with an unramified
involution $\lambda$, having the property that $A$ is Brauer equivalent to an algebra $A'$ with $\lambda$-involution,
but the least degree of such an $A'$ is $2n$, Theorem \ref{th:mainCounterexample}; 
the bound $2n$ is the lowest possible by
\cite[\S4]{knus_azumaya_1990}, which guarantees the existence of $A'$ of degree $2n$ in general. 
An analogous example in the case
where $\lambda$ is assumed to be trivial was given in \cite{asher_auel_azumaya_2017}.
The method of proof, which is carried out in Sections~\ref{sec:topology}
and~\ref{sec:the-example}, is by using existing study of
bundles with involution as a branch of equivariant homotopy theory, \cite{may_equivariant_1996}. In particular, we can
find universal examples of topological Azumaya algebras with involution, which are valuable sources of counterexamples.

\medskip

In an appendix, we give a proof that the stalks of the sheaf of continuous, complex-valued functions on a topological space
$X$ satisfy Hensel's lemma. This is used here and there in the body of the paper to treat this case at the same time as \'etale sites of schemes.

\subsection{Acknowledgments}

The authors would like to thank Zinovy Reichstein for introducing them to each other and recommending that they study
involutions of Azumaya algebras from a topological point of view. They would like to thank Asher Auel for helpful
conversations and good ideas, some of which appear in this paper. They owe an early form of an argument in
\ref{subsec:quotient-scheme} to Sune Precht Reeh. The second author would like to thank Omar Antol\'in, Akhil Mathew,
Mona Merling, Marc Stephan and Ric Wade for various conversations about equivariant classifying spaces, and Bert Guillou
for a reference to the literature on equivariant model structures. The second author would like to thank Ben Antieau for
innumerable valuable conversations about Azumaya algebras from the topological point of view, and would like to thank
Gwendolyn Billett for help in deciphering \cite{giraud_cohomologie_1971}.
We also thank the referees for many valuable suggestions.

\section{Preliminaries}
\label{sec:preliminaries}

This section recalls necessary facts and sets notation for the sequel. Throughout, $\bfX$ denotes a Grothendieck topos.
 We reserve the term ``ring'' for commutative unital rings, whereas algebras are assumed unital but not necessarily
 commutative.

\subsection{Generalities on Topoi}
\label{subsec:topos}

Recall that a Grothendieck topos is a category that is equivalent to the category of set-valued sheaves over a small
site, or equivalently, a category satisfying Giraud's axioms; see \cite[Chap.~0]{giraud_cohomologie_1971}. In this paper
we shall be particularly interested in the following examples:
\begin{enumerate}[label=(\roman*)]
\item $\bfX=\Sh(X_\et)$, the category of sheaves over the small \'etale site of a scheme $X$.
\item $\bfX=\Sh(X)$, the category of sheaves on a topological space $X$.
\end{enumerate} 
We will occasionally consider other sites associated with a scheme $X$. In particular, 
$X_{\Zar}$ and $X_\fppf$ will
denote the small Zariski and small fppf sites of
 $X$, respectively.

The topos of sheaves over a singleton topological space, which is nothing but the category of sets, will be denoted
$\mathbf{pt}$.

We note that every topos $\mathbf{X}$ can be regarded as a site relative to its canonical topology. In this case, a
 collection of morphisms $\{U_i\to V\}_{i\in I}$ is a covering of $V$ if and only if it is jointly surjective, and every
 sheaf over $\mathbf{X}$ is representable, so that $\mathbf{X}\iso \Sh(\mathbf{X})$. This allows us to define objects of $\bfX$
 by specifying the sheaf that they represent, and to define morphisms between objects by defining them on sections.

 The symbols $\emptyset_{\bfX}$ and $*_{\mathbf{X}}$ will be used for the initial and final objects of $\mathbf{X}$,
 respectively. When $\mathbf{X}=\Sh(X)$ for a site $X$, the sheaf $\emptyset_{\bfX}$ assigns an empty set to every
 non-initial object of $X$, and $*_{\bfX}$ is the sheaf assigning a singleton to every object in $X$. The subscript
 $\bfX$ will dropped when it may be understood from the context.

 For every object $A,U $ of $\bfX $ the $U$-sections of $A$ are
 \[ A(U):=\Hom_{\mathbf{X}}(U,A)\]
 and the global sections of $A$ are $\Hoh^0(\mathbf X, A) = \Gamma A=\Gamma_\bfX A:=A(*)$. We will write $A_U=A\times U$,
 and will regard $A_U$ as an object of the slice category $\mathbf{X}/U$.

 By a \textit{group} $G$ in $\mathbf{X}$ we will mean a group object in $\mathbf{X}$. In this case, the $U$-sections $G(U)$
 form a group for all objects $U$ of $\mathbf X$. Similar conventions will apply to abelian groups, rings, $G$-objects, and so
 on.

 If $R$ is a ring object in some topos, then $\mu_{2,R}$ will denote the object of square-roots of $1$ in $R$, that is, the
 object given section-wise by $\mu_{2,R}(U) = \{ x \in R(U) \suchthat x^2=1\}$. The bald notation $\mu_2$ will denote
 the constant sheaf $\{ +1 ,-1\}$.

\subsection{Torsors}

\begin{definition} \label{def:torsors}
 Let $X$ be a site and let $G$ be a sheaf of groups on $X$. A (right) \textit{$G$-torsor} is a sheaf $P$ on $X$ equipped
with a right action $P\times G\to P$ such that $P$ is locally isomorphic to $G$ as a right $G$-object.
\end{definition}

Equivalently, and intrinsically to the topos $\mathbf X:=\Sh(X)$, a (right) $G$-torsor is an object $P$ of $\mathbf X$
equipped with a (right) $G$-action $m: P \times G \to P$ such that the unique morphism $P \to \ast$ is an epimorphism
and such that the morphism $P \times G \overset{\pi_1 \times m}{\longrightarrow} P \times P$ is an isomorphism. 
See \cite[D\'ef.~III.1.4.1]{giraud_cohomologie_1971} where more general torsors over
objects $S$ of $\mathbf X$ are defined; our definition is that of torsors over the terminal object.

The equivalence of the two definitions of ``torsor'' is given by \cite[Prop.~III.1.7.3]{giraud_cohomologie_1971}.

 The category of $G$-torsors, with $G$-equivariant isomorphisms as morphisms, will be denoted
 \[ \Tors(\bfX,G). \]

A $G$-torsor $P$ is \emph{trivial} if $P\iso G$ as right $G$-objects, and an object $U$ is said to \textit{trivialize}
$P$ if $P_U\iso G_U$ as $G_U$-objects. 
The latter holds precisely when $P(U)\neq \emptyset$.
 
 Recall that if $P$ is a $G$-torsor and $X$ is a left $G$-object in $\bfX$, then $P\times^G X$ denotes the quotient of
 $P\times X$ by the equivalence relation $P\times G\times X\to (P\times X)\times (P\times X)$ given by $(p,g,x)\mapsto
 ((pg,x),(p,gx))$ on sections. We shall sometimes denote $P\times^G X$ by ${}^PX$ and call it the \emph{$P$-twist} of
 $X$. We remark that $X$ and ${}^PX$ are locally isomorphic in the sense that there exists a covering $U\to *$ in $\bfX$
 such that $X_U\iso {}^PX_U$ --- take any $U$ such that $G_U\iso P_U$. If $X$ posses some additional structure, for
 instance if $X$ is an abelian group, and $G$ respects this structure, then ${}^PX$ also posses the same structure and
 the isomorphism $X_U\iso {}^PX_U$ respects the additional structure. The general theory outlined here is established
 precisely in \cite[Chap.~III]{giraud_cohomologie_1971}.

\begin{remark}
 There is another plausible definition of ``torsor'' on a site $X$, particularly when the topology is subcanonical and
 when the category $X$ has finite products---i.e., $X$ is a \textit{standard site}. That is, one modifies the
 definition in \ref{def:torsors} by requiring the objects $G$ and $P$ to be objects of the site $X$. These are the
 \textit{representable torsors} as distinct from the \textit{sheaf torsors} defined above. We will not consider the
 question of representability in this paper beyond the following remark:
 Suppose $X$ is a scheme and $G$ is a group scheme over $X$. Then $G$ represents a group sheaf on
 the big flat site of $X$, also denoted
 $G$. If $G\to X$ is affine, then all sheaf $G$-torsors
 are representable by an $X$-scheme \cite[Thm.~III.4.3]{milne_etale_1980}.
\end{remark} 

\subsection{Cohomology of Abelian Groups}
\label{subsec:coh-of-ab-grps}

The functor $\Hoh^0$ sending an abelian group $A$ in $\mathbf{X}$ to its global sections is left exact. The $i$-th right
derived functor of $\Hoh^0$ is denoted $\Hoh^i(\mathbf{X},A)$, as usual. If $\mathbf{X}$ is clear from the context, we shall
simply write $\Hoh^i(A)$. When $\mathbf{X}=\Sh(X_\et)$ for a scheme $X$, we write $\Hoh^i(\mathbf{X},A)$ as
$\Hoh^i_{\et}(X,A)$, and likewise for other sites associated with $X$.

In the sequel, we shall make repeated use of Verdier's Theorem, quoted below, which provides a description of cohomology
classes in terms of hypercoverings. We recall some details, and in doing so, we set notation. One may additionally
consult \cite[Tag 01FX]{de_jong_stacks_2017}, \cite{dugger_hypercovers_2004} or \cite[Exp.~V.7]{artin_theorie_1972}.

\medskip

Let $\Simp$ denote the category having $\{\{0,\dots,n\}\where n=0,1,2,\dots\}$
as its objects and the non-decreasing functions as its morphisms.
Recall that a \textit{simplicial object} in $\bfX$ is a contravariant functor
$U_\bullet:\Simp\to \bfX$. 
For every $0\leq
i\leq n$, we write $U_n=U_\bullet(\{0,\dots,n\})$ and set $d^n_i=U_\bullet(\delta^n_i)$ and $s^n_i=U_\bullet(\sigma_i^n)$, where $\delta^n_i:\{0,\dots,n-1\}\to
\{0,\dots,n\}$ is the non-decreasing monomorphism whose image does not include $i$ and $\sigma^n_i:\{0,\dots,n+1\}\to
\{0,\dots,n\}$ is the non-decreasing epimorphism for which $i$ has two preimages. We shall write $d_i,s_i$ instead of
$d_i^n,s_i^n$ when $n$ is clear from the context. Since the morphisms $\{\sigma^n_i,\delta^n_i\}_{i,n}$ generate
$\Simp$, in order to specify a simplicial object $U_\bullet$ in $\bfX$, it is enough to specify objects $\{U_n\}_{n\geq
0}$ and morphisms $s^n_i:U_n\to U_{n+1}$, $d^n_i:U_n\to U_{n-1}$ for all $0\leq i\leq n$. Of course, the morphisms
$\{s^n_i,d^n_i\}_{i,n}$ have to satisfy certain relations, which can be found in \cite{may_simplicial_1992}, for
instance.

For $n\geq 0$, let $\Simp_{\leq n}$ denote the full subcategory of $\Simp$ whose objects are
$\{\{0\},\dots,\{0,\dots,n\}\}$. The restriction functor $U_\bullet\mapsto U_{\leq n}:\Fun(\Simp^\op,\bfX)\to
\Fun(\Simp^\op_{\leq n},\bfX)$ admits a right adjoint
called the $n$-th coskeleton and denoted $\cosk_n$. We also write $\cosk_n(U_\bullet)$ for
$\cosk_n(U_{\leq n})$. The simplicial object $U_\bullet$ is called a \emph{hypercovering} (of the terminal object) if $U_0\to *$ is a covering
and for all $n\geq 0$, the map $U_{n+1}\to \cosk_n(U_\bullet)_{n+1}$ induced by the adjunction is a covering. For
example, when $n=0$, the latter conditions means that $(d_0^1,d_1^1):U_1\to U_0\times U_0$ is a covering.

Hypercoverings form a category in the obvious manner, morphisms being natural transformations.

\begin{example}\label{EX:Cech-hypercovering}
 Let $U\to *$ be a morphism in $\bfX$. Define $U_n=U\times \dots\times U$ ($n+1$ times), let $d_i^n:U_{n}\to U_{n-1}$ be
 the projection omitting the $i$-th copy of $U$ and let $s_i^n:U_n\to U_{n+1}$ be given by $(u_0,\dots,u_n)\mapsto
 (u_0,\dots,u_i,u_i,\dots,u_n)$ on sections. These data determine a simplicial object $U_\bullet$ which is a
 hypercovering if $U\to *$ is a covering. In this case, the map $U_{n+1}\to \cosk_n(U_\bullet)_{n+1}$ is an isomorphism
 for all $n$. The hypercovering $U_\bullet$ is called the \emph{\v{C}ech hypercovering} associated to $U$. If
 $U_\bullet$ is an arbitrary hypercovering, then $\cosk_0(U_\bullet)$ is the \v{C}ech hypercovering associated to $U_0$.
\end{example}

The following lemma is fundamental.

\begin{lem}[{\cite[Lm.~24.7.3]{de_jong_stacks_2017} or \cite[Th.~V.7.3.2]{artin_theorie_1972}}]
\label{LM:hypercover-refinement} Let $U_\bullet$ be a hypercovering and let $V\to U_n$ be a covering. Then there exists
a hypercovering morphism $U'_\bullet\to U_\bullet$ such that ${U'_n\to U_n}$ factors through $V\to U_n$.
\end{lem}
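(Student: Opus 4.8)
The plan is to construct $U'_\bullet$ directly as a fibre product built out of $U_\bullet$ and $V$, so that the comparison morphism $U'_\bullet \to U_\bullet$ is a projection. (Note that one cannot simply refine $U_\bullet$ in degree $n$ alone, nor replace it by a coskeleton: the degenerate simplices in degree $n$ are forced by lower degrees and need not factor through $V$, so the lower degrees must be refined too.) For $k\ge 0$ write $\Simp(n,k)$ for the set of non-decreasing maps $\{0,\dots,n\}\to\{0,\dots,k\}$, and for $\alpha\in\Simp(n,k)$ let $\alpha^*\colon U_k\to U_n$ denote the induced map. Set
\[
 U'_k \;:=\; U_k \times_{\,\prod_{\alpha\in\Simp(n,k)} U_n\,}\ \prod_{\alpha\in\Simp(n,k)} V,
\]
where the left map is $(\alpha^*)_{\alpha}$ and the right one is the product over $\Simp(n,k)$ of the covering $V\to U_n$; thus a section of $U'_k$ is a section $u$ of $U_k$ together with a lift of $\alpha^*u$ along $V\to U_n$ for every $\alpha$. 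A non-decreasing $\delta\colon\{0,\dots,k'\}\to\{0,\dots,k\}$ acts by $\delta^*(u,(\ell_\alpha)_\alpha)=(\delta^*u,(\ell_{\delta\alpha})_\alpha)$; this is functorial in $\Simp^\op$ and lands in $U'_{k'}$, so $U'_\bullet$ is a simplicial object and the projections $U'_k\to U_k$ assemble to a simplicial morphism $U'_\bullet\to U_\bullet$. Projecting to the factor indexed by $\id\in\Simp(n,n)$ exhibits $U'_n\to U_n$ as factoring through $V\to U_n$, as required.

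The next step is to check that $U'_\bullet$ is a hypercovering; here I will use freely that, for the canonical topology on $\bfX$, coverings are the epimorphisms, and that these are stable under base change and closed under composition and finite products. There is a unique $\alpha\in\Simp(n,0)$, so $U'_0=U_0\times_{U_n}V$ is a base change of $V\to U_n$ and hence $U'_0\to\ast$ is a covering. For the conditions at level $k$ one must identify the matching object $\cosk_k(U'_{\le k})_{k+1}$: since $U'_j$ is a fibre product for each $j\le k$ and limits commute, a section of $\cosk_k(U'_{\le k})_{k+1}$ is a section of $M:=\cosk_k(U_{\le k})_{k+1}$ together with a compatible family of lifts along $V\to U_n$ indexed by those $\beta\in\Simp(n,k+1)$ which factor through some $\{0,\dots,j\}$ with $j\le k$ --- equivalently, by the non-surjective $\beta$. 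Splitting $\Simp(n,k+1)$ accordingly, write $\bar P=\prod_{\beta\text{ non-surj.}}U_n$, $P=\prod_{\beta\text{ non-surj.}}V$, $\bar Q=\prod_{\beta\text{ surj.}}U_n$, $Q=\prod_{\beta\text{ surj.}}V$. Then $\cosk_k(U'_{\le k})_{k+1}\cong M\times_{\bar P}P$ and $U'_{k+1}\cong U_{k+1}\times_{\bar P\times\bar Q}(P\times Q)$, and the canonical matching map becomes $(u,\ell_P,\ell_Q)\mapsto(m(u),\ell_P)$, where $m\colon U_{k+1}\to M$ is the (covering) matching map of $U_\bullet$ and $U_{k+1}\to\bar P$ factors through $m$. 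Rewriting $U'_{k+1}=(U_{k+1}\times_{\bar Q}Q)\times_{\bar P}P$ presents this matching map as the base change along $P\to\bar P$ of the composite $U_{k+1}\times_{\bar Q}Q\to U_{k+1}\xrightarrow{m}M$; this composite is a covering, since $Q\to\bar Q$ is a finite product of copies of $V\to U_n$, its base change $U_{k+1}\times_{\bar Q}Q\to U_{k+1}$ is a covering, and $m$ is a covering. Hence $U'_{k+1}\to\cosk_k(U'_{\le k})_{k+1}$ is a covering for every $k$ (the boundary case $n\le k$, where no surjective $\beta$ exists and $Q$ is terminal, being subsumed), and $U'_\bullet$ is a hypercovering.

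The only genuinely combinatorial point --- and the step I expect to require the most care --- is the identification of $\cosk_k(U'_{\le k})_{k+1}$: one must check that, in the limit defining the matching object, the factors of the product $\prod_{\alpha}V$ glue along the morphisms of the comma category $\Simp_{\le k}\downarrow\{0,\dots,k+1\}$ so as to leave exactly one copy of $V$ for each non-surjective $\beta\in\Simp(n,k+1)$, with the corresponding structure map to $\prod_\beta U_n$. This reduces to the assertion that, for such a $\beta$, the poset of order-preserving factorizations $\beta=\delta\circ\alpha$ through the $k$-truncation is nonempty and connected --- indeed the image factorization is initial in it --- so that a compatible family of lifts is pinned down by the composite $\delta\alpha=\beta$ alone. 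Once this bookkeeping is settled, everything else is a formal diagram chase with pullbacks together with the stability properties of epimorphisms recalled above.
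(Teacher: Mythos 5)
Your proof is correct. Since the paper only cites the Stacks Project and SGA~4 for this lemma without reproducing an argument, what you have written is a self-contained proof of a result the paper takes on authority, and it is a good one: the single-shot fibre-product construction
\[
U'_k \;=\; U_k \times_{\prod_{\alpha\in\Simp(n,k)} U_n}\ \prod_{\alpha\in\Simp(n,k)} V
\]
is cleaner than the level-by-level inductive refinements one often sees, and your opening remark --- that one cannot refine at level $n$ alone because degeneracies from lower levels need not factor through $V$ --- correctly identifies why something must be done at every level. The one step you rightly flagged as delicate, identifying $\cosk_k(U'_{\le k})_{k+1}$ with $M\times_{\bar P}P$, does go through: since $U_n$ and $V$ enter as constant functors, the relevant limit is a product indexed by $\pi_0$ of the category of pairs $(\gamma,\alpha)$ with $\gamma\alpha=\beta$, and your observation that the image factorization $\beta=\delta\sigma$ serves as a pivot (first absorb the surjection in $\alpha$, then the injection, reaching $(\delta,\sigma)$ in two moves) establishes that this category has exactly one component per non-surjective $\beta\colon[n]\to[k+1]$. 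The rest is, as you say, stability of epimorphisms in a topos under base change, composition, and finite products. I also checked the boundary cases ($k=0$, and $k\ge n$ where $Q$ and $\bar Q$ are terminal), the well-definedness and functoriality of the structure maps $\delta^*(u,(\ell_\alpha))=(\delta^*u,(\ell_{\delta\alpha}))$, and the compatibility of $m(u)$ with $\ell_P$ over $\bar P$; all are fine.
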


Let $A$ be an abelian group object of $\mathbf X$. With any hypercovering $U_\bullet$ in $\bfX$ we associate a cochain
complex $C^\bullet(U_\bullet,A)$ defined by $C^n(U_\bullet,A)=A(U_n)$ for $n\geq 0$ and $C^n(U_\bullet,A)=0$
otherwise. The coboundary map $d^n:C^n(U_\bullet,A)\to C^{n+1}(U_\bullet,A)$ is given by $d^n(a)=\sum_{i=0}^{n+1} (-1)^i
d^{*}_i(a)$, as usual; here $d^{*}_i=(d^{n+1}_i)^*:A(U_n)\to A(U_{n+1})$ is the map induced by $d^{n+1}_i:U_{n+1}\to
U_n$. The cocycles, coboundaries, and cohomology groups of the complex are denoted $Z^n(U_\bullet,A)$,
$B^n(U_\bullet,A)$ and $\Hoh^n(U_\bullet,A)$. Any morphism of hypercoverings $U'_\bullet\to U_\bullet$ induces a
morphism $\Hoh^n(U_\bullet,A)\to \Hoh^n(U'_\bullet,A)$ in the obvious manner.

\begin{theorem}[{Verdier \cite[Th.~V.7.4.1]{artin_theorie_1972}}]\label{TH:Verdier} Let $\mathbf X$ be a topos and $A$ an
 abelian group object in $\mathbf X$. The functors
\[ A\mapsto \Hoh^n(\bfX,A)\qquad\text{and} \qquad A \mapsto \colim_{U_\bullet} \Hoh^n(U_\bullet,A)
\] from the category of abelian groups in $\bfX$ to the category of abelian groups are naturally isomorphic. Here, the
colimit is taken over the category of hypercoverings.
\end{theorem}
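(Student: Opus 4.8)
The plan is to exhibit both functors as \emph{universal cohomological $\delta$-functors} on the category $\Ab(\bfX)$ of abelian group objects of $\bfX$, agreeing in degree $0$ with the global sections functor $\Gamma=\Hoh^0(\bfX,-)$, and then to conclude by the uniqueness of universal $\delta$-functors, which furnishes a canonical natural isomorphism. Since $\Ab(\bfX)$ has enough injectives, $\Hoh^\bullet(\bfX,-)$ is the right-derived functor of $\Gamma$ and hence the universal $\delta$-functor extending it; so the whole content lies in establishing, for $T^n:=\colim_{U_\bullet}\Hoh^n(U_\bullet,-)$, three facts: the colimit is filtered, $(T^n)$ admits a $\delta$-structure, and each $T^n$ with $n\ge 1$ is effaceable.

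\textbf{Filteredness and degree zero.} First I would address the indexing. A morphism of hypercoverings $U'_\bullet\to U_\bullet$ induces a chain map $C^\bullet(U_\bullet,A)\to C^\bullet(U'_\bullet,A)$, and simplicially homotopic morphisms induce chain-homotopic maps, so $\Hoh^n(U_\bullet,A)$ depends only on the homotopy class of the morphism; the colimit may therefore be computed over the category $\mathcal H$ of hypercoverings and homotopy classes of maps. This category is nonempty (take a \v{C}ech hypercovering, Example~\ref{EX:Cech-hypercovering}), any two hypercoverings are refined by their levelwise product $U_\bullet\times V_\bullet$, which is again a hypercovering, and any two parallel morphisms of hypercoverings become homotopic after precomposing with a suitable refinement --- this last point being exactly the use of Lemma~\ref{LM:hypercover-refinement}, following \cite[Th.~V.7.3.2]{artin_theorie_1972}. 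Hence $\mathcal H^{\mathrm{op}}$ is filtered and $T^n(A)$ is a filtered colimit of abelian groups. For degree $0$: since $(d_0^1,d_1^1)\colon U_1\to U_0\times U_0$ and $U_0\to\ast$ are coverings, the sheaf axiom identifies $\ker\big(A(U_0)\rightrightarrows A(U_1)\big)$ with $A(\ast)$, compatibly with refinement, so $T^0(A)=\Gamma A$ naturally.

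\textbf{$\delta$-structure and effaceability.} For a short exact sequence $0\to A'\to A\to A''\to 0$ in $\Ab(\bfX)$, the sequence $0\to C^\bullet(U_\bullet,A')\to C^\bullet(U_\bullet,A)\to C^\bullet(U_\bullet,A'')\to 0$ fails to be right-exact for a fixed $U_\bullet$, but a section of $A''$ over $U_n$ lifts to $A$ over some covering of $U_n$, and Lemma~\ref{LM:hypercover-refinement} yields a refining hypercovering through which that covering is dominated, so the lift exists after refinement; hence $\colim_{U_\bullet}C^\bullet(U_\bullet,A)\to\colim_{U_\bullet}C^\bullet(U_\bullet,A'')$ is surjective. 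Since filtered colimits of abelian groups are exact and commute with cohomology of complexes, the resulting short exact sequence of colimit complexes gives a natural long exact sequence $\cdots\to T^n(A')\to T^n(A)\to T^n(A'')\to T^{n+1}(A')\to\cdots$. For effaceability, embed $A$ into an injective abelian sheaf $I$; I claim $\Hoh^n(U_\bullet,I)=0$ for every hypercovering and every $n\ge 1$. Writing $\Z[U_\bullet]$ for the chain complex of free abelian sheaves $n\mapsto\Z U_n$ with simplicial boundary, one has $C^\bullet(U_\bullet,I)=\Hom_{\Ab(\bfX)}(\Z[U_\bullet],I)$; and the coskeletal covering conditions defining a hypercovering say precisely that $U_\bullet\to\ast$ is a local weak equivalence of simplicial objects, so that the augmentation $\Z[U_\bullet]\to\Z$ is a quasi-isomorphism of complexes of abelian sheaves (apply the free-abelian-sheaf functor and normalize). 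As $I$ is injective, $\Hom_{\Ab(\bfX)}(-,I)$ is exact and therefore carries this quasi-isomorphism to a quasi-isomorphism $\Gamma I\to C^\bullet(U_\bullet,I)$ with source in degree $0$, which forces $\Hoh^n(U_\bullet,I)=0$ for $n\ge1$. So $T^n(I)=0$ for $n\ge1$, $(T^n)$ is a universal $\delta$-functor extending $\Gamma$, and the uniqueness theorem yields the natural isomorphism $T^\bullet\cong\Hoh^\bullet(\bfX,-)$ asserted.

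\textbf{Main obstacle.} The delicate part is the homotopy-coherence bookkeeping behind the filteredness of $\mathcal H$: Lemma~\ref{LM:hypercover-refinement} has to be applied not only to build common refinements but to rectify parallel morphisms of hypercoverings up to homotopy, and one must check carefully that chain-homotopy invariance of $C^\bullet(-,A)$ really makes $T^n(A)$ independent of all these choices. The other technical input --- that the coskeletal covering conditions force $\Z[U_\bullet]\to\Z$ to be a quasi-isomorphism --- is standard and may simply be cited from \cite[Th.~V.7.3.2]{artin_theorie_1972} or \cite{de_jong_stacks_2017} rather than reproved.
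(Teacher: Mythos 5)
The paper does not give its own proof of this theorem; it cites it directly to SGA4, Exp.~V, Th.~7.4.1, whose proof is precisely the $\delta$-functor argument you reconstruct here. Your proof is correct and matches the cited source: the filteredness of the homotopy category of hypercoverings via Lemma~\ref{LM:hypercover-refinement}, the exactness of filtered colimits supplying the connecting maps, and effaceability via the quasi-isomorphism $\Z[U_\bullet]\to\Z$ for injective coefficients are exactly the ingredients of Verdier's proof (and of the version in the Stacks Project).
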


\begin{remark} If we were to take the colimit in the theorem over the category of the \v{C}ech hypercoverings, then the
result would be the \v{C}ech cohomology of $A$. Consequently, the \v{C}ech cohomology and the derived-functor
cohomology agree when every hypercovering admits a map from a \v{C}ech hypercovering. This is known to be the case when
$\bfX=\Sh(X)$ for a paracompact Hausdorff topological space $X$
 \cite[Th.~5.10.1]{godement_topologie_1973}, or $\bfX=\Sh(X_\et)$ for a noetherian scheme $X$ such that any finite subset
of $X$ is contained in an open affine subscheme \cite[\S4]{artin_joins_1971}.
\end{remark}

A short exact sequence of abelian groups $1 \to A'\to A\to A'' \to 1$ in $\bfX$ gives rise to a long exact sequence of
cohomology groups. By the second proof of \cite[Tag 01H0]{de_jong_stacks_2017}, quoted as Theorem~\ref{TH:Verdier}
here, the connecting homomorphism $\delta^n:\Hoh^n(A'')\to \Hoh^{n+1}(A')$ can be described as follows: Let
$\alpha''\in\Hoh^n(A'')$ be a cohomology class represented by a cocycle $a''\in Z^n(U_\bullet,A'')$ for some
hypercovering $U_\bullet$. Since $A\to A''$ is an epimorphism, we may find a covering $V\to U_n$ such that $a''\in
A''(U_n)$ is the image of some $a\in A(V)$. By 
Lemma~\ref{LM:hypercover-refinement} there exists a morphism of hypercoverings
$V_\bullet \to U_\bullet$ such that $V_n\to U_n$ factors through $V\to U_n$. We replace $a$ with its image in $A(V_n)$.
One easily checks that the image of $d^n(a)\in C^{n+1}(V_\bullet,A)$ in both $C^{n+1}(V_\bullet,A'')$ and
$C^{n+2}(V_\bullet,A)$ is $0$, and hence $d^n(a)\in Z^{n+1}(V_\bullet,A')$. Now, $\delta^n(\alpha'')$ is the cohomology
class determined by $d^n(a)\in Z^{n+1}(V_\bullet,A')$.

\subsection{Cohomology of Non-Abelian Groups}
\label{subsec:coh-of-non-ab-grp}

For a group object $G$ of $\mathbf X$, not necessary abelian, we define the pointed set $\Hoh^1(\mathbf{X},G)$ by
hypercoverings. Given a hypercovering $U_\bullet$ in $\bfX$,
let $Z^1(U_\bullet,G)$ be the set of elements $g\in G(U_1)$ satisfying
\begin{equation}\label{EQ:one-cocycle-cond}
d_2^* g\cdot d_0^*g\cdot d_1^*g^{-1}=1
\end{equation} in $G(U_2)$; here $d^*_i=(d_i^2)^*:G(U_1)\to G(U_2)$ is induced by $d^2_i:U_2\to U_1$. Two elements
$g,g'\in Z^1(U_\bullet,G)$ are said to be cohomologous, denoted $g\sim g'$, if there exists $x\in G(U_0)$ such that
$g=d_1^* x \cdot g'\cdot d_0^*x^{-1}$. We define the pointed set $\Hoh^1(U_\bullet,G)$ to be $Z^1(U_\bullet,G)/{\sim}$
with the equivalence class of $1_{G(U_1)}$ as a distinguished element. A morphism of hypercoverings $U'_\bullet\to
U_\bullet$ induces a morphism of pointed sets $\Hoh^1(U_\bullet,G)\to \Hoh^1(U'_\bullet,G)$. Now, following the
literature, we define
\[ \Hoh^1(\bfX,G):=\colim_{U_\bullet}\Hoh^1(U_\bullet,G),
\] where the colimit is taken over the category of all hypercoverings in $\bfX$. We note that some texts take the
colimit over the category of \v{C}ech hypercoverings, see \ Example~\ref{EX:Cech-hypercovering}, but this makes no
difference thanks to the following lemma.

\begin{lemma}\label{LM:Cech-hyp-vs-all-hyp} Let $U_\bullet$ be a hypercovering. Then the maps
$Z^1(\cosk_0(U_\bullet),G)\to Z^1(U_\bullet,G)$ and $\Hoh^1(\cosk_0(U_\bullet),G)\to \Hoh^1(U_\bullet,G)$, induced by
the canonical morphism $U_\bullet\to \cosk_0(U_\bullet)$, are isomorphisms.
\end{lemma}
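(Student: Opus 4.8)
The plan is to prove the assertion for $Z^1$ first, where the content lies, and then deduce the assertion for $\Hoh^1$ essentially formally. Write $V_\bullet:=\cosk_0(U_\bullet)$, so that $V_n=U_0^{\times(n+1)}$ with the partial projections as face maps (Example~\ref{EX:Cech-hypercovering}), and let $\phi\colon U_\bullet\to V_\bullet$ be the canonical morphism. By construction $\phi_0=\id_{U_0}$, while $\phi_1\colon U_1\to U_0\times U_0$ is exactly the map that the hypercovering axiom (the case $n=0$) requires to be a covering; in particular $\phi_1$ is an epimorphism, so $\phi_1^*\colon G(U_0\times U_0)\to G(U_1)$ is injective, and the injectivity of $Z^1(V_\bullet,G)\to Z^1(U_\bullet,G)$ follows at once.

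The substantial point is surjectivity. Given $g\in Z^1(U_\bullet,G)\subseteq G(U_1)$, I would descend $g$ along $\phi_1$ to a section over $U_0\times U_0$ and check that the result is a \v{C}ech cocycle. Two preliminary observations are needed. First, $g$ vanishes on degeneracies: pulling the cocycle identity \eqref{EQ:one-cocycle-cond} back along the degeneracy $s_0\colon U_1\to U_2$ and simplifying by the simplicial identities reduces it to $(s_0d_1)^*g=1$ in $G(U_1)$, and since $d_1\colon U_1\to U_0$ is an epimorphism (it is $\phi_1$ followed by a product projection) this forces $s_0^*g=1$ in $G(U_0)$. Second, writing $P:=U_1\times_{U_0\times U_0}U_1$ with its two projections $q_1,q_2$, I claim $q_1^*g=q_2^*g$. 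To prove this, build a morphism $\psi\colon P\to\cosk_1(U_\bullet)_2$ whose three faces are $q_1$, $q_2$, and the degenerate edge $s_0d_0q_1$ on their common endpoint; the required compatibility of the vertices of this $2$-simplex is a short check with the simplicial identities and the defining relations of $P$. Because $U_2\to\cosk_1(U_\bullet)_2$ is a covering (the case $n=1$ of the axiom), pulling $\psi$ back along it produces a covering $\pi\colon P'\to P$ and a morphism $\rho\colon P'\to U_2$ with $d_2\rho=q_1\pi$, $d_1\rho=q_2\pi$, and $d_0\rho=s_0d_0q_1\pi$. Now pull \eqref{EQ:one-cocycle-cond} back along $\rho$: the factor coming from $d_0$ dies because $s_0^*g=1$, and what remains is $\pi^*q_1^*g=\pi^*q_2^*g$, so $q_1^*g=q_2^*g$ because $\pi$ is an epimorphism. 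Since $G$ is a sheaf it satisfies descent along the epimorphism $\phi_1$, so there is a unique $\bar g\in G(U_0\times U_0)$ with $\phi_1^*\bar g=g$; and pulling the \v{C}ech cocycle identity for $\bar g$ back along $\phi_2\colon U_2\to U_0^{\times 3}$ — also an epimorphism — turns it, by naturality of $\phi$, into \eqref{EQ:one-cocycle-cond} for $g$, so $\bar g\in Z^1(V_\bullet,G)$ is the required preimage.

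For $\Hoh^1$, the map on $Z^1$ manifestly carries cohomologous cocycles to cohomologous cocycles, so it suffices to establish the converse for representatives of the form $\phi_1^*\bar g$. If $\phi_1^*\bar g=d_1^*y\cdot\phi_1^*\bar g'\cdot(d_0^*y)^{-1}$ for some $y\in G(U_0)$, then, using $\phi_0=\id$ and the naturality of $\phi$, the right-hand side equals $\phi_1^*\big(d_1^*y\cdot\bar g'\cdot(d_0^*y)^{-1}\big)$; injectivity of $\phi_1^*$ then gives $\bar g\sim\bar g'$ in $Z^1(V_\bullet,G)$. Combined with the bijectivity of the map on cocycles, this shows that $\Hoh^1(V_\bullet,G)\to\Hoh^1(U_\bullet,G)$ is a bijection.

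The step I expect to be the main obstacle is the equality $q_1^*g=q_2^*g$ within the surjectivity argument: one has to manufacture, from the single $n=1$ hypercovering axiom alone, enough $2$-simplices of $U_2$ to compare the two pullbacks of $g$ to the kernel pair of $\phi_1$. The key device is the degenerate $2$-simplex with faces $q_1$, $q_2$, $s_0d_0q_1$, which — given that cocycles vanish on degeneracies — collapses the cocycle identity to precisely the sought relation; everything else is routine manipulation with the simplicial identities and with the facts that in a topos epimorphisms are effective and stable under pullback.
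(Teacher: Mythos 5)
Your argument follows essentially the same route as the paper's proof: injectivity on cocycles from the covering property of $\phi_1=(d_0,d_1)$, surjectivity via descent along $\phi_1$ using a carefully chosen $2$-simplex to compare the two pullbacks to the kernel pair, and then the $\Hoh^1$ statement as a formal consequence. The $2$-simplex you construct (faces $q_1$, $q_2$, and $s_0 d_0 q_1$) is the same one the paper uses under the name $\Psi$; the paper packages the step differently by first replacing $U_\bullet$ with $\cosk_1(U_\bullet)$ so that $\Psi$ lands directly in $U_2$ rather than in $\cosk_1(U_\bullet)_2$, which is what forces you to pull back once more along $U_2\to\cosk_1(U_\bullet)_2$. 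The two routes are equivalent; if anything your version is a bit more transparent, since you isolate explicitly the fact that cocycles vanish on degeneracies rather than folding it into an $s_1^*$-pullback of the cocycle identity as the paper does.

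One point deserves more than a parenthetical: you assert that $\phi_2\colon U_2\to U_0^{\times 3}$ is an epimorphism without argument. This is true but is not immediate from the bare definition of a hypercovering. One has to factor $\phi_2$ as $U_2\to\cosk_1(U_\bullet)_2\to\cosk_0(U_\bullet)_2=U_0^{\times 3}$; the first map is a covering by the $n=1$ hypercovering axiom, and the second must be seen to be an epimorphism separately. For the latter, given a section $(v_0,v_1,v_2)\in U_0^{\times 3}(X)$, use the fact that $\phi_1\colon U_1\to U_0\times U_0$ is a covering three times to produce, after passing to a covering $X'\to X$, three edges $e_{01},e_{02},e_{12}\in U_1(X')$ with the correct endpoints; these assemble into a section of $\cosk_1(U_\bullet)_2$ over $X'$ mapping to $(v_0,v_1,v_2)$. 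This is exactly the concluding argument of the paper's proof, and it is worth supplying, since without it the final step of your surjectivity argument (injectivity of $\phi_2^*$) is unsupported.
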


\begin{proof} The proof shall require various facts about coskeleta. We refer the reader to
\cite[\S{}14.19]{de_jong_stacks_2017} or any equivalent source for proofs.

Recall from Example~\ref{EX:Cech-hypercovering} that $\cosk_0(U_\bullet)$ is nothing but the \v{C}ech hypercovering
associated to $U_0$. Since $U_\bullet$ is a hypercovering, $(d_0,d_1):U_1\to \cosk_0(U_\bullet)_1=U_0\times U_0$ is a
covering, and hence the induced map $G(U_0\times U_0)\to G(U_1)$ is injective. Since the map
$Z^1(\cosk_0(U_\bullet),G)\to Z^1(U_\bullet,G)$ is a restriction of the latter, it is also injective. This implies that
if two cocycles in $Z^1(\cosk_0(U_\bullet),G)$ become cohomologous in $Z^1(U_\bullet,G)$, then they are also
cohomologous in $Z^1(\cosk_0(U_\bullet),G)$, so $\Hoh^1(\cosk_0(U_\bullet),G)\to \Hoh^1(U_\bullet,G)$ is injective. It
is therefore enough to show that $Z^1(\cosk_0(U_\bullet),G)\to Z^1(U_\bullet,G)$ is surjective.

We first observe that the canonical map $Z^1(\cosk_1(U_\bullet),G)\to Z^1(U_\bullet,G)$ is an isomorphism. This
follows from the fact that $\cosk_1(U_\bullet)_1=U_1$ and $U_2\to \cosk_1(U_\bullet)_2$ is a covering, hence
\eqref{EQ:one-cocycle-cond} is satisfied in $G(U_2)$ if and only if it is satisfied in $G(\cosk_1(U_\bullet)_2)$. Since
$\cosk_0(\cosk_1 (U_\bullet))=\cosk_0 (U_\bullet)$, we may replace $U_\bullet$ with $\cosk_1(U_\bullet)$. In this case,
the construction of $\cosk_1$ implies that $U_2$ is characterized by
\begin{align*} U_2(X)=\{(e_{01},e_{02},e_{12},v_0,v_1,v_2)\in U_1(X)^3\times U_0(X)^3\suchthat\, &d_0e_{ij}=v_j,\\
&d_1e_{ij}=v_i~\text{for all legal}~i,j\}\ .
\end{align*} for all objects $X$ of $\mathbf X$. The maps $d_0,d_1,d_2:U_2\to U_1$ are then given by taking the
$e_{12}$-part, $e_{02}$-part, and $e_{01}$-part, respectively. Geometrically, $U_2$ is the object of simplicial
morphisms from the boundary of the $2$-simplex to $U_\bullet$.

Let $g\in Z^1(U_\bullet,G)\subseteq G(U_1)$. We claim that $g$ descends along $(d_0,d_1)$ to $g'\in G({U_0\times U_0})$.
Write $V=U_1\times_{U_0\times U_0} U_1$ and let $\pi_1$, $\pi_2$ denote the first and second projections from $V$ onto
$U_1$. We need to show that $\pi_1^*g=\pi_2^*g$ in $G(V)$. For an object $X$, the $X$-sections of $V$ can be described
by
\begin{align*} V(X)=\{(e_{01},e'_{01},v_0,v_1)\in X(U_1)^2\times X(U_0)^2\suchthat &d_0(e_{01})=d_0(e'_{01})=v_1,\\
&d_1(e_{01})=d_1(e'_{01})=v_0\}\ .
\end{align*} Define $\Psi: V\to U_2$ by
\[ \Psi(e_{01},e'_{01},v_0,v_1)=(e_{01},e'_{01},s_0v_1,v_0,v_1,v_1)
\] on sections. One readily checks that $d_0\Psi=s_0d_0\pi_1=d_0s_1\pi_1$, $d_1 \Psi=\pi_2$ and
 $d_2\Psi=\pi_1$. Now, applying $\Psi^*:G(U_2)\to G(V)$ to \eqref{EQ:one-cocycle-cond}, we arrive at the equation
$\pi_1^*g\cdot \pi_1^*d_0^*s_1^*g\cdot \pi_2^*g^{-1}=1$ in $G(V)$, and applying $s_1^*:G(U_2)\to G(U_1)$
 to \eqref{EQ:one-cocycle-cond}, we find that $g\cdot s_1^*d_0^*g\cdot g^{-1}=1$ in $G(U_1)$. Both equations taken
 together imply that $\pi_1^*g=\pi_2^*g$ in $G(V)$, hence our claim follows.

To finish the proof, it is enough to show that $g'$ is a $1$-cocycle. This will follow from the fact
 that $g$ is a $1$-cocycle if we show that the canonical map $U_2\to \cosk_0(U_\bullet)_2=U_0^3$ is a covering. The
 latter map is given by $(e_{01},e_{02},e_{12},v_0,v_1,v_2)\mapsto (v_0,v_1,v_2)$ on sections. Since $(d_0,d_1):U_1\to
 U_0$ is a covering, for every pair of vertices $v_0,v_1\in U_0(X)$, there exists a covering $X'\to X$ and an edge
 $e_{01}\in U_1(X')$ satisfying $d_0 e_{01}=v_1$, $d_1e_{01}=v_0$. This easily implies that $U_2\to U_0^3$ is locally
 surjective, finishing the proof.
\end{proof}

The following proposition summarizes the main properties of $\Hoh^1(\bfX,G)$.
As before,
we shall suppress $\bfX$, writing $\Hoh^1(G)$, when it is clear form the context.

\begin{prp}\label{PR:non-ab-coh-basic-prop} Let $1\to G'\to G\to G''\to 1$ be a short exact sequence of groups in $\bfX$.
\begin{enumerate}[label=(\roman*)]
 \item \label{item:PR:non-ab-coh-basic-prop:torsor} $\Hoh^1(G)=\Hoh^1(\bfX,G)$ is naturally isomorphic to the set of isomorphism
 classes of $G$-torsors; the distinguished element of $\Hoh^1(G)$ corresponds to the isomorphism class of the
 trivial torsor.
\item \label{item:PR:non-ab-coh-basic-prop:long-ex-seq} There is a long exact sequence of pointed sets
\[ 1\to \Hoh^0(G')\to \Hoh^0(G)\to \Hoh^0(G'')\xrightarrow{\delta^1} \Hoh^1(G')\to \Hoh^1(G)\to \Hoh^1(G'')\ .
\] This exact sequence is functorial in
 $1\to G'\to G\to G''\to 1$, i.e., a morphism from it to another short exact
sequence of groups gives rise a morphism between the corresponding long exact sequences.
\item \label{item:PR:non-ab-coh-basic-prop:longer-ex-seq} When $G'$ is central in $G$, one can extend the exact sequence
of \ref{item:PR:non-ab-coh-basic-prop:long-ex-seq} with an additional morphism $\delta^2:\Hoh^1(G'')\to \Hoh^2(G')$,
which is again functorial in $1\to G'\to G\to G''\to 1$.
\item When $G',G,G''$ are abelian, the exact sequence of \ref{item:PR:non-ab-coh-basic-prop:longer-ex-seq} is
canonically isomorphic to the truncation of the usual long exact sequence of cohomology groups associated to $1\to G'\to G\to
G''\to 1$. In particular, $\Hoh^1( G)$ defined here is naturally isomorphic to $\Hoh^1( G)$ defined in
\ref{subsec:coh-of-ab-grps} and regarded as a pointed set with distinguished element $1$.
\end{enumerate}
\end{prp}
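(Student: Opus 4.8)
The plan is to establish \ref{item:PR:non-ab-coh-basic-prop:torsor} first, since the torsor description of $\Hoh^1$ makes \ref{item:PR:non-ab-coh-basic-prop:long-ex-seq} and \ref{item:PR:non-ab-coh-basic-prop:longer-ex-seq} nearly formal, and to obtain the abelian comparison in the last part by matching the cocycle formulas below with the hypercovering description of the connecting map recalled after Theorem~\ref{TH:Verdier}. Most of the content is classical, see \cite[Chap.~III]{giraud_cohomologie_1971}; the task is to reconcile it with the conventions of \S\ref{subsec:coh-of-non-ab-grp}. For \ref{item:PR:non-ab-coh-basic-prop:torsor}, by Lemma~\ref{LM:Cech-hyp-vs-all-hyp} we may restrict the colimit defining $\Hoh^1(\bfX,G)$ to the \v{C}ech hypercoverings $\cosk_0(V_\bullet)$ attached to coverings $V\to\ast$. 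An element of $Z^1(\cosk_0(V_\bullet),G)\subseteq G(V\times V)$ is, by \eqref{EQ:one-cocycle-cond}, exactly a descent datum for the trivial $G$-torsor $G_V$ on $\bfX/V$ along $V\to\ast$; gluing the underlying sheaves of sets produces $P\in\Tors(\bfX,G)$ together with a chosen $G$-equivariant isomorphism $P_V\iso G_V$, and cohomologous cocycles yield $G$-isomorphic torsors. Conversely any $G$-torsor $P$ is trivialized by $V:=P$, and a choice of $G$-equivariant isomorphism $G_V\iso P_V$ produces, by comparing the two pullbacks to $V\times V$, a cocycle whose class is independent of the choice; these assignments are mutually inverse. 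They commute with pushforward of torsors along homomorphisms $G\to H$, whence naturality in $G$, and $1\in Z^1$ corresponds to the trivial torsor.

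For \ref{item:PR:non-ab-coh-basic-prop:long-ex-seq}, define $\delta^1\colon\Hoh^0(\bfX,G'')=G''(\ast)\to\Hoh^1(\bfX,G')$ by sending a global section $s''$ to the class of $P:=(G\to G'')^{-1}(s'')$, which is a $G'$-torsor under right translation because $G\to G''$ is an epimorphism; concretely, choosing a covering $V\to\ast$ and a lift $s\in G(V)$ of $s''|_V$, $\delta^1(s'')$ is represented by $(d_1^\ast s)^{-1}d_0^\ast s\in G'(V\times V)$. Exactness at $\Hoh^0(G)$ and at $\Hoh^0(G'')$ is the left exactness of $\Gamma$ together with this definition. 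Exactness at $\Hoh^1(G')$: the cocycle $(d_1^\ast s)^{-1}d_0^\ast s$ is a coboundary when regarded in $G$, so $\im\delta^1$ lies in the kernel; conversely, if a $G'$-torsor $P'$ is trivialized in $\Hoh^1(G)$, then after refining, a cocycle for $P'$ equals a coboundary $d_1^\ast x\,(d_0^\ast x)^{-1}$ in $G$, and the image of $x^{-1}$ in $G''$ descends to a global section $s''$ with $\delta^1(s'')=[P']$. Exactness at $\Hoh^1(G)$: a $G$-torsor $Q$ is trivialized in $\Hoh^1(G'')$ iff $Q\times^G G''$ has a global section, in which case the fibre of $Q\to Q\times^G G''$ over it is a $G'$-torsor pushing forward to $Q$; the reverse containment is immediate. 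All maps are natural operations on torsors, giving the functoriality in $G'\to G\to G''$.

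For \ref{item:PR:non-ab-coh-basic-prop:longer-ex-seq}, let $G'$ be central in $G$. Represent $\alpha''\in\Hoh^1(\bfX,G'')$ by $g''\in Z^1(\cosk_0(V_\bullet),G'')$; as $G\to G''$ is an epimorphism, after refining $V$ (via Lemma~\ref{LM:hypercover-refinement}, the refinement again being \v{C}ech) we may lift $g''$ to $g\in G(V\times V)$. Then $c:=d_2^\ast g\cdot d_0^\ast g\cdot d_1^\ast g^{-1}$ maps to $1$ in $G''$, hence lies in $G'(V^{\times 3})$, and a computation with the simplicial identities---using the centrality of $G'$---shows $c\in Z^2(\cosk_0(V_\bullet),G')$ in the sense of \S\ref{subsec:coh-of-ab-grps}; set $\delta^2(\alpha''):=[c]$. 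Replacing $g$ by $gh$ with $h\in G'(V\times V)$ replaces $c$ by $c\cdot d^1 h$ (again by centrality), and refining $V$ or changing the representative $g''$ leaves $[c]$ unchanged, so $\delta^2$ is well-defined; it is natural in $G'\to G\to G''$ as it is built from the lifting construction. If $\alpha''$ lifts to $\Hoh^1(G)$ one may take $g$ a cocycle, so $c=1$ and $\delta^2(\alpha'')=0$; conversely $\delta^2(\alpha'')=0$ means $c=d^1 h$ after a refinement, and then a suitable correction of $g$ by $h$ yields a $G$-valued $1$-cocycle lifting $g''$, establishing exactness at $\Hoh^1(G'')$.

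Finally, when $G',G,G''$ are abelian, \eqref{EQ:one-cocycle-cond} becomes $d_0^\ast g-d_1^\ast g+d_2^\ast g=0$, which is the $1$-cocycle condition of \S\ref{subsec:coh-of-ab-grps}, and the relation $\sim$ is the additive coboundary relation, so $\Hoh^1(U_\bullet,G)$ coincides with the group defined there; taking the colimit over hypercoverings and applying Theorem~\ref{TH:Verdier} identifies the two definitions of $\Hoh^1(\bfX,G)$. Under this identification $\delta^1$ and $\delta^2$ above are, term for term, the hypercovering formula for the connecting homomorphism recalled after Theorem~\ref{TH:Verdier} (note $(d_1^\ast s)^{-1}d_0^\ast s=d^0 s$ written additively), while the remaining arrows are the evident functorial ones, so the sequence of \ref{item:PR:non-ab-coh-basic-prop:longer-ex-seq} is the truncation of the usual long exact sequence. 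The step I expect to be most delicate is the bookkeeping in \ref{item:PR:non-ab-coh-basic-prop:torsor}---checking that gluing a torsor from a cocycle and extracting a cocycle from a torsor are mutually inverse, independently of the trivializing cover and the chosen descent isomorphism---and, in \ref{item:PR:non-ab-coh-basic-prop:longer-ex-seq}, the verification that $c$ is a $2$-cocycle and that $\delta^2$ is well-defined, which is exactly where the centrality hypothesis is used and which requires a somewhat lengthy manipulation of the face maps; neither is conceptually hard, but both are where the hypercovering formalism of this section must be matched carefully with the torsor formalism of \cite{giraud_cohomologie_1971}.
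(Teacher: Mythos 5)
Your proposal is correct and follows essentially the same route as the paper: \v{C}ech hypercoverings for the torsor dictionary in \ref{item:PR:non-ab-coh-basic-prop:torsor}, local lifting to define the connecting maps in \ref{item:PR:non-ab-coh-basic-prop:long-ex-seq} and \ref{item:PR:non-ab-coh-basic-prop:longer-ex-seq}, and matching with the hypercovering description of the abelian connecting map for the comparison in the last part. One point does need repair, and one other is worth flagging.

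In \ref{item:PR:non-ab-coh-basic-prop:longer-ex-seq} you write that after refining $V$ ``(via Lemma~\ref{LM:hypercover-refinement}, the refinement again being \v{C}ech)'' the cocycle $g''$ lifts to $G(V\times V)$. Lemma~\ref{LM:hypercover-refinement} produces a morphism of hypercoverings $U'_\bullet\to U_\bullet$ with $U'_1\to U_1$ factoring through a given cover of $U_1$; it does \emph{not} produce a refinement of the form $\cosk_0(V')$, and there is no reason the lift of $g''$ should be available over a product $V'\times V'$. This is exactly the point of the paper's remark that \ref{item:PR:non-ab-coh-basic-prop:torsor} is easiest with \v{C}ech hypercoverings while \ref{item:PR:non-ab-coh-basic-prop:longer-ex-seq} needs arbitrary ones. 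The fix is painless: let $U'_\bullet$ be a general hypercovering, place $c:=d_2^*g\cdot d_0^*g\cdot d_1^*g^{-1}$ in $Z^2(U'_\bullet,G')$, and read $\delta^2(\alpha'')$ in $\Hoh^2(\bfX,G')=\colim_{U'_\bullet}\Hoh^2(U'_\bullet,G')$ as per Theorem~\ref{TH:Verdier}; restricting to \v{C}ech covers would only compute a \v{C}ech $\Hoh^2$, which need not equal the group $\Hoh^2$ of \S\ref{subsec:coh-of-ab-grps}. Separately, and in your favor: your $\delta^1$-cocycle $(d_1^*s)^{-1}d_0^*s$ is the one that actually satisfies \eqref{EQ:one-cocycle-cond} for a general, possibly nonabelian, $G'$ (it gives $s(u_0)^{-1}s(u_1)$, so that $g'_{01}g'_{12}=g'_{02}$), whereas the formula $d_0^*g\cdot d_1^*g^{-1}$ printed in the paper's sketch only verifies the cocycle condition when $G'$ is abelian; this appears to be a transposition in the paper, and your version is the standard one.
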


 The proposition is well known, but 
 Giraud \cite[IV, 4.2.7.4, 4.2.10]{giraud_cohomologie_1971} 
 is the only source we are aware of that treats all
 parts in the generality that we require.
 Since the treatment in \cite{giraud_cohomologie_1971} is somewhat obscure,
 and since we shall need the definition of the maps $\delta^1$ and $\delta^2$
 in the sequel, 
 we include an outline of the proof here. Note that it is easier to prove
 \ref{item:PR:non-ab-coh-basic-prop:torsor} using the definition of $\Hoh^1(G)$ via \v{C}ech hypercoverings, while it is
 easier to prove \ref{item:PR:non-ab-coh-basic-prop:longer-ex-seq} using the definition of $\Hoh^1(G)$ via arbitrary
 hypercoverings, and these definitions are equivalent thanks to Lemma~\ref{LM:Cech-hyp-vs-all-hyp}.

\begin{proof}[Proof (sketch)]
\begin{enumerate}[label=(\roman*), align=left, leftmargin=0pt, itemindent=1ex, itemsep=0.3\baselineskip]
\item Let $P$ be a $G$-torsor. Choose a covering $U_0\to *_\bfX$ such that $P(U_0)\neq \emptyset$ and fix some $x\in P(U_0)$. Form the
\v{C}ech hypercovering $U_\bullet$ associated to $U_0$. Then there exists a unique $g\in G(U_1)$ such that
$d_1^*x\cdot g=d_0^*x$ in $P(U_1)$. We leave it to the reader to check that $g\in Z^1(U_\bullet,G)$ and the
construction $P\mapsto g$ induces a well-defined map from the isomorphism classes of $\Tors(G)$ to $\Hoh^1(G)$ taking
the trivial $G$-torsor to the special element of $\Hoh^1(G)$.

In the other direction, let $\alpha\in\Hoh^1(G)$. By Lemma~\ref{LM:Cech-hyp-vs-all-hyp}, $\alpha$ is represented by
some $g\in Z^1(U_\bullet,G)$ where $U_\bullet$ is a \v{C}ech hypercovering. Define $P$ to be the object of $\bfX$
characterized by $P(V)=\{x\in G(U_0\times V)\suchthat g_V\cdot d_0^*x=d_1^*x\}$; here, $d_i^*:G(U_0\times V)\to
G(U_1\times V)$ is induced by $d_i\times \id:U_1\times V\to U_0\times V$. There is a right $G$-action on $P$ given by
$(x,h)\mapsto x\cdot h_{U_0}$ on sections. We leave it to the reader to check that $P$ is indeed a $G$-torsor, and the
assignment $\alpha\mapsto P$ defines an inverse to the map of the previous paragraph. In particular, note that
$P\to *_\bfX$ is a covering because $g\in G(U_0)$; use the fact that $U_\bullet$ is a \v{C}ech hypercovering.

\item Define $\delta^1:\Hoh^0(G'')\to \Hoh^1(G')$ as follows: Let $g''\in \Hoh^0(G'')$. There is a covering $U_0\to *$ such
that $g''$ lifts to some $g\in G(U_0)$. One easily checks that $g':=d_0^*g\cdot d_1^*g^{-1}\in G(U_1)$ lies in
$Z^1(G',U_\bullet)$ where $U_\bullet$ is the \v{C}ech hypercovering associated to $U_0$. We define $\delta^1 g''$ to be
the cohomology class represented by $g'$, and leave it to the reader to check that this is well-defined. All other maps
in the sequence are defined in the obvious manner and the exactness is easy to check.

\item Define $\delta^2:\Hoh^1( G'')\to \Hoh^2(G')$ as follows: Let $\alpha\in\Hoh^1(G'')$ be represented by
$g''\in Z^1(U_\bullet,G'')$ where $U_\bullet$ is a hypercovering. There is a covering $V\to U_1$ such that $g''$ lifts
to some $g\in G(V)$. By Lemma~\ref{LM:hypercover-refinement}, there is a morphism of hypercoverings
$U'_\bullet\to U_\bullet$ such that $U'_1\to U_1$ factors through $V\to U_1$. We replace $g$ with its image in
$G(U'_1)$. Let $g':=d_2^*g\cdot d_0^*g\cdot d_1^*g^{-1}\in G(U'_1)$. It is easy to check that $g'$ lies in $G'(U'_1)$
and defines a $2$-cocycle of $G'$ relative to $U'_\bullet$. We define $\delta^2 \alpha$ to be the cohomology class
represented by $g'\in Z^2(U'_\bullet,G')$, and leave it to the reader to check that this is well-defined. The exactness
of the sequence at $\Hoh^1(G'')$ is straightforward to check.

\item Verdier's Theorem gives rise to an obvious isomorphism between $\Hoh^1(G)$ as defined here and $\Hoh^1(G)$ as
defined in \ref{subsec:coh-of-ab-grps}. It is immediate from the definitions that this isomorphism also induces an
isomorphism between the long exact sequences; see the proofs of \ref{item:PR:non-ab-coh-basic-prop:long-ex-seq},
\ref{item:PR:non-ab-coh-basic-prop:longer-ex-seq} and the comment at the end of \ref{subsec:coh-of-ab-grps}.
\qedhere
\end{enumerate}
\end{proof}

\subsection{Azumaya Algebras}

\label{subsec:Az-algs}

 Let $R$ be a ring object of $\mathbf X$ and let $n$ be a positive integer. Recall that an Azumaya $R$-algebra of
degree $n$ is an $R$-algebra $A$ in $\mathbf{X}$ that is locally isomorphic to $\nMat{R}{n\times n}$, i.e., there exists a
covering $U\to *$ such that $A_{U}\iso \nMat{R_U}{n\times n}$ as $R_U$-algebras.
 The Azumaya $R$-algebras of degree $n$ together with $R$-algebra isomorphisms form a category which we denote by
 \[ \Az_n(\bfX,R)\ .
 \]
 If $A'$ is another Azumaya $R$-algebra, we let $\sHom_{\text{$R$-alg}}(A,A')$ denote the subobject of the internal
 mapping object $(A')^A$ of $\mathbf{X}$ consisting of $R$-algebra homomorphisms. We define the group object
 $\sAut_{\textrm{$R$-alg}}(A)$ similarly.

 \begin{remark} \label{RM:non-constant-rank} We have defined here Azumaya algebras of constant degree only. When $\bfX$
 is connected, these are all the Azumaya algebras, but in general, one has to allow the degree $n$ to take values in
 the global sections of the sheaf $\mathbb{N}$ of positive integers on $\mathbf{X}$. For any such $n$ one can define
 $\nMat{R}{n\times n}$ and the definition of Azumaya algebras of degree $n$ extends verbatim. We ignore this
 technicality, both for the sake of simplicity, and also since it is unnecessary for connected topoi, which are the
 topoi of interest to us.
 \end{remark}

	Let $\calO_\bfX$ be a ring object in $\bfX$.
 Recall that $\bfX$ is \emph{locally ringed} by ${\sh{O}_{\mathbf X}}$, or ${\sh{O}_{\mathbf X}}$ is a \emph{local ring
 object} in $\bfX$, if for any object $U$ in $\bfX$
 and $\{r_i\}_{i\in I}\subseteq {\sh{O}_{\mathbf X}}(U)$ with
 ${\sh{O}_{\mathbf X}}(U)=\sum_i r_i{\sh{O}_{\mathbf X}}(U)$, there exists a covering $\{U_i\to U\}_{i\in I}$ such that
 $r_i\in\units{{\sh{O}_{\mathbf X}}(U_i)}$ for all $i\in I$. 
 In fact, 
 one can take $U_i=\emptyset_{\bfX}$ for almost all
 $i$. We remark that the condition should also hold when $I=\emptyset$,
 which implies that $\calO_\bfX(U)$ cannot be the zero ring when $U\ncong\emptyset_\bfX$. 
 When $\bfX$ has enough points, the condition 
 is equivalent to saying that for every point $i:\mathbf{pt} \to \bfX$, the ring $i^*
 {\sh{O}_{\mathbf X}}$ is local (the zero ring is not considered local).

 Suppose that ${\sh{O}_{\mathbf X}}$ is a local ring object. Then 
 the group homomorphism $\GL_n(\calO_\bfX)\to \sAut_{\text{${\sh{O}_{\mathbf X}}$-alg}}(\nMat{{\sh{O}_{\mathbf X}}}{n\times n})$
	given by $a\mapsto [x\mapsto axa^{-1}]$
	on sections is surjective, \cite[V.\S 4]{giraud_cohomologie_1971}.
 This induces an isomorphism
 $\PGL_n({\sh{O}_{\mathbf X}}):=\GL_n({\sh{O}_{\mathbf
 X}})/\units{\sh{O}_{\mathbf X}}\to \sAut_{\text{${\sh{O}_{\mathbf X}}$-alg}}(\nMat{{\sh{O}_{\mathbf X}}}{n\times n})$,
 which will be used to freely identify the source and target in the sequel.
 The following proposition is well established, again see \cite[V.\S
 4]{giraud_cohomologie_1971}.\uriyaf{There must be some reference which is not written in French...}

 \begin{proposition}\label{PR:equiv-Az-PGLnR} If ${\sh{O}_{\mathbf X}}$ is a local ring object, then there is an
 equivalence of categories
 \[ \Tors(\bfX,\PGL_n({\sh{O}_{\mathbf X}}))\overset{\sim}{\longrightarrow} \Az_n(\bfX,{\sh{O}_{\mathbf X}})
 \] given by the functors $P\mapsto P\times^{\PGL_n({\sh{O}_{\mathbf X}})}\nMat{{\sh{O}_{\mathbf X}}}{n\times n}$ and
 $A\mapsto\sHom_{\text{${\sh{O}_{\mathbf X}}$-alg}}(\nMat{{\sh{O}_{\mathbf X}}}{n\times n},A)$.
\end{proposition}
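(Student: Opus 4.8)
The plan is to exhibit the two functors as quasi-inverse to one another, using the general torsor-twisting formalism recalled in Section~\ref{subsec:topos}, together with the identification of $\sAut_{\text{${\sh{O}_{\cat X}}$-alg}}(\nMat{{\sh{O}_{\cat X}}}{n\times n})$ with $\PGL_n({\sh{O}_{\cat X}})$ that precedes the statement. Write $G:=\PGL_n({\sh{O}_{\cat X}})$ and $M:=\nMat{{\sh{O}_{\cat X}}}{n\times n}$, viewed as a left $G$-object in $\bfX$ via the (now inner) action on $M$. The functor $F\colon P\mapsto {}^PM = P\times^G M$ lands in $\Az_n(\bfX,{\sh{O}_{\cat X}})$: by the remark in Section~\ref{subsec:topos} on $P$-twists, $M$ and ${}^PM$ become isomorphic after passing to a covering $U\to\ast$ that trivializes $P$, and since $G$ acts by ${\sh{O}_{\cat X}}$-algebra automorphisms the twist ${}^PM$ inherits the structure of an ${\sh{O}_{\cat X}}$-algebra whose restriction to $U$ is isomorphic to $M_U$ as an ${\sh{O}_{\cat X}}_U$-algebra; that is exactly the definition of an Azumaya algebra of degree $n$. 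Conversely, the functor $F'\colon A\mapsto \sHom_{\text{${\sh{O}_{\cat X}}$-alg}}(M,A)$ carries an Azumaya algebra to a sheaf with a right $G$-action (precompose an algebra homomorphism $M\to A$ with an automorphism of $M$), and one checks $F'(A)$ is a $G$-torsor: locally on a covering trivializing $A$ one has $A_U\iso M_U$, whence $\sHom_{\text{${\sh{O}_{\cat X}}$-alg}}(M,A)_U\iso \sAut_{\text{${\sh{O}_{\cat X}}$-alg}}(M_U)\iso G_U$ as right $G_U$-objects; moreover $F'(A)\to\ast$ is an epimorphism since $A$ is locally trivial, and the shear map $F'(A)\times G\to F'(A)\times F'(A)$ is an isomorphism because it is so after restriction to such a $U$ and these $U$ jointly cover $\ast$.

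Next I would produce the natural isomorphisms $F'\circ F\Rightarrow \id$ and $F\circ F'\Rightarrow\id$. For the first, there is an evident evaluation-type morphism $P\to \sHom_{\text{${\sh{O}_{\cat X}}$-alg}}(M,{}^PM)$ sending (on sections, over a $U$ trivializing $P$) a point $p\in P(U)$ to the ${\sh{O}_{\cat X}}$-algebra homomorphism $m\mapsto [p,m]\in {}^PM(U)$; this is $G$-equivariant and, being an isomorphism after restriction to any trivializing $U$, is an isomorphism of $G$-torsors. For the second, the counit of the adjunction underlying twisting furnishes a morphism ${}^{F'(A)}M = \sHom_{\text{${\sh{O}_{\cat X}}$-alg}}(M,A)\times^G M\to A$, namely $[\,\varphi,m\,]\mapsto \varphi(m)$ on sections; this is visibly a morphism of ${\sh{O}_{\cat X}}$-algebras, and it is an isomorphism because after restriction to a covering $U$ trivializing $A$ it becomes the canonical isomorphism ${}^{G}M_U\iso M_U\iso A_U$. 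Naturality of both transformations in $P$ and in $A$ respectively is immediate from the section-level formulas. Since both composites are naturally isomorphic to the identity, $F$ and $F'$ are mutually quasi-inverse equivalences, as claimed.

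The only subtle point, and the step I expect to require the most care, is the local triviality argument for $F'(A)$ and the verification that the shear map $F'(A)\times G\to F'(A)\times F'(A)$ is an isomorphism: these are statements about morphisms of sheaves that hold after passing to a covering, and one must invoke that a morphism of sheaves which becomes an isomorphism after restriction along a covering family $\{U\to\ast\}$ (equivalently after pullback along an epimorphism $U\to\ast$) is already an isomorphism in $\bfX$. This is standard in a topos, but it is the place where the hypothesis that ${\sh{O}_{\cat X}}$ is a \emph{local} ring object is genuinely used, since it is precisely that hypothesis which, via \cite[V.\S4]{giraud_cohomologie_1971}, identifies $\sAut_{\text{${\sh{O}_{\cat X}}$-alg}}(M_U)$ with $G_U$ locally on $U$ and thereby makes $F'(A)$ locally isomorphic to $G$. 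Everything else — the $G$-equivariance of the various maps, and the checking of the two natural isomorphisms on sections over trivializing objects — is routine.
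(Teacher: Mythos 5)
The paper does not prove this proposition; it simply refers to Giraud \cite[V.\S 4]{giraud_cohomologie_1971} after noting that the local-ring hypothesis identifies $\sAut_{\text{$\sh{O}_{\cat X}$-alg}}(\nMat{\sh{O}_{\cat X}}{n\times n})$ with $\PGL_n(\sh{O}_{\cat X})$. Your proof supplies the expected argument --- the two functors are mutually quasi-inverse via the twisting formalism --- and it is correct. One point worth stating explicitly rather than burying inside the isomorphism $\sHom_{\text{$\sh{O}_{\cat X}$-alg}}(M,A)_U\iso\sAut_{\text{$\sh{O}_{\cat X}$-alg}}(M_U)$ is why the sheaf of algebra \emph{homomorphisms} $M\to A$ agrees with the sheaf of algebra \emph{isomorphisms} once $A_U\iso M_U$: this requires that any $\sh{O}_{\cat X}(V)$-algebra endomorphism of $\nMat{\sh{O}_{\cat X}(V)}{n\times n}$ is an automorphism after passing to a covering, which again ultimately rests on the local-ring hypothesis (by localizing to a point, the kernel is a two-sided ideal of a matrix algebra over a local ring, hence of the form $\nMat{I}{n\times n}$ with $I\trianglelefteq\sh{O}_{\cat X}(V)$, and is forced to vanish). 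With that spelled out, the remaining verifications --- that $F'(A)\to\ast$ is an epimorphism, that the shear map is an isomorphism because it is so after pulling back along a covering, and that the two section-level maps are mutually inverse natural isomorphisms --- are routine exactly as you say.
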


The proposition holds for any ring object ${\sh{O}_{\mathbf X}}$ of $\mathbf X$ if one replaces the group
object $\PGL_n({\sh{O}_{\mathbf X}})$ with $\sAut_{\textrm{${\sh{O}_{\mathbf X}}$-alg}}(\nMat{{\sh{O}_{\mathbf X}}}{n\times n})$.

\medskip

We continue to assume that $\calO_{\bfX}$ is a local ring object.
By Proposition~\ref{PR:non-ab-coh-basic-prop}\ref{item:PR:non-ab-coh-basic-prop:longer-ex-seq},
 the short exact sequence $1 \to \units{\sh{O}_{\mathbf X}}\to \GL_n({\sh{O}_{\mathbf X}})\to \PGL_n({\sh{O}_{\mathbf
 X}}) \to 1$ gives rise to a pointed set map $\Hoh^1(\PGL_n({\sh{O}_{\mathbf X}}))\to \Hoh^2(\units{\sh{O}_{\mathbf X}})$; here, $\Hoh^*(-)=\Hoh^*(\bfX,-)$. As usual,
 the \emph{Brauer group} of ${\sh{O}_{\mathbf X}}$ is
 \[\Br(\bfX,{\sh{O}_{\mathbf X}})=\Br({\sh{O}_{\mathbf X}}):=\bigcup_{n\in\N}\im\left(\Hoh^1(\PGL_n({\sh{O}_{\mathbf X}}))\to
 \Hoh^2(\units{\sh{O}_{\mathbf X}})\right)\]
 the addition being that inherited from the group $\Hoh^2(\units{\sh{O}_{\mathbf X}})$. Since Azumaya $\sh O_X$-algebras
 correspond to $\PGL_n(\sh O_X)$-torsors, which are in turn classified by $\Hoh^1(\mathbf X, \PGL_n (\sh O_X))$, any
 Azumaya ${\sh{O}_{\mathbf X}}$-algebra $A$ gives rise to an element in $\Br({\sh{O}_{\mathbf X}})$, denoted $[A]$ and called the
 \emph{Brauer class} of $A$. By writing $A'\in[A]$ or saying that $A'$ is \emph{Brauer equivalent} to $A$, we mean that
 $A'$ is an Azumaya ${\sh{O}_{\mathbf X}}$-algebra with $[A']=[A]$. For more details, see \cite{grothendieck_groupe_1968-1}
 or \cite[Chap.~V, \S 4]{giraud_cohomologie_1971}.

	\begin{example}
 Let $X$ be a topological space, let $\bfX=\Sh(X)$ and let $\calO_\bfX$ be the sheaf of continuous functions
 from $X$ to $\C$, denoted $\cont(X,\C)$. Then Azumaya $\calO_\bfX$-algebra are topological Azumaya algebras
 over $X$ as studied in \cite{antieau_period-index_2014}.
	\end{example}
	
	\begin{example}\label{EX:quasi-coh-sheaves}
 An Azumaya algebra $A$ of degree $n$ over a scheme $X$ is a sheaf of $\calO_X$-algebras that is locally, in
 the \'etale topology, isomorphic as an $\sh O_X$-algebra to $\Mat_{n \times n}(\sh O_X)$,
 \cite[Para.~1.2]{grothendieck_groupe_1968-1}
	\end{example}
	
	\begin{example}\label{EX:quasi-coh-sheaves-II}
 Let $R$ be a ring. An Azumaya $R$-algebra of degree $n$ is an $R$-algebra $A$ for which there exists a
 faithfully flat \'etale $R$-algebra $R'$ such that $A\otimes_RR'\cong \nMat{R'}{n\times n}$ as
 $R'$-algebras. This is equivalent to the definition of Example~\ref{EX:quasi-coh-sheaves} in the case where
 $X = \Spec R$ by \cite[Th.~5.1, Cor.~5.2]{grothendieck_groupe_1968-1}. 
 Consult \cite[III.\S5]{knus_quadratic_1991} for other equivalent
 definitions and cf.~Remark~\ref{RM:non-constant-rank}.
	\end{example}

\section{Rings with Involution}

\label{sec:ringsWithInvolution}

In this section we collect a number of results regarding involutions of rings that will be needed later in the paper. The
main result is Theorem \ref{thm:local-involution}, which gives the structure of those rings with involution $(R,\lambda)$
for which the fixed ring of $\lambda$ is local. It is shown that in this case, $R$ is a local ring in its own right, or
$R$ is a quadratic \'etale algebra over the fixed ring of $\lambda$. In particular, the ring $R$ is semilocal.

Throughout, involutions will be written exponentially and the Jacobson radical of a ring $R$ will be denoted $\Jac (R)$.

\subsection{Quadratic \'Etale Algebras}

\begin{definition} \label{def:etale_rank_n} Let $S$ be a ring. A commutative $S$-algebra $R$ is said to be
 \textit{finite \'etale of rank $n$} if $R$ is a locally free $S$-module of rank $n$, and the multiplication map
 $\mu:R\tensor_SR \to R$ may be split as a morphism of $R\tensor_SR$-modules, where $R$ is regarded as an
 $R\tensor_SR$-algebra via $\mu$.
 Finite \'etale $S$-algebras of rank $2$ will be called \emph{quadratic} \'etale algebras.
\end{definition}

\begin{remark}
 One common definition of \'etale for commutative $S$-algebras is that $R$ should be flat over $S$, 
 of finite presentation
 as an $S$-algebra, and unramified in 
 the sense that $\Omega_{R/S}$, the module of K\"ahler differentials, vanishes. 
 This is the definition in \cite[Sec.~17.6]{grothendieck_elements_1967} in the
 affine case.
 Our finite \'etale algebras of rank $n$ are precisely the \'etale algebras
 that are locally free of rank $n$.

Indeed, if $R$ is locally free of rank $n$ over $S$, then 
$R$ is also finitely presented and flat as an $S$-module
\cite[\href{https://stacks.math.columbia.edu/tag/00NX}{Tag 00NX}]{de_jong_stacks_2017}, and hence 
of finite presentation as an $R$-algebra \cite[Prop.~1.4.7]{grothendieck_elements_1964}.
Furthermore, $\mu:R\otimes_SR\to R$ admits a splitting $\psi$ if and only if the $R\otimes_SR$-ideal $\ker \mu$ is
 generated by an idempotent, namely $1-\psi(1_R)$. For finitely generated $S$-algebras $R$, the existence
 of such an idempotent is equivalent to saying $R$ is unramified over $S$ by \cite[Tag 02FL]{de_jong_stacks_2017}.
\end{remark}

\begin{example}\label{EX:quadratic-etale-alg}
 Let $f\in S[x]$ be a monic polynomial of degree $n$. It is well known that $S[x]/(f)$ is a finite \'etale $S$-algebra of
 rank $n$ if and only if the discriminant of $f$ is invertible in $S$. In particular, $S[x]/(x^2+\alpha x+\beta)$ is a
 quadratic \'etale $S$-algebra if and only if $\alpha^2-4\beta\in\units{S}$.
\end{example}

Every quadratic \'etale $S$-algebra $R$ admits a canonical $S$-linear involution $\lambda$ given by
$r^\lambda=\mathrm{Tr}_{R/S}(r)-r$,
see \cite[I.\S1.3.6]{knus_quadratic_1991}. 
The fixed ring of $\lambda$ is $S$ and $\lambda$ is the only $S$-automorphism of $R$
with this property. Moreover, when $S$ is connected, it is the only non-trivial $S$-automorphism of $R$.

\begin{proposition}

\label{PR:Hilbert-ninety} Let $S$ be a local ring with maximal ideal $\frakm$ and residue field $k$, 
let $R$ be a quadratic
 \'etale $S$-algebra, and let $\lambda$ be the unique non-trivial $S$-automorphism of $R$. Then:
 \begin{enumerate}[label=(\roman*)] 
\item \label{item:LM:Hilbert-ninety:Jac} $\Jac(R)=R\frakm$
\item \label{item:LM:Hilbert-ninety:structure} $\quo{R}:=R/R\frakm$ is either a separable
 quadratic field extension of $k$, or $\quo{R}\iso k\times k$. The automorphism that $\lambda$ induces on $\quo{R}$ is
 the unique non-trivial $k$-automorphism of $\quo{R}$.
\item \label{item:LM:Hilbert-ninety:norm}(``Hilbert 90'') For every $r\in \units{R}$ with $r^\lambda r=1$, there exists
$a\in \units{R}$ such that $r=a^{-1}a^\lambda$.
\end{enumerate}
\end{proposition}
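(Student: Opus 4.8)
I would prove (i) and (ii) together by reducing modulo $\frakm$, and then deduce (iii) from an explicit resolvent together with the same reduction. The only delicate point will be a small case distinction at the end of (iii).

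\textbf{Parts (i) and (ii).} Since $R$ is locally free of rank $2$ over the local ring $S$ it is free of rank $2$, hence a module-finite, and so integral, $S$-algebra. Because $R$ is integral over $S$, the contraction of any maximal ideal $\mathfrak M$ of $R$ is a maximal ideal of $S$, necessarily $\frakm$, so $\frakm R\subseteq\mathfrak M$; hence $\frakm R\subseteq\Jac(R)$ and, all maximal ideals of $R$ containing $\frakm R$, one has $\Jac(R)/\frakm R=\Jac(R/\frakm R)$. Now $\quo R:=R/\frakm R=R\tensor_S k$ is a quadratic \'etale $k$-algebra, \'etaleness in the sense of Definition~\ref{def:etale_rank_n} being stable under base change, and an \'etale algebra of rank $2$ over a field is either a separable quadratic field extension of $k$ or isomorphic to $k\times k$ (choose a generator and apply Example~\ref{EX:quadratic-etale-alg}). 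In either case $\Jac(\quo R)=0$, which gives (i) and the structural assertion of (ii). For the claim about the induced automorphism, recall that $\lambda$ is the canonical involution $r\mapsto\mathrm{Tr}_{R/S}(r)-r$; since $\mathrm{Tr}_{R/S}$ commutes with the base change $S\to k$, the automorphism $\quo\lambda$ induced on $\quo R$ is $\quo r\mapsto\mathrm{Tr}_{\quo R/k}(\quo r)-\quo r$, the canonical involution of $\quo R$, which is the unique nontrivial $k$-automorphism: Galois conjugation when $\quo R$ is a field, and the coordinate swap when $\quo R\cong k\times k$.

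\textbf{Part (iii).} Given $r\in\units R$ with $r^\lambda r=1$, set $a_c:=c+r^\lambda c^\lambda$ for $c\in R$. A direct computation using $r r^\lambda=1$ gives $a_c^\lambda=c^\lambda+rc=r(c+r^\lambda c^\lambda)=r\,a_c$ for every $c$, so as soon as $a_c\in\units R$ for some $c$, the element $a:=a_c$ satisfies $r=a^{-1}a^\lambda$. Thus it remains to produce one $c$ with $a_c$ invertible. Since $R$ is module-finite over the local ring $S$ and $\Jac(R)=\frakm R$ by (i), an element of $R$ is a unit precisely when its image in $\quo R$ is a unit, so it suffices to choose $\quo c\in\quo R$ with $\quo{a_c}=\quo c+\quo{r^\lambda}\,\quo{c^\lambda}$ a unit of $\quo R$ and then lift $\quo c$ arbitrarily to $R$. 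If $\quo R\cong k\times k$ with $\quo\lambda$ the swap, write $\quo r=(u,u^{-1})$ with $u\in\units k$ and take $\quo c=(1,0)$, so that $\quo{a_c}=(1,u)\in\units{(k\times k)}$. If $\quo R$ is a separable quadratic field extension of $k$, then the $k$-linear map $\quo c\mapsto\quo c+\quo{r^\lambda}\,\quo{c^\lambda}$ is not identically zero: otherwise evaluation at $1$ forces $\quo r=-1$, and the map becomes $\quo c\mapsto\quo c-\quo{c^\lambda}$, which is nonzero because $\quo\lambda\neq\id$; any $\quo c$ with nonzero image then has invertible image, $\quo R$ being a field.

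\textbf{Main obstacle.} The routine ingredients are base-change stability of \'etaleness and of the trace, the identity $a_c^\lambda=r\,a_c$, and lifting units along $R\to R/\frakm R$. The one point requiring care is the choice of $\quo c$ in (iii): the split case $\quo R\cong k\times k$ must be separated from the field case, since over $k\times k$ a nonzero element need not be a unit, so one has to check that the produced element is genuinely invertible rather than merely nonzero. (Alternatively, this last step is exactly classical Hilbert 90 for quadratic \'etale algebras over a field, which one could simply cite.)
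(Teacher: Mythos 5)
Your proof is correct and follows essentially the same approach as the paper's: reduce modulo $\frakm$ to get (i) and (ii) from the structure of quadratic \'etale algebras over a field, then prove (iii) by lifting a suitable ``resolvent'' element from $\quo R$. Your resolvent $a_c = c + r^\lambda c^\lambda$ and the paper's $t = x + x^\lambda r$ are related by $a_{x^\lambda} = t^\lambda$, so the computations are interchangeable, and your case analysis over $\quo R$ mirrors the paper's argument exactly, including the observation that failure in the field case would force $\quo r = -1$ and $\quo\lambda = \id$. The only minor difference is cosmetic: for the inclusion $\frakm R \subseteq \Jac(R)$ you give a short self-contained argument via integrality of $R$ over $S$, whereas the paper cites Reiner's theorem on finite modules over semilocal rings, and you invoke the paper's own Example~\ref{EX:quadratic-etale-alg} for the classification of $\quo R$ where the paper cites DeMeyer--Ingraham. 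Both are fine.
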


\begin{proof} It is clear that $\quo{R}$ is a quadratic \'etale $k$-algebra, and hence a product of separable field
extensions of $k$ \cite[Th.~II.2.5]{demeyer_separable_1971}. This implies the first assertion of
\ref{item:LM:Hilbert-ninety:structure} as well as $\Jac(R)\subseteq R\frakm$. The inclusion $R\frakm\subseteq \Jac(R)$
holds because $R$ is a finite $S$-module \cite[Th.~6.15]{reiner_maximal_1975-1}, so we have proved
\ref{item:LM:Hilbert-ninety:Jac}. The last assertion of (ii) follows from the fact that $\lambda$ is given by $x\mapsto
\mathrm{Tr}_{R/S}(x)-x$.

Let $\quo{r}$ denote the image of $r\in R$ in $\quo{R}$. To prove \ref{item:LM:Hilbert-ninety:norm}, we first claim
that there is $\quo{x}\in \quo{R}$ with $\quo{x}+\quo{x}^{\lambda} \quo{r}\in\units{\quo{R}}$. This is easy to see if
$\quo{R}=k\times k$. Otherwise, $\quo{R}$ is a field and such $\quo{x}$ exists unless $\quo{x}=-\quo{x}^{\lambda}
\quo{r}$ for all $\quo{x}\in \quo{R}$. The latter forces $\quo{r}=-1$ (take $\quo{x}=1$) and
$\lambda_{\quo{R}}=\id_{\quo{R}}$, which is impossible by \ref{item:LM:Hilbert-ninety:structure}, so $\quo{x}$ exists.

Let $x\in R$ be a lift of $\quo{x}$. Then $t:=x+x^\lambda r$ is a lift of $\quo{x}+\quo{x}^{{\lambda}} \quo{r}$, which
implies $t\in\units{R}$ by~\ref{item:LM:Hilbert-ninety:Jac}. Since $r^\lambda =r^{-1}$, it is the case that $t^{\lambda} r=t$, and
so $r=a^\lambda a^{-1}$ with $a=t^{-1}$.
\end{proof}

\subsection{Quadratic \'{E}tale Algebras in Topoi}
\label{subsec:etale-algebras}

Our definition of a ``quadratic \'etale algebra'' extends directly to the case where $S$ is a local ring object in a topos $\bfX$.
\begin{definition}
 Given a ring object $S$ in a topos $\mathbf X$, we say an $S$-algebra $R$ is a \textit{ finite \'etale $S$-algebra
 of rank $n$} if $R$ is a locally free $S$-module of rank $n$ such that the multiplication map
 $\mu: R \tensor_S R \to R$ may be split as a morphism of $R \tensor_S R$-algebras. Finite \'etale $S$-algebras of rank $2$
 will be called \textit{quadratic} \'etale algebras.
\end{definition}
 
We alert the reader that if $R$ is a quadratic \'etale $S$-algebra, then it is not true in general that $R(U)$ is a
quadratic \'etale $S(U)$-algebra for all objects $U$ of $\bfX$. In fact $R(U)$ may not be locally free of rank $2$ over
$S(U)$. Rather, one can always find a covering $V\to U$ such that $R(V)$ is a quadratic \'etale $S(V)$-algebra; for
instance, one may take any
$V\to U$ such that $R_V$ is a free $S_V$-module of rank $2$. We further note that in general there is no covering
$U\to *$ such that $R_U\iso S_U\times S_U$ as $S_U$-algebras, e.g.\ 
let $\bfX=\mathbf{pt}$ (the topos of sets) and take $S$ and $R$ to be $\Q$ and $\Q[\sqrt{2}]$ respectively. While
$\mu: R\otimes_SR\to R$ is easily seen to be split, there is no covering $U\to *$ such that
$R_U\cong S_U\times S_U$.
 
 As one might expect, being a finite \'etale algebra of rank $n$ is a local property in that it may be tested on a covering.
 
\begin{lem}\label{LM:etale-is-local-property} Let $S$ be a ring in $\bfX$, let $R$ be an $S$-algebra and let $U\to *$ be
a covering. Then $R$ is a finite \'etale $S$-algebra of rank $n$ if and only if $R_U$ is a finite \'etale $S_U$-algebra of rank $n$
in $\bfX/U$.
\end{lem}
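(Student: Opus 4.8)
The plan is to prove the two implications separately; the forward one is formal. For the \emph{``only if''} direction, note that the localization functor $(-)_U\colon\bfX\to\bfX/U$ is the inverse image of the \'etale geometric morphism $\bfX/U\to\bfX$ and in addition admits a left adjoint $p_!$; it therefore preserves all finite limits and all colimits, and---by Frobenius reciprocity for $p_!$---internal homs as well. In particular it carries $R\tensor_S R$ to $R_U\tensor_{S_U}R_U$, carries $\mu\colon R\tensor_S R\to R$ to the multiplication $\mu_U$ of $R_U$, carries a covering on which $R\iso S^n$ (as $S$-modules) to a covering of $U$ on which $R_U\iso S_U^n$, and carries a splitting of $\mu$ to a splitting of $\mu_U$; hence $R$ being \'etale of rank $n$ over $S$ implies $R_U$ is \'etale of rank $n$ over $S_U$.

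For the \emph{``if''} direction, assume $R_U$ is \'etale of rank $n$ over $S_U$. That $R$ is locally free of rank $n$ over $S$ is immediate: a covering of $U$ over which $R_U\iso S_U^n$, composed with $U\to *$, is a covering of $*$ over which $R\iso S^n$. The substance is to descend the splitting of $\mu$, for which I would use the characterization recorded in the remark following Definition~\ref{def:etale_rank_n}: $\mu$ splits if and only if the ideal $I:=\ker\mu$ of $B:=R\tensor_S R$ is generated by an idempotent---and that generating idempotent, when it exists, is unique, since $Bf=Bg$ with $f,g$ idempotent gives $f=bg$ locally, hence $fg=bg^2=bg=f$ (an equation of sections, so valid globally) and likewise $gf=g$, whence $f=fg=gf=g$. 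Being equational, this holds internally in $\bfX$ and in each of its slices. Now, by hypothesis there is an idempotent $e\in B(U)$ with $\ker\mu_U=B_U(1-e)$. Writing $d_0,d_1\colon U\times U\to U$ for the two projections, the pullback of the subobject $\ker\mu_U\subseteq B_U$ along either $d_i$ is the subobject $(\ker\mu)_{U\times U}\subseteq B_{U\times U}$, so $B_{U\times U}\bigl(1-d_0^{*}e\bigr)=(\ker\mu)_{U\times U}=B_{U\times U}\bigl(1-d_1^{*}e\bigr)$, and uniqueness of the idempotent generator forces $d_0^{*}e=d_1^{*}e$ in $B(U\times U)$. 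Thus $e$ satisfies the gluing condition for the covering $U\to*$, so, $B$ being a sheaf, it descends to a unique $\tilde e\in\Gamma(B)$ with $\tilde e|_U=e$; one checks at once that $\tilde e$ is idempotent and that $B(1-\tilde e)=\ker\mu$ (both sides are subobjects of $B$ that agree after restriction to the covering $U$). Hence $\mu$ splits, and $R$ is an \'etale $S$-algebra of rank $n$.

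Equivalently, the descent step can be packaged as follows: the object $\mathcal S$ of $B$-module sections of $\mu$---a subobject of the internal $\Hom$ of $B$-modules $\sHom_{B}(R,B)$, hence preserved by $(-)_U$---is subterminal by the uniqueness noted above; the hypothesis gives $\mathcal S(U)\neq\emptyset$, so $\mathcal S\to*$ is an epimorphism (epimorphisms being local), hence an isomorphism, so $\mathcal S$ has a global section. I expect the only mildly delicate point, in either formulation, to be the bookkeeping that $(-)_U$ commutes with the constructions involved---kernels, images, the tensor product $\tensor_S$, and in the second version the internal $\Hom$ of $B$-modules---which is exactly what the two-sided adjointness of $(-)_U$ provides.
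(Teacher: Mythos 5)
Your proof is correct and follows essentially the same strategy as the paper's: show that a splitting of $\mu$ (equivalently, the idempotent generating $\ker\mu$) is unique, and descend from $U$ to $*$ using the sheaf condition. The only difference is presentational—the paper descends the splitting morphism $\psi$ directly by showing $\pi_1^*\psi = \pi_2^*\psi$, whereas you descend the idempotent $e$ and reconstruct the splitting—and you also spell out the uniqueness of the generating idempotent, which the paper states without proof.
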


\begin{proof} Write $M=R\tensor_SR$. The only non-trivial thing to check is that if the multiplication map $\mu_U:
M_U\to R_U$ admits a splitting $\psi:R_U \to M_U$ in $\bfX/U$, then so does $\mu:M\to R$ in $\bfX$. Let
$\pi_1,\pi_2:U\times U\to U$ denote the first and second projections, and let $\pi_i^*\psi:R_{U\times U}\to M_{U\times
U}$ denote the pullback of $\psi$ along $\pi_i$. We claim that $\mu_{U\times U}:M_{U\times U}\to R_{U\times U}$ admits
at most one splitting. Provided this holds, we must have $\pi_1^*\psi=\pi_2^*\psi$ and so $\psi$ descends to a map
$\psi_0:R\to M$ splitting $\mu$ as required.

The claim can be verified on the level of sections, namely, it is enough to check that any ring surjection $\mu:A\to B$
admits at most one $A$-linear splitting. If $\psi$ is such a splitting and $e=\psi(1_B)$, then $\psi(1_B\cdot
\alpha)=\alpha e$ for all $\alpha\in A$, so $\psi$ is determined by the idempotent $e$. It is easy to check that
$(1-e)A=\ker \mu$ and that $e$ is the only idempotent with this property, hence $e$ is determined by $\mu$.
\end{proof}

\begin{example}\label{EX:quad-etale-morph-of-sch}
Let $\pi:X\to Y$ be a quadratic \'etale morphism of schemes. That is, $\pi$ is affine and $Y$ can be covered by
open affine subschemes $\{U_i\}_i$ such that the ring map corresponding to $\pi:\pi^{-1}(U_i)\to U_i$ is quadratic
\'etale for all $i$. Then $\pi_*\calO_X$ is a quadratic \'etale $\calO_Y$-algebra in both $\Sh(Y_\et)$ and
$\Sh(Y_{\Zar})$; this can be checked using Lemma~\ref{LM:etale-is-local-property}.
 \end{example}
\begin{example} Let $\pi:X\to Y$ be a double covering of topological spaces and let $\calC(X,\C)$ and $\calC(Y,\C)$ denote the
sheaves of continuous $\C$-values functions on $X$ and $Y$, respectively. Then $\pi_*\calC(X,\C)$ is a quadratic \'etale
$\calC(Y,\C)$-algebra in $\Sh(Y)$; again this can be checked with Lemma~\ref{LM:etale-is-local-property}.
\end{example}

\subsection{Rings with Involution}

Throughout, $R$ is an ordinary commutative ring, $\lambda:R\to R$ is an involution, and $S$ is the fixed ring of
$\lambda$. The purpose of this section is twofold. First, we show that the locus of primes $\frakp\in \Spec S$ such
that $R_\frakp$ is a quadratic \'etale over $S_\frakp$ is open in $\Spec S$. Second, we study the structure of $R$ when
$S$ is local, showing, in particular, that $R$ is quadratic \'etale over $S$, or $R$ is local.

There are two pitfalls in the study of $R$ over $S$. First of all, $R$ may not be finite over $S$.
\begin{example}\label{EX:pathological-rings-with-inv}
 Let $I$ be any set, let $R$ be the commutative $\C$-algebra freely generated by $\{x_i\}_{i\in I}$, and
 let $\lambda:R\to R$ be the $\C$-linear involution sending each $x_i$ to $-x_i$. Then the fixed ring of
 $\lambda$ is $S=\C[x_ix_j\where i,j\in I]$. 
 Let $\frakm=\langle x_ix_j\where i,j\in I\rangle$.
 Since $R/\frakm R\iso {\C[x_i\where i\in I]}/\langle x_ix_j\where i,j\in I\rangle$ and $S/\frakm\iso \C$, it follows that
 $R$ cannot be generated by fewer than $|I|$ elements as an $S$-algebra. Thus, when $I$ is infinite, $R$ is not finite
 over $S$. The same applies to the $S/\frakm$-algebra $R/\frakm R$, even though $S/\frakm$ is noetherian. We further
 note that when $1<|I|<\aleph_0$, the ring $R$ is a smooth affine $\C$-algebra, but $S$ is singular.
\end{example}

Second, the formation of fixed rings may not commute with extension of scalars. That is, if $S'$ is a commutative
$S$-algebra, then $S'$ need not be the subring of $\lambda$-fixed elements in $R':=R\otimes_SS'$. In fact, $S'\to R'$ is
{\it a priori} not one-to-one. Nevertheless, $S'\to R'$ restricts to an isomorphism
$S'\to \{r\in R'\suchthat r=r^\lambda\}$ if $S'$ is flat over $S$, or $2\in \units{S}$. To see this, consider the exact
sequence of $S$-modules $0\to S\to R\xrightarrow{\id_S-\lambda} R$. The statement amounts to showing that it remains
exact after tensoring with $S'$. This is clear if $S'$ is flat, and if $2\in \units{S}$, then it follows because
$S\to R$ is split by $r\mapsto \frac{1}{2}(r+r^\lambda)$.

\begin{remark}
 Voight \cite[Corollary~3.2]{voight_2011_standard_involution} showed that if $R$ is locally free of rank at least $3$
 over $S$ and $2$ is not a zerodivisor in $S$, then $R$ decomposes as $S\oplus M$ where $M$ is an ideal of $R$ such that
 $M^2=0$ and $\lambda|_M=-\id_M$. Voight calls such (commutative) rings with involution \emph{exceptional}. This shows
 that if $\lambda:R\to R$ is not exceptional then either $R$ is not locally free over $S$, or $\rank_S R\leq 2$.
 The case $R= \C[x_i\where i\in I]/\langle x_ix_j\where i,j\in I\rangle$ and $S=\C$ featuring in
 Example~\ref{EX:pathological-rings-with-inv} is an example of an exceptional ring with involution, and essentially the
 only one if $S=\C$.
	\end{remark}

 Having warned the reader of these pitfalls, we return to the main topic of the section, which is the study of
 $R$ over $S$.

	\begin{lemma}\label{LM:etale-criterion} 
 Assume that there exists $r\in R$ with $r-r^\lambda\in\units{R}$. Then $R$ is a quadratic \'etale $S$-algebra.
\end{lemma}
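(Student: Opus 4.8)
The plan is to show that $R$ is a quadratic \'etale $S$-algebra by producing, from the hypothesis, a concrete presentation of $R$ witnessing both the rank-$2$ condition and the splitting of multiplication. First I would normalize the given element: let $u = r - r^\lambda \in \units{R}$, so $u^\lambda = -u$. The idea is that $u$ should generate $R$ over $S$ as a free module with basis $\{1, u\}$. To see this, note that for any $a \in R$ we can write
\[
a = \tfrac{1}{2}\bigl(a + a^\lambda\bigr) + \tfrac{1}{2}\bigl(a - a^\lambda\bigr),
\]
but $2$ need not be invertible, so instead I would use $u$ as the separating device: the element $a - a^\lambda$ is $\lambda$-antisymmetric, and since $u$ is a unit with $u^\lambda = -u$, the quotient $(a - a^\lambda)u^{-1}$ is $\lambda$-symmetric, hence lies in $S$. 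Thus every $a \in R$ has the form $a = s_0 + s_1 u$ with $s_1 = (a - a^\lambda)u^{-1} \in S$ and $s_0 = a - s_1 u$; one checks $s_0^\lambda = s_0$, so $s_0 \in S$. This gives surjectivity of $S \oplus S \to R$, $(s_0, s_1) \mapsto s_0 + s_1 u$. For injectivity (freeness), if $s_0 + s_1 u = 0$ then applying $\lambda$ gives $s_0 - s_1 u = 0$, so $2 s_1 u = 0$ and $2 s_0 = 0$; multiplying by $u^{-1}$, $s_1 \cdot 2 = 0$; I then need $2 s_1 = 0 \Rightarrow s_1 = 0$ and similarly for $s_0$ — but this is false if $2$ is a zero divisor in $S$, so the basis claim as stated needs the subtler observation that $u - u^\lambda = 2u$ is a unit (since $u$ is a unit and \dots no, $2$ need not be a unit). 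The correct move: from $u^\lambda = -u$ and $u \in \units{R}$ one gets $2u = u - u^\lambda \in \units{R}$ only if $2 \in \units{R}$, which is not assumed — so instead I would directly show $\{1, u\}$ is an $S$-basis by a local argument, or observe that $u \cdot u^\lambda = -u^2$ and $u + u^\lambda = 0$ mean $u$ satisfies $T^2 - (u u^\lambda \text{ with sign})$, i.e. $u^2 = -u u^\lambda \in S$ and $u^2 \in \units{S}$ since $u \in \units{R}$ and $u^2 = u \cdot u$ with... $u^2$ is $\lambda$-fixed hence in $S$, and invertible in $R$; its inverse $u^{-2}$ is also $\lambda$-fixed, hence in $S$, so $u^2 \in \units{S}$.

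With $\beta := u^2 \in \units{S}$ in hand, I would identify $R$ with $S[T]/(T^2 - \beta)$ via $T \mapsto u$. Surjectivity of $S[T] \to R$ follows from the decomposition $a = s_0 + s_1 u$ above. For the kernel: the map factors through $S[T]/(T^2 - \beta)$, which is free of rank $2$ over $S$ with basis $\{1, T\}$, and the induced map $S[T]/(T^2 - \beta) \to R$ sends this basis to $\{1, u\}$. To see this is an isomorphism it suffices to see $\{1, u\}$ is $S$-linearly independent; if $s_0 + s_1 u = 0$, multiply by $u$: $s_0 u + s_1 \beta = 0$, so $s_0 u = -s_1 \beta \in S$, hence $s_0 u$ is $\lambda$-fixed, i.e. $s_0 u = (s_0 u)^\lambda = -s_0 u$, giving $2 s_0 u = 0$; multiply by $u^{-1}$: $2 s_0 = 0$. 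Hmm, still the $2$-torsion issue. The clean fix: from $s_0 + s_1 u = 0$ and $s_0 - s_1 u = 0$ (apply $\lambda$), subtract to get $2 s_1 u = 0$, add to get $2 s_0 = 0$; but also from the original, $s_0 = -s_1 u$, so $s_0^2 = s_1^2 \beta$, and $s_0 \in S$, $s_0 = -s_1 u \notin S$ unless $s_1 = 0$ — more precisely $-s_1 u$ is $\lambda$-antisymmetric while $s_0$ is symmetric, so $s_0 = -s_0$, i.e. $2 s_0 = 0$ again. I think the genuinely correct argument is: the composite $S[T]/(T^2 - \beta) \to R \to S[T]/(T^2-\beta)$, where the second map is the retraction $a \mapsto ((a + a^\lambda) + (a - a^\lambda)u^{-1} \cdot T)/2$... no. Let me instead just say: define $\varphi: R \to S \oplus S$ by $\varphi(a) = (s_0, s_1)$ using $s_1 = (a - a^\lambda)u^{-1}$, $s_0 = a - s_1 u$; this is $S$-linear and is a two-sided inverse to $(s_0, s_1) \mapsto s_0 + s_1 u$ because plugging in $a = s_0 + s_1 u$ (with $s_i \in S$) recovers $(a - a^\lambda)u^{-1} = (s_1 u - s_1 u^\lambda)u^{-1} = 2 s_1 u u^{-1}/\!\dots$ — and here $u^{-1}(u - u^\lambda) = u^{-1}(2u) = 2$, so $s_1 u u^{-1} \cdot$ ugh. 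Given these delicacies, \textbf{the main obstacle} is precisely handling the characteristic: the naive $1/2$-arguments fail, and one must extract the unit $\beta = u^2 \in \units{S}$ and verify the presentation $R \cong S[T]/(T^2 - \beta)$ directly, without dividing by $2$. Once this presentation is established, Example~\ref{EX:quadratic-etale-alg} immediately gives that $R$ is quadratic \'etale over $S$, since the discriminant of $T^2 - \beta$ is $4\beta$... which again is a unit only if $2$ is — so in fact one must instead observe that $\mu: R \tensor_S R \to R$ splits because $\ker \mu$ is generated by the idempotent $\tfrac{1}{2}(1 \tensor 1 - \text{something})$, and here we genuinely need an idempotent: $e = \tfrac{1}{2}\bigl(1 \tensor 1 - \beta^{-1}(u \tensor u)\bigr)$, where the factor $\tfrac{1}{2}$ is legitimate because... it is not, in general.

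Given the recurring obstruction, the realistic plan is: (1) set $u = r - r^\lambda \in \units{R}$, note $u^\lambda = -u$ and $\beta := u^2 \in \units{S}$; (2) prove $R = S \oplus Su$ as $S$-modules by showing the decomposition $a = s_0 + s_1 u$ exists (using that $(a-a^\lambda)u^{-1}$ is $\lambda$-fixed) and is unique (using that a $\lambda$-symmetric element equal to a $\lambda$-antisymmetric element $s_0 = -s_1 u$ forces, after multiplying the relation $s_0 = -s_1u$ by $u$ and using $s_0 u + s_1 \beta = 0$ with both $s_0 u, s_1\beta$ comparisons under $\lambda$, that $s_1 = 0$ — the honest verification of uniqueness is where I'd spend care, possibly invoking that $u$ is a non-zero-divisor to cancel); (3) conclude $R \cong S[T]/(T^2 - \beta)$; (4) split $\mu: R \tensor_S R \to R$ by exhibiting the idempotent generator of $\ker\mu$: since $u \tensor 1 - 1 \tensor u$ lies in $\ker \mu$ and $(u\tensor 1 - 1 \tensor u)(u \tensor 1 + 1 \tensor u) = u^2 \tensor 1 - 1 \tensor u^2 = \beta - \beta = 0$ while $(u \tensor 1 - 1 \tensor u)^2 = 2\beta - 2(u \tensor u) = -2(u \tensor u - \beta)$, the element $e = \tfrac{1}{2}(1 - \beta^{-1} (u \tensor u))$ is idempotent \emph{when $2 \in \units{S}$}; in the stated generality of the paper, where $2$ is assumed invertible throughout the relevant sections (the excerpt notes ``we make this assumption''), this is fine, and then $1 - e$ generates $\ker \mu$, giving the splitting $\psi(x) = (1-e) \cdot (x \tensor 1)$. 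Thus $R$ is quadratic \'etale over $S$ by definition. \textbf{The hard part} is step (2), the freeness/uniqueness of the basis $\{1, u\}$ over $S$; everything downstream is then a short computation, and the \'etale splitting is immediate from the idempotent $1 - e$ once $2$ is inverted.
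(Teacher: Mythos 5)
The proposal has a genuine gap right at the start of the claimed decomposition, and the fallback assumption the proposal leans on is not available here.

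The decomposition $a = s_0 + s_1 u$ with $s_0 \in S$ is false. You set $s_1 = (a - a^\lambda)u^{-1}$ (correct, this is $\lambda$-fixed since $u^\lambda = -u$), and $s_0 = a - s_1 u$. But $s_1 u = a - a^\lambda$, so $s_0 = a - (a - a^\lambda) = a^\lambda$, which is \emph{not} $\lambda$-fixed unless $a$ already was. The claimed check ``$s_0^\lambda = s_0$'' fails: $(a^\lambda)^\lambda = a \ne a^\lambda$ in general. So you have not even shown $R = S + Su$, and in fact $\{1,u\}$ is \emph{not} an $S$-basis of $R$ without inverting $2$: the change of basis from $\{1, r\}$ to $\{1, u = 2r - t_r\}$ has determinant $2$. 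Concretely, with $S = \Z[1/3]$ and $R = \Z[\omega, 1/3]$ ($\omega$ a primitive cube root of unity, $\lambda$ complex conjugation, $r = \omega$), one has $u = \omega - \omega^2 = 1 + 2\omega \in \units{R}$, yet $S + Su \ne R$ since $\omega \notin S + Su$. This example also shows why the final fallback, invoking that $2$ is assumed invertible, is not available: that remark in the introduction concerns only the expository overview, and the body of Section~\ref{sec:ringsWithInvolution}, including this lemma and Corollary~\ref{CR:etale-at-a-point} and Proposition~\ref{PR:ramification-in-schemes} that depend on it, makes no such assumption.

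The missing idea is the one the paper uses to manufacture a surrogate for $\tfrac{1}{2}$ out of the hypothesis: set $a := u^{-1}r$, so that $a + a^\lambda = u^{-1}r - u^{-1}r^\lambda = u^{-1}u = 1$. Then for any $b \in R$, the element $b' := b - a(b + b^\lambda)$ satisfies $(b')^\lambda = -b'$, hence $u^{-1}b' \in S$, giving $b = a(b + b^\lambda) + u \cdot (u^{-1}b') \in aS + uS$. Since $a = u^{-1}r$ can be rewritten as an $S$-combination of $1, r, r^2$ (using $u^{-2} = -n_u^{-1} \in S$ and $ru = r^2 - n_r$), this shows $R = S[r]$. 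One then verifies $S[r] \cong S[x]/(x^2 - t_r x + n_r)$, free of rank $2$ over $S$ (injectivity of $\alpha + \beta r \mapsto 0 \Rightarrow \alpha = \beta = 0$ follows directly from $r - r^\lambda \in \units{R}$ by applying $\lambda$ and subtracting), and étale because the discriminant is $t_r^2 - 4n_r = (r - r^\lambda)^2 \in \units{S}$. No division by $2$ occurs anywhere. Using $\{1, r\}$ rather than $\{1, u\}$ as the intended basis, and $a$ rather than $\tfrac12$ as the trace-splitter, is exactly what the naive approach fails to see.
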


\begin{proof}
 For $a\in R$, write $t_a=a+a^\lambda$ and $n_a=a^\lambda a$, and observe that $t_a,n_a\in S$ and $a^2-t_aa+n_a=0$.
	
	Suppose that $R=S[r]$. Since $r^2-t_rr+n_r=0$, it follows that $R=S+Sr$. Furthermore, if $\alpha r\in Sr$ for
 $\alpha\in S$, then $0=(\alpha r)-(\alpha r)^\lambda=\alpha(r-r^\lambda)$, so $\alpha=0$ because
 $r-r^\lambda\in\units{R}$. It follows that the $S$-algebra map $R \to S[x]/(x^2-t_rx+n_r)$ sending $x$ to $r$ is an isomorphism. Since
 $t_r^2-4n_r=(r-r^\lambda)^2\in\units{S}$, we conclude that $R$ is a quadratic \'etale $S$-algebra.
	
 We now show that $R=S[r]$. Write $u=r-r^\lambda=2r-t_r\in S[r]$ and $a=u^{-1}r$. One verifies that
 $a=n_u^{-1}(n_r-r^2)$, 
 so that $a \in S[r]$. Since $u^\lambda=-u$, we have
 $a+a^\lambda=u^{-1}r-u^{-1}r^\lambda=u^{-1}u=1$. Let $b\in R$ and $b'=b-a(b+b^\lambda)$. Straightforward computation
 shows that $b'^\lambda=-b'$, hence $(u^{-1}b')^\lambda=u^{-1}b'$ and $u^{-1}b'\in S$. It follows that
 $b'\in uS\subseteq S[r]$ and thus, $b=b'+a(b+b^\lambda)\in S[r]+aS=S[r]$.
\end{proof}

	\begin{lemma}\label{LM:converse-etale-crit}	
 Suppose that $S$ is local and $R$ is a quadratic \'etale $S$-algebra. Then there exists $r\in R$ such that
 $r-r^\lambda\in\units{R}$.
	\end{lemma}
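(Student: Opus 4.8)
The plan is to pass to the quotient of $R$ by its Jacobson radical, where everything becomes transparent. Let $\frakm$ denote the maximal ideal of $S$ and $k=S/\frakm$ its residue field. Since $R$ is a quadratic \'etale $S$-algebra and $\lambda$ is an $S$-algebra automorphism of $R$ with fixed ring exactly $S$, the characterization of the canonical involution recalled just before Proposition~\ref{PR:Hilbert-ninety} identifies $\lambda$ with the canonical involution, so Proposition~\ref{PR:Hilbert-ninety} applies to $(R,\lambda)$. Because $\lambda$ fixes $S$ pointwise it preserves $R\frakm$, and by Proposition~\ref{PR:Hilbert-ninety}\ref{item:LM:Hilbert-ninety:Jac} we have $R\frakm=\Jac(R)$; hence $\lambda$ descends to a ring involution $\overline{\lambda}$ of $\overline{R}:=R/\Jac(R)$, and an element of $R$ is a unit if and only if its image in $\overline{R}$ is a unit. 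It therefore suffices to exhibit $\overline{r}\in\overline{R}$ with $\overline{r}-\overline{\lambda}(\overline{r})\in\units{\overline{R}}$: lifting $\overline{r}$ to an arbitrary $r\in R$, the element $r-r^\lambda$ reduces to $\overline{r}-\overline{\lambda}(\overline{r})$ and so is itself a unit of $R$.

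To produce $\overline{r}$ I would invoke Proposition~\ref{PR:Hilbert-ninety}\ref{item:LM:Hilbert-ninety:structure}: either $\overline{R}$ is a separable quadratic field extension $\ell$ of $k$, or $\overline{R}\cong k\times k$, and in both cases $\overline{\lambda}$ is the unique nontrivial $k$-automorphism of $\overline{R}$, so in particular $\overline{\lambda}\neq\id$. Choose $\overline{r}$ with $\overline{\lambda}(\overline{r})\neq\overline{r}$. If $\overline{R}=\ell$, then $\overline{r}-\overline{\lambda}(\overline{r})$ is a nonzero element of a field, hence a unit. If $\overline{R}\cong k\times k$ with $\overline{\lambda}$ the exchange automorphism, write $\overline{r}=(a,b)$ with $a\neq b$; then $\overline{r}-\overline{\lambda}(\overline{r})=(a-b,\,b-a)$, whose two coordinates are negatives of one another and hence both nonzero, so it is a unit of $k\times k$. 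This finishes the argument.

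I do not anticipate any genuine obstacle. The one point that needs a moment's care is the split case, where an element of $k\times k$ may be nonzero without being a unit; the fix is simply to note that the particular difference $\overline{r}-\overline{\lambda}(\overline{r})$ has the shape $(u,-u)$, which is a unit as soon as $u\neq 0$. As a slicker variant one can avoid the casework: over the local ring $S$ the algebra $R$ is free of rank $2$, and choosing $\omega\in R$ whose image generates $\overline{R}$ over $k$ gives, by Nakayama, $R=S\oplus S\omega$ with $\omega^2=t\omega-n$ and (using $\omega+\omega^\lambda\in S$ and $\lambda^2=\id$) $\omega^\lambda=t-\omega$ for suitable $t,n\in S$; thus $R\cong S[x]/(x^2-tx+n)$, and $r:=\omega$ satisfies $(r-r^\lambda)^2=(2\omega-t)^2=t^2-4n$, which is the discriminant of $R$ over $S$ and hence a unit by Example~\ref{EX:quadratic-etale-alg}, so $r-r^\lambda\in\units{R}$ directly.
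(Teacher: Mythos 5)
Your main argument is correct and is the same as the paper's: reduce modulo $\frakm R = \Jac(R)$ using Proposition~\ref{PR:Hilbert-ninety}\ref{item:LM:Hilbert-ninety:Jac}, then finish by invoking the structure result Proposition~\ref{PR:Hilbert-ninety}\ref{item:LM:Hilbert-ninety:structure}. You are right to flag the split case explicitly — $k\times k$ has nonzero nonunits, but the element $\overline r - \overline\lambda(\overline r)$ has the form $(u,-u)$ and is therefore a unit whenever $u\neq 0$ — this is precisely the detail the paper elides behind ``follows easily.'' The alternative you sketch at the end is a genuinely different and arguably cleaner route: presenting $R$ over the local ring $S$ as $S[x]/(x^2-tx+n)$ via Nakayama, identifying $(r-r^\lambda)^2$ with the discriminant $t^2-4n$, and citing Example~\ref{EX:quadratic-etale-alg}. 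That version avoids the residue-field casework entirely and shows directly why the lemma should be true, at the modest cost of verifying that the surjection $S[x]/(x^2-tx+n)\twoheadrightarrow R$ between free rank-$2$ modules is an isomorphism (standard over a local ring). Both proofs are sound.
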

	
	\begin{proof}
		Let $\frakm$ be the maximal ideal of $S$.
		By Proposition~\ref{PR:Hilbert-ninety}\ref{item:LM:Hilbert-ninety:Jac},
		we may replace $R$ with $R/\frakm R$.
		The claim then follows easily from Proposition~\ref{PR:Hilbert-ninety}\ref{item:LM:Hilbert-ninety:structure}.
	\end{proof}
	
	\begin{corollary}\label{CR:etale-at-a-point}
 The set of prime ideals $\frakp\in\Spec S$ such that $R_\frakp$ is a quadratic \'etale $S_\frakp$-algebra is
 open in $\Spec S$. Equivalently for every $\frakp\in\Spec S$ such that $R_\frakp$ is a quadratic \'etale
 $S_\frakp$-algebra, there exists $s\in S- \frakp$ such that $R_s$ is a quadratic \'etale $S_s$-algebra.
\end{corollary}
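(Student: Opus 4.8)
The plan is to prove the second, ``equivalent'', assertion---that every $\frakp\in\Spec S$ at which $R_\frakp$ is a quadratic \'etale $S_\frakp$-algebra admits $s\in S\sm\frakp$ with $R_s$ a quadratic \'etale $S_s$-algebra---and then to deduce openness of the locus formally. The whole argument is a routine ``spreading out'' of the element furnished by Lemma~\ref{LM:converse-etale-crit}, so I do not expect a genuine obstacle; the one point that needs care is the behaviour of the fixed ring of $\lambda$ under localization, which is what makes the lemmas of the previous subsection applicable to localized rings.

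First I would record the relevant bookkeeping: for any multiplicative subset $T\subseteq S$, the involution $\lambda$ extends uniquely to $T^{-1}R$ and the fixed ring of this extension is $T^{-1}S$. Indeed, if $x/t\in T^{-1}R$ is $\lambda$-fixed then $t'(x-x^\lambda)=0$ in $R$ for some $t'\in T$, and, since $t'\in S$, the element $t'x$ lies in $S$ and represents the same class as $x/t$; the reverse inclusion is clear. In particular $(R_\frakp)^\lambda=S_\frakp$ and $(R_s)^\lambda=S_s$, so that $(R_\frakp,\lambda|_{R_\frakp})$ and $(R_s,\lambda|_{R_s})$ fall under the standing conventions of this subsection, and Lemmas~\ref{LM:etale-criterion} and~\ref{LM:converse-etale-crit} apply to them.

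Now fix $\frakp$ as above. Since $S_\frakp$ is local and $R_\frakp$ is quadratic \'etale over it, Lemma~\ref{LM:converse-etale-crit} produces $\bar r\in R_\frakp$ with $\bar r-\bar r^\lambda\in\units{R_\frakp}$. Writing $\bar r=r/s_0$ with $r\in R$ and $s_0\in S\sm\frakp$, and using that $\lambda$ is $S$-linear, we get $\bar r-\bar r^\lambda=(r-r^\lambda)/s_0$, so already $u:=r-r^\lambda\in R$ is a unit in $R_\frakp$. Unwinding this: the inverse of $u$ in $R_\frakp$ has the form $v/s'$ with $v\in R$ and $s'\in S\sm\frakp$, whence $s''(uv-s')=0$ in $R$ for some $s''\in S\sm\frakp$, that is $u\,(s''v)=s's''$ in $R$. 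Put $s:=s's''\in S\sm\frakp$ and $w:=s''v$, so that $uw=s$ in $R$; then $s$ is invertible in $R_s$ and hence so is $u=r-r^\lambda$, with inverse $w/s$. Applying Lemma~\ref{LM:etale-criterion} to $(R_s,\lambda|_{R_s})$, whose fixed ring is $S_s$ by the previous paragraph, shows that $R_s$ is a quadratic \'etale $S_s$-algebra, as required.

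Finally, to obtain openness it suffices to observe that the distinguished open $D(s)\subseteq\Spec S$ is contained in the locus in question: for $\frakq\in D(s)$, transitivity of localization gives $R_\frakq=(R_s)_{\frakq S_s}$ and $S_\frakq=(S_s)_{\frakq S_s}$, and quadratic \'etaleness---being local freeness of rank $2$ together with a splitting of the multiplication map, both of which survive any base change---passes to this further localization. Thus every point of the locus has an open neighbourhood contained in it, so the locus is open.
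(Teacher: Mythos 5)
Your proof is correct and takes essentially the same route as the paper's: produce the element $r$ from Lemma~\ref{LM:converse-etale-crit}, spread out the condition $r-r^\lambda\in\units{R_\frakp}$ to a principal open $D(s)$, and conclude via Lemma~\ref{LM:etale-criterion}. The only difference worth noting is that you make explicit the check that the fixed ring of $\lambda$ commutes with localization at a multiplicative subset of $S$, which the paper's proof uses silently; that is a harmless and welcome bit of bookkeeping, but not a different argument.
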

 
\begin{proof}
 By Lemma~\ref{LM:converse-etale-crit}, there exists $r\in {R_\frakp}$ with $r-r^\lambda\in\units{R_\frakp}$. We can
 find $s_1\in S- \frakp$ and $a,b\in R$ such that $r=as_1^{-1}$ and $r-r^\lambda=bs_1^{-1}$ in $R_\frakp$.
 Since $b s_1^{-1}\in \units{R_\frakp}$, we can find $s_2\in S-\frakp$ such that $b s_1^{-1}$ is invertible in
 $R_{s_1s_2}$. Now take $s=s_1s_2$ and apply Lemma~\ref{LM:etale-criterion} to $R_{s}$ with $r=as_1^{-1}$.
\end{proof}

\begin{remark} Corollary \ref{CR:etale-at-a-point} is well known when $R$ is locally
free of finite rank over
$S$;
see, for instance, \cite[Tag 
\href{https://stacks.math.columbia.edu/tag/0C3J}{0C3J}]{de_jong_stacks_2017}.
 \end{remark}

	We momentarily consider an arbitrary finite group acting on $R$.

\begin{proposition}\label{PR:G-acts-on-R}
 Let $G$ be a finite group acting on a ring $R$ and let $S$ be the subring of 
 elements fixed under $G$. If $S$ is local
 then the maximal ideals of $R$ form a single $G$-orbit. In particular, $R$ is semilocal.
\end{proposition}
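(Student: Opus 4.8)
The plan is to reduce the statement to the classical behaviour of primes in a ring of invariants. The first step is to observe that $R$ is \emph{integral} over $S$: for any $r \in R$ the monic polynomial $\prod_{g \in G}(x - g\cdot r) \in R[x]$ has coefficients that are symmetric functions of the family $\{g\cdot r\}_{g \in G}$, hence are fixed by $G$ and therefore lie in $S$, and $r$ is visibly a root. Note that no finiteness hypothesis on $R$ is needed for this.

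Next I would extract the standard consequences of integrality. Since $S$ is local, $S \ne 0$, hence $R \ne 0$ and $R$ has a maximal ideal. If $\frakq$ is any maximal ideal of $R$, then $R/\frakq$ is a field which is integral over its subring $S/(\frakq \cap S)$; a domain over which a field is integral is itself a field, so $\frakq \cap S$ is the maximal ideal $\frakm$ of $S$. Thus \emph{every} maximal ideal of $R$ lies over $\frakm$.

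The heart of the argument is transitivity of the $G$-action on these maximal ideals. Let $\frakp, \frakq$ be maximal ideals of $R$ and suppose, for contradiction, that $\frakq \not\subseteq g\cdot\frakp$ for every $g \in G$. As $G$ is finite and each $g\cdot\frakp$ is prime, prime avoidance produces $x \in \frakq$ with $x \notin g\cdot\frakp$ for all $g$. The element $N(x) := \prod_{g \in G} g\cdot x$ is $G$-invariant, hence lies in $S$, and it lies in $\frakq$ because $x$ itself is one of its factors; therefore $N(x) \in \frakq \cap S = \frakm \subseteq \frakp$. Since $\frakp$ is prime, some factor $g\cdot x$ lies in $\frakp$, i.e.\ $x \in g^{-1}\cdot\frakp$, contradicting the choice of $x$. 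Hence $\frakq \subseteq g\cdot\frakp$ for some $g \in G$, and maximality of $\frakq$ forces $\frakq = g\cdot\frakp$. So the maximal ideals of $R$ form a single $G$-orbit; since $G$ is finite this orbit is finite, and $R$ is semilocal.

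I do not expect a serious obstacle: this is essentially the standard proof that a finite group acts transitively on the fibre of an integral extension of invariants (cf.\ Bourbaki, \emph{Commutative Algebra}, Ch.~V). The only points that warrant care are that we work with honestly commutative rings, and that the hypothesis ``$S$ local'' is used precisely to force the fibre over $\frakm$ to consist of \emph{all} the maximal ideals of $R$, not merely one of them.
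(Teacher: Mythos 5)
Your proof is correct, but it follows a different route from the paper's. The paper argues directly from the Chinese Remainder Theorem: assuming $\frakp$ and $\frakq$ lie in distinct $G$-orbits, all ideals $g(\frakp)$ and $h(\frakq)$ are pairwise comaximal, so the $G$-stable intersections $\frakp' = \bigcap_g g(\frakp)$ and $\frakq' = \bigcap_g g(\frakq)$ are comaximal; picking $r \in \frakq'$ with $r \equiv 1 \bmod \frakp'$ and replacing $r$ by $\prod_{g\in G} g(r)$ yields a $G$-invariant element of $S$ lying simultaneously in $\frakm$ and in $1 + \frakm$, which is absurd. The paper therefore never needs to establish that $R$ is integral over $S$ or that every maximal ideal of $R$ lies over $\frakm$. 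You instead set up the classical Bourbaki-style argument: first note $R$ is integral over $S$ via $\prod_{g\in G}(x - g\cdot r) \in S[x]$, deduce by the Cohen--Seidenberg machinery that all maximal ideals of $R$ lie over $\frakm$, and then use prime avoidance together with the multiplicative norm $N(x)=\prod_g g\cdot x$ to contradict primality of $\frakp$. Both arguments are sound; yours leans on the going-up package (which makes the structure transparent and matches the standard reference you cite), while the paper's is slightly more self-contained, trading integrality and lying-over for a CRT manipulation plus the locality of $S$. One small remark: the fact that $R$ is integral over $S$, which you establish here, is also proved and used later in the paper (in the proof of Corollary~\ref{CR:henselian-ring-with-involution}), so there would be no harm in the paper factoring it out as you do.
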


\begin{proof}
Let $\frakm$ denote the maximal ideal of $S$ and, for the sake of contradiction, suppose $\frakp$ and $\frakq$ are
 maximal ideals of $R$ lying in distinct $G$-orbits. 
 Let $\frakp'=\bigcap_{g\in G}g(\frakp)$ and $\frakq'=\bigcap_{g\in G}g(\frakq)$. 
 Since $g(\frakp)+h(\frakq)=R$ for all $g,h\in G$, we have $\frakp'+\frakq'=R$.
 Using the Chinese Remainder Theorem, choose $r\in \frakq'$ such that
 $r\equiv 1\!\!\mod \frakp'$. Replacing $r$ with $\prod_{g\in G} g(r)$, we may assume that $r\in S$. 
 Since $\frakp'\cap S$ and $\frakq'\cap S$ are both contained in $\frakm$, this
 means that $r$ lies both in $(1+\frakp')\cap S\subseteq 1+\frakm$ and in $\frakq'\cap S\subseteq \frakm$, which is absurd.
\end{proof}
 
 	We derive the main result of this section by 
	specializing Proposition~\ref{PR:G-acts-on-R} to the case of a group with $2$ elements.

\begin{theorem}\label{thm:local-involution} 
 Suppose $R$ is a ring and $\lambda: R \to R$ is an involution with fixed ring $S$ such that $S$ is local. Let $\quo R$ denote $R / \Jac(R)$ and $\quo \lambda$ the restriction of $\lambda$ to $\quo R$.
 \begin{enumerate}[label=(\roman*)]
 \item If $\quo \lambda \neq \id$, then $R$ is a quadratic \'etale algebra over $S$.
 \item If $\quo \lambda = \id$, then $R$ is a local ring that is not quadratic \'etale
 over $S$.
 \end{enumerate}
In either case, $R$ is semilocal.
\end{theorem}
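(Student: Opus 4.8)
The plan is to assemble the statement from the results already proved in this section, using Proposition~\ref{PR:G-acts-on-R} as the organizing tool. First I would apply that proposition to the group $\langle\lambda\rangle$ acting on $R$: since $S=R^{\langle\lambda\rangle}$ is local, $R$ is semilocal and its maximal ideals form a single $\langle\lambda\rangle$-orbit, so there are at most two of them, say $\frakm_1,\dots,\frakm_m$ with $m\le 2$. This already yields the final assertion that $R$ is semilocal. Since the $\frakm_i$ are pairwise comaximal, the Chinese Remainder Theorem gives $\quo R=R/\Jac(R)\cong\prod_{i=1}^m R/\frakm_i$, a product of fields, and—because $\lambda$ fixes $\Jac(R)=\bigcap_i\frakm_i$—the induced involution $\quo\lambda$ permutes the field factors exactly as $\lambda$ permutes the maximal ideals, hence transitively. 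I would also record the elementary observation that, $R$ being semilocal, an element $x\in R$ is a unit if and only if its image $\quo x\in\quo R$ is a unit, both being equivalent to $x$ lying in no $\frakm_i$; in particular, for $r\in R$ the element $r-r^\lambda$ is a unit of $R$ precisely when $\quo r-\quo\lambda(\quo r)$ is a unit of $\quo R$.

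For part (ii), assume $\quo\lambda=\id$. Then the transitive action of $\langle\lambda\rangle$ on the factors of $\quo R$ is trivial, so $m=1$ and $R$ is local. If $R$ were a quadratic \'etale $S$-algebra, then—$S$ being local—Lemma~\ref{LM:converse-etale-crit} would supply $r\in R$ with $r-r^\lambda\in\units R$; but $\quo\lambda=\id$ forces $\quo{r-r^\lambda}=\quo r-\quo r=0$, so $r-r^\lambda$ is not a unit, a contradiction. (Alternatively, Proposition~\ref{PR:Hilbert-ninety}\ref{item:LM:Hilbert-ninety:structure} directly asserts that $\quo\lambda$ is the nontrivial automorphism of $\quo R$.) Hence $R$ is local and not quadratic \'etale over $S$.

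For part (i), assume $\quo\lambda\neq\id$. By Lemma~\ref{LM:etale-criterion} it suffices to produce $r\in R$ with $r-r^\lambda\in\units R$, and by the unit criterion above it is enough to find $\quo r\in\quo R$ with $\quo r-\quo\lambda(\quo r)\in\units{\quo R}$ and lift it arbitrarily. If $m=1$, then $\quo R$ is a field and $\quo\lambda$ a nontrivial automorphism of it, so I choose $\quo r$ not fixed by $\quo\lambda$, making $\quo r-\quo\lambda(\quo r)$ a nonzero, hence invertible, element of a field. If $m=2$, then $\quo\lambda$ interchanges the two field factors of $\quo R=k_1\times k_2$, and $\quo r=(1,0)$ gives $\quo r-\quo\lambda(\quo r)=(1,-1)\in\units{\quo R}$. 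In both cases Lemma~\ref{LM:etale-criterion} applies and $R$ is a quadratic \'etale $S$-algebra.

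There is essentially no hard step here: once Proposition~\ref{PR:G-acts-on-R} guarantees that $R$ is semilocal with at most two maximal ideals, the structure of $\quo R$ and the action of $\quo\lambda$ on its field factors are completely transparent, and Lemmas~\ref{LM:etale-criterion} and~\ref{LM:converse-etale-crit} do the rest. The only points to be a little careful about are the routine equivalence between units of $R$ and units of $\quo R$ and the fact that $\quo\lambda$ acts on the set of field factors of $\quo R$ through the permutation of maximal ideals induced by $\lambda$; I expect no serious obstacle beyond this bookkeeping.
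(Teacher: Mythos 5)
Your proof is correct and takes essentially the same approach as the paper: both rely on Proposition~\ref{PR:G-acts-on-R} to pin down the maximal ideals of $R$, on the induced structure of $\quo R$ as a product of one or two fields permuted transitively by $\quo\lambda$, and on Lemmas~\ref{LM:etale-criterion}, \ref{LM:converse-etale-crit}, and Proposition~\ref{PR:Hilbert-ninety} to decide the quadratic \'etale question. The only organizational difference is that you split cases on $\quo\lambda=\id$ versus $\quo\lambda\neq\id$ from the outset (matching the statement), whereas the paper splits on $\frakM=\frakM^\lambda$ versus $\frakM\neq\frakM^\lambda$ and recovers the condition on $\quo\lambda$ afterward; the underlying computations are identical.
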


\begin{proof}
 Let $\frakM$ be a maximal ideal of $R$. Taking $G=\{1,\lambda\}$ in Proposition~\ref{PR:G-acts-on-R}, we see that the
 maximal ideals of $R$ are $\{\frakM,\frakM^\lambda\}$. We consider the cases $\frakM\neq \frakM^\lambda$ and
 $\frakM=\frakM^\lambda$ separately.
	
 Suppose that $\frakM\neq \frakM^\lambda$. By the Chinese Remainder Theorem,
 $\quo{R}=R/(\frakM\cap \frakM^\lambda)\cong R/\frakM\times R/\frakM^\lambda$, and under this isomorphism,
 $\quo{\lambda}$ acts by sending $(a+\frakM,b+\frakM^\lambda)$ to $(b^\lambda+\frakM,a^\lambda+\frakM^\lambda)$. This
 implies that $\quo{\lambda}\neq \id$, so we are in the situation of (i). Furthermore, by taking $a=1$ and $b=0$, we see
 that there exists $\quo{r}\in R$ such that $\quo{r-r^\lambda}\in\units{\quo{R}}$, or equivalently,
 $r-r^\lambda\in \units{R}$. Thus, by Lemma~\ref{LM:etale-criterion}, $R$ is quadratic \'etale over $S$.
	
 Suppose now that $\frakM=\frakM^\lambda$. Then $R$ is local and $\quo{R}=R/\frakM$ is a field. If
 $\quo\lambda \neq\id$, then there exists $\quo{r}\in \quo{R}$ with $\quo{r-r^\lambda}\in\units{\quo{R}}$ and again we
 find that $R$ is quadratic \'etale over $S$. On the other hand, if $\quo{\lambda} = \id$, then $R$ cannot be quadratic
 \'etale over $S$ by Proposition~\ref{PR:Hilbert-ninety}\ref{item:LM:Hilbert-ninety:structure}.
	
 We have verified (i) and (ii) in both cases, so the proof is complete.
\end{proof}

\begin{notation}
 A \textit{henselian} ring is a local ring in which Hensel's lemma, \cite[Thm.~7.3]{eisenbud_commutative_1995},
 holds. A \textit{strictly henselian} ring is a henselian ring for which the residue field is separably closed.
 \end{notation}

 \begin{lemma} \label{LM:mR-subset-Jac} Let $G$ be a finite group acting on a ring $R$ and let $S$ be the subring of $R$
 fixed under $G$. If $S$ is local with maximal ideal $\frakm$, then $ \frakm R\subseteq \Jac(R)$. In particular,
 $\frakm R\cap S=\frakm$.
 \end{lemma}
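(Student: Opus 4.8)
The plan is to reduce everything to the integrality of $R$ over $S$. Since $G$ is finite, each $r\in R$ satisfies the monic polynomial $\prod_{g\in G}(x-g(r))$, whose coefficients are elementary symmetric functions in the orbit $\{g(r)\}_{g\in G}$ and hence lie in $S$; thus $R$ is integral over $S$. Note also that $S$ is nonzero, being local, so $R$ is nonzero and $\Jac(R)$ is a proper ideal of $R$.

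First I would show that every maximal ideal $\frakM$ of $R$ contracts to $\frakm$. The subring $S/(\frakM\cap S)$ of the field $R/\frakM$ is a domain over which $R/\frakM$ is integral (reduce a monic $S$-relation for an element of $R$ modulo $\frakM$), and a domain admitting an integral field extension is itself a field --- invert a nonzero element using a monic relation and clear denominators. Hence $\frakM\cap S$ is a maximal ideal of $S$, so $\frakM\cap S=\frakm$. In particular $\frakm\subseteq\frakM$, and since $\frakM$ is an ideal of $R$ this gives $\frakm R\subseteq\frakM$. Intersecting over all maximal ideals of $R$ yields $\frakm R\subseteq\Jac(R)$, which is the first assertion. (One may instead invoke Proposition~\ref{PR:G-acts-on-R}, which gives that $R$ is semilocal with maximal ideals forming a single $G$-orbit; since $G$ fixes $S$ pointwise these all contract to the same prime of $S$, and lying-over forces that prime to be $\frakm$.)

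For the final assertion, observe that $\frakm\subseteq\frakm R\cap S$ trivially, while $\frakm R\cap S$ is a proper ideal of $S$: since $\frakm R\subseteq\Jac(R)\subsetneq R$ we have $1\notin\frakm R\cap S$. As $S$ is local, $\frakm R\cap S\subseteq\frakm$, and therefore $\frakm R\cap S=\frakm$.

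The only step requiring any thought is the standard lemma that a domain admitting an integral field extension is a field; the rest is bookkeeping, and the integrality of $R$ over $S$ does all the real work.
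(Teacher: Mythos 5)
Your proof is correct, but it follows a genuinely different route from the paper's. The paper argues directly that $1+aR\subseteq\units{R}$ for each $a\in\frakm$: for $b\in R$, the $G$-norm $\prod_{g\in G}g(1+ab)=\prod_{g\in G}(1+a\,g(b))$ is $G$-invariant, hence lies in $S$, and expanding it as $1+a\sigma_1+a^2\sigma_2+\cdots$ (with $\sigma_i\in S$) shows it lies in $1+\frakm\subseteq\units{S}$; since the factor $1+ab$ divides this unit in the commutative ring $R$, it is itself a unit. (The displayed formula in the paper has a typo---the powers should be of $a$, not $b$, and the expansion should involve the elementary symmetric functions of the $g(b)$'s---but the intended argument is exactly this.) Your version instead invokes integrality of $R$ over $S$ and the lying-over consequence that every maximal ideal of $R$ contracts to $\frakm$, deduced from the standard lemma that a domain with an integral field extension is a field, and then intersects over all maximal ideals. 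Both arguments are sound and of comparable length; the paper's is more computational and self-contained (it never mentions maximal ideals of $R$), whereas yours is more conceptual, leaning on the well-worn machinery of integral extensions. Your deduction of the final assertion $\frakm R\cap S=\frakm$ from the first part is the same as what the paper implicitly intends, and it is correctly reasoned: the containment $\frakm\subseteq\frakm R\cap S$ is automatic, and $\frakm R\cap S$ is a proper ideal of the local ring $S$ because $\frakm R\subseteq\Jac(R)\subsetneq R$.
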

 
 \begin{proof}
 Let $a\in \frakm$. To prove $ \frakm R\subseteq \Jac(R)$, we need to show that $aR\subseteq \Jac(R)$, or
 equivalently, that $1+aR$ consists of invertible elements. Let $b\in R$ and let $\{g_1,\dots,g_n\}$ denote the
 distinct elements of $G$. Then
 	\[\prod_{g\in G}g(1+ab)=1+\sigma_1(g_1b,\dots,g_nb)a+\sigma_2(g_1b,\dots,g_nb)a^2+
 \dots\] where $\sigma_i$ denotes the $i$-th elementary symmetric polynomial on $n$ letters. Since
 $a\in\frakm$, and since $\sigma_i(g_1b,\dots,g_nb)$ is invariant under $G$, the right hand side lies in
 $1+\frakm\subseteq \units{S}$. Thus, $1+ab\in\units{R}$.
 \end{proof}

 \begin{cor}\label{CR:henselian-ring-with-involution} Let $R$ be a ring, let $\lambda:R\to R$ be an involution, and let
 $S$ denote the fixed ring of $\lambda$. Suppose that $S$ is a strictly henselian ring with maximal ideal
 $\frakm$. Then $R$ is a finite product of strictly henselian rings.
 \end{cor}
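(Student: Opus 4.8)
The plan is to deduce this from Theorem~\ref{thm:local-involution} together with the standard fact that an algebra finite over a henselian local ring splits as a finite product of henselian local rings. Write $\frakm$ for the maximal ideal of $S$ and $k = S/\frakm$; since $S$ is strictly henselian, $k$ is separably closed.

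I would begin with two observations. First, $R$ is integral over $S$: every $r\in R$ is a root of the monic polynomial $t^2 - (r+r^\lambda)t + r^\lambda r \in S[t]$, as $r+r^\lambda$ and $r^\lambda r$ are fixed by $\lambda$. Second, applying Proposition~\ref{PR:G-acts-on-R} to $G = \{1,\lambda\}$, the maximal ideals of $R$ form a single $\lambda$-orbit, so $R$ is semilocal with at most two maximal ideals. By Lemma~\ref{LM:mR-subset-Jac}, $\frakm R \subseteq \Jac(R)$, so every maximal ideal of $R$ contains $\frakm R$; hence each residue field of $R$ is an extension of $k$, and it is algebraic over $k$ because $R$ is integral over $S$. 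An algebraic extension $L$ of the separably closed field $k$ is itself separably closed: $\kbar/k$ is purely inseparable, so any finite separable extension of an intermediate field $L$ embeds in $\kbar$, is therefore purely inseparable over $L$, hence trivial. So it suffices to exhibit $R$ as a finite product of henselian local rings.

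By Theorem~\ref{thm:local-involution} there are two cases. If $R$ is a quadratic \'etale $S$-algebra, then $R$ is free of rank $2$ over the henselian local ring $S$, hence a finite product of henselian local rings by \cite[Tag 04GH]{de_jong_stacks_2017}; this product has at most two factors by the bound above, and it consists of strictly henselian rings by the previous paragraph. If instead $R$ is local, it remains to show $R$ is henselian. Write $R$ as the filtered union of its finite $S$-subalgebras $R_\alpha$ (they exhaust $R$ since $R$ is integral over $S$). Each $R_\alpha$ is finite over the henselian local ring $S$, hence a finite product of henselian local rings; but $R_\alpha \subseteq R$ has no nontrivial idempotents ($R$ being local), so $R_\alpha$ is a single henselian local ring. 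The inclusions $R_\alpha \subseteq R_\beta$ are local homomorphisms, since $R_\beta$ is integral over $R_\alpha$ and so its maximal ideal contracts to the maximal ideal of $R_\alpha$. Thus $R$ is a filtered colimit of henselian local rings along local homomorphisms, hence henselian; being also local with separably closed residue field, $R$ is strictly henselian, completing this case.

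The main obstacle is the local case, because $R$ need not be module-finite over $S$ (cf.\ Example~\ref{EX:pathological-rings-with-inv}), so one cannot directly apply the finite version of the decomposition theorem; the filtered-colimit reduction above circumvents this, the essential input being that a filtered colimit of henselian local rings along local ring homomorphisms is again henselian local. The remaining steps — the quadratic integral dependence, localness of the transition maps, and the permanence of separable closedness under algebraic extensions — are routine.
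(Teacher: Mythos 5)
Your proof is correct. Both you and the paper first dispatch the quadratic \'etale case via \cite[Tag 04GH]{de_jong_stacks_2017}, and in the local case both reduce to the filtered system of finite $S$-subalgebras of $R$, each of which is a henselian local ring because it is module-finite over $S$ and sits inside the local ring $R$. Where you diverge is in how the henselian property is transferred to the colimit. The paper works by hand: given a monic $f\in R[x]$ whose reduction has a simple root $\eta$, it collects the coefficients and a lift of $\eta$ into a single finite subalgebra $R_0$, shows that the inclusion $R_0\hookrightarrow R$ induces a map on residue fields by establishing $\frakM_0\subseteq\frakM$ via a nilpotency bound $\frakM_0^n\subseteq\frakm R_0$ from \cite[Th.~6.15]{reiner_maximal_1975-1} together with Lemma~\ref{LM:mR-subset-Jac}, and then lifts the root inside $R_0$. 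You instead invoke the general fact that a filtered colimit of henselian local rings along local homomorphisms is henselian local, and you verify locality of the transition maps $R_\alpha\subseteq R_\beta$ by the lying-over argument for integral extensions, which is more elementary than the paper's route through Reiner's bound. Your packaging is cleaner at the cost of importing one more standard lemma; the paper's is more self-contained but essentially reproves that lemma in the special case at hand. Your observation that the residue fields remain separably closed because algebraic extensions of separably closed fields are separably closed matches the paper's remark verbatim in content. No gaps.
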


 \begin{proof} By Theorem~\ref{thm:local-involution}, either $R$ is a quadratic \'etale $S$-algebra, or $R$ is local. In
 the former case, $R$ is finite over $S$, and the lemma follows from \cite[Tag 04GH]{de_jong_stacks_2017} so we
 assume $R$ is local. Write $\frakM$ for the maximal ideal of $R$, $k$ for its residue field, and denote by
 $r\mapsto\quo{r}$ the surjection $R\to k$.

 Observe first that $R$ is integral over $S$ since for all $r\in R$, we have $ r^2-(r^\lambda+r)r+r^\lambda r=0$ and
 $r^\lambda+r,r^\lambda r\in S$. This implies that $k$ is algebraic over the residue field of $S$, which is separably
 closed, hence $k$ is separably closed.

 Now, let $f\in R[x]$ be a monic polynomial such that $\quo{f}\in k[x]$ has a simple root $\eta \in k$. We will show
 that $f$ has root $y\in R$ with $\quo{y}=\eta$. Let $a_0,\dots,a_{n-1}\in R$ be the coefficients of $f$ and let
 $x\in R$ be any element with $\quo{x}=\eta$. Since $R$ is integral over $S$, there is a finite $S$-subalgebra
 $R_0\subseteq R$ containing $a_0,\dots,a_{n-1},x$. By \cite[Tag 04GH]{de_jong_stacks_2017}, $R_0$ is a product of
 henselian rings. Since $R$ has no non-trivial idempotents, this means $R_0$ is a henselian ring. Write $\frakM_0$ for
 the maximal ideal of $R_0$ and $k_0=R_0/\frakM_0$. Since $R_0$ is finite over $S$, there is a natural number $n$ such
 that $\frakM_0^n\subseteq \frakm R_0\subseteq \frakM_0$ by \cite[Th.~6.15]{reiner_maximal_1975-1}. This implies that
 $(R\frakM_0)^n=R\frakM_0^n\subseteq R\frakm$ and the latter is a proper ideal of $R$, by Lemma
 \ref{LM:mR-subset-Jac}. Therefore, $R\frakM_0\subseteq\frakM$ and we have a well-defined homomorphism of fields
 $k_0\to k$ given by $x+\frakM_0\mapsto x+\frakM$. Since $a_0,\dots,a_{n-1},x\in R_0$, we have $\quo{f}\in k_0[x]$ and
 $\eta\in k_0$. As $R_0$ is a henselian ring, there is $y\in R_0$ with $f(y)=0$ and $y+\frakM_0=\eta$. This completes the
 proof.
 \end{proof}

\section{Ringed Topoi with Involution} \label{sec:SitesWithInvolution}

Unless indicated otherwise, throughout this section, $(\mathbf{X}, \calO_{\mathbf{X}})$ denotes a locally 
ringed topos. Our interest is in the
following examples:
\begin{enumerate}
\item $\mathbf{X}=\Sh(X_\et)$ for a scheme $X$, and $\calO_{\mathbf{X}}$ is the structure
sheaf of $X$ which sends $(U\to X)$ to $\Gamma(U,\calO_U)$.
\item $\mathbf{X}=\Sh(X)$ for a topological space $X$, and $\calO_{\mathbf{X}}$ is the sheaf of continuous $\C$-valued
functions, denoted $\calC(X,\C)$.
\end{enumerate} We write the cyclic group with two elements as $C_2$
and, when applicable, the non-trivial element
of $C_2$ will always be denoted $\lambda$.

When there is no risk of confusion, we shall refer to $\bfX$ 
as a ringed topos, in which case the ring object is understood to be $\calO_\bfX$.

\subsection{Involutions of Ringed Topoi}
\label{subsec:involutions}

\begin{definition}
 Suppose $\mathbf X$ is a topos. An \textit{involution} of $\mathbf X$ consists of an equivalence of categories $\Lambda: \mathbf
 X \to \mathbf X$ and a natural isomorphism $\nu: \Lambda^2 \Rightarrow \id$ 
 satisfying the coherence condition that
 $\nu_{\Lambda X}=\Lambda \nu_X$ for all
 objects $X$ of $\mathbf X$.
\end{definition}

The natural equivalence $\nu$ will generally be suppressed from the notation.

\begin{definition}\label{DF:ringed-topos-with-inv}
 Suppose $(\mathbf X, \sh{O}_{\mathbf X})$ is a ringed topos. An \textit{involution} of $(\mathbf X, \calO_{\mathbf{X}})$ consists
 of an involution $(\Lambda, \nu)$ of $\mathbf X$ and an isomorphism $\lambda$ of ring objects $\lambda: \calO_{\mathbf{X}}
 \to \Lambda \calO_{\mathbf{X}}$ such that $ \Lambda\lambda \circ \lambda = \nu_{\calO_{\mathbf{X}}}^{-1}$.
\end{definition}

Suppressing $\nu$ from the last equation, we say $\Lambda \lambda \circ \lambda = \id$.

\begin{remark}\label{RM:C-two-action} 
	(i) The functor $\Lambda$ is left adjoint to itself with the unit and counit of
	the adjunction being $\nu^{-1}$ and $\nu$, respectively.
	Thus, if we write $\Lambda^*=\Lambda_*=\Lambda$, then
	the adjoint pair $(\Lambda^*,\Lambda_*)$ defines a geometric automorphism of $\mathbf X$.
	Moreover, $(\Lambda^*,\Lambda_*,\lambda^{-1}:\Lambda_*\calO_\bfX\to \calO_\bfX)$ defines
	an automorphism of the ringed topos $(\bfX,\calO_\bfX)$.

	(ii) Topoi and ringed topoi form $2$-categories in which there is a notion of a weak $C_2$-action. An
 involution of a topos, resp.\ ringed topos, as defined here induces such a weak $C_2$-action in which the
 non-trivial element $\lambda$ acts as the morphism $\Lambda=(\Lambda^*,\Lambda_*)$, resp.\
 $(\Lambda^*,\Lambda_*,\lambda^{-1})$, and the trivial element acts as the identity. It can be checked that all
 $C_2$-actions with the latter property arise in this manner. Since an arbitrary weak $C_2$-action is equivalent
 to one in which $1$ acts as the identity, specifying an involution is essentially the same as specifying a weak
 $C_2$-action.
\end{remark}

\begin{notation}
	Henceforth, when there is no risk of confusion, involutions of ringed topoi will always be denoted
	$(\Lambda,\nu,\lambda)$. In fact, we shall often
	abbreviate the triple $(\Lambda,\nu,\lambda)$ to $\lambda$.
\end{notation}

It is convenient to think of $\Lambda, \nu$ as the ``geometric data'' of the involution and of $\lambda$ as the
``arithmetic data'' of the involution. The following are the motivating examples.

 \begin{example} \label{ex:galois} 
	Let $X$ be a scheme and let $\sigma:X\to X$
	be an involution. 
	The direct image functor $\Lambda:=\sigma_*$ defines an involution
	of $\bfX:=\Sh(X_\et)$; the suppressed natural isomorphism $\nu:\Lambda^2\Rightarrow \id$
	is the identity.
	Let $\calO_\bfX$ be 
	the structure sheaf of $X$ on $X_\et$.
 	The involution of $X$ gives rise to an isomorphism $\lambda:\calO_\bfX\to \Lambda\calO_\bfX$
	as follows:
	For an \'etale morphism $U\to X$, define $U^\sigma\to X$ via the pullback diagram
	\[
	\xymatrix{
	U^\sigma \ar[r]\ar[d]^{\sigma_U} & X \ar[d]^{\sigma} \\
	U \ar[r] & X	
	}
	\] 
	By definition, $\Lambda\calO_\bfX(U\to X)=\sigma_*\calO_\bfX(U\to X)= \calO_\bfX(U^\sigma\to
 X)=\Gamma(U^\sigma,\calO_{U^\sigma})$, and we define $\lambda_{U\to X}:\Gamma(U,\calO_U)=\calO_\bfX(U\to X)\to
 \Lambda\calO_\bfX(U \to X)=\Gamma(U^\sigma,\calO_{U^\sigma})$ to be the isomorphism induced by $\sigma_U$. It
 is easy to check that $\Lambda\lambda\circ\lambda=\id$ and so $(\Lambda,\nu,\lambda)$ is an involution of
 $(\bfX,\calO_\bfX)$. 
 When $X=\Spec R$ for a ring $R$ and $\sigma$ is induced by an involution of $R$, we can
 recover that involution from $\lambda:\calO_\bfX\to \Lambda\calO_\bfX$ by taking global sections.
	
	The small \'etale site $X_\et$ can be replaced by other sites, for instance the site $X_\fppf$.
\end{example}

\begin{example} \label{ex:top} Let $X$ be topological space with a continuous involution $\sigma: X \to X$. Write
 $\mathbf{X}=\Sh(X)$ and let $\calO_{\mathbf{X}}=\calC(X,\C)$, the ring sheaf of continuous functions into $\C$. Then the
 direct image functor $\Lambda:=\sigma_*$ defines an involution of $\bfX$; the isomorphism
 $\nu:\Lambda^2\Rightarrow \id$ is the identity.

 In particular, $(\Lambda \calO_{\mathbf{X}}) (U) = \calO_{\mathbf{X}} (\sigma U) = \cont( \sigma U, \C)$. Let
 $\lambda:\calO_{\mathbf{X}}\to\Lambda \calO_{\mathbf{X}}$ be given sectionwise by precomposition with $\sigma$, namely
\begin{align*} \lambda_U : {C}(U, \mathbb{C}) & \to {C}(\sigma U, \mathbb{C}), \\ \phi &\mapsto\phi\circ \sigma.
\end{align*} Then $(\Lambda,\nu,\lambda)$ is an involution of $(\mathbf{X},\calO_{\mathbf{X}})$.
\end{example}

\begin{example}\label{EX:trivial-Lambda}
 Let $\bfX$ be any topos, let $R$ be a ring object in $\bfX$ and let $\lambda:R\to R$ be an involution. Then
 $(\id_{\bfX},\nu:=\id,\lambda)$ is an involution of $(\bfX,R)$.
\end{example}

\begin{definition}\label{DF:trivial-involution} The \textit{trivial involution} of $(\mathbf{X},\calO_{\mathbf{X}})$ is 
 $(\Lambda,\nu,\lambda)=(\id_{\mathbf{X}},\id,\id_{\calO_{\mathbf{X}}})$. Any other involution $(\Lambda, \nu, \lambda)$ of $(\mathbf{X},\calO_{\mathbf{X}})$
 will be said to be \textit{weakly trivial} if 
	it is equivalent to the trivial involution of $(\bfX,\calO_\bfX)$
	in the following sense: 
 There exists a natural isomorphism $\theta: \Lambda \Rightarrow \id$ such that
 $\theta_{\calO_X}=\lambda^{-1}$ and
 $\theta_X\circ \theta_{\Lambda X}=\nu_X$ 
 for all objects $X$ in $\bfX$.\uriyaf{
 The identity $\Lambda\theta_X =\theta_{\Lambda X}$
 is superfluous and follows from the equality
 $\Lambda\theta_X\circ\theta_X=\theta_{\Lambda X}\circ \theta_X$
 which holds since $\theta:\Lambda\Rightarrow\id$. 
}
\end{definition}

\begin{example} The involution of Example~\ref{ex:galois} (resp.\ Example~\ref{ex:top}) is trivial if and only if
$\sigma:X\to X$ is the identity.
\end{example}

Let $(\Lambda,\nu,\lambda)$ be an involution of $(\mathbf{X},\calO_{\mathbf{X}})$ and let $M$ be an $\calO_{\mathbf{X}}$-module. Then
$\Lambda M$ carries a $\Lambda \calO_{\mathbf{X}}$-module structure. We shall always regard $\Lambda M$ as an
$\calO_{\mathbf{X}}$-module by using the morphism $\lambda:\calO_{\mathbf{X}}\to \Lambda \calO_{\mathbf{X}}$, that is, by
twisting the structure.

Using the equality $\Lambda \lambda \circ \lambda = \nu_{\calO_{\mathbf{X}}}^{-1}$, one easily
checks that $\nu_M:\Lambda\Lambda M\to M$ is an isomorphism of
 $\calO_{\mathbf{X}}$-modules, which we suppress from the notation henceforth. 

\benw{The referee has objected to the vagueness of the following comment, asking for an example instead. This seems
 fair. I would like to give the example of a ``skyscraper module'', but now I don't know why I can't produce such a
 thing in $\Spec R$. Update! I can, it's just that the quasicoherent sheaves have a property of being determined by
 quasicoherence and their global sections, and such a quasicoherent object has the property asserted. One can get an
 example with something as simple as a non-quasicoherent $\sh O_X$-module.}
 
\uriyaf{If $\frakm$ is a maximal ideal of $R$, then $R/\frakm$
corresponds to a skyscraper sheaf on $\Spec R$. 
Anyhow, the example you gave is fine. There is not need to specify
a sheaf supported at $p$.}

 Notice that contrary
 to the case of $\lambda$-twisting of modules over ordinary rings, 
 it is not in general true that $M$ can be identified
 with its $\lambda$-twist as an abelian group object in $\mathbf{X}$.
 For instance, 
 in the context of Example~\ref{ex:galois},
 suppose that $X=\Spec R$ and $\sigma$ exchanges two maximal ideals
 $\frakm_1,\frakm_2\lhd R$, and let $M_1$, $M_2$
 denote the quasicoherent $\calO_\bfX$-modules corresponding to the $R$-modules
 $R/\frakm_1$ and $R/\frakm_2$, respectively. Then $M_2=\Lambda M_1$,
 but $M_1$ is not isomorphic to $M_2$ as abelian group objects
 in $\bfX$ because $M_1$ is supported at $\{\frakm_1\}$
 while $M_2$ is supported at $\{\frakm_2\}$.

 If $A$ is an $\calO_{\mathbf{X}}$-algebra, then $\Lambda A$, besides being an $\calO_{\mathbf{X}}$-module, carries an
 $\calO_{\mathbf{X}}$-algebra structure. Letting $A^\op$ denote the opposite algebra of $A$, we have $\Lambda(\Lambda
 A^\op)^\op=A$ up to the suppressed natural isomorphism $\nu_A$.

\begin{definition}\label{DF:alg-with-a-lambda-inv} A \emph{$\lambda$-involution} on an $\calO_{\mathbf{X}}$-algebra $A$ is a
morphism of $\calO_{\mathbf{X}}$-algebras $\tau:A\to \Lambda A^\op$ such that $\Lambda\tau\circ\tau=\id_A$. In this case,
$(A,\tau)$ is called an \emph{$\calO_{\mathbf{X}}$-algebra with a $\lambda$-involution}. If $A$ is an Azumaya algebra, it
will be called an \textit{Azumaya algebra with $\lambda$-involution}.

If $(A,\tau)$ and $(A',\tau')$ are $\calO_{\mathbf{X}}$-algebras with $\lambda$-involutions, then a morphism
 from $(A,\tau)$ to $(A',\tau')$ is a morphism of $\calO_{\mathbf{X}}$-algebras $\phi: A\to A'$ such that $\tau'\circ
 \phi=\Lambda\phi\circ \tau$.
\end{definition}

 Notice that applying $\Lambda$ to both sides of $\Lambda \tau \circ \tau = \id_A$ gives $\tau\circ \Lambda \tau =
 \id_{\Lambda A}$.

 \begin{notation}
 The category where the objects are degree-$n$ Azumaya $\calO_\bfX$-algebras with $\lambda$-involutions and where the
 morphisms are isomorphisms of $\calO_\bfX$-algebras with $\lambda$-invo\-lu\-tions shall be denoted
 $\Az_n(\bfX,\calO_\bfX,\lambda)$, or just $\Az_n(\calO_\bfX,\lambda)$.
\end{notation}

Given a ring with involution $(R,\lambda)$ and an $R$-algebra $A$, it is reasonable to define a $\lambda$-involution of
$A$ to be an involution $\tau:A\to A$ satisfying $(ra)^\tau=r^\lambda a^\tau$ for all $r\in R$, $a\in A$. Following
Gille \cite[\S1]{gille_gersten_2009}, this can be generalized to the context of schemes: Given a scheme $X$, an
involution $\sigma:X\to X$, and  an $\calO_X$-algebra $A\in\Sh(X_{\Zar})$, then a \textit{$\sigma$-involution} of $A$
is an $\calO_X$-algebra morphism $\tau:A\to \sigma_*A^\op$ such that $\sigma_*\tau\circ \tau=\id_A$. We reconcile these
elementary definitions with Definition~\ref{DF:alg-with-a-lambda-inv} in the following example.

\begin{example} 
 Let $(R,\lambda)$ be a ring with involution, let $X=\Spec R$ and write $\sigma:X\to X$ for the involution induced by
 $\lambda$. Abusing the notation, let $(\Lambda,\nu,\lambda)$ denote the involution induced by $\sigma$ 
 on the \'etale
 ringed topos of $X$, see Example~\ref{ex:galois}.
 
 Every $R$-module $M$ gives rise to an $\calO_\bfX$-module, also denoted $M$, the sections of which are given by
 $M(\Spec R'\to \Spec R)=M\otimes_RR'$ for any $(\Spec R'\to \Spec R)$ in $X_\et$. In fact, this defines an equivalence
 of categories between $R$-modules and quasicoherent $\calO_\bfX$-modules \cite[Tag 03DX]{de_jong_stacks_2017}.
 Straightforward computation shows that the $\calO_\bfX$-module $\Lambda M$ corresponds to $M^\lambda$, the $R$-module
 obtained from $M$ by twisting via $\lambda$.
 	
 Now let $A$ be an $R$-algebra and let $\tau:A\to A$ be an involution satisfying $(ra)^\tau=r^\lambda a^\tau$ for all
 $r\in R$, $a\in A$. Realizing $A$ as an $\calO_\bfX$-algebra in $\bfX$, the algebra $\Lambda A^\op$ corresponds to the
 $R$-algebra $(A^\lambda)^\op$, and so the involution $\tau$ induces a $\lambda$-involution $A \to \Lambda A ^\op$, also
 denoted $\tau$. By taking global sections, we see that all $\lambda$-involutions of the $\calO_\bfX$-algebra $A $ are
 obtained in this manner.
 	
 Likewise, if $X$ is a scheme, $\sigma:X\to X$ is an involution, and $A$ is a quasicoherent $\calO_X$-algebra in
 $\Sh(X_{\Zar})$\benw{Do we really mean Zariski here?}\uriyaf{For purpose of comparison, yes.}, then the
 $\lambda$-involutions of the $\calO_\bfX$-algebra associated to $A$ in $\bfX=\Sh(X_\et)$ (see \cite[Tag
 03DU]{de_jong_stacks_2017}) are in one-to-one correspondence with the $\sigma$-involutions of $A$ in the sense of Gille
 \cite[\S1]{gille_gersten_2009}. The quasicoherence assumption applies in particular when $A$ is an Azumaya algebra
 over $X$.
 \end{example}

 \begin{example}\label{EX:lambda-tr} 
 Let $\lambda=(\Lambda,\nu,\lambda)$ be an involution of $(\bfX,\calO_\bfX)$
 and
 let $n$ be a natural number. Then $\nMat{\calO_{\mathbf{X}}}{n\times n}$ admits a
$\lambda$-involution given by $(\alpha_{ij})_{i,j}\mapsto (\alpha_{ji}^{\lambda})_{i,j}$ on sections and denoted $\lambda\tr$.
If the sections $\alpha_{ij}$ lie in $\calO_{\mathbf{X}}(U)$, then the sections $\alpha_{ji}^{\lambda}$ lie in
$(\Lambda \calO_{\mathbf{X}})(U)$.
\end{example}

\subsection{Morphisms}
\label{subsec:morphisms}

We now give the general definition of a morphism of ringed topoi with involution. Only very few examples of these will
be considered in the sequel.

\medskip

 Recall that a geometric morphism of topoi $f:\bfX\to \bfX'$ consists of two functors $f_*:\bfX\to \bfX'$,
 $f^*:\bfX'\to \bfX$ together with an adjunction between $f^*$ and $ f_*$ and such that $f^*$ commutes with finite
 limits. We shall usually denote the unit and counit natural transformations associated to the adjunction by
 $\eta^{(f)}:\id_{\bfX'}\Rightarrow f_*f^*$ and $\veps^{(f)}:f^*f_*\Rightarrow \id_{\bfX}$, dropping the
 superscript $f$ when there is no risk of confusion. If $(\bfX,\calO)$ and $(\bfX',\calO')$ are ringed topoi, then
 a morphism $f:(\bfX,\calO)\to (\bfX',\calO')$ consists of a geometric morphism of topoi $f:\bfX\to \bfX'$ together
 with a ring homomorphism $f_\#:\calO'\to f_*\calO$, which then corresponds to a ring homomorphism
 $f^\#:f^*\calO'\to \calO$ via the adjunction.
	
	Now let $(\bfX,\calO)$ and $(\bfX',\calO')$ be ringed topoi with involutions $(\Lambda,\nu,\lambda)$ and
 $(\Lambda',\nu',\lambda')$, respectively. Regarding $\Lambda$ and $\Lambda'$ as geometric automorphisms of
 $\bfX$ and $\bfX'$, see Remark~\ref{RM:C-two-action}, a morphism of ringed topoi with involution
 $ (\bfX,\calO)\to (\bfX',\calO')$ should intuitively consist of a morphism of ringed topoi $f$ such that
 $f\circ \Lambda$ is ``equivalent'' to $\Lambda'\circ f$. The specifications of this equivalence, which we now
 give, are somewhat technical.
	
	\begin{definition}\label{DF:morphism-top-w-inv}
 With the previous notation, a morphism of ringed topoi with involution $(\bfX,\calO)\to (\bfX',\calO')$
 consists of a morphism of ringed topoi $f:(\bfX,\calO)\to (\bfX',\calO')$ together with natural isomorphisms
 $\alpha_*:f_*\Lambda\Rightarrow \Lambda'f_*$ and $\alpha^*:f^*\Lambda'\Rightarrow \Lambda f^*$ satisfying the
 following coherence conditions for all objects $X$ in $\bfX$ and $X'$ in $\bfX'$:
		\begin{enumerate}[label=(\arabic*)]
 \item \label{item:DF:alpha-to-alpha} The following diagram, the columns of which are induced by $\alpha^*$ and
 $\alpha_*$ and the rows of which are induced by the relevant adjunctions, commutes.
			\[
			\xymatrix{
			\Hom_{\bfX}(\Lambda f^* X',X) \ar[r]\ar[d] &
			\Hom_{\bfX'}(X',f_*\Lambda X) \ar[d] \\
			\Hom_{\bfX}(f^*\Lambda'X',X) \ar[r] &
			\Hom_{\bfX'}(X',\Lambda'f_*X)
			}
			\]
			\item \label{item:DF:lower-coherence}
			$f_*\nu_X=\nu'_{f_*X}\circ \Lambda'\alpha_{*, X}\circ \alpha_{*,\Lambda X}$
			\item \label{item:DF:lower-ring-coherence}
			$\Lambda' f_\#\circ \lambda'=\alpha_{*,\calO}\circ f_*\lambda \circ f_\#$
 \item \label{item:DF:upper-coherence}
 $f^*\nu'_{X'}=\nu_{f^*X'}\circ \Lambda\alpha^*_{X'}\circ \alpha^*_{\Lambda'X'}$
			\item \label{item:DF:upper-ring-coherence}
			$\lambda\circ f^{\#}=\Lambda f^{\#}\circ \alpha^*_{\calO'}\circ f^*\lambda'$
		\end{enumerate}
		We say that $f$ is \emph{strict} when $\alpha_*=\id$ and $\alpha^*=\id$, so that
 $f_*\Lambda=\Lambda'f_*$ and $\Lambda f^*=f^*\Lambda'$.
		
 We call $f$ an equivalence when $f_*$ is an equivalence of categories and $f_{\#}$ is an isomorphism, in which case the
 same holds for $f^*$ and $f^\#$.
	\end{definition}
	
	\begin{remark}
 Yoneda's lemma and condition~\ref{item:DF:alpha-to-alpha} imply that $\alpha_*$ and $\alpha^*$ determine each
 other. Explicitly, this is given as
		\[
		\alpha^*_{X'}=
		\veps^{(f)}_{\Lambda f^*X'}\circ f^*\nu'_{f_*\Lambda f^*X'}\circ
		f^*\Lambda'\alpha_{*,\Lambda f^*X'}\circ f^*\Lambda'f_*\nu^{-1}_{f^*X'}
		\circ f^*\Lambda'\eta_{X'}\ .
		\]
		Furthermore, provided \ref{item:DF:alpha-to-alpha} holds, conditions \ref{item:DF:lower-coherence} and
 \ref{item:DF:upper-coherence} are equivalent, and so are \ref{item:DF:lower-ring-coherence} and
 \ref{item:DF:upper-ring-coherence}. Thus, in practice, it is enough to either specify
 $\alpha_*:f_*\Lambda\Rightarrow \Lambda'f_*$ and verify \ref{item:DF:lower-coherence} and
 \ref{item:DF:lower-ring-coherence}, or specify $\alpha^*:f^*\Lambda'\Rightarrow \Lambda f^*$ and verify
 \ref{item:DF:upper-coherence} and \ref{item:DF:upper-ring-coherence}. 
	\end{remark}

	In accordance with Remark~\ref{RM:C-two-action}, we will sometimes
	call morphisms of ringed topoi with involution \emph{$C_2$-equivariant}
	morphisms.

	We will usually suppress $\alpha_*$ and $\alpha^*$
	in computations, identifying
	$f_*\Lambda$ with $\Lambda'f_*$ and $\Lambda f^*$ with $f^*\Lambda'$.
	The coherence conditions guarantee that this will not cause inconsistency
	or ambiguity.

	\begin{example}\label{EX:weakly-triv-inv-morphism}
 Let $(\Lambda,\nu,\lambda)$ be a weakly trivial involution of $(\bfX,\calO)$ and let
 $(\Lambda',\nu',\lambda')$ be the trivial involution on $(\bfX,\calO)$, see
 Definition~\ref{DF:trivial-involution}. Then there is $\theta:\Lambda\Rightarrow \id$ such that
 $\theta_{\calO}=\lambda^{-1}$ and $\theta_X\circ \theta_{\Lambda X}=\nu_X$ for all $X$ in $\bfX$, and one can
 readily verify that $(f,\alpha_*,\alpha^*):=(\id_{(\bfX,\calO )},\theta,\theta^{-1})$ defines an equivalence
 from $(\bfX,\calO,\Lambda,\nu,\lambda)$ to $(\bfX,\calO,\Lambda',\nu',\lambda')$, and that every such
 equivalence is of this form.
 		
 		More generally, if $(\Lambda,\nu,\lambda)$ is arbitrary and there exists an equivalence
 $(f,\alpha_*,\alpha^*)$ from $(\bfX,\calO,\Lambda,\nu,\lambda )$ to a ringed topos with a trivial
 involution, then $(\Lambda,\nu,\lambda)$ is weakly trivial in the sense of
 Definition~\ref{DF:trivial-involution}; take
 $\theta_X=\veps_X\circ f^*\alpha_{*,X}\circ \veps_{\Lambda X}^{-1}$.
	\end{example}

 If
 $f: (\mathbf X_1, \sh O_{1}, \Lambda_1, \nu_1, \lambda_1) \to (\mathbf X_2, \sh O_{2}, \Lambda_2, \nu_2,
 \lambda_2)$
 is a morphism of ringed topoi with involution, and if $A$ is an Azumaya $\calO_2$-algebra with a
 $\lambda_2$-involution $\tau$, then $ f^* A \tensor_{f^* \calO_2} \calO_1$ is an Azumaya algebra on
 $ (\mathbf X_1,\calO_1)$ with a $\lambda_1$-involution given by $f^* \tau \tensor \lambda_1$. This induces
 transfer functors
 \[\Az_n(\calO_2)\to \Az_n(\calO_1) \qquad\text{and}\qquad
 \Az_n(\calO_2,\lambda_2)\to \Az_n(\calO_1,\lambda_1)\ .\]
 We shall need a particular instance of these transfer maps
 later.

\begin{example} \label{ex:complexRealization}
 If $X$ is a complex variety with involution $\lambda: X \to X$, then 
 the \'etale ringed topos of $X$, denoted $\mathbf X_{\et}$, becomes a locally ringed topos with involution, as in
 Example \ref{ex:galois}. On the other hand, one may form the topological space $X(\CC)$, equipped with the analytic
 topology, which then has an associated site and consequently an associated topos $\cat X_\top$. One may endow the topos
 $\cat X_\top$ with several different local ring objects depending on the kind of geometry one wishes to carry
 out. There is the sheaf $\sh H$ of holomorphic functions, as set out in 
\cite{grothendieck_techniques_1960}, 
and there is also the
 sheaf $\sh C$ of continuous $\CC$-valued functions.

 We claim that $(\cat X_\top, \sh C)$ is a locally ringed topos and that there is a ``realization'' morphism
 $(\cat X_\top, \sh C)\to (\cat X_\et, \sh O_X)$ of ringed topoi with involution. An outline of the argument follows.

For every complex variety $U$, there is a unique, functorially-defined \textit{analytic topology} on
 $U(\CC)$; this is established in \cite[Exp.~XII]{grothendieck_revetements_1971}. 
 The functor $U \mapsto U(\CC)$ preserves finite limits. Moreover, if $\{ U_i \to X \}_{i \in I} $ is an \'etale
 covering of $X$, then the family of maps $\{U_i(\CC) \to X(\CC) \}_{i \in I}$ is a jointly surjective family of local
 homeomorphisms, and may therefore be refined by a jointly surjective family $\{V_i \to X(\CC)\}_{i \in I'}$ of open
 inclusions. Since families of this form generate the usual Grothendieck topology on the topological space $X(\CC)$, it
 follows that there is a morphism of sites $f: (X(\C), \top) \to X_\et$, and therefore a morphism of topoi, \cite[III.1
 and IV.4.9.4]{artin_theorie_1972-1}. Complex realization may be applied to $\A^1_\CC$, the representing object for
 $\sh O_X$, to obtain $\CC$, the representing object for $\sh C$ which is the local ring object on
 $X(\CC)$. Therefore, $f$ is a morphism of locally ringed topoi. It is routine to verify that since the involution on
 $X$ becomes the obvious involution on $X(\CC)$ after realization, the morphism $f: \mathbf X_\top \to \mathbf X_\et$
 extends to a strict morphism of locally ringed topoi with involution. In this instance, all the ``coherence'' natural
 isomorphisms appearing are, in fact, identities.
\end{example}

\subsection{Quotients by an Involution}
\label{subsec:quotients}

Let $\lambda=(\Lambda,\nu,\lambda)$ be an involution of 
a locally ringed topos 
$\bfX$.
We would like to consider a
quotient of 
$\bfX$ by the action
of $\lambda$, or equivalently, by the (weak) $C_2$-action it induces. 
In general, however, it is difficult to define a specific
quotient topos in a geometrically reasonable way. For example, if $(\mathbf{X},\calO_\bfX)=(\Sh(X_\et),\calO_X)$ for a scheme $X$ admitting a
$C_2$-action, then the \'etale ringed topoi of both the geometric quotient $X/C_2$, if exists, and the stack $[X/C_2]$
may \textit{a priori} serve as reasonable quotients of $\bfX$.

We therefore ignore the problem of constructing or specifying a quotient of a locally ringed topos with involution and
instead enumerate the properties that such a quotient should possess, declaring any
locally ringed topos possessing these
properties to be satisfactory.

To be precise, we ask for a locally ringed
topos 
$\bfY$, endowed with the trivial involution,
together with a $C_2$-equivariant morphism
$\pi:\bfX\to \bfY$ which satisfy 
certain axioms. 
Recall from Subsection~\ref{subsec:morphisms} that the data of $\pi$ consists of 
a geometric morphism of topoi $\pi=(\pi^*,\pi_*):\mathbf{X}\to \mathbf{Y}$, a ring 
homomorphism $\pi_\#:\calO_\bfY\to \pi_*\calO_\bfX$ (or equivalently, $\pi^\#:\pi^*\calO_\bfY\to \calO_\bfX$) and natural transformations $\alpha_*:\pi_*\Lambda\Rightarrow\pi_*$,
$\alpha^*:\pi^*\Rightarrow \Lambda\pi^*$ satisfying the relations of
Definition~\ref{DF:morphism-top-w-inv}.
We will often suppress $\alpha_*$ and $\alpha^*$, identifying
$\pi_*\Lambda$ with $\pi_*$ and $\Lambda\pi^*$ with $\pi^*$.
In fact, in many of our examples, $\alpha_*$ and $\alpha^*$ will both be the identity.

\begin{definition}\label{DF:exact-quotient} Let $(\mathbf{X},\calO_{{\mathbf{X}}})$ be a locally ringed topos with involution
$\lambda=(\Lambda,\nu,\lambda)$, let $(\mathbf{Y},\calO_{\bfY})$ be a locally ringed topos with a trivial involution, and let
$\pi:(\mathbf{X},\calO_{{\mathbf{X}}})\to (\mathbf{Y},\calO_{\bfY})$ be a $C_2$-equivariant morphism of ringed
topoi. We say that $\pi$ is an \emph{exact quotient} (of $(\mathbf{X},\calO_{{\mathbf{X}}})$ by the given 
$C_2$-action) if
\begin{enumerate}[label=(E\arabic*)]
\item \label{i:sq1} $\pi_\#: \calO_{\bfY}\to \pi_*\calO_{{\mathbf{X}}}$ is the equalizer of
$\pi_*\lambda:\pi_*\calO_{{\mathbf{X}}}\to \pi_*\Lambda \calO_{{\mathbf{X}}}= \pi_*\calO_{{\mathbf{X}}}$ and the identity map
$\id_{\pi_*\calO_\bfX}$,
\item \label{i:sq2} $\pi_*$ preserves epimorphisms.
\end{enumerate}
\end{definition}

\begin{remark}
An exact quotient is in particular a morphism of locally
	ringed topoi, i.e., a morphism of ringed topoi $\pi:\bfX\to \bfY$ satisfying
	the additional condition that $\units{\calO_\bfY}\to \calO_\bfY$ is the pullback
	of $\units{\pi_*\calO_\bfX}\to \pi_*\calO_\bfX$ along $\pi_\#$. 
	Indeed, given $V\in\bfY$, the $V$-sections of the pullback
	consist of pairs $(x,y)\in \pi_*\units{\calO_\bfX}(V)\times \calO_\bfY(V)$
	with $\pi_\# y=x$ in $\pi_*\calO_\bfX(V)$.
	By \ref{i:sq1}, we have $\pi_*\lambda(x)=x$ in $\calO_\bfX(V)$.
	Since $x\in \pi_*\units{\calO_\bfX}(V)=\units{\pi_*\calO_\bfX(V)}$,
	this means that $\pi_*\lambda(x^{-1})=x^{-1}$.
	Thus, again by \ref{i:sq1}, there exists unique $y'\in \calO_\bfY(V)$
	with $\pi_\# y'=x^{-1}$. In particular,
	$\pi_\#(yy')=xx^{-1}=1$ in $\pi_*\calO_\bfX(V)$.
	Since $\calO_\bfY$ is a subobject of $\pi_*\calO_\bfX$
	via $\pi_\#$ (again, by \ref{i:sq1}), this means that $yy'=1$
	in $\calO_\bfY(V)$, so $y\in\units{\calO_\bfY(V)}=\units{\calO_\bfY}(V)$.
	As $x=\pi_\#y$, it follows that $(x,y)$ is the image of a (necessarily unique)
	$V$-section of $\units{\calO_\bfY}$ 
	under the natural map $\units{\calO_\bfY}\to \pi_*\units{\calO_\bfX}\times_{\pi_*\calO_\bfX}\calO_\bfY$,
	as required.
\end{remark}

The name ``exact'' comes from condition~\ref{i:sq2}, which implies in particular that $\pi_*$ preserves exact sequences
of groups. We shall see below that this condition is critical for transferring cohomological data from $\bfX$ to $\bfY$.
Condition~\ref{i:sq1} informally means that $\calO_\bfY$ behaves as one would expect from the subring of $\calO_\bfX$
fixed by $\lambda$ --- such an object cannot be defined in $\bfX$ because the source and target of $\lambda$ are not, in
general, canonically isomorphic.

\medskip

	To motivate Definition~\ref{DF:exact-quotient},
	we now give two fundamental examples of exact quotients.
	However, in order not to digress, we postpone the proof of their exactness
	to Subsection~\ref{subsec:examples-of-exact-quo}, where
 	further examples and non-examples are exhibited.

\begin{example}\label{EX:important-exact-quo-scheme}
	Let $X$ be a scheme and let $\lambda:X\to X$ be an involution.
	A morphism of schemes $\pi:X\to Y$ is called a \emph{good quotient}
	of $X$ relative to the action of $C_2=\{1,\lambda\}$
	if $\pi$ is affine, $C_2$-invariant, and
	$\pi_\#:\calO_Y\to\pi_*\calO_X$ 
	defines an isomorphism of $\calO_Y$ with $(\pi_*\calO_X)^{C_2}$.
	By \cite[Prp.~V.1.3]{grothendieck_revetements_1971} 
	and
	the going-up theorem, these conditions imply
	that $\pi$ is universally surjective 
	and so this agrees
	with the more general definition in \cite[Tag 04AB]{de_jong_stacks_2017}.
	A good $C_2$-quotient of $X$ exists if and only if
	every $C_2$-orbit in $X$ is contained in an affine open subscheme,
	in which case it is also a categorical quotient in the category
	of schemes, hence unique up to isomorphism \cite[Prps.~V.1.3, V.1.8]{grothendieck_revetements_1971}.

	Let $({\mathbf{X}},\calO_{\mathbf{X}})=(\Sh(X_\et),\calO_X)$, and let $\lambda=(\Lambda,\nu,\lambda)$ be the
 involution of $(\bfX,\calO_\bfX)$ induced by $\lambda:X\to X$, see Example~\ref{ex:galois}. Given a good
 $C_2$-quotient, $\pi:X\to Y$, we define an exact quotient $\pi:(\bfX,\calO_\bfX)\to (\bfY,\calO_\bfY)$ relative
 to $\lambda$ by taking $(\bfY,\calO_\bfY)=(\Sh(Y_\et),\calO_Y)$, letting
 $\pi=(\pi^*,\pi_*):\Sh(X_\et)\to \Sh(Y_\et)$ and defining $\pi_\#:\calO_\bfY\to \pi_*\calO_\bfX$ to be the
 canonical extension of $\pi_\#:\calO_Y\to \pi_*\calO_X$ in $\Sh(X_{\Zar})$ to the corresponding ring objects in
 $\Sh(X_{\et})$. The suppressed natural transformations $\alpha_*$ and $\alpha^*$ are both the identity.
 \end{example}

\begin{example}\label{EX:important-exact-quo-top}
 Let $X$ be a Hausdorff topological space, let $\lambda:X\to X$ be a continuous involution, let $Y=X/\{1,\lambda\}$ and
 let $\pi:X\to Y$ be the quotient map. Let $(\bfX,\calO_\bfX)=(\Sh(X),\calC(X,\C))$ and let 
 $\lambda=(\Lambda,\nu,\lambda)$ be the
 involution induced by $\lambda:X\to X$, see Example~\ref{ex:top}. We define an exact quotient
 $\pi:(\bfX,\calO_\bfX)\to (\bfY,\calO_\bfY)$ relative to $\lambda$ by taking
 $(\bfY,\calO_\bfY)=(\Sh(Y),\cont(Y,\C))$, letting $\pi=(\pi^*,\pi_*):\Sh(X)\to \Sh(Y)$ be the geometric morphism
 induced by $\pi:X\to Y$, and defining $\pi_\#:\calC(Y,\C)\to \pi_*\calC(X,\C)$ to be the morphism sending a section
 $f\in \cont(U,\C)$ to
 $f\circ \pi\in \cont(\pi^{-1}(U),\C)=\pi_*\cont(X,\C)(U)$.
 Again, the suppressed natural transformations $\alpha_*$ and $\alpha^*$ are both
 the identity.
\end{example}

	We also record the following trivial example.
	
	\begin{example}\label{EX:trivial-exact-quotient}
		Suppose that the involution $\lambda=(\Lambda,\nu,\lambda)$
		on $(\bfX,\calO_\bfX)$
		is weakly trivial, namely, there is a natural isomorphism
		$\theta:\Lambda\Rightarrow \id$ such that $\theta_X\circ\theta_{\Lambda X}=\nu_X$,
		and $\lambda=\theta^{-1}_{\calO_\bfX}$. 
		Then 
		the identity morphism $\id:(\bfX,\calO_\bfX)\to(\bfX,\calO_\bfX)$
		defines an exact quotient upon taking $\alpha_*=\theta$
		and $\alpha^*=\theta^{-1}$. 
		We call it the
		\emph{trivial quotient} of $(\bfX,\calO_\bfX)$.
		
		More generally, an arbitrary
		exact $C_2$-quotient 
		$\pi:\bfX\to \bfY$		
		will be called trivial when $\pi$ is an equivalence of ringed topoi
		with involution. As noted in Example~\ref{EX:weakly-triv-inv-morphism},
		such a quotient can only exist when the involution
		of 
		$\bfX$ is weakly trivial. 
	\end{example}

	We shall see below (Remark~\ref{RM:C-two-quotients-not-unique}) that
	a locally ringed topos with involution 
	may admit several non-equivalent exact quotients.

\medskip

We turn to establish some properties of exact quotients that will arise in the sequel. The most crucial of these will
be the fact that when 
$\pi:\bfX\to \bfY$ 
is an exact $C_2$-quotient, $\pi_*$ induces an
equivalence between the Azumaya $\calO_\bfX$-algebras and the Azumaya $\pi_*\calO_\bfX$-algebras, and similarly for
Azumaya algebras with a $\lambda$-involution.

\medskip

 The following theorem is a consequence of condition~\ref{i:sq2}.

\begin{theorem}
\label{TH:vanishing}
 Let $\pi: \mathbf X \to \mathbf Y$ be a geometric morphism of topoi such that $\pi_*$ preserves epimorphisms, and let
 $G$ be a group in $\mathbf X$. Then:
 \begin{enumerate}[label=(\roman*)] 
 \item \label{item:TH:vanishing:cat-equiv-of-tors} $\pi_*$ induces an equivalence of categories $\Tors(\mathbf{X}, G) \to
 \Tors(\mathbf{Y}, \pi_*G)$.
 \item \label{item:TH:vaninshin:can-isom} There is a canonical isomorphism $\Hoh^i(\mathbf Y, \pi_* G) \iso \Hoh^i(\mathbf X,
 G)$ when $i=0,1$, and for all $i \ge 0$ when $G$ is abelian. For $i=0$, this is the canonical isomorphism
 $\Hoh^0(\bfY,\pi_*G)=\Hoh^0( \bfX, G)$. For $i=1$, this isomorphism agrees with the one induced by
 \ref{item:TH:vanishing:cat-equiv-of-tors} and
 Proposition~\ref{PR:non-ab-coh-basic-prop}\ref{item:PR:non-ab-coh-basic-prop:torsor}.
 \item If $0 \to G'\to G\to G'' \to 0$ is a short exact sequence of abelian groups then the isomorphism of
 \ref{item:TH:vaninshin:can-isom} gives rise to an isomorphism between the cohomology long exact sequence of $ G'\to
 G\to G''$ and the cohomology long exact sequence of $\pi_*G'\to \pi_*G\to\pi_*G''$.
The same holds for the truncated long exact sequence of parts \ref{item:PR:non-ab-coh-basic-prop:long-ex-seq} and
\ref{item:PR:non-ab-coh-basic-prop:longer-ex-seq} of Proposition~\ref{PR:non-ab-coh-basic-prop} when $G',G,G''$ are not
assumed to be abelian.
 \end{enumerate}
\end{theorem}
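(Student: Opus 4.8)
The plan is to establish part \ref{item:TH:vanishing:cat-equiv-of-tors} by hand and then obtain \ref{item:TH:vaninshin:can-isom} and (iii) essentially formally from it.

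For \ref{item:TH:vanishing:cat-equiv-of-tors}: being a right adjoint, $\pi_*$ preserves all limits, in particular finite products and the terminal object, and by hypothesis it preserves epimorphisms. Hence if $P$ is a $G$-torsor in $\cat X$, then $\pi_*P$, with the action $\pi_*P\times\pi_*G=\pi_*(P\times G)\to\pi_*P$, again satisfies the torsor axioms: $\pi_*P\to\ast=\pi_*(\ast)$ is an epimorphism and $\pi_*P\times\pi_*G\to\pi_*P\times\pi_*P$ is an isomorphism. This yields the functor $F:=\pi_*\colon\Tors(\cat X,G)\to\Tors(\cat Y,\pi_*G)$. For a quasi-inverse I would use that $\pi^*$, being left exact and a left adjoint, preserves finite limits, the terminal object, and epimorphisms; consequently, for a $\pi_*G$-torsor $Q$ the object $\pi^*Q$ is a $\pi^*\pi_*G$-torsor, and pushing it along the group homomorphism $\veps_G\colon\pi^*\pi_*G\to G$ (the counit) produces a $G$-torsor $H(Q):=\pi^*Q\times^{\pi^*\pi_*G}G$. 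Then I would exhibit the two natural isomorphisms: $HF\cong\id$ via the $G$-equivariant map $H(\pi_*P)\to P$ obtained by descending the counit $\veps_P\colon\pi^*\pi_*P\to P$ along $\veps_G$, and $FH\cong\id$ via the $\pi_*G$-equivariant map $Q\to\pi_*H(Q)$ obtained by composing the unit $\eta_Q\colon Q\to\pi_*\pi^*Q$ with $\pi_*$ of the canonical inclusion $\pi^*Q\to H(Q)$ and invoking the triangle identity $\pi_*\veps_G\circ\eta_{\pi_*G}=\id$. Both maps are morphisms of torsors over a fixed group, hence automatically isomorphisms (\cite[Chap.~III]{giraud_cohomologie_1971}). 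The only real work here is the bookkeeping showing these maps are equivariant for the indicated actions, which follows from naturality of the unit and counit and the triangle identities; I expect this to be the main, though routine, obstacle.

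For \ref{item:TH:vaninshin:can-isom}, the case $i=0$ is the adjunction isomorphism $\Hoh^0(\cat Y,\pi_*G)=\Hom_{\cat Y}(\ast,\pi_*G)\iso\Hom_{\cat X}(\pi^{*}\ast,G)=\Hom_{\cat X}(\ast,G)=\Hoh^0(\cat X,G)$, using that $\pi^*$ preserves the terminal object. For $i=1$ I would take the isomorphism induced by \ref{item:TH:vanishing:cat-equiv-of-tors} and Proposition~\ref{PR:non-ab-coh-basic-prop}\ref{item:PR:non-ab-coh-basic-prop:torsor}; it is a pointed bijection since $F$ carries the trivial $G$-torsor to the trivial $\pi_*G$-torsor. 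When $G$ is abelian, $\pi^*$ is exact (left exact as part of the geometric morphism, right exact as a left adjoint), so $\pi_*$ preserves injective abelian group objects; moreover $\pi_*$, being left exact and epimorphism-preserving, is exact on abelian group objects, and $\Gamma_{\cat X}=\Gamma_{\cat Y}\circ\pi_*$. Hence $R\Gamma_{\cat X}\iso R\Gamma_{\cat Y}\circ\pi_*$, a canonical isomorphism of $\delta$-functors $\Hoh^i(\cat X,-)\iso\Hoh^i(\cat Y,\pi_*-)$; its degree-$0$ part is the one above, and its degree-$1$ part coincides with the torsor isomorphism by Proposition~\ref{PR:non-ab-coh-basic-prop} (comparison of the abelian and non-abelian $\Hoh^1$). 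This last compatibility, as well as the fact that the degree-$1$ isomorphism is computed on \v{C}ech cocycles by $[g]\mapsto[\pi_*g]$, can be checked using Example~\ref{EX:Cech-hypercovering} and the explicit torsor$\leftrightarrow$cocycle correspondence in the proof of Proposition~\ref{PR:non-ab-coh-basic-prop}.

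For (iii), in the abelian case $\pi_*$ is exact, so $\pi_*G'\to\pi_*G\to\pi_*G''$ is again short exact, and the isomorphism of $\delta$-functors from \ref{item:TH:vaninshin:can-isom} is by construction compatible with connecting homomorphisms, giving the desired isomorphism of long exact sequences. In the non-abelian case one argues the same way for the truncated sequences of Proposition~\ref{PR:non-ab-coh-basic-prop}\ref{item:PR:non-ab-coh-basic-prop:long-ex-seq} and \ref{item:PR:non-ab-coh-basic-prop:longer-ex-seq}: the maps between the $\Hoh^0$'s and $\Hoh^1$'s are induced functorially and are visibly compatible with $\pi_*$, while for the connecting maps $\delta^1$ and $\delta^2$ one unwinds their explicit cocycle definitions from the proof of Proposition~\ref{PR:non-ab-coh-basic-prop} and uses that $\pi_*$ sends a \v{C}ech hypercovering in $\cat X$ to the \v{C}ech hypercovering of its image (products being preserved), sends a lift of a section along an epimorphic covering $U\to\ast$, resp.\ $V\to U_1$, to such a lift over $\pi_*U$, resp.\ $\pi_*V$, and sends the hypercovering refinement provided by Lemma~\ref{LM:hypercover-refinement} to one with the analogous factorisation property, together with the identification $[g]\mapsto[\pi_*g]$ noted above. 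The most laborious point is the diagram chase for $\delta^2$.
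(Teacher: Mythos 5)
Your proof is correct, and parts (ii) and (iii) essentially follow the paper's approach (identifying $R\Gamma_{\cat X}$ with $R\Gamma_{\cat Y}\circ\pi_*$ via exactness of $\pi_*$ and preservation of injectives, then confronting it with the torsor/cocycle description in degree $1$, and for the non-abelian connecting maps unwinding the cocycle recipes from Proposition~\ref{PR:non-ab-coh-basic-prop}). Part (i), however, takes a genuinely different route. The paper cites Giraud (\cite{giraud_cohomologie_1971}*{Chap.~V, 3.1.1.1}) for an equivalence $\Tors(\cat X,G)^{\cat Y}\to\Tors(\cat Y,\pi_*G)$ defined on the subcategory of $G$-torsors trivialized by a covering pulled back from $\cat Y$, and then shows that this subcategory is everything, by observing that $\pi_*P\to\ast$ is epi so $\pi^*\pi_*P$ trivializes $P$. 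You instead construct the quasi-inverse $Q\mapsto\pi^*Q\times^{\pi^*\pi_*G}G$ by hand and verify both composites via unit/counit and the triangle identities. Both are valid; the paper's version is shorter because the real content is delegated to the cited result, while yours is self-contained and makes the role of the counit $\veps_G\colon\pi^*\pi_*G\to G$ explicit (which is essentially what lies behind the cited Giraud statement, so the two approaches are closer in spirit than in presentation). One point worth making precise if you write out the details: for $i=1$ you must check that the map $[g]\mapsto[\pi_*g]$ on hypercover cohomology actually lands in, and is bijective onto, the colimit over \emph{all} hypercoverings of $\cat Y$, not merely those of the form $\pi_*U_\bullet$; the paper sidesteps this cofinality question by identifying the map with the one produced by the torsor equivalence, and your argument should do so as well rather than relying on $\pi_*$ of Čech hypercoverings being cofinal, which is not obviously true in general.
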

\begin{proof}
 \begin{enumerate}[label=(\roman*), align=left, leftmargin=0pt, itemindent=1ex, itemsep=0.3\baselineskip]
 \item We treat the topos as a site in its own canonical topology. In this language,
 \cite{giraud_cohomologie_1971}*{Chap.~V, Sect.~3.1.1.1} says that $\pi_*$ induces an equivalence $\Tors(\mathbf{X},
 G)^{\mathbf{Y}} \to \Tors(\mathbf{Y}, \pi_*G)$, where the source is the category of $G$-torsors $P$ for which there exists a
 covering $U \to *_{\mathbf{Y}}$ such that $P_{\pi^*U}\iso G_{\pi^*U}$. We claim that this applies to all $G$-torsors, and
 so $\pi_*$ induces an equivalence $\Tors(\mathbf{X}, G) \to \Tors(\mathbf{Y}, \pi_*G)$.

 Since a $G$-torsor $P$ is trivialized by itself, it is also trivialized by any object mapping to $P$, for instance by
 $\pi^{*}\pi_* P$. It is therefore sufficient to show that the map $\pi_*P\to *_{\bfY}$ is an epimorphism. Since $P\to
 *_{\bfX}$ is an epimorphism, our assumption implies that $\pi_*P\to \pi_*(*_\bfX)=*_{\bfY}$ is also an epimorphism, so
 the claim is verified.
\item Suppose first that $G$ is abelian. The fact that $\pi_*$ is exact implies that the family of functors
 $\{G\mapsto \Hoh^i(\bfY,\pi_*G)\}_{i\geq 0}$ from abelian groups in $\bfX$ to abelian groups forms a
 $\delta$-functor. Thus, the universality of derived functors implies that the canonical isomorphism
 $\Hoh^0(\bfX, G)\xrightarrow{\sim}\Hoh^0(\bfY, \pi_*G)$ gives rise to a unique natural transformation
 $\Hoh^i(\bfX,G)\to \Hoh^i(\bfX,\pi_*G)$ for any abelian $G$. Since $\pi_*$ takes injective abelian groups to injective
 abelian groups, $\{G\mapsto \Hoh^i(\bfY,\pi_*G)\}_{i\geq 0}$ is an effaceable $\delta$-functor, hence
 universal. Applying the universality of the latter to the natural isomorphism
 $\Hoh^0(\bfY, \pi_*G)\xrightarrow{\sim}\Hoh^0(\bfX, G)$ implies that $\Hoh^i(\bfX,G)\to \Hoh^i(\bfX,\pi_*G)$ is an
 isomorphism.

 We note that if we use Verdier's Theorem, see Subsection~\ref{subsec:coh-of-ab-grps}, to describe $\Hoh^i(\bfX,G)$ and
 $\Hoh^i(\bfY,\pi_*G)$, then the isomorphism is given by sending the cohomology class represented by $g\in
 Z^i(U_\bullet,G)$ to the cohomology class represented by $\pi_*g\in Z^i(\pi_*U_\bullet,\pi_*G)$. Notice that
 $\pi_*U_\bullet$, which is just $\pi_*\circ U_\bullet:\Simp\to \bfY$, is a hypercovering since $\pi_*$ preserves
 epimorphisms and commutes with $\cosk_n$ for all $n$. This isomorphism coincides with the one in the previous paragraph
 because they coincide on the $0$-th cohomology.

 When $i=1$, the map we have just described is defined for an arbitrary group $G$, and we take it to be the canonical
 morphism $\Hoh^1(\bfX,G)\to \Hoh^1(\bfY,\pi_*G)$.
 The construction in the proof of
 Proposition~\ref{PR:non-ab-coh-basic-prop}\ref{item:PR:non-ab-coh-basic-prop:torsor} implies 
 that this map agrees with the one induced by \ref{item:TH:vanishing:cat-equiv-of-tors}
 and Proposition~\ref{PR:non-ab-coh-basic-prop}\ref{item:PR:non-ab-coh-basic-prop:torsor},
 and thus $\Hoh^1(\bfX,G)\to \Hoh^1(\bfY,\pi_*G)$ is an isomorphism.
 
\item In the abelian case, this follows from the argument given in \ref{item:TH:vaninshin:can-isom}, 
	which shows that
 $\{G\mapsto \Hoh^i(\bfX,G)\}_{i\geq 0}$ and $\{G\mapsto \Hoh^i(\bfY,\pi_*G)\}_{i\geq 0}$ are isomorphic
 $\delta$-functors. In the nonabelian case, this follows from the proof of Proposition~\ref{PR:non-ab-coh-basic-prop},
 parts \ref{item:PR:non-ab-coh-basic-prop:long-ex-seq} and \ref{item:PR:non-ab-coh-basic-prop:longer-ex-seq}.
 \qedhere
 \end{enumerate}
 \end{proof}
 
 \begin{remark} When $\pi:\bfX\to \bfY$ is a geometric morphism of topoi, with no further assumptions, one still has
 natural transformations $\Hoh^i(\bfY,\pi_*G)\to \Hoh^i(\bfX,G)$ for all $G$ abelian and $i\geq 0$, or $G$ non-abelian
 and $i=0,1$, 
 and if $\pi_*$ preserves epimorphisms, they coincide with the inverses of the isomorphisms of
 Theorem~\ref{TH:vanishing}. In the abelian case, the construction is given as follows: Using the exactness of $\pi^*$,
 one finds that the canonical map $\Hoh^0(\bfY, A)\to \Hoh^0(\bfX,\pi^* A)$ gives rise to natural transformations
 $\Hoh^i(\bfY,A)\to \Hoh^i(\bfX,\pi^*A)$. Taking $A=\pi_*G$ and composing with the map $\Hoh^i(\bfX,\pi^*\pi_*G)\to
 \Hoh^i(\bfX,G)$, induced by the counit $\pi^*\pi_*G\to G$, one obtains a natural transformation $\Hoh^i(\bfY,\pi_*G)\to
 \Hoh^i(\bfX,G)$. This map can be written explicitly on the level of cocycles, using Verdier's Theorem, and be adapted
 to the non-abelian case when $i=0,1$.
 \end{remark}

 Henceforth, let 
 $\pi:\bfX\to \bfY$ be an exact quotient relative to an
 involution $\lambda=(\Lambda,\nu,\lambda)$ on 
 $\bfX$. 
 We write
 \[R=\pi_*\calO_{{\mathbf{X}}}\qquad\text{and}\qquad S=\calO_\bfY\] for brevity, and, abusing the notation, we let
 $\lambda:R\to R$ denote the involution $\pi_*\lambda: \pi_*\calO_{{\mathbf{X}}}\to\pi_*\Lambda\calO_{{\mathbf{X}}}$.

 We shall use the following lemma freely to identify $\pi_*\GL_n(\calO_{\mathbf{X}})$ with $\GL_n(R)$ and
 $\pi_*\PGL_n(\calO_{\mathbf{X}})$ with $\PGL_n(R)$.

 \begin{lemma}\label{LM:PGLn-preserved-under-pi} For all $n\in\N$, there are canonical isomorphisms
 $\pi_*\nMat{\calO_\bfX}{n\times n}\iso \nMat{R}{n\times n}$, $\pi_*\GL_n(\calO_{{\mathbf{X}}})\iso \GL_n(R)$ and
 $\pi_*\PGL_n(\calO_{{\mathbf{X}}})\iso \PGL_n(R)$.
 \end{lemma}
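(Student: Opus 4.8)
The plan is to exploit that $\pi_*$, being a right adjoint, preserves all limits --- in particular finite products and equalizers --- and therefore commutes with any construction on ring or group objects assembled from these. First I would record that $\nMat{\calO_\bfX}{n\times n}$ has underlying object the finite product $\calO_\bfX^{n^2}$, with addition, unit and multiplication given by morphisms built from the ring operations of $\calO_\bfX$ and the product projections. Applying $\pi_*$ then produces a ring object on $R^{n^2}$ carrying the matrix operations over $R$; that is, the canonical comparison morphism $\pi_*\nMat{\calO_\bfX}{n\times n}\to\nMat{R}{n\times n}$ is an isomorphism of ring objects.

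Next I would realize $\units{\calO_\bfX}$ and $\GL_n(\calO_\bfX)$ as finite limits over $\calO_\bfX$: the object $\units{\calO_\bfX}$ is the subobject of $\calO_\bfX^2$ cut out by the equation $ab=1$, and $\GL_n(\calO_\bfX)$ is the subobject of $\nMat{\calO_\bfX}{n\times n}^2$ cut out by the equations saying that a pair of matrices is mutually inverse; each is an equalizer of morphisms built from the ring operations. Since $\pi_*$ preserves these equalizers and, by the previous step, sends $\nMat{\calO_\bfX}{n\times n}$ to $\nMat{R}{n\times n}$, the canonical maps $\pi_*\units{\calO_\bfX}\to\units{R}$ and $\pi_*\GL_n(\calO_\bfX)\to\GL_n(R)$ are isomorphisms of group objects, compatibly with the central scalar inclusions $\units{\calO_\bfX}\hookrightarrow\GL_n(\calO_\bfX)$ and $\units{R}\hookrightarrow\GL_n(R)$.

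The genuinely non-formal point, which I expect to be the main obstacle, is $\PGL_n$, since $\PGL_n(\calO_\bfX)=\GL_n(\calO_\bfX)/\units{\calO_\bfX}$ is a colimit --- a sheaf cokernel --- and right adjoints need not preserve colimits. Here I would invoke the defining property \ref{i:sq2}: applying $\pi_*$ to the central short exact sequence of group objects
\[
1\longrightarrow\units{\calO_\bfX}\longrightarrow\GL_n(\calO_\bfX)\longrightarrow\PGL_n(\calO_\bfX)\longrightarrow1,
\]
the left-exactness of $\pi_*$ (automatic for a right adjoint) together with \ref{i:sq2} (which keeps $\pi_*\GL_n(\calO_\bfX)\to\pi_*\PGL_n(\calO_\bfX)$ an epimorphism) yields a short exact sequence $1\to\units{R}\to\GL_n(R)\to\pi_*\PGL_n(\calO_\bfX)\to1$ of group objects in $\bfY$, in which $\units{R}$ is central in $\GL_n(R)$. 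Since a central quotient is determined up to canonical isomorphism by such a sequence, $\pi_*\PGL_n(\calO_\bfX)$ is canonically $\GL_n(R)/\units{R}=\PGL_n(R)$. As all three isomorphisms produced are the evident natural comparison maps, the assertion that they are canonical is immediate.
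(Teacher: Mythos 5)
Your proposal is correct and takes essentially the same approach as the paper: for the first two isomorphisms both arguments come down to $\pi_*$ preserving finite limits (you phrase this via limit-preservation of right adjoints, the paper computes section-wise via the adjunction $\pi_*F(U)\cong F(\pi^*U)$, but these are the same observation), and for $\PGL_n$ both apply $\pi_*$ to the central short exact sequence $\units{\calO_\bfX}\to\GL_n(\calO_\bfX)\to\PGL_n(\calO_\bfX)$ and use condition \ref{i:sq2} to keep the resulting sequence exact on the right.
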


 \begin{proof} 
 Let $U\in \bfY$. Thanks to the adjunction
 between $\pi^*$ and $\pi_*$, we have a natural
 isomorphism
 $\pi_*\nMat{\calO_\bfX}{n\times n}(U)\cong \nMat{\calO_\bfX}{n\times n}(\pi^*U)
 =\nMat{\calO_\bfX(\pi^*U)}{n\times n}\iso\nMat{\pi_*\calO_\bfX(U)}{n\times n}$.
 This establishes the first isomorphism.

	The second isomorphism is obtained in the same manner.

 The last isomorphism is deduced from the following ladder of short exact sequences
 \begin{equation*} \xymatrix{ 1 \ar[r] & R^\times \ar[r] \ar^\iso[d] & \GL_n(R) \ar[r] \ar^\iso[d] & \PGL_n(R)\ar[r]
 \ar@{.>}^\iso[d] & 1 \\ 1 \ar[r] & \pi_* (\calO_{\mathbf X}^\times) \ar[r] & \pi_*( \GL_n(\calO_{\mathbf{X}}) ) \ar[r]&
 \pi_*\PGL_n( \calO_{\mathbf{X}})\ar[r] & 1.}
 \end{equation*} 
	Here, the left and middle
 isomorphisms follow from the previous paragraph,
 and the bottom row is exact since $\pi_*$ preserves epimorphisms.
\end{proof}

 We shall use the following lemma to identify
 $\PGL_n(R)$ with $\sAut_{\text{$R$-alg}}(\nMat{R}{n\times n})$
 henceforth.

 \begin{lemma}\label{LM:inner-automorphisms}
 The canonical 
 group homomorphism $\PGL_n(R)\to \sAut_{\text{$R$-alg}}(\nMat{R}{n\times n})$ 
 is an isomorphism.
 \end{lemma}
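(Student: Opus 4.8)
The plan is to reduce the assertion to the corresponding, already known, fact over the local ring object $\calO_\bfX$ by pushing forward along the exact quotient $\pi$. Write $M=\nMat{\calO_\bfX}{n\times n}$ for the matrix algebra on $\bfX$ and $M'=\nMat{R}{n\times n}$ for the matrix algebra on $\bfY$; by the first isomorphism of Lemma~\ref{LM:PGLn-preserved-under-pi} there is a canonical identification $\pi_*M\cong M'$ of $R$-algebras. Since $\calO_\bfX$ is a local ring object, the conjugation homomorphism $\PGL_n(\calO_\bfX)\to\sAut_{\text{$\calO_\bfX$-alg}}(M)$ is an isomorphism: this is exactly the identification recalled just before Proposition~\ref{PR:equiv-Az-PGLnR}, via \cite[V.\S4]{giraud_cohomologie_1971}. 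The goal is to apply $\pi_*$ to this isomorphism and recognize the outcome as the conjugation map over $\bfY$.

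The key point is that, although $\pi_*$ does not commute with internal Hom or tensor products in general, it does so for the modules at hand because they are finite free. As an $\calO_\bfX$-module $M\cong\calO_\bfX^{\oplus n^2}$, hence $\sEnd_{\calO_\bfX}(M)\cong M^{\oplus n^2}\cong\calO_\bfX^{\oplus n^4}$, $M\tensor_{\calO_\bfX}M\cong\calO_\bfX^{\oplus n^4}$, and $\sHom_{\calO_\bfX}(M\tensor_{\calO_\bfX}M,M)\cong M^{\oplus n^4}$; since $\pi_*$ is a right adjoint it preserves these finite products, so $\pi_*$ of each of these objects is canonically the corresponding finite free $R$-module, e.g.\ $\pi_*\sEnd_{\calO_\bfX}(M)\cong R^{\oplus n^4}\cong\sEnd_R(M')$ and $\pi_*(M\tensor_{\calO_\bfX}M)\cong M'\tensor_R M'$. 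I would then check, which is routine given that $\pi_*$ is lax monoidal and every structure map in sight is a morphism of finite free $\calO_\bfX$-modules, that under these identifications $\pi_*$ sends the composition law on $\sEnd_{\calO_\bfX}(M)$, the multiplication $M\tensor_{\calO_\bfX}M\to M$, and the unit $\calO_\bfX\to M$ to the corresponding maps for $M'$. Now $\sAut_{\text{$\calO_\bfX$-alg}}(M)$ is the subobject of $\sEnd_{\calO_\bfX}(M)$ cut out by the finite-limit conditions ``commutes with multiplication'', ``fixes the unit'', and ``is invertible''; since $\pi_*$ is left exact it carries this subobject precisely onto the subobject of $\sEnd_R(M')$ defined by the same conditions, which is $\sAut_{\text{$R$-alg}}(M')$. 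Thus $\pi_*$ induces a canonical isomorphism $\pi_*\sAut_{\text{$\calO_\bfX$-alg}}(M)\xrightarrow{\ \sim\ }\sAut_{\text{$R$-alg}}(M')$.

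Assembling these facts gives a chain of isomorphisms
\[
\PGL_n(R)\xrightarrow{\ \sim\ }\pi_*\PGL_n(\calO_\bfX)\xrightarrow[\ \sim\ ]{\pi_*(\mathrm{conj})}\pi_*\sAut_{\text{$\calO_\bfX$-alg}}(M)\xrightarrow{\ \sim\ }\sAut_{\text{$R$-alg}}(M'),
\]
the first isomorphism being Lemma~\ref{LM:PGLn-preserved-under-pi}. It remains to identify this composite with the conjugation homomorphism of the statement, which is pure naturality: the conjugation map $\GL_n(-)\to\sAut(\nMat{-}{n\times n})$, $u\mapsto(x\mapsto uxu^{-1})$, is assembled from the multiplication and inversion of $\GL_n$ and the conjugation action on $\nMat{n\times n}$, all of which are respected by $\pi_*$ together with the canonical identifications above; chasing an element $\bar u\in\PGL_n(R)(U)=\PGL_n(\calO_\bfX)(\pi^*U)$ through the chain shows it lands on conjugation by a local lift of $\bar u$. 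Hence the conjugation homomorphism $\PGL_n(R)\to\sAut_{\text{$R$-alg}}(\nMat{R}{n\times n})$ equals the displayed composite of isomorphisms, and is therefore itself an isomorphism.

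I expect the main obstacle to be exactly the bookkeeping of the middle paragraph: one must remember that $\pi_*$ fails to commute with internal Hom and with $\tensor$ in general, exploit finite-freeness to sidestep this, and then verify that all identifications used are the canonical/natural ones so that naturality of conjugation pins down the composite. (Alternatively, when $\bfY$ has enough points one can argue stalkwise: each stalk of $R$ is semilocal by Theorem~\ref{thm:local-involution}, and over a semilocal commutative ring every automorphism of a matrix algebra is inner; but the argument above avoids this hypothesis.)
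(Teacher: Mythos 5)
Your proof is correct, and it takes a genuinely different route from the paper's. The paper argues directly on the $\bfY$-side: given a section $\psi$ of the automorphism sheaf over $U\in\bfY$, it localizes $S(U)$ at a prime $\frakp$, invokes Theorem~\ref{thm:local-involution} to conclude that $R(U)_\frakp$ is a semilocal ring, cites the fact that algebra automorphisms of matrix algebras over semilocal rings are inner, spreads the conjugating matrix out to a principal localization $R(U)_{f_\frakp}$, and finally uses that $S=\calO_\bfY$ is a local ring object to obtain a covering on which each $f_\frakp$ becomes invertible. Your argument instead pushes the known identification $\PGL_n(\calO_\bfX)\cong\sAut_{\text{$\calO_\bfX$-alg}}(\nMat{\calO_\bfX}{n\times n})$ forward along $\pi_*$, using only left exactness of $\pi_*$, preservation of epimorphisms (via Lemma~\ref{LM:PGLn-preserved-under-pi}), and finite freeness of the matrix algebra; this dispenses with Theorem~\ref{thm:local-involution} entirely, which is an appealing economy, at the cost of the bookkeeping in your middle paragraph. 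One small imprecision there: ``is invertible'' is not itself a finite-limit condition on a subobject of $\sEnd_{\calO_\bfX}(M)$ as you phrase it. The easiest repair is to describe $\sAut$ as the pullback of $\units{\calO_\bfX}\hookrightarrow\calO_\bfX$ along the determinant $\sEnd_{\calO_\bfX}(M)\to\calO_\bfX$ and note that $\pi_*\units{\calO_\bfX}=\units{R}$; alternatively, realize $\sAut$ as the isomorphic first projection of $\{(\phi,\psi)\in\sEnd\times\sEnd:\phi\psi=\psi\phi=\id\}$, which is a finite limit. With that fix, the compatibility of your chain of identifications with the conjugation maps follows by naturality, as you indicate, and the lemma holds.
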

 
 The lemma would follow immediately from the discussion in Subsection~\ref{subsec:Az-algs} if $R$ were a local ring
 object, but this is not the case in general. Using Theorem~\ref{thm:local-involution}, we shall see that $R$
 functions as a ``semilocal'' ring object and therefore the isomorphism still holds.

\begin{proof}
 We need to show that for all $U\in\bfY$, any $R_U$-automorphism $\psi$ of $\nMat{R_U}{n\times n}$ becomes an inner
 automorphism after passing to a covering $V\to U$. Let $\frakp\in \Spec S(U)$. Then $B:=S(U)_\frakp$ is the
 $\lambda$-fixed subring of $A:=R(U)_\frakp$. By Theorem~\ref{thm:local-involution}, $A$ is semilocal. It is then well
 known that $\psi_A$ is an inner automorphism, \cite[III.\S5.2]{knus_quadratic_1991}. Write
 $\psi_A(x)=a_\frakp xa_\frakp^{-1}$ for some $a_\frakp\in \GL_n(A)$. There exists $f_\frakp\in S(U)- \frakp$ such
 that $a_\frakp$ is the image of an element in $\GL_n(R(U)_{f_\frakp})$, also denoted $a_\frakp$, and such that
 $\psi_{R(U)_{f_\frakp}}$ agrees with $x\mapsto a_\frakp xa_\frakp^{-1}$ on $\nMat{R(U)_{f_\frakp}}{n\times n}$, e.g.\
 if they agree on an $R(U)$-basis of $\nMat{R(U)}{n\times n}$. Since $S=\calO_\bfY$ is a local ring object, and since
 $S(U)=\sum_{\frakp\in\Spec S(U)} S(U)f_\frakp$, there exists a covering $\{V_\frakp\to U\}_{\frakp}$ such that
 $f_\frakp\in\units{S(V_\frakp)}$. By construction, $R(U)\to R(V_\frakp)$ factors through $R(U)_{f_\frakp}$, and thus
 $\psi$ is inner on $V_\frakp$ for all $\frakp$, as required.
 \end{proof}

\begin{lem}\label{LM:equiv-relation-quotient} 
 Let $\pi:{\mathbf{X}}\to \bfY$ be a geometric morphism of topoi such that $\pi_*$ preserves epimorphisms. Then $\pi_*$
 preserves quotients by equivalence relations. In particular, for any group object $G$, any $G$-torsor $P$, and any
 $G$-set $X$, there is a canonical isomorphism $\pi_*(P\times^GX)\iso \pi_*P\times^{\pi_*G}\pi_*X$.
 \end{lem}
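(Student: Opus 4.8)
The plan is to derive everything from two standard facts about a Grothendieck topos $\cat Z$: every equivalence relation in $\cat Z$ is effective, and, as a consequence, every epimorphism in $\cat Z$ is the coequalizer of its own kernel pair. The second fact follows from the first together with the epi--mono factorization in a topos: if $f$ is an epimorphism, factoring it as $A\onto\im(f)\into B$ makes $\im(f)\into B$ both monic and epic, hence an isomorphism, so $f$ is the canonical quotient of $A$ by its kernel pair. Throughout I will use without comment that $\pi_*$, being a right adjoint, preserves all limits; in particular it preserves monomorphisms, finite products and pullbacks, hence kernel pairs.

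First I would prove the general assertion. Let $(d_0,d_1)\colon E\to A\times A$ be an equivalence relation in $\cat X$ with quotient $q\colon A\to A/E$. By effectivity, $E\iso A\times_{A/E}A$ with $d_0,d_1$ the two projections, i.e.\ $E\rightrightarrows A$ is the kernel pair of $q$. Applying $\pi_*$ and using preservation of this pullback, $\pi_*E\rightrightarrows\pi_*A$ is the kernel pair of $\pi_*q\colon\pi_*A\to\pi_*(A/E)$ (in particular it is an equivalence relation). Since $\pi_*q$ is an epimorphism by hypothesis, it is the coequalizer of this kernel pair; hence $\pi_*(A/E)$ is the quotient of $\pi_*A$ by $\pi_*E$, which is what we want.

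Next I would treat the twist $P\times^GX$. Recall (from the discussion of torsors, or \cite[Chap.~III]{giraud_cohomologie_1971}) that $P\times^GX$ is the quotient of $P\times X$ by the subobject $E\subseteq(P\times X)\times(P\times X)$ that is the image of $\phi\colon P\times G\times X\to(P\times X)\times(P\times X)$, $(p,g,x)\mapsto\bigl((pg,x),(p,gx)\bigr)$. The first thing to check is that $\phi$ is a monomorphism: post-composing with $\bigl((p_1,x_1),(p_2,x_2)\bigr)\mapsto\bigl((p_2,p_1),x_1\bigr)$ gives $(p,g,x)\mapsto\bigl((p,pg),x\bigr)$, which is an isomorphism because $P\times G\to P\times P$, $(p,g)\mapsto(p,pg)$, is an isomorphism (the torsor axiom). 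So one may take $E=P\times G\times X$ with $\phi$ as its embedding into $(P\times X)^{\times 2}$. Now apply $\pi_*$: it preserves finite products, and by Theorem~\ref{TH:vanishing}\ref{item:TH:vanishing:cat-equiv-of-tors} (or directly, since $\pi_*$ preserves products and epimorphisms) $\pi_*P$ is a $\pi_*G$-torsor, while $\pi_*X$ is a $\pi_*G$-set with structure maps obtained from those of $X$ by applying $\pi_*$ and using the canonical isomorphisms $\pi_*(-\times-)\iso\pi_*(-)\times\pi_*(-)$. Under these isomorphisms $\pi_*E\iso\pi_*P\times\pi_*G\times\pi_*X$ and $\pi_*\phi$ becomes the map $(p,g,x)\mapsto\bigl((pg,x),(p,gx)\bigr)$ defining $\pi_*P\times^{\pi_*G}\pi_*X$. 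Plugging this identification into the general assertion yields the canonical isomorphism $\pi_*(P\times^GX)\iso\pi_*P\times^{\pi_*G}\pi_*X$.

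The one step that needs genuine care — the ``hard part'', such as it is — is the bookkeeping in the last paragraph: confirming that $\phi$ is monic so that the quotient really is a quotient by the equivalence relation $P\times G\times X$, and checking that applying $\pi_*$ reproduces the analogous defining datum downstairs on the nose, not merely up to an unidentified isomorphism. This amounts to chasing the naturality of $\pi_*(A\times B)\iso\pi_*A\times\pi_*B$ against the action and structure morphisms, together with the torsor axiom for $\pi_*P$. Everything else is formal.
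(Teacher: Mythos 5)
Your argument is correct and matches the paper's own proof: both rest on effectivity of equivalence relations in a topos, the fact that $\pi_*$ preserves limits (hence kernel pairs), and the hypothesis that $\pi_*$ preserves epimorphisms, from which it follows that $\pi_*$ carries the quotient $A\to A/Q$ to the quotient of $\pi_*A$ by $\pi_*Q$. Your further check that $\phi\colon P\times G\times X\to(P\times X)\times(P\times X)$ is a monomorphism (via the torsor axiom) and that $\pi_*$ transports the defining datum of $P\times^G X$ to that of $\pi_*P\times^{\pi_*G}\pi_*X$ is more detail than the paper offers for the ``in particular'' clause, but it is correct and worth having spelled out.
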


 \begin{proof} Recall that in a topos ${\mathbf{X}}$, any equivalence relation $Q\to A\times A$ is effective, meaning that
 $Q=A\times_BA$ for some epimorphism $A\to B$---in fact, $A \to B$ must be isomorphic to $A \to A/Q$.

 Since $\pi_*$ preserves epimorphisms and limits, this means that $\pi_*(A/Q)$ is canonically isomorphic to
 $\pi_*A/\pi_*Q$.
 \end{proof}

 We now come to the main result of this section, which allows passage from Azumaya algebras in the locally ringed topos
 $(\mathbf X, \sh{O}_{\mathbf X})$ to Azumaya algebras in the ringed topos $(\mathbf Y, R)$.

 \begin{thm}\label{TH:pi-star-equiv} Suppose 
 $\bfX$ is a locally ringed topos with involution $\lambda=(\Lambda,\nu, \lambda)$, and that $\pi:\bfX\to \bfY$ is an
 exact quotient relative to $\lambda$. Let $R=\pi_* \sh O_{\mathbf X}$ and $n\in\N$. Then the following categories
 are equivalent:
 \begin{enumerate}[label=(\alph*)]
 \item \label{i:a} $\Az_n({\mathbf{X}},\calO_{\mathbf{X}})$, the category of Azumaya $\sh O_{\mathbf X}$-algebras of degree $n$.
 \item \label{i:b} $\Tors({\mathbf{X}},\PGL_n(\calO_{\mathbf{X}}))$, the category of $\PGL_n(\calO_{\mathbf{X}})$-torsors on
 $\mathbf X$.
 \item \label{i:c} $\Az_n(\bfY,R)$, the category of Azumaya $R$-algebras of degree $n$.
 \item \label{i:d} $\Tors(\bfY,\PGL_n(R))$, the category of $\PGL_n(R)$-torsors on $\mathbf Y$.
 \end{enumerate} The equivalence between \ref{i:a} and \ref{i:b}, resp.\ \ref{i:c} and \ref{i:d}, is the one given in
 Proposition~\ref{PR:equiv-Az-PGLnR}, and the equivalence between \ref{i:a} and \ref{i:c}, resp.\ \ref{i:b} and
 \ref{i:d}, is given by applying $\pi_*$.
 \end{thm}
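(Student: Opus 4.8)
The plan is to prove the four equivalences named in the statement and then check that they assemble into a commuting square, so that the ``diagonal'' equivalence \ref{i:a}$\simeq$\ref{i:c} is realized by $\pi_*$.

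First I would dispose of \ref{i:a}$\simeq$\ref{i:b}: this is immediate from Proposition~\ref{PR:equiv-Az-PGLnR}, since $\bfX$ is a locally ringed topos, so $\calO_\bfX$ is a local ring object; the equivalence is $A\mapsto \sHom_{\text{$\calO_\bfX$-alg}}(\nMat{\calO_\bfX}{n\times n},A)$ with quasi-inverse $P\mapsto P\times^{\PGL_n(\calO_\bfX)}\nMat{\calO_\bfX}{n\times n}$. For \ref{i:b}$\simeq$\ref{i:d} I would use that $\pi$ is an exact quotient, so $\pi_*$ preserves epimorphisms by condition~\ref{i:sq2}; then Theorem~\ref{TH:vanishing}\ref{item:TH:vanishing:cat-equiv-of-tors} applied to $G=\PGL_n(\calO_\bfX)$ gives an equivalence $\pi_*\colon \Tors(\bfX,\PGL_n(\calO_\bfX))\to \Tors(\bfY,\pi_*\PGL_n(\calO_\bfX))$, and Lemma~\ref{LM:PGLn-preserved-under-pi} identifies $\pi_*\PGL_n(\calO_\bfX)$ canonically with $\PGL_n(R)$.

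The step I expect to be the main obstacle is \ref{i:d}$\simeq$\ref{i:c}, precisely because $R=\pi_*\calO_\bfX$ is in general \emph{not} a local ring object, so Proposition~\ref{PR:equiv-Az-PGLnR} does not apply to $(\bfY,R)$ as stated. The way around this is the version of that proposition valid for an arbitrary ring object (the remark following it), which gives an equivalence $\Tors(\bfY,\sAut_{\text{$R$-alg}}(\nMat{R}{n\times n}))\simeq \Az_n(\bfY,R)$; I would then rewrite the automorphism sheaf using Lemma~\ref{LM:inner-automorphisms}, which asserts that the canonical map $\PGL_n(R)\to \sAut_{\text{$R$-alg}}(\nMat{R}{n\times n})$ is an isomorphism. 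This is exactly the point at which the semilocal structure of $R$ supplied by Theorem~\ref{thm:local-involution} enters. Under this identification the resulting equivalence \ref{i:d}$\simeq$\ref{i:c} is again $P\mapsto P\times^{\PGL_n(R)}\nMat{R}{n\times n}$ (conjugation action) with quasi-inverse $A\mapsto \sHom_{\text{$R$-alg}}(\nMat{R}{n\times n},A)$.

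Finally I would check that the square with horizontal arrows \ref{i:b}$\to$\ref{i:a}, \ref{i:d}$\to$\ref{i:c} and vertical arrows $\pi_*$ commutes up to natural isomorphism. Given a $\PGL_n(\calO_\bfX)$-torsor $P$, one compares $\pi_*\bigl(P\times^{\PGL_n(\calO_\bfX)}\nMat{\calO_\bfX}{n\times n}\bigr)$ with $\pi_*P\times^{\PGL_n(R)}\nMat{R}{n\times n}$: Lemma~\ref{LM:equiv-relation-quotient} (applicable since $\pi_*$ preserves epimorphisms) identifies the former with $\pi_*P\times^{\pi_*\PGL_n(\calO_\bfX)}\pi_*\nMat{\calO_\bfX}{n\times n}$, and the compatible isomorphisms $\pi_*\PGL_n(\calO_\bfX)\iso\PGL_n(R)$, $\pi_*\nMat{\calO_\bfX}{n\times n}\iso\nMat{R}{n\times n}$ of Lemma~\ref{LM:PGLn-preserved-under-pi} turn this into $\pi_*P\times^{\PGL_n(R)}\nMat{R}{n\times n}$ as an $R$-algebra. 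Since three sides of the square are equivalences, so is the remaining one; in particular $\pi_*$ restricts to an equivalence $\Az_n(\bfX,\calO_\bfX)\xrightarrow{\sim}\Az_n(\bfY,R)$, giving \ref{i:a}$\simeq$\ref{i:c} and completing the chain of equivalences. The only residual subtlety is bookkeeping --- verifying that the isomorphisms of Lemma~\ref{LM:PGLn-preserved-under-pi} respect the conjugation actions and the algebra structures --- which is routine on sections via the $\pi^*\dashv\pi_*$ adjunction.
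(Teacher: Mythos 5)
Your proof is correct and follows essentially the same route as the paper: Proposition~\ref{PR:equiv-Az-PGLnR} for \ref{i:a}$\simeq$\ref{i:b} and (with Lemma~\ref{LM:inner-automorphisms} supplying $\PGL_n(R)\iso\sAut_R(\nMat{R}{n\times n})$) for \ref{i:c}$\simeq$\ref{i:d}; Theorem~\ref{TH:vanishing} and Lemma~\ref{LM:PGLn-preserved-under-pi} for \ref{i:b}$\simeq$\ref{i:d}; and Lemma~\ref{LM:equiv-relation-quotient} to see the induced \ref{i:a}$\simeq$\ref{i:c} is $\pi_*$. You correctly identify the non-locality of $R$ as the one genuine subtlety and resolve it exactly as the paper does.
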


 In the context of Example~\ref{EX:important-exact-quo-scheme}, where our exact quotient is induced from a good
 $C_2$-quotient of schemes $\pi:X\to Y$, the theorem says that every Azumaya algebra $A$
 over $X$ admits an \'etale covering $U$ of $Y$ (not of $X$) such that $A$ becomes a matrix algebra after base change
 to $X\times_YU$, and every automorphism $\psi$ of $A$ admits an \'etale covering $V$ of $Y$ (again, not of $X$) such
 that $\psi$ becomes an inner automorphism after passing to $X\times_YV$. Theorem~\ref{TH:pi-star-equiv} can be regarded as a 
 generalization of this fact.

 \begin{proof} 
 The equivalence between \ref{i:a} and \ref{i:b}, resp.\ \ref{i:c} and \ref{i:d}, is
 Proposition~\ref{PR:equiv-Az-PGLnR}; here we identified $\PGL_n(R)$ with $\sAut_R(\nMat{R}{n\times n})$ as in
 Lemma~\ref{LM:inner-automorphisms}. The equivalence between \ref{i:b} and \ref{i:d} is
 Theorem~\ref{TH:vanishing}\ref{item:TH:vanishing:cat-equiv-of-tors}, together with the isomorphism
 $\pi_*\PGL_n(\calO_{{\mathbf{X}}})\cong \PGL_n(R)$ of Lemma~\ref{LM:PGLn-preserved-under-pi}. It now follows from
 Lemma~\ref{LM:equiv-relation-quotient} that the induced equivalence between \ref{i:a} and \ref{i:c} is given by
 applying $\pi_*$.
 \end{proof}
 
 	\begin{remark}
 The exact quotient $\pi:\bfX\to \bfY$ gives rise to a morphism of ringed topoi with involution
 $\hat{\pi}:(\bfX,\calO_\bfX)\to (\bfY,R)$ by setting $\hat{\pi}_\#:R\to \pi_*\calO_\bfX $ to be the identity.
 The induced transfer functor $\Az_n(R)\to\Az_n(\calO_\bfX)$ is then an inverse to the equivalence
 $\pi_*:\Az_n(\calO_\bfX)\to \Az_n(R) $.
	\end{remark}

	\begin{remark}\label{RM:lambda-stable-degree}
 As phrased, Theorem~\ref{TH:pi-star-equiv} addresses Azumaya $\calO_\bfX$-algebras of constant degree $n$
 only. These constitute all Azumaya algebras when $\bfX$ is connected, but not in general. If we replace $n$
 with a global section of the constant sheaf $\N$ on $\bfX$, then Theorem~\ref{TH:pi-star-equiv} still holds,
 provided that $n$ is fixed by $\Lambda:\Hoh^0(\bfX,\N)\to \Hoh^0(\bfX,\Lambda \N)= \Hoh^0(\bfX,\N)$, in which
 case $n$ can be understood as an element of $\Hoh^0(\bfY,\N)$. Since Theorem~\ref{TH:pi-star-equiv} is used
 throughout, we tacitly assume henceforth that all Azumaya $\calO_\bfX$-algebras have degrees that are fixed
 under $\Lambda$. This makes little difference in practice, because any Azumaya $\calO_\bfX$-algebra is Brauer
 equivalent to another Azumaya $\calO_\bfX$-algebra of degree which is fixed by $\Lambda$.
\end{remark}

We define an involution on the ringed topos $(\bfY,R)$ by setting $\Lambda=\id_\bfY$, $\nu=\id$ and
$\lambda_{(\bfY,R)}=\pi_*\lambda_{(\bfX,\calO_\bfX)}$. Since $\pi_*$ preserves products, for any
$\calO_{\mathbf{X}}$-algebra $A$, we have $\pi_*\Lambda A^\op=\Lambda\pi_*A^\op$ as $R$-algebras. However, we alert the
reader that while $\Lambda\pi_*A^\op=\pi_*A^\op$ as noncommutative rings, the $R$-algebra structure of
$\Lambda\pi_*A^\op$ is obtained from the $R$-algebra structure of $\pi_*A^\op$ by twisting via
$\lambda:R\to \Lambda R=R$, as explained in Subsection~\ref{subsec:involutions}. Theorem~\ref{TH:pi-star-equiv} now
implies:

 \begin{cor}\label{CR:equiv-Az-with-inv} For all $n\in\N$, the functor $\pi_*$ induces an equivalence between 
 $\Az_n(\calO_\bfX,\lambda)$, the category of degree-$n$ Azumaya $\calO_{\mathbf{X}}$-algebras with
 $\lambda$-involution, and $\Az_n(R,\lambda)$, the category of degree-$n$ Azumaya $R$-algebras with
 $\lambda$-involution.
 \end{cor}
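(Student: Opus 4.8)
The plan is to bootstrap from Theorem~\ref{TH:pi-star-equiv}, which already provides an equivalence $\pi_*:\Az_n(\bfX,\calO_\bfX)\xrightarrow{\sim}\Az_n(\bfY,R)$, and merely to check that this equivalence is compatible with the extra datum of a $\lambda$-involution. Throughout I will use that $\pi_*$ preserves finite products, so that $\pi_*(A^\op)=(\pi_*A)^\op$ and, as recorded just before the statement, $\pi_*\Lambda A^\op=\Lambda(\pi_*A)^\op$ as $R$-algebras for every $\calO_\bfX$-algebra $A$; I will also invoke Remark~\ref{RM:lambda-stable-degree}, so that every degree occurring is fixed by $\Lambda$ and hence $\Lambda A^\op$ again lies in $\Az_n(\bfX,\calO_\bfX)$ whenever $A$ does.

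First I would check that $\pi_*$ carries $\Az_n(\calO_\bfX,\lambda)$ into $\Az_n(R,\lambda)$. If $\tau:A\to\Lambda A^\op$ is a $\lambda$-involution in the sense of Definition~\ref{DF:alg-with-a-lambda-inv}, then $\pi_*\tau$ is an isomorphism of $R$-algebras $\pi_*A\to\pi_*\Lambda A^\op=\Lambda(\pi_*A)^\op$, and applying $\pi_*$ to the identity $\Lambda\tau\circ\tau=\id_A$, together with the identification $\pi_*\Lambda=\Lambda\pi_*$, yields $\Lambda(\pi_*\tau)\circ\pi_*\tau=\id_{\pi_*A}$; hence $(\pi_*A,\pi_*\tau)\in\Az_n(R,\lambda)$. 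The same computation shows that a morphism $\phi$ of algebras with involution, characterised by $\tau'\circ\phi=\Lambda\phi\circ\tau$, is sent to such a morphism.

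Next I would establish full faithfulness. Since $\pi_*:\Az_n(\bfX,\calO_\bfX)\to\Az_n(\bfY,R)$ is an equivalence it is in particular faithful, so the condition $\tau'\circ\phi=\Lambda\phi\circ\tau$ holds if and only if it holds after applying $\pi_*$; combined with the full faithfulness of $\pi_*$ on $\Az_n$, this gives full faithfulness of $\pi_*:\Az_n(\calO_\bfX,\lambda)\to\Az_n(R,\lambda)$. For essential surjectivity, given $(B,\sigma)\in\Az_n(R,\lambda)$, Theorem~\ref{TH:pi-star-equiv} furnishes $A\in\Az_n(\bfX,\calO_\bfX)$ together with an isomorphism $h:\pi_*A\xrightarrow{\sim}B$ of $R$-algebras; transporting $\sigma$ along $h$ (the routine observation that an isomorphism of algebras conjugates a $\lambda$-involution to a $\lambda$-involution) produces $\sigma'$ on $\pi_*A$ with $h:(\pi_*A,\sigma')\xrightarrow{\sim}(B,\sigma)$. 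Viewing $\sigma'$ as an isomorphism $\pi_*A\to\pi_*\Lambda A^\op$ in $\Az_n(\bfY,R)$ and using that $\pi_*$ is an equivalence on $\Az_n$, I obtain a unique isomorphism $\tau:A\to\Lambda A^\op$ of $\calO_\bfX$-algebras with $\pi_*\tau=\sigma'$; applying $\pi_*$ to $\Lambda\tau\circ\tau$ gives $\Lambda\sigma'\circ\sigma'=\id_{\pi_*A}=\pi_*(\id_A)$, so $\Lambda\tau\circ\tau=\id_A$ by faithfulness of $\pi_*$, and $\tau$ is the desired $\lambda$-involution with $\pi_*(A,\tau)\cong(B,\sigma)$.

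The only point requiring care — and what I would regard as the sole obstacle, such as it is — is the bookkeeping with the opposite-algebra and $\Lambda$ constructions: one must keep straight the identifications $\pi_*(A^\op)=(\pi_*A)^\op$ and $\pi_*\Lambda A^\op=\Lambda(\pi_*A)^\op$, the suppressed coherence isomorphisms $\nu$ and $\alpha_*$ of Subsections~\ref{subsec:involutions} and~\ref{subsec:morphisms}, and the fact (Remark~\ref{RM:lambda-stable-degree}) that $\Lambda A^\op$ does not leave $\Az_n$. No ingredient beyond Theorem~\ref{TH:pi-star-equiv} and these formal compatibilities is needed.
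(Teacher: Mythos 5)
Your proposal is correct and follows the same route as the paper, which states the corollary with no separate proof, treating it as an immediate consequence of Theorem~\ref{TH:pi-star-equiv} together with the compatibility $\pi_*\Lambda A^\op=\Lambda(\pi_*A)^\op$ recorded in the preceding paragraph and Remark~\ref{RM:lambda-stable-degree}. You have simply spelled out the bookkeeping — well-definedness, full faithfulness, essential surjectivity — that the paper leaves to the reader.
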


 At this point we conclude that results proved so far allow one to shift freely between $(\bfX,\calO_\bfX)$ and
 $(\bfY,R)$, at least when Azumaya algebras, possibly with a $\lambda$-involution, are concerned. Of these two
 contexts, we shall work most often in the second, since this is technically easier. That said, the starting point is
 always a locally ringed topos $(\bfX,\calO_\bfX)$ with involution $\lambda=(\Lambda,\nu,\lambda)$, and the choice of
 the corresponding $\bfY$, $R$ and $\lambda_{(\bfY,R)}$ is not in general uniquely determined by the initial data.

\subsection{Examples of Exact Quotients}
\label{subsec:examples-of-exact-quo}

We now turn to providing various examples of exact $C_2$-quotients. In particular, we will prove that
Examples~\ref{EX:important-exact-quo-scheme} and~\ref{EX:important-exact-quo-top} are exact $C_2$-quotients. It will
also be shown that any locally ringed topos with involution admits an exact quotient.

\medskip

Of the two conditions of Definition~\ref{DF:exact-quotient}, condition~\ref{i:sq2} is harder to establish. The following
lemma is our main tool in proving it.

\begin{lemma}\label{LM:finite-morph} Let $\mathbf{X}$, $\mathbf{Y}$ be topoi and let $\pi:\mathbf{X}\to \mathbf{Y}$ be a geometric
 morphism. Suppose that $\mathbf{Y}$ has a conservative family of points $\{p_i:\mathbf{pt}\to \bfY\}_{i \in I}$ with
 the property that for each $i \in I$, there exists a set of points $\{j_n:\mathbf{pt} \to \bfX\}_{n\in {N_i}}$ such
 that the functors $U\mapsto p_i^*\pi_*U$ and $ U\mapsto \prod_{n\in N_i} j_n^*U$ from $\bfX$ to $\mathbf{pt}$ are
 isomorphic. Then $\pi_*$ preserves epimorphisms.
\end{lemma}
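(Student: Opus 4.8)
The plan is to verify the two defining conditions of an epimorphism in a topos — being able to test epimorphisms on a conservative family of points, and the hypothesis giving us exactly such a test for $\pi_*U$.

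\medskip

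First I would recall that a morphism $f: U \to V$ in a topos $\bfX$ is an epimorphism if and only if, for every point $j: \cat{pt} \to \bfX$, the induced map $j^*f : j^*U \to j^*V$ is a surjection of sets; this uses that points $j^*$ are exact (in particular preserve epimorphisms) and that, conversely, a conservative family of points detects epimorphisms. Here $\bfX$ has enough points because, by hypothesis, $\bfY$ has a conservative family $\{p_i\}$ and each $p_i^*\pi_*$ is a product of $j_n^*$'s, so the collection $\{j_n : n \in N_i, i \in I\}$ will turn out to be the relevant family on $\bfX$. (Strictly, I only need that these points detect epimorphisms; if they are not a priori conservative, one can pass to the conservative family of all points of $\bfX$, but in fact the stated hypothesis is exactly what is needed.)

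\medskip

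Next, let $g : U \to U'$ be an epimorphism in $\bfX$; I want $\pi_*g : \pi_*U \to \pi_*U'$ to be an epimorphism in $\bfY$. Since $\{p_i\}_{i\in I}$ is conservative, it suffices to check that $p_i^*\pi_*g$ is a surjection of sets for each $i$. By the assumed natural isomorphism, $p_i^*\pi_*(-) \cong \prod_{n \in N_i} j_n^*(-)$, so $p_i^*\pi_* g$ is identified with the map $\prod_{n\in N_i} j_n^* g : \prod_{n} j_n^* U \to \prod_n j_n^* U'$. Each $j_n^* g$ is a surjection of sets because $j_n^*$ is exact and $g$ is an epimorphism; an arbitrary product of surjections of sets is a surjection; hence $\prod_n j_n^* g$ is surjective, and therefore so is $p_i^*\pi_* g$. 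As this holds for all $i \in I$ and the family is conservative, $\pi_* g$ is an epimorphism, which is what we wanted.

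\medskip

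The only real subtlety — the step I expect to need the most care — is making sure the naturality of the isomorphism $p_i^*\pi_*(-) \cong \prod_{n} j_n^*(-)$ is used correctly: one must apply it to the morphism $g$, not merely to objects, so that the square identifying $p_i^*\pi_* g$ with $\prod_n j_n^* g$ genuinely commutes. Everything else is the standard fact that epimorphisms in a topos are detected by, and preserved by, points, together with the trivial observation that products of surjective maps of sets are surjective. No delicate analysis is required once the point-detection principle is in hand.
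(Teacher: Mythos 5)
Your argument is correct and is exactly the paper's proof, merely spelled out in more detail: identify $p_i^*\pi_*$ with $\prod_n j_n^*$, use that each $j_n^*$ (being a point) preserves epimorphisms and that products of surjections of sets are surjective, then invoke conservativity of $\{p_i\}$ to detect that $\pi_*g$ is an epimorphism. The brief digression about whether $\bfX$ has enough points is unnecessary (you only use that each $j_n^*$ preserves epimorphisms, which is automatic for any point), but as you note yourself it plays no role in the argument.
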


\begin{proof}
 If $p_i:\mathbf{pt}\to \bfY$ is a point as in the lemma, then $p_i^*\pi_*$ preserves epimorphisms because each $j_n^*$
 does. By assumption, a morphism $\psi$ in $\bfY$ is an epimorphism if and only if $p_i^*\psi$ is an epimorphism for
 any $p_i:\mathbf{pt}\to \bfY$ as in the statement, so $\pi_*$ preserves epimorphisms.
\end{proof}

Informally, a geometric morphism satisfying the conditions of Lemma~\ref{LM:finite-morph} can be regarded as having
``discrete fibres''. It can also be thought of as a generalization of a finite morphism in algebraic geometry, thanks to
the following corollary.

\begin{cor}\label{CR:finite-etale-morphism} Let $\pi:X\to Y$ be a finite morphism of schemes. Then the direct image
 functor $\pi_*:\Sh(X_\et)\to \Sh(Y_\et)$ preserves epimorphisms.
\end{cor}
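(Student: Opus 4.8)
The plan is to verify the hypothesis of Lemma~\ref{LM:finite-morph}. The small \'etale topos $\Sh(Y_\et)$ has a conservative family of points given by the geometric points $\bar y$ of $Y$; write $p_{\bar y}:\cat{pt}\to \Sh(Y_\et)$ for the corresponding point, so that $p_{\bar y}^*$ is the stalk functor $\mathcal G\mapsto \mathcal G_{\bar y}$. Thus it suffices to produce, for each $\bar y$, a finite family of geometric points $j_1,\dots,j_r$ of $X$ together with a natural isomorphism $(\pi_*\mathcal F)_{\bar y}\cong \prod_{n=1}^{r} j_n^*\mathcal F$ for $\mathcal F\in \Sh(X_\et)$.

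Fix $\bar y$ and let $\calO^{\,\mathrm{sh}}_{Y,\bar y}$ be the strict henselization of $\calO_Y$ at $\bar y$, so that $\Spec\calO^{\,\mathrm{sh}}_{Y,\bar y}=\lim_{(V,v)}V$, the cofiltered limit over \'etale neighbourhoods of $\bar y$ in $Y$, which we may take to be affine. Since $\pi$ is finite, hence affine, each $X\times_Y V$ is affine, and $X\times_Y \Spec\calO^{\,\mathrm{sh}}_{Y,\bar y}=\lim_{(V,v)}(X\times_Y V)$ is a cofiltered limit of affine $X$-schemes with affine transition maps. As $(\pi_*\mathcal F)(V)=\mathcal F(X\times_Y V)$, the continuity of \'etale sheaves along such limits gives $(\pi_*\mathcal F)_{\bar y}=\colim_{(V,v)}\mathcal F(X\times_Y V)=\mathcal F\bigl(X\times_Y \Spec\calO^{\,\mathrm{sh}}_{Y,\bar y}\bigr)$.

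Now $B:=\calO_X\bigl(X\times_Y \Spec\calO^{\,\mathrm{sh}}_{Y,\bar y}\bigr)$ is a finite algebra over the strictly henselian local ring $\calO^{\,\mathrm{sh}}_{Y,\bar y}$, hence a finite product $B=\prod_{n=1}^{r} B_n$ of henselian local rings by \cite[Tag 04GH]{de_jong_stacks_2017}; each residue field of $B_n$ is finite over the separably closed residue field of $\calO^{\,\mathrm{sh}}_{Y,\bar y}$, hence purely inseparable over it and so separably closed, so each $B_n$ is strictly henselian. Since $\mathcal F$ is an \'etale sheaf, $\mathcal F\bigl(X\times_Y\Spec\calO^{\,\mathrm{sh}}_{Y,\bar y}\bigr)=\prod_{n=1}^{r}\mathcal F(\Spec B_n)$. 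For $V$ small enough the idempotents cutting out the $B_n$ descend, so each $\Spec B_n$ is itself a cofiltered limit of \'etale $X$-schemes; its closed point determines a geometric point $j_n$ of $X$, and because $B_n$ is strictly henselian this limit is cofinal among \'etale neighbourhoods of $j_n$ in $X$ (one lifts an \'etale neighbourhood through $\Spec B_n$ by the henselian lifting property). Hence $\mathcal F(\Spec B_n)=j_n^*\mathcal F$, and combining the identifications yields the required natural isomorphism $(\pi_*\mathcal F)_{\bar y}\cong\prod_{n=1}^{r} j_n^*\mathcal F$; Lemma~\ref{LM:finite-morph} then gives the claim.

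I expect the main obstacle to be the two ``passage to the limit'' steps: identifying $(\pi_*\mathcal F)_{\bar y}$ with sections over $X\times_Y\Spec\calO^{\,\mathrm{sh}}_{Y,\bar y}$, and identifying sections over $\Spec B_n$ with the stalk $j_n^*\mathcal F$. Both rest on the continuity of \'etale cohomology under cofiltered limits of schemes with affine transition maps, which I would either cite from the Stacks project or spell out; the remaining ingredients---conservativity of the family of geometric points, the structure of finite algebras over henselian local rings, and the henselian lifting property used for cofinality---are standard.
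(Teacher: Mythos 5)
Your proof is correct and follows essentially the same route as the paper's: both reduce to Lemma~\ref{LM:finite-morph} by describing the stalk of $\pi_*\mathcal F$ at a geometric point of $Y$ as a finite product of stalks of $\mathcal F$ at geometric points of $X$, using the decomposition of a finite algebra over a strictly henselian local ring into a product of strictly henselian local rings via \cite[Tag 04GH]{de_jong_stacks_2017}. The only cosmetic difference is that the paper never passes to the limit scheme $X\times_Y\Spec\calO^{\,\mathrm{sh}}_{Y,\bar y}$ or invokes continuity of \'etale sheaves: it works entirely with the cofiltered system $\{V_\alpha\}$, descends the idempotents at finite stages, and then only needs that a directed colimit commutes with a finite product, a slightly more elementary bookkeeping that sidesteps the two ``passage to the limit'' steps you flag.
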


This arises in the proof that the higher direct images vanish for cohomology with abelian coefficients, \cite[Tag
03QN]{de_jong_stacks_2017}. We have included a proof here in order to present a modification later.
\begin{proof}

 Recall (\cite[Exp.~VIII, \S3--4]{artin_theorie_1972}) that the points of $\Sh(Y_\et)$ are constructed as follows:
 Given $y\in Y$, choose a cofiltered system of \'etale neighbourhoods $\{(U_\alpha,u_\alpha)\to (Y,y)\}_\alpha$ such
 that $\lim_\alpha U_\alpha=\Spec B$, where $B$ is a strictly henselian ring, necessarily isomorphic to the strict
 henselization of $\calO_{Y,y}$. Then the functor $i^*:F\mapsto \colim_\alpha F(U_\alpha)$ from $\Sh(Y_\et)$ to
 $\mathbf{pt}$ and its right adjoint $i_*$ define a point $i:\mathbf{pt} \to\Sh(Y_\et)$, and these points form a
 conservative family,
 \cite[Thm.~VIII.3.5]{artin_theorie_1972}.

 Let $i:\mathbf{pt}\to \Sh(Y_\et)$ be such a point and write $V_\alpha=U_\alpha\times_Y X$. For all sheaves $F$ on
 $X_\et$, we have $i^*\pi_*F=\colim_\alpha F(V_\alpha)$. Note that
 $\lim_\alpha V_\alpha=\lim U_\alpha\times_YX=\Spec B\times_Y X$. Since $\pi$ is finite, $\Spec B\times_Y X=\Spec A$
 where $A$ is a finite $B$-algebra.

 By \cite[Tag 04GH]{de_jong_stacks_2017}, $A=\prod_{n=1}^t A_n$ where each $A_n$ is a henselian ring. Since the residue
 field of each $A_n$ is finite over the separably closed residue field of $B$, each $A_n$ is strictly henselian.
 Letting $e_1,\dots,e_t$ be the primitive idempotents of $A$, we may assume, by appropriately thinning the family
 $\{U_\alpha\to Y\}_{\alpha}$, that $e_1,\dots,e_t$ are defined as compatible global sections on each $V_\alpha$. This
 allows us to write $V_\alpha$ as $\bigsqcup_n V_{n,\alpha}$ such that $\lim_\alpha V_{n,\alpha}=\Spec A_n$ for all
 $n$. Let $x_n$ and $v_{n,\alpha}$ denote the images of the closed point of $\Spec A_n$ in $X$ and $V_{n,\alpha}$,
 respectively, and let $j_n:\mathbf{pt}\to \Sh(X_\et)$ denote the point corresponding to the filtered system
 $\{(V_{n,\alpha},v_{n,\alpha})\to (X,x_n)\}_{\alpha}$. Since we can commute a directed colimit past a finite limit, we
 have shown that
\[ i^*\pi_*F=\colim_\alpha F(V_\alpha)=\colim_\alpha\prod_n F(V_{n,\alpha})=\prod_n j_n^*F\ ,
\] and the result now follows from Lemma~\ref{LM:finite-morph}.
\end{proof}

\begin{cor}\label{CR:finite-top-morphism}
 Let $\pi:X\to Y$ be a continuous morphism of topological spaces such that:
 \begin{enumerate}[label=(\arabic*)]
 \item \label{item:CR:finite-top-morphism:inverse-basis} For any $y\in Y$ and any open neighbourhood $U \supseteq
 \pi^{-1}(y)$, there exists an open neighbourhood $V$ of $y$ such that $\pi^{-1}(V) \subseteq U$.
 \item \label{item:CR:finite-top-morphism:very-disc-fibre} For any $y \in Y$, the fibre $\pi^{-1}(y)$ is finite and,
 letting $x_1,\dots, x_t\in X$ denote the points lying over $y$, there exist disjoint open sets $\{U_i\}_{i=1}^t$ such
 that $x_i \in U_i$. 
 \end{enumerate} Then $\pi_*:\Sh(X)\to \Sh(Y)$ 
 preserves epimorphisms.
\end{cor}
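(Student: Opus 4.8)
The plan is to deduce this from Lemma~\ref{LM:finite-morph}, following the same pattern as the proof of Corollary~\ref{CR:finite-etale-morphism}. Recall that for any topological space $Y$ the stalk functors $F\mapsto F_y=\colim_{V\ni y}F(V)$, together with their right adjoints (the skyscraper functors), give a conservative family of points $\{p_y:\cat{pt}\to\Sh(Y)\}_{y\in Y}$ of the topos $\Sh(Y)$; likewise each $x\in X$ gives a stalk point $j_x:\cat{pt}\to\Sh(X)$. So it suffices to show that for each $y\in Y$ the functor $F\mapsto p_y^*\pi_*F$ on $\Sh(X)$ is naturally isomorphic to $F\mapsto\prod_{x\in\pi^{-1}(y)}j_x^*F$, and then invoke Lemma~\ref{LM:finite-morph}.

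First I would fix $y\in Y$, write $\pi^{-1}(y)=\{x_1,\dots,x_t\}$, and choose pairwise disjoint opens $U_i\ni x_i$ using hypothesis~\ref{item:CR:finite-top-morphism:very-disc-fibre}. Since $\pi^{-1}(y)\subseteq\bigsqcup_i U_i$, hypothesis~\ref{item:CR:finite-top-morphism:inverse-basis} produces an open $V_0\ni y$ with $\pi^{-1}(V_0)\subseteq\bigsqcup_i U_i$. Then for every open $V$ with $y\in V\subseteq V_0$ one has a decomposition into disjoint open sets $\pi^{-1}(V)=\bigsqcup_i W_{i,V}$, where $W_{i,V}:=\pi^{-1}(V)\cap U_i$, and hence $F(\pi^{-1}(V))=\prod_i F(W_{i,V})$ for any sheaf $F$ on $X$. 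As the opens $V\ni y$ contained in $V_0$ are cofinal among all opens containing $y$, and finite products commute with filtered colimits of sets, this yields
\[
 p_y^*\pi_*F=(\pi_*F)_y=\colim_{V}F(\pi^{-1}(V))=\prod_{i=1}^{t}\ \colim_{V}F(W_{i,V}),
\]
the colimits being over opens $V$ with $y\in V\subseteq V_0$.

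The step I expect to be the crux is identifying $\colim_V F(W_{i,V})$ with the stalk $F_{x_i}$, i.e.\ showing that $\{W_{i,V}\}_V$ is a cofinal system of open neighbourhoods of $x_i$. The idea: given an open $O\ni x_i$, apply hypothesis~\ref{item:CR:finite-top-morphism:inverse-basis} to the open set $(O\cap U_i)\cup\bigcup_{j\neq i}U_j$, which contains $\pi^{-1}(y)$, to get an open $V\ni y$ (replaced if necessary by $V\cap V_0$) with $\pi^{-1}(V)$ contained in it; intersecting with $U_i$ and using that the $U_j$ with $j\neq i$ are disjoint from $U_i$ gives $W_{i,V}\subseteq O\cap U_i\subseteq O$. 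Granting this, $\colim_V F(W_{i,V})=F_{x_i}=j_{x_i}^*F$, so $p_y^*\pi_*F\cong\prod_{i=1}^t j_{x_i}^*F$ naturally in $F$ (in the degenerate case $\pi^{-1}(y)=\emptyset$ hypothesis~\ref{item:CR:finite-top-morphism:inverse-basis} forces $\pi^{-1}(V_0)=\emptyset$, giving $p_y^*\pi_*F=\ast$, an empty product). Lemma~\ref{LM:finite-morph} then applies and shows that $\pi_*:\Sh(X)\to\Sh(Y)$ preserves epimorphisms. Apart from this cofinality argument—the only place hypothesis~\ref{item:CR:finite-top-morphism:inverse-basis} is used a second time—everything is formal.
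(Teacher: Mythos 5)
Your proposal is correct and follows essentially the same path as the paper's proof: reduce to Lemma~\ref{LM:finite-morph}, compute $p_y^*\pi_*F$ as a filtered colimit, split it into a finite product over the fibre using the disjoint opens from hypothesis~(2), and identify each factor with the corresponding stalk via a cofinality argument driven by hypothesis~(1). You have actually been slightly more careful than the paper in introducing $V_0$ explicitly to guarantee that $\pi^{-1}(V)\subseteq\bigsqcup_i U_i$ before decomposing $F(\pi^{-1}(V))$ as a product, and in spelling out the cofinality check, both of which the paper's proof leaves implicit.
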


 The hypotheses are satisfied when $\pi: Y \to X$ is 
 a finite covering space map of Hausdorff spaces, or a closed embedding
 of Hausdorff spaces, for instance.
 
 It is also easy to see that condition~\ref{item:CR:finite-top-morphism:inverse-basis} is equivalent to $\pi$ being
 closed, and condition~\ref{item:CR:finite-top-morphism:very-disc-fibre} is equivalent to $\pi$ having finite fibres and
 being separated in the sense that the image of the diagonal map $X\to X\times_Y X$ is closed. \benw{Check whether this
 is the same as $G \times X \to X \times X$ being proper---in the category of Hausdorff spaces to saying the inverse
 image of a compact set is compact.}
	
\begin{proof}
 Again, we use Lemma~\ref{LM:finite-morph}. For $\Sh(Y)$, the points are induced by inclusion maps $i: \{y\} \to Y$ as
 $y$ rages over $Y$, \cite[Chap.~VII, \S 5]{mac_lane_sheaves_1992}. Fix such an inclusion, let $x_1,\dots,x_t$ denote
 the points in $\pi^{-1}(y)$, and let $j_n: \{x_n\} \to X$ denote the inclusion maps. The corresponding morphisms on
 the topoi of sheaves will be denoted by the same letters.

 By definition, for any sheaf $F$ on $X$, we have
 \[ i^* \pi_* \sh F=\colim_{U \ni y} \sh{F}(\pi^{-1}(U))
 \]
 where the colimit is taken over all open neighbourhoods of $y$. Using
 condition~\ref{item:CR:finite-top-morphism:very-disc-fibre}, choose disjoint open neighbourhoods $\{V_n\}_{n=1}^t$ with
 $x_n\in V_n$. Condition~\ref{item:CR:finite-top-morphism:inverse-basis} implies that the family
 $\{{V_n\cap \pi^{-1}(U)}\where\text{$U$ is an open neighbourhood of $y$}\}$ is a basis of open neighbourhoods of $x_n$,
 hence
 \[ \colim_{U \ni y} \sh{F}(\pi^{-1}(U))=\prod_{n=1}^t\colim_{U \ni y} \sh{F}(\pi^{-1}(U) \cap V_n)= \prod_{n=1}^t
 j_n^*\sh F \ .
 \] It follows that $i^*\pi_*=\prod_{n=1}^t j_n^*$, so the proof is complete.
\end{proof}

 \begin{theorem}\label{TH:important-exact-quotients} Suppose that
 \begin{enumerate}[label=(\roman*)]
 \item \label{item:TH:important-exact-quotients:schemes} $X$ is a scheme, $\lambda:X\to X$ is an involution,
 $\pi:X\to Y$ is a good quotient relative to $\{1,\lambda\}$, $({\mathbf{X}},\calO_{\mathbf{X}})=(\Sh(X_\et),\calO_X)$
 and $(\bfY,\calO_\bfY)=(\Sh(Y_\et),\calO_Y)$ (Example~\ref{EX:important-exact-quo-scheme}), or
 \item \label{item:TH:important-exact-quotients:top-spaces} $X$ is a Hausdorff topological space, $\lambda:X\to X$ is an
 involution, $Y=X/\{1,\lambda\}$ and $\pi:X\to Y$ is the quotient map,
 $({\mathbf{X}},\calO_{\mathbf{X}})=(\Sh(X),\cont(X,\C))$ and $(\bfY,\calO_\bfY)=(\Sh(Y),\cont(Y,\C))$
 (Example~\ref{EX:important-exact-quo-top}).
 \end{enumerate} Then the morphism $\pi:(\mathbf{X},\calO_{{\mathbf{X}}})\to (\mathbf{Y},\calO_{\bfY})$ induced by $\pi:X\to Y$
 is an exact quotient relative to the involution induced by $\lambda$.
 \end{theorem}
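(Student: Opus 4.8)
The plan is to verify the two conditions \ref{i:sq1} and \ref{i:sq2} of Definition~\ref{DF:exact-quotient} in each of the cases \ref{item:TH:important-exact-quotients:schemes} and \ref{item:TH:important-exact-quotients:top-spaces}, treating condition~\ref{i:sq2} first since it is the one requiring real work. For condition~\ref{i:sq2}, the key observation is that the morphisms $\pi\colon X\to Y$ arising here are ``finite-like'' in the sense needed to apply the machinery already developed. In case \ref{item:TH:important-exact-quotients:schemes}, a good $C_2$-quotient is by definition affine, and since $X$ carries a $C_2$-action with $\calO_Y=(\pi_*\calO_X)^{C_2}$, the morphism $\pi$ is integral; combined with affineness and quasi-compactness this makes $\pi$ a finite morphism of schemes (here one may use that $\calO_X$ is integral over $\calO_Y$ via the relation $x^2-(x+x^\lambda)x+x^\lambda x=0$, as in the proof of Corollary~\ref{CR:henselian-ring-with-involution}), so Corollary~\ref{CR:finite-etale-morphism} applies directly and $\pi_*\colon\Sh(X_\et)\to\Sh(Y_\et)$ preserves epimorphisms. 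In case \ref{item:TH:important-exact-quotients:top-spaces}, $\pi\colon X\to X/C_2$ with $X$ Hausdorff is a closed map with finite fibres (of size $1$ or $2$), and distinct points in a fibre can be separated by disjoint opens since $X$ is Hausdorff; thus the hypotheses \ref{item:CR:finite-top-morphism:inverse-basis} and \ref{item:CR:finite-top-morphism:very-disc-fibre} of Corollary~\ref{CR:finite-top-morphism} hold, and that corollary gives that $\pi_*$ preserves epimorphisms.

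For condition~\ref{i:sq1}, I need to check that $\pi_\#\colon\calO_\bfY\to\pi_*\calO_\bfX$ is the equalizer of $\pi_*\lambda$ and $\id$. In case \ref{item:TH:important-exact-quotients:top-spaces} this is essentially immediate on sections: for an open $V\subseteq Y$, a continuous function $\pi^{-1}(V)\to\C$ is $\lambda$-invariant precisely when it factors through the quotient map $\pi^{-1}(V)\to V$, and since $\pi$ is a quotient map this factoring produces a continuous function on $V$; so $\cont(V,\C)=\cont(\pi^{-1}(V),\C)^{C_2}$ as needed, and equalizers of sheaves are computed sectionwise. In case \ref{item:TH:important-exact-quotients:schemes}, the defining property of a good quotient already gives $\calO_Y\cong(\pi_*\calO_X)^{C_2}$ in $\Sh(Y_\Zar)$; I then need to promote this to the étale site. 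Since $\pi$ is affine and the formation of the fixed subsheaf commutes with flat base change, and étale morphisms are flat, pulling back the identity $\calO_Y=(\pi_*\calO_X)^{C_2}$ along any étale $U\to Y$ yields the corresponding statement for $U\times_Y X\to U$; hence the extension $\pi_\#\colon\calO_\bfY\to\pi_*\calO_\bfX$ to étale ring objects described in Example~\ref{EX:important-exact-quo-scheme} is still the equalizer of $\pi_*\lambda$ and $\id$. One should note here that $\pi_*$ on the étale topoi computes étale-locally on $Y$ by the same base-change, so that checking the equalizer property étale-locally on $Y$ suffices.

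The main obstacle I anticipate is the scheme case of condition~\ref{i:sq1}, specifically the interaction between the fixed-subsheaf construction, which is a kind of limit, and the change of site from $Y_\Zar$ to $Y_\et$ together with the pushforward $\pi_*$ along the étale realization of $\pi$. The cleanest route is to reduce everything to the affine local picture $\Spec R\to\Spec S$ with $S=R^{C_2}$ and an étale $S$-algebra $S'$: one must verify $R\otimes_S S'$ has $C_2$-fixed ring $S'$, which follows from flatness of $S'/S$ because taking $C_2$-invariants is left exact and commutes with the flat base change $-\otimes_S S'$. Then Example~\ref{EX:quasi-coh-sheaves}-style comparison between quasicoherent sheaves on étale covers and sections of $\pi_*\calO_\bfX$ assembles this into the sheaf-theoretic equalizer statement. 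Once both conditions are in hand for both cases, invoking Definition~\ref{DF:exact-quotient} finishes the proof; I would also briefly remark that the suppressed natural transformations $\alpha_*,\alpha^*$ were already taken to be identities in Examples~\ref{EX:important-exact-quo-scheme} and~\ref{EX:important-exact-quo-top}, and the compatibility with the ring homomorphisms is the content of condition~\ref{i:sq1} just verified, so $\pi$ is a genuine $C_2$-equivariant morphism of locally ringed topoi.
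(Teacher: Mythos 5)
Your treatment of condition~\ref{i:sq1} in both cases, and of condition~\ref{i:sq2} in the topological case, matches the paper's argument (reduction to the affine picture and preservation of the equalizer under flat base change; verification of the hypotheses of Corollary~\ref{CR:finite-top-morphism}). However, the scheme case of condition~\ref{i:sq2} has a genuine gap: you assert that a good $C_2$-quotient $\pi\colon X\to Y$ is a \emph{finite} morphism, arguing ``integral $+$ affine $+$ quasi-compact.'' This is false. Integral together with affine does not imply finite; finiteness additionally requires that $\pi_*\calO_X$ be of finite type over $\calO_Y$, and Example~\ref{EX:pathological-rings-with-inv}(i) in the paper exhibits precisely a good $C_2$-quotient $\Spec R\to\Spec S$, with $S$ even noetherian and local, for which $R$ is \emph{not} finite (nor even finitely generated) over $S$. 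Hence you cannot apply Corollary~\ref{CR:finite-etale-morphism} directly.

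The paper's proof is aware of this and handles it: it states that the epimorphism-preservation is shown ``as in the proof of Corollary~\ref{CR:finite-etale-morphism}, except one has to replace \cite[Tag 04GH]{de_jong_stacks_2017} by Corollary~\ref{CR:henselian-ring-with-involution}.'' In that proof, the place where finiteness is used is the step where $\Spec B\times_Y X=\Spec A$ with $B$ strictly henselian, and one needs $A$ to decompose as a finite product of strictly henselian local rings. For a finite $B$-algebra this is Tag 04GH; in the good-quotient setting one has only that $A$ is integral over $B$ and that $B$ is the fixed ring of an involution on $A$, and it is exactly Corollary~\ref{CR:henselian-ring-with-involution} that supplies the required decomposition in that setting, bypassing finiteness. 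To repair your argument you would need to invoke that corollary rather than claim $\pi$ is finite.
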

 
\begin{proof} 
\begin{enumerate}[label=(\roman*), align=left, leftmargin=0pt, itemindent=1ex, itemsep=0.3\baselineskip]
\item The fact that $\pi_*:\Sh(X_\et)\to \Sh(Y_\et)$ preserves epimorphisms is shown as in the proof of
 Corollary~\ref{CR:finite-etale-morphism}, except one has to replace \cite[Tag 04GH]{de_jong_stacks_2017} by
 Corollary~\ref{CR:henselian-ring-with-involution}. Checking that $\calO_\bfY$ is the coequalizer of
 $\lambda,\id:\pi_*\calO_{\bfX}\to \pi_*\calO_\bfX$, amounts to showing that for any \'etale morphism $U\to Y$,
 $\Hoh^0 (U,\calO_U)$ is the fixed ring of $\lambda$ in $\Hoh^0(U\times_YX,\calO_{U\times_YX})$. In fact, it is enough
 to check this after base changing to an open affine covering $\{Y_i\to Y\}$, so we may assume that $U\to Y$ factors as
 $U\to Y_0\to Y$ with $Y_0$ open and affine. Write $Y_0=\Spec B$ and $U=\Spec B'$. Since $X\to Y$ is affine, we may
 further write $Y_0\times_YX=\Spec A$. The assumption that $X\to Y$ is a good quotient relative to $\{1,\lambda\}$
 implies that the sequence of $B$-modules $0\to B\to A\xrightarrow{a\mapsto a-a^\lambda} A$ is exact. Since $B'$ is
 flat over $B$, the sequence $0\to B'\to {A\tensor_B B'}\xrightarrow{a\mapsto a-a^\lambda} A\tensor_B B'$ is exact, and
 hence $B'=\Hoh^0 (U,\calO_U)$ is the fixed ring of $\lambda$ in $A\tensor_BB'=\Hoh^0(U\times_YX,\calO_{U\times_YX})$.
 
\item Conditions \ref{item:CR:finite-top-morphism:inverse-basis} and \ref{item:CR:finite-top-morphism:very-disc-fibre}
 of Corollary \ref{CR:finite-top-morphism} are easily seen to hold, hence $\pi_*$ preserves epimorphisms. It remains to
 show that $\sh{O}_{Y}$ is the equalizer of $\pi_*\sh{O}_X$ under the action of $C_2=\{1,\lambda\}$. To this end, let
 $U \subseteq Y$ be an open set. The $C_2$-action on $X$ restricts to an action on $\pi^{-1}(U)$ and
 $\pi^{-1}(U)/C_2 = U$. In particular, $\sh{O}_Y(U)=C(U,\C)$ is in natural bijection with the set of functions in
 $\pi_*\sh{O}_X(U)=\sh{O}_X(\pi^{-1}(U))=C(\pi^{-1}(U),\C)$ that are fixed under the $C_2$-action. This means that
 $\sh{O}_Y$ is the fixed subsheaf of $\pi_*\sh{O}_X$ under the action of $C_2$.\qedhere
 \end{enumerate}
 \end{proof}
 
 \begin{remark} The proofs of Theorem~\ref{TH:important-exact-quotients}\ref{item:TH:important-exact-quotients:schemes}
 and Corollary~\ref{CR:finite-etale-morphism} can be modified to work for the Nisnevich site of a scheme --- simply
 replace \'etale neighbourhoods by Nisnevich neighbourhoods and strictly henselian rings by henselian rings.
 Disregarding set-theoretic problems, the large \'etale and Nisnevich sites can be handled similarly, using suitable
 conservative families of points, provided one assumes in
 Theorem~\ref{TH:important-exact-quotients}\ref{item:TH:important-exact-quotients:schemes} that
 $\calO_Y\to \pi_*\calO_X\xrightarrow{\lambda-\id} \pi_*\calO_X$ splits in the middle, which is the case when
 $2\in\units{\calO_X}$. This assumption guarantees that the sequence remains exact after base-change to any $Y$-scheme
 $U$, not necessarily flat.

Likewise, the proofs Theorem~\ref{TH:important-exact-quotients}\ref{item:TH:important-exact-quotients:top-spaces}
 and Corollary~\ref{CR:finite-top-morphism} can be modified to work for the large site of a topological space.
\end{remark}

	The next examples bring several situations
	where condition \ref{i:sq1} of Definition~\ref{DF:exact-quotient}
	is satisfied while condition~\ref{i:sq2} is not.
	They also show that some
	of the assumptions made in Theorem~\ref{TH:important-exact-quotients}
	cannot be removed in general.

\begin{example}\label{EX:not-exact-I}
	Let $R$ be a strictly henselian discrete valuation ring
	with fraction field $K$. Let $X$ denote the scheme obtained
	by gluing two copies of $Y:=\Spec R$ along $\Spec K$, and let
	$\lambda:X\to X$ denote the involution exchanging these two copies.
	The morphism $\pi:X\to Y$ which restricts to the identity
	on each of the copies of $\Spec R$ is a geometric quotient relative
	to $C_2=\{1,\lambda\}$ in the sense of \cite[Tag 04AD]{de_jong_stacks_2017}, 
	namely, $\calO_Y=(\pi_*\calO_X)^{C_2}$, $Y=X/C_2$ 
	as topological spaces, and the latter property
	holds after base change. In particular, $\pi:X\to Y$ is the $C_2$-quotient
	of $X$ in the category of schemes. However, the induced $C_2$-equivariant 
	morphism
	$\pi:(\Sh(X_\et),\calO_X)\to (\Sh(Y_\et),\calO_Y)$ is not 
	an exact quotient, the reason being that $\pi_*$
	does not preserve epimorphisms.
	
	To see this, fix a non-trivial abelian group $A$, which will be regarded as a constant sheaf on the appropriate
 space, and let $i,j:\Spec R\to X$ denote the inclusions of the two copies of $\Spec R$ in $X$. Since $i$ is an
 open immersion, we can form the extension-by-$0$ functor $i_!:\Sh((\Spec R)_\et)\to \Sh(X_\et)$, which is left
 adjoint to $i^*$. Let $F=i_!i^*A\oplus j_!j^*A$. The counit maps $\veps^{(i)}:i_!i^*A\to A$ and
 $\veps^{(j)}:j_!j^*A\to A$ give rise to a morphism $\psi:F\to A$ given by
 $(x\oplus y)\mapsto (\veps^{(i)}x+\veps^{(j)}y)$ on sections. This morphism is surjective, as can be easily
 seen by checking the stalks. However $\pi_*\psi:\pi_*F\to \pi_*A$ is not surjective, as can be seen by noting
 that $\pi_*F(Y)=0$, $\pi_*A(Y)=A\neq 0$, and any \'etale covering of $Y$ has a section, because $R$ is strictly
 henselian.
	
	This example does not stand in contradiction to
 Theorem~\ref{TH:important-exact-quotients}\ref{item:TH:important-exact-quotients:schemes} because $\pi:X\to Y$
 is not affine, and hence not a good quotient.
\end{example}

\begin{example}\label{EX:not-exact-II}
 Let $X$ be an infinite set endowed with the cofinite topology, let $\lambda:X\to X$ be an involution acting freely on
 $X$, and let $\pi:X\to Y=X/C_2$ be the quotient map. Then the induced $C_2$-equivariant morphism
 $\pi:(\Sh(X),\cont(X,\C))\to (\Sh(Y),\cont(Y,\C))$ is not an exact $C_2$-quotient, because $\pi_*$ fails to preserve
 epimorphisms. This is shown as in Example~\ref{EX:not-exact-I}, except here one chooses $x\in X$ and uses the open
 embeddings $i:X-\{x\}\to X$ and $j:X-\{\lambda(x)\}\to X$. We conclude that in
 Theorem~\ref{TH:important-exact-quotients}\ref{item:TH:important-exact-quotients:top-spaces}, the assumption that $X$
 is Hausdorff in cannot be removed in general, even when $\lambda$ acts freely on $X$.
\end{example}

\begin{example}
 Let $R$ be a principal ideal domain admitting exactly two maximal ideals, $\fraka$ and $\frakb$. Suppose that there
 exists an involution $\lambda:R\to R$ exchanging $\fraka$ and $\frakb$, and moreover, that the fixed ring of
 $\lambda$, denoted $S$, is a discrete valuation ring. Let $X=\Spec R$, $Y=\Spec S$ and let $\pi:X\to Y$ be the
 morphism adjoint to the inclusion $S\to R$. Then $\pi:X\to Y$ is a good quotient relative to $\lambda:X\to X$, but the
 induced $C_2$-equivariant morphism $\pi:(\Sh(X_{\Zar}),\calO_X)\to (\Sh(Y_{\Zar}),\calO_Y)$ is not an exact quotient,
 because, yet again, $\pi_*$ does not preserve epimorphisms. Again, this is checked as in Example~\ref{EX:not-exact-I}
 by using the open embeddings $i:\Spec R_\fraka \to \Spec R$ and $j:\Spec R_\frakb \to \Spec R$. This shows that we
 cannot, in general, replace the \'etale site with the Zariski site in
 Theorem~\ref{TH:important-exact-quotients}\ref{item:TH:important-exact-quotients:schemes}, even when $\pi:X\to Y$ is
 quadratic \'etale.
\end{example}

\begin{remark}\label{RM:fppf-site}
 Let $X$ be a scheme, let $\lambda:X\to X$ be an involution and let $\pi:X\to Y$ be a good quotient relative to
 $\{1,\lambda\}$. Then the associated morphism of $\fppf$ topoi
 $\pi:(\Sh(X_{\fppf}),\calO_X)\to (\Sh(Y_{\fppf}),\calO_Y)$ is not exact in general, even when $\pi$ is an fppf
 morphism.

 For example, let $k$ be a field characteristic $\neq 2$, and consider $X=\Spec k[\veps\where \veps^2=0]$, $Y=\Spec k$
 and the $k$-involution $\lambda$ sending $\veps$ to $-\veps$. Then $x\mapsto x^2: \calO_X \to \calO_X $ is surjective
 as morphism in $\Sh(X_\fppf)$, but its pushforward to $\Sh(Y_{\fppf})$ is not, because $\veps$ is not in the image of
 $x\mapsto x^2: A[\veps\where \veps^2=0]\to A[\veps\where\veps^2=0]$ for all commutative $k$-algebras $A$.
	
 Nevertheless, when $\pi:X\to Y$ is finite and locally free, Theorem~\ref{TH:pi-star-equiv} and
 Corollary~\ref{CR:equiv-Az-with-inv} still hold, the reason being that $\PGL_n(\calO_X)$ and
 $\PGL_n(R)=\pi_*\PGL_n(\calO_X)$ are both represented by smooth affine group schemes over $X$ (use
 \cite[Prop.~7.6.5(h)]{bosch_1990_Neron_models}), and hence their \'etale and fppf cohomologies coincide
 \cite[Thm.~11.7, Rmk.~11.8(3)]{grothendieck_1968_groupe_de_Brauer_III}. As a result, some theorems in the next
 sections, e.g.\ Theorems \ref{TH:ct-determines-local-iso} and~\ref{TH:Saltman-ramified}, also hold in the context of
 $\fppf$ ringed topoi associated to a finite locally free good $C_2$-quotient of schemes $\pi:X\to Y$.
\end{remark}

We finish with demonstrating that every locally ringed topos 
$\bfX$ with involution 
$\lambda=(\Lambda,\nu,\lambda)$
admits a canonical exact quotient, sometimes called the ``homotopy fixed points'', as in \cite[Section
2]{merling_equivariant_2017}. We denote this exact quotient by
$\pi:\bfX\to [\bfX/C_2]$.

As the notation suggests, when $(\bfX,\calO_\bfX)=(\Sh(X_\et),\calO_X)$ for a scheme $X$, the ringed topos $[\bfX/C_2]$
will be equivalent to the \'etale ringed topos of the Deligne--Mumford stack $[X/C_2]$. Indeed, the objects of
$[\bfX/C_2]$ will be \emph{$C_2$-equivariant sheaves}, the data of which are equivalent to specifying a sheaf on the
\'etale site of $[X/C_2]$; this is explained for coherent sheaves in
\cite[Example~7.21]{vistoli_1989_intersection_theory}, but the principle works for set-valued sheaves (in the sense of
\cite[{\href{https://stacks.math.columbia.edu/tag/06TN}{Tag 06TN}}]{de_jong_stacks_2017}) as well.

\begin{construction}\label{CN:stacky-quotient}
 Define the category $[\bfX/C_2]$ as follows: The objects of $[\bfX/C_2]$ consist of pairs $(U,\tau)$, where $U$ is an
 object of $\bfX$ and $\tau:U\to \Lambda U$ is a morphism satisfying $\Lambda\tau\circ \tau=\id_U$. In other words, the
 objects of $[\bfX/C_2]$ are objects of $\bfX$ equipped with an involution, or a $C_2$-action. Morphisms in
 $[\bfX/C_2]$ are defined as commuting squares
\[ \xymatrix{ U \ar^\tau[r] \ar^f[d] & \Lambda U \ar^{\Lambda f}[d] \\ U' \ar^{\tau'}[r] & \Lambda U'. } \]

Define $\pi^*:[\bfX/C_2]\to\bfX$ to be the forgetful functor $(U,\tau)\mapsto U$, and define $\pi_*:\bfX\to [\bfX/C_2]$
to be the functor sending $U$ to $(U\times \Lambda U, \tau_U)$ where
$\tau_U:U\times \Lambda U\to \Lambda(U\times \Lambda U)=\Lambda U\times U$ is the interchange morphism. For a morphism
$\phi:U\to V$ in $\bfX$, let $\pi_*\phi=\phi\times \Lambda\phi$. The functor $\pi^*$ is easily seen to be left adjoint
to $\pi_*$ with the unit and counit of the adjunction given by
$u\mapsto (u,\tau u):(U,\tau)\to \pi_*\pi^*(U,\tau)=(U\times \Lambda U,\tau_U)$ and
$(v,v')\mapsto v: \pi^*\pi_*V=V\times \Lambda V\to V$ on the level of sections (in $\bfX$).
 
 	For objects $V$ in $\bfX$ and $(U,\tau)$ in $[\bfX/C_2]$,
 	let $\alpha_{*,V}$ denote
	the interchange morphism $(\Lambda V\times V,\tau_{\Lambda V})\to
	(V\times \Lambda V,\tau_V)$ and
	let $\alpha^*_{(U,\tau)}$ denote
	$\tau: U\to \Lambda U$.
	Then $\alpha_*$ is a natural isomorphism
	$\pi_*\Lambda\Rightarrow \pi_*$ and $\alpha^*$ is a natural isomorphism
	$\pi^*\Rightarrow \Lambda\pi^*$. We alert
 the reader that these
 natural isomorphisms are in general not the identity
 transformations, even when the involution $\lambda$
 is trivial.
 
 Define the ring object $\calO_{[\bfX/C_2]}$ in $[\bfX/C_2]$ to be $(\calO_\bfX,\lambda)$ with the obvious ring
 structure. Finally, define $\pi_\#:\calO_{[\bfX/C_2]}\to \pi_*\calO_\bfX$ to be
 $(\calO_\bfX,\lambda)\to(\calO_\bfX\times\Lambda \calO_\bfX,\tau_{\calO_\bfX})$, where the underlying morphism
 $\calO_\bfX\to \calO_\bfX\times \Lambda\calO_\bfX$ is given by $x\mapsto (x,x^\lambda)$ on sections.
\end{construction}

\begin{prp}\label{PR:stacky-quotient}
	In Construction~\ref{CN:stacky-quotient}, the following hold:
\begin{enumerate}[label=(\roman*)] 
		\item $[\bfX/C_2]$ is a Grothendieck topos.
		\item $\pi:=(\pi^*,\pi_*):\bfX\to [\bfX/C_2]$ is an essential geometric morphism of topoi.
		\item A family of morphisms 
		$\{(U_i,\tau_i)\to (U,\tau)\}_{i\in I}$ in $[\bfX/C_2]$
 is a covering if and only if $\{U_i\to U\}_{i\in I}$ is a covering in $\bfX$.
 		\item $\calO_{[\bfX/C_2]}$ is a local ring object in $[\bfX/C_2]$.
 		\item 
		$(\pi^*,\pi_*,\pi_\#,\alpha^*,\alpha_*)$
		defines an exact quotient		
 		$\pi:(\bfX,\calO_\bfX)\to ([\bfX/C_2],\calO_{[\bfX/C_2]})$ 
 		relative to $\lambda$.
 	\end{enumerate}
\end{prp}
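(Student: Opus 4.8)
The plan is to verify the five parts of Proposition~\ref{PR:stacky-quotient} in order, using standard facts about the $2$-category of topoi together with the explicit descriptions furnished by Construction~\ref{CN:stacky-quotient}. For part (i), I would exhibit $[\bfX/C_2]$ as the category of $C_2$-equivariant objects of $\bfX$, i.e.\ the category of functors from the groupoid $\mathbf{B}C_2$ (regarded as acting on $\bfX$ via $\Lambda$, $\nu$) into $\bfX$; equivalently it is the classifying topos of the action groupoid $\bfX \rtimes C_2$ in the category of topoi. Since $\bfX$ is a Grothendieck topos and $C_2$ is a (discrete) group acting by geometric automorphisms, the category of equivariant objects is again a Grothendieck topos; one can check Giraud's axioms directly --- small colimits and finite limits are computed underlying in $\bfX$ and equipped with the evident induced $C_2$-structure, a small generating set is obtained by applying $\pi_*$ to a generating set of $\bfX$, equivalence relations are effective because they are so in $\bfX$, and coproducts are disjoint for the same reason. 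Alternatively, cite the general fact that the ``homotopy fixed point'' topos $\bfX^{hC_2}$ of a $C_2$-action on a topos is a topos, as in \cite[Section 2]{merling_equivariant_2017}.

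For part (ii), the adjunction $\pi^* \dashv \pi_*$ is already established in Construction~\ref{CN:stacky-quotient} with the units and counits given explicitly there, and $\pi^*$ manifestly preserves finite limits since limits in $[\bfX/C_2]$ are computed in $\bfX$; hence $\pi = (\pi^*, \pi_*)$ is a geometric morphism. It is essential because the forgetful functor $\pi^*$ has not only a right adjoint $\pi_*$ but also a left adjoint $\pi_!$, namely $U \mapsto (U \times \Lambda U, \tau_U)$ again --- more precisely $\pi_!$ and $\pi_*$ agree here since $C_2$ is finite and the action is by self-adjoint equivalences (indeed $\Lambda$ is left adjoint to itself by Remark~\ref{RM:C-two-action}(i)), so ``induction equals coinduction'' for this finite group. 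I would spell out the left adjoint $\pi_!(U) = (U \times \Lambda U, \tau_U)$ with the obvious unit and counit and verify the triangle identities on sections, exactly mirroring the computation already done for $\pi^* \dashv \pi_*$. For part (iii), I would define the topology on $[\bfX/C_2]$ by declaring $\{(U_i,\tau_i)\to(U,\tau)\}$ a covering precisely when $\{U_i \to U\}$ covers in $\bfX$; one must check this is the canonical topology, which amounts to checking that every representable presheaf is a sheaf and that this is the finest topology with that property. This follows because $\pi^*$ preserves and reflects epimorphisms (an epimorphism in $[\bfX/C_2]$ is a jointly surjective family, tested underlying in $\bfX$), so the canonical topology on $[\bfX/C_2]$ is exactly the one pulled back from $\bfX$ along $\pi^*$.

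For part (iv), I need $\calO_{[\bfX/C_2]} = (\calO_\bfX,\lambda)$ to be a local ring object: given $(U,\tau)$ in $[\bfX/C_2]$ and sections $r_1,\dots,r_k \in \calO_\bfX(U)$ with $\sum r_i \calO_\bfX(U) = \calO_\bfX(U)$, locality of $\calO_\bfX$ in $\bfX$ provides a covering $\{U_i \to U\}$ with $r_i$ invertible on $U_i$; I then need to promote this to a covering in $[\bfX/C_2]$, which by part (iii) only requires producing $C_2$-equivariant refinements. Replacing $\{U_i\}$ by $\{U_i \times_U \Lambda U_i\}$ (intersection of $U_i$ with its $\tau$-translate inside $U$) gives a $C_2$-stable refinement that still witnesses invertibility of $r_i$, since $r_i$ being a unit is preserved by restriction; this is where one uses that $\lambda$ is a ring isomorphism so $\Lambda$ carries units to units. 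Finally, for part (v), I must check the two axioms of Definition~\ref{DF:exact-quotient} together with the coherence conditions of Definition~\ref{DF:morphism-top-w-inv}: condition~\ref{i:sq1} says $\pi_\#\colon (\calO_\bfX,\lambda) \to (\calO_\bfX \times \Lambda\calO_\bfX, \tau_{\calO_\bfX})$, $x \mapsto (x,x^\lambda)$, is the equalizer of $\pi_*\lambda$ and the identity --- concretely, in $[\bfX/C_2]$ this equalizer is the subobject of $(\calO_\bfX \times \Lambda\calO_\bfX, \tau)$ on which the two projections to $\calO_\bfX \times \Lambda \calO_\bfX$ agree after applying the swap, which is precisely the diagonal-type image of $x \mapsto (x, x^\lambda)$, so \ref{i:sq1} holds on inspection. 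Condition~\ref{i:sq2}, that $\pi_*$ preserves epimorphisms, is immediate since $\pi_*U = (U \times \Lambda U, \tau_U)$ and epimorphisms in both topoi are tested underlying in $\bfX$, where $U \mapsto U \times \Lambda U$ preserves epimorphisms (products of epimorphisms are epimorphisms in a topos, and $\Lambda$ is an equivalence). The coherence conditions \ref{item:DF:alpha-to-alpha}--\ref{item:DF:upper-ring-coherence} relating $\alpha_*$ (the interchange $(\Lambda V \times V, \tau_{\Lambda V}) \to (V \times \Lambda V, \tau_V)$) and $\alpha^*$ ($\tau\colon U \to \Lambda U$) to $\nu$ and $\lambda$ are then a routine diagram chase, using repeatedly that $\nu$ satisfies $\nu_{\Lambda X} = \Lambda \nu_X$ and that $\tau$ in $[\bfX/C_2]$ satisfies $\Lambda\tau \circ \tau = \id$; I would verify \ref{item:DF:lower-coherence} and \ref{item:DF:lower-ring-coherence} explicitly and invoke the Remark after Definition~\ref{DF:morphism-top-w-inv} to get the rest. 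The main obstacle is bookkeeping rather than conceptual: keeping track of the suppressed isomorphism $\nu$ and of which $\Lambda$ appears where when checking the coherence conditions in (v), since $\alpha_*,\alpha^*$ are genuinely nontrivial here; everything else is a direct unwinding of definitions.
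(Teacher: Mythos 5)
The central error in your proposal is the identification of $\pi_!$ with $\pi_*$. You claim $\pi_!(U) = (U \times \Lambda U, \tau_U)$ and justify this by invoking ``induction equals coinduction'' for the finite group $C_2$. This principle (ambidexterity) is a phenomenon of additive settings such as module categories over a Frobenius algebra; it does \emph{not} hold for objects in a general topos. Already when $\bfX = \Sets$ with $\Lambda = \id$, the category $[\bfX/C_2]$ is that of $C_2$-sets, the left adjoint of the forgetful functor is $X \mapsto X \sqcup X$ (the free $C_2$-set), and the right adjoint is $X \mapsto X \times X$ (the cofree one), which have different cardinalities. The correct left adjoint is $\pi_!(U) = (U \sqcup \Lambda U, \sigma_U)$ with the interchange on the \emph{coproduct}, and this is what the paper introduces and uses; the fact that $\Lambda$ is self-adjoint as a functor of $\bfX$ is irrelevant to the existence of an ambidextrous adjunction.

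This error is not cosmetic; it breaks your argument for part (iv). You propose to promote the cover $\{U_i \to U\}$ to a $C_2$-equivariant cover by passing to $\{U_i \times_U \Lambda U_i\}$. But this family need not cover $U$. Take $\bfX = \Sh(S^1)$ with $\lambda$ the antipodal map and cover $S^1$ by two arcs each slightly longer than a half-circle; then each $U_i \cap \lambda^{-1}(U_i)$ consists of two small arcs, and the pair of intersections need not cover $S^1$. (There is also the secondary issue of verifying that $U_i \times_U \Lambda U_i$ actually carries a canonical $C_2$-equivariant structure over $(U,\tau)$, which you do not supply.) The correct device is precisely $\pi_!$ with coproducts: the adjoint $\beta_i \colon \pi_! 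U_i \to (U,\tau)$ of $U_i \to U$ has underlying map $U_i \sqcup \Lambda U_i \to U$, which is automatically a covering since $U_i \to U$ already is; and the adjunction isomorphism $\calO_{[\bfX/C_2]}(\pi_! U_i) \cong \calO_\bfX(U_i)$ carries $r_i$ to an invertible section. Parts (i) and (v) of your sketch are essentially sound, and your observation in (iii) --- that $\pi^*$ is faithful (hence reflects epimorphisms) and is a left adjoint (hence preserves them) --- is actually a shorter route than the paper's $\pi_!$-based argument for that part, though you should state it non-circularly rather than asserting the conclusion inside the parenthetical. But (ii) and (iv) require the genuine $\pi_!$, and your substitution of the product for the coproduct is a real gap.
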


\begin{proof}
	We first introduce the functor
	$\pi_!:\bfX\to [\bfX/C_2]$ given by sending an object $U$ to $(U\sqcup \Lambda U, \sigma_U)$ where $\sigma_U:U\sqcup \Lambda U\to \Lambda(U \sqcup
 \Lambda U)=\Lambda U\sqcup U$ is the interchange morphism.
 It is routine to check that $\pi_!$ is left adjoint to $\pi^*$;
 the unit map is the canonical embedding $V\to \pi^*\pi_!V=V\sqcup \Lambda V$ 
 and the counit map is the map $\pi_!\pi^*(U,\tau)=(U\sqcup \Lambda U,\sigma_U)\to (U,\tau)$
 restricting to $\id_U$ on $U$ and to $\tau^{-1}$ on $\Lambda U$.
	The existence of adjoints implies formally that $\pi^*$ is continuous and cocontinuous, and
	that $\pi^*$ and $\pi_!$ preserve epimorphisms.	We now turn to the proof itself.
	
\begin{enumerate}[label=(\roman*), align=left, leftmargin=0pt, itemindent=1ex, itemsep=0.3\baselineskip]
 \item 
	We verify Giruad's axioms for $[\bfX/C_2]$.
	Briefly, if $\{G_i\}_{i\in I}$ is a set of generators for $\bfX$,
	then $\{\pi_!G_i\}_{i\in I}$ is a set of generators for $[\bfX/C_2]$.
	Indeed, let $f,g:U\to V$ be distinct morphisms in $[\bfX/C_2]$.
	Since $\pi^*$ is faithful, $\pi^*f,\pi^*g:\pi^*U\to \pi^*V$ are distinct in $\bfX$,
	hence there exist $i\in I$ and $h:G_i\to \pi^*U\to \pi^*V$
	such that $ \pi^*f\circ h\neq \pi^* g\circ h $.
	By the adjunction between $\pi_!$ and $\pi^*$, the morphism
	$h$ corresponds to a morphism $h':\pi_!G_i\to U$ in $[\bfX/C_2]$
	such that $ f\circ h'\neq g\circ h'$, as required.
	
	That sums are disjoint
	and equivalence relations are effective in $[\bfX/C_2]$ can
	be checked with the help of the forgetful functor $\pi^*$ and the fact
	that these properties hold in $\bfX$. Finally, the existence of
	colimits and the fact that they commute with fiber products can be checked directly.

 \item This is immediate from the adjunctions between $\pi_!$, $\pi^*$ and $\pi_*$
 noted above.

 \item 
 We may replace $\{(U_i,\tau_i)\}_{i\in I}$ with their disjoint union,
 denoted
 $(U',\tau')$,
 to assume that $I$ consists of a single element. We need
 to show that $U'\to U$ is an epimorphism in $\bfX$
 if and only if $(U',\tau')\to (U,\tau)$ is an epimorphism in $[\bfX/C_2]$.
 The ``if''
 part follows from the fact that $\pi^*$ preserves epimorphisms, being a left adjoint.
 To see the converse, it is enough to show that the composition
 $\pi_!U'=\pi_!\pi^*(U',\tau')\xrightarrow{\text{counit}} (U',\tau')\to (U,\tau) $
 is an epimorphism.
 Let $(V,\sigma)\in [\bfX/C_2]$.
 Then $\Hom_{[\bfX/C_2]}(\pi_!U',(V,\sigma))\cong \Hom_{\bfX}(U',\pi^*(V,\sigma))=
 \Hom_{\bfX}(U',V)$, and under this isomorphism the map 
 \[\Hom_{[\bfX/C_2]}((U,\tau),(V,\sigma))\to \Hom_{[\bfX/C_2]}(\pi_!U',(V,\sigma))\]
 is the composition $\Hom_{[\bfX/C_2]}((U,\tau),(V,\sigma))\hookrightarrow
 \Hom_{\bfX}(U,V)\to \Hom_{\bfX}(U',V)$, which is injective since $U'\to U$
 an epimorphism. 
 Thus, $(U',\tau')\to (U,\tau)$ is an epimorphism.

\item 	 Let $\{r_i\}_{i\in I}$ be $(U,\tau)$-sections of
 $(\calO_\bfX,\lambda)$ generating the unit ideal. 
 Then $\{\pi^*r_i\}_{i\in I}$ generate the unit ideal in $\calO_\bfX(U)$,
 and hence there exists a covering $\{\alpha_i:U_i\to U\}_{i\in I}$ 
 such that $r_i\in\units{\calO_\bfX(U_i)}$ for all $i$. (We remark that $\calO_\bfX(\emptyset)$ is the $0$-ring, in
 which the unique element is invertible; it is possible that some of the $U_i$ called for in this covering are initial objects.)

	Fix $i\in I$.
	The adjunction between $\pi_!$ and $\pi^*$
	gives rise to a morphism 	
	$\beta_i:\pi_!U_i\to (U ,\tau)$,
	adjoint to $\alpha_i:U_i\to U=\pi^*(U,\tau)$,
	and an isomorphism of rings
	$\calO_\bfX(U_i)=\Hom_\bfX(U_i,\calO_\bfX)\cong\Hom_{[\bfX/C_2]}(\pi_!U_i,(\calO_\bfX,\lambda))= 
	\calO_{[\bfX/C_2]}(\pi_!U_i)$.
	Unfolding the definitions, one finds that
	under this isomorphism, $r_i|_{\pi_!U_i}=r_i\circ \beta_i\in 
	\calO_{[\bfX/C_2]}(\pi_!U_i)$ 
	corresponds to $\pi^*r_i|_{U_i}=\pi^*r_i\circ \alpha_i$, which is invertible.
	Thus, $r_i$ is invertible in $\calO_{[\bfX/C_2]}(\pi_!U_i)$.
	By (iii), the collection $\{\beta_i:\pi_!U_i\to (U,\tau)\}_{i\in I}$ is a covering,
	so we have shown that $[\bfX/C_2]$ is locally ringed by $\calO_{[\bfX/C_2]}$.

\item 	One checks that
 $\pi_*\lambda:\pi_*\calO_\bfX\to \pi_*\calO_\bfX$ is the morphism \[ (\calO_\bfX\times\Lambda
 \calO_\bfX,\tau_{\calO_\bfX})\xrightarrow{(x,y)\mapsto (y^\lambda, x^\lambda)}(\calO_\bfX\times\Lambda
 \calO_\bfX,\tau_{\calO_\bfX}),\] and so $\calO_{[\bfX/C_2]}$ is the equalizer of $\id,\pi_*\lambda:\pi_*\calO_\bfX\to
 \pi_*\calO_\bfX$. That $\pi_*$ preserves epimorphisms can be checked
 directly using the definitions and (iii).
	The verification of the coherence conditions in Definition~\ref{DF:morphism-top-w-inv}
	is routine.\uriyaf{To do: Verify them.} 
 \qedhere
 \end{enumerate}
\end{proof}

\subsection{Ramification}
\label{subsec:ramification}

Let 
$\bfX$ be a locally
ringed topos with an involution $\lambda=(\Lambda,\nu,\lambda)$, and let 
$\pi:\bfX\to \bfY$
be an exact quotient, see Definition~\ref{DF:exact-quotient}. For brevity, write
\[ R=\pi_*\calO_{\mathbf{X}}\qquad\text{and}\qquad S=\calO_\bfY\ .
\] As in Subsection~\ref{subsec:quotients}, 
we write $\pi_*\lambda:R\to \pi_*\Lambda\calO_{\mathbf{X}}=R$ as $\lambda$. The
automorphism $\lambda:R\to R$ is an involution the fixed ring of which is $S$.

\begin{definition}\label{DF:ramification}
 Let $V$ be an object of $\bfY$. We say that $\pi$ is \emph{unramified} (relative to $\lambda$) along $V$ if $R_V$ is a
 quadratic \'etale $S_V$-algebra in $\bfY/V$, see Subsection~\ref{subsec:etale-algebras}. Otherwise, $\pi$ is said to
 be \emph{ramified} along $V$.

 The morphism $\pi$ is said to be \emph{unramified} if it is unramified along $*_\bfY$, and \emph{ramified}
 otherwise. It is \emph{everywhere ramified} if $\pi$ is ramified along every non-initial object of $\bfY$.
\end{definition}

We alert the reader that, contrary to the common use of the term ``ramification'',
we consider trivial $C_2$-quotients as everywhere ramified.

\begin{example}\label{EX:triv-quotient-is-tot-ramified}
 Suppose $(\bfX,\calO_\bfX)$ is given a weakly trivial involution and $\pi$ is the trivial $C_2$-quotient, namely, the
 identity map $\id:(\bfX,\calO_\bfX)\to (\bfX,\calO_\bfX)$ (Example~\ref{EX:trivial-exact-quotient}). 
 Then $\pi$ is everywhere ramified. Indeed, in this
 case $R=S=\calO_\bfX$ and and $\lambda=\id_R$.
	Since $\calO_\bfX$ is a local ring object, for any $V\ncong \emptyset$ in $\bfX$,
	the ring $\calO_\bfX(V)$ is nonzero, and so $R_V$ cannot be locally free of rank $2$ over $S_V$.
\end{example}

For any object $V$ of $\bfY$, define $U(V)$ to be a singleton if $\pi$ is unramified along $V$, and an empty set
otherwise. It follows from Lemma~\ref{LM:etale-is-local-property} that $V\mapsto U(V)$ defines a sheaf (the action of
$U$ on morphisms in $\bfY$ is uniquely determined), which is then represented by an object of $\bfY$, denoted $U=U_\pi$. We
call $U$ the \emph{unbranched locus} of $\pi$. It is a subobject of $*_\bfY$.
Clearly, $\pi$ is unramified if and only if $U=*_\bfY$,
and $\pi$ is everywhere ramified if and only if $U=\emptyset_\bfY$.

The following propositions give a more concrete description of the unbranched locus when
$\pi:\bfX\to \bfY$ is induced by a $C_2$-quotient of schemes or topological spaces, see
Examples~\ref{EX:important-exact-quo-scheme} and~\ref{EX:important-exact-quo-top}.

\begin{prp} \label{PR:ramification-in-schemes} In the situation of Example~\ref{EX:important-exact-quo-scheme}, i.e.,
 when 
 $\pi:\bfX\to \bfY$ 
 is obtained from a good $C_2$-quotient of schemes $\pi:X\to Y$ by
 taking \'etale ringed topoi, the unbranched locus of $\pi$, defined above, is represented by an open subscheme
 $U\subseteq Y$. The latter can be defined in any of the following equivalent ways:
\begin{enumerate}[label=(\alph*)]
	\item \label{item:PR:ram:max-etale} $U$ is the largest open subset of $Y$ such that $\pi_U:\pi^{-1}(U)\to U$
	is quadratic \'etale.
 \item \label{item:PR:ram:Zariski} The (Zariski) points of $U$ are those points $y\in Y$ such that
 $X\times_Y\Spec\calO_{Y,y}\to \Spec \calO_{Y,y}$ is quadratic \'etale.
 \item \label{item:PR:ram:stalk} The (Zariski) points of $U$ are those points $y\in Y$ such that
 $X\times_Y\Spec\calO_{Y,y}^{\mathrm{sh}}\to \Spec \calO_{Y,y}^{\mathrm{sh}}$ is quadratic \'etale;
 here, $\calO_{Y,y}^{\mathrm{sh}}$ is the strict henselization of $\calO_{Y,y}$.
 \item \label{item:PR:ram:lambda} The (Zariski) points of $Y- U$ are those points $y\in Y$ such that 
 the set $\pi^{-1}(y)$ is a singleton $\{x\}$, and
 $\lambda$ induces the identity on $k(x)$.
\end{enumerate}
Consequently, 
$\pi:\bfX\to \bfY$ is unramified if and only if
$\pi:X\to Y$ is quadratic \'etale.
\end{prp}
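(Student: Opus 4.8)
The plan is to work Zariski-locally on $Y$ and reduce the whole statement to facts about the ring with involution $(R,\lambda)$ attached to an affine open $\Spec S\subseteq Y$, where $S=\calO_Y(\Spec S)$, $R=\calO_X(\pi^{-1}(\Spec S))$ and $S=R^{C_2}$ since $\pi$ is a good quotient; this loses nothing because $\pi$ is affine and the unramified locus $U$ of Subsection~\ref{subsec:ramification} is built as a sheaf, hence is local on $Y$. First I would record that a subterminal object of $\Sh(Y_\et)$ is represented by an open subscheme of $Y$ --- indeed it is the union of the images of the \'etale maps on which it is supported, and every open arises this way --- so $U$ is automatically an open subscheme of $Y$; the content is to identify it.

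Introduce $U'\subseteq Y$, the set of points described in \ref{item:PR:ram:Zariski}, i.e.\ those $y$ for which $X\times_Y\Spec\calO_{Y,y}\to\Spec\calO_{Y,y}$ is quadratic \'etale, equivalently $R_{\mathfrak p}$ is quadratic \'etale over $S_{\mathfrak p}$ (using that $\pi$ is affine and that $S_{\mathfrak p}$ is still the $\lambda$-fixed ring of $R_{\mathfrak p}:=R\otimes_S S_{\mathfrak p}$, by flatness of $S\to S_{\mathfrak p}$). By Corollary~\ref{CR:etale-at-a-point} applied to the affine pieces, $U'$ is open, and for each $y\in U'$ there is an affine open neighbourhood of $y$ over which $\pi$ is quadratic \'etale; conversely if $\pi$ is quadratic \'etale over an open $W$ then $W\subseteq U'$ trivially. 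Since being quadratic \'etale over the base is local on the base (Lemma~\ref{LM:etale-is-local-property}, Example~\ref{EX:quad-etale-morph-of-sch}), it follows that $\pi^{-1}(U')\to U'$ is quadratic \'etale and that $U'$ is the largest open with this property; this is the equivalence of \ref{item:PR:ram:max-etale} and \ref{item:PR:ram:Zariski}. To see that $U'$ represents the sheaf $U$, I would check that for an \'etale $V\to Y$ with image the open $W$, one has $R_V$ quadratic \'etale over $S_V$ iff $W\subseteq U'$: the ``if'' is base change of a quadratic \'etale morphism, and ``only if'' is fppf descent of the property ``quadratic \'etale morphism'' along the faithfully flat $V\to W$.

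For \ref{item:PR:ram:Zariski}$\iff$\ref{item:PR:ram:stalk}: the map $\calO_{Y,y}\to\calO_{Y,y}^{\mathrm{sh}}$ is faithfully flat, ``quadratic \'etale'' is stable under arbitrary base change and descends along faithfully flat maps (for the descent of the splitting of the multiplication map one uses its uniqueness, as in the proof of Lemma~\ref{LM:etale-is-local-property}), and the relevant fibre products are $\Spec$ of $R\otimes_S\calO_{Y,y}$ and $R\otimes_S\calO_{Y,y}^{\mathrm{sh}}$ since $\pi$ is affine. For \ref{item:PR:ram:Zariski}$\iff$\ref{item:PR:ram:lambda}: fix $y$ with corresponding prime $\mathfrak p$; then $R$ is integral over $S$ (every $r$ satisfies $r^2-(r+r^\lambda)r+r^\lambda r=0$, with coefficients in $S$), so the maximal ideals of $R_{\mathfrak p}$ are precisely the points of $\pi^{-1}(y)$, of which, by Proposition~\ref{PR:G-acts-on-R}, there are one or two, swapped by $\lambda$. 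By Theorem~\ref{thm:local-involution} (with $S_{\mathfrak p}$ local), $R_{\mathfrak p}$ is quadratic \'etale over $S_{\mathfrak p}$ exactly when it has two maximal ideals, or has a single maximal ideal but $\lambda$ acts nontrivially on the residue field there --- and in that last case the residue field is $\kappa(x)$ for $x$ the unique point of $\pi^{-1}(y)$. Negating yields description \ref{item:PR:ram:lambda}. Finally, $\pi\colon\bfX\to\bfY$ is unramified iff $U=*_{\bfY}$ iff $U'=Y$ iff, by \ref{item:PR:ram:max-etale}, $\pi\colon X\to Y$ is quadratic \'etale; this also follows directly from Definition~\ref{DF:ramification} and Example~\ref{EX:quad-etale-morph-of-sch}. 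I expect the main obstacle to be the bookkeeping that matches the \'etale-sheaf-theoretic object $U$ with the scheme-theoretic set $U'$, and, inside \ref{item:PR:ram:Zariski}$\iff$\ref{item:PR:ram:lambda}, correctly identifying the maximal ideals of $R\otimes_S\calO_{Y,y}$ with the fibre $\pi^{-1}(y)$ and tracking which case of Theorem~\ref{thm:local-involution} is the ramified one.
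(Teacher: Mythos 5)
Your proof is correct and follows the same overall architecture as the paper's: identify the candidate open $U'$ via Corollary~\ref{CR:etale-at-a-point}, show it represents the sheaf-theoretic unramified locus using Lemma~\ref{LM:etale-is-local-property}, and obtain the equivalence \ref{item:PR:ram:Zariski}$\iff$\ref{item:PR:ram:lambda} from Theorem~\ref{thm:local-involution}. The one place where you take a genuinely different route is \ref{item:PR:ram:stalk}$\implies$\ref{item:PR:ram:Zariski}. You appeal to faithfully flat descent of the property ``quadratic \'etale'' along $\calO_{Y,y}\to\calO_{Y,y}^{\mathrm{sh}}$. This does work, but it carries a hidden cost: $\calO_{Y,y}^{\mathrm{sh}}$ is not of finite type over $\calO_{Y,y}$, so the descent here is not an application of Lemma~\ref{LM:etale-is-local-property} (which is a statement about coverings inside a fixed topos) but a genuine faithfully-flat descent, and in particular you need that finite presentation of the $S_{\frakp}$-module $R_{\frakp}$ descends along the not-quasi-compact faithfully flat map $S_\frakp\to\calO_{Y,y}^{\mathrm{sh}}$. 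That is true (it is essentially \cite[Tag 05AY]{de_jong_stacks_2017}), but it is a more delicate input than you acknowledge; recall that a priori $R_\frakp$ need not be finite over $S_\frakp$ at all (Example~\ref{EX:pathological-rings-with-inv}(i)), so finiteness really must be extracted from the hypothesis. The paper sidesteps this by writing $\calO_{Y,y}^{\mathrm{sh}}=\colim_\alpha B_\alpha$ as a filtered colimit of \'etale faithfully flat $\calO_{Y,y}$-algebras, using Lemma~\ref{LM:converse-etale-crit} to produce $r\in R\otimes_S\calO^{\mathrm{sh}}_{Y,y}$ with $r-r^\lambda$ a unit, observing that $r$ (and the unit condition) are already visible at a finite stage $A_\alpha=R\otimes_S B_\alpha$, and then applying Lemma~\ref{LM:etale-criterion} to conclude $A_\alpha$ is quadratic \'etale over $B_\alpha$ --- after which Lemma~\ref{LM:etale-is-local-property} \emph{does} apply, since $\Spec B_\alpha\to\Spec\calO_{Y,y}$ is now a genuine \'etale covering. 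Your version buys economy of exposition by leaning on descent theory; the paper's buys elementarity by reducing to the ring-theoretic criterion of Lemma~\ref{LM:etale-criterion}, which is stable under the sort of spreading-out used throughout Section~\ref{sec:ringsWithInvolution}. Both are valid, but if you keep your version you should make the finite-presentation descent step explicit rather than folding it into ``descends along faithfully flat maps''.
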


\begin{proof}
 By virtue of Lemma~\ref{LM:etale-is-local-property}, if $V\to Y$ is an \'etale morphism having image
 $U$ in $Y$, then $\pi_V:X\times_YV\to V$ is quadratic \'etale if and only if $\pi_U:\pi^{-1}(U)\to U$ is quadratic
 \'etale. It follows that there exists a maximal open subset $U$ of $Y$ with the property that $\pi_U$ is quadratic
 \'etale, and any $V\to X$ as above factors through the inclusion $U\subseteq X$. We also let $U$ denote the sheaf it
 represents in $\bfY=\Sh(Y_\et)$.
	
 Since $U$ is a subobject of $*_\bfY$, the set $U(V)$ is a singleton or empty for all $V\in\bfY$. To show that $U$ is
 the unbranched locus of 
 $\pi:\bfX\to \bfY$, it is enough to show that $\pi$ is
 unramified along an object $V$ of $\bfY$ if and only if there exists a morphism $V\to U$. For any such $V$, we can find a covering
 $\{V_i\to V\}_i$ with each $V_i$ represented by some $ (\tilde{V}_i\to Y)$ in $ Y_\et$. By
 Lemma~\ref{LM:etale-is-local-property}, $\pi$ unramified along $V$ if and only if $\pi$ is unramified along each
 $V_i$. Furthermore, if for each $i\in I$ there is a morphism $V_i\to U$ (in $\bfY$), then these morphisms must patch to a
 morphism $V\to U$, because $U(V_i\times_V V_j)$ is a singleton. It is therefore 
 enough to show that $\pi$ is unramified
 along $V_i$ if and only if there exists a morphism $\tilde{V}_i\to U$ in $Y_\et$. The latter holds precisely when
 $\im( \tilde{V}_i\to Y)\subseteq U$, and so the claim follows from the definition of $U$.
	
 We finish by showing that the different characterizations of $U$ are equivalent. The equivalence of 
 \ref{item:PR:ram:max-etale} and \ref{item:PR:ram:Zariski}
 follows from Corollary~\ref{CR:etale-at-a-point}, and the equivalence of \ref{item:PR:ram:Zariski} and 
 \ref{item:PR:ram:lambda} follows from
 Theorem~\ref{thm:local-involution}.
 That the condition in \ref{item:PR:ram:Zariski} implies the condition in \ref{item:PR:ram:stalk}
 is clear. It remains to prove the converse.
 Since $\calO_{Y,y}^{\mathrm{sh}}$ is faithfully flat over $\calO_{Y,y}$,
 this is a consequence of Lemma~\ref{LM:etale-is-local-property} applied to the fpqc site
 of $\Spec \calO_{Y,y}$.
\end{proof}

\begin{prp} \label{PR:ramification-top} In the situation of Example~\ref{EX:important-exact-quo-top}, i.e., when
 $\pi:\bfX\to \bfY$ 
 is induced by a $C_2$-quotient of Hausdorff topological spaces
 $\pi:X\to Y=X/C_2$, the unbranched locus is represented by an open subset ${U}\subseteq Y$. Specifically,
 ${U}=\{x\in X\suchthat x^\lambda\neq x\}/C_2$. Consequently, 
 $\pi:\bfX\to \bfY$ is
 unramified if and only if $C_2$ acts freely on $X$.
\end{prp}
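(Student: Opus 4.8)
The plan is to identify the open subset of $Y$ representing the unramified locus $U_\pi$ with the set $U_0 := \{x \in X \suchthat x^\lambda \neq x\}/C_2$; the asserted equivalence is then immediate, since $\pi$ is unramified exactly when $U_\pi = *_\bfY$, i.e.\ when $U_0 = Y$, i.e.\ when $\lambda$ has no fixed point. First I would record the elementary topology. Since $X$ is Hausdorff, the fixed locus $F := \{x \suchthat x^\lambda = x\}$ is closed: if $x^\lambda \neq x$, separating $x$ from $x^\lambda$ by disjoint opens $A, B$ exhibits $A \cap \lambda^{-1}(B)$ as a neighbourhood of $x$ missing $F$. Hence $V_0 := X \setminus F$ is a $C_2$-stable open set, and because the quotient map $\pi$ is open on saturated sets, $U_0 = \pi(V_0)$ is open in $Y$ and $\pi^{-1}(U_0) = V_0$. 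Note also that $\pi$ is closed ($C_2$ being finite), so Corollary~\ref{CR:finite-top-morphism} applies; in particular, for each $y \in Y$ the sets $\pi^{-1}(W)$, with $W$ an open neighbourhood of $y$, are cofinal among the open neighbourhoods of the fibre $\pi^{-1}(y)$. Finally, as a subobject of $*_\bfY$, $U_\pi$ is represented by the open set $\bigcup\{W \suchthat \pi \text{ is unramified along } W\}$.

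For the inclusion $U_0 \subseteq U_\pi$, let $y \in U_0$, so $\pi^{-1}(y) = \{x, x^\lambda\}$ with $x \neq x^\lambda$. Using Hausdorffness I would produce disjoint opens $A_1 \ni x$ and $\lambda A_1 \ni x^\lambda$ (take any separating pair and intersect one member with the $\lambda$-image of the other to make the pair $\lambda$-symmetric), put $A = A_1 \sqcup \lambda A_1$ and $W = \pi(A)$. Then $W$ is an open neighbourhood of $y$, $\pi^{-1}(W) = A$, and $\pi$ restricts to a homeomorphism $A_1 \to W$. Consequently $R|_W = (\pi_*\calC(X,\C))|_W$ is isomorphic, as an $S|_W$-algebra, to the split quadratic \'etale algebra $S|_W \times S|_W$ (sectionwise, $\calC(\pi^{-1}(W'),\C) \cong \calC(W',\C) \times \calC(W',\C)$ via the two branches $A_1 \to W$, $\lambda A_1 \to W$). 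Thus $\pi$ is unramified along $W$, so $W \subseteq U_\pi$ and $y \in U_\pi$.

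The harder inclusion $U_\pi \subseteq U_0$ is the main obstacle, and I would argue by contradiction. Suppose $y \in U_\pi$ but $y \notin U_0$, so $\pi^{-1}(y) = \{x\}$ with $x^\lambda = x$. Choose an open $W \ni y$ along which $\pi$ is unramified, so $R|_W$ is quadratic \'etale over $S|_W$. Passing to stalks at $y$ — which is exact, hence preserves the equalizer condition~\ref{i:sq1}, local freeness of rank $2$, and the splitting of the multiplication map — yields that $R_y$ is a quadratic \'etale algebra over the local ring $S_y$ (the germs of continuous $\C$-valued functions at $y$), with $S_y = R_y^{\lambda_y}$ by exactness applied to~\ref{i:sq1}. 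By the cofinality recorded above, $R_y = \colim_{W \ni y}\calC(\pi^{-1}(W),\C)$ is canonically the ring of germs at $x$ of continuous $\C$-valued functions on $X$, and $\lambda_y$ acts by $g \mapsto g\circ\lambda$. Since $\lambda(x) = x$, the germ $g - g\circ\lambda$ vanishes at $x$, hence is never a unit of $R_y$. This contradicts Lemma~\ref{LM:converse-etale-crit}, which furnishes $r \in R_y$ with $r - r^{\lambda_y} \in \units{R_y}$. Therefore $y \in U_0$, and altogether $U_\pi = U_0 = V_0/C_2$, as claimed.

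I expect the only delicate points to be bookkeeping: that the stalk at $y$ of a quadratic \'etale $S|_W$-algebra is again quadratic \'etale over the local ring $S_y$, and that the sectionwise equalizer description of~\ref{i:sq1} together with exactness of the stalk functor really does identify $S_y$ with the $\lambda_y$-fixed subring of $R_y$. The topological manipulations in the inclusion $U_0 \subseteq U_\pi$ (the $\lambda$-symmetric opens and the homeomorphism $A_1 \to W$) are routine, as is the final translation into the statement about free actions.
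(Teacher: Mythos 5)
Your proof is correct and follows the strategy the paper has in mind: the paper leaves this proof to the reader by analogy with Proposition~\ref{PR:ramification-in-schemes}, and your two-inclusion argument---exhibiting a $\lambda$-symmetric pair of disjoint opens to produce the local split double cover over the free locus, and then passing to stalks and invoking Lemma~\ref{LM:converse-etale-crit} to rule out ramification at a fixed point---is exactly the topological rendition of the corresponding steps there. The bookkeeping you flag (that taking stalks preserves the quadratic \'etale property and commutes with the equalizer condition~\ref{i:sq1}) is indeed routine and handled as you describe.
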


\begin{proof}
	This is similar to the previous proof and is left to the reader.
\end{proof}

We refer to the situations of Examples~\ref{EX:important-exact-quo-scheme}
and~\ref{EX:important-exact-quo-top} as
the scheme-theoretic case and topological case, respectively.
In both cases, we define
the \emph{branch locus} of $\pi$ to be the complement $W:=Y- U$,
where $U$ is as in Proposition~\ref{PR:ramification-in-schemes} or Proposition~\ref{PR:ramification-top}.
The \emph{ramification locus} of $\pi$ is $Z:=\pi^{-1}(W)$. In the scheme-theoretic case, we also 
endow $W$ and $Z$ with the
reduced induced closed subscheme structure.

Notice that in the topological case, $\pi_U:\pi^{-1}(U)\to U$ is a double covering, while $\pi_W:\pi^{-1}(W)\to W$ is a
homeomorphism. With slight modification, a similar statement holds for schemes.

\begin{prp}\label{PR:Z-to-W-homeomorphism}
 In the notation of Proposition~\ref{PR:ramification-in-schemes}, let $W=U- Y$, and regard $W$ and $\pi^{-1}(W)$ as
 reduced closed subschemes of $Y$ and $X$, respectively. Then:
	\begin{enumerate}[label=(\roman*)]
		\item $\pi_U:\pi^{-1}(U)\to U$ is quadratic \'etale.
		\item $\lambda:X\to X$ restricts to the identity morphism on $\pi^{-1}(W)$.
		\item $\pi|_{\pi^{-1}(W)}:\pi^{-1}(W)\to W$ is a homeomorphism,
		and when $2$ is invertible on $Y$, it is an isomorphism of schemes.
	\end{enumerate}
\end{prp}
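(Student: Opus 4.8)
The plan is to prove the three assertions in turn, working locally on $Y$ throughout: since $\pi$ is affine we may assume $Y=\Spec S$, $X=\Spec R$ with $S=R^{C_2}$, and then $R$ is integral over $S$ because every $r\in R$ is a root of $T^{2}-(r+r^{\lambda})T+r^{\lambda}r\in S[T]$, as observed in the proof of Corollary~\ref{CR:henselian-ring-with-involution}. Assertion~(i) needs no argument: by the construction of the unramified locus in Proposition~\ref{PR:ramification-in-schemes}, characterization~\ref{item:PR:ram:max-etale}, the open subscheme $U$ is \emph{defined} as the largest open of $Y$ over which $\pi$ becomes quadratic \'etale.

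For~(ii), write $Z$ for $\pi^{-1}(W)$ with its reduced structure, as in the statement. The subspace $\pi^{-1}(W)\subseteq X$ is $\lambda$-stable since $\pi\lambda=\pi$, and as $\lambda$ takes reduced closed subschemes to reduced closed subschemes it restricts to an automorphism $\bar\lambda$ of $Z$. By Proposition~\ref{PR:ramification-in-schemes}\ref{item:PR:ram:lambda} every $w\in W$ has a one-point fibre, so every point of $Z$ is fixed by $\bar\lambda$; the same characterization says that $\lambda$ induces the identity on each residue field $k(z)$, $z\in Z$. I would then invoke the elementary fact that, for a \emph{reduced} ring $A$, a ring endomorphism preserving every prime ideal and inducing the identity on every residue field is the identity --- for $a\in A$ the element $\phi(a)-a$ lies in every prime of $A$, hence in the nilradical, which is zero. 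Applying this over an affine open cover of $Z$ yields $\bar\lambda=\id_Z$.

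For~(iii), since $R$ is integral over $S$ the morphism $\pi$ is closed, so the composite $Z\hookrightarrow X\xrightarrow{\pi}Y$ is closed; it factors through the closed subscheme $W$ (set-theoretically, and hence scheme-theoretically because $Z$ is reduced), and by Proposition~\ref{PR:ramification-in-schemes}\ref{item:PR:ram:lambda} it is a bijection onto $W$. A continuous closed bijection is a homeomorphism, which proves the first clause. For the second, assume $2\in\units{\calO_Y}$ and argue locally: write $W=V(I)$ for the radical ideal $I\subseteq S$ defining it, so $Z=\Spec(R/\sqrt{IR})$, $W=\Spec(S/I)$, and the morphism $Z\to W$ is $\Spec$ of the induced map $\bar\psi\colon S/I\to R/\sqrt{IR}$. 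Since $2\in\units{R}$ we may split $R=S\oplus R^{-}$ with $R^{-}=\{r\in R: r^{\lambda}=-r\}$; for $r\in R^{-}$, part~(ii) gives $-r=\lambda(r)\equiv r\pmod{\sqrt{IR}}$, whence $2r\in\sqrt{IR}$ and so $r\in\sqrt{IR}$. Thus $R^{-}\subseteq\sqrt{IR}$, hence $R=S+\sqrt{IR}$ and $\bar\psi$ is surjective; its $\Spec$ is then a closed immersion which, being a homeomorphism onto the reduced scheme $W$ by the first clause, has vanishing ideal sheaf. So $\bar\psi$ is an isomorphism, and the local isomorphisms glue, each being a restriction of $\pi$.

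I expect no serious obstacle, since everything reduces to Proposition~\ref{PR:ramification-in-schemes} and routine commutative algebra; the one delicate point is the scheme-theoretic half of~(iii), where invertibility of $2$ is used essentially to deduce $R=S+\sqrt{IR}$ from part~(ii). Example~\ref{EX:pathological-rings-with-inv}(ii) shows that without it $S/I\to R/\sqrt{IR}$ may be a proper inclusion, so that only the homeomorphism survives.
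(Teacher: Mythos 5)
Your proof is correct and follows essentially the same route as the paper's: (i) is the definition of $U$, (ii) combines condition~\ref{item:PR:ram:lambda} with reducedness to kill $f-f^\lambda$ (you package this as a general lemma about automorphisms preserving primes and residue fields, the paper works directly with the vanishing locus of $f-f^\lambda$, but it is the same nilradical argument), and (iii) uses integrality for closedness and the decomposition $a=\tfrac12(a+a^\lambda)+\tfrac12(a-a^\lambda)$ for surjectivity of $S/I\to R/\sqrt{IR}$. Your extra step deducing injectivity from the closed-immersion-that-is-a-homeomorphism argument is a clean way of justifying what the paper tacitly calls an ``evident embedding''; both are sound.
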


\begin{proof}
\begin{enumerate}[label=(\roman*), align=left, leftmargin=0pt, itemindent=1ex, itemsep=0.3\baselineskip]
	\item This immediate from condition \ref{item:PR:ram:max-etale} in Proposition~\ref{PR:ramification-in-schemes}.
	
	\item 
	Condition \ref{item:PR:ram:lambda} of Proposition~\ref{PR:ramification-in-schemes}	
	implies that $\lambda$ fixes the (Zariski) points of $\pi^{-1}(W)$.
	Let $f$ be a (Zariski) section of $\calO_{\pi^{-1}(W)}$ and let $S\subseteq \pi^{-1}(W)$
	be the largest open subset on which $f-f^\lambda$ is invertible.
	Let $s\in S$. Then $f-f^\lambda$ is invertible in $k(s)$, which is impossible
	by condition \ref{item:PR:ram:lambda} of Proposition~\ref{PR:ramification-in-schemes}.
	Thus, $S=\emptyset$. Since $\pi^{-1}(W)$ is reduced, we conclude that $f-f^\lambda=0$.
	
	\item It is enough to prove the claim after restricting to an open affine covering
	of $Y$, so assume $X=\Spec A$, $Y=\Spec B$, $W=\Spec B/I$ with $I$ a
	radical ideal of $B$, and $\pi^{-1}(W)=\Spec A/\sqrt{IA}$,
	where $\sqrt{IA}$ denotes the radical of $IA$.
	The morphism $\pi|_{\pi^{-1}(W)}:\pi^{-1}(W)\to W$ is adjoint to the evident homomorphism 
	$B/I\to A/\sqrt{IA}$, and
	$\lambda:X\to X$ induces an involution $\lambda:A\to A$ having fixed ring $B$.
	
	We know that $\pi:{\pi^{-1}(W)}\to W$ is continuous and it is a set bijection
	since its set-theoretic fibers consist of singletons by 
	condition \ref{item:PR:ram:lambda} of Proposition~\ref{PR:ramification-in-schemes}.
	Thus, proving that $\pi|_{\pi^{-1}(W)}$ is a homeomorphism
	amounts to checking that it is closed. 
	Since any $a\in A$ satisfies
	$a^2-(a^\lambda+a)a+(a^\lambda a)=0$, the morphism
	 $\Spec A/\sqrt{IA}\to \Spec B/I$ is integral, and therefore
	closed by
	\cite[Tag 01WM]{de_jong_stacks_2017}.

	Suppose now that $2\in\units{B}$. 
	We need to show that $B/I\to A/\sqrt{IA}$ is bijective.
	Let $a\in A$. 
	By virtue of (ii), $\lambda$ induces the identity
	involution on $A/\sqrt{IA}$, and thus $a\equiv \frac{1}{2}(a+a^\lambda)\mod \sqrt{IA}$.
	Since $a+a^\lambda\in B$, we have established the surjectivity
	of $B/I\to A/\sqrt{IA}$. 
	Next, write $J=\ker(B/I\to A/\sqrt{AI})$.
	Since $\Spec A/\sqrt{AI}\to \Spec B/I$ is bijective, 
	$J$ is contained in every prime ideal of $B/I$,
	and
	since $B/I$ is reduced, $J=0$.
	\qedhere
\end{enumerate}
\end{proof}
 
\begin{remark}
\begin{enumerate}[label=(\roman*), align=left, leftmargin=0pt, itemindent=1ex, itemsep=0.3\baselineskip]
\item In Proposition~\ref{PR:Z-to-W-homeomorphism}(iii), 
it is in general necessary to assume that $2$ is invertible in order
to conclude that $\pi:\pi^{-1}(W)\to W$ is an isomorphism. 
Consider, for example, a DVR $S$ with $2\neq 0$ having a non-perfect residue field
$K$ of characteristic $2$, let $\alpha\in \units{S}$ be an element
such that its image in $K$ is not a square, let $R=S[x\,|\,x^2=\alpha]$,
and let $\lambda:R\to R$ be the $S$-involution sending $x$ to $-x$.
Taking $X=\Spec R$ and $Y=\Spec S$, the set
$W$ consists of the closed
point $y$ of $Y$, but the induced map $k(y)\to k(\pi^{-1}(y))$
not an isomorphism.

\item Let $\pi:X\to Y$ be a good $C_2$-quotient
of schemes which is everywhere ramified, and suppose $2$ is invertible
on $Y$. Then Proposition~\ref{PR:Z-to-W-homeomorphism}
implies that the induced morphism $\pi:X_{\mathrm{red}}\to Y_{\mathrm{red}}$
is an isomorphism. However, in general, and in contrast
to Proposition~\ref{PR:ramification-top}, it can happen that $\pi:X\to Y$
is not an isomorphism. For example, take $X=\Spec \C[\veps]/(\veps^2)$
and let $\lambda$ be the $\C$-involution taking $\veps$ to $-\veps$.
\end{enumerate}
\end{remark}

\begin{remark}
	For a general exact $C_2$-quotient $\pi:\bfX\to \bfY$ with unbranched
	locus $U$,
	it is possible to define the ``branch topos'' $\bfW$ of $\pi$ as
 the full subcategory of $\bfY$ consisting of objects $W$ such that the projection $U\times W\to U$ is an isomorphism.
 In the situation of Examples~\ref{EX:important-exact-quo-scheme}
 and~\ref{EX:important-exact-quo-top}, this turns out to give the topos of sheaves over the
 set-theoretic or scheme-theoretic branch locus of $\pi:X\to Y$
 defined above. We omit the details as they will not be needed
 in this work.
\end{remark}

We finish with showing that the exact quotient of Construction~\ref{CN:stacky-quotient}
is unramified. Thus, any locally ringed topos with involution admits an unramified exact quotient.
When $\bfX$ is the \'etale ringed topos of a scheme $X$, this
generalizes the well-known fact that the morphism from $X$ to its quotient stack $[X/C_2]$ is quadratic \'etale.

\begin{prp} \label{PR:stacky-quotient-ramification}
The exact quotient 
$\pi:\bfX\to [\bfX/C_2]$ of
Construction~\ref{CN:stacky-quotient} is unramified.
\end{prp}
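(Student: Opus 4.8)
The plan is to reduce this to the statement that $R:=\pi_*\calO_\bfX$ becomes the split quadratic \'etale algebra over $S:=\calO_{[\bfX/C_2]}$ after pulling back along the canonical covering of the terminal object furnished by $\pi_!$. Recall from Construction~\ref{CN:stacky-quotient} that $R=(\calO_\bfX\times\Lambda\calO_\bfX,\tau_{\calO_\bfX})$ and $S=(\calO_\bfX,\lambda)$, and that the $S$-algebra structure on $R$ is induced by $\pi_\#$, whose underlying morphism in $\bfX$ is $x\mapsto(x,x^\lambda)\colon\calO_\bfX\to\calO_\bfX\times\Lambda\calO_\bfX$. By Lemma~\ref{LM:etale-is-local-property} it suffices to produce a covering $V\to\ast_{[\bfX/C_2]}$ for which $R_V$ is a quadratic \'etale $S_V$-algebra in $[\bfX/C_2]/V$.

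First I would take $V:=\pi_!\ast_\bfX=(\ast_\bfX\sqcup\Lambda\ast_\bfX,\sigma_{\ast_\bfX})$, together with the counit $\pi_!\pi^*\ast_{[\bfX/C_2]}\to\ast_{[\bfX/C_2]}$ of the adjunction $\pi_!\dashv\pi^*$, noting $\pi^*\ast_{[\bfX/C_2]}=\ast_\bfX$. Since $\Lambda$ is an equivalence of categories, $\Lambda\ast_\bfX\cong\ast_\bfX$, so $\pi^*V\cong\ast_\bfX\sqcup\ast_\bfX$; and by the description of the counit of $\pi_!\dashv\pi^*$ in the proof of Proposition~\ref{PR:stacky-quotient}, the induced morphism $\pi^*V\to\ast_\bfX$ restricts to the identity on one summand, hence is an epimorphism in $\bfX$. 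Proposition~\ref{PR:stacky-quotient}(iii) then shows that $V\to\ast_{[\bfX/C_2]}$ is a covering.

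Next I would invoke the equivalence of categories $[\bfX/C_2]/V\xrightarrow{\sim}\bfX$ that sends an object $((W,\rho)\to V)$ to the summand $W_0$ of $W$ lying over one fixed copy of $\ast_\bfX$ inside $\pi^*V$; this is immediate from the explicit descriptions of objects, morphisms and coverings of $[\bfX/C_2]$ in Construction~\ref{CN:stacky-quotient} and Proposition~\ref{PR:stacky-quotient}, since $C_2$-equivariance of $\rho$ forces the complementary summand of $W$ to be $\Lambda W_0$. Under this equivalence $V$ corresponds to $\ast_\bfX$, the ring $S_V$ to $\calO_\bfX$, the ring $R_V$ to $\calO_\bfX\times\Lambda\calO_\bfX$, and the $S_V$-algebra structure of $R_V$ to the morphism $x\mapsto(x,x^\lambda)$; composing the latter with the ring isomorphism $\id_{\calO_\bfX}\times\lambda^{-1}\colon\calO_\bfX\times\Lambda\calO_\bfX\xrightarrow{\sim}\calO_\bfX\times\calO_\bfX$ yields the diagonal map $\calO_\bfX\to\calO_\bfX\times\calO_\bfX$. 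Hence $R_V$ is isomorphic, as an $S_V$-algebra, to $\calO_\bfX\times\calO_\bfX$ with its diagonal structure, that is, to $S_V\times S_V\cong\calO_\bfX[t]/(t^2-t)$, which is quadratic \'etale over $\calO_\bfX$ (cf.\ Example~\ref{EX:quadratic-etale-alg}, the discriminant of $t^2-t$ being a unit). Lemma~\ref{LM:etale-is-local-property} then gives that $R$ is quadratic \'etale over $S$, i.e.\ $\pi$ is unramified.

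The step that will require the most care is the passage through the equivalence $[\bfX/C_2]/V\simeq\bfX$: although it is purely formal, one must keep track of the suppressed coherence isomorphisms $\nu$ and of the various interchange maps $\sigma_\bullet$, $\tau_\bullet$ entering the definitions of $\pi_!$, $\pi_*$ and $\pi_\#$, and verify in particular that the $S_V$-algebra structure of $R_V$ really transports to $x\mapsto(x,x^\lambda)$ rather than to a twisted version of it. The remaining steps — that $V\to\ast_{[\bfX/C_2]}$ is a covering, and that the split rank-$2$ algebra is quadratic \'etale — are routine given the material already in the excerpt.
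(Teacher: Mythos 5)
Your argument is correct and follows the paper's own proof in its essentials: you pick the same covering object $V=\pi_!\ast_\bfX=(\ast_\bfX\sqcup\Lambda\ast_\bfX,\sigma_{\ast_\bfX})$ (called $D$ in the paper), invoke Proposition~\ref{PR:stacky-quotient}(iii) to see that $V\to\ast_{[\bfX/C_2]}$ is a covering, use the equivalence $[\bfX/C_2]/V\simeq\bfX$ (the paper phrases it as $(U,\tau)\mapsto *\times_{(*\sqcup\Lambda *)}U$ with inverse $\pi_!$, which is the same ``take a summand'' construction you describe), and identify $R_V\to S_V$ with $x\mapsto(x,x^\lambda)\colon\calO_\bfX\to\calO_\bfX\times\Lambda\calO_\bfX$ via the adjunction $\pi_!\dashv\pi^*$, concluding $R_V\cong S_V\times S_V$. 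The only cosmetic difference is that you spell out the isomorphism $R_V\cong S_V\times S_V$ by postcomposing with $\id\times\lambda^{-1}$, where the paper simply asserts the isomorphism follows; both are fine.
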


\begin{proof} Recall from Construction~\ref{CN:stacky-quotient} 
that $S=(\calO_\bfX,\lambda)$ and $R=(\calO_\bfX\times
\Lambda\calO_\bfX,\tau_{\calO_\bfX})$, where $\tau_{\calO_\bfX}$ is the interchange involution, and the morphism $\pi_\#:S\to
R$ is given by $x\mapsto (x,x^\lambda)$ on sections (in $\bfX$). 
We shall make use of $\pi_!:\bfX\to [\bfX/C_2]$, the left adjoint of $\pi^*$
constructed in the proof of Proposition~\ref{PR:stacky-quotient}.

Write $D:=\pi_!(*_\bfX)=(*\sqcup \Lambda
*,\sigma_{*})$ and observe that the unique map $D\to (*_\bfX,\id)=*_{[\bfX/C_2]}$ is a covering
by Proposition~\ref{PR:stacky-quotient}(iii). By Lemma~\ref{LM:etale-is-local-property}, 
it is enough to show that $R_{D}$ is a quadratic \'etale $S_{D}$-algebra. In fact, we will show that
$R_D\iso S_D\times S_D$.

We first observe that the slice category $[\bfX/C_2]/D$ is equivalent to $\bfX$; the equivalence is
given by mapping $(U,\tau)\to D$ in $ [\bfX/C_2]/D$ to $*\times_{(*\sqcup \Lambda *)} U$, and by $\pi_!$ in the other
direction. Now, consider $R_D$ and $S_D$ as sheaves on $\calO_{[\bfX/C_2]}/D$. Then $R'=R_D\circ \pi_!$ and
$S'=S_D\circ \pi_!$ are sheaves of rings on the equivalent topos $\bfX$. Since $\pi_!$ is left adjoint to $\pi^*$, for
all objects $V$ of $\bfX$, we have natural isomorphisms of rings
\begin{align*} R'(V) &=R(\pi_!V)= \Hom_{[\bfX/C_2]}(\pi_!V, (\calO_\bfX\times \Lambda\calO_\bfX,\tau_{\calO_\bfX}))
 \iso\calO_\bfX(V)\times \Lambda\calO_\bfX(V)\\ S'(V) &=S(\pi_!V)= \Hom_{[\bfX/C_2]}(\pi_!V, (\calO_\bfX,\lambda))
 \iso\calO_\bfX(V)
\end{align*} and under these isomorphisms, the embedding $S'\to R'$ induced by $\pi_\#:S\to R$ is given by $x\mapsto (x,
x^\lambda)$ on sections. From this it follows that $R'\iso S'\times S'$ as $S'$-algebras, and hence $R_D\iso
S_D\times S_D$ as $S_D$-algebras.
\end{proof}

\begin{remark}\label{RM:C-two-quotients-not-unique}
 Propositions~\ref{PR:ramification-in-schemes} and~\ref{PR:ramification-top} provide plenty of examples where a locally
 ringed topos with involution admits a ramified exact quotient. However,
 Proposition~\ref{PR:stacky-quotient-ramification} shows that these examples also admit unramified exact quotients.
 It follows that exact quotients are not unique in general.
\end{remark}

\section{Classifying Involutions into Types}
\label{sec:types}

The purpose of this section is to classify 
involutions of Azumaya algebras into types in such a way which generalizes
the familiar classification of involutions of central simple algebras over fields as orthogonal, symplectic or unitary.

Throughout, $\bfX$ denotes a locally ringed topos with an involution $\lambda=(\Lambda,\nu,\lambda)$ and
$\pi:\bfX\to \bfY$ is an exact quotient relative to $\lambda$, see
Definition~\ref{DF:exact-quotient}. For brevity, we write
\[ R=\pi_*\calO_{\mathbf{X}} \qquad\text{and}\qquad S=\calO_\bfY ,\]
and, abusing the notation, denote the involution $\pi_*\lambda:R\to R$ by $\lambda$. Theorem~\ref{TH:pi-star-equiv} and
Corollary~\ref{CR:equiv-Az-with-inv} imply that Azumaya $\calO_{\mathbf{X}}$-algebras with a $\lambda$-involution are
equivalent to Azumaya $R$-algebras with a $\lambda$-involution, and the latter are easier to work with.

In fact, most of the results of this section can be phrased with no direct reference to 
$\bfX$ or the quotient map $\pi$, assuming only a locally ringed topos $(\bfY,S)$, an $S$-algebra $R$,
and an involution $\lambda:R\to R$ having fixed ring $S$.

We remind the reader that Azumaya $\calO_\bfX$-algebras are always assumed
to have a degree which is fixed by $\Lambda$, see Remark~\ref{RM:lambda-stable-degree}.
This is automatic when $\bfX$ is connected.

\subsection{Types of Involutions}
\label{subsec:types}

Suppose $K$ is a field and $\lambda:K\to K$ is an involution, the fixed subfield of which is $F$. Classically, when
$\lambda=\id_K$, the $\lambda$-involutions of central simple $K$-algebras are divided into two types, \emph{orthogonal} and
\emph{symplectic}, whereas in the case $\lambda\neq \id_K$, they are simply called \emph{unitary}; see
\cite[\S2]{knus_book_1998-1}. This classification satisfies the following two properties:
\begin{enumerate}[label=(\roman*)]
	\item \label{item:type-cond-local-iso} 
	If $(A,\tau)$ and $(A',\tau')$ are 
	central simple 
	$K$-algebras with $\lambda$-involutions such that $\deg A=\deg A'$,
	then $\tau$ and $\tau'$ are of the same type
	if and only if $(A,\tau)$ and $(A',\tau')$
	become isomorphic as algebras with involution over an algebraic closure of $F$.
	\item \label{item:type-cond-matrix} If $(A,\tau)$ is a central simple $K$-algebra with $\lambda$-involution,
	then $\tau$ has the same type as $\tau\tr:\nMat{A}{n\times n}\to \nMat{A}{n\times n}$
	given by $(a_{ij})_{i,j}\mapsto (a_{ji}^{\tau})_{i,j}$.
\end{enumerate}
Of these two properties, it is mainly the first that motivates the classification into types. The second property
should not be disregarded as it guarantees, at least when $2\in\units{K}$, that involutions adjoint to symmetric
bilinear forms (resp.\ alternating bilinear forms, hermitian forms) of arbitrary rank all have the same type, see
\cite[\S4]{knus_book_1998-1}.

Our aim in this section is to partition the $\lambda$-involutions of Azumaya $R$-algebras into equivalence classes,
called types, so that properties analogous to \ref{item:type-cond-local-iso} and \ref{item:type-cond-matrix} hold. To
this end, we simply take the minimal equivalence relation forced by 
the ``if'' part of condition
\ref{item:type-cond-local-iso} and condition~\ref{item:type-cond-matrix}.

\begin{definition}\label{DF:types}
 Let $(A,\tau)$, $(A',\tau')$ be two Azumaya $R$-algebras with a $\lambda$-involution. Let $\tau\tr$ denote the
 involution $\tau\tensor \lambda\tr$ of $\nMat{A}{n\times n} \iso A \tensor \nMat{R}{n\times n}$. On sections, it is
 given by $(a_{ij})_{i,j}\mapsto (a_{ji}^{\tau})_{i,j}$.

 We say that $\tau$ and $\tau'$ are of the same \emph{$\lambda$-type} or \emph{type} if there exist $n,n'\in \N$ and a
 covering $U\to \ast$ in $\mathbf{Y}$ such that
 \[ (\nMat{A_U}{n\times n},\tau_U\tr)\iso (\nMat{A'_U}{n'\times n'},\tau'_U\tr) \] as $R_U$-algebras with involution.

 Being of the same $\lambda$-type is an equivalence relation. The equivalence classes will be called
 \emph{$\lambda$-types} or just \emph{types}, and the type of $ \tau$ will be denoted
 \[ \tp(\tau)\qquad\text{or}\qquad \tp(A,\tau)\ . \]
 The set of all $\lambda$-types will be denoted
 $\Types{\lambda}$. Tensor product of Azumaya algebras with involution endows $\Types{\lambda}$ with a monoid
 structure. We write the neutral element, represented by $(R,\lambda)$, as $1$.
\end{definition}

We shall shortly see that our definition gives the familiar types in the case of fields, as well as in a number of other
cases. It is no longer clear whether two involutions of the same degree and type are locally isomorphic, however, and
the majority of this section will be dedicated to showing that this is indeed the case under mild assumptions. Another
drawback of the definition is that it not clear how to enumerate the types it yields, and nor does it provide a way to
test whether two involutions are of the same type. These problems will also be addressed, especially in
the situation of Theorem~\ref{TH:important-exact-quotients}, namely, in cases induced by a good $C_2$-quotient of
schemes on which $2$ is invertible, or by a $C_2$-quotient of Hausdorff topological spaces.

\begin{remark}\label{RM:type-depends-on-quotient} 
\label{RM:types-over-OX}
Let $(A,\tau)$, $(A',\tau')$ be two Azumaya $\calO_\bfX$-algebras in $\mathbf X$ with a $\lambda$-involution. Using
Corollary~\ref{CR:equiv-Az-with-inv}, we say that $\tau$ and $\tau'$ have the same \textit{$\lambda$-type (relative to
 $\pi$)} when the same holds after applying $\pi_*$. The equivalence class of $(A,\tau)$ is denoted $\tp_\pi(\tau)$ or
$\tp_\pi(A,\tau)$ and the monoid of types is denoted $\Types[\pi]{\lambda}$.

We warn the reader that the $\lambda$-type of a $\lambda$-involution 
of an Azumaya $\calO_\bfX$-algebra depends on the choice of the quotient $\pi: \mathbf X \to \mathbf Y$, which
is why we include $\pi$ in the notation.

For instance, we shall see below in Theorem~\ref{TH:basic-properties-of-types} that in the case where $\bfX$ is
 given the trivial involution and $\bfX$ is connected, then under mild assumptions, 
 taking $\pi$ to be the trivial exact quotient $\id:\bfX\to\bfX$ results in two $\lambda$-types, whereas taking $\pi$ to be the exact
 quotient $\bfX\to [\bfX/C_2] $ of Construction~\ref{CN:stacky-quotient} results in only one
 $\lambda$-type.
\end{remark}

\begin{example}\label{EX:types-for-schemes}
 \begin{enumerate}[label=(\roman*), leftmargin=0pt, align=left, itemindent=1ex, itemsep=0.6\baselineskip]
 \item \label{EX:types-for-schemesi} Let $X$ be a connected scheme on which $2$ is invertible, let $\lambda:X\to X$ be
 the trivial involution and let $Y :=X/C_2=X$. Consider the exact quotient obtained from $\pi:X\to Y$ by taking
 \'etale ringed topoi, see Example~\ref{EX:important-exact-quo-scheme}. In this case,
 $\mathbf X = \mathbf Y=\Sh(X_\et)$, $R = S=\sh O_X $ and $\lambda=\id_R$. Thus, an Azumaya $R$-algebra with a
 $\lambda$-involution is simply an Azumaya $\calO_X$-algebra with an involution of the first kind on $X$. It is well
 known that there are two $\lambda$-types: \emph{orthogonal} and \emph{symplectic}. The orthogonal type is represented
 by $(R,\id_R)$ and the symplectic type is represented by $(\nMat{R}{2\times 2},\syp)$, where $\syp$ is given by
 $x \mapsto h_2 x^\tr h_2^{-1}$ on sections and $h_2= [\begin{smallmatrix} 0 & 1 \\ -1 & 0
\end{smallmatrix}]$.
 Moreover, every Azumaya $R$-algebra of degree $n$ with an involution of the first kind is locally isomorphic either to
 $(\nMat{R}{n},\tr)$ or to $(\nMat{R}{n},\syp)$, where in the latter case $n=2m$ and $\syp$ is given sectionwise
 by
	$x\mapsto h_n x^{ \tr} h_n^{-1}$ with
	$h_n= [\begin{smallmatrix}
 0 & I_{m}\\ -I_m & 0 
 \end{smallmatrix}]$;
 see \cite[III.\S8.5]{knus_quadratic_1991} or \cite[\S1.1]{parimala_92}\uriyaf{added this.}. In this
 case, $\Types{\lambda}$ is isomorphic to the group $\{\pm 1\}$.

 \item \label{EX:types-for-schemesii} Let $\pi:X\to Y$ be a quadratic \'etale morphism,
 and let $\lambda:X\to Y$ denote the canonical $Y$-involution of $X$.
 Again, let $\pi:\bfX\to \bfY$ denote the exact quotient
 obtained from $\pi:X\to Y$ by taking \'etale ringed topoi. 
In this case,
 $R=\pi_*\calO_X$ is quadratic \'etale over $S=\calO_Y$, and 
 $\lambda$-involutions are known as \emph{unitary} involutions. There is only
 one type in this situation, and moreover, any Azumaya $R$-algebra of degree $n$ with a $\lambda$-involution is
 locally isomorphic to $(\nMat{R}{n},\lambda\tr)$; 
 these well-known facts can be found in
 \cite[\S1.2]{parimala_92} without proof, but they follow from the
 results in the sequel. We were unable to find a source providing complete proofs. 
\end{enumerate}
\end{example}

\begin{example}\label{EX:types-in-char-two} 
 Let $K$ be a perfect field of characteristic $2$, and consider the case of the trivial involution on $K$. 
 As in Example~\ref{EX:types-for-schemes}\ref{EX:types-for-schemesi}, this corresponds
 to taking
 $\mathbf X = \mathbf Y = \Sh((\Spec K)_\et)$, $R=S=\calO_{\Spec K}$ and
 $\lambda=\id$. Azumaya $R$-algebras with a $\lambda$-involution are therefore 
 central simple $K$-algebras with an
 involution of the first kind. There are again two types in this case, again called orthogonal and symplectic, but
 $\Types{\lambda}$ is isomorphic to the multiplicative monoid $\{0,1\}$ with $0$ corresponding to the symplectic type;
 see \cite[\S2]{knus_book_1998-1}. This shows that the theory in characteristic $2$ is substantially different from
 that in other characteristics.
 
 The assumption that $K$ is
 perfect can be dropped if one replaces the \'etale site with the fppf site
 (consult Remark~\ref{RM:fppf-site}).
\end{example}

\subsection{Coarse types}
\label{subsec:coarse-types}

In Subsection~\ref{subsec:types}, we defined the type of a $\lambda$-involution of an Azumaya $R$-algebra in terms of
the entire class of algebras and not as an intrinsic invariant of the involution. We now introduce another invariant of
$\lambda$-involutions, called the \emph{coarse type}, which, while in general coarser than the type, will enjoy an
intrinsic definition. It will turn out that under mild assumptions the invariants are equivalent, and this will be
used to address the questions raised in Subsection~\ref{subsec:types}. Apart from that, coarse types will also be needed in proving the main results of Section~\ref{sec:Saltman}.

\medskip

We begin by defining the abelian group object $N$ of $\bfY$ via the exact sequence
\begin{equation}\label{EQ:N-dfn} 1\to N\to \units{R}\xrightarrow{x\mapsto x^\lambda x} \units{S}
\end{equation} and the abelian group object $T$ of $\bfY$ via the short exact sequence
\begin{equation}\label{EQ:T-dfn} 1\to \units{R}/\units{S}\xrightarrow{x\mapsto x^\lambda x^{-1}} N\to T\to 1 \ .
\end{equation} The group $N$ should be regarded as the group of elements of $\lambda$-norm $1$. We call the global
sections of $T$ \emph{coarse $\lambda$-types} and write
\[ \CTypes{\lambda}=\Hoh^0(T)\ .
\]

The following example and propositions give some hints about the structure of $T$.

\begin{example}\label{EX:T-tot-ramified} If the involution of $\bfX$ is 
trivial and the quotient map is the
 identity, then $N=\mu_{2,R}$ and the map $x\mapsto
x^{-1}x^\lambda:\units{R}/\units{S}\to N$ is trivial, hence $T=\mu_{2,R}$ and $\CTypes{\lambda}=\Hoh^0(\mu_{2,R})=\{x\in
\Hoh^0(R)\suchthat x^2=1\}$.
\end{example}

\begin{proposition}\label{PR:T-structure-unramified-pi} If $\pi$ is unramified along an object $U$ of $\bfY$, 
 see Subsection \ref{subsec:ramification},
then $T_U=1$ in $\bfY/U$. In particular, when $\pi$ is unramified, $T=1$ and
$\CTypes{\lambda}=\{1\}$.
\end{proposition}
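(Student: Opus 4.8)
The plan is to prove $T_U = 1$ by showing that the defining short exact sequence \eqref{EQ:T-dfn} becomes the trivial sequence after restricting to $U$, which amounts to showing the map $\units{R}/\units{S} \to N$, given sectionwise by $x \mapsto x^\lambda x^{-1}$, is an epimorphism in $\bfY/U$. Since we are working in a topos, epimorphism is a local condition, so it suffices to produce, for each section $n \in N(V)$ with $V \to U$, a covering $V' \to V$ and a section $a \in \units{R}(V')$ with $a^\lambda a^{-1} = n|_{V'}$. This is exactly ``Hilbert 90'' for the quadratic \'etale extension $R_U/S_U$, which is supplied by Proposition~\ref{PR:Hilbert-ninety}\ref{item:LM:Hilbert-ninety:norm} in the local ring case.

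First I would reduce to $U = *_\bfY$ by replacing $\bfY$ with $\bfY/U$, so that $\pi$ is unramified, meaning $R$ is a quadratic \'etale $S$-algebra by Definition~\ref{DF:ramification}. Then, given any object $V$ of $\bfY$ and a section $n \in N(V)$, so $n \in \units{R}(V)$ with $n^\lambda n = 1$, I would pass to a covering on which the relevant structure trivializes: since $S = \calO_\bfY$ is a local ring object and $R$ is locally free of rank $2$ over $S$, there is a covering $V' \to V$ such that $R(V')$ is a quadratic \'etale $S(V')$-algebra and, further refining, such that $S(V')$ is (a product of) local rings — more precisely, one can work stalkwise. The cleanest approach is to check the epimorphism statement on stalks when $\bfY$ has enough points (e.g.\ the \'etale topos of a scheme or sheaves on a space), where $\calO_{\bfY}$ has local stalks; at a point $p$, the stalk $R_p$ is quadratic \'etale over the local ring $S_p$, and Proposition~\ref{PR:Hilbert-ninety}\ref{item:LM:Hilbert-ninety:norm} gives $a \in \units{(R_p)}$ with $a^\lambda a^{-1} = n_p$. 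Alternatively, and working intrinsically without points: given $n \in N(V)$ we want a local lift, so by the local freeness we may assume $R_{V'} \cong S_{V'}[x]/(x^2 + \alpha x + \beta)$ with $\alpha^2 - 4\beta \in \units{S(V')}$ (Example~\ref{EX:quadratic-etale-alg}), and then run the explicit argument in the proof of Proposition~\ref{PR:Hilbert-ninety}\ref{item:LM:Hilbert-ninety:norm} — choose $\bar x$ with $\bar x + \bar x^\lambda n$ a unit in the residue quotient, lift, set $t = x + x^\lambda n$, verify $t \in \units{R(V')}$ using that $\calO_\bfY$ is a local ring object, and take $a = t^{-1}$, which satisfies $a^\lambda a^{-1} = n$.

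Having shown $\units{R}/\units{S} \to N$ is an epimorphism, the sequence \eqref{EQ:T-dfn} forces $T = 1$, hence $\CTypes{\lambda} = \Hoh^0(T) = \Hoh^0(1) = \{1\}$; the relative statement $T_U = 1$ follows by applying the same argument in $\bfY/U$. The main obstacle I anticipate is the bookkeeping needed to deploy Hilbert~90 locally in the topos: Proposition~\ref{PR:Hilbert-ninety} is stated for an honest local ring $S$, so one must either invoke ``enough points'' together with the fact that $\calO_\bfY$ has local stalks, or carefully choose a covering that simultaneously trivializes the rank-$2$ module $R$ and on which the element $t$ constructed above is invertible — the latter invertibility being precisely what the local-ring-object property of $\calO_\bfY$ provides (lift the unit condition from a residue field to a covering). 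Once that localization is set up correctly, the rest is a direct translation of the finite-level computation and presents no difficulty.
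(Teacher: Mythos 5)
Your strategy is the right one and matches the paper's in outline: reduce to showing that $x\mapsto x^\lambda x^{-1}:\units{R}/\units{S}\to N$ is an epimorphism, feed this through Proposition~\ref{PR:Hilbert-ninety}\ref{item:LM:Hilbert-ninety:norm}, and use the local-ring-object property of $\calO_\bfY$ to turn the finitary statement into a covering. You also correctly identify the central obstacle — Proposition~\ref{PR:Hilbert-ninety} is a statement about a genuinely local ring $S$, but $S(V')$ for a general section $V'$ is not local.

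Where the proposal falls short is in the ``intrinsic'' repair. You propose to ``choose $\bar x$ with $\bar x + \bar x^\lambda n$ a unit in the residue quotient, lift, set $t = x + x^\lambda n$,'' but for a general commutative ring $S(V')$ there is no single residue quotient, and no single choice of $x\in R(V')$ whose associated $t$ becomes invertible everywhere. Different primes of $S(V')$ may require incompatible choices of $\bar x$; the local-ring-object property can make a given $t$ invertible on a covering only if $t$ is already a unit at every prime, which is exactly what can fail. So ``lift once, then cover'' doesn't close the argument.

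The paper's proof circumvents this by not attempting to lift a single element. It refines $U$ so that $R(U)$ is quadratic \'etale over $S(U)$, then for \emph{each} prime $\frakp\in\Spec S(U)$ applies Proposition~\ref{PR:Hilbert-ninety}\ref{item:LM:Hilbert-ninety:norm} to the local ring $S(U)_\frakp$ to produce a possibly $\frakp$-dependent $a_\frakp\in\units{R(U)_\frakp}$ with $a_\frakp^{-1}a_\frakp^\lambda=r$. Each $a_\frakp$ spreads out to a Zariski neighbourhood $\Spec S(U)_{f_\frakp}$, the collection $\{f_\frakp\}$ generates the unit ideal, and \emph{now} the local-ring-object property of $S$ yields a covering $\{U_\frakp\to U\}$ on which each $f_\frakp$ becomes a unit. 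Taking $V=\bigsqcup_\frakp U_\frakp$ and patching the $a_\frakp$'s together finishes the argument. The key move you are missing is this decoupling: produce many local solutions indexed by primes of $S(U)$, spread out, and only then invoke the local-ring-object axiom on the resulting partition of unity — rather than choosing a single candidate section and trying to make it invertible after the fact.

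Your ``enough points'' alternative is valid but weaker, since the proposition is stated for an arbitrary locally ringed topos, which need not have enough points; the paper's argument is explicitly point-free.
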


When $\bfY$ has enough points, it is possible to argue at stalks, and therefore the proposition follows 
from our version of Hilbert's Theorem 90, Proposition~\ref{PR:Hilbert-ninety}\ref{item:LM:Hilbert-ninety:norm}. The
following argument applies even without the assumption of enough points.
 
 \begin{proof} We must show that for all objects $U$ of $\bfY$ and all $r\in R(U)$ satisfying $r^\lambda r=1$, there
 is a covering $V\to U$ and $a\in \units{R}(V)$ such that $a^{-1}a^\lambda=r$ in $R(V)$.

 Refining $U$ if necessary, we may assume that $R(U)$ is a quadratic \'etale $S(U)$-algebra, see Subsection
 \ref{subsec:etale-algebras}. By Proposition~\ref{PR:Hilbert-ninety}\ref{item:LM:Hilbert-ninety:norm}, for all
 $\frakp\in\Spec S(U)$, there is $ a_\frakp\in \units{R(U)_\frakp}$ such that $a_\frakp^{-1} a_\frakp^\lambda=r$. For
 each $\frakp$, choose some $f_\frakp\in S(U)- \frakp$ such that $a_\frakp$ is the image of an element in
 $\units{R(U)_{f_\frakp}}$, also denoted $a_\frakp$, which satisfies $a_\frakp^{-1} a_\frakp^\lambda=r$ in
 $S(U)_{f_\frakp}$. The set $\{f_\frakp\}_\frakp$ is not contained in any proper ideal of $S(U)$ and therefore generates
 the unit ideal. Since $S$ is a local ring object, there exists a covering $\{U_\frakp\to U\}_{\frakp\in\Spec S(U)}$
 such that the image of $f_{\frakp}$ is invertible in $S(U_\frakp)$ for all $\frakp$. Now take
 $V=\bigsqcup_{\frakp} U_\frakp$ and $a$ to be the image of $(a_{\frakp})_{\frakp}$ in
 $R(V)=\prod_{\frakp} R(U_\frakp)$.
\end{proof}

\begin{proposition} \label{PR:T-is-two-torsion}
	$T$ is a $2$-torsion abelian sheaf.
\end{proposition}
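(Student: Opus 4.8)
The plan is to prove that the squaring endomorphism of $T$ (in multiplicative notation, $t\mapsto t^2$) is the zero morphism of abelian sheaves on $\bfY$. Since $N\to T$ in \eqref{EQ:T-dfn} is an epimorphism and, being a homomorphism of abelian sheaves, commutes with squaring, it suffices to show that the composite $N\xrightarrow{x\mapsto x^2} N\to T$ is the zero morphism; and for that it is in turn enough to verify the corresponding identity on sections $x\in N(U)$ for an arbitrary object $U$ of $\bfY$.

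First I would record that a section $x\in N(U)$ satisfies $x^\lambda x=1$, i.e.\ $x^\lambda=x^{-1}$ in $\units{R}(U)$. The key computation is then the identity
\[
x^2 \;=\; (x^\lambda)^{-1}\cdot x \;=\; (x^{-1})^\lambda\cdot (x^{-1})^{-1},
\]
which uses only the commutativity of $R$, the fact that $\lambda$ commutes with inversion, and the relation $x^\lambda=x^{-1}$. Introduce the homomorphism $\psi\colon\units{R}\to N$ given on sections by $y\mapsto y^\lambda y^{-1}$; a short check ($\psi(y)^\lambda\psi(y)=y(y^\lambda)^{-1}\cdot y^\lambda y^{-1}=1$, using commutativity of $R$) shows that $\psi$ indeed lands in $N$. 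The displayed identity says precisely that $x^2=\psi(x^{-1})$, so every square in $N$ lies in the image of $\psi$.

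Next I would observe that $\psi$ factors through $\units{R}/\units{S}$: for $s\in\units{S}(U)$ one has $s^\lambda=s$, hence $\psi(ys)=y^\lambda s^\lambda s^{-1}y^{-1}=\psi(y)$. The induced map $\units{R}/\units{S}\to N$ is exactly the one appearing in \eqref{EQ:T-dfn}, so by exactness of that sequence the composite $\units{R}/\units{S}\to N\to T$ is trivial, and therefore so is $\units{R}\xrightarrow{\psi}N\to T$. Combining this with $x^2=\psi(x^{-1})$, the image of $x^2$ in $T(U)$ is trivial for every $U$ and every $x\in N(U)$; hence $N\xrightarrow{x\mapsto x^2}N\to T$ is the zero morphism of sheaves. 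As explained above, this forces the squaring map $T\to T$ to vanish, which is the assertion.

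There is no genuine obstacle here; the only point requiring a little care is moving between section-level identities and morphisms of sheaves, which is handled by the fact that $N\to T$ is an epimorphism. As a sanity check, the argument is consistent with Example~\ref{EX:T-tot-ramified}, where $T=\mu_{2,R}$ is manifestly $2$-torsion, and with Proposition~\ref{PR:T-structure-unramified-pi}, where $T$ is trivial on the unramified locus.
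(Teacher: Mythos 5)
Your proof is correct and rests on the same key identity as the paper's, namely that for $x\in N(U)$ one has $x^2=(x^{-1})^\lambda(x^{-1})^{-1}$, which places $x^2$ in the image of $\units{R}\to N$ and hence makes it vanish in $T$. The paper argues section-by-section after passing to a covering where a section of $T$ lifts to $N$, while you phrase the same descent via the observation that $N\to T$ is an epimorphism of sheaves; this is a minor repackaging, not a genuinely different route.
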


\begin{proof} Let $U$ be an object of $\bfY$ and $t\in T(U)$. By passing to a covering of $U$, we may assume that $t$ is
the image of some $r\in N(U)$. Since $r^\lambda r=1$, we have $r^2=(r^{-1})^\lambda (r^{-1})^{-1}$, and thus $t^2=1$ in
$T(U)$.
\end{proof}

Let $n=\deg A$. Using Lemma~\ref{LM:inner-automorphisms}, we shall freely identify the group $\PGL_n(R)$ with
 $\sAut_{\textrm{$R$-alg}}(\nMat{R}{n\times n})$. 
 We denote by 
 \[-\lambda\tr\] 
 the automorphism of $\GL_n(R)$ given by
 $x\mapsto (x^{-1})^{\lambda\tr}$ on sections. This automorphism induces an automorphism on $\PGL_n(R)$, which is also
 denoted $-\lambda\tr$. We need the following lemma.

 \begin{lemma}\label{LM:lambda-tr-relation} Let $g$ be a section of $\PGL_n(R)=\Aut_{\text{\rm
 $R$-alg}}(\nMat{R}{n\times n}) =\Aut_{\text{\rm $R$-alg}}(\Lambda\nMat{R}{n\times n}^\op)$ and view $\lambda\tr$ as an
 $R$-algebra isomorphism $\nMat{R}{n\times n}\to \Lambda\nMat{R}{n\times n}^\op$. Then $g\circ \lambda\tr=\lambda\tr
 \circ g^{-\lambda\tr}$.
 \end{lemma}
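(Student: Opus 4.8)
The plan is to verify the identity $g\circ\lambda\tr = \lambda\tr\circ g^{-\lambda\tr}$ by a direct, section-wise computation, reducing to the case where the section $g\in\PGL_n(R)(U)$ is inner, which we may do after passing to a covering of $U$ by Lemma~\ref{LM:inner-automorphisms}. So suppose $g$ is conjugation by some $a\in\GL_n(R)(U)$, i.e. $g(x)=axa^{-1}$ for sections $x$ of $\nMat{R}{n\times n}$. Here I must be careful about which algebra a given expression lives in: $\lambda\tr$ is to be read as the fixed $R$-algebra isomorphism $\nMat{R}{n\times n}\to\Lambda\nMat{R}{n\times n}^\op$ of Example~\ref{EX:lambda-tr}, sending $(\alpha_{ij})_{i,j}$ to $(\alpha_{ji}^\lambda)_{i,j}$, and $g$ on the right-hand side denotes the automorphism of $\Lambda\nMat{R}{n\times n}^\op$ induced by the same torsor section, which under the identification $\Lambda\nMat{R}{n\times n}^\op$ is conjugation-in-the-opposite-algebra, or equivalently $y\mapsto a^{-\lambda}ya^{\lambda}$ once one tracks how $\lambda$-twisting the $R$-structure interacts with the opposite multiplication.

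First I would compute $(g\circ\lambda\tr)(x)$ on sections: applying $\lambda\tr$ gives $(x^{\lambda})^{\tr}$ (i.e. the matrix with $(i,j)$ entry $x_{ji}^\lambda$), and then applying the automorphism $g$ of $\Lambda\nMat{R}{n\times n}^\op$ gives, after unwinding the opposite-algebra convention, an expression of the form $(a^{-\lambda})\cdot(x^\lambda)^\tr\cdot(a^{\lambda})$ computed in the twisted algebra. Next I would compute $(\lambda\tr\circ g^{-\lambda\tr})(x)$: the automorphism $g^{-\lambda\tr}$ of $\nMat{R}{n\times n}$ is by definition $x\mapsto b x b^{-1}$ where $b$ is the image of $a$ under $x\mapsto(x^{-1})^{\lambda\tr}$, that is $b=(a^{-1})^{\lambda\tr}=((a^\lambda)^{\tr})^{-1}$, so $g^{-\lambda\tr}(x)=((a^\lambda)^{\tr})^{-1}\, x\, (a^\lambda)^{\tr}$; then applying $\lambda\tr$ and using the transpose--involution identities $(yz)^\tr=z^\tr y^\tr$, $(y^\tr)^{-1}=(y^{-1})^\tr$, and $(y^\lambda)^\tr$ compatibility, one transports this to the same expression in $\Lambda\nMat{R}{n\times n}^\op$ as before. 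The equality then follows by matching the two results entry by entry.

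The main obstacle, and the step deserving the most care, is \emph{not} any nontrivial algebra but rather the bookkeeping of the three simultaneous conventions: (a) the identification $\sAut_{\text{$R$-alg}}(\nMat{R}{n\times n})=\sAut_{\text{$R$-alg}}(\Lambda\nMat{R}{n\times n}^\op)$ via $\lambda\tr$, so that the symbol ``$g$'' on the two sides of the asserted identity refers to the same torsor section acting on two different (but isomorphic) algebras; (b) the fact that $\Lambda\nMat{R}{n\times n}^\op$ coincides with $\nMat{R}{n\times n}$ as a non-commutative ring but has its $R$-module structure twisted by $\lambda$ (Subsection~\ref{subsec:involutions}), which affects how scalars — but not the multiplication — are handled; and (c) the passage from a general section $g$ of $\PGL_n(R)$ to an inner one, which is only local, so strictly the identity is checked on a covering and then descends since both sides are morphisms of sheaves. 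Once these are pinned down the computation is the routine manipulation $(a^\lambda)^\tr$-conjugation versus $a^{-\lambda}$-conjugation-in-the-opposite-algebra, which agree precisely because $\lambda\tr$ is an anti-automorphism intertwining $g$ with $g^{-\lambda\tr}$; I would present it compactly, perhaps phrasing the final comparison as: $\lambda\tr$ is an isomorphism of algebras-with-involution-data from $(\nMat{R}{n\times n}$ with $g$-action$)$ to $(\Lambda\nMat{R}{n\times n}^\op$ with $g^{-\lambda\tr}$-action$)$, which is exactly the assertion.
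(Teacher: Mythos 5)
Your strategy is exactly the one the paper uses: pass to a covering on which $g$ is inner (valid because both sides of the claimed identity are sections of a Hom-sheaf, so equality can be checked locally), and then compute. The paper's own proof is just a one-line sketch saying "the lemma follows by computation," so filling in the detail is the right instinct.

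However, the one step you single out as deserving the most care — describing how $g$ acts on $\Lambda\nMat{R}{n\times n}^\op$ — is the step you get wrong. You assert that under the identification $\Aut_{\text{$R$-alg}}(\nMat{R}{n\times n}) = \Aut_{\text{$R$-alg}}(\Lambda\nMat{R}{n\times n}^\op)$, the section $g = [a]$ becomes "conjugation in the opposite algebra, equivalently $y\mapsto a^{-\lambda}ya^{\lambda}$." Neither description is correct, and they are not even equivalent to each other: conjugation by $a$ in the opposite algebra is $y\mapsto a^{-1}ya$, which involves no $\lambda$ at all. The correct statement is simpler: the map $x\mapsto axa^{-1}$ \emph{itself} is already an $R$-algebra automorphism of $\Lambda\nMat{R}{n\times n}^\op$. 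Indeed, it respects the opposite multiplication (any ring automorphism of $\nMat{R}{n\times n}$ is automatically a ring automorphism of the opposite), and it is $R$-linear for the $\lambda$-twisted $R$-action because $r^\lambda$ is central. So the identification of automorphism sheaves is by the same underlying map, and $g$ acts on $\Lambda\nMat{R}{n\times n}^\op$ by $y\mapsto aya^{-1}$, not by $y\mapsto a^{-\lambda}ya^\lambda$. This matters: with your formula the two sides of the identity would \emph{not} agree, since $a^{-\lambda}x^{\lambda\tr}a^\lambda = ax^{\lambda\tr}a^{-1}$ forces $a^\lambda a$ to be central, which is false for general $a$. With the correct description, both sides evaluate to $ax^{\lambda\tr}a^{-1}$: for the left, immediately; for the right, from $\bigl((a^{-1})^{\lambda\tr}\,x\,a^{\lambda\tr}\bigr)^{\lambda\tr} = (a^{\lambda\tr})^{\lambda\tr}\,x^{\lambda\tr}\,((a^{-1})^{\lambda\tr})^{\lambda\tr} = ax^{\lambda\tr}a^{-1}$, using that $\lambda\tr$ is an anti-automorphism of order two.
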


 \begin{proof} Suppose $g\in \PGL_n(R)(U)$ for some object $U$ of $\bfX$. It is enough to prove the equality after
 passing to a covering $V\to U$. We may therefore assume that $g$ is inner, and the lemma follows by computation.
 \end{proof}

\begin{construction}\label{CN:coarse-type}
Let $(A,\tau)$ be a degree-$n$ Azumaya $R$-algebra with a $\lambda$-involution. We now construct an element
\[\ct(\tau)\in \CTypes{\lambda}=\Hoh^0(T)\] and call it the \emph{coarse $\lambda$-type} of $\tau$. This construction,
which is concluded in Definition~\ref{DF:coarse-types},
will play a major role in the sequel.

Choose a covering $U\to *_\bfY$ such that there exists an isomorphism of $R_U$-algebras 
$\psi:A_U\xrightarrow{\sim}
\nMat{R_U}{n\times n}$. The isomorphism $\psi$ gives rise to a $\lambda_U$-involution
\[ \sigma=\psi \circ \tau_U\circ \psi^{-1}:\nMat{R_U}{n\times n}\to \nMat{R_U}{n\times n}\ .
\] From $\sigma$ and the involution $\lambda \tr$, we construct
\[g:=\lambda\tr\circ \sigma\in \Aut_{\text{$R_{U}$-alg}}(\nMat{R_{U}}{n\times n})=\PGL_n(R)(U)\ .\] By
Lemma~\ref{LM:lambda-tr-relation}, we have $\id = \sigma \circ \sigma=\lambda\tr\circ g\circ \lambda\tr \circ g=
\lambda\tr\circ \lambda\tr \circ g^{-\lambda\tr}\circ g $, hence $g^{-\lambda\tr} g=1$. Replacing $U$
by a covering $U'\to U$ if necessary, we may assume that $g\in \PGL_n(R)(U)$ lifts to a section
\[ h\in \GL_n(R)(U)\ .
\] From $g^{-\lambda\tr} g=1$, we get
\begin{equation}\label{EQ:eps-def} \veps:=h^{-\lambda\tr} h\in \units{R}(U)\ .
\end{equation} Note that $\veps^{\lambda \tr}\veps=\veps^{\lambda \tr}h^{-\lambda\tr} h=h^{-\lambda\tr} \veps^{\lambda
\tr} h= h^{-\lambda\tr}(h^{\lambda \tr}h^{-1})h=1$, hence $\veps\in N(U)$. Let $\quo{\veps}$ be the image of $\veps$ in
$T(U)$.
\end{construction}

\begin{lem} \label{LM:coarse-type-descends}
The section $\quo{\veps}\in T(U)$ determines a global section of $T$. It is independent of the choices made
in Construction~\ref{CN:coarse-type}.
\end{lem}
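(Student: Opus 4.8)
The plan is to show that $\quo{\veps}$ is unchanged under the two sources of ambiguity in Construction~\ref{CN:coarse-type}: the choice of the lift $h \in \GL_n(R)(U')$ of $g$, and the choice of the trivializing covering $U \to *_\bfY$ together with the isomorphism $\psi : A_U \xrightarrow{\sim} \nMat{R_U}{n\times n}$. Once both are settled, a gluing argument over a common refinement will promote $\quo\veps$ to a global section of $T$.

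First I would treat the dependence on $h$. Any two lifts of $g$ to $\GL_n(R)$ differ (after passing to a further covering) by a section $z \in \units{R}(U')$, i.e.\ $h' = zh$. Then $\veps' = (h')^{-\lambda\tr} h' = (zh)^{-\lambda\tr}(zh) = z^{-\lambda} h^{-\lambda\tr} z h = z^{-\lambda} z \cdot h^{-\lambda\tr} h = (z^\lambda z^{-1})^{-1}\veps$, using that $z$ is central in $\nMat{R}{n\times n}$ so it commutes past $h^{-\lambda\tr}$ and that $\lambda\tr$ restricted to scalars is $\lambda$. Hence $\veps'$ and $\veps$ differ by $(z^\lambda z^{-1})^{\pm 1}$, which lies in the image of $\units{R}/\units{S} \to N$ in the exact sequence \eqref{EQ:T-dfn}, so $\quo{\veps'} = \quo{\veps}$ in $T(U')$. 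Since $T(U) \to T(U')$ is injective whenever $U' \to U$ is a covering (as $T$ is a sheaf and $U' \to U$ an epimorphism), this shows $\quo\veps$ is well-defined as an element of $T(U)$ independent of $h$.

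Next I would treat the dependence on $\psi$ (over a fixed $U$). A second isomorphism $\psi' : A_U \xrightarrow{\sim} \nMat{R_U}{n\times n}$ differs from $\psi$ by an $R_U$-algebra automorphism of $\nMat{R_U}{n\times n}$; after passing to a covering and invoking Lemma~\ref{LM:inner-automorphisms}, we may assume this automorphism is conjugation by some $a \in \GL_n(R)(U')$. Then $\sigma' = (\text{inn}_a)\circ \sigma \circ (\text{inn}_a)^{-1}$, so $g' = \lambda\tr \circ \sigma' = \lambda\tr \circ \text{inn}_a \circ \sigma \circ \text{inn}_a^{-1}$; applying Lemma~\ref{LM:lambda-tr-relation} to move $\text{inn}_a$ past $\lambda\tr$ gives $g' = a^{-\lambda\tr}\cdot g \cdot a^{-1}$ as sections of $\PGL_n(R)$. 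Lifting $a$ to $\GL_n(R)$ and choosing the lift $h' = a^{-\lambda\tr} h a^{-1}$ of $g'$, a direct computation yields $\veps' = (h')^{-\lambda\tr} h' = a \cdot h^{-\lambda\tr} \cdot a^{\lambda\tr} a^{-\lambda\tr}\cdot h \cdot a^{-1} = a \veps a^{-1} = \veps$, the last step because $\veps$ is a scalar. (The main care-point here is bookkeeping with $\lambda\tr$ versus $-\lambda\tr$ and the identity $(x^{-\lambda\tr})^{-\lambda\tr}=x$, i.e.\ that $-\lambda\tr$ squares to the identity.) Combined with the previous paragraph, $\quo\veps$ is independent of all choices over a fixed $U$.

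Finally, for two coverings $U_1 \to *_\bfY$, $U_2 \to *_\bfY$ with their respective data, I would restrict both to the common refinement $U_1 \times U_2$; by the independence already established, the two constructions produce the same section of $T(U_1 \times U_2)$, and likewise they agree with the restrictions of $\quo\veps_1 \in T(U_1)$ and $\quo\veps_2 \in T(U_2)$. Since $T$ is a sheaf and $\{U_1 \times U_2 \to U_i\}$ are coverings, the sections $\quo\veps_1, \quo\veps_2$ agree on the overlap and hence each is the restriction of a unique global section of $T = \Hoh^0(T)$, which does not depend on which covering we started from. I expect the only real obstacle to be the careful verification of the $\lambda\tr$-conjugation identities in the second paragraph; everything else is formal sheaf-theoretic descent using that coverings induce injections on sections of the abelian sheaf $T$ and that $\units{R}/\units{S} \to N$ is the relevant piece of \eqref{EQ:T-dfn}.
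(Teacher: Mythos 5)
Your proof is correct, and it takes a cleaner, more modular route than the paper's. The paper's proof first carries out an explicit descent computation, showing $d_1^*\veps^{-1}\cdot d_0^*\veps = \beta^{-1}\beta^\lambda$ on a refinement of $U\times U$ by introducing the transition element $a=\psi_1\circ\psi_0^{-1}$, a lift $b$, and the quantity $\beta:=b^{-\lambda\tr}\cdot d_0^*h\cdot b^{-1}\cdot d_1^*h^{-1}$; it then separately verifies independence of the choices. You instead establish independence first, split into two elementary steps (changing $h$ multiplies $\veps$ by $z^{-\lambda}z$, which lies in the image of $\units{R}/\units{S}\to N$; changing $\psi$ amounts to scalar conjugation, which fixes $\veps$), and then observe that the cocycle condition $d_0^*\quo\veps=d_1^*\quo\veps$ is a formal consequence of independence together with the evident naturality of the construction under restriction—apply independence over $U\times U$ to the two pullbacks of the data along $d_0$ and $d_1$. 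This trades the paper's hands-on cocycle manipulation for a conceptual argument, which is a genuine simplification. Two small remarks: first, the lifts $h,h'$ of $g$ already differ by a section of $\units{R}(U)$ without any further refinement, since the kernel of $\GL_n(R)(U)\to\PGL_n(R)(U)$ is exactly $\units{R}(U)$; second, your final paragraph is phrased in terms of two coverings $U_1,U_2$, but the case that actually produces the global section is $U_1=U_2=U$ with the two projections $U\times U\rightrightarrows U$—it would be worth stating this explicitly so the reader sees the cocycle condition emerge.
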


\begin{proof}
 Let $U_\bullet$ denote the \v{C}ech hypercovering associated to $U$---for the definition see
 Example~\ref{EX:Cech-hypercovering}. In particular, $U_0=U$, $U_1=U\times U$ and $d_0,d_1:U_1\to U_0$ are given by
 $d_i(u_0,u_1)=u_{1-i}$ on sections. Proving that $\quo{\veps}$ determines a global section of $T$ amounts to showing
 that there exists a covering $V\to U_1=U\times U$ and $\beta\in \units{R}(V)$ such that $d_1^*\veps^{-1}\cdot
 d_0^*\veps = \beta^{-1}\beta^\lambda$ holds in $\units{R}(V)$.

For $i\in\{0,1\}$, let $\psi_i$ denote the pullback of $\psi:A_U\to \nMat{R_U}{n\times n}$ along $d_i:U_1\to
U_0=U$. Define $\sigma_i:\nMat{R_{U_1}}{n\times n}\to \nMat{R_{U_1}}{n\times n}$ similarly. Let
\[ a:=\psi_{1}\circ \psi_{0}^{-1}:\nMat{R_{U_1}}{n\times n}\to \nMat{R_{U_1}}{n\times n} .
\] and regard $a$ as an element of $\PGL_n(R)(U_1)$. The fact that $\psi_i^{-1}\sigma_i\psi_i=\tau_{U_1}$ for $i=0,1$ implies
that $\sigma_1\circ a=a\circ \sigma_0$. Therefore, using Lemma~\ref{LM:lambda-tr-relation}, we get
\begin{eqnarray*} d_1^*g\cdot a &= \lambda\tr\circ \sigma_1\circ a =\lambda\tr\circ a\circ \sigma_0=
a^{-\lambda\tr}\circ \lambda\tr\circ \sigma_0=a^{-\lambda \tr} \cdot d_0^*g\ ,
\end{eqnarray*} or equivalently, $ a^{-\lambda\tr} \cdot d_0^*g \cdot a^{-1} \cdot d_1^* g^{-1}=1 $ in 
$\PGL_n(R)(U_1)$.

There exists a covering $V\to U_1$ such that the image of $a$ in $\PGL_n(R)(V)$, lifts to
\[b\in \GL_n(R)(V)\ .\] The relation $ a^{-\lambda\tr} \cdot d_0^*g \cdot a^{-1} \cdot d_1^* g^{-1}=1 $ now implies that
\begin{equation}\label{EQ:beta-b-v-relation} \beta:=b^{-\lambda\tr} \cdot d_0^*h \cdot b^{-1}\cdot d_1^*h^{-1}\in
\units{R}(V)\ .
\end{equation} Using \eqref{EQ:eps-def} and \eqref{EQ:beta-b-v-relation} we get
\begin{align*} \beta^{-1}\beta^\lambda &=\beta^{-1}(b^{-\lambda\tr} \cdot d_0^*h \cdot b^{-1}\cdot d_1^*h^{-1})^{\lambda
\tr}\\ &=\beta^{-1}\cdot d_1^*h^{-\lambda\tr} \cdot b^{-\lambda\tr} \cdot d_0^*h^{\lambda\tr} \cdot b^{-1} \\
&=d_1^*h^{-\lambda\tr} \cdot (b^{-\lambda\tr} \cdot d_0^*h \cdot b^{-1}\cdot d_1^*h^{-1})^{-1}\cdot b^{-\lambda\tr}
\cdot d_0^*h^{\lambda\tr} \cdot b^{-1} \\ &=d_1^*h^{-\lambda\tr} \cdot d_1^*h \cdot b \cdot d_0^*h^{-1} \cdot
b^{\lambda\tr} \cdot b^{-\lambda\tr} \cdot d_0^*h^{\lambda\tr} \cdot b^{-1}= d_1^*\veps\cdot d_0^*\veps^{-1}
\end{align*} in $\GL_n(R)(V)$. This establishes the first part of the lemma.

Let $t$ denote the global section determined by $\quo{\veps}$. The construction of $t$ involves choosing $U$, $\psi$
and $h\in \GL_n(U)$ above. Suppose that $t'\in\Hoh(T)$ was obtained by replacing these choices with $U'$, $\psi'$ and
$h'\in \GL_n(U')$. We need to show that $t=t'$.

Define $g',\sigma',\veps'$ as above using $U'$, $\psi'$, $h'$ in place of $U$, $\psi$, $h$. It is clear that refining
the covering $U\to *$ does not affect $t$. Therefore, refining $U\to *$ and $U'\to *$ to ${U\times U'}\to *$, we may
assume that $U=U'$. Write $\psi'=u\circ \psi$ with $u\in \Aut_{R_U}(\nMat{R_U}{n\times n})=\PGL_n(R)(U)$. Then
$\sigma'=\psi'\circ \tau_U\circ\psi'^{-1}=u\circ \sigma \circ u^{-1}$, and using Lemma~\ref{LM:lambda-tr-relation}, we
get $g'=\lambda\tr \circ \sigma'=u^{-\lambda\tr}g u^{-1}$. Refining $U\to *$ further, if necessary, we may assume that
$u$ lifts to $v\in\GL_n(R)(U)$. The relation $g'=u^{-\lambda\tr} gu^{-1}$ implies that there is $\alpha\in \units{R_U}$
such that $h'=\alpha^{-1}v^{-\lambda\tr} hv^{-1}$. Thus,
\begin{align*} \veps'& =h'^{-\lambda\tr}h' =(\alpha^{-1}v^{-\lambda\tr} hv^{-1})^{-\lambda\tr}\alpha^{-1}v^{-\lambda\tr}
hv^{-1} =\alpha^\lambda vh^{-\lambda\tr}v^{\lambda\tr}\alpha^{-1}v^{-\lambda\tr} hv^{-1} =\alpha^\lambda
\alpha^{-1}\veps
\end{align*} and $\quo{\veps}=\quo{\veps'}$ in $T(U)$. This completes the proof.
\end{proof}

\begin{definition} \label{DF:coarse-types} 
Let $(A,\tau)$ be an Azumaya $R$-algebra with a $\lambda$-involution. The \emph{coarse
$\lambda$-type} or \emph{coarse type} of $\tau$ is the global section of $T$ determined by $\quo{\veps}\in T(U)$
constructed above. It shall be denoted $\ct(\tau)$ or $\ct(A,\tau)$.
\end{definition}

\begin{remark}
	In accordance with 
	Remark~\ref{RM:types-over-OX},
	we shall write the coarse type of a $\lambda$-involution $\tau$
	of an Azumaya $\calO_\mathbf{X}$-algebra, defined to be $\ct(\pi_*\tau)$, as
	$\ct_\pi(\tau)$.
\end{remark}
 
\begin{prp}\label{PR:type-determines-coarse-type} Let $(A,\tau)$, $(A',\tau')$ be Azumaya $R$-algebras with
$\lambda$-involutions. Then:
\begin{enumerate}[label=(\roman*)]
\item $\ct(A,\tau)=\ct(\nMat{A}{m\times m},\tau\tr)$ for all $m$.
\item $\ct(\tau\tensor_R\tau')=\ct(\tau)\cdot \ct(\tau')$ in $\Hoh^0(T)$. 
\item If there is a covering $V\to *$ such that $(A_V,\tau_V)\iso (A'_V,\tau'_V)$, then $\ct(\tau)=\ct(\tau')$.
\end{enumerate} Consequently, the map $\tp(\tau)\mapsto \ct(\tau):\Types{\lambda}\to \CTypes{\lambda}$ is a
well-defined morphism of monoids.
\end{prp}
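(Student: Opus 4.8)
The plan is to prove (i)--(iii) by a single device: to compute the coarse type of a construction built from $(A,\tau)$ (and $(A',\tau')$), choose one covering $U\to\ast_\bfY$ over which all the algebras involved are simultaneously split, run Construction~\ref{CN:coarse-type} on that covering, and read off the resulting scalar $\veps$ in terms of the $\veps$'s of the inputs. What makes this legitimate is Lemma~\ref{LM:coarse-type-descends}, which frees us to compute $\ct$ with any convenient splitting $\psi$ and lift $h$; and the one nontrivial input I would record at the start is the standard identification $\nMat{R}{n}\tensor_R\nMat{R}{m}\iso\nMat{R}{nm}$ of $R$-algebras, under which $\lambda\tr\tensor\lambda\tr$ corresponds to $\lambda\tr$ and a Kronecker product $h\tensor h'$ of invertible matrices corresponds to an invertible matrix. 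Part (iii) is then immediate and carries no obstacle: if $\phi\colon(A_V,\tau_V)\to(A'_V,\tau'_V)$ is an isomorphism of $R_V$-algebras with $\lambda$-involution and $U\to V$ splits $A_V$ via $\psi$, then $\psi':=\psi\circ\phi_U^{-1}$ splits $A'_U$, and since $\phi$ commutes with the involutions one gets $\psi'\circ\tau'_U\circ\psi'^{-1}=\psi\circ\tau_U\circ\psi^{-1}=\sigma$; hence the elements $g$, $h$, $\veps$ produced by Construction~\ref{CN:coarse-type} for $(A',\tau')$ literally coincide with those for $(A,\tau)$, so $\ct(\tau)=\ct(\tau')$.

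For (i) and (ii) I would split $A$ via $\psi\colon A_U\to\nMat{R_U}{n}$ (and, for (ii), $A'$ via $\psi'\colon A'_U\to\nMat{R_U}{n'}$ on the same $U$), refine $U$ so that the resulting $g\in\PGL_n(R)(U)$ (and $g'$) lifts to $h\in\GL_n(R)(U)$ (and $h'$), with $\veps=h^{-\lambda\tr}h$ (and $\veps'$) scalar. In (i), $\psi\tensor\id_{\nMat{R_U}{m}}$ followed by the Kronecker identification splits $\nMat{A_U}{m}$, and the compatibility above gives $\sigma_{\mathrm{new}}=\sigma\tensor\lambda\tr$, hence $g_{\mathrm{new}}=g\tensor\id$, lifted by $h\tensor I_m$; then $\veps_{\mathrm{new}}=(h\tensor I_m)^{-\lambda\tr}(h\tensor I_m)=(h^{-\lambda\tr}h)\tensor I_m=\veps I_{nm}$, i.e.\ the same scalar $\veps$, so $\ct(\nMat{A}{m\times m},\tau\tr)=\ct(A,\tau)$. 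In (ii), $\psi\tensor\psi'$ splits $(A\tensor_RA')_U$, so $\sigma_{\mathrm{new}}=\sigma\tensor\sigma'$, $g_{\mathrm{new}}=g\tensor g'$ is lifted by $h\tensor h'$, and $\veps_{\mathrm{new}}=(h\tensor h')^{-\lambda\tr}(h\tensor h')=(h^{-\lambda\tr}h)\tensor(h'^{-\lambda\tr}h')=\veps\veps'\,I_{nn'}$, the scalar $\veps\veps'$, giving $\ct(\tau\tensor_R\tau')=\ct(\tau)\cdot\ct(\tau')$. The only thing to watch here is the bookkeeping with the Kronecker identification and the choice of mutually compatible lifts; I expect no real difficulty, precisely because $\veps$ is central (a scalar) and Kronecker-tensoring by identity matrices is harmless, but this is the step where care is needed.

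Finally, for the ``consequently'' clause: if $\tp(\tau)=\tp(\tau')$ then by Definition~\ref{DF:types} there are $m,m'$ and a covering $U$ with $(\nMat{A_U}{m\times m},\tau_U\tr)\iso(\nMat{A'_U}{m'\times m'},\tau'_U\tr)$, so (iii) gives $\ct(\nMat{A}{m\times m},\tau\tr)=\ct(\nMat{A'}{m'\times m'},\tau'\tr)$, and (i) identifies the two sides with $\ct(A,\tau)$ and $\ct(A',\tau')$; hence $\tp(\tau)\mapsto\ct(\tau)$ is well defined. It is multiplicative by (ii) since the monoid operation on $\Types{\lambda}$ is induced by $\tensor_R$, and it sends the neutral type $\tp(R,\lambda)$ to $\ct(R,\lambda)$, which is $1$: running Construction~\ref{CN:coarse-type} on $(R,\lambda)$ with $n=1$, $\psi=\id$ yields $\sigma=\lambda$, $g=\lambda\tr\circ\lambda=\id$, $h=1$, $\veps=1$. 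Thus $\tp(\tau)\mapsto\ct(\tau)\colon\Types{\lambda}\to\CTypes{\lambda}$ is a morphism of monoids, completing the proof.
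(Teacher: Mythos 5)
Your proof is correct and follows essentially the same route as the paper's: split over a common covering, run Construction~\ref{CN:coarse-type}, and read off $\veps$ directly from the tensor/Kronecker structure. The only cosmetic difference is that for part (i) the paper lifts $g_m$ by the block-diagonal matrix $h\oplus\dots\oplus h$ while you use $h\tensor I_m$; these agree up to a basis permutation and in any case yield the same scalar $\veps$, so nothing is lost.
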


\begin{proof} Write $n=\deg A$ and $n'=\deg A'$. Define $U,\psi,g,\sigma,h,\veps$ as in
Construction~\ref{CN:coarse-type}, and analogously, define
$U',\psi',g',\sigma',h',\veps'$ using $(A',\tau')$ in place of $(A,\tau)$.
\begin{enumerate}[label=(\roman*), align=left, leftmargin=0pt, itemindent=1ex, itemsep=0.6\baselineskip]
\item The isomorphism $\psi:A_U\to \nMat{R_U}{n\times n}$ gives rise
to an isomorphism
\[ \psi_m:\nMat{A}{m\times m}_U\to \nMat{\nMat{R_U}{n\times n}}{m\times m}= \nMat{R_U}{nm\times nm}.\] Let
$\sigma_m=\psi_m\circ \lambda\tr \circ \psi_m^{-1}$, let $g_m:=\lambda\tr\circ \sigma$ and let $h_m=(h\oplus \dots\oplus
h)\in \GL_{nm}(R)(U)$. Straightforward computation shows that the image of $h_m$ in $\PGL_{nm}(R)(U)$ is $g_m$.
This means that $\veps_m:=h_m^{-\lambda\tr}h_m$ coincides with $\veps=h^{-\lambda\tr}h$ in $N(U)$, and thus
$\ct(A,\tau)=\ct(\nMat{A}{m\times m},\tau\tr)$.

\item Consider $\tilde{\psi}=\psi\tensor \psi':A_U\tensor A'_U\to \nMat{R_U}{n\times n}\tensor \nMat{R_U}{n'\times n'}=
\nMat{R_U}{nn'\times nn'}$, let $\tilde{\sigma}=\tilde{\psi}\circ(\tau\tensor \tau')\circ \tilde{\psi}^{-1}=
\sigma\tensor \sigma'$, and let $\tilde{g}=\lambda\tr\circ \sigma=g\tensor g'$. Define $\tilde{h}:=h\tensor h'\in
\GL_{nn'}(R)(U)$. Then $\tilde{h}$ maps onto $\tilde{g}$, and we have
$\tilde{h}^{-\lambda\tr}\tilde{h}=(h^{-\lambda\tr}h)\tensor (h'^{-\lambda\tr}h')$, which means $\ct(\tau\tensor_R\tau')=\ct(\tau)\cdot \ct(\tau')$.

\item Fix an isomorphism $\eta:(A'_V,\tau'_V)\to (A_V,\tau_V)$ and, in the construction of $\ct(\tau)$, choose a
covering $U\to *$ factoring through $V\to *$. Taking $U'=U$ and $\psi':=\psi\circ \eta_U: A'_U\to \nMat{R}{n\times
n}$ in the construction of $\ct(A',\tau')$, we find that
\[\sigma'=\psi'\tau'_{U}\psi'^{-1}=\psi\eta_{U}\tau'_{U}\eta_{U}^{-1}\psi^{-1}= \psi\tau\psi^{-1}=\sigma\ , \] so
$\ct(\tau)=\ct(\tau')$. \qedhere
\end{enumerate}
\end{proof}

\begin{remark}\label{RM:type-to-ct-not-injective}
There are examples where $\Types{\lambda}\to \CTypes{\lambda}$ is not injective. For example, 
by Proposition~\ref{PR:T-is-two-torsion},
the image of $\Types{\lambda}\to \CTypes{\lambda}$
is a subgroup, so this map is not injective when $\Types{\lambda}$ is not a group, 
e.g.\ Example~\ref{EX:types-in-char-two}.
\end{remark}

\begin{definition}
 An abelian group object $G$ of $\bfY$ is said to have \emph{square roots locally} if the
 the squaring map $x\mapsto x^2:G\to G$ is an epimorphism. That is, for any object $U$ of $\bfY$ and $x\in G(U)$, there
 exists a covering $V\to U$ and $y\in G(V)$ such that $y^2=x$.
 \end{definition}

 \begin{example} \label{EX:SHL_implies_SRL}
 If $\mathbf Y$ is the topos of a topological space with
 the ring sheaf of continuous functions into $\C$ or the \'etale ringed topos of a scheme on which $2$ is
 invertible, then the group $\units{\sh O_\bfY}$ has square roots locally.
 Indeed, this holds at the stalks, because the stalks of ${\sh O}_\bfY$ 
 are strictly
 henselian rings in which $2$ is invertible---this is well known in the case of an \'etale ringed
 topos of a scheme, or proved in Appendix \ref{AX:C_is_s_henselian} in the case of a topological space.
 Furthermore, if $\bfY$ is the fppf ringed topos of an arbitrary scheme $Y$, then
 $\units{\sh O_\bfY}$ has square roots locally, because any $U$-section 
 has a square root over a degree-$2$ finite flat covering of $U$. 
 \end{example}

The main result of this section is the following theorem, which shows that under mild assumptions, 
$\lambda$-involutions of Azumaya algebras of the same degree having the same coarse
type are locally isomorphic.
As a consequence, an analogue of the desired property \ref{item:type-cond-local-iso}
from Section~\ref{subsec:types} holds under the same assumptions.
The theorem holds in particular when $\pi:\bfX\to \bfY$ is induced by a
good $C_2$-quotient of schemes on which $2$ is invertible (see
Example~\ref{EX:important-exact-quo-scheme}), 
or by a $C_2$-quotient of
Hausdorff topological spaces (see
Example~\ref{EX:important-exact-quo-top}).

\begin{theorem}\label{TH:ct-determines-local-iso}
	Let $\bfX$ be a locally ringed topos with involution $\lambda$,
	let $\pi:\bfX\to \bfY$ be an exact quotient relative to $\lambda$,
	and
	write $R=\pi_*\calO_\bfX$ and $S=\calO_\bfY$.
 Suppose that $\units{S}$ has square roots locally and at least one of the following
 conditions holds:
 \begin{enumerate}[label=(\arabic*)]
 \item $2 \in S^\times$.
 \item \label{item:TH:ct-determines-local-iso:quad-et} 
 $\pi: \mathbf X \to \mathbf Y$ is unramified.
 \item \label{item:TH:ct-determines-local-iso:odd} $n$ is odd.
 \end{enumerate}
 Suppose $(A, \tau)$ and
 $(A',\tau')$ are two degree-$n$ Azumaya $R$-algebras with $\lambda$-involutions.
 Then the following are equivalent:
 \begin{enumerate}[label=(\alph*)]
 \item \label{pr51} $(A, \tau)$ and $(A', \tau')$ are locally isomorphic as $R$-algebras with involution.
 \item \label{pr52} $(A, \tau)$ and $(A', \tau')$ have the same type.
 \item \label{pr53} $(A, \tau)$ and $(A', \tau')$ have the same coarse type.
 \end{enumerate}
\end{theorem}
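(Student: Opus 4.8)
The plan is to prove the easy implications \ref{pr51}$\Rightarrow$\ref{pr52}$\Rightarrow$\ref{pr53} directly and then to concentrate on \ref{pr53}$\Rightarrow$\ref{pr51}. The implication \ref{pr51}$\Rightarrow$\ref{pr52} is immediate from Definition~\ref{DF:types} (take both matrix sizes to be $1$), and \ref{pr52}$\Rightarrow$\ref{pr53} is contained in Proposition~\ref{PR:type-determines-coarse-type}. So assume $\ct(A,\tau)=\ct(A',\tau')$. First I would pass to a covering $U\to *_\bfY$ which simultaneously trivialises $A$ and $A'$ and over which Construction~\ref{CN:coarse-type} can be carried out for both algebras: this yields isomorphisms of $R_U$-algebras with involution $(A_U,\tau_U)\iso (\nMat{R_U}{n\times n},\sigma)$ and $(A'_U,\tau'_U)\iso (\nMat{R_U}{n\times n},\sigma')$, where $\sigma=\lambda\tr\circ g$ and $\sigma'=\lambda\tr\circ g'$, where $g,g'$ lift (after refining $U$) to $h,h'\in\GL_n(R)(U)$, and where $\veps=h^{-\lambda\tr}h$ and $\veps'=h'^{-\lambda\tr}h'$ lie in $N(U)$. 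It therefore suffices to prove that $(\nMat{R_U}{n\times n},\sigma)$ and $(\nMat{R_U}{n\times n},\sigma')$ become isomorphic after a further refinement of $U$.

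The hypothesis $\ct(\tau)=\ct(\tau')$ says that $\veps$ and $\veps'$ have equal images in $\Hoh^0(T)$; refining $U$, their images agree already in $T(U)$, so $\veps'\veps^{-1}=\beta^\lambda\beta^{-1}$ for some $\beta\in\units{R}(U)$ after a further refinement. Replacing the chosen lift $h'$ of $g'$ by $\beta h'$ — legitimate, since $\beta$ is central and so does not alter $\sigma'$ — we arrange $\veps'=\veps$. A direct computation, along the lines of Lemma~\ref{LM:lambda-tr-relation} and Construction~\ref{CN:coarse-type}, identifies $\sigma$ with the adjoint involution of a nondegenerate $\veps$-hermitian form on $R_U^n$ associated to $h$, shows that $(\nMat{R_U}{n\times n},\sigma)\iso(\nMat{R_U}{n\times n},\sigma')$ precisely when the forms attached to $h$ and $h'$ are similar (isometric up to a scalar), and shows that when $\veps=\veps'$ this similarity factor is forced to lie in $\units{S}(U)$. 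Since $\units{S}$ has square roots locally, that scalar can be absorbed after a further refinement. We are thus reduced to showing that two nondegenerate $\veps$-hermitian forms of rank $n$ over $R_U$ become isometric on some covering.

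For this final step I would invoke — or reprove in the topos-theoretic language — the classification of $\veps$-hermitian forms over the semilocal ring $R_U$, whose fixed ring is local by Theorem~\ref{thm:local-involution}, in each of the three cases. If $\pi$ is unramified (case~\ref{item:TH:ct-determines-local-iso:quad-et}), then $R_U$ is quadratic \'etale over $S_U$, Proposition~\ref{PR:Hilbert-ninety}\ref{item:LM:Hilbert-ninety:norm} lets one rescale $\veps$ to $1$ (consistent with $T=1$, Proposition~\ref{PR:T-structure-unramified-pi}), and hermitian forms over a quadratic \'etale extension of a local ring are classified by rank and a discriminant-type invariant which is killed because $\units{S}$ has square roots locally. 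If $n$ is odd (case~\ref{item:TH:ct-determines-local-iso:odd}), taking determinants in $\veps=h^{-\lambda\tr}h$ puts $\veps^n$ in the image of the map $\units{R}/\units{S}\to N$, so $\ct(\tau)$ is trivial because $T$ is $2$-torsion (Proposition~\ref{PR:T-is-two-torsion}); one again rescales to $\veps=1$ and cites the classification of hermitian forms of fixed rank over a semilocal ring, the residual discriminant disappearing after extracting square roots. If $2\in\units{S}$, one treats the symmetric and alternating cases separately — an alternating form is locally hyperbolic, hence unique of its rank, while a symmetric or general $\veps$-hermitian form is classified up to a discriminant-type invariant annihilated locally. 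In each case one obtains a single local isometry class of rank $n$, finishing the proof; references for these facts are \cite[III.\S8]{knus_quadratic_1991} and the literature cited there.

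I expect this last step to be the main obstacle: the form-classification inputs must be assembled carefully over a semilocal — not local — ring with involution, which moreover need not be finitely presented over $S$, so one must argue with coverings rather than at points (although when $\bfY$ has enough points, as in Examples~\ref{EX:important-exact-quo-scheme} and~\ref{EX:important-exact-quo-top}, stalkwise reasoning suffices). One must also be attentive to why exactly one of the three hypotheses is needed: in characteristic $2$ with $n$ even and $\pi$ ramified the statement is false, since then symmetric and alternating bilinear involutions share the same (trivial) coarse type but have different types — the phenomenon recorded in Remark~\ref{RM:type-to-ct-not-injective} and Example~\ref{EX:types-in-char-two} — and this is precisely where the case distinction must do its work.
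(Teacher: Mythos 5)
Your proposal follows the same route as the paper's: the easy implications are treated identically, and the substantive direction \ref{pr53}$\Rightarrow$\ref{pr51} is, as in Proposition~\ref{PR:completion}, reduced to showing that two $(\veps,\lambda\tr)$-hermitian matrices $h,h'\in\GL_n(R)(U)$ with $\veps=\veps'$ become congruent after a covering of $\bfY$. Your analysis of where the three hypotheses are consumed matches the paper's: $n$ odd lets you kill $\veps$ in $T$ via determinants, the unramified case uses Hilbert~90 (Propositions~\ref{PR:Hilbert-ninety} and~\ref{PR:T-structure-unramified-pi}), and when $2\in\units{S}$ one is led to the orthogonal/symplectic dichotomy. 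One small stylistic difference: the paper normalizes $\veps=\veps'$ and then produces an exact congruence $v^{\lambda\tr}hv=h'$ (Lemma~\ref{LM:hermitina-local-congruence}), whereas you argue modulo a similarity scalar which you then show lies in $\units{S}(U)$ and absorb by extracting a square root locally; both work, but the paper's route avoids the extra step.

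Be careful, though, about the final appeal to ``the classification of $\veps$-hermitian forms over the semilocal ring $R_U$'': as stated in the paper just before Lemma~\ref{LM:hermitina-local-congruence}, this congruence result ``is known when $\pi$ is unramified or trivial, $\dots$ The ramified situation that we consider appears not to have been considered before in the literature.'' So citing \cite{knus_quadratic_1991} will not close the case $2\in\units{S}$, ramified, $\quo\lambda=\id$, $\veps=-1$, which is the hard case and the real content of Lemmas~\ref{LM:decomposition-II}, \ref{LM:JacR-squared} and especially \ref{LM:tough-lemma}. Your slogan ``an alternating form is locally hyperbolic, hence unique of its rank'' is too quick here: the residue form is indeed alternating, but lifting the hyperbolic decomposition to an actual congruence over $R$ requires adjoining square roots of certain units of $S$ (not of $R$!) and then spreading out from stalks to a covering of $\bfY$ --- this is precisely the explicit computation in Lemma~\ref{LM:tough-lemma} with $s=4+4\alpha^2$, $t=4+4\beta^2$, and the passage through $B[\sqrt{s},\sqrt{t}]_{f_i}$. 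You flag this difficulty honestly (``one must argue with coverings rather than at points''), and the ``or reprove'' clause is exactly where Subsection~\ref{subsec:technical-proofs} does its work; but as written the proposal leaves this, the bulk of the proof, unexecuted.
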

\begin{proof}
 Statement \ref{pr52} is implied by \ref{pr51} by virtue of the definition of ``type'', Definition \ref{DF:types}. Then the
 implication of \ref{pr53} by \ref{pr52} is Proposition \ref{PR:type-determines-coarse-type}. The final implication,
 that of \ref{pr51} by \ref{pr53}, is somewhat technical and it is given by Proposition \ref{PR:completion} below.
\end{proof}

\begin{cor}\label{CR:coarse-type-determines-type} 
 Suppose the assumptions of Theorem \ref{TH:ct-determines-local-iso} hold and $2 \in S^\times$.
 Then the map $\tp(\tau)\mapsto \ct(\tau):\Types{\lambda}\to \CTypes{\lambda}$ is injective. 
\end{cor}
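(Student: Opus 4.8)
The plan is to deduce Corollary~\ref{CR:coarse-type-determines-type} directly from Theorem~\ref{TH:ct-determines-local-iso} together with Proposition~\ref{PR:type-determines-coarse-type}. Recall that Proposition~\ref{PR:type-determines-coarse-type} already furnishes a well-defined morphism of monoids $\tp(\tau)\mapsto\ct(\tau)\colon\Types{\lambda}\to\CTypes{\lambda}$; what remains is injectivity. Since $2\in S^\times$, condition (1) of Theorem~\ref{TH:ct-determines-local-iso} is satisfied (the hypothesis that $\units S$ has square roots locally is carried over from the assumptions of that theorem, which we are told to assume here).

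First I would take two $\lambda$-types $\tp(A,\tau)$ and $\tp(A',\tau')$ with $\ct(A,\tau)=\ct(A',\tau')$ and show they are equal. The subtlety is that Theorem~\ref{TH:ct-determines-local-iso} compares algebras with involution \emph{of the same degree}, whereas two representatives of given types need not have the same degree. To handle this, I would pass to matrix algebras: set $n=\deg A$ and $n'=\deg A'$, and replace $(A,\tau)$ by $(\nMat{A}{n'\times n'},\tau\tr)$ and $(A',\tau')$ by $(\nMat{A'}{n\times n},\tau'\tr)$. By Definition~\ref{DF:types}, these matrix algebras represent the same types $\tp(A,\tau)$ and $\tp(A',\tau')$ respectively, and now both have degree $nn'$. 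Moreover, by Proposition~\ref{PR:type-determines-coarse-type}(i), passing to matrix algebras does not change the coarse type, so $\ct(\nMat{A}{n'\times n'},\tau\tr)=\ct(A,\tau)=\ct(A',\tau')=\ct(\nMat{A'}{n\times n},\tau'\tr)$.

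Now both algebras with involution have the same degree $nn'$ and the same coarse type, so Theorem~\ref{TH:ct-determines-local-iso} — specifically the implication \ref{pr53}$\Rightarrow$\ref{pr51}, which is available since $2\in S^\times$ — applies and tells us that $(\nMat{A}{n'\times n'},\tau\tr)$ and $(\nMat{A'}{n\times n},\tau'\tr)$ are locally isomorphic as $R$-algebras with involution. In particular, there is a covering $U\to\ast$ over which they become isomorphic, which by Definition~\ref{DF:types} means they have the same type. Hence $\tp(A,\tau)=\tp(\nMat{A}{n'\times n'},\tau\tr)=\tp(\nMat{A'}{n\times n},\tau'\tr)=\tp(A',\tau')$, establishing injectivity. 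The only step requiring care is the bookkeeping between degrees and matrix algebras in the preceding paragraph; everything else is an immediate invocation of results already proved. No genuine obstacle is expected, since the hard analytic content lives in Proposition~\ref{PR:completion} (cited inside Theorem~\ref{TH:ct-determines-local-iso}), which we are entitled to use as a black box.
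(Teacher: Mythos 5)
Your proof is correct and takes essentially the same route as the paper: equalize degrees by passing to matrix algebras (coarse type unchanged by Proposition~\ref{PR:type-determines-coarse-type}(i)), then invoke the implication \ref{pr53}$\Rightarrow$\ref{pr51} of Theorem~\ref{TH:ct-determines-local-iso} to get a local isomorphism, hence equality of types. The paper's own proof is this argument, stated a bit more tersely.
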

\begin{proof}
 Suppose $\ct(A,\tau)=\ct(A',\tau')$ and write $n=\deg A$, $n'=\deg A'$. 
 By Proposition~\ref{PR:type-determines-coarse-type}, we may replace
 $(A,\tau)$ with $(\nMat{A}{n'\times
 n'},\tau\tr)$ and $(A',\tau')$ with $(\nMat{A'}{n\times n},\tau'\tr)$, and assume that $\deg A=\deg
 A'$. Now, by Theorem~\ref{TH:ct-determines-local-iso}, $(A,
 \tau)$ is locally isomorphic to $(A', \tau')$ as an $R$-algebra with involution, and \textit{a fortiori} it has the
 same type.
\end{proof}

\begin{corollary}\label{CR:Types-are-two-torsion}
 With the assumptions of Corollary \ref{CR:coarse-type-determines-type}, $\Types{\lambda}$ is a $2$-torsion group.
\end{corollary}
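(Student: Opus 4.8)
The plan is to obtain this as a formal consequence of two facts already in hand: the injectivity of the comparison map $\tp(\tau)\mapsto\ct(\tau):\Types{\lambda}\to\CTypes{\lambda}$, and the fact that the target $\CTypes{\lambda}$ is $2$-torsion. First I would recall from Proposition~\ref{PR:type-determines-coarse-type} that this map is a well-defined homomorphism of monoids, and from Corollary~\ref{CR:coarse-type-determines-type} that, under the hypotheses now in force (in particular $2\in S^\times$ and $\units{S}$ having square roots locally), it is injective. Next, Proposition~\ref{PR:T-is-two-torsion} says that $T$ is a $2$-torsion abelian sheaf, and hence $\CTypes{\lambda}=\Hoh^0(T)$ is a $2$-torsion abelian group.

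The remaining step is the elementary observation that a submonoid of a $2$-torsion abelian group is automatically a $2$-torsion group. Concretely: given $t\in\Types{\lambda}$, its image satisfies $\ct(t)^2=1$ in $\CTypes{\lambda}$, so $\ct(t^2)=\ct(t)^2=1=\ct(1)$; since the map is injective this forces $t^2=1$. Thus every element of $\Types{\lambda}$ is its own inverse, so $\Types{\lambda}$ is indeed a group, and it has exponent dividing $2$.

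I do not expect any genuine obstacle here — all of the substantive work (that coarse type is a complete invariant of type in this setting, and that coarse types form a $2$-torsion group) has already been carried out in Theorem~\ref{TH:ct-determines-local-iso}, Corollary~\ref{CR:coarse-type-determines-type}, and Proposition~\ref{PR:T-is-two-torsion}. The one point to handle with a little care is that the comparison map is a priori only a homomorphism of \emph{monoids}, so the argument must be phrased so that it simultaneously exhibits inverses and the exponent-$2$ property, rather than presupposing that $\Types{\lambda}$ is a group.
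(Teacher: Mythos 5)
Your proof is correct and takes essentially the same approach as the paper: embed $\Types{\lambda}$ into the $2$-torsion group $\CTypes{\lambda}=\Hoh^0(T)$ via the injective monoid homomorphism from Corollary~\ref{CR:coarse-type-determines-type}, then observe that a submonoid of a $2$-torsion abelian group is itself a $2$-torsion group. The only difference is that you spell out the elementary last step, which the paper leaves implicit.
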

\begin{proof}
 We know $\CTypes{\lambda}$ is a $2$-torsion group by Proposition~\ref{PR:T-is-two-torsion}, 
 and $\Types{\lambda}$ is a submonoid by Corollary~\ref{CR:coarse-type-determines-type}.
\end{proof}

\begin{remark}\label{RM:are-all-coarse-types-realizable}
We do not know whether the assumptions
of Corollary~\ref{CR:coarse-type-determines-type}
imply that the map $\Types{\lambda}\to \CTypes{\lambda}$ is surjective.
A more extensive discussion of this and some positive results
will be given in
Subsection~\ref{subsec:ker-of-transfer}. 
\end{remark}

The following theorem shows that the properties exhibited in Example~\ref{EX:types-for-schemes} extend to our general
 setting under some assumptions. 
 
\begin{thm}\label{TH:basic-properties-of-types} 
	With the assumptions of Theorem~\ref{TH:ct-determines-local-iso}, the following hold:
\begin{enumerate}[label=(\roman*)] 
\item \label{item:TH:basic-properties-of-types:tot-ram} If $2\in\units{S}$ and 
$\pi:\bfX\to \bfY$ is a trivial quotient (Example~\ref{EX:trivial-exact-quotient}), 
then $\Types{\lambda}$ is isomorphic to the group $\Hoh^0(\mu_{2,R})$.
\item \label{item:TH:basic-properties-of-types:unram} 
If $\pi:\bfX\to \bfY$ is unramified, then $\Types{\lambda}=\{1\}$.
\item \label{item:TH:basic-properties-of-types:odd} Let $(A,\tau)$ be an Azumaya $R$-algebra with a $\lambda$-involution. If
$\deg A$ is odd, then $\tp(\tau) =1$.
\end{enumerate}
\end{thm}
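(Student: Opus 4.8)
The plan is to derive all three parts from Theorem~\ref{TH:ct-determines-local-iso}, whose hypotheses are available by assumption, together with the structure of $T$ recorded in Propositions~\ref{PR:T-structure-unramified-pi} and~\ref{PR:T-is-two-torsion} and in Example~\ref{EX:T-tot-ramified}, plus a direct reading of Construction~\ref{CN:coarse-type}. I will repeatedly use the following observation: taking matrix sizes $n$ and $1$ in Definition~\ref{DF:types} shows that $(R,\lambda)$ and $(\nMat{R}{n\times n},\lambda\tr)$ are of the same type, so $\tp(\nMat{R}{n\times n},\lambda\tr)=1$, while $\ct(\nMat{R}{n\times n},\lambda\tr)=\ct(R,\lambda)=1$ by Proposition~\ref{PR:type-determines-coarse-type}\,(i). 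Hence, whenever Theorem~\ref{TH:ct-determines-local-iso} applies, any degree-$n$ Azumaya $R$-algebra $(A,\tau)$ with $\ct(\tau)=1$ is locally isomorphic to $(\nMat{R}{n\times n},\lambda\tr)$, and therefore $\tp(\tau)=1$.

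Part~\ref{item:TH:basic-properties-of-types:unram} is then immediate: when $\pi$ is unramified, condition~\ref{item:TH:ct-determines-local-iso:quad-et} of Theorem~\ref{TH:ct-determines-local-iso} is in force, and $T=1$ by Proposition~\ref{PR:T-structure-unramified-pi}, so $\CTypes{\lambda}=\{1\}$; thus every $\lambda$-involution has coarse type $1$, and by the previous paragraph $\Types{\lambda}=\{1\}$. For part~\ref{item:TH:basic-properties-of-types:odd}, let $n=\deg A$ be odd, so condition~\ref{item:TH:ct-determines-local-iso:odd} applies; it then suffices to prove $\ct(\tau)=1$. Running Construction~\ref{CN:coarse-type}, over a covering $U\to *_\bfY$ one obtains a scalar $\veps=h^{-\lambda\tr}h\in N(U)$ with $h\in\GL_n(R)(U)$, representing $\ct(\tau)$. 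Taking determinants, and using that $\det$ is compatible with transpose and with $\lambda$, we get $\veps^{\,n}=\det(h^{-\lambda\tr})\,\det(h)=b^{-\lambda}b$ with $b=\det h\in\units{R}(U)$. Writing $b^{-\lambda}b=x^\lambda x^{-1}$ with $x=b^{-1}$ exhibits $\veps^{\,n}$ in the image of the map $\units{R}/\units{S}\to N$ of~\eqref{EQ:T-dfn}, so it vanishes in $T(U)$, i.e.\ $\ct(\tau)^n=1$. Since also $\ct(\tau)^2=1$ by Proposition~\ref{PR:T-is-two-torsion} and $n$ is odd, $\ct(\tau)=1$.

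For part~\ref{item:TH:basic-properties-of-types:tot-ram}, assume $2\in\units{S}$ and that $\pi$ is a trivial quotient, so that (up to the equivalence $\pi$, Example~\ref{EX:trivial-exact-quotient}) we may take $R=S$ and $\lambda=\id_R$; then $N=\mu_{2,R}$ and the map $\units{R}/\units{S}\to N$ is trivial, whence $T=\mu_{2,R}$ and $\CTypes{\lambda}=\Hoh^0(\mu_{2,R})$ as in Example~\ref{EX:T-tot-ramified}. By Corollary~\ref{CR:coarse-type-determines-type} the monoid morphism $\tp(\tau)\mapsto\ct(\tau)\colon\Types{\lambda}\to\Hoh^0(\mu_{2,R})$ is injective, so I only need surjectivity; a bijective monoid map onto a group then yields the group isomorphism $\Types{\lambda}\cong\Hoh^0(\mu_{2,R})$. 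Given $\veps\in\Hoh^0(\mu_{2,R})$, set $h_\veps=\begin{bmatrix} 0 & 1 \\ \veps & 0 \end{bmatrix}\in\GL_2(R)$, which is invertible and satisfies $h_\veps^\tr=\veps h_\veps$, and let $\sigma_\veps$ be the map $x\mapsto h_\veps x^\tr h_\veps^{-1}$ of $\nMat{R}{2\times 2}$; it is an involution because $h_\veps(h_\veps^{-1})^\tr=\veps$ is central. Feeding $(\nMat{R}{2\times 2},\sigma_\veps)$ into Construction~\ref{CN:coarse-type} with $\psi=\id$, the automorphism $g=\lambda\tr\circ\sigma_\veps$ is conjugation by $(h_\veps^{-1})^\tr=\veps h_\veps^{-1}$, so one may take $h=\veps h_\veps^{-1}$ and compute $h^{-\lambda\tr}h=h_\veps\cdot\veps h_\veps^{-1}=\veps$; thus $\ct(\nMat{R}{2\times 2},\sigma_\veps)=\veps$, which proves surjectivity.

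I do not anticipate any genuine obstacle: the content is carried entirely by Theorem~\ref{TH:ct-determines-local-iso} and its corollaries, and the rest is bookkeeping. The only two places demanding care are the short matrix computations—the determinant identity $\veps^{\,n}=b^{-\lambda}b$ in part~\ref{item:TH:basic-properties-of-types:odd}, and the verification that the concrete involution $\sigma_\veps$ has coarse type exactly $\veps$ in part~\ref{item:TH:basic-properties-of-types:tot-ram}—which I would carry out section-wise, keeping in mind throughout the latter that $\veps^{-1}=\veps$.
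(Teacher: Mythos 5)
Your proof is correct and takes essentially the same route as the paper: part~(ii) follows from Proposition~\ref{PR:T-structure-unramified-pi} and Theorem~\ref{TH:ct-determines-local-iso}; part~(i) follows from Example~\ref{EX:T-tot-ramified}, Corollary~\ref{CR:coarse-type-determines-type}, and exhibiting an explicit degree-$2$ involution of each coarse type (your $h_\veps=\left[\begin{smallmatrix}0&1\\\veps&0\end{smallmatrix}\right]$ is just the transpose of the paper's $\left[\begin{smallmatrix}0&\veps\\1&0\end{smallmatrix}\right]$); part~(iii) is a determinant argument applied to $h^{\lambda\tr}=\veps h$. The one small stylistic divergence is in part~(iii): you show $\veps^{\,n}=b^{-\lambda}b$ and then invoke $2$-torsion of $T$ (Proposition~\ref{PR:T-is-two-torsion}) together with oddness of $n$ to kill $\ct(\tau)$, whereas the paper manufactures $\beta=\veps^m\det h$ directly so that $\veps=\beta^{-1}\beta^\lambda$, avoiding the extra step; both are perfectly valid, the paper's being marginally tighter.
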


We deduce Theorem~\ref{TH:basic-properties-of-types} mostly as a corollary of Theorem~\ref{TH:ct-determines-local-iso}.

\begin{proof}
\ref{item:TH:basic-properties-of-types:tot-ram} This follows from Theorem~\ref{TH:ct-determines-local-iso} and Example~\ref{EX:T-tot-ramified} if we show that for every
$\veps\in \Hoh^0(\mu_{2,R})=\Hoh^0(T)$, there is an involution of coarse type $\veps$.
To see that, let $h=[\begin{smallmatrix} 0 & \veps \\ 1 & 0 \end{smallmatrix}]$
and take $\tau:\nMat{R}{2\times 2}\to \nMat{R}{2\times 2}$
defined by $x\mapsto (h xh^{-1})^{\lambda\tr}$. That $\ct(\tau)=\veps$
follows by applying Construction~\ref{CN:coarse-type} with $U=*$ and $h$, $\veps$
just defined.

\ref{item:TH:basic-properties-of-types:unram} This follows from
Theorem~\ref{TH:ct-determines-local-iso} and Proposition~\ref{PR:T-structure-unramified-pi}.

\ref{item:TH:basic-properties-of-types:odd} It is enough to show that
$\ct( \tau)=1$ whenever $\deg A=2m+1$. 
Define $U$, $h$ and $\veps$ as in Construction~\ref{CN:coarse-type}.
From \eqref{EQ:eps-def}, we have $h=\veps h^{\lambda\tr}$. 
Taking the determinant of both sides yields
$\det h = \veps^{2m+1} (\det h)^\lambda$. Since $\veps^\lambda=\veps^{-1}$, this implies that
$\veps=\beta^{-1}\beta^\lambda$ for $\beta=\veps^{-m}\det h$. 
This means that $\quo{\veps}$, the image of $\veps$ in $T(U)$, is trivial,
so $\ct( \tau)=1$.
\end{proof}

 We note that part \ref{item:TH:basic-properties-of-types:tot-ram} applies in the case 
 where $\pi:\bfX\to \bfY$ is induced by a scheme $X$ on which $2$ is invertible endowed 
 with the trivial 
 involution $\lambda=\id :X\to X$; see
 Example~\ref{EX:types-for-schemes}\ref{EX:types-for-schemesi}.

 Part \ref{item:TH:basic-properties-of-types:unram} applies to the case where $\pi:\bfX\to \bfY$ is induced by a
 quadratic \'etale morphism of schemes $\pi: X \to Y$ where $X$ is given the canonical $Y$-involution; see
 Example~\ref{EX:types-for-schemes}\ref{EX:types-for-schemesii}.

\medskip

Our last application of Theorem~\ref{TH:ct-determines-local-iso} provides a concrete realization of the first cohomology
set of the projective unitary group of an Azumaya $R$-algebra with a $\lambda$-involution $(A,\tau)$. As usual, the
unitary group of $(A,\tau)$ is the group object $\U(A,\tau)$ in $\bfY$ whose $V$-sections are
$\{a\in A(V)\suchthat a^\tau a=1\}$, and the projective unitary group of $(A,\tau)$ is the quotient
	\[
	\PU(A,\tau)=\U(A,\tau)/N
	\]
	where $N$, the group of $\lambda$-norm $1$ elements in $R$ defined above.

	If $(A',\tau')$ is another Azumaya $R$-algebra with a $\lambda$-involution
	such that $\deg A=\deg A'$, we further define
	$\sHom_R((A,\tau),(A',\tau'))$ to be the sheaf
	of $R$-linear isomorphisms from $(A,\tau)$
	to $(A',\tau')$, and $\sAut_R(A,\tau)$
	to be the group sheaf of $R$-linear automorphisms 
	of $(A,\tau)$. 
	
	\begin{lemma}\label{LM:PU-isomorphism}
		Suppose $\units{S}$ has square roots locally.
	Let $(A,\tau)$ be a degree-$n$ Azumaya $R$-algebra 
	with a $\lambda$-involution. 
	Then the map $\U(A,\tau)\to \sAut_R(A,\tau)$
	sending a section $x$ to conjugation
	by $x$ is an epimorphism with kernel $N$.
	Consequently, it induces an isomorphism
	$\PU(A,\tau)\cong \sAut_R(A,\tau)$.
	\end{lemma}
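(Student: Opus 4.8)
The plan is to show that the conjugation map $c\colon \U(A,\tau)\to \sAut_R(A,\tau)$, $x\mapsto (a\mapsto xax^{-1})$, is well defined, has kernel exactly $N$, and is an epimorphism of sheaves on $\cat Y$; the final isomorphism $\PU(A,\tau)\cong\sAut_R(A,\tau)$ then follows formally. First I would check that $c$ is well defined: for a section $x$ of $\U(A,\tau)$, i.e.\ $x^\tau x=1$, conjugation by $x$ is clearly an $R$-algebra automorphism of $A$, and it commutes with $\tau$ because for $a\in A(V)$ we have $(xax^{-1})^\tau=(x^{-1})^\tau a^\tau x^\tau=x a^\tau x^{-1}$ using $x^\tau=x^{-1}$ (which is $(x^{-1})^\tau=x$ after applying $\tau$), so $c(x)$ indeed lies in $\sAut_R(A,\tau)$. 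Next, the kernel: $c(x)=\id$ means $x$ is central in $A$, hence $x\in \units R$ (the centre of an Azumaya algebra being its base ring; this can be checked locally after trivializing $A$), and the condition $x^\tau x=1$ becomes $x^\lambda x=1$, so $x\in N$ by the defining sequence \eqref{EQ:N-dfn}; conversely every section of $N\subseteq\units R$ is central and satisfies the unitary condition, so $\ker c=N$.

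The substantive point is surjectivity of $c$ as a map of sheaves, i.e.\ local surjectivity on sections. Let $\psi$ be a section of $\sAut_R(A,\tau)$ over an object $U$ of $\cat Y$. Forgetting the involution, $\psi$ is an $R_U$-algebra automorphism of $A_U$, so by Lemma~\ref{LM:inner-automorphisms} (applied after passing to a covering, together with the local triviality of $A$ and the identification $\PGL_n(R)\cong\sAut_{\text{$R$-alg}}(\nMat{R}{n\times n})$) there is a covering $V\to U$ and a section $g$ of $\GL_n(R)$ — equivalently, after transporting along a trivialization, a section $g$ of $\units{A}$ — with $\psi|_V$ equal to conjugation by $g$. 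The compatibility $\tau\circ\psi=\psi\circ\tau$ translates, via Lemma~\ref{LM:lambda-tr-relation} (or direct computation), into the statement that $g^\tau g$ is a central section, hence $g^\tau g\in\units R(V)$; call it $\delta$. Applying $\tau$ and using $\tau^2=\id$ gives $\delta^\lambda=g^\tau g^{\tau\tau}\cdot(\text{central})$, and one computes $\delta^\lambda\delta=1$, so $\delta\in N(V)$. The remaining task is to correct $g$ by a central unit so that it becomes unitary: I want $c\in\units R$, after a further covering, with $(cg)^\tau(cg)=1$, i.e.\ $c^\lambda c\,\delta=1$, i.e.\ $c^\lambda c=\delta^{-1}$. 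Since $\delta^{-1}\in N$, hence $\delta^{-1}=\delta^{-1}$ has $\lambda$-norm $(\delta^{-1})^\lambda\delta^{-1}=1$... — precisely here I invoke the hypothesis that $\units S$ has square roots locally and the trichotomy on the conditions (1)--(3).

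The main obstacle, then, is solving $c^\lambda c=\delta^{-1}$ locally for $\delta^{-1}\in N(V)$. In case (1), when $2\in\units S$: the norm-one element $\delta^{-1}$ has $\delta^{-1}(\delta^{-1})^\lambda=1$; writing $\eta=\delta^{-1}$, note $(\eta+1)^\lambda\eta=\eta^\lambda\eta+\eta=1+\eta$, so if $\eta+1$ is a unit we may take $c=(\eta+1)^{-1}$ — this is the standard ``Cayley'' trick, and the case $\eta+1$ not a unit is handled after a further covering exactly as in the proof of Hilbert~90, Proposition~\ref{PR:Hilbert-ninety}\ref{item:LM:Hilbert-ninety:norm}, reduced to the stalk level where $\units S$ having square roots and $2$ invertible make $R$ a quadratic \'etale algebra over a semilocal ring by Theorem~\ref{thm:local-involution}. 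In case (2), $\pi$ unramified, $R$ is quadratic \'etale over $S$ everywhere, and the same Hilbert~90 argument applies directly with no need to invert $2$. In case (3), $n$ odd: one instead adjusts using a scalar to fix the determinant — since $N$ contains all $n$-th powers from $\units R/\units S$ only up to the norm map, one uses that $\delta^{-n}=(\det g^\tau\det g)^{?}$... more precisely $\det(g)^\lambda=\delta^n\det(g)$ forces, since $\delta^\lambda\delta=1$ and $n=2m+1$, that $\delta=\beta^{-1}\beta^\lambda$ for $\beta=\delta^{m}\det g$, exactly as in the proof of Theorem~\ref{TH:basic-properties-of-types}\ref{item:TH:basic-properties-of-types:odd}; then $c=\beta^{-1}$ works after extracting a square root of the resulting unit in $\units S$ using the square-roots-locally hypothesis. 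In all three cases one obtains, locally on $\cat Y$, a unitary section $x=cg$ with $c(x)=\psi$, which proves $c$ is an epimorphism; combined with the kernel computation and the first isomorphism theorem for sheaves of groups, $\PU(A,\tau)=\U(A,\tau)/N\xrightarrow{\ \sim\ }\sAut_R(A,\tau)$.
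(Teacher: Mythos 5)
There is a genuine algebraic error at the heart of your argument, and it leads you astray into unnecessary (and in fact unavailable) hypotheses.

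You set $\delta := g^\tau g$ (a central unit) and claim ``one computes $\delta^\lambda\delta=1$, so $\delta\in N(V)$.'' This is wrong. Since $\tau$ reverses products and $\tau^2 = \id$, one has
\[
\delta^\tau = (g^\tau g)^\tau = g^\tau (g^\tau)^\tau = g^\tau g = \delta,
\]
and since $\delta$ is central, $\delta^\tau=\delta^\lambda$. So the correct identity is $\delta^\lambda = \delta$, i.e.\ $\delta\in\units{S}(V)$ --- $\delta$ is a unit \emph{fixed} by $\lambda$, not a unit of $\lambda$-norm one. These are very different claims: a section of $N\cap\units{S}$ would have to satisfy $\delta^2=1$. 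Once you know $\delta\in\units{S}$, the correction is elementary: you seek $c\in\units{S}$ with $(cg)^\tau(cg)=c^\lambda c\,\delta=c^2\delta=1$, i.e.\ a square root of $\delta^{-1}$ in $\units{S}$, and this is exactly what the hypothesis ``$\units{S}$ has square roots locally'' grants, after a covering. This is the paper's proof.

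Because of the error, your argument spirals into solving $c^\lambda c = \delta^{-1}$ with $\delta^{-1}\in N$. That is a surjectivity-of-the-norm question, which is \emph{not} what Hilbert~90 (Proposition~\ref{PR:Hilbert-ninety}\ref{item:LM:Hilbert-ninety:norm}) provides --- that proposition writes a norm-one element as $a^{-1}a^\lambda$, a different shape. You also invoke the trichotomy (1)--(3) on $2\in\units{S}$, $\pi$ unramified, or $n$ odd, but those conditions are not hypotheses of this lemma: the lemma assumes only that $\units{S}$ has square roots locally, precisely because the correct computation makes the stronger conditions unnecessary. The well-definedness and kernel computations in your write-up are fine and match the paper.
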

	
	\begin{proof}
		That the kernel is $N$ follows easily from the fact
		that the centre of $A$ is $R$.
		We turn to proving that the map is an epimorphism.
		
		Let $V\in\bfY$ and $\psi\in \sAut_R(A,\tau)(V)=\Aut_{R_V}(A_V,\tau_V)$. By replacing $V$ with suitable
 covering, we may assume that $A_V=\nMat{R_V}{n\times n}$ and that $\psi_V\in \PGL_n(R)(V)$ is given
 section-wise by $\psi_V(x)=hxh^{-1}$ for some $h\in \GL_n(R)(V)$. Since
 $\psi_V\circ \tau_V=\tau_V\circ \psi_V$, for any section $x\in A(V)$, we have
 $h^{-1}x^\tau h=h^{\tau}x^\tau (h^{-1})^\tau$, and thus $hh^{\tau}\in \units{R}(V)$. In fact, since
 $(h h^\tau)^\lambda=h h^\tau$, we have $h h^\tau\in\units{S}(V)$. By assumption, we can replace $V$
 with a suitable covering to assume that there is $\alpha\in \units{S}(V)$ with $\alpha^2=h h^\tau$.
 Replacing $h$ with $h\alpha^{-1}$ yields $h h^\tau=1$. We have therefore shown that over a covering of
 $V$, $\psi$ lifts to a section of $\U(A,\tau)$.
	\end{proof}

\begin{corollary}\label{CR:torsors-of-PU}
 With the assumptions of Theorem~\ref{TH:ct-determines-local-iso}, let $(A,\tau)$ be a degree-$n$ Azumaya $R$-algebra
 with a $\lambda$-involution and identify $\sAut_R(A,\tau)$ with $\PU(A,\tau)$ as in Lemma~\ref{LM:PU-isomorphism}.
 Then the functor $(A',\tau')\mapsto \sHom_R((A,\tau),(A',\tau'))$
 defines an equivalence between 
 the full subcategory
 of
 $\Az_n(\bfY,R,\lambda)$ consisting of $R$-algebras with a $\lambda$-involution of the same type as $\tau$
 and
 $\Tors(\bfY,\PU(A,\tau))$.
 Consequently, $\Hoh^1(\bfY,\PU(A,\tau))$ is in canonical bijection with isomorphism classes of the aforementioned
 algebras with involution.
\end{corollary}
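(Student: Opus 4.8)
The plan is to recognise Corollary~\ref{CR:torsors-of-PU} as the standard equivalence between twisted forms of an object and torsors under its automorphism sheaf, cf.\ \cite[Chap.~III]{giraud_cohomologie_1971}; the only nonformal input is Theorem~\ref{TH:ct-determines-local-iso}, which tells us that among degree-$n$ Azumaya $R$-algebras with $\lambda$-involution, ``same type'' coincides with ``locally isomorphic''. First I would check that the functor $F\colon (A',\tau')\mapsto \sHom_R((A,\tau),(A',\tau'))$ takes values in $\Tors(\bfY,\PU(A,\tau))$ when restricted to the full subcategory of $\Az_n(\bfY,R,\lambda)$ of algebras with $\lambda$-involution of the same type as $\tau$. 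The sheaf $\sHom_R((A,\tau),(A',\tau'))$ carries a right $\sAut_R(A,\tau)$-action by precomposition, hence, via the isomorphism $\sAut_R(A,\tau)\cong\PU(A,\tau)$ of Lemma~\ref{LM:PU-isomorphism}, a right $\PU(A,\tau)$-action. If $\tp(A',\tau')=\tp(A,\tau)$, then by Theorem~\ref{TH:ct-determines-local-iso} there is a covering $U\to\ast$ and an isomorphism $(A_U,\tau_U)\cong(A'_U,\tau'_U)$, which is a section of $\sHom_R((A,\tau),(A',\tau'))$ over $U$; precomposition with it gives an isomorphism of right $\PU(A,\tau)_U$-objects $\PU(A,\tau)_U\xrightarrow{\sim}\sHom_R((A,\tau),(A',\tau'))_U$. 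So $F(A',\tau')$ is a $\PU(A,\tau)$-torsor, and $F$ is functorial since $\sHom_R$ is.

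Next I would build the quasi-inverse. Given a $\PU(A,\tau)$-torsor $P$, set $G(P):={}^{P}(A,\tau)=P\times^{\PU(A,\tau)}(A,\tau)$, with $\PU(A,\tau)\cong\sAut_R(A,\tau)$ acting on $(A,\tau)$ through automorphisms of $R$-algebras with $\lambda$-involution. Because $P$ is locally isomorphic to $\PU(A,\tau)$, the twist ${}^{P}(A,\tau)$ is locally isomorphic to $(A,\tau)$ (over a trivialising covering it is canonically $(A,\tau)$), and the algebra-with-involution structure is transported to it as in the discussion following Definition~\ref{def:torsors}; since being an Azumaya $R$-algebra of degree $n$ is a local condition, $G(P)$ lies in $\Az_n(\bfY,R,\lambda)$ and has the same type as $\tau$. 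To show $F$ and $G$ are mutually quasi-inverse, I would use the two canonical evaluation morphisms. For a torsor $P$, the map $P\to\sHom_R((A,\tau),{}^{P}(A,\tau))=FG(P)$, $p\mapsto(a\mapsto[p,a])$, is $\PU(A,\tau)$-equivariant and an isomorphism, since it is one over any covering trivialising $P$. For $(A',\tau')$ of the same type as $(A,\tau)$, the map ${}^{F(A',\tau')}(A,\tau)=\sHom_R((A,\tau),(A',\tau'))\times^{\sAut_R(A,\tau)}(A,\tau)\to (A',\tau')$, $[\phi,a]\mapsto\phi(a)$, is a morphism of $R$-algebras with $\lambda$-involution and an isomorphism, since it becomes one over a covering trivialising $F(A',\tau')$, i.e.\ a covering over which $(A_U,\tau_U)\cong(A'_U,\tau'_U)$, which exists by Theorem~\ref{TH:ct-determines-local-iso}. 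Both maps are natural, so $F$ is an equivalence of categories onto $\Tors(\bfY,\PU(A,\tau))$. The final assertion then follows by composing this equivalence with the identification of $\Hoh^1(\bfY,\PU(A,\tau))$ with the set of isomorphism classes of $\PU(A,\tau)$-torsors, Proposition~\ref{PR:non-ab-coh-basic-prop}\ref{item:PR:non-ab-coh-basic-prop:torsor}.

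I do not expect a serious obstacle: the argument is the usual dictionary between torsors and twisted forms, and the variance and gluing bookkeeping can be cited from \cite[Chap.~III]{giraud_cohomologie_1971}. The two genuinely substantive points are that $\sHom_R((A,\tau),(A',\tau'))$ admits local sections exactly when $(A',\tau')$ has the same type as $(A,\tau)$, and that twisting $(A,\tau)$ by such a torsor returns an object of the same type; both are the equivalence ``same type $\Leftrightarrow$ locally isomorphic'' of Theorem~\ref{TH:ct-determines-local-iso}, whose hypotheses we have assumed. The mild care needed is to confirm that the twist $P\times^{\PU(A,\tau)}(A,\tau)$ inherits the full structure of an $R$-algebra with $\lambda$-involution, not merely that of an $R$-module; this holds because $\PU(A,\tau)\cong\sAut_R(A,\tau)$ acts through automorphisms preserving all of that structure.
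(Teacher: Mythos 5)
Your proof is correct and is essentially the approach the paper takes: the paper invokes Theorem~\ref{TH:ct-determines-local-iso} to identify ``same type'' with ``locally isomorphic'' and then cites the standard torsors--twisted-forms dictionary from Giraud, together with Proposition~\ref{PR:non-ab-coh-basic-prop}\ref{item:PR:non-ab-coh-basic-prop:torsor} for the final bijection. You have simply unwound the Giraud citation into the explicit functor $F$, the quasi-inverse $G$ by twisting, and the evaluation isomorphisms; the substantive inputs (local triviality of $\sHom_R((A,\tau),(A',\tau'))$ exactly for same-type $(A',\tau')$, and that twisting preserves the algebra-with-involution structure) are exactly the ones the paper relies on.
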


	\begin{proof}
 Theorem~\ref{TH:ct-determines-local-iso} implies that an $R$-algebra with a $\lambda$-involution $(A',\tau')$
 is locally isomorphic to $(A,\tau)$ if and only if $A$ is Azumaya of degree $n$ and $\tau$ is of the same type
 as $\tau'$. With this fact at hand, the equivalence is standard; see \cite[V.\S
 4]{giraud_cohomologie_1971}\benw{Right book, wrong chapter. Will
 fix.}\benw{Actually, I think no reference is better than any reference here.}. The last statement follows from
 Proposition~\ref{PR:non-ab-coh-basic-prop}\ref{item:PR:non-ab-coh-basic-prop:torsor}.
	\end{proof}

 Many of the previous results require that $2\in\units{S}$. Indeed, our arguments build on using the coarse
 type, which is too coarse if $2$ is not invertible---see Remark~\ref{RM:type-to-ct-not-injective}. Nevertheless,
 we ask:
 
 \begin{question}
	Does the equivalence between \ref{pr51} and \ref{pr52} in 
	Theorem~\ref{TH:ct-determines-local-iso}
	hold without assuming any of the conditions (1), (2), (3)? 
 \end{question}
 
	A particularly interesting case is 
	the morphism of \emph{fppf} ringed topoi associated to
	a finite locally free good $C_2$-quotient 
 	$\pi:X\to Y$, where $X$ is a scheme on which $2$ is not invertible
 	(consult Remark~\ref{RM:fppf-site}).

\subsection{Proof of Theorem~\ref{TH:ct-determines-local-iso}}
\label{subsec:technical-proofs}

In this subsection we complete the proof of Theorem~\ref{TH:ct-determines-local-iso}
by showing that condition \ref{pr53} implies condition \ref{pr51}. This result
is given as Proposition~\ref{PR:completion} below. The reader can skip this subsection without loss
of continuity.

\medskip

In the following lemmas, unless otherwise specified, $A$ will be a ring, $\lambda: A \to A$ an involution, 
and $B$ will be
the fixed ring of $\lambda$. We will write $\quo A $ for $A/\Jac(A)$, and $\quo \lambda: \quo A \to \quo A$ for the
involution induced on $\quo A$ by $\lambda$. Let $\veps \in \{ \pm 1 \}$ and let $h \in \GL_n(A)$ be an
$(\veps, \lambda \tr)$-hermitian matrix, which is to say
\[ h=\veps h^{\lambda \tr} . \]

Let $H:A^n\times A^n\to A$ be the $(\veps,\lambda)$-hermitian form associated to $h$; it is given by
$h(x,y)=x^{\lambda\tr} hy$ where $x,y\in A^n$ are written as column vectors. Let $\quo{H}$ denote the reduction of $H$ to $\quo{A}$.

\begin{lem}\label{LM:decomposition-I}
Assume $B$ is local.
If $\quo{\lambda}\neq\id_{\quo{A}}$, or $\veps\neq -1$ in $\quo{A}$, or $n$ is
odd, then there exists $v\in \GL_n(A)$ such that $v^{\lambda\tr}hv$ is a diagonal matrix.
\end{lem}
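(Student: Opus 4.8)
The plan is to argue by induction on $n$, the cases $n\le 1$ being trivial, and for the inductive step to reduce everything to producing a single vector $x\in A^n$ with $H(x,x)=x^{\lambda\tr}hx\in\units{A}$. Granting such an $x$, set $a=H(x,x)$; the $A$-linear functional $y\mapsto a^{-1}H(x,y)$ sends $x$ to $1$, hence is a split surjection $A^n\to A$, so $A^n=Ax\oplus W$ with $W$ projective of rank $n-1$. By Theorem~\ref{thm:local-involution} the ring $A$ is semilocal, so a finitely generated projective module of constant rank is free, giving $W\cong A^{n-1}$; with respect to a basis adapted to this splitting the matrix of $H$ becomes $\langle a\rangle\perp h'$ for an $(\veps,\lambda\tr)$-hermitian $h'\in\GL_{n-1}(A)$, and we apply the inductive hypothesis to $h'$. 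The conditions "$\quo\lambda\ne\id$" and "$\veps\ne-1$ in $\quo A$" depend only on $(A,\lambda,\veps)$ and therefore persist; the delicate point is the case where the inductive hypothesis for $h'$ has to be invoked via "$n$ odd", as $h'$ has even size $n-1$, and I address this at the end.

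The core of the argument is thus to find $x$ with $H(x,x)$ a unit. Since an element of $A$ is a unit precisely when its image in $\quo A=A/\Jac(A)$ is, it suffices to find $\bar x$ with $\quo H(\bar x,\bar x)\in\units{\quo A}$, where $\quo H$ is the form attached to $\quo h$ (whose determinant is a unit, $\quo h$ being invertible). Here Theorem~\ref{thm:local-involution} splits the problem in two. If $\quo\lambda\ne\id_{\quo A}$, then $A$ is quadratic \'etale over $B$ and, by Proposition~\ref{PR:Hilbert-ninety}, $\quo A$ is either a separable quadratic field extension $L/k$ with $\quo\lambda$ the nontrivial $k$-automorphism, or $\quo A\cong k\times k$ with $\quo\lambda$ the switch. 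In the split case one computes directly, writing $\quo h=(h_1,\veps h_1^{\tr})$, that a suitable vector takes the value $(1,\veps)\in\units{k\times k}$; in the field case, if $\quo H$ represented no nonzero element, then all $\quo h_{ii}$ would vanish and, evaluating on $\bar x=e_i+\mu e_j$ with $\quo h_{ij}\ne0$, the additive map $c\mapsto c+\veps\,\bar c$ would vanish identically on $L$, which is impossible since it equals $\mathrm{Tr}_{L/k}$ when $\veps=1$ and has kernel the proper subfield $k$ when $\veps=-1$. If instead $\quo\lambda=\id_{\quo A}$, then $A$ is local and $\quo A=k$ is a field, so $\quo H$ is a nondegenerate symmetric-up-to-sign bilinear form over $k$; taking determinants in $\quo h^{\tr}=\veps\quo h$ yields $\veps^{\,n}=1$ in $k$, so when $n$ is odd $\veps=1$ in $k$ and $\quo h$ is symmetric. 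If $2\in\units{k}$ (in particular whenever "$\veps\ne-1$ in $\quo A$" holds), a nonzero symmetric bilinear form over $k$ has a nonzero diagonal entry and we are done; and if $\operatorname{char}k=2$ we must be invoking "$n$ odd", in which case an invertible symmetric matrix of odd size over a field of characteristic $2$ cannot be alternating, so again $\quo H$ represents a unit.

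It remains to close the induction in the residual characteristic-two situation: $A$ local, $\quo\lambda=\id$, residue characteristic $2$, and $n$ odd, where after splitting off $\langle a\rangle$ the even-rank form $h'$ may be alternating modulo $\Jac(A)$, so that no anisotropic vector can be split from $h'$ by itself. Note first that the determinant relations above force, in this branch, that no invertible $(\veps,\lambda\tr)$-hermitian form of odd rank with $\veps$ a genuine $-1$ and $2$ a non-zerodivisor can occur, so the pathological configurations are excluded; the remaining task is to diagonalize a form of the shape $\langle a\rangle\perp h'$ with $a$ a unit. The plan here is to observe that an anisotropic line absorbs a hyperbolic plane: writing $\mathbb{H}$ for the $(\veps,\lambda\tr)$-hermitian plane $\smallSMatII{0}{1}{\veps}{0}$ with hyperbolic basis $e,f$, and taking $w$ with $H(w,w)=a$, the vectors $w+e,\ w+af,\ w+e+af$ form an orthogonal basis of $\langle a\rangle\perp\mathbb{H}$ on which $H$ is everywhere a unit, and the explicit change-of-basis matrix has unit determinant, so this identification holds over $A$ itself. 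Decomposing $h'$ as a diagonalizable part orthogonal to an alternating part of even rank by the standard reduction, and then absorbing each hyperbolic plane of the alternating summand into the leftover $\langle a\rangle$, exhibits the transformed $h$ as a diagonal matrix, which completes the proof. The main obstacle is precisely this last characteristic-two bookkeeping; the rest is a routine Gram–Schmidt descent powered by Theorem~\ref{thm:local-involution}.
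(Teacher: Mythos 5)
Your overall scheme — find $x\in A^n$ with $H(x,x)\in\units{A}$, split $A^n=Ax\oplus x^\perp$, and induct on $n$ — matches the paper's descent, and your case analysis for producing the anisotropic vector is essentially sound. The problem is the inductive invariant. You carry the disjunction ``$\quo{\lambda}\neq\id$, or $\veps\neq-1$ in $\quo{A}$, or $n$ odd'' through the induction, and, as you observe, it fails precisely when ``$n$ odd'' is the applicable clause, since $x^\perp$ has even rank $n-1$. Your proposed repair does not close this gap: with $v_1=w+e$, $v_2=w+af$, $v_3=w+e+af$ and $a^{\lambda}=\veps a$, one computes $H(v_1,v_2)=H(v_1,v_3)=H(v_2,v_3)=2a$, which lies in $\Jac(A)$ (since the residue characteristic is $2$) but is not zero in $A$. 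So $\{v_1,v_2,v_3\}$ is an orthogonal basis for $\quo{H}$ over $\quo{A}$ only, not for $H$ over $A$, and having the change-of-basis matrix lie in $\GL_3(A)$ does not make $v^{\lambda\tr}hv$ diagonal. Likewise, decomposing $h'$ into a diagonal part orthogonal to a sum of hyperbolic planes is a theorem over fields and cannot be invoked directly over the local ring $A$.

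What is missing is the lifting step that the paper makes explicit: if $\quo{H}$ admits an orthogonal basis over $\quo{A}$, then so does $H$ over $A$. This is proved by the very descent you use, but with ``$\quo{H_1}$ is diagonalizable'' as the invariant — lifting a basis vector $x_1$ of an orthogonal basis of $\quo{H}$ and restricting to $x_1^\perp$ leaves $\quo{x}_2,\dots,\quo{x}_n$ as an orthogonal basis of the reduction, so the parity of $n-1$ never enters. The diagonalizability of $\quo{H}$ itself in the remaining odd-rank, residue-characteristic-$2$ case is Albert's theorem, which the paper cites outright; your absorption identity is essentially a direct verification of that theorem over the residue field, but to promote it to a statement over $A$ you still need the lifting lemma as a separate ingredient, rather than the (false) assertion that a residually orthogonal basis is orthogonal over $A$.
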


\begin{proof} 
Proving the lemma is equivalent to showing that $H$ is diagonalizable, i.e., has an orthogonal basis.

We first claim that $\quo{H}$ has an orthogonal basis. This is
well known when $\quo{A}$ is a field; see \cite[Th.~7.6.3]{scharlau_quadratic_1985} for the case where
$\quo{\lambda}\neq \id$ or $\veps\neq -1$ in $\quo{A}$, and \cite[Thm.~6]{albert_symmetric_1938} for the case where $\quo{\lambda}=\id_A$,
$\veps=-1$ in $\quo{A}$ and $n$ is odd. We note in passing that the second case can occur only when the characteristic of $\quo{A}$ is $2$. If
$\quo{A}$ is not a field, then Theorem \ref{thm:local-involution} and
Proposition~\ref{PR:Hilbert-ninety}\ref{item:LM:Hilbert-ninety:structure} imply that $\quo{A}\iso k\times k$, where
$k$ is the residue field of $B$, and $\quo{\lambda}$ acts by interchanging the two copies of $k$. In this case
$\quo{H}$ is hyperbolic and the easy proof is left to the reader.

We now claim that any nondegenerate $(\veps,\lambda)$-hermitian form $H$ whose reduction $\quo{H}$ admits a
diagonalization is diagonalizable, thus proving the lemma. Let $\{\quo{x}_1,\dots,\quo{x}_n\}\subseteq \quo{A}^n$ be an
orthogonal basis for $\quo{H}$ and let $x_1\in A^n$ be an arbitrary lift of $\quo{x}_1$. Then $H(x_1,x_1)\in\units{A}$ and
hence $A^n=x_1A\oplus x_1^{\perp}$. Write $P=x_1^{\perp}$ and $H_1=H|_{P\times P}$. The $A$-module $P$ is free because
$A$ is semilocal and $P$ is projective of constant $A$-rank $n-1$. Furthermore, since $\quo{P}=\sum_{i=2}^{n}
\quo{x}_i\quo{A}$, the form $\quo{H_1}$ is diagonalizable by construction. We finish by applying induction to $H_1$.
\end{proof}

\begin{lem}\label{LM:decomposition-II} 
 Assume $B$ is local, and suppose $\quo{\lambda}=\id_{\quo{A}}$, that $\veps = -1$ and that $2\in\units{A}$. Then
 there exists $v\in \GL_n(A)$ such that $v^{\lambda\tr}hv$ is a direct sum of $2\times 2$ matrices in
 $[\begin{smallmatrix} 0 & 1 \\ -1 & 0\end{smallmatrix}]+\nMat{\Jac(A)}{2\times 2}$. In particular, $n$ is even.
\end{lem}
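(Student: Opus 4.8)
The plan is to reduce modulo $\Jac(A)$, treat the resulting alternating form over a field, and then lift a hyperbolic decomposition back to $A$ by peeling off nondegenerate rank-$2$ summands one at a time — the same splitting mechanism used in the proof of Lemma~\ref{LM:decomposition-I}, but with hyperbolic planes in place of rank-$1$ pieces. First I would observe that, since $\quo\lambda=\id_{\quo A}$, Theorem~\ref{thm:local-involution} forces $A$ to be a \emph{local} ring with maximal ideal $\Jac(A)$ and residue field $k:=\quo A$; the alternative $\quo A\cong k\times k$ of Proposition~\ref{PR:Hilbert-ninety}\ref{item:LM:Hilbert-ninety:structure} is ruled out because there $\quo\lambda$ is the swap, not the identity. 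Since $2\in\units A$, also $\operatorname{char}k\neq 2$. Writing $H(x,y)=x^{\lambda\tr}hy$ for the $(\veps,\lambda)$-hermitian form attached to $h$, we have $H(y,x)=-H(x,y)^\lambda$, and the reduction $\quo h\in\GL_n(k)$ satisfies $\quo h^{\tr}=-\quo h$, so $\quo H$ is a nondegenerate alternating bilinear form over the field $k$ of characteristic $\neq 2$. In particular $\det\quo h=(-1)^n\det\quo h$ forces $n$ even, and for $n>0$ there exist $\quo e_1,\quo f_1\in k^n$ with $\quo H(\quo e_1,\quo f_1)=1$, whence $\quo H(\quo e_1,\quo e_1)=\quo H(\quo f_1,\quo f_1)=0$ and the Gram matrix of $\{\quo e_1,\quo f_1\}$ is $[\begin{smallmatrix}0&1\\-1&0\end{smallmatrix}]$.

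The main work is an induction on $n$. For $n>0$, lift $\quo e_1,\quo f_1$ to $e_1,f_1'\in A^n$; since $H(e_1,f_1')\equiv 1\pmod{\Jac(A)}$ it is a unit, so replacing $f_1'$ by $f_1:=f_1'\cdot H(e_1,f_1')^{-1}$ yields $H(e_1,f_1)=1$ and $H(f_1,e_1)=-1$, while $a:=H(e_1,e_1)$ and $b:=H(f_1,f_1)$ lie in $\Jac(A)$ because their images in $k$ vanish. Thus the Gram matrix of $\{e_1,f_1\}$ is $[\begin{smallmatrix}a&1\\-1&b\end{smallmatrix}]\in[\begin{smallmatrix}0&1\\-1&0\end{smallmatrix}]+\nMat{\Jac(A)}{2\times 2}$, with determinant $ab+1\in\units A$. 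Exactly as in the proof of Lemma~\ref{LM:decomposition-I}, invertibility of this $2\times 2$ Gram matrix makes the map $A^2\to A^n$ with columns $e_1,f_1$ split injective, so $P_1:=e_1A\oplus f_1A$ is a free rank-$2$ direct summand on which $H$ is nondegenerate, and $A^n=P_1\perp P_1^\perp$ with $P_1^\perp$ free of rank $n-2$ over the local ring $A$, carrying the nondegenerate $(\veps,\lambda)$-hermitian form $H|_{P_1^\perp}$. Applying the inductive hypothesis to $H|_{P_1^\perp}$ gives a basis putting its Gram matrix into the desired block form; juxtaposing $e_1,f_1$ with that basis produces $v\in\GL_n(A)$ with $v^{\lambda\tr}hv$ a direct sum of blocks in $[\begin{smallmatrix}0&1\\-1&0\end{smallmatrix}]+\nMat{\Jac(A)}{2\times 2}$, and $n=2+(\text{even})$ re-confirms that $n$ is even.

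The point that requires attention — and the reason the statement allows a perturbation by $\nMat{\Jac(A)}{2\times 2}$ rather than demanding the exact normal form — is that one cannot in general kill the diagonal entries $a,b$ of the $2\times 2$ block: over a non-complete local ring the natural fixed-point iteration for a correcting vector need not converge. The remedy, as above, is simply to never normalize beyond what is needed, splitting off $P_1$ as soon as its Gram matrix is invertible, which it already is modulo $\Jac(A)$. The rest is routine bookkeeping with the involution (that $H|_{P_1^\perp}$ is again $(\veps,\lambda\tr)$-hermitian, that $P_1^\perp$ is free, etc.), all of which is contained in the argument already given for Lemma~\ref{LM:decomposition-I}.
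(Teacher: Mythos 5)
Your proof is correct and follows essentially the same route as the paper's: use Theorem~\ref{thm:local-involution} to deduce $A$ is local with residue field $k$ of characteristic $\neq 2$, observe $\quo H$ is a nondegenerate alternating form over $k$, lift a hyperbolic pair, note the resulting $2\times 2$ Gram block is automatically in $[\begin{smallmatrix}0&1\\-1&0\end{smallmatrix}]+\nMat{\Jac(A)}{2\times2}$ hence invertible, split off that rank-$2$ summand, and induct on $\{x,y\}^\perp$. The only cosmetic difference is that you normalize $H(e_1,f_1)=1$ before splitting, which the paper skips since the desired block form already allows arbitrary $\Jac(A)$-perturbation; the normalization is harmless but unnecessary.
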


\begin{proof} 
We need to show that $A^n$
admits a basis $\{x_1,y_1,x_2,y_2,\dots\}$ such that $x_iA+y_iA$ is orthogonal to $x_jA+y_jA$ whenever $i\neq j$ and such
that $[\begin{smallmatrix} H(x_i,x_i) & H(x_i,y_i) \\ H(y_i,x_i) & H(y_i,y_i)\end{smallmatrix}] \in [\begin{smallmatrix}
0 & 1 \\ -1 & 0\end{smallmatrix}]+\nMat{\Jac(A))}{2\times 2}$.

By Theorem~\ref{thm:local-involution}, the
assumption $\quo{\lambda}=\id_{\quo{A}}$ implies that $A$ is local, hence $\quo{A}$ is a field of characteristic different from $2$ and
$\quo{H}$ is a nondegenerate alternating bilinear form. This means that $n$ must be even. Choose a nonzero
$\quo{x}\in \quo{A}^n$. Since $\quo{H}$ is nondegenerate, there is $\quo{y}$ such that
$\quo{H}(\quo{x},\quo{y})=1$. Since $\quo{H}$ is alternating, we also have $\quo{H}(\quo{y},\quo{x})=-1$ and
$\quo{H}(\quo{x},\quo{x})=\quo{H}(\quo{y},\quo{y})=0$. Let $x,y\in A^n$ be lifts of $\quo{x}$ and $\quo{y}$. The previous
equations imply that $M:=[\begin{smallmatrix} H(x,x) & H(x,y) \\ H(y,x) & H(y,y)\end{smallmatrix}] \in
[\begin{smallmatrix} 0 & 1 \\ -1 & 0\end{smallmatrix}]+\nMat{\Jac(A)}{2\times 2}$. In particular, $M$ is invertible,
and 
so $A^n=(xA\oplus yA)\oplus \{x,y\}^\perp$. 
We proceed by
induction on the restriction of $H$ to $\{x,y\}^\perp$.
\end{proof}

\begin{lem}\label{LM:JacR-squared}
	Assume $B$ is local and $2\in\units{B}$. Then $\Jac(A)^2\subseteq \Jac(B)A$.
\end{lem}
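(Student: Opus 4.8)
The plan is to split the proof according to the dichotomy of Theorem~\ref{thm:local-involution}, applied to the induced involution $\quo\lambda$ on $\quo A = A/\Jac(A)$; throughout, write $\frakm=\Jac(B)$ for the maximal ideal of $B$. When $\quo\lambda\neq\id$, Theorem~\ref{thm:local-involution} tells us that $A$ is a quadratic \'etale $B$-algebra, so Proposition~\ref{PR:Hilbert-ninety}\ref{item:LM:Hilbert-ninety:Jac} identifies $\Jac(A)$ with $\frakm A$, and then $\Jac(A)^2=\frakm^2 A\subseteq\frakm A=\Jac(B)A$ immediately; this case will not even use $2\in\units B$.

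The substance is the case $\quo\lambda=\id$, where Theorem~\ref{thm:local-involution} guarantees $A$ is local; I will write $\frakM=\Jac(A)$ for its maximal ideal, so $\quo A=A/\frakM$ is a field. Here I would invoke $2\in\units A$ to decompose $A=B\oplus A^-$ as $B$-modules, where $A^-=\{a\in A:a^\lambda=-a\}$ and the two projections are $a\mapsto\frac12(a+a^\lambda)$ and $a\mapsto\frac12(a-a^\lambda)$. The crucial observation will be that $A^-\subseteq\frakM$: for $a\in A^-$ its residue $\bar a\in\quo A$ satisfies $\bar a=\overline{a^\lambda}=\overline{-a}=-\bar a$ since $\quo\lambda=\id$, and as $2$ is a unit in the field $\quo A$ this forces $\bar a=0$. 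From this I will deduce $\frakM\subseteq\frakm+A^-$: writing $x\in\frakM$ as $x=x^++x^-$ with $x^+=\frac12(x+x^\lambda)\in B$ and $x^-=\frac12(x-x^\lambda)\in A^-\subseteq\frakM$, one gets $x^+=x-x^-\in\frakM\cap B$, and $\frakM\cap B$ is a proper ideal of the local ring $B$, hence lies in $\frakm$.

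Finally I would expand, using commutativity of $A$,
\[
\frakM^2\subseteq(\frakm+A^-)^2=\frakm^2+\frakm A^-+(A^-)^2,
\]
and check that each summand lies in $\frakm A$: the first two do so trivially, and for the last I will use that a product $ab$ of skew elements satisfies $(ab)^\lambda=a^\lambda b^\lambda=ab$, so $(A^-)^2\subseteq B$, while also $(A^-)^2\subseteq\frakM^2\subseteq\frakM$, whence $(A^-)^2\subseteq\frakM\cap B\subseteq\frakm\subseteq\frakm A$. This assembles to $\Jac(A)^2=\frakM^2\subseteq\frakm A=\Jac(B)A$. I do not anticipate a genuine obstacle; the only points demanding care are that $2\in\units A$ must be used precisely twice — once for the symmetric/skew splitting and once to kill $\bar a$ in the residue field — and that the inclusion $A^-\subseteq\frakM$ really depends on being in the local case of Theorem~\ref{thm:local-involution}, since in the \'etale case $A^-$ typically contains units.
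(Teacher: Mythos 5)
Your proof is correct, but it takes a genuinely different route from the paper's. The paper does not case-split on the dichotomy of Theorem~\ref{thm:local-involution} at all; instead it argues uniformly: for $x\in\Jac(A)$ the elements $x+x^\lambda$ and $x^\lambda x$ lie in $\Jac(A)\cap B\subseteq\frakm$, so the integrality relation $x^2-(x+x^\lambda)x+x^\lambda x=0$ gives $x^2\in\frakm A$ directly, and then the polarization identity $xy=\tfrac12\bigl((x+y)^2-x^2-y^2\bigr)$ handles general products, with $2\in\units B$ entering only at that last step. Your argument instead leans on the structure theorem to reduce to two cases, disposes of the quadratic \'etale case via $\Jac(A)=\frakm A$, and in the local case replaces the integrality relation by the symmetric/skew splitting $A=B\oplus A^-$, using that $A^-\subseteq\frakM$ and $(A^-)^2\subseteq\frakM\cap B\subseteq\frakm$. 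Both are sound; the paper's is shorter and avoids invoking the structure theorem, while yours makes the role of Theorem~\ref{thm:local-involution} explicit and shows in passing that the \'etale case needs no hypothesis on $2$. One small stylistic point: when you write $\frakM^2\subseteq(\frakm+A^-)^2$ you should be reading $\frakM^2$ as the additive group generated by products $xy$ (or note that since $\frakm A$ is an ideal of $A$, bounding those products suffices to bound the ideal); this is what you in fact use, and it is fine, but worth saying so the reader does not worry about the $A$-coefficients in the ideal $\Jac(A)^2$.
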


\begin{proof}
	Write $\frakm =\Jac(B)$
	and let $x,y\in \Jac(A)$. Then $x^\lambda+x,x^\lambda x\in \Jac(A)\cap B\subseteq \frakm$.
	The equality $x^2-(x^\lambda+x)x+(x^\lambda x)=0$ implies $x^2\in \frakm A$.
	Likewise, $y^2,(x+y)^2\in \frakm A$.
	We finish by noting that $xy=\frac{1}{2}((x+y)^2-x^2-y^2)$.
\end{proof}

In the following lemmas, given a ring $A$ and $a\in A$, we write $A[\sqrt{a}]$ to denote the ring $A[T]/(T^2-a)$ and let
$\sqrt{a}$ denote the image of $T$ in $A[\sqrt{a}]$. By induction, we define
$A[\sqrt{a_1},\sqrt{a_2},\dots]=A[\sqrt{a_1}][\sqrt{a_2},\dots] \cong A[T_1,T_2,\dots]/(T_1^2-a_1,T_2^2-a_2,\dots)$. If
$\lambda:A\to A$ is an involution with fixed ring $B$ and $a\in B$, then $\lambda$ extends to $A[\sqrt{a}]$ by setting
$(\sqrt{a})^\lambda=\sqrt{a}$, and the fixed ring of $\lambda:A[\sqrt{a}]\to A[\sqrt{a}]$ is $B[\sqrt{a}]$.

\begin{lem}\label{LM:tough-lemma} 
Assume $B$ is local and 
suppose $\quo{\lambda}=\id_{\quo{A}}$, that $\veps = -1$ and 
 $2\in\units{A}$. Suppose that $h$ lies in $[\begin{smallmatrix} 0 & 1 \\ -1 &
0\end{smallmatrix}]+\nMat{\Jac(A)}{2\times 2}$. Then there are
$s,t\in\units{B}$, $f_1,\dots,f_r\in {B[\sqrt{s},\sqrt{t}]}$ and $v_i\in \GL_2(A[\sqrt{s},\sqrt{t}]_{f_i})$
($i=1,\dots,r$) such that $\sum_i f_iB[\sqrt{s},\sqrt{t}]=B[\sqrt{s},\sqrt{t}]$ and
$v_i^{\lambda\tr}hv_i=[\begin{smallmatrix} 0 & 1 \\ -1 & 0\end{smallmatrix}]$ in $A[\sqrt{s},\sqrt{t}]_{f_i}$ for all
$i$.
\end{lem}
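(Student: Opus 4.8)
The plan is to produce, over a suitable two‑element Zariski cover of $\Spec B[\sqrt s]$, an explicit ``symplectic basis'' for the $(-1,\lambda)$-hermitian form $H(u,w)=u^{\lambda\tr}hw$ on the free rank‑two module; the associated change‑of‑basis matrix $v_i$ is then precisely a matrix with $v_i^{\lambda\tr}hv_i=h_0:=[\begin{smallmatrix}0&1\\-1&0\end{smallmatrix}]$. First I would reduce to $A$ being local: since $\quo\lambda=\id_{\quo A}$, Theorem~\ref{thm:local-involution} shows $A$ is local; write $k$ for its residue field, which equals that of $B$ and has characteristic $\neq 2$ since $2\in A^\times$, and note $2,4\in B^\times$ and that any $\lambda$-fixed unit of $A$ lies in $B^\times$. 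Writing $h=[\begin{smallmatrix}a&1+b\\-1-b^\lambda&d\end{smallmatrix}]$ with $a,b,d\in\Jac(A)$, $a^\lambda=-a$, $d^\lambda=-d$, conjugation by $\diag{1,(1+b)^{-1}}$ (a congruence over $A$, as $1+b\in A^\times$) replaces $h$ by $h'=[\begin{smallmatrix}a&1\\-1&d'\end{smallmatrix}]$ with $d':=(1+b^\lambda)^{-1}d(1+b)^{-1}\in\Jac(A)$ and $(d')^\lambda=-d'$. Set $s:=1+ad'$. As $ad'$ is $\lambda$-symmetric (both factors skew, $A$ commutative) it lies in $B$, and as $ad'\in\Jac(A)$ we get $s\in B^\times$ with $s\equiv 1\pmod{\Jac(A)}$. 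I would adjoin $\sqrt s$, extending $\lambda$ by $(\sqrt s)^\lambda=\sqrt s$ (so the fixed ring of $A[\sqrt s]$ is $B[\sqrt s]$), put $f_1:=\sqrt s+1$, $f_2:=\sqrt s-1$, and observe $f_1-f_2=2\in B^\times$, so $\{f_1,f_2\}$ generates the unit ideal of $B[\sqrt s]$, while $f_1f_2=s-1=ad'$.

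The conceptual crux is that the naive normalization by successive shears does not terminate — each step pushes the error into a higher power of $\Jac(A)$ and $A$ need not be complete — so instead one solves the relevant quadratic exactly after the base change. Concretely, over $A[\sqrt s]_{f_1}$ the vector $e:=(d'/f_1,\,1)^\tr$ satisfies $H(e,e)=0$: with $z=d'/f_1$ one has $z^\lambda=-d'/f_1$ and $a(d')^2=(s-1)d'=f_1f_2 d'$, whence $H(e,e)=az^\lambda z+z^\lambda-z+d'=(f_1-f_2-2)\,d'/f_1=0$; symmetrically $e:=(-d'/f_2,\,1)^\tr$ works over $A[\sqrt s]_{f_2}$. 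Since $e$ is unimodular I complete $\{e\}$ to a basis, e.g.\ $\{e,(1,0)^\tr\}$; a direct computation gives $H(e,(1,0)^\tr)=-\sqrt s\in B[\sqrt s]^\times$, so after rescaling the second basis vector and then applying the shear $f\mapsto f+\tfrac12 H(f,f)\,e$ (legitimate because $2\in B^\times$ and $H(f,f)^\lambda=-H(f,f)$) one obtains a symplectic basis $\{e,f'\}$. Its matrix $v_1\in\GL_2(A[\sqrt s]_{f_1})$ — invertible because its reduction mod $\Jac$ is $[\begin{smallmatrix}0&-1\\1&0\end{smallmatrix}]$ — satisfies $v_1^{\lambda\tr}h'v_1=h_0$, and composing with the preliminary $A$-congruence gives the sought $v_1$ for $h$; the same construction on the $f_2$-patch provides $v_2$.

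Thus one in fact obtains the conclusion with a single square root; the statement's allowance of two, $\sqrt s$ and $\sqrt t$, then costs nothing (one may take $t$ to be any unit of $B$ and pass along $B[\sqrt s]\hookrightarrow B[\sqrt s,\sqrt t]$, since $\{f_i\}$ still generates the unit ideal and the identity $v_i^{\lambda\tr}hv_i=h_0$ persists over $A[\sqrt s,\sqrt t]_{f_i}$). The part needing genuine care is the bookkeeping: keeping track of the extended involution, verifying the isotropy identity cleanly, checking that every element produced really lives in $A[\sqrt s]_{f_i}$ (localization only at elements of $B[\sqrt s]$, never at an element of $A$ outside $B$), and confirming the unit‑determinant claim; the substantive mathematical point is the replacement of a non‑terminating approximation process by an exact solution of a quadratic whose discriminant, suitably rescaled, happens to lie in $B$ and to be $\equiv 1$ modulo $\Jac(B)$.
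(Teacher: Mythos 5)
Your proof is correct, but it takes a genuinely different route from the paper's. A summary of the differences and what each approach buys:

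After both proofs make the same preliminary normalization of $h$ to $h'=\left[\begin{smallmatrix}a&1\\-1&d'\end{smallmatrix}\right]$ with $a,d'\in\Jac(A)$ skew, the paper treats the two diagonal entries symmetrically and separately: it adjoins two square roots $\sqrt{s}$ and $\sqrt{t}$ with $s=4+4a^2$ and $t=4+4d'^{\,2}$, solves a quadratic for each to obtain skew elements $u=c_i^{-1}-1$ and $w=d_j^{-1}-1$, and then forms isotropic vectors $\tilde x=(1+u)x+(1-w)y$, $\tilde y=(1-u)x-(1+w)y$. It then covers $\Spec B[\sqrt s,\sqrt t]$ by localizing near each of its finitely many maximal ideals and invoking semilocality of $B[\sqrt s,\sqrt t]$ over $B$. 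Your proof instead couples the two entries from the start: the single quadratic $az^2+2z-d'=0$ for a skew $z$ with $(z,1)^\tr$ isotropic has discriminant $4(1+ad')$, so you adjoin only one square root, with $s=1+ad'\in\units B$ (using $ad'\in B\cap\Jac(A)=\Jac(B)$). The cover is entirely explicit and of size two, $\{f_1,f_2\}=\{\sqrt s+1,\sqrt s-1\}$, generating the unit ideal because $f_1-f_2=2\in\units B$ and with $f_1f_2=ad'$; the roots $z=d'/f_1$ and $z=-d'/f_2$ are already integral over each patch, and completion to a symplectic basis is a one-step rescale-and-shear. You then take $t$ to be arbitrary to match the statement. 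Both arguments verify correctly (I checked the isotropy identity, $H\bigl(e,(1,0)^\tr\bigr)=-\sqrt s$, the invertibility $\det v_1 = 1/\sqrt s$, and that everything lives in $A[\sqrt s]_{f_i}$ with the denominators in $B[\sqrt s]$). Your version is more economical — one adjunction instead of two, a uniform two-element cover instead of one indexed by maximal ideals — at the mild cost of obscuring the role-symmetry of $H(x,x)$ and $H(y,y)$ that the paper's construction preserves.
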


\begin{proof} 
Again, by Theorem~\ref{thm:local-involution}, $A$ is local.
We write $\frakm=\Jac(B)$, $\frakM=\Jac(A)$ and let $\{x,y\}$ denote
the standard $A$-basis of $A^2$. Once $s,t\in S$ and $f_1,\dots,f_r\in B[\sqrt{s},\sqrt{t}]$ above have been chosen, we
need to show that for all $i$, there are $\tilde{x},\tilde{y}\in A[\sqrt{s},\sqrt{t}]_{f_i}^2$ such that
$\big[\begin{smallmatrix} H(\tilde{x},\tilde{x}) & H(\tilde{x},\tilde{y}) \\ H(\tilde{y},\tilde{x}) &
H(\tilde{y},\tilde{y})\end{smallmatrix}\big]= [\begin{smallmatrix} 0 & 1 \\ -1 & 0\end{smallmatrix}]$.

Our assumption on $h$ implies that $H(x,y)\in 1+\Jac(A)\subseteq \units{A}$. Replacing $y$ with $H(x,y)^{-1}y$, we may
assume that $H(x,y)=1$ and so $H(y,x)=-1$. We further write $\alpha=H(x,x)$ and $\beta=H(y,y)$. Since
$h=-h^{\lambda\tr}$, we have $\alpha^\lambda=-\alpha$ and $\beta^\lambda=-\beta$, and since $\quo{\lambda}=\id_{\quo{A}}$,
it follows that $\alpha,\beta\in \frakM$ and hence $\alpha^2,\beta^2\in B\cap \frakM\subseteq \frakm$.

Observe that the polynomial $ p(x)=-2x^2+(2-2\alpha)x+\alpha\in A[x] $ has discriminant
\[s:=(2-2\alpha)^2-4(-2\alpha)=4+4\alpha^2\] and that $s \in 4+\frakm\subseteq \units{B}$. 
The roots of $p(x)$ in $A[\sqrt{s}]$ are $c_1=\frac{1}{4}(\sqrt{s}+2-2\alpha)$ and
$c_2=\frac{1}{4}(-\sqrt{s}+2-2\alpha)$ and we note that
\begin{align}\label{EQ:c-invertablity} 16(c_1^\lambda c_1+c_2^\lambda c_2)&=
(\sqrt{s}+2)^2-4\alpha^2+(\sqrt{s}-2)^2-4\alpha^2= 16\in\units{B}\ .
\end{align}

We repeat this construction with $\beta$ in place of $\alpha$, denoting the elements corresponding to $s$, $c_1$, $c_2$
by $t$, $d_1$, $d_2$.

Fix a maximal ideal $\frakp \lhd B[\sqrt{s},\sqrt{t}]$ and write
$B':=B[\sqrt{s},\sqrt{t}]_\frakp$, $A':=A[\sqrt{s},\sqrt{t}]_\frakp$,
$\frakm'=\Jac(B')$ and $\frakM'=\Jac(A')$.

It is clear that $B'$ is local. We claim that $A'$ is also local and $\frakM A'\subseteq \frakM'$. Indeed, by
\cite[Th.~6.15]{reiner_maximal_1975-1}, we have
$\frakm B[\sqrt{s},\sqrt{t}]\subseteq \Jac(B[\sqrt{s},\sqrt{t}])\subseteq \frakp$, and hence $\frakm B'
\subseteq \frakp_\frakp=\frakm'$, which in turn implies $\frakm A'\subseteq \frakm'A'$.
By Lemma~\ref{LM:mR-subset-Jac}, we have $\frakm'A'\subseteq \frakM'$, and by Lemma~\ref{LM:JacR-squared},
$\frakM^2\subseteq \frakm A$. Using the last three inclusions, we get
$(\frakM A')^2=\frakM^2 A'\subseteq \frakm A'\subseteq \frakm' A'\subseteq \frakM'$, and
since $\frakM'$ is semiprime, 
$\frakM A'\subseteq \frakM'$. The latter implies that $A'\to A'/\frakM'$ factors through $A'/\frakM A'\iso \quo{A}\tensor_B B'$, and
hence the specialization of $\lambda$ to $A'/\frakM'$ is the identity. Since $B'$
is flat over $B$, the local ring $B'$ is the fixed ring of
$\lambda:A'\to A'$ and Theorem~\ref{thm:local-involution} implies that $A'$ is local.

Now, the inclusion $\frakM A'\subseteq \frakM'$ 
implies $\alpha,\beta\in \frakM'$. By equation \eqref{EQ:c-invertablity},
there is $i\in\{1,2\}$ such that $c_i^\lambda c_i\in \units{B'}$, and hence $c_i\in\units{A'}$. In the same way, there
is $j\in\{1,2\}$ such that $d_j\in\units{A'}$.

Working in $A'$, we have
\begin{align}\label{EQ:ci-relation} c_i^{-1}+(c_i^{-1})^\lambda
&=\frac{4}{\pm\sqrt{s}+2-2\alpha}+\frac{4}{\pm\sqrt{s}+2+2\alpha} \\
&=\frac{8(\pm\sqrt{s}+2)}{(\pm\sqrt{s}+2)^2-4\alpha^2} \nonumber \\ &=\frac{8(\pm\sqrt{s}+2)}{s\pm
4\sqrt{s}+4-4\alpha^2} \nonumber \\ &=\frac{8(\pm\sqrt{s}+2)}{\pm 4\sqrt{s}+8}=2 \nonumber
\end{align} and likewise for $d_j$. Write $u=c_i^{-1}-1$. Since
$\alpha(c_i^{-1})^2+(2-2\alpha)c_i^{-1}-2=c_i^{-2}p(c_i)=0$, we have
\begin{equation}\label{EQ:alpha-relation} \alpha u^2+2u-\alpha=0\ ,
\end{equation} and since $\alpha\in\frakM'$, this implies $u\in \frakM'$. We further have $u^\lambda=-u$ by
\eqref{EQ:ci-relation}. In the same way, writing $w=d_j^{-1}-1$, we have
$\beta w^2+2w-\beta=0$, $w\in \frakM'$
and $w^\lambda=-w$. Let $\tilde{x}=(1+u)x+(1-w)y$. Then, using \eqref{EQ:alpha-relation},
we get
\begin{align*} H(\tilde{x},\tilde{x}) &=(1+u)^\lambda (1+u)\alpha + (1+u)^\lambda (1-w)-(1-w)^\lambda(1+u) +
(1-w)^\lambda(1-w)\beta\\ &=(1-u^2)\alpha+(1-u)(1-w)-(1+w)(1+u)+(1-w^2)\beta\\ &=\alpha-\alpha u^2-2u +\beta -\beta w^2
- 2w=0
\end{align*} Likewise, $\tilde{y}:=(1-u)x-(1+w)y$ satisfies $H(\tilde{y},\tilde{y})=0$. Since $u,w\in
\frakM'=\Jac(A')$, the vectors $\tilde{x},\tilde{y}$ span $A'^2$. Since $H$ is nondegenerate, this forces
$H(\tilde{x},\tilde{y})\in\units{A'}$. Replacing $\tilde{y}$ with $H(\tilde{x},\tilde{y})^{-1}\tilde{y}$, we find that
$\big[\begin{smallmatrix} H(\tilde{x},\tilde{x}) & H(\tilde{x},\tilde{y}) \\ H(\tilde{y},\tilde{x}) &
H(\tilde{y},\tilde{y})\end{smallmatrix}\big]= [\begin{smallmatrix} 0 & 1 \\ -1 & 0\end{smallmatrix}]$.

Finally, for every maximal ideal $\frakp \lhd B[\sqrt{s},\sqrt{t}]$, choose $f_\frakp\in B[\sqrt{t},\sqrt{s}]-
\frakp$ such that the coefficients of $\tilde{x}$, $\tilde{y}$ constructed above are defined in
$B[\sqrt{t},\sqrt{s}]_{f_\frakp}$ and such that the identity $\big[\begin{smallmatrix} H(\tilde{x},\tilde{x}) &
H(\tilde{x},\tilde{y}) \\ H(\tilde{y},\tilde{x}) & H(\tilde{y},\tilde{y})\end{smallmatrix}\big]= [\begin{smallmatrix} 0
& 1 \\ -1 & 0\end{smallmatrix}]$ holds in $B[\sqrt{s},\sqrt{t}]_{f_\frakp}$. Then $\sum_\frakp f_\frakp
B[\sqrt{s},\sqrt{t}]=B[\sqrt{s},\sqrt{t}]$. Since $B[\sqrt{s},\sqrt{t}]$ is a finite algebra over a local ring, it has
only finitely many maximal ideals. The result follows.
\end{proof}

\begin{lem}\label{LM:point-to-neighbourhood} Maintaining the 
 assumptions made at the beginning of this subsection, suppose that $h'\in \GL_n(A)$ is another
 $(\veps,\lambda\tr)$-hermitian matrix. Suppose further we are given a prime ideal $\frakp\in \Spec B$, units
 $s_1,\dots,s_\ell\in \units{B_\frakp}$, elements $f_1,\dots,f_r\in B_\frakp[\sqrt{s_1},\dots,\sqrt{s_\ell}]$ and
 $v_i\in \GL_n(A_\frakp[\sqrt{s_1},\dots,\sqrt{s_\ell}]_{f_i})$ ($i=1,\dots,r$) such that $f_1,\dots,f_r$ generate the
 unit ideal in $B_\frakp[\sqrt{s_1},\dots,\sqrt{s_\ell}]$ and such that $v_i^{\lambda\tr}hv_i=h'$ for all
 $1\leq i\leq r$. Then there is $b\in B- \frakp$ for which the previous condition holds upon replacing $B_\frakp$,
 $A_\frakp$ with $B_b$, $A_b$.
\end{lem}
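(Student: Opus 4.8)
The plan is a routine ``spreading out'' argument: the hypothesis provides data over the stalk $B_\frakp$, and we must descend it to a principal localization $B_b$ with $b\in B-\frakp$. Throughout I would use the following elementary principle. For a $B$-module $M$ the natural map $\colim_{b\in B-\frakp}M_b\to M_\frakp$ is an isomorphism; hence (i) every element of $M_\frakp$ is the image of an element of some $M_b$ with $b\notin\frakp$, and (ii) two elements of a fixed $M_b$ having the same image in $M_\frakp$ already have the same image in $M_{bb'}$ for a suitable $b'\notin\frakp$. All of the data in the hypothesis --- the $s_i$, the $f_j$, the entries of the $v_j$ and of the $v_j^{-1}$ --- together with all the relations they satisfy involve only finitely many elements of finitely many of the relevant rings, so finitely many applications of (i) and (ii) will suffice; the single $b$ required by the lemma is then the product of the finitely many elements of $B-\frakp$ produced along the way.

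First I would spread out the units. Since each $s_i$ and each $s_i^{-1}$ lies in $B_\frakp$, there is $b_0\notin\frakp$ with $s_1,\dots,s_\ell\in\units{B_{b_0}}$, the identities $s_is_i^{-1}=1$ already holding there after enlarging $b_0$. Replacing $B$ by $B_{b_0}$, $A$ by $A_{b_0}$ and $\frakp$ by $\frakp B_{b_0}$ --- harmless, since a localization of $B_{b_0}$ at an element not in $\frakp B_{b_0}$ is of the form $B_b$ with $b\in B-\frakp$ --- we may assume $s_1,\dots,s_\ell\in\units B$. Now form the free (in particular finite) $B$-algebra $C:=B[\sqrt{s_1},\dots,\sqrt{s_\ell}]$ and $D:=A\tensor_BC=A[\sqrt{s_1},\dots,\sqrt{s_\ell}]$; recall from the discussion preceding the lemma that $\lambda$ extends to $D$ and fixes $C$ pointwise. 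For every $b\notin\frakp$ and every $f\in C_b$ there are natural identifications $C_b\tensor_{B_b}B_\frakp=B_\frakp[\sqrt{s_1},\dots,\sqrt{s_\ell}]$, $D_b\tensor_{B_b}B_\frakp=A_\frakp[\sqrt{s_1},\dots,\sqrt{s_\ell}]$ and $(D_b)_f\tensor_{B_b}B_\frakp=A_\frakp[\sqrt{s_1},\dots,\sqrt{s_\ell}]_f$, which exhibit the rings occurring in the hypothesis as localizations at the multiplicative set $B-\frakp$ of the fixed finite $B$-algebras $C$, $D$, $D_f$. This is exactly the situation to which the principle above applies.

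By (i) applied to $C$ there is $b_1\notin\frakp$ and elements $f_1,\dots,f_r\in C_{b_1}$ mapping to the given $f_j$, and by (i) and (ii) the given relation $\sum_jf_je_j=1$ in $C_\frakp$ holds, after enlarging $b_1$ to some $b\in B-\frakp$, already in $C_b$, so that $f_1,\dots,f_r$ generate the unit ideal of $C_b$. By (i) applied to each $D_{f_j}$, after a further enlargement the entries of $v_j$ and of $v_j^{-1}$ are defined over $(D_b)_{f_j}=A_b[\sqrt{s_1},\dots,\sqrt{s_\ell}]_{f_j}$; by (ii) the identities $v_jv_j^{-1}=v_j^{-1}v_j=I$ hold there after one more enlargement, so $v_j\in\GL_n(A_b[\sqrt{s_1},\dots,\sqrt{s_\ell}]_{f_j})$; and since $f_j\in C_b$ is $\lambda$-fixed, the matrix $v_j^{\lambda\tr}hv_j$ makes sense over this ring. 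Finally the $n^2$ scalar identities constituting $v_j^{\lambda\tr}hv_j=h'$, which hold in $A_\frakp[\sqrt{s_1},\dots,\sqrt{s_\ell}]_{f_j}$ by hypothesis, hold over $(D_b)_{f_j}$ after a last application of (ii). With $b$ now the product of all the finitely many elements of $B-\frakp$ that were introduced (including $b_0$), the displayed data over $B_b$, $A_b$ witnesses the conclusion.

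The argument is essentially bookkeeping, and I expect the only point needing a little care to be organising the data so that every ring in the hypothesis is presented as a localization at $B-\frakp$ of one fixed finite $B$-algebra --- which is what the preliminary reduction to $s_i\in\units B$ and the introduction of $C$ and $D$ achieve --- after which the colimit principle can be applied uniformly and simultaneously to all pieces of data and all the finitely many relations among them.
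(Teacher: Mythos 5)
Your proof is correct and takes essentially the same approach as the paper's: a spreading-out argument that descends data from the stalk $B_\frakp$ to a principal localization $B_b$ with $b\notin\frakp$. The only minor difference is cosmetic: you spread out both $v_j$ and $v_j^{-1}$ and then spread the relation $v_jv_j^{-1}=I$, whereas the paper spreads $v_i$ to a matrix $v_i'$ with entries in $A'$ (writing $v_i=v_i'b^{-1}f_i^{-m}$) and then recovers invertibility over $A'_{f_i}$ by taking determinants in the relation $\tilde v_i^{\lambda\tr}h\tilde v_i=h'$; both devices work equally well.
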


\begin{proof} There is $b\in B$ such that $s_1,\dots,s_\ell$ are in the image of $\units{B_b}\to \units{B_\frakp}$. We
 may replace $B$, $\frakp$ with $B_b$, $\frakp_b$ and assume $s_1,\dots,s_\ell\in \units{B}$ henceforth.

 Write $B'=B[\sqrt{s_1},\dots,\sqrt{s_\ell}]$ and $A'=A[\sqrt{s_1},\dots,\sqrt{s_\ell}]$, and choose
 $g_1,\dots,g_r\in B'_\frakp$ such that $\sum_i f_i g_i=1$. Then there is $b\in B- \frakp$ such that
 $f_1,\dots,f_r,g_1,\dots,g_r$ are images of elements in $B'_b$, also denoted $f_1,\dots,f_r,g_1,\dots,g_r$, and such
 that $\sum_i g_if_i=1$ in $B'_b$. Again, we replace $B$, $\frakp$ with $B_b$, $\frakp_b$ and assume
 $f_1,\dots,f_r\in B'$.

Fix $1\leq i\leq r$. There are $v'_i\in \nMat{A'}{n\times n}$, $b\in B- \frakp$ and $m\in \N\cup \{0\}$ such
that $v_i=v'_ib^{-1}f_i^{-m}$ in $\nMat{(A'_\frakp)_{f_i}}{n\times n}$. Since $v_i^{\lambda \tr}hv_i=h'$, we have
$v'^{\lambda\tr}_i h v'_i=b^2f_i^{2m} h'$ in $\nMat{(A'_\frakp)_{f_i}}{n\times n}$, and hence there is $k\in \N\cup 0$
such that $f_i^kv'^{\lambda\tr}_i h v'_i=b^2f_i^{2m+k} h'$ in $\nMat{A'_\frakp}{n\times n}$. This in turn implies that
there is $b'\in B-\frakp $ such that $b'f_i^kv'^{\lambda\tr}_i h v'_i=b'b^2f_i^{2m+k}h'$ in $\nMat{A'}{n\times
n}$. Replacing $B$, $\frakp$ with $B_{bb'}$, $\frakp_{bb'}$, we may assume $b,b'\in\units{B}$. Let
$\tilde{v}_i=v'_ib^{-1}f_i^{-m}\in \nMat{A'_{f_i}}{n\times n}$. Then the image of $\tilde{v}_i$ in
$\nMat{(A'_\frakp)_{f_i}}{n\times n}$ is $v_i$ and the equality $b'f_i^kv'^{\lambda\tr}_i h v'_i=b'b^2f_i^{2m+k}h'$
implies that $\tilde{v}_i^{\lambda\tr}h\tilde{v}_i=h'$ in $\nMat{A'_{f_i}}{n\times n}$. Taking determinants, we see
that $\tilde{v}_i\in \GL_n(A'_{f_i})$.

The lemma follows by applying the previous paragraph to all $1\leq i\leq r$.
\end{proof}

We now return to the context of ringed topoi. 

\begin{lem}\label{LM:one-plus-unitary} Assume $2\in\units{S}$, and let $U\in\bfY$, $\veps\in N(U)$ and $t=\quo{\veps}\in
T(U)$. Then:
\begin{enumerate}[label=(\roman*)]
\item There is a covering $\{{V_i\to U}\}_{i=1,2}$ of $U$ such that $1+\veps\in\units{R}(V_1)$ and $1-\veps\in
\units{R}(V_2)$.
\item \label{item:LM:one-plus-unitary:beta}
There exists a covering $\{V_i\to U\}_{i=1,2}$ of $U$ such that $t|_{V_1}=1$ and $t|_{V_2}=-1$. Equivalently,
there exists a covering $\{V_i\to U\}_{i=1,2}$ of $U$ and $\beta_i\in R(V_i)$ ($i=1,2$) such that
$\beta_1^{-1}\beta_1^\lambda=\veps|_{V_1}$ and $-\beta_2^{-1}\beta_2^\lambda=\veps|_{V_2}$.
\end{enumerate}
\end{lem}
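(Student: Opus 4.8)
First I would prove (i). Since $\veps\in N(U)$ we have $\veps^\lambda\veps=1$ in $\units{R}(U)$, so $\veps^{-1}=\veps^\lambda$. Consider the two sections $r_1:=(1+\veps)(1+\veps)^\lambda$ and $r_2:=(1-\veps)(1-\veps)^\lambda$ of $R$ over $U$; each is fixed by $\lambda$ because $R$ is commutative, hence each lies in $S(U)$ by condition~\ref{i:sq1} of Definition~\ref{DF:exact-quotient}. Expanding, $r_1+r_2=2+2\veps\veps^\lambda=4$, which is a unit of $S(U)$ because $2\in\units{S}$. Thus $S(U)=r_1S(U)+r_2S(U)$, and since $\calO_\bfY$ is a local ring object there is a covering $\{V_i\to U\}_{i=1,2}$ with $r_1\in\units{S(V_1)}$ and $r_2\in\units{S(V_2)}$. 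Under the ring homomorphism $\pi_\#$, the image of $r_1$ in $R(V_1)$ is a unit, and as $R$ is commutative this exhibits $1+\veps$ as a unit of $R(V_1)$, with inverse $(1+\veps)^\lambda r_1^{-1}$; symmetrically $1-\veps\in\units{R(V_2)}$. This is (i), and the hypothesis $2\in\units{S}$ is used precisely to ensure that $r_1+r_2$ is a unit.

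For (ii) I would first record the asserted equivalence, which merely unwinds the definition of $T$. The short exact sequence~\eqref{EQ:T-dfn} presents $T$ as the quotient of $N$ by the image of $x\mapsto x^\lambda x^{-1}$, so $t|_V=1$ in $T(V)$ holds exactly when $\veps|_V=x^\lambda x^{-1}$ locally on $V$ for some $x\in\units{R}$; writing $\beta=x$ and using commutativity of $R$, this reads $\veps|_V=\beta^{-1}\beta^\lambda$ (and any $\beta$ with $\beta^{-1}$ defined is automatically a unit). Applying the same remark to $-\veps\in N$, and using that $\overline{-\veps}=\bar\veps\cdot\overline{-1}$ since $T$ is $2$-torsion, shows $t|_V=-1$ in $T(V)$ is equivalent to $-\veps|_V=\beta^{-1}\beta^\lambda$ locally. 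So it remains to produce suitable $\beta_i$ over the covering from (i).

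On $V_1$, where $1+\veps$ is a unit, I would take $\beta_1:=1+\veps^\lambda$; this is a unit since $1+\veps^\lambda=\veps^{-1}(1+\veps)$. Applying $\lambda$ gives $\beta_1^\lambda=1+\veps=\veps(1+\veps^\lambda)=\veps\beta_1$, whence $\beta_1^{-1}\beta_1^\lambda=\veps|_{V_1}$ by commutativity. Symmetrically, on $V_2$, where $1-\veps$ is a unit, take $\beta_2:=1-\veps^\lambda=-\veps^{-1}(1-\veps)\in\units{R(V_2)}$; then $\beta_2^\lambda=1-\veps=-\veps\beta_2$, so $-\beta_2^{-1}\beta_2^\lambda=\veps|_{V_2}$. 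Hence $t|_{V_1}=1$ and $t|_{V_2}=-1$, which proves (ii).

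No step here is genuinely difficult; the only points requiring care are (a) checking that $r_1$ and $r_2$ really generate the unit ideal of $S(U)$, which is exactly where $2\in\units{S}$ enters, and (b) the bookkeeping that translates "$t=\pm1$ locally" into "there exist local units $\beta_i$ with the stated identities" via the quotient-sheaf description of $T$ coming from~\eqref{EQ:T-dfn}.
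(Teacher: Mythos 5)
Your proof is correct and follows essentially the same route as the paper's: for (i), you reduce to the observation that a pair of $\lambda$-fixed sections of $R$, built out of $1\pm\veps$, lie in $S$ and have sum (or, in the paper, difference) equal to $4\in\units{S}$, then invoke that $S$ is a local ring object to split the covering; for (ii), you plug in explicit units $\beta_i$ built from $1\pm\veps$. The only cosmetic difference is the choice of $\beta_i$: the paper takes $\beta_1=1+\veps$, $\beta_2=1-\veps$, which gives $\beta_1^{-1}\beta_1^\lambda=\veps^{-1}$ rather than $\veps$ (this is immaterial in $T$ since $\veps$ and $\veps^{-1}$ have the same image), whereas your $\beta_1=1+\veps^\lambda$, $\beta_2=1-\veps^\lambda$ produce the stated identities verbatim.
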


\begin{proof} 
\begin{enumerate}[label=(\roman*), align=left, leftmargin=0pt, itemindent=1ex, itemsep=0.3\baselineskip]
\item We note that this statement requires proof because $R$ is not a local ring object in general. Observe
that $\veps^{-1}(1\pm\veps)^2=\veps^\lambda\pm 2+\veps$ and hence $\veps^{-1}(1\pm\veps)^2\in S(U)$. Since
$\veps^{-1}(1+\veps)^2- \veps^{-1}(1-\veps)^2=4$ and $S$ is a local ring object, the assumption $2\in\units{S}$ implies
that there exists a covering $\{V_i\to U\}_{i=1,2}$ of $U$ such that $\veps^{-1}(1+\veps)^2\in\units{S}(V_1)$ and
$\veps^{-1}({1-\veps})^2\in\units{S}(V_2)$. It follows that $1+\veps\in\units{R}(V_1)$ and $1-\veps\in \units{R}(V_2)$.

\item Choose a covering $\{V_i\to U\}_{i=1,2}$ as in (i) and let $\beta_1=(1+\veps)|_{V_1}$, $\beta_2=(1-\veps)|_{V_2}$.
Since $\veps^\lambda = \veps^{-1}$, we have $\beta_1^\lambda\veps=\beta_1$, and hence
$\beta_1^{-1}\beta_1^\lambda=\veps$. Likewise, $\beta_2^{-1}\beta_2^\lambda=-\veps$. \qedhere
\end{enumerate}
\end{proof}

The following lemma is known when $\pi:\bfX\to \bfY$ is unramified or trivial, i.e., when $R$ is a quadratic \'etale $S$-algebra or
$R=S$. The ramified situation that we consider appears not to have been considered before in the literature.

\begin{lem}\label{LM:hermitina-local-congruence} 
Let
$U\in \bfY$, let $\veps\in N(U)$, and let $h,h'\in \GL_n(R)(U)$ be two $(\veps,\lambda\tr)$-hermitian matrices,
i.e., $h=\veps h^{\lambda \tr}$ and $h'=\veps h'^{\lambda\tr}$.
Assume $\units{S}$ has square roots locally
and that $2\in\units{S}$, or $\pi$ is unramified, or $n$ is odd. Then
there exists a covering $V\to U$ and $v\in \GL_n(R)(V)$ such that $v^{\lambda\tr}hv=h'$ in $\GL_n(R)(V)$.
\end{lem}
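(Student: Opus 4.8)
The plan is to transport the question to the commutative ring $A=R(U)$, equipped with the involution $\lambda$ and fixed ring $B=S(U)$, to localize at the primes of $B$, and then to feed the resulting statements through the structural lemmas~\ref{LM:decomposition-I}--\ref{LM:tough-lemma} of this subsection.

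First I would note that the conclusion is local on $U$ and involves only the sections $h,h',\veps$ over $U$. The bridge between local rings and the topos will be Lemma~\ref{LM:point-to-neighbourhood}: combining it with the fact that $\calO_\bfY$ is a local ring object (which converts a family of elements of $B$ generating the unit ideal into a covering of $U$, and lets one invert the auxiliary elements $f_i$ on a covering) and with the hypothesis that $\units S$ has square roots locally (which realizes the auxiliary square-root extensions $B_\frakp[\sqrt{s_j}]$ over a covering), the problem reduces to the following: for each $\frakp\in\Spec B$, exhibit units $s_1,\dots,s_\ell\in\units{B_\frakp}$, elements $f_1,\dots,f_r\in B_\frakp[\sqrt{s_1},\dots,\sqrt{s_\ell}]$ generating the unit ideal, and matrices $v_i\in\GL_n(A_\frakp[\sqrt{s_1},\dots,\sqrt{s_\ell}]_{f_i})$ with $v_i^{\lambda\tr}hv_i=h'$. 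By Theorem~\ref{thm:local-involution}, $A_\frakp$ is semilocal with fixed ring the local ring $B_\frakp$, so the lemmas of this subsection apply to it.

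Next I would reduce to $\veps\in\{\pm1\}$: replacing $h,h'$ by $\gamma h,\gamma h'$ for a central unit $\gamma\in\units{A_\frakp}$ does not affect the relation $v^{\lambda\tr}hv=h'$ and changes $\veps$ into $\gamma^\lambda\gamma^{-1}\veps$, so it suffices to make $\gamma^\lambda\gamma^{-1}\veps=\pm1$. When $\pi$ is unramified this is Hilbert~90, Proposition~\ref{PR:Hilbert-ninety}\ref{item:LM:Hilbert-ninety:norm}, and gives $\veps=1$; when $2\in\units S$ one uses, as in Lemma~\ref{LM:one-plus-unitary}, that one of $1\pm\veps$ is a unit because $B_\frakp$ is local, giving $\veps=\pm1$ with a common $\gamma$ for $h$ and $h'$; when $n$ is odd one takes $\gamma=(\veps^{(n-1)/2}\det h)^{-1}$ as in the proof of Theorem~\ref{TH:basic-properties-of-types}\ref{item:TH:basic-properties-of-types:odd}, obtaining $\veps=1$ but with the scalars for $h$ and $h'$ differing by the $\lambda$-fixed unit $c=\det h'/\det h\in\units{B_\frakp}$, to be removed at the end.

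With $\veps\in\{\pm1\}$ I would then bring both forms to a common shape. If $\veps=1$, or $\quo\lambda\neq\id$ on $\quo{A_\frakp}$, or $n$ is odd, Lemma~\ref{LM:decomposition-I} diagonalizes $h$ and $h'$ over $A_\frakp$; since the entries satisfy $a_i^\lambda=\veps a_i$, the ratios $c_i=a'_i a_i^{-1}$ are $\lambda$-fixed units of $B_\frakp$, and $\diag{\sqrt{c_1},\dots,\sqrt{c_n}}$ matches the two diagonal matrices over $A_\frakp[\sqrt{c_1},\dots,\sqrt{c_n}]$, producing the required data with $r=1$. The remaining case $\veps=-1$, $\quo\lambda=\id$, $n$ even forces $2\in\units S$ (the other two hypotheses being impossible), and there Lemma~\ref{LM:decomposition-II} followed by Lemma~\ref{LM:tough-lemma} on each $2\times2$ block expresses both $h$ and $h'$, over a suitable $A_\frakp[\sqrt{s_1},\dots,\sqrt{s_\ell}]_{f_i}$ obtained by collecting the square roots and the (products of the) elements $f$ from all blocks of $h$ and of $h'$, as congruent to the single matrix $[\begin{smallmatrix}0&1\\-1&0\end{smallmatrix}]\oplus\cdots\oplus[\begin{smallmatrix}0&1\\-1&0\end{smallmatrix}]$, hence to one another; the residual scalar $c$ from the $n$ odd case is finally cleared by replacing each $v_i$ with $v_i\sqrt c$ over $A_\frakp[\sqrt c,\dots]$, using $(\sqrt c)^\lambda=\sqrt c$. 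I expect this last case — the alternating case $\veps=-1$, $\quo\lambda=\id$ — to be the main obstacle: diagonalization is unavailable, one is forced into Lemmas~\ref{LM:decomposition-II} and~\ref{LM:tough-lemma}, and the careful bookkeeping of the several square-root extensions and localizations, and of their reassembly via Lemma~\ref{LM:point-to-neighbourhood} and the standing hypotheses on $\calO_\bfY$ into a single honest covering of $U$, is where the real work lies.
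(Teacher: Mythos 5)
Your proposal is correct and follows essentially the same route as the paper's proof: reduce $\veps$ to $\{\pm1\}$ (via Lemma~\ref{LM:one-plus-unitary}, Hilbert~90, or a determinant argument depending on the case), pass to $B_\frakp$ and $A_\frakp$ via Lemma~\ref{LM:point-to-neighbourhood} together with the locality of $\calO_\bfY$ and the square-roots-locally hypothesis, apply Lemmas~\ref{LM:decomposition-I}--\ref{LM:tough-lemma} to produce congruences over square-root extensions and localizations, and reassemble over a covering. The one unnecessary detour is in the $n$-odd case, where you scale $h$ and $h'$ by different factors and then clear the residual unit $c=\det h'/\det h$ with $\sqrt c$; since $\beta^{-1}h'$ is $(1,\lambda\tr)$-hermitian whenever $\beta^{-1}h$ is (the hermitian condition depends only on $\veps$ and $\beta$, not on the matrix entries), one can simply use the same $\beta=\veps^{(n-1)/2}\det h$ for both, as the paper does, and avoid introducing $c$ at all.
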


\begin{proof} Suppose first that $2\in\units{S}$. Let $\{V_i\to U\}_{i=1,2}$ and $\beta_1,\beta_2$ be as in
Lemma~\ref{LM:one-plus-unitary}\ref{item:LM:one-plus-unitary:beta}. 
We may replace $h,h',U$ with $(\beta_i h,\beta_i h',V_i)_{i=1,2}$ and
assume that $\veps\in\{\pm 1\}$ henceforth.

We claim that it is enough to show that for all $\frakp\in\Spec S(U)$, there are
$s_{1},\dots,s_{\ell}\in \units{S(U)_\frakp}$, $f_{1},\dots,f_{r}\in S(U)_\frakp[\sqrt{s_1},\dots,\sqrt{s_\ell}]$ and
$v_j\in\GL_n(R(U)_\frakp[\sqrt{s_1},\dots,\sqrt{s_\ell}]_{f_j})$ ($j=1,\dots,r$) such that $f_1,\dots,f_r$ generate the
unit ideal in $S(U)_\frakp[\sqrt{s_1},\dots,\sqrt{s_\ell}]$ and $v_j^{\lambda\tr}hv_j=h'$. If this holds, then
Lemma~\ref{LM:point-to-neighbourhood} implies that for all $\frakp$, we can find $b_\frakp\in S(U)- \frakp$ such
that the previous condition holds upon replacing $S(U)_\frakp,R(U)_\frakp$ with $S(U)_{b_\frakp},R(U)_{b_\frakp}$.
Since $\sum_\frakp b_\frakp S(U)=S(U)$ and $S$ is a local ring object, there is a covering $\{V_\frakp\to U\}_\frakp$
such that $b_\frakp\in\units{S(V_\frakp)}$. Fix some $\frakp$ and let
$b_\frakp,s_1,\dots,s_\ell,f_1,\dots,f_r,v_1,\dots,v_r$ be as above. By construction, the images of $s_1,\dots,s_\ell$
in $S(V_\frakp)$ are invertible in $S(V_\frakp)$. Since $\units{S}$ has square roots locally, we can replace $V_\frakp$
with a suitable covering such that $s_1,\dots,s_\ell$ have square roots in $S(V_\frakp)$. In particular,
$S(U)\to S(V_\frakp)$ factors through $S(U)\to S(U)_{b_\frakp}[\sqrt{s_1},\dots,\sqrt{s_{\ell}}]$. Applying the fact
that $S$ is a local ring object again, we see that there is a covering $\{V_{\frakp,i}\to V_\frakp\}_{i=1}^r$ such that
the image of $f_i$ in ${S(V_{\frakp,i})}$ is invertible. It follows that $R(U)\to R(V_{\frakp,i})$ factors through
$R(U)_{b_\frakp}[\sqrt{s_1},\dots,s_{s_\ell}]_{f_i}$ and hence there is $v_{\frakp,i}\in \GL_n(R(V_{\frakp,i}))$ --- the
image of $v_i$ --- such that $v_{\frakp,i}^{\lambda\tr}hv_{\frakp,i}=h'$. Finally, let
$V=\bigsqcup_{\frakp,i} V_{\frakp,i}$ and take
$v=(v_{\frakp,i})_{\frakp,i}\in \GL_n(R(V))=\prod_{\frakp,i}\GL_n(R(V_{\frakp,i}))$.

Let $\frakp\in \Spec S(U)$. We now prove the existence of $s_1,\dots,s_\ell,f_1,\dots,f_r,v_1,\dots,v_r$ above. Write
$B=S(U)_\frakp$ and $A=R(U)_\frakp$. Then $B$ is local and it is the fixed ring of $\lambda:A\to A$. We write
$\quo{A}=A/\Jac(A)$ and let $\quo{\lambda}:\quo{A}\to \quo{A}$ denote the involution induced by $\lambda$.

Suppose $\veps=1$ or $\quo{\lambda}\neq\id$. By Lemma~\ref{LM:decomposition-I}, we may assume that
$h$ and $h'$ are diagonal, say $h=\mathrm{diag}(\alpha_1,\dots,\alpha_n)$ and $h'\in
\mathrm{diag}(\alpha'_1,\dots,\alpha'_n)$. Since $h$ and $h'$ are $(\veps,\lambda\tr)$-hermitian, we have
$(\alpha_i^{-1}\alpha'_i)^\lambda=\alpha_i^{-1}\veps\alpha'_i\veps^{-1}=\alpha_i^{-1}\alpha_i'$,
hence $\alpha_i^{-1}\alpha'_i\in \units{B}$ for all $i$. Writing $s_i=\alpha_i^{-1}\alpha'_1$ and
$v=\mathrm{diag}(\sqrt{s_1},\dots,\sqrt{s_n})\in \GL_n(A[\sqrt{s_1},\dots,\sqrt{s_n}])$, we have $v^{\lambda\tr}hv=h'$, as required
(take $f_1=1$).

Suppose now that $\veps=-1$ and $\quo{\lambda}=\id$. Applying Lemma~\ref{LM:decomposition-II} to $h$ and $h'$, we may
assume that $h$ and $h'$ are direct sums of $2\times 2$ matrices in $[\begin{smallmatrix} 0 & 1 \\ -1 &
0 \end{smallmatrix}]+\nMat{\Jac(A)}{2\times 2}$,
say $h=h_1\oplus\dots\oplus h_m$, $h'=h_{m+1}\oplus\dots\oplus h_{n}$ with $m=n/2$. Applying Lemma~\ref{LM:tough-lemma}
to $h_j$, we obtain $s_j,t_j\in \units{B}$, $f_{j1},\dots,f_{jr_j}\in B[\sqrt{s_j},\sqrt{t_j}]$, and
$v_{ji}\in\GL_2(A[\sqrt{s_j},\sqrt{t_j}]_{f_{ji}})$ such that $v_{ji}^{\lambda\tr} h_j v_{ji}= [\begin{smallmatrix} 0 &
1 \\ -1 & 0 \end{smallmatrix}]$.

Let $B'=B[\sqrt{s_1},\sqrt{t_1},\dots,\sqrt{s_n},\sqrt{t_n}]$, $A'=A[\sqrt{s_1},\sqrt{t_1},\dots,\sqrt{s_n},\sqrt{t_n}]$
and regard $\{f_{ji}\}_{j,i}$ as elements of $B'$. For every tuple $I=(i_1,\dots,i_n)\in\prod_j\{1,\dots,r_j\}$, let
$f_I=\prod_j f_{ji_j}$, $v_I=v_{1i_1}\oplus\dots\oplus v_{mi_m}$ and $v'_I=v_{(m+1)i_{m+1}}\oplus\dots\oplus v_{ni_n}$,
where $v_I,v'_I$ are regarded as elements of $\GL_n(A'_{f_I})$. Then $\sum_I f_IB'=\prod_j(\sum_{i=1}^{r_j}
f_{ji}B')=B'$ and $(v_I v'^{-1}_I)^{\lambda\tr}h(v_I v'^{-1}_I)=h'$,
which is what we want. This establishes the lemma
when $2\in\units{S}$.

To prove the remaining cases, we observe that when $\pi$ is unramified, or $n$ is odd, the use of
Lemmas~\ref{LM:decomposition-II}, \ref{LM:tough-lemma} and~\ref{LM:one-plus-unitary} can be avoided, and hence the
assumption $2\in\units{S}$ is unnecessary.

When $\pi$ is unramified, we apply Proposition~\ref{PR:T-structure-unramified-pi} instead of
Lemma~\ref{LM:one-plus-unitary}(ii) and assume $\veps=1$ hereafter. Since in this case $A$ is a quadratic \'etale
$B$-algebra, Proposition~\ref{PR:Hilbert-ninety}\ref{item:LM:Hilbert-ninety:structure} implies that $\quo{\lambda}\neq\id$,
and so the case $\quo{\lambda}=\id$ does not occur.

Suppose now that $n$ is odd, say $n=2m+1$. Taking the determinant of both sides of $h=\veps h^{\lambda\tr} $ yields
$\det h = \veps^{2m+1} (\det h)^\lambda$. Since $\veps^\lambda=\veps^{-1}$, this implies that
$\veps=\beta^{-1}\beta^\lambda$ for $\beta=\veps^{m}(\det h)^\lambda$. Replacing $h,h'$ with $\beta h,\beta h'$, we may
assume $\veps=1$. Since $n$ is odd, we can now apply Lemma~\ref{LM:decomposition-I} even when $\quo{\lambda}=\id$ and
finish the proof without using Lemmas~\ref{LM:decomposition-II}, \ref{LM:tough-lemma} or~\ref{LM:one-plus-unitary}.
\end{proof}

We can finally complete the proof of Theorem~\ref{TH:ct-determines-local-iso}.

\begin{proposition} \label{PR:completion} 
 Let $n$ be a positive integer. Suppose $\units{S}$ has square roots locally
 and at least one of the following holds:
 \begin{enumerate}[label=(\arabic*)]
 \item $2 \in S^\times$.
 \item $\pi: \mathbf X \to \mathbf Y$ is unramified.
 \item $n$ is odd.
 \end{enumerate}
Let $(A, \tau)$ and
 $(A',\tau')$ be two degree-$n$ Azumaya $R$-algebras with $\lambda$-involutions having the same coarse type. Then
 $(A,\tau)$ and $(A', \tau')$ are locally isomorphic as $R$-algebras with involution.
\end{proposition}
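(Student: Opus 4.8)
The plan is to reduce the statement to Lemma~\ref{LM:hermitina-local-congruence}, which contains all the substantive work, and to spend the rest of the argument translating between Azumaya algebras with involution and hermitian matrices. First I would apply Construction~\ref{CN:coarse-type} simultaneously to $(A,\tau)$ and $(A',\tau')$: after passing to a common covering $U\to *_\bfY$ there are isomorphisms of $R_U$-algebras $\psi\colon A_U\xrightarrow{\sim}\nMat{R_U}{n\times n}$ and $\psi'\colon A'_U\xrightarrow{\sim}\nMat{R_U}{n\times n}$ and, refining $U$ further, sections $h,h'\in\GL_n(R)(U)$ such that, writing $\sigma=\psi\circ\tau_U\circ\psi^{-1}$ and $\sigma'=\psi'\circ\tau'_U\circ\psi'^{-1}$, the automorphism $g=\lambda\tr\circ\sigma$ of $\nMat{R_U}{n\times n}$ is conjugation by $h$ (Lemma~\ref{LM:inner-automorphisms}), and likewise for $\sigma'$ and $h'$. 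Unwinding $\sigma^2=\id$ then shows $\sigma(x)=h^{-1}x^{\lambda\tr}h$ on sections, with $h$ an $(\veps,\lambda\tr)$-hermitian matrix for some $\veps\in N(U)$ whose class $\quo\veps$ in $T(U)$ is $\ct(\tau)$ by \eqref{EQ:eps-def}; similarly $\sigma'(x)=h'^{-1}x^{\lambda\tr}h'$ with $h'$ $(\veps',\lambda\tr)$-hermitian and $\quo{\veps'}=\ct(\tau')$. (The two possible conventions $h^{\lambda\tr}=\veps h$ versus $h^{\lambda\tr}=\veps^{-1}h$ differ only by replacing $\veps$ by $\veps^{-1}$, which has the same class in $T$; I will ignore this.)

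Next I would use the hypothesis $\ct(A,\tau)=\ct(A',\tau')$, i.e.\ $\quo\veps=\quo{\veps'}$ in $\Hoh^0(T)$. By the defining sequence \eqref{EQ:T-dfn}, after refining $U$ to a covering $V$ there is $\beta\in\units{R}(V)$ with $\veps'\veps^{-1}=\beta^{\lambda}\beta^{-1}$. Since $\beta$ is central in $\nMat{R_V}{n\times n}$, replacing $h'$ by $\beta h'$ changes neither $\sigma'$ nor the fact that $\sigma'$ is adjoint to it, but it turns $\beta h'$ into an $(\veps,\lambda\tr)$-hermitian matrix. After this relabelling I may assume $h$ and $h'$ are two $(\veps,\lambda\tr)$-hermitian matrices in $\GL_n(R)(V)$ for one and the same $\veps\in N(V)$.

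Now Lemma~\ref{LM:hermitina-local-congruence} applies, since by hypothesis $\units{S}$ has square roots locally and one of $2\in\units S$, $\pi$ unramified, or $n$ odd holds: there is a covering $W\to V$ and $v\in\GL_n(R)(W)$ with $v^{\lambda\tr}hv=h'$. A direct computation with the explicit forms of $\sigma$ and $\sigma'$ then shows that the $R_W$-algebra automorphism $\phi$ of $\nMat{R_W}{n\times n}$ given on sections by $\phi(x)=v^{-1}xv$ satisfies $\sigma'\circ\phi=\phi\circ\sigma$, so $\phi$ is an isomorphism $(\nMat{R_W}{n\times n},\sigma_W)\xrightarrow{\sim}(\nMat{R_W}{n\times n},\sigma'_W)$ of $R_W$-algebras with involution; hence $\psi'^{-1}_W\circ\phi\circ\psi_W\colon(A_W,\tau_W)\to(A'_W,\tau'_W)$ is an isomorphism of $R_W$-algebras with involution, and $(A,\tau)$, $(A',\tau')$ are locally isomorphic. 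This proves Proposition~\ref{PR:completion}, hence the implication \ref{pr53}$\Rightarrow$\ref{pr51} of Theorem~\ref{TH:ct-determines-local-iso}. I expect no real obstacle in the proposition itself: all the difficulty is already absorbed into Lemma~\ref{LM:hermitina-local-congruence}, and the only points needing care here are the compatibility of sign conventions for hermitian matrices (whence the passage between $\veps$ and $\veps^{-1}$) and the choice of left versus right conjugation by $v$.
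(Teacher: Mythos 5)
Your proof is correct and follows essentially the same route as the paper's: apply Construction~\ref{CN:coarse-type} to produce $(\veps,\lambda\tr)$-hermitian matrices $h,h'$ for the two involutions, use the hypothesis $\ct(\tau)=\ct(\tau')$ to normalize the scaling so that $\veps'=\veps$, invoke Lemma~\ref{LM:hermitina-local-congruence} to congruate $h$ into $h'$, and conjugate back through $\psi,\psi'$. The only superficial differences are your sign convention on $\veps$ versus $\veps^{-1}$ (which you correctly dismiss since $T$ is $2$-torsion) and that you verify the intertwining $\sigma'\circ\phi=\phi\circ\sigma$ by direct computation rather than invoking Lemma~\ref{LM:lambda-tr-relation}.
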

\begin{proof}
 Following the construction of $\ct(A,\tau)$ in \ref{subsec:coarse-types}, define $U$, $\psi$, $\sigma$, $g$, $h$ and
$\veps=h^{-\lambda\tr}h\in N(U)$ so that $\quo{\veps}\in T(U)$ induces $\ct(A,\tau)\in\Hoh^0(T)$. Repeating the
construction with $(A',\tau')$ in place of $(A,\tau)$, we define $U',\psi',\sigma',g',h',\veps'$ analogously. By
refining both $U$ and $U'$, we may assume $U=U'$.

Since $\ct(A,\tau)=\ct(A',\tau')$, $\veps$ and $\veps'$ determine the same section in $T(U)$. Thus, there exists a
covering $V\to U$ and $\beta\in R(V)$ such that $\veps'=\beta^{-1}\beta^\lambda\veps$. Replacing $U$ with $V$, and $h'$
with $\beta h'$, we may assume that $\veps'=\veps$.

Now, by Lemma~\ref{LM:hermitina-local-congruence}, there exists a covering $V\to U$ and $v\in \GL_n(R(V))$ such that
$v^{\lambda\tr} h v=h'$. Again,
replace $U$ with $V$. 
Letting $u$ denote the image of $v$ in $\PGL_n(R)(U)$, we deduce $g u=u^{-\lambda\tr} g'$.
Unfolding the construction in \ref{subsec:coarse-types}, one finds that $\tau_U=\psi^{-1}\circ \sigma \circ \psi=
\psi^{-1}\circ \lambda\tr \circ g\circ \psi$, and likewise $\tau'_U=\psi'^{-1}\circ \lambda\tr \circ g'\circ \psi'$.
Let $\theta:=\psi^{-1}\circ u\circ \psi':A'_U\to A_U$. Then $\theta$ is an isomorphism of $R$-algebras, and since
$gu=u^{-\lambda\tr}g'$, we have
\begin{align*} \theta\circ \tau'_U &=\psi^{-1}\circ u\circ \psi'\circ \psi'^{-1}\circ \lambda\tr\circ g' \circ \psi'\\
&=\psi^{-1} \circ \lambda \tr \circ u^{-\lambda\tr}g' \circ \psi'\\ &=\psi^{-1}\circ \lambda\tr \circ gu\circ \psi'\\
&=\psi^{-1}\circ \lambda\tr \circ g\circ \psi\circ \psi^{-1}\circ u\circ \psi' =\tau_U\circ \theta\ .
\end{align*} Thus, $\theta$ defines an isomorphism of algebras with involution $(A'_U,\tau'_U)\xrightarrow{\sim}
(A_U,\tau_U)$.
\end{proof}

\subsection{Determining types in specific cases} 
\label{subsec:quotient-scheme}

Under mild assumptions,
Theorem~\ref{TH:ct-determines-local-iso} provides a cohomological criterion
to determine whether two $\lambda$-involutions of Azumaya algebras
have the same type, and Corollary~\ref{CR:coarse-type-determines-type} embeds
the possible $\lambda$-types in $\CTypes{\lambda}=\Hoh^0(T)$. 
We finish this section by making this criterion and the realization of the types
even more explicit in case the exact
quotient $\pi:\bfX\to \bfY$ is induced by
a $C_2$-quotient of schemes or topological spaces.

\begin{notation}\label{NT:types-in-specific-cases}
	Throughout, we assume one of the following:
	\begin{enumerate}[label=(\arabic*)]
		\item $X$ is a scheme on which $2$ is invertible,
		$\lambda:X\to X$ is an involution and $\pi:X\to Y$
		is a good quotient relative to $C_2=\{1,\lambda\}$, see
		Example~\ref{EX:important-exact-quo-scheme}.
		\item $X$ is a Hausdorff topological space,
		$\lambda:X\to X$ is a continuous involution,
		and 
		$\pi:X\to Y=X/\{1,\lambda\}$ is the quotient map.
	\end{enumerate}
	We will usually treat both cases simultaneously,
	but when there is need to distinguish them,
	we shall address them as the scheme-theoretic case and the topological
	case, respectively.
	
	In the scheme-theoretic case, the terms sheaf, cohomology
	and covering should be understood as \'etale sheaf, \'etale cohomology
	and \'etale covering, whereas in the topological case,
	they retain their ordinary meaning relative to the relevant topological space.
	Furthermore, in the topological case, $\calO_X$ stands
	for $\cont(X,\C)$, the sheaf of continuous functions into $\C$, 
	and likewise for all topological spaces.
	
	As in Subsection~\ref{subsec:coarse-types},
	write $S=\calO_Y$ and $R=\pi_*\calO_X$,
	and define $N$ to be the kernel of the $\lambda$-norm
	$x\mapsto x^\lambda x:\units{R}\to \units{S}$
	and $T$ to be the cokernel of $x\mapsto x^{-1}x^\lambda:\units{R}\to N$.
	By means of Theorem~\ref{TH:important-exact-quotients},
	the results of the previous subsections can
	be applied, essentially verbatim, to
	Azumaya $\calO_X$-algebras with $\lambda$-involution.

	Recall from Propositions~\ref{PR:ramification-in-schemes} and~\ref{PR:ramification-top} that there is a maximal open subscheme, resp.\ subset, $U\subseteq Y$ such that
$\pi_U:\pi^{-1}(U)\to U$ is unramified, i.e.\ a quadratic \'etale morphism
or a double covering of topological spaces.
We write
\[ W=Y- U \qquad \text{and} \qquad Z=\pi^{-1}(W)\ .\]
Then $W$ and $Z$ are the branch locus and
the ramification
locus of $\pi:X\to Y$, respectively. We endow $Z$ and $W$ with the
subspace topologies. In the scheme-theoretic case, we further endow them with the reduced induced closed subscheme
structures in $X$ and $Y$, respectively.
Recall from Proposition~\ref{PR:Z-to-W-homeomorphism} and the preceding
comment that $\pi$ induces an isomorphism of schemes, resp.\ topological
spaces, $Z\to W$, and $\lambda$ restricts to the identity map on $Z$.
In particular, $W=Z/C_2$.

	Recall that $\mu_{2,\calO_W}$ denotes the sheaf of square roots of $1$
	in $\calO_W$; we abbreviate this sheaf as $\mu_{2,W}$. Since $2$ is invertible
	in $\calO_W$, the sheaf $\mu_{2,W}$
	is just the constant sheaf $\{\pm 1\}$. Similar notation applies to $Z$.
\end{notation}

Let $(A,\tau)$ be an Azumaya $\calO_X$-algebra with $\lambda$-involution and let $z\in Z$ be a point of the
the ramification locus. Propositions~\ref{PR:ramification-in-schemes} and~\ref{PR:ramification-top} imply that
$\lambda(z)=z$ and the specialization of $\lambda$ to $k(z)$, denoted $\lambda_{k(z)}$, is the
identity. 
Thus, the specialization of $(A,\tau)$ to $k(z)$, denoted $(A_{k(z)},\tau_{k(z)})$, is a central simple
$k(z)$-algebra with an involution of the first kind.

\begin{lemma} \label{lem:defftau}
 With the above notation, the function $f_\tau: Z \to \{ 1, -1\}$ determined by 
 \begin{equation*}
 f_\tau(z) = \begin{cases} \phantom{-}1 \quad \text{ if $\tau_{k(z)}$ is orthogonal} \\ -1 \quad \text{ if $\tau_{k(z)}$ is symplectic} \end{cases}
 \end{equation*}
 is locally constant, and therefore determines a global section $f_\tau \in \Hoh^0(Z, \mu_{2,Z})$.
\end{lemma}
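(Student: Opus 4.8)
The statement is local on $Z$, so the plan is to show that each point $z \in Z$ has an open neighbourhood in $Z$ on which $f_\tau$ is constant. Since $Z$ is a reduced closed subscheme of $X$ on which $\lambda$ acts trivially (Proposition~\ref{PR:Z-to-W-homeomorphism}), the restriction $(A|_Z, \tau|_Z)$ is an Azumaya $\calO_Z$-algebra with an involution of the first kind in the classical sense, since the induced involution on $\calO_Z$ is the identity. The type of such an involution --- orthogonal or symplectic --- is detected by the rank of the subsheaf of $\tau|_Z$-symmetric elements: for an Azumaya algebra of degree $n$ with an involution of the first kind over a ring in which $2$ is invertible, the module of symmetric elements is locally free of rank $\binom{n+1}{2}$ in the orthogonal case and $\binom{n}{2}$ in the symplectic case; see \cite[Prp.~2.6]{knus_book_1998-1} (or \cite[III.\S8]{knus_quadratic_1991}). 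These two ranks differ (they differ by $n$, which is invertible-or-at-least-nonzero on $Z$ since $2$ is invertible and $n \geq 1$), so the symmetric-elements sheaf distinguishes the two types pointwise.

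First I would form the $\calO_Z$-submodule $\calS \subseteq A|_Z$ of $\tau|_Z$-symmetric sections, i.e.\ the equalizer of $\tau|_Z$ and $\id$, which is locally a direct summand of $A|_Z$ because $2$ is invertible (the idempotent $a \mapsto \tfrac12(a + a^{\tau})$ splits off $\calS$). Hence $\calS$ is a locally free $\calO_Z$-module, and its rank is a locally constant function $r: Z \to \N$. Next I would compute this rank at a point $z$: after passing to a cover of $Z$ trivializing $A$, we may assume $A|_Z$ is a matrix algebra and $\tau|_Z$ is adjoint to a symmetric or alternating bilinear form, whence the rank of $\calS$ at $z$ equals $\binom{n+1}{2}$ or $\binom{n}{2}$ according as $\tau_{k(z)}$ is orthogonal or symplectic. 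This identifies $f_\tau$ with the composition of the locally constant function $r$ with the bijection $\{\binom{n+1}{2}, \binom{n}{2}\} \to \{1,-1\}$, so $f_\tau$ is locally constant, and therefore defines a global section of the constant sheaf $\mu_{2,Z} = \{\pm 1\}$.

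In the topological case the argument is identical, with $\calO_Z = \cont(Z,\C)$ and local triviality of $A|_Z$ in the usual topology; local constancy of the rank of a locally free sheaf holds just as well over $\cont(Z,\C)$ since its stalks are local rings (indeed strictly henselian, by the appendix). The main point to be careful about is the bookkeeping relating the rank of $\calS$ at the \emph{point} $z$ --- i.e.\ the rank of the specialized module over $k(z)$, or over the stalk --- to the classical orthogonal/symplectic dichotomy of $\tau_{k(z)}$; this is exactly \cite[Prp.~2.6]{knus_book_1998-1}, applied after base change to the residue field (or stalk), where the involution becomes one of the first kind on a central simple algebra over a field in which $2$ is invertible. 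There is no real obstacle here beyond invoking this classical fact correctly and noting that local freeness of $\calS$ means the generic rank on a connected component agrees with the rank at every point of that component.
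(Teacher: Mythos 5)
Your argument is correct and follows essentially the same route as the paper's: restrict to $Z$, where the involution becomes of the first kind, consider the sheaf of $\tau$-symmetric elements, and use local constancy of its rank together with the classical dimension dichotomy from \cite[\S2A]{knus_book_1998-1}. Where you split off $\calS=A_+$ via the idempotent $\tfrac12(1+\tau)$ and conclude it is locally free, the paper instead invokes semicontinuity of the fiber ranks of $A_\pm$ --- in the end the same fact, since $A=A_+\oplus A_-$ when $2$ is invertible.
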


\begin{proof}
	We may assume that $\deg A$ is constant; otherwise, decompose $X$ into
	a disjoint union of components
	on which this holds and work component-wise.

	Let $(A_Z,\tau_Z)$ denote the base change of $(A,\tau)$ from $X$ to $Z$,
	namely $(i^*A,i^*\tau)\otimes_{i^*\calO_X}(\calO_Z, \lambda_Z^\#=\id_{\calO_Z})$, where $i:Z\to X$
	denotes the inclusion map.
 For any point $z\in Z$, the type of $\tau$ at $k(z)$ may be calculated 
 relative to $(A_Z, \tau_Z)$.
 We may therefore replace $X$ and $(A,\tau)$ with $Z$ and $(A_Z,\tau_Z)$
 to assume that $Z =X$ and $\lambda:X\to X$ is the trivial involution.

	Let $A_+=\ker(\id_A-\tau)$ and $A_-=\ker(\id_A+\tau)$.
	That is, $A_+$ and $A_-$ are the sheaves and $\tau$-symmetric
	and $\tau$-antisymmetric elements in $A$.
	Since $\lambda=\id$, both $A_+$ and $A_-$
	are $\calO_X$-modules, and since $2$ is invertible
	on $X$, the sequence $0\to A_-\hookrightarrow A\xrightarrow{\id+\tau} A_+\to 0$
	is split exact. Consequently, the sequence remains exact after
	base changing to $k(z)$ for all $z\in X$, and so we
	may identify $(A_+)_{k(z)}$ with the $\tau_{k(z)}$-symmetric
	elements of $A_{k(z)}$.
	
	It is well known \cite[\S2A]{knus_book_1998-1} that $\dim (A_+)_{k(z)}$ 
	equals $\frac{1}{2}n(n+1)$ when $\tau_{k(z)}$ is orthogonal
	and $\frac{1}{2}n(n-1)$ when $\tau_{k(z)}$ is symplectic.
	Since $A_+$ is an $\calO_X$-summand of $A$, it is locally free.
	Thus, the rank of $A_+$ is locally constant, and {\it a fortiori} so is $f_\tau$.
\end{proof}

We will prove, after a number of lemmas, that the element $f_\tau$ determines the type of $\tau$. In the
course of the proof, we shall see that the sheaf $T$ introduced in Subsection~\ref{subsec:coarse-types} is nothing but
the pushforward to $Y$ of the sheaf $\mu_{2,W}$ on $W$.

\begin{lemma}\label{LM:morphism-between-quotients}
	Consider a commutative diagram
	\[
	\xymatrix{
	X' \ar[r]^u \ar[d]^{\pi'} &
	X \ar[d]^\pi \\
	Y' \ar[r]^v &
	Y	
	}
	\]
	in which $\pi:X'\to Y'$ is a 
	good $C_2$-quotient of schemes, resp.\
	a $C_2$-quotient of Hausdorff topological spaces,
	and $u$ is $C_2$-equivariant.
	Let $\lambda'$ denote
	the involution of $X'$ and let $S',R',N',T'$ denote the sheaves corresponding to $S,R,N,T$
	and constructed with $\pi':X'\to Y'$ in place of $\pi:X\to Y$.
	Then:
	\begin{enumerate}[label=(\roman*)]
	\item There are commutative squares of ring sheaves on $Y$
	and $Y'$, respectively:
	\[
	\xymatrix{
	v_*R' \ar@{=}[r] &
	\pi_*u_*\calO_{X'} &
	R \ar[l]_-{\pi_*u_\#} \\
	v_*S' \ar[u]_{v_*\pi'_\#} &
	&
	S \ar[ll]_{v_\#} \ar[u]_{\pi_\#} 
	}
	\qquad
 \xymatrix{
 R' &
 v^*R \ar[l] \\
 S' \ar[u]_{\pi'_\#} &
 v^*S \ar[l]_{v^\#} \ar[u]_{v^*\pi_\#}
 }
	\]
	Here, the horizontal arrows of the right square are the adjoints
	of the horizontal arrows of the left square relative to the adjuntion between
	$v^*$ and $v_*$. Furthermore, in both squares,
	the top horizontal arrows are morphism of rings with involution.
	
	\item
	The left square of (i) induces
	morphisms
	$N\to v_*N'$, $T\to v_*T'$ and $\Hoh^0(T)\to\Hoh^0(T')$.
	Furthermore, if $(A,\tau)$ is an Azumaya $\calO_X$-algebra
	with a $\lambda$-involution and $(A',\tau')$ denotes
	the base change of $(A,\tau)$ to $X'$, namely
	$(u^*A,u^*\tau)\otimes_{u^*\calO_X}(\calO_{X'}, \lambda')$,
	then the image of $\ct_\pi(\tau)\in \Hoh^0(T)$ in $\Hoh^0(T')$
	is $\ct_{\pi'}(\tau')$.
	\end{enumerate}
\end{lemma}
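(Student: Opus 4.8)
For part (i), the plan is to exploit that the square commutes, so $v\circ\pi'=\pi\circ u$ and hence $v_*R'=v_*\pi'_*\calO_{X'}=\pi_*u_*\calO_{X'}$, which is the asserted identification in the left square. Both horizontal ring maps there come from the structural map $u_\#\colon\calO_X\to u_*\calO_{X'}$: the top one is $\pi_*u_\#\colon R=\pi_*\calO_X\to\pi_*u_*\calO_{X'}$, and the composite $v_*\pi'_\#\circ v_\#\colon S\to v_*R'$ equals $(\pi u)_\#=(v\pi')_\#\colon\calO_Y\to(\pi u)_*\calO_{X'}$ by functoriality of $(\,\cdot\,)_\#$ under composition of morphisms of (ringed) spaces, which is exactly $\pi_*u_\#\circ\pi_\#$; so the left square commutes. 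That the top arrows respect involutions is where $C_2$-equivariance of $u$ enters: equivariance means $\lambda_X\circ u=u\circ\lambda_{X'}$ as morphisms of spaces, and applying $\pi_*(\,\cdot\,)_\#$ to this identity gives compatibility of $\pi_*u_\#$ with $\lambda=\pi_*\lambda_X$ and $v_*\lambda'=v_*\pi'_*\lambda_{X'}$. The right square is obtained from the left by the adjunction $(v^*,v_*)$, its horizontal arrows being the mates of those of the left square; it commutes, and its top arrow respects involutions, because the corresponding facts hold for the left square and transport across the adjunction.

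For the first assertion of (ii), from the morphism of rings with involution $(R,\lambda)\to(v_*R',v_*\lambda')$ and the map $S\to v_*S'$ of part (i) one gets a map $\units R\to v_*\units{R'}$ of unit group sheaves (here $v_*$ commutes with the finite-limit construction $\units{(\,\cdot\,)}$, so $v_*\units{R'}=\units{v_*R'}$), compatible with the $\lambda$-norm maps; since $v_*$ is left exact, taking kernels gives $N\to v_*N'$. To descend this to $T\to v_*T'$ I would check that the composite $\units R/\units S\to N\to v_*N'\to v_*T'$, with first arrow $x\mapsto x^\lambda x^{-1}$, vanishes: by part (i) it equals $\units R/\units S\to v_*(\units{R'}/\units{S'})\xrightarrow{v_*(x\mapsto x^{\lambda'}x^{-1})}v_*N'\to v_*T'$, and the last two arrows compose to $v_*$ of the zero map $\units{R'}/\units{S'}\to T'$. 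Hence the composite factors through the cokernel $T$ of $\units R/\units S\to N$, giving $T\to v_*T'$; applying $\Hoh^0(Y,-)$ together with $\Hoh^0(Y,v_*T')=\Hoh^0(Y',T')$ yields $\Hoh^0(T)\to\Hoh^0(T')$.

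For the second assertion of (ii), I would first observe that the square lifts to a commutative square of ringed topoi with involution: take $\hat\pi\colon(\bfX,\calO_X)\to(\bfY,R)$ and $\hat\pi'\colon(\bfX',\calO_{X'})\to(\bfY',R')$ with identity structure maps (as in the remark after Theorem~\ref{TH:pi-star-equiv}), and let $\hat v\colon(\bfY',R')\to(\bfY,R)$ carry $v$ together with the ring-with-involution map $R\to v_*R'$ of (i); then $\hat v\circ\hat\pi'=\hat\pi\circ u$, all coherence $2$-cells being identities in the \'etale and topological cases. Since transfer functors compose, transferring $(\pi_*A,\pi_*\tau)=\hat\pi_*(A,\tau)$ first along $\hat v$ and then along $\hat\pi'$ coincides with transferring $(A,\tau)$ along $u$ after transferring along $\hat\pi$, and the latter is $(A,\tau)$ by Theorem~\ref{TH:pi-star-equiv}; so it equals $(A',\tau')$. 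Inverting the equivalence of Corollary~\ref{CR:equiv-Az-with-inv} for $\pi'$, this shows $(\pi'_*A',\pi'_*\tau')$ is, up to isomorphism, the $\hat v$-transfer of $(\pi_*A,\pi_*\tau)$, namely $v^*(\pi_*A)\otimes_{v^*R}R'$ with involution $v^*(\pi_*\tau)\otimes\lambda'$. Now run Construction~\ref{CN:coarse-type}: choose a covering $U\to\ast_\bfY$ trivializing $\pi_*A$, giving $\psi,\sigma,g,h$ and $\veps=h^{-\lambda\tr}h\in N(U)$ with $\quo\veps\in T(U)$ representing $\ct_\pi(\tau)$; then $v^*U\to\ast_{\bfY'}$ is a covering, the base change of $v^*\psi$ along $v^*R\to R'$ trivializes $\pi'_*A'$ over $v^*U$, and since $\lambda'\tr$ on $\nMat{R'}{n\times n}$ is the base change of $\lambda\tr$ by part (i), the data computing $\ct_{\pi'}(\tau')$ from this covering are the images of $\sigma,g,h,\veps$ under the maps induced by $v^*R\to R'$. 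In particular the resulting unit is the image of $\veps$ in $\units{R'}(v^*U)$ and its class in $T'(v^*U)$ is the image of $\quo\veps$ under $T(U)\to v_*T'(U)=T'(v^*U)$; unwinding the definition of $\Hoh^0(T)\to\Hoh^0(T')$ by restriction to $U$, this is exactly the statement that $\ct_{\pi'}(\tau')$ is the image of $\ct_\pi(\tau)$.

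The step I expect to be the main obstacle is the middle of the last paragraph: making precise that the $\pi_*$-equivalences of Theorem~\ref{TH:pi-star-equiv} are genuinely compatible with the transfer functors along the four morphisms of the lifted square (the $2$-categorical bookkeeping of the coherence isomorphisms), and then matching the non-canonical coarse-type construction on both sides through a single pulled-back trivializing covering. Everything else — the functoriality of $(\,\cdot\,)_\#$, left-exactness of $v_*$, the adjunction identities $v_*(\,\cdot\,)(U)=(\,\cdot\,)(v^*U)$, and the tracing of classes through $T(U)\to v_*T'(U)$ — is routine diagram-chasing.
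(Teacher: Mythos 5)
Your proof is correct and follows the paper's broad strategy, but you establish the central base-change identity
$(\pi'_*A',\pi'_*\tau')\cong (v^*\pi_*A,v^*\pi_*\tau)\otimes_{v^*R}(R',\lambda')$
by a genuinely different mechanism. The paper directly builds a comparison morphism $v^*\pi_*A\otimes_{v^*R}R'\to\pi'_*A'$ from the counit maps and then proves it is an isomorphism by using Theorem~\ref{TH:pi-star-equiv} to reduce to the split case $A=\nMat{\calO_X}{n\times n}$. You instead lift the commuting square to a square of ringed topoi with involution (with $\hat\pi, \hat\pi', \hat v$ as in the remark following Theorem~\ref{TH:pi-star-equiv}) and deduce the identity from the compositionality of the transfer functors together with the fact that the $\hat\pi$-transfer inverts $\pi_*$. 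Both routes ultimately lean on Theorem~\ref{TH:pi-star-equiv} and are correct; yours is more conceptual but, as you rightly flag, pushes the work into verifying that the coherence $2$-cells of the lifted square are compatible with the transfer functors. The paper's hands-on construction sidesteps that bookkeeping entirely, at the cost of an explicit local computation. Your final step — pulling back the trivializing covering $U$ along $v$ and observing that the data $\psi,\sigma,g,h,\veps$ in Construction~\ref{CN:coarse-type} transport along $v^*R\to R'$ to compute $\ct_{\pi'}(\tau')$ — coincides with the paper's. Parts (i) and the first assertion of (ii) are the routine diagram-chases the paper declares straightforward, and you have them right. One small citation slip: that the $\hat\pi$-transfer is inverse to $\pi_*$ is the remark after Theorem~\ref{TH:pi-star-equiv} (and Corollary~\ref{CR:equiv-Az-with-inv} for the involution-equipped version), not the theorem itself.
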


\begin{proof}
	Part (i) and the first sentence of (ii)
	are straightforward from the definitions.
	We turn to prove the last statement of (ii).

	We first claim that
	\begin{equation}\label{EQ:base-change-w-inv}
	(\pi'_*A',\pi'_*\tau')\cong (v^*\pi_*A,v^*\pi_*\tau)\otimes_{v^*R}(R',\lambda')\ .
	\end{equation}
	To see this, observe that the relevant counit maps induce a ring homomorphism
	\begin{align*}
	\pi'^* (v^*\pi_*A \otimes_{v^*R} R') 
	&= \pi'^*v^*\pi_*A \otimes_{\pi'^*v^*\pi_*\calO_X} \pi'^*\pi'_*\calO_{X'} \\
	&=u^*\pi^*\pi_*A \otimes_{u^*\pi^*\pi_*\calO_X} \pi'^*\pi'_*\calO_{X'} 
	\to u^*A\otimes_{u^*\calO_X}\calO_{X'}=A'
	\end{align*}
	which respects the relevant involutions. This morphism is adjoint to a morphism
	\begin{equation}\label{EQ:base-change}
	v^*\pi_*A \otimes_{v^*R} R'\to \pi'_*A'
	\end{equation}
	which we claim to be the desired isomorphism.
	This is easy to see when $A=\nMat{\calO_X}{n\times n}$.
	In general, by Theorem~\ref{TH:pi-star-equiv},
	there exists a covering $U\to Y$ such that $A$ becomes a matrix
	algebra after pulling back to $X_U$. Thus, $(v^*\pi_*A \otimes_{v^*R} R')_{Y'_U}\to (\pi'_*A')_{Y'_U}$
	is an isomorphism, and we conclude that so does \eqref{EQ:base-change}.
	
	With \eqref{EQ:base-change-w-inv} at hand,
	let $U,\psi,\sigma,g,h,\veps$ be as in
	Construction~\ref{CN:coarse-type}, applied to $(A,\tau)$. 
	We may assume that $U$ is represented
	by a covering of $Y$, denoted $U\to Y$. 
	Let $U'\to Y'$ be the pullback of $U\to Y$ along $v:Y'\to Y$,
	which corresponds to the sheaf $v^*U$ in $\bfY'$.
	Let $\psi'=v^*\psi\otimes_{v^*R_U}\id_{R'_{U'}}$
	and let $\sigma'=\psi'\tau'\psi'^{-1}=v^*\sigma\otimes_{v^*R}\lambda'$.
	The right square of (i) induces canonical maps 	
	$v^*\PGL_n(R)=\PGL_n(v^*R)\to \PGL_n(R')$,
	$v^*\GL_n(R)=\GL_n(v^*R)\to \GL_n(R')$ and $v^*N\to N'$ (notice that $v^*$ is exact).
	Let $g'$ be the image of $v^*g\in v^*\PGL_n(R)(U')$ in $\PGL_n(R')(U')$,
	and define $h'\in \GL_n(R')(U')$ and $\veps'\in N'(U')$ similarly.
	It is easy to check that we can apply Construction~\ref{CN:coarse-type}
	to $(A',\tau')$ using $U',\psi',\sigma',g',h',\veps'$.
	Consequently, the image of $\veps'$ in $T'(U')$
	agrees with the image of $v^*\veps$, which is exactly what we need
	to prove.
\end{proof}

Endowing $Z$ with the trivial involution, we can apply Lemma~\ref{LM:morphism-between-quotients}
with the square
	\begin{equation}\label{EQ:ramification-square}
	\xymatrix{
	Z \ar@{^(->}[r]^i \ar[d]^{\pi'} &
	X \ar[d]^\pi \\
	W \ar@{^(->}[r]^j &
	Y	
	}
	\end{equation}
where $\pi'$ is the restriction of $\pi$ to $Z$.
By Example~\ref{EX:T-tot-ramified}, the sheaf $T'$ is just $\mu_{2,W}$
and hence Lemma~\ref{LM:morphism-between-quotients}(ii) gives
rise to a morphism
\[
\Psi: T\to j_*\mu_{2,W} \ .
\]

\begin{lem}\label{LM:structure-of-T-interesting-cases}
$\Psi: T\to j_*\mu_{2,W}$ is an isomorphism of abelian sheaves on $Y$.
\end{lem}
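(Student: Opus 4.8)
\textbf{Proof strategy for Lemma~\ref{LM:structure-of-T-interesting-cases}.}
The plan is to prove $\Psi$ is an isomorphism by checking it on stalks, reducing in this way to a local algebra computation that invokes Theorem~\ref{thm:local-involution} and Proposition~\ref{PR:Hilbert-ninety}. Both in the scheme-theoretic case, working with the \'etale site, and in the topological case, the topos $\bfY$ has enough points, so it suffices to show that $\Psi$ induces an isomorphism on every stalk. A point of $\bfY$ over $y\in Y$ corresponds to a strictly henselian local ring $S_{\bar y}$ (the strict henselization of $\calO_{Y,y}$, or simply $\C$ in the topological case), and by Corollary~\ref{CR:henselian-ring-with-involution} the corresponding stalk $R_{\bar y}$ of $R$ is a finite product of strictly henselian rings carrying the involution $\lambda$, whose fixed subring is $S_{\bar y}$. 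The stalk of $N$ is $\{r\in\units{R_{\bar y}}\suchthat r^\lambda r=1\}$, the stalk of $\units R/\units S$ is $\units{R_{\bar y}}/\units{S_{\bar y}}$, and the stalk of $T$ is the cokernel of $x\mapsto x^{-1}x^\lambda$ on these, since $v^*$ (for $v$ a point) is exact.

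First I would dispose of the unramified points. If $y\in U=Y- W$, then by Proposition~\ref{PR:ramification-in-schemes}\ref{item:PR:ram:stalk} (resp.\ the topological analogue) $R_{\bar y}$ is a quadratic \'etale $S_{\bar y}$-algebra, so Proposition~\ref{PR:Hilbert-ninety}\ref{item:LM:Hilbert-ninety:norm} gives that every element of $\lambda$-norm $1$ is of the form $a^{-1}a^\lambda$; hence the stalk of $T$ vanishes there. On the other hand the stalk of $j_*\mu_{2,W}$ at a point of the open set $U$ is also trivial, since $U\cap W=\emptyset$. So $\Psi$ is an isomorphism (of zero sheaves) over $U$. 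It remains to handle $y\in W$. Here Proposition~\ref{PR:ramification-in-schemes}\ref{item:PR:ram:lambda} (resp.\ Proposition~\ref{PR:ramification-top}) shows $\pi^{-1}(y)=\{x\}$ is a single point fixed by $\lambda$, and by Theorem~\ref{thm:local-involution} the stalk $R_{\bar y}$ is \emph{local} strictly henselian, with $\quo\lambda=\id$ on its residue field. I then compute directly: for a local ring $R_{\bar y}$ with involution inducing the identity on the residue field and fixed ring $S_{\bar y}$, the norm-one subgroup $N_{\bar y}$ satisfies $r^\lambda r=1$, so $(r^{-1})^\lambda r^{-1}=1$ too, giving $r^2=(r^{-1})^{\lambda}(r^{-1})^{-1}$ in the image of $x\mapsto x^\lambda x^{-1}$; combined with $N_{\bar y}\to N_{\bar y}/\im$ being $2$-torsion (Proposition~\ref{PR:T-is-two-torsion}) one shows the quotient is exactly $\mu_{2,R_{\bar y}}$, which since $2$ is invertible is $\{\pm1\}$. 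Meanwhile the stalk of $j_*\mu_{2,W}$ at $y\in W$ is the stalk of $\mu_{2,W}$ at $y$ (since $j$ is a closed immersion, resp.\ inclusion of a closed subspace, and $y$ is already in $W$), which is $\{\pm1\}$ because $2$ is invertible on $W$.

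The last thing to verify is that $\Psi$ actually realizes this identification, i.e.\ that the map $T\to j_*\mu_{2,W}$ built from Lemma~\ref{LM:morphism-between-quotients} via the square \eqref{EQ:ramification-square} is, on stalks at $y\in W$, the identity of $\{\pm1\}$ under the two computations above. This follows by tracing the construction: the base change of the involution data along $Z\hookrightarrow X$ restricts $(R_{\bar y},\lambda)$ to $(\calO_{Z,z},\id)$, under which a norm-one element $\veps$ of $R_{\bar y}$ maps to its reduction in $\mu_{2,\calO_{Z,z}}\iso\mu_{2,\calO_{W,y}}$, and this is precisely the map induced on cokernels. I expect the main obstacle to be this last bookkeeping step --- keeping straight the several suppressed natural isomorphisms (the $\alpha_*,\alpha^*$ and the $\nu$'s) while identifying the stalk of $\Psi$ with the reduction map, and being careful that the reduced scheme structure on $Z$ and $W$ does not introduce a discrepancy. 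Once that is pinned down, $\Psi$ is a map of sheaves that is an isomorphism on all stalks, hence an isomorphism.
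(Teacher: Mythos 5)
Your strategy matches the paper's: reduce to stalks at topos-theoretic points of $\bfY$, dispose of the points lying over the unramified locus $U$ using Hilbert 90 (Proposition~\ref{PR:T-structure-unramified-pi}), and handle points over $W$ using Theorem~\ref{thm:local-involution} to see that $p^*R$ is local with $\quo\lambda=\id$ on its residue field. The identification of the surjectivity of $p^*\Psi$ (that $\pm 1\in p^*N$ go to $\pm1$ in the target, because $N\to j_*N'$ is a restriction of $R\to j_*\calO_W$) is also as in the paper.

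However, there is a genuine gap in your computation at a ramified point. You cite the $2$-torsion property of $T$ and then write ``one shows the quotient is exactly $\mu_{2,R_{\bar y}}$'' --- but the calculation you give (that $r^2$ lies in the image of $x\mapsto x^{-1}x^\lambda$) is literally a restatement of Proposition~\ref{PR:T-is-two-torsion} and by itself says nothing about the \emph{size} of $p^*T$; a $2$-torsion quotient could a priori be trivial or much larger than $\mu_2$. The step you have omitted is the one that actually bounds the stalk: since $p^*R$ is local and $2\in\units{p^*R}$, for any $\veps\in p^*N$ at least one of $1+\veps$, $1-\veps$ is a unit (their sum is $2$, and in a local ring the nonunits form an ideal). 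If $\beta:=1+\veps$ is a unit, then using $\veps^\lambda=\veps^{-1}$ one computes $\veps\beta^\lambda=\beta$, i.e.\ $\veps=(\beta^{-1})^\lambda\beta$, which lies in the image of $x\mapsto x^{-1}x^\lambda$, so $\quo\veps=\quo{1}$ in $p^*T$; symmetrically, if $1-\veps$ is a unit then $\quo\veps=\quo{-1}$. Hence $p^*T\subseteq\{\quo{1},\quo{-1}\}$, and together with the surjectivity you already noted this shows $p^*\Psi$ is a bijection. Without this step the argument is incomplete: you have established that $p^*T$ is $2$-torsion and admits a surjection onto $\{\pm1\}$, but not that those two facts pin it down.
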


\begin{proof}
	To show that $\Psi$ is an isomorphism, it is enough to check the stalks.
	The topos-theoretic points of $\bfY$ 
	are recalled in the proofs of Corollaries~\ref{CR:finite-etale-morphism}
	and~\ref{CR:finite-top-morphism}; they are in correspondence
	with the set-theoretic points of $Y$.
	
	Let $p:\mathbf{pt}\to \bfY$ be a point, corresponding to $y\in Y$.
	Since $p^*$ is exact, $p^*N$ is the kernel
	of $x\mapsto x^\lambda x:\units{p^*R}\to \units{p^*R}$
	and $p^*T$ is the cokernel
	of $x\mapsto x^{-1}x^\lambda:\units{p^*R}\to p^*N$.
	
	Suppose that $y\notin W$. Then, since $j:W\to Y$
	is a closed embedding, $p^*j_*\mu_{2,W}=0$.
	On the other hand, since $\pi$ is unramified at $y$, it is unramified
	at a neighborhood of $y$ and hence $p^*T=0$ by Proposition~\ref{PR:T-structure-unramified-pi}.
	Thus, $p^*\Psi:p^*T\to p^*j_*\mu_{2,W}$
	is an isomorphism.
	
	Suppose henceforth that $y\in W$. Then $\pi$ is ramified at $y$.
	We claim that $p^*R$ is local and $\lambda$ induces the identity
	map on its residue field. This is evident from the definitions
	in the topological case, see Proposition~\ref{PR:ramification-top}.
	In the scheme-theoertic case, this follows from condition~\ref{item:PR:ram:stalk}
	in Proposition~\ref{PR:ramification-in-schemes} and Theorem~\ref{thm:local-involution}
	after noting that $\Spec p^*R=X\times_Y \Spec \calO_{Y,y}^{\mathrm{sh}}$.

	Now, we have $p^*j_*\mu_{2,W}=\{\pm 1\}$.
	With the notation of 	Lemma~\ref{LM:morphism-between-quotients},
	applied to the square \eqref{EQ:ramification-square}, 
	the morphism $N\to j_*N'$
	is just a restriction of the morphism 
	$R\to j_*R'=j_*\calO_W$. 
	This implies that the images of $-1,1\in p^*N$ in $p^*T$
	are mapped under $p^*\Psi$
	to $-1,1\in p^*j_*\mu_{2,W}$, respectively, so $p^*\Psi$ is surjective.
	
	To finish, we show that $p^*T$ consists of at most $2$ elements.
	Every $t \in p^*T$ is
	represented by some $\veps\in p^*N$.
	Since $2\in \units{p^*R}$ and $p^*R$ is local,
	either $1+\veps$ or $1-\veps$ is invertible.
	Suppose $\beta:=1+\veps\in \units{p^*R}$.
	Since $\veps^\lambda=\veps^{-1}$, we have
	$\veps\beta^\lambda=\beta$, or rather, $\veps=(\beta^{-1})^\lambda\beta$,
	which implies $t=\quo{1}$.
	Similarly, when $1-\veps\in\units{p^*R}$, we find that ${t}=\quo{-1}$.
	It follows that $p^*T=\{\quo{1},\quo{-1}\}$ and the proof is complete.	
\end{proof}

We finally prove the main result of this subsection.

\begin{theorem}\label{TH:types-for-schemes}
	With Notation~\ref{NT:types-in-specific-cases},
	Let $(A,\tau)$ and $(A',\tau')$ be Azumaya $\calO_X$-algebras
	with $\lambda$-involutions, and let $f_\tau,f_{\tau'}\in \Hoh^0(Z,\mu_{2,Z})$
	be as in Lemma~\ref{lem:defftau}. Then:
	\begin{enumerate}[label=(\roman*)]
		\item $\tau$ and $\tau'$ have the same type if and only if $f_\tau=f'_\tau$.
		\item 
		There is a group isomorphism $\Phi: \Hoh^0(T)\to \Hoh^0(Z,\mu_{2,Z})$
		such that $\Phi(\ct_\pi(\tau))=f_\tau$ for all $(A,\tau)$.
	\end{enumerate}
\end{theorem}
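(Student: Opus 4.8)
The plan is to combine the isomorphism $\Psi\colon T\xrightarrow{\sim} j_*\mu_{2,W}$ of Lemma~\ref{LM:structure-of-T-interesting-cases} with the isomorphism of schemes, resp.\ topological spaces, $\pi'\colon Z\to W$ furnished by Proposition~\ref{PR:Z-to-W-homeomorphism}, and then to identify the composite with the assignment $\tau\mapsto f_\tau$. Concretely, the isomorphism $Z\cong W$ induces an isomorphism of sheaves $\mu_{2,Z}\cong \mu_{2,W}$ on $W$ (both are constant sheaves $\{\pm1\}$ since $2$ is invertible, but we must use the canonical identification coming from $\pi'$), hence an isomorphism on global sections $\Hoh^0(Z,\mu_{2,Z})\cong \Hoh^0(W,\mu_{2,W})=\Hoh^0(Y,j_*\mu_{2,W})$. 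Composing with $\Hoh^0(\Psi)^{-1}\colon \Hoh^0(Y,j_*\mu_{2,W})\xrightarrow{\sim}\Hoh^0(T)$ and inverting yields a group isomorphism $\Phi\colon\Hoh^0(T)\to\Hoh^0(Z,\mu_{2,Z})$. Part (i) then follows immediately once (ii) is established: by Theorem~\ref{TH:ct-determines-local-iso} (whose hypotheses hold by Notation~\ref{NT:types-in-specific-cases}, $2$ being invertible in the scheme case and $\units S$ having square roots locally by Example~\ref{EX:SHL_implies_SRL}) together with Corollary~\ref{CR:coarse-type-determines-type}, two $\lambda$-involutions of the same degree have the same type iff they have the same coarse type; passing to matrix algebras via Proposition~\ref{PR:type-determines-coarse-type} reduces to that case, and $\Phi$ being injective turns $\ct_\pi(\tau)=\ct_\pi(\tau')$ into $f_\tau=f_{\tau'}$.

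So the crux is the identity $\Phi(\ct_\pi(\tau))=f_\tau$, equivalently $\Psi(\ct_\pi(\tau))$ equals $f_\tau$ under the identification $Z\cong W$. By Lemma~\ref{LM:morphism-between-quotients}(ii) applied to the ramification square \eqref{EQ:ramification-square}, the image of $\ct_\pi(\tau)$ under $\Hoh^0(T)\to\Hoh^0(T')=\Hoh^0(W,\mu_{2,W})$ is $\ct_{\pi'}(\tau')$, where $(A',\tau')$ here denotes the base change of $(A,\tau)$ to $Z$ — which is exactly the algebra $(A_Z,\tau_Z)$ appearing in the proof of Lemma~\ref{lem:defftau}. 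Since $Z$ carries the trivial involution and $\pi'\colon Z\to W$ is a trivial exact quotient (with $R'=S'=\calO_Z$, $\lambda'=\id$), we are reduced to a purely local statement over $Z$: for an Azumaya $\calO_Z$-algebra with involution of the first kind $(B,\rho)$ on a scheme, resp.\ space, $Z$ on which $2$ is invertible, the coarse type $\ct(\rho)\in\Hoh^0(\mu_{2,Z})$ coincides with the function $z\mapsto f_\rho(z)$. This in turn may be checked stalkwise, hence it suffices to treat the case where $Z=\Spec K$ is the spectrum of a field with $2\in\units K$ (or a strictly henselian local ring, using Example~\ref{EX:SHL_implies_SRL} and that an Azumaya algebra over such a ring is a matrix algebra).

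It then remains to verify, for a matrix algebra $\nMat{K}{n\times n}$ with involution of the first kind $\rho\colon x\mapsto h x^{\tr} h^{-1}$ where $h^{\tr}=\veps h$, $\veps\in\{\pm1\}$, that Construction~\ref{CN:coarse-type} produces precisely $\veps\in\mu_{2,K}=\Hoh^0(\mu_{2,K})$, and that $\veps=+1$ iff $\rho$ is orthogonal, $\veps=-1$ iff $\rho$ is symplectic. The first point is the computation already implicit in Construction~\ref{CN:coarse-type}: taking $\psi=\id$, $\sigma=\rho$, one has $g=\lambda\tr\circ\rho$ represented by $h$ itself, so $\veps_{\mathrm{Construction}}=h^{-\lambda\tr}h=h^{-\tr}h=\veps h^{-1}h=\veps$ (using $\lambda=\id$ so $\lambda\tr=\tr$). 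The second point is the classical dictionary between the sign of a symmetric vs.\ alternating bilinear form and the type of its adjoint involution, as in \cite[\S2A]{knus_book_1998-1}: over a field in which $2$ is invertible, $h$ symmetric ($\veps=1$) gives an orthogonal involution and $h$ alternating ($\veps=-1$) a symplectic one; this matches the $\dim(A_\pm)$ criterion used to define $f_\rho$ in Lemma~\ref{lem:defftau}. Finally one checks that $\Phi$ is a group homomorphism: it is a composite of the group isomorphisms $\Hoh^0(\Psi)^{-1}$ and the one induced by $Z\cong W$, and $\Hoh^0(\Psi)$ is a homomorphism because $\Psi$ is, by Lemma~\ref{LM:structure-of-T-interesting-cases}. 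The main obstacle is bookkeeping the several identifications ($T\cong j_*\mu_{2,W}$, $Z\cong W$, base change of $(A,\tau)$ to $Z$ versus restriction, and $\lambda\tr$ versus $\tr$ when $\lambda=\id$) so that the composite really is $f_\tau$ and not, say, $f_\tau$ composed with a nontrivial automorphism of $\{\pm1\}$; but since the only automorphism of $\mu_2$ is trivial, this reduces to checking signs on one matrix, which the above does.
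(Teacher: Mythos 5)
Your proof is correct and follows essentially the same route as the paper's: reduce via Lemmas~\ref{LM:morphism-between-quotients} and~\ref{LM:structure-of-T-interesting-cases} to the totally-ramified case $\pi'\colon Z\to W\cong Z$ with trivial involution, then verify the identity $\ct(\rho)=f_\rho$ by an explicit matrix computation of the form $h^{-\lambda\tr}h=\veps$. The only cosmetic difference is that where you invoke a "stalkwise" reduction to the case of a field or strictly henselian ring, the paper instead refines the trivializing covering $U$ to $U_1\sqcup U_{-1}$ on which $\veps$ is constant and specializes to the residue field $k(u)$ of a preimage $u$ of a given $x\in Z$; this accomplishes the same reduction while making explicit the compatibility of $\ct$ with base change that you leave implicit.
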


\begin{remark}
	We do not know whether every $f\in \Hoh^0(Z,\mu_{2,Z})$ arises as $f_\tau$
	for some $(A,\tau)$, see Remark~\ref{RM:are-all-coarse-types-realizable}.
\end{remark}

\begin{proof}
	By Theorem~\ref{TH:ct-determines-local-iso} and Example~\ref{EX:SHL_implies_SRL}, in order prove (i), it is enough to
 	prove that $\ct_\pi(\tau)=\ct_\pi(\tau')$, and this follows if we prove (ii).
 	
 	Apply Lemma~\ref{LM:morphism-between-quotients}
 	and its notation to the square \eqref{EQ:ramification-square}.
 	The lemma gives rise to a morphism of sheaves
 	$T\to j_*T'=j_*\mu_{2,W}$, which is an isomorphism
 	by Lemma~\ref{LM:structure-of-T-interesting-cases}. This in turn induces
 	an isomorphism
 	\begin{equation*} \label{EQ:H-zero-T-iso} 
 	\Hoh^0(Y,T)\to \Hoh^0(Y,j_*T')=\Hoh^0(W,T')\ ,
 	\end{equation*} 
 	such that $\ct_{\pi}(A,\tau)$
 	is mapped to $\ct_{\pi'}(A_Z,\tau_Z)$, where $(A_Z,\tau_Z)$
 	denotes the base change of $(A,\tau)$ to $Z$.
 	Since $\pi':Z\to W$ is an isomorphism, 
 	this gives rise to an isomorphism
 	\[
 	\Hoh^0(Y,T)\to \Hoh^0(W,T')=\Hoh^0(W,\mu_{2,W})\cong \Hoh^0(Z,\mu_{2,Z})\ ,
 	\]
 	which we take to be $\Phi$.
 	It remains to show that $\Phi(\ct_\pi(A,\tau))=f_\tau$.
 	
 	Since $f_{\tau_Z}=f_\tau$, and since the image of $\ct_{\pi}(A,\tau)$
 	in $\Hoh^0(T')$ is $\ct_{\pi'}(A_Z,\tau_Z)$,
 	it is enough to show that the image of $ \ct_{\pi'}(A_Z,\tau_Z)\in \Hoh^0(W,T') $
 	in $\Hoh^0(Z,\mu_{2,Z})$
 	is $f_{\tau_Z}$. 
 	To this end, we replace $\pi:X\to Y$ and $(A,\tau)$
 	with $\pi':Z\to W$ and $(A_Z,\tau_Z)$.
 	Now, $\lambda$ is the trivial involution and we may assume that $Y=X$
 	and $\pi$ is the identity map.
 	The map $\Hoh^0(W,T')\to \Hoh^0(Z,\mu_{2,Z})$ is just
 	the identity
 	map $\Hoh^0(X,\mu_{2,X})\to \Hoh^0(X,\mu_{2,X})$, see Example~\ref{EX:T-tot-ramified},
 	and the proof reduces to showing that $\ct(\tau)=f_\tau$.

 	Let $x\in X$, and let $U,\sigma,h,\veps$ be as in Construction~\ref{CN:coarse-type},
 	applied to $(A,\tau)$.
 	We may assume that the sheaf $U$ in $\bfY=\bfX$
 	is represented by a covering $U\to X$.
 	Since $\lambda$ is the trivial involution, $\veps\in N(U)=\mu_{2,X}(U)$,
 	and since
 	$\mu_{2,X}$ is the constant sheaf $\{\pm 1\}$ on $X$,
 	there is a covering $U_1\sqcup U_{-1}\to U$ 
 	such that $\veps|_{U_{-1}}=-1$ and $\veps|_{U_{1}}=1$.
 	
	There is $c\in \{\pm 1\}$ and $u\in U_c$ such that $x$
	is the image of $u$ under $U_c\to X$. It is immediate from
	the definition of $t:=\ct(\tau)$ that $t(x)=c$.	
	Let $k(u)$ denote
	the residue field of $u$.
	By construction, $(A_{U_c},\tau_{U_c})\cong( \nMat{\calO_X}{n\times n},\sigma)$,
	where $\sigma$ is given section-wise by $x\mapsto (hx h^{-1})^\tr$
	and $c h^{\tr}= h$. Thus, $\tau_{U_c}$ is orthogonal when $c=1$
	and symplectic when $c=-1$.
	The same applies to $\tau_{k(u)}:A_{k(u)}\to A_{k(u)}$.
	Since $(A_{k(u)},\tau_{k(u)})=(A_{k(x)},\tau_{k(x)})\otimes_{k(x)}(k(u),\id)$,
	it follows that $\tau_{k(x)}:A_{k(x)}\to A_{k(x)}$
	is orthogonal when $t(x)=1$ and symplectic when $t(x)=-1$.
	This means $t=f_\tau$, so we are done.
\end{proof}

\section{Brauer Classes Supporting an Involution}
\label{sec:Saltman}

\subsection{Introduction}

Let $K$ be a field and let $\lambda:K\to K$ be an involution
with fixed field $F$.
The central simple $K$-algebras
admitting a $\lambda$-involution were characterized 
by Albert, Riehm and Scharlau, see for instance \cite[Thm.~3.1]{knus_book_1998-1}, who proved:

\begin{thm*}
	Let $A$ be a central simple $K$-algebra. Then:
	\begin{enumerate}[label=(\roman*)]
	\item (Albert) When $\lambda=\id$, $A$ admits a $\lambda$-involution
	if and only if $2[A]=0$ in $\Br(K)$.
	\item (Albert--Riehm--Scharlau) 
	When $\lambda\neq \id$, $A$ admits a $\lambda$-involution
	if and only if $[\cores_{K/F}(A)]=0$ in $\Br(F)$. 
	\end{enumerate}
\end{thm*}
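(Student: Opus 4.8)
The plan is to reduce the classical Albert--Riehm--Scharlau theorem to the results already developed in this paper, now specialized to the case where the base is $\Spec F$ with its \'etale ringed topos and $\lambda$ is either trivial (the Albert case) or the involution coming from a quadratic \'etale extension $K/F$ (the unitary case). In either situation, take $\bfX = \Sh((\Spec K)_\et)$ with the involution induced by $\lambda$, and take $\pi:\bfX\to\bfY$ to be the exact quotient induced by the good $C_2$-quotient $\Spec K\to\Spec F$, so that $R=\pi_*\calO_\bfX$ corresponds to $K$ and $S=\calO_\bfY$ to $F$ (Example~\ref{EX:important-exact-quo-scheme} and Theorem~\ref{TH:important-exact-quotients}). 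Since a field is a strictly henselian-free but certainly connected situation, Corollary~\ref{CR:equiv-Az-with-inv} identifies Azumaya $\calO_\bfX$-algebras with a $\lambda$-involution with central simple $K$-algebras with a $\lambda$-involution in the usual sense.

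First I would record the two transfer-map identifications promised in the introduction: in the trivial case, the transfer map $\transf:\Br(K)\to\Hoh^2_\et(F,\Gm)$ equals multiplication by $2$ on $\Br(K)=\Hoh^2_\et(\Spec F,\Gm)$, and in the unitary case $\transf = \cores_{K/F}$. These are the degenerate-case computations referenced around Theorem~\ref{TH:Saltman-ramified}; since $\Spec F$ is a nonsingular variety, the correction class $\Phi(t)\in\Hoh^2(Y,\Gm)$ vanishes for every type $t$. Next, I would invoke the main existence theorem of Section~\ref{sec:Saltman}, Theorem~\ref{TH:Saltman-ramified}: a Brauer class $\alpha=[A]\in\Br(K)$ is Brauer equivalent to an algebra carrying a $\lambda$-involution of type $t$ if and only if $\transf(\alpha)=\Phi(t)=0$. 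Combining with the two identifications above, $[A]$ supports some $\lambda$-involution (on a possibly different representative) exactly when $2[A]=0$ (trivial case) or $\cores_{K/F}[A]=0$ (unitary case).

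It remains to upgrade ``some representative of $[A]$ admits a $\lambda$-involution'' to ``$A$ itself admits a $\lambda$-involution,'' which is the point at which the field hypothesis is essential and is the step I expect to be the main obstacle in presentation (though the underlying fact is classical). Over a field, every Brauer class has a \emph{unique} division algebra representative, and $A\cong\nMat{D}{m\times m}$ for that $D$; an involution on $D$ of a given kind extends to $\nMat{D}{m\times m}$ via the transpose-conjugate construction, i.e.\ $\tau\mapsto\tau\tr$ in the notation of Example~\ref{EX:lambda-tr} and Subsection~\ref{subsec:types}, using that a symmetric bilinear (resp.\ alternating, resp.\ hermitian) form of rank $m$ exists over $D$. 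Conversely, if $\nMat{D}{m\times m}$ admits a $\lambda$-involution, so does $D$, by a standard descent argument on the centralizer of a maximal idempotent, or directly because the type is detected on $D$. Hence over a field the existence question does not depend on the representative, and the statement follows. I would phrase this last paragraph carefully, since the analogous step genuinely fails for general Azumaya algebras---this is precisely the ``$\deg A' = 2\deg A$ cannot be improved'' phenomenon that motivates the rest of the paper---so the proof should flag that it is using the field hypothesis exactly here.
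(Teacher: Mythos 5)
The paper does not supply a proof of this statement; it is quoted as a classical result with the citation \cite[Thm.~3.1]{knus_book_1998-1}. Your proposal to \emph{derive} it from the paper's Theorem~\ref{TH:Saltman-ramified} is therefore an interesting cross-check that the general machinery specializes as advertised, but the last paragraph contains a genuine gap.

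The cohomological part of the argument is correct. Taking $\bfX=\Sh((\Spec K)_\et)$ and the exact quotient over $\Spec F$, Proposition~\ref{PR:Phi-trivial} gives $\Phi=0$ in both the trivial and the quadratic-\'etale case, and Examples~\ref{EX:mult_by_two} and~\ref{EX:transfer-unramified-schemes} identify $\transf_\lambda$ with multiplication by $2$ or with $\cores_{K/F}$. Theorem~\ref{TH:Saltman-ramified} then correctly yields: \emph{some} representative $A'\in[A]$ admits a $\lambda$-involution if and only if $2[A]=0$, resp.\ $\cores_{K/F}[A]=0$. That is the ``Brauer-class'' form of the theorem.

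The gap is in the upgrade to ``$A$ itself admits a $\lambda$-involution.'' The nontrivial half of that upgrade is the implication: if $M_{m'}(D)$ admits a $\lambda$-involution for some $m'$, then $D$ does. This is precisely Albert's theorem on converting anti-automorphisms into involutions, and it is essentially the entire classical content of the statement. Your proposed justifications do not close the gap. A ``descent argument on the centralizer of a maximal idempotent'' does not get off the ground, because a rank-one idempotent $e$ is in general not fixed by $\tau$ (indeed $\tau$ sends right ideals to left ideals), so $\tau$ does not restrict to $eAe$. The alternative phrase ``because the type is detected on $D$'' presupposes that $D$ already carries an involution whose type one compares, which is what you are trying to establish. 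The correct argument starts from the isomorphism $A\cong(\Lambda A)^{\op}$ furnished by the Brauer-class condition, obtains a $\lambda$-semilinear anti-automorphism $\phi$ of $A$, writes $\phi^2=\mathrm{Int}(u)$ via Skolem--Noether, and then carefully normalizes (using that $u\phi(u)\in F^\times$ modulo $N_{K/F}(K^\times)$ is constrained) to produce a $v$ with $(\mathrm{Int}(v)\circ\phi)^2=\id$. If you reprove that from scratch here, your derivation no longer saves any work relative to the classical proof; if you instead cite it, the whole derivation collapses back to the citation, which is what the paper does.
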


Here, $\cores_{K/F}(A)$ is the \emph{corestriction algebra} of $A$,
whose definition we recall below.

\medskip

The Albert--Riehm--Scharlau Theorem does not, in general, hold if we replace $K$ with an arbitrary ring. However, in
\cite{saltman_azumaya_1978}, Saltman showed that the Brauer classes admitting a representative with a
$\lambda$-involution can still be characterized similarly.

\begin{thm*}[Saltman {\cite[Th.~3.1]{saltman_azumaya_1978}}] 
 Let $R$ be a ring, let $\lambda:R\to R$ be an involution and let $S$ be the fixed ring of $\lambda$. Let $A$ be an
 Azumaya $R$-algebra. Then:
 \begin{enumerate}[label=(\roman*)]
 \item When $\lambda=\id$, there exists $A'\in [A]$ such that $A'$ admits a
 $\lambda$-involution if and only if $2[A]=0$ in $\Br(R)$.
 \item When $R$ is quadratic \'etale over $S$,
 there exists $A'\in [A]$ such that $A'$ admits a
 $\lambda$-involution if and only if $[\cores_{R/S}(A)]=0$ in $\Br(S)$. 
 \end{enumerate}
\end{thm*}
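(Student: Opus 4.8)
The plan is to deduce Saltman's theorem as a special case of our Theorem~\ref{TH:Saltman-ramified}, which characterizes Brauer classes containing an algebra with a $\lambda$-involution in terms of the transfer map $\transf: \Br(X) \to \Hoh^2_\et(Y, \Gm)$ and the classes $\Phi(t) \in \Hoh^2(Y, \Gm)$ attached to types. First I would observe that the two cases of the theorem correspond, in the language of Section~\ref{sec:SitesWithInvolution}, to the two extreme kinds of exact quotient: when $\lambda = \id_R$ we take $\bfX = \bfY = \Sh((\Spec R)_\et)$ with the trivial quotient $\pi = \id$ (so $R = S = \calO_\bfX$), and when $R/S$ is quadratic \'etale we take $\bfX = \Sh((\Spec R)_\et)$, $\bfY = \Sh((\Spec S)_\et)$ and $\pi$ the exact quotient induced by the quadratic \'etale morphism $\Spec R \to \Spec S$ (Example~\ref{EX:important-exact-quo-scheme}, whose exactness is Theorem~\ref{TH:important-exact-quotients}\ref{item:TH:important-exact-quotients:schemes}). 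In both cases $Y = \Spec S$ is an affine scheme, hence its \'etale cohomology in degree $2$ with $\Gm$-coefficients receives $\transf$, and the general statement of Theorem~\ref{TH:Saltman-ramified} applies verbatim to Azumaya algebras over $R = \pi_*\calO_\bfX$ by Corollary~\ref{CR:equiv-Az-with-inv}.

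The key computational steps are then the two identifications asserted in the outline: that in the case of a trivial action $\transf([A]) = 2[A]$, and that in the unramified case $\transf([A]) = \cores_{R/S}[A]$, together with the vanishing of $\Phi(t)$ whenever it occurs. For the trivial-action case, the transfer map is built from the norm/squaring map on $\Gm$, and unwinding its definition (the map $\units{R} \xrightarrow{x \mapsto x^\lambda x} \units{S}$ becomes $\units{R} \xrightarrow{x \mapsto x^2} \units{R}$ since $\lambda = \id$) shows it induces multiplication by $2$ on $\Hoh^2$; here there is a single nontrivial type and one checks directly that its associated class $\Phi(t)$ is trivial, so the condition $\transf([A]) = \Phi(t)$ reduces to $2[A] = 0$. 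For the unramified case, one identifies the transfer with the cohomological corestriction $\cores_{R/S}: \Hoh^2_\et(\Spec R, \Gm) \to \Hoh^2_\et(\Spec S, \Gm)$ attached to the finite \'etale degree-$2$ map $\Spec R \to \Spec S$; that this cohomological corestriction computes the Brauer-group corestriction $\cores_{R/S}$ of \cite{saltman_azumaya_1978} is standard, and by Proposition~\ref{PR:T-structure-unramified-pi} the sheaf $T$ is trivial, so there is only one type and $\Phi$ is forced to be trivial. Finally the degree bound $\deg A' \le 2\deg A$ is part of Theorem~\ref{TH:Saltman-ramified}, recovering \cite[Thms.~4.1, 4.2]{knus_azumaya_1990}.

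The main obstacle I anticipate is the bookkeeping required to match our transfer map $\transf$, defined cohomologically via hypercoverings and the exact quotient formalism of Section~\ref{sec:SitesWithInvolution}, with the classical corestriction of central simple or Azumaya algebras, which \cite{saltman_azumaya_1978} and \cite{knus_azumaya_1990} define algebra-theoretically through the corestriction algebra $\cores_{R/S}(A)$. Since, as noted in the introduction, it is not even clear that a corestriction algebra can be sensibly defined when $\lambda$ is ramified, the comparison must be carried out purely on the level of cohomology classes in $\Hoh^2_\et(-, \Gm)$, using the compatibility of the cohomological corestriction with the one on Brauer groups for quadratic \'etale extensions --- a fact I would either cite or verify by reduction to the \v{C}ech description of $\Hoh^2$. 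Once this identification is in place, and once the triviality of $\Phi$ in these two cases is checked (which follows from $Y = \Spec S$ being affine together with the structure of $T$ above, since in both cases there is essentially only one type to worry about), the theorem follows immediately from Theorem~\ref{TH:Saltman-ramified}.
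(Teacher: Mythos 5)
Your proposal is correct and follows essentially the same route the paper takes: the paper explicitly states (just after Theorem~\ref{TH:Saltman-ramified}) that Saltman's theorem and its improvement in \cite{knus_azumaya_1990} are recovered by specializing Theorem~\ref{TH:Saltman-ramified} to the trivial and quadratic-\'etale exact quotients, with $\Phi = 0$ supplied by Proposition~\ref{PR:Phi-trivial}(i), (iii), and with $\transf_\lambda$ identified as multiplication by $2$ (Example~\ref{EX:mult_by_two}) or as the corestriction (Example~\ref{EX:transfer-unramified-schemes}), the latter matched to the algebra-theoretic corestriction of \cite{knus_azumaya_1990} by citation.

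One small inaccuracy worth flagging: your reason for $\Phi = 0$ in the trivial-quotient case is off. It is not because $Y = \Spec S$ is affine, nor really because ``there is a single nontrivial type''; when $\Spec R$ is disconnected there may be many coarse types, and affineness plays no role. The correct reason (Proposition~\ref{PR:Phi-trivial}(i)) is that $\Phi$ factors through $\Hoh^1(\units{R}/\units{S})$, and $\units{R}/\units{S} = 1$ when $R = S$, so $\Phi$ is identically zero regardless of how many types there are. Similarly in the unramified case the vanishing comes from $\Hoh^0(T) = 0$ (Hilbert 90, via Proposition~\ref{PR:T-structure-unramified-pi}), not from affineness. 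Since you need $\Phi \equiv 0$, not merely $\Phi(t) = 0$ for one chosen $t$, to conclude that existence of some $\lambda$-involution in $[A]$ is equivalent to $\transf([A]) = 0$, it is worth stating the vanishing uniformly rather than type-by-type.
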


A later proof by Knus, Parimala and Srinivas \cite[Thms.~4.1, 4.2]{knus_azumaya_1990} 
applies in the generality of schemes and also implies that the representative
$A'$ can be chosen such that $\deg A'=2\deg A$.

\medskip

In this section, we extend Saltman's theorem 
to locally ringed topoi with involution.
We note that our generalization implies in particular that Salman's theorem
applies to topological Azumaya algebras. Furthermore,
while Saltman's theorem assumes that $\lambda=\id$, or $R$ is quadratic \'etale over
the fixed ring of $\lambda$,
our result will apply without any restriction on the involution. 
Finally, we also characterize the possible types, or more precisely,
coarse types, of the involutions of the various representatives $A'\in [A]$.

\begin{notation} \label{NOT:BCSI}
 Throughout this section,
let 
$\bfX$ 
be a locally ringed topos with ring object $\calO_\bfX$
and involution $\lambda=(\Lambda,
\nu,\lambda)$, and let 
$\pi:\bfX\to \bfY$
be an exact quotient relative to $\lambda$, see~\ref{subsec:quotients}. 
Recall that such quotients arise, for instance, from $C_2$-quotients
of schemes or Hausdorff topological spaces as explained in Examples~\ref{EX:important-exact-quo-scheme}
and~\ref{EX:important-exact-quo-top}. In such cases, we shall
work with the original schemes, resp.\ topological spaces, denoted $X$ and $Y$, rather than the
associated ringed topoi.

As in Section~\ref{sec:types}, we write $S=\calO_\bfY$ and $R=\pi_*\calO_\bfX$.

We sometimes omit bases when evaluating
cohomology; the base will always be clear from the context.
If $A$ is an abelian group in $\bfX$, we shall freely
identify $\Hoh^i(\mathbf X, A)$, written $\Hoh^i(A)$, with $\Hoh^i(\mathbf Y, \pi_* A)$, written $\Hoh^i(\pi_*A)$, using Theorem~\ref{TH:vanishing}.
\end{notation}

\subsection{The Cohomological Transfer Map}
\label{subsec:coh-transfer}

The corestriction map considered in the aforementioned
theorems of Albert--Riehm--Scharlau and Saltman
is a special case of the cohomological
transfer map, which will feature in our generalization
of Saltman's theorem.

\begin{definition} \label{DEF:coho-cores} The \textit{cohomological $\lambda$-transfer} map
 $\transf_\lambda:\Hoh^2(\mathbf X, \units{\sh O_{\mathbf X}}) \to \Hoh^2(\mathbf Y, \units{\calO_\bfY})$ is the composite of
 the isomorphism $\Hoh^2(\bfX,\units{\calO_\bfX})\xrightarrow{\sim} \Hoh^2(\bfY,\units{\pi_*\calO_\bfX})$ 
 induced by $\pi_*$, see
 Theorem~\ref{TH:vanishing}, and the morphism $\Hoh^2(\bfY,\units{\pi_*\calO_\bfX})\to \Hoh^2(\bfY,\units{\calO_\bfY})$ induced by the $\lambda$-norm
 map $x\mapsto x^\lambda x:\units{\pi_*\calO_\bfX}\to \units{\calO_\bfY}$. When no confusion can arise, we
 shall omit $\lambda$, simply writing $\transf$ for $\transf_\lambda$, and calling it the transfer map.
\end{definition}

\begin{example} \label{EX:mult_by_two} 
If the involution 
$\lambda$ of 
$\bfX$ is weakly trivial and
$\pi:\bfX\to \bfY$
is the trivial quotient,
 see Example~\ref{EX:trivial-exact-quotient}, then 
	the $\lambda$-norm is the
 squaring map $x\mapsto x^2:\units{\pi_*\calO_\bfX}\to \units{\pi_*\calO_\bfX}=\units{\calO_\bfY}$,
 and so
 $\transf_\lambda : \Hoh^2(\sh O_\bfX^\times) \to \Hoh^2(\sh O_\bfY^\times)\cong 
 \Hoh^2(\units{\calO_\bfX})$
 is multiplication by $2$. 
\end{example}

\begin{example}\label{EX:transfer-unramified-schemes}
	Let $\pi:X\to Y$ be a quadratic \'etale morphism of schemes, and let 
	$\lambda:X\to X$ be the canonical $Y$-involution of $X$, given
	section-wise by $x^\lambda=\mathrm{Tr}_{X/Y}(x)-x$.
	We consider the exact quotient obtained from $\pi$ and $\lambda$
	by taking \'etale ringed topoi, see Example~\ref{EX:important-exact-quo-scheme}.
 In this case,
 the transfer map
 $\transf_{\lambda}:\Hoh^2_{\et}(X,\units{\calO_X})\to \Hoh^2_{\et}(Y,\units{\calO_Y})$ is, by definition, the
 corestriction map $\cores_{X/Y}:\Hoh^2_{\et}(X,\units{\calO_X})\to \Hoh^2_{\et}(Y,\units{\calO_Y})$.
	Moreover, $\cores_{X/Y}$ restricts to a map $\cores_{X/Y}:\Br(X)\to \Br(Y)$ which can be described explicitly on
 the level of Azumaya algebras: Let $A$ be an Azumaya $\calO_X$-algebra. The corestriction algebra
 $\cores_{X/Y}(A)$ is an Azumaya $\calO_Y$-algebra defined as the $\calO_Y$-subalgebra of
 $\pi_*(A \otimes_{\calO_X}\lambda^*A)$ fixed by the exchange automorphism, given by $x\otimes y\mapsto y\otimes x$ on
 sections. The map $\cores_{X/Y}:\Br(X)\to \Br(Y)$ is then given by $[A]\mapsto [\cores_{X/Y}(A)]$, see
 \cite[p.~68]{knus_azumaya_1990} (the diagram on that page contains a misprint, on the right column, both `$S$'s should
 be `$R$'s).
\end{example}

\begin{remark}\label{RM:no-corestriction}
 In contrast to the situation in Examples~\ref{EX:mult_by_two}
 and~\ref{EX:transfer-unramified-schemes}, we do not know whether
 \[\transf_\lambda:\Hoh^2(\mathbf X, \units{\sh O_{\mathbf X}}) \to \Hoh^2(\mathbf Y, \units{\calO_\bfY}) \] restricts to a map
 between the Brauer groups $\Br(\bfX,\calO_\bfX)\to \Br(\bfY,\calO_\bfY)$, even in the cases 
 induced by a good $C_2$-quotient of schemes $\pi:X\to Y$. 
	Some
 positive results appear in \cite[Lem.~5.1, Rmk.~5.2]{auel_parimala_suresh_2015}. 
 Also, when $R$ is locally free of rank $2$ over $S$,
 Ferrand \cite{ferrand_1998_norm_functors} constructs a universal norm functor taking $R$-algebras
 to $S$-algebras, which coincides with $\cores_{R/S}$
 when $R$ is quadratic \'etale over $S$, but it is {\it a priori}
 not clear whether it takes Azumaya $R$-algebras to Azumaya $S$-algebras in general.
 We hope to address this problem in a subsequent work.
	
 We further note that without assuming that $\pi$ is unramified, the construction of 
 Example~\ref{EX:transfer-unramified-schemes} may produce an algebra which is not Azumaya. For example, 
 it can be checked directly that $\cores_{R/S}(\nMat{R}{2\times 2})$ is
 not Azumaya over $S$ when $S=\C$, $R=\C[x]/(x^2)$, and $\lambda:R\to R$ is the $\C$-involution
 taking $x$ to $-x$.
\end{remark}

\begin{example} \label{EX:topological-free-transfer}
 In the case where $X$ is a Hausdorff topological space with a free $C_2$-action and $\pi: X \to Y:=X/C_2$ is the 
 corresponding $2$-sheeted covering,
 the construction \[\transf: \Hoh^2(X ,S^1)\iso \Hoh^2(X , \sh O_{ X}^\times) \to \Hoh^2(Y, \sh O_{ Y}^\times)
 \iso \Hoh^2(Y ,S^1)\]
 is a special case of the usual transfer map for a $2$-sheeted cover. This can be proved by considering $\transf$ on the level
 of $2$-cocycles. See also \cite[Sec.~3.3]{piacenza_transfer_1984} and note that $\pi^*$ takes
 $\units{\sh O_Y}$, the sheaf of nonvanishing continuous complex-valued functions on $Y$, to $\sh O_X$ on
 $\mathbf X$.
\end{example}

\begin{remark}
 There is a notion of transfer for ramified covers $X \to X/G$ where $G$ is a finite group, in particular, when
 $G=C_2$. This may be found in \cite{aguilar_transfer_2010}. It seems likely, that map $\transf_{\lambda}$ given here
 is a special case of that construction, but we do not pursue this further.
\end{remark}

\subsection{Brauer Classes Supporting a $\lambda$-Involution}
\label{subsec:Saltman-ramified}

In this subsection, we characterize
those Brauer classes in $\Br(\bfX, \calO_\bfX)$ 
admitting a representative with a $\lambda$-involution, 
thus generalizing Saltman's theorem \cite[Th.~3.1]{saltman_azumaya_1978}.

We remind the reader that the notational conventions of Notation~\ref{NOT:BCSI} are still in effect. In particular,
$S:=\calO_\bfY$ is a local ring object in $\bfY$ and $R:=\pi_*\calO_\bfX$ is a commutative $S$-algebra with involution $\lambda$ such that
the fixed ring of $\lambda$ is $S$. 

\medskip

As in Subsection~\ref{subsec:coarse-types},
we define $N$ to be the kernel of the $\lambda$-norm $x\mapsto x^\lambda x:\units{R}\to \units{S}$ 
and let
$T$ be the quotient of $N$ by the image of the map $x \mapsto x^{\lambda}x^{-1}: R^\times \to N$. 
Recall that $\CTypes{\lambda}:=\Hoh^0(T)$ is the group of coarse $\lambda$-types
and there is a map $(A,\tau)\mapsto \ct_{\pi}(A,\tau)\in \Hoh^0(T)$ associating an Azumaya
$\calO_\bfX$-algebra with a $\lambda$-involution to its coarse type, see Subsection~\ref{subsec:coarse-types}.

The short exact sequence 
$ 1 \to R^\times/S^\times \xrightarrow{x \mapsto x^{\lambda}x^{-1}} N \to T \to 1 $ induces the connecting homomorphism
\[ \delta^0:\Hoh^0(T) \to \Hoh^1(\units{R}/\units{S}) \] 
and the short exact sequence 
$ 1 \to \units S \to \units R \to \units R / \units S \to 1 $
induces a connecting homomorphism
\[ \delta^1:\Hoh^1(\units{R}/\units{S}) \to \Hoh^2(\units S). \]
\begin{notation}\label{NT:Phi}
 We denote the composite morphism $\delta^1\circ\delta^0$ by $\Phi$,
\[ \Phi:\CTypes{\lambda}= \Hoh^0(T) 
\to \Hoh^2(\units S). \]
\end{notation}

\begin{proposition} \label{PR:Phi-trivial} The map $\Phi$ is the $0$-map in the following cases:
\begin{enumerate}[label=(\roman*)] 
\item \label{pr:phitriv-1} 
When $\pi:\bfX\to \bfY$ is a trivial quotient (Example~\ref{EX:trivial-exact-quotient}), i.e.\ $R=S$.
\item \label{pr:phitriv-4} When $\pi$ is everywhere ramified (Definition~\ref{DF:ramification}),
$2\in\units{S}$ and $\units{S}$ has square roots locally. 
\item \label{pr:phitriv-2} When $\pi$ is unramified (Definition~\ref{DF:ramification}), i.e.\ $R$ is a quadratic \'etale
$S$-algebra.
\item \label{pr:phitriv-3} When $\pi:X\to Y$ is a good $C_2$-quotient of schemes, 
$Y$ is noetherian and regular, and $\pi$ is unramified at the generic points of $Y$;
the corresponding exact quotient is obtained
by taking \'etale ringed topoi as in Example~\ref{EX:important-exact-quo-scheme}.
\end{enumerate}
\end{proposition}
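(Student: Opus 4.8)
The four cases call for somewhat different arguments, but all reduce to showing that a certain connecting map vanishes, and in each case the natural strategy is to produce enough sections to kill the relevant cohomology class, or to reduce to a situation where one of the groups in the composite $\Phi = \delta^1 \circ \delta^0$ is already trivial.

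For case \ref{pr:phitriv-1}, when $R = S$ the map $x \mapsto x^\lambda x^{-1} \colon \units{R} \to N$ is the trivial map (since $\lambda = \id$ and $N = \mu_{2,R}$), so the short exact sequence $1 \to \units{R}/\units{S} \to N \to T \to 1$ degenerates: $\units{R}/\units{S} = 1$, whence $\delta^0 = 0$ and a fortiori $\Phi = 0$. This is immediate from Example~\ref{EX:T-tot-ramified}.

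For case \ref{pr:phitriv-4}, I would use Theorem~\ref{TH:basic-properties-of-types}\ref{item:TH:basic-properties-of-types:tot-ram}: under these hypotheses every coarse type is realized by an actual $\lambda$-involution $\tau$ of an Azumaya algebra (indeed, the explicit $h = [\begin{smallmatrix} 0 & \veps \\ 1 & 0 \end{smallmatrix}]$ from the proof of that theorem exhibits a $\tau$ with $\ct(\tau) = \veps$). The point is then that the coarse type of an \emph{honest} involution must lie in the kernel of $\Phi$: chasing Construction~\ref{CN:coarse-type}, the class $\delta^0(\ct(\tau)) \in \Hoh^1(\units{R}/\units{S})$ is represented by the image of $h \in \GL_n(R)(U)$, which lifts $g = \lambda\tr \circ \sigma \in \PGL_n(R)(U)$; since $g$ itself is a global section (not merely a cocycle, as $\sigma$ is globally defined once we fix $\psi$), its image under $\delta^1 \circ \delta^0$ dies. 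I would phrase this cleanly by observing that $\delta^0(\ct(\tau))$ is the obstruction to lifting $g$ to $\GL_n(R)$ globally up to $N$, and that $\delta^1$ then lands in the obstruction to lifting that lift's $\lambda$-norm, which is $1$ by construction. The hypotheses $2 \in \units{S}$ and $\units{S}$ has square roots locally are exactly what Theorem~\ref{TH:basic-properties-of-types} needs.

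For cases \ref{pr:phitriv-2} and \ref{pr:phitriv-3}, the cleanest route is via $T$ itself. In the unramified case \ref{pr:phitriv-2}, Proposition~\ref{PR:T-structure-unramified-pi} gives $T = 1$, so $\Hoh^0(T)$ is trivial and $\Phi = 0$ vacuously. For case \ref{pr:phitriv-3}, the plan is to reduce to the unramified case by a standard ``purity/generic'' argument: since $Y$ is noetherian and regular and $\pi$ is unramified at the generic points, the ramification locus $W \subseteq Y$ has codimension $\geq 1$; I would use the structure result $T \cong j_* \mu_{2,W}$ from Lemma~\ref{LM:structure-of-T-interesting-cases} (valid since $2 \in \units{S}$ here — regularity plus the scheme-theoretic setup puts us in Notation~\ref{NT:types-in-specific-cases}) to compute $\delta^0 \colon \Hoh^0(j_*\mu_{2,W}) = \Hoh^0(W, \mu_2) \to \Hoh^1(\units{R}/\units{S})$ and then $\delta^1$ into $\Hoh^2(\units{S}) = \Hoh^2_\et(Y, \Gm)$. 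The key input will be that for a regular noetherian scheme, $\Hoh^2_\et(Y,\Gm) = \Br(Y)$ injects into $\Hoh^2_\et(k(Y), \Gm)$ (Grothendieck, \cite{grothendieck_groupe_1968-1}), so it suffices to check that $\Phi(t)$ restricts to $0$ at the generic points — but there $\pi$ is unramified, so $T$ vanishes locally and $t$ itself restricts to $0$. The main obstacle is making the last reduction rigorous: one must check that restriction to generic points commutes appropriately with the connecting maps $\delta^0, \delta^1$, which amounts to functoriality of the long exact sequences (Proposition~\ref{PR:non-ab-coh-basic-prop}\ref{item:PR:non-ab-coh-basic-prop:longer-ex-seq} and Theorem~\ref{TH:vanishing}(iii)) together with the injectivity of $\Br(Y) \to \Br(k(Y))$; I expect this cohomological bookkeeping, rather than any deep geometry, to be the crux.
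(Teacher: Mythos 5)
Your cases \ref{pr:phitriv-1} and \ref{pr:phitriv-2} agree with the paper's proof, and your overall plan for case \ref{pr:phitriv-3} (reduce to the generic point via flatness, use Grothendieck's injectivity of $\Br(Y)\to\Br(k(Y))$, and invoke the unramified case there) is exactly the paper's argument. The detour through $T\cong j_*\mu_{2,W}$ in case \ref{pr:phitriv-3} is however both unnecessary and unjustified: hypothesis \ref{pr:phitriv-3} does not assume $2\in\units{S}$, so you are not in the situation of Notation~\ref{NT:types-in-specific-cases}, and Lemma~\ref{LM:structure-of-T-interesting-cases} is not available. The direct reduction to the generic point works without it.

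The genuine gap is in case \ref{pr:phitriv-4}. Theorem~\ref{TH:basic-properties-of-types}\ref{item:TH:basic-properties-of-types:tot-ram} applies only to the \emph{trivial} quotient, where $\pi$ is the identity equivalence and $R=S$; it does not apply to a general totally ramified quotient, e.g.\ $X=\Spec\C[\varepsilon]/(\varepsilon^2)$ over $Y=\Spec\C$ with $\varepsilon\mapsto-\varepsilon$, which is totally ramified but has $R\neq S$. More fundamentally, even producing, for each $t\in\Hoh^0(T)$, an honest involution $\tau$ with $\ct(\tau)=t$ would not help: the statement that this forces $\Phi(t)=0$ is precisely Theorem~\ref{TH:Saltman-ramified} applied to the split algebra $A=R$ (i.e.\ $t$ is \emph{semistandard} in the sense of Theorem~\ref{TH:standard-types}\ref{item:TH:standard-types:semi}), whereas in general $\Phi(\ct(\tau))=\transf_\lambda([A])$, which need not vanish; the argument is circular. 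Your cocycle chase also misidentifies the representative: $g$ is only a section over $U$, not a global section, and $\delta^0(\ct(\tau))$ is represented by the class of $\beta^{-1}$ on $U\times U$ (in the notation of the proof of Lemma~\ref{LM:coarse-type-descends}), not by $h$. The paper proves case \ref{pr:phitriv-4} by a short direct computation: under the stated hypotheses, squaring is shown to be an automorphism of the sheaf $\units{R}/\units{S}$, hence of $\Hoh^1(\units{R}/\units{S})$. Since $\Hoh^0(T)$ is $2$-torsion by Proposition~\ref{PR:T-is-two-torsion}, the map $\delta^0:\Hoh^0(T)\to\Hoh^1(\units{R}/\units{S})$ factors through $0$, and hence $\Phi=\delta^1\circ\delta^0=0$.
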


\begin{proof}
 \begin{enumerate}[label=(\roman*), align=left, leftmargin=0pt, itemindent=1ex, itemsep=0.3\baselineskip]
 \item 
	In this case, $\units{R}/\units{S}$ is trivial. 
 Since $\Phi$ factors through $\Hoh^1( \units R/\units S) = 0$, the result follows.
 
\item We claim that squaring induces an automorphism of $\units{R}/\units{S}$, and hence of the group
 $\Hoh^1(\units{R}/\units{S})$. Since $\Hoh^0(T)$ is a $2$-torsion group (Proposition~\ref{PR:T-is-two-torsion}), this
 forces $\delta^0:\Hoh^0(T)\to \Hoh^1(\units{R}/\units{S})$ to vanish, implying $\Phi$ vanishes as well.
 
 We show the surjectivity of $x\mapsto x^2:\units{R}/\units{S}\to \units{R}/\units{S}$ by checking that $\units{R}$ has
 square roots locally. Let $U$ be an object of $\bfY$ and $r\in \units{R}(U)$.
 Since $\units{S}$ has square roots locally, there a covering $V\to U$ and $s\in \units{S}(V)$ such that
 $r^\lambda r=s^2$. Replacing $r$ with $rs^{-1}$ and $U$ with $V$, we may assume $r^\lambda r=1$. Now, by
 Lemma~\ref{LM:one-plus-unitary}, there is a covering $\{V_i\to U\}_{i=1,2}$ and $\beta_i\in \units{R}(V_i)$ such that
 $r=\beta_1^{-1}\beta_1^\lambda$ in $\units{R}(V_1)$ and $r=-\beta_2^{-1}\beta_2^\lambda$ in $\units{R}(V_2)$. We may
 refine $V_2\to V$ to assume that there is $a\in\units{S}(V_2)$ such that $-\beta_2^\lambda \beta_2=a^2$ and get
 $r=a^2\beta_2^{-2}$. Similarly, we refine $V_1$ to find a square root of $r$ in $\units{R}(V_1)$ and conclude that $r$
 has a square root on $V_1\sqcup V_2$.
 
 Next, let $K$ denote the kernel of $x\mapsto x^2:\units{R}/\units{S}\to \units{R}/\units{S}$. A section of $K$ is
 represented by some $a\in \units{R}(U)$ such that $a^2\in \units{S}(U)$, or rather, $a^2=(a^\lambda)^2$. 
 Since $(a-a^\lambda)^2+(a+a^\lambda)^2=4a^2\in\units{S}(U)$ and $(a-a^\lambda)^2,(a+a^\lambda)^2\in S(U)$, 
 and since $S$
 is a local ring object, there is a covering $\{U_i\to U\}_{i=1,2}$ such that $a-a^\lambda\in \units{R}(U_1)$ and
 $a+a^\lambda \in \units{R}(U_2)$. By virtue of Lemma~\ref{LM:etale-criterion}, $R_{U_1}$ is a quadratic \'etale over $S_{U_1}$,
 so our assumption that $\pi$ is everywhere ramified forces $U_1=\emptyset$. Thus, $U_2\to U$ is a covering, implying that
 $a+a^\lambda$ is invertible in $R(U)$. Since $(a+a^\lambda)(a-a^\lambda)=a^2-(a^\lambda)^2=0$, we must have
 $a-a^\lambda=0$, so $a\in \units{S}(U)$. It follows that $a$ represents the $1$-section in $\units{R}/\units{S}$, and
 thus $K=0$.

 \item In this case, a version of Hilbert's Theorem 90 applies in the form of Proposition~\ref{PR:T-structure-unramified-pi}, and
 $\Hoh^0(\mathbf Y, T) = 0$. \textit{A fortiori}, $\Phi$ is $0$.
 
 \item We may assume that $Y$ is connected and therefore integral, otherwise we may work component by component.
 
 Let $\xi:\Spec K\to Y$ denote the generic point of $Y$. Since
 $\xi$ is flat, $\pi_\xi:X_\xi\to \Spec K$ is a good
 $C_2$-quotient relative to the action induced by $\lambda$;
 denote the sheaves corresponding to $S,R,N,T$ by $S',R',N',T'$.
 We now apply Lemma~\ref{LM:morphism-between-quotients} to
 the square
 \[
 \xymatrix{
 X_\xi \ar[r]^{\xi_\pi} \ar[d]^{\pi_{\xi}}&
 X \ar[d]^\pi \\
 \Spec K \ar[r]^{\xi} &
 Y
 }
 \]
 which gives rise to maps $\xi^*N\to N'$, $\xi^*T\to T'$,
 adjoint to the maps in the lemma.
 The exactness of $\xi^*$ together with the natural homomorphism
 $\Hoh^*(Y,-)\to \Hoh^*(K,\xi^*(-))$ now give
 rise to a commutative diagram
 \begin{equation*}
 \xymatrix{ 
 \Hoh_\et^0( Y, T) \ar[d] \ar^\Phi[r] & \Hoh^2_\et(Y, \units{\calO_Y}) \ar[d] \\
 \Hoh^0_\et(K, T') \ar^{\Phi_\xi
 }[r] & \Hoh^2_\et(K, \units{\calO_{\Spec K}}) }
 \end{equation*}
 By \cite[Cor.~1.8]{grothendieck_groupe_1968}, the right vertical morphism
 is injective (here we need $Y$ to be regular),
 and
 by \ref{pr:phitriv-2}, $\Phi_\xi=0$. Therefore, $\Phi=0$.
	\qedhere
 \end{enumerate}
\end{proof}

We are now ready to state our generalization of Saltman's theorem. Whereas Saltman's original proof
\cite{saltman_azumaya_1978} and the later proof by Knus, Parimala and Srinivas \cite{knus_azumaya_1990} make use of the
corestriction of an Azumaya algebra, we cannot employ this construction, as demonstrated in
Remark~\ref{RM:no-corestriction}. Rather, our proof is purely cohomological, phrased in the language set in
Subsections~\ref{subsec:coh-of-ab-grps} and~\ref{subsec:coh-of-non-ab-grp}. We remind the reader of our standing
assumption from Remark~\ref{RM:lambda-stable-degree} that the degrees of all Azumaya $\calO_\bfX$-algebras considered
are fixed under $\Lambda$, which is automatic when $\bfX$ is connected.

\begin{theorem}\label{TH:Saltman-ramified} 
 Let 
 $\bfX$
 be a locally ringed topos with involution $\lambda$, let 
 $\pi:\bfX\to \bfY$ be an
 exact quotient relative to $\lambda$, 
 and consider the map ${\transf}_\lambda:\Br(\bfX,\calO_\bfX)\to\Hoh^2(\bfY,\units{\calO_\bfY})$ of
 Definition~\ref{DEF:coho-cores} 
 and the map $\Phi:\CTypes{\lambda}=\Hoh^0(T)\to \Hoh^2(\bfY,\units{\calO_\bfY})$
 of Notation~\ref{NT:Phi}. Let $A$ be an Azumaya $\calO_\bfX$-algebra of degree $n$, and let $t\in \Hoh^0(T)$. Then
 there exists $A'\in [A]$ admitting a $\lambda$-involution of coarse type $t $ if and only if $\transf([A]) = \Phi(t)$
 in $\Hoh^2(\bfY,\units{\calO_\bfY})$. The algebra $A'$ can be chosen such that $\deg A'=2n$.
\end{theorem}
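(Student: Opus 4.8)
\textbf{Proof strategy for Theorem~\ref{TH:Saltman-ramified}.}
The plan is to work entirely on $\bfY$ with the group sheaves $\units{S}$, $\units{R}$, $\units{R}/\units{S}$, $N$, and $T$, using the fact (Theorem~\ref{TH:vanishing}) that $\pi_*$ identifies $\Hoh^i(\bfX,\units{\calO_\bfX})$ with $\Hoh^i(\bfY,\units{R})$. The key observation is that an Azumaya $\calO_\bfX$-algebra with $\lambda$-involution $(A,\tau)$ of degree $n$ corresponds, via Corollary~\ref{CR:equiv-Az-with-inv} and Theorem~\ref{TH:pi-star-equiv}, to a torsor structure refining a $\PGL_n(R)$-torsor. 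Concretely, to carry a $\lambda$-involution at all, $A \cong \pi_*(\text{something})$ must satisfy $[\Lambda A^\op] = [A]$, and the coarse type $\ct(\tau) \in \Hoh^0(T)$ is the obstruction measured in Construction~\ref{CN:coarse-type}. I would set up a ``unitary group'' version of $\PGL_n$: the group sheaf $G_t$ on $\bfY$ governing degree-$n$ Azumaya algebras with $\lambda$-involution of coarse type $t$, fitting into an exact sequence relating it to $\PGL_n(R)$ and to $N$ (resp.\ $T$). The classes in $\Hoh^1(\bfY, \PGL_n(R))$ that lift to such data are exactly those whose image under the connecting maps to $\Hoh^2(\units{S})$ equals $\Phi(t)$; meanwhile the composite $\Hoh^1(\PGL_n(R)) \to \Hoh^2(\units R) \xrightarrow{\text{norm}} \Hoh^2(\units S)$ sends $[A]$ to $\transf_\lambda([A])$ by Definition~\ref{DEF:coho-cores}.

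The heart of the argument is a diagram chase. First I would recall that $[A] \in \Br(\calO_\bfX) = \Br(R)$ is the image of a class $\alpha \in \Hoh^1(\bfY,\PGL_n(R))$ under $\delta^2: \Hoh^1(\PGL_n(R)) \to \Hoh^2(\units R)$ coming from $\units R \to \GL_n(R) \to \PGL_n(R)$. The existence of $A' \in [A]$ of degree $2n$ with a $\lambda$-involution of coarse type $t$ should be reformulated as: the class $2[A] - (\text{correction by }t) \in \Hoh^2(\units R)$ lies in the kernel of the norm map $\Hoh^2(\units R) \to \Hoh^2(\units S)$, i.e.\ arrives from a suitable $\Hoh^1$. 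The doubling of the degree is what buys the extra freedom: replacing $A$ by $A \otimes \Lambda A^\op$-type constructions, or rather using that $A' = A_0$ where $[A_0]=[A]$ but $A_0$ is built as a sub-quotient allowing a $\lambda$-hermitian structure of rank $2n$, is precisely the trick in \cite[Th.~4.2]{knus_azumaya_1990}; I would replay it cohomologically. The ``only if'' direction is straightforward: given $(A',\tau')$ with $\ct(\tau')=t$, Construction~\ref{CN:coarse-type} produces cocycles $h \in \GL_n(R)(U)$ with $h^{-\lambda\tr}h = \veps$ representing $t$, and unwinding the definitions of $\delta^1,\delta^0$ and of $\transf_\lambda$ shows $\transf([A']) = \Phi(\ct(\tau')) = \Phi(t)$, and $\transf([A'])=\transf([A])$ since the transfer only depends on the Brauer class.

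For the ``if'' direction, assume $\transf([A]) = \Phi(t)$. Pick a representative $\alpha \in \Hoh^1(\bfY,\PGL_n(R))$ of $[A]$. The condition says that $\transf([A]) - \Phi(t) = 0$; tracing through the commutative ladder of exact sequences connecting $1 \to \units S \to \units R \to \units R/\units S \to 1$, $1 \to \units R/\units S \to N \to T \to 1$, and $1 \to N \to \units R \xrightarrow{\text{norm}} \units S$, this vanishing lets me modify $\alpha$ (after passing to a larger matrix algebra, hence degree $2n$) to a class $\alpha'$ that lifts along the map $\Hoh^1(\mathbf{U}) \to \Hoh^1(\PGL_{2n}(R))$ from a suitable unitary-type group sheaf $\mathbf U = \mathbf U_t$, where such a lift is exactly the data of a $\lambda$-involution of coarse type $t$ on a representative $A'$. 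The main obstacle I anticipate is organizing the non-abelian cohomology bookkeeping so that the degree-doubling is forced rather than ad hoc: one must show that the obstruction living \emph{a priori} in a non-abelian $\Hoh^2$-flavored set actually lands in the abelian $\Hoh^2(\units S)$ and is computed by $\transf$ and $\Phi$ as claimed, and that passing from $n$ to $2n$ (e.g.\ via $A \mapsto \Mat_{2\times2}(A)$ equipped with a hyperbolic-type hermitian form over $A$) kills exactly the discrepancy. Here I would lean on Proposition~\ref{PR:non-ab-coh-basic-prop}\ref{item:PR:non-ab-coh-basic-prop:longer-ex-seq} for the central-extension connecting map $\delta^2$, on Lemma~\ref{LM:lambda-tr-relation} to control how $\lambda\tr$ interacts with inner automorphisms, and on Lemma~\ref{LM:one-plus-unitary} and the local structure theory of Section~\ref{sec:ringsWithInvolution} to produce the required local hermitian diagonalizations that realize the cocycle lift.
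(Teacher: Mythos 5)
Your ``only if'' direction is essentially the paper's: one unwinds Construction~\ref{CN:coarse-type} and the definitions of $\delta^0,\delta^1,\transf_\lambda$ on a common hypercovering, and verifies by a cocycle computation that $\alpha^\lambda\alpha$ equals the coboundary representing $\Phi(t)$. You have the right plan there, just not carried out. You have also correctly identified the hyperbolic degree-doubling as the engine of the ``if'' direction: the paper's construction is to set $h=\left[\begin{smallmatrix}0&\veps\\1&0\end{smallmatrix}\right]$ and $b'=\left[\begin{smallmatrix}b&0\\0&\beta b^{-\lambda\tr}\end{smallmatrix}\right]$ over a suitable refinement, check $a'=[b']\in Z^1(V_\bullet,\PGL_{2n}(R))$, descend to a degree-$2n$ algebra $A'$, and check that the involution $x\mapsto(h^{-1}xh)^{\lambda\tr}$ descends with coarse type $t$.

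However, there are two genuine gaps in the route you propose for the ``if'' direction. First, the lifting problem you describe --- lifting a $\PGL_{2n}(R)$-torsor along a \emph{subgroup} inclusion $\mathbf U_t\hookrightarrow\PGL_{2n}(R)$ --- is not governed by an abelian $\Hoh^2$-valued obstruction in the way a quotient-lifting problem with abelian kernel would be, so the claim that ``the obstruction lands in $\Hoh^2(\units S)$ and is computed by $\transf$ and $\Phi$'' needs justification that is not supplied by Proposition~\ref{PR:non-ab-coh-basic-prop}. The paper sidesteps this by never posing a lifting problem: it simply writes down the cocycle $a'$ whose values happen to preserve $\sigma$, and the hypothesis $\transf([A])=\Phi(t)$ is used only once, to fix up the scalar $\beta$ so that $a'$ is actually a cocycle (see the computation $d_2^*b'\cdot d_0^*b'\cdot d_1^*b'^{-1}\in\units R$). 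Second, and more seriously, you invoke Lemma~\ref{LM:one-plus-unitary} and the local hermitian-diagonalization lemmas of Subsection~\ref{subsec:technical-proofs}, all of which require $2\in\units S$ and/or that $\units S$ have square roots locally; those hypotheses are present in Theorem~\ref{TH:ct-determines-local-iso} but are \emph{not} part of Theorem~\ref{TH:Saltman-ramified}. The paper's proof is a pure cocycle manipulation that avoids the hermitian theory entirely, which is precisely why it holds without those assumptions. As written, your argument would prove a weaker statement.
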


We recover Saltman's original theorem \cite[Th.~3.1]{saltman_azumaya_1978} and the improvement of Knus, Parimala and
Sinivas \cite[Thms.~4.1, 4.2]{knus_azumaya_1990} from Theorem~\ref{TH:Saltman-ramified} by taking
$\pi:\bfX\to \bfY$
to be the exact quotient associated to a good $C_2$-quotient of schemes
$\pi:X\to Y$ such that $\pi$ is an isomorphism or quadratic \'etale, see
Examples~\ref{EX:important-exact-quo-scheme}. In this case, $\Phi=0$ by Proposition~\ref{PR:Phi-trivial}, and the
transfer map coincides with multiplication by $2$ when $\pi=\id$, or with the corestriction map when $\pi$ is quadratic
\'etale, as demonstrated in Examples~\ref{EX:mult_by_two} and~\ref{EX:transfer-unramified-schemes}.

The relation between the type and the coarse type of an involution, as well as the question of when two involutions of
the same type are locally isomorphic, had been studied extensively in Subsections~\ref{subsec:coarse-types}
and~\ref{subsec:quotient-scheme}.

\begin{proof}
Thanks to Theorems~\ref{TH:vanishing} and~\ref{TH:pi-star-equiv}, we may replace
$A$ with $\pi_*A$ and work with $R$-algebras,
rather than $\calO_\bfX$-algebras. We abuse the notation and 
denote the map $\Hoh^2(\units{R})\to \Hoh^2(\units{S})$ induced by
$x\mapsto x^\lambda x:\units{R}\to\units{S}$ as $\transf_\lambda$.

\medskip

Suppose first that there exists $[A']\in A$ admitting a $\lambda$-involution $\tau$ of coarse type $t$. 
We may replace $A$ with $A'$.
We now invoke all the notation of Construction~\ref{CN:coarse-type} and 
the proof of Lemma~\ref{LM:coarse-type-descends} through which $t$ is constructed from $(A,\tau)$.
Specifically: 
\begin{itemize} 
\item $U\to *_\bfY$ is a covering such that there exists an isomorphism of $R_U$-algebras
$\psi:A_U\to \nMat{R_U}{n\times n}$,
\item
$\sigma:=\psi\circ\tau_U\circ \psi^{-1}$ is an involution of $\nMat{R_U}{n\times n}$,
\item
$g:=\lambda\tr\circ \sigma$ is an element of $\PGL_n(R)(U)$,
\item
$h\in \GL_n(R)(U)$ is a lift of $g$ (refine $U$ if necessary),
\item 
$\veps:=h^{-\lambda\tr}h$ is an element of $N(U)$, embedded diagonally in $\GL_n(R)(U)$,
\item
$U_\bullet$ is the \v{C}ech hypercovering corresponding to $U\to *$, see Example~\ref{EX:Cech-hypercovering},
\item 
$a=\psi_1\circ\psi_0^{-1}\in \PGL_n(R)(U_1)$, where $\psi_i$ is the pullback of $\psi$
along $d_i:U_1\to U_0$,
\item $b$ is a lift of $a$ to $\GL_n(R)(V)$, where $V\to U_1$ is some covering,
\item $\beta:=b^{-\lambda\tr} \cdot d_0^*h \cdot b^{-1} \cdot d_1^*h^{-1}$ is an element $\units{R}(V)$,
embedded diagonally in $\GL_n(R)(V)$,
\item $t=\ct(\tau)$ is the image of $\veps$ in $T(U)$; it descends
to a global section of $T$ since $d_1^*\veps \cdot d_0^*\veps^{-1}=\beta^{-1}\beta^\lambda$.
\end{itemize}
By Lemma~\ref{LM:hypercover-refinement}, there is a hypercovering
morphism $V_\bullet\to U_\bullet$ such that $V_1\to U_1$ factors
through $V\to U_1$. We replace $V$ with $V_1$.

Recall from Theorem~\ref{TH:pi-star-equiv} that $A$ corresponds to a $\PGL_n(R)$-torsor,
which in turn corresponds to a cohomology class in $\Hoh^1(\PGL_n(R))$.
We claim that $a$ is a $1$-cocycle in $Z^1(U_\bullet,\PGL_n(R))$ which represents
this cohomology class. Indeed, the $\PGL_n(R)$-torsor corresponding to $a$ 
is $P:=\sAut_R(\nMat{R}{n\times n},A)$, and $\psi^{-1}\in P(U)=P(U_0)$
by construction. By the isomorphism given in the proof of 
Proposition~\ref{PR:non-ab-coh-basic-prop}\ref{item:PR:non-ab-coh-basic-prop:torsor},
the cohomology class corresponding to $P$ is represented
by $ d_1^*(\psi^{-1})^{-1} \cdot d_0^*(\psi^{-1})=\psi_1\circ \psi_0^{-1}=a$.

Consider the short exact sequence
$1 \to \units{R}\to \GL_n(R)\to \PGL_n(R) \to 1$ and its associated
$7$-term cohomology exact sequence, see Proposition~\ref{PR:non-ab-coh-basic-prop}\ref{item:PR:non-ab-coh-basic-prop:longer-ex-seq}.
It follows from the definition of 
$\delta^2:\Hoh^1(\PGL_n(R))\to \Hoh^2(\units{R})$,
see the proof of Proposition~\ref{PR:non-ab-coh-basic-prop}\ref{item:PR:non-ab-coh-basic-prop:longer-ex-seq}, 
that $[A]=\delta^2(a)\in \Hoh^2(\units{R})$ 
is represented
by 
\begin{equation}\label{EQ:alpha-definition}
\alpha := d_2^*b \cdot d_0^*b \cdot d_1^*b^{-1}\in Z^2(V_\bullet,\units{R})\ .
\end{equation}
and thus, $\transf([A])$ is represented by $\alpha^\lambda \alpha \in Z^2(V_\bullet, \units{S})$.

On the other hand, by the definition of $\delta^0:\Hoh^0(T)\to \Hoh^1(\units{R}/\units{S})$,
see the beginning of this subsection and the end of~\ref{subsec:coh-of-ab-grps},
$\delta^0(t)$ is represented by the image of $\beta^{-1}\in \units{R}(V_1)$
in $(\units{R}/\units{S})(V_1)$, since $d_0^*\veps\cdot d_1^*\veps^{-1}=(\beta^{-1})^\lambda\beta$.
Likewise,
by the definition of $\delta^1:\Hoh^1(\units{R}/\units{S})\to \Hoh^2(\units{S})$,
the class $\Phi(t)=\delta^1\delta^0(t)$ is represented
by $d_0^*\beta^{-1}\cdot d_1^*\beta\cdot d_2^*\beta^{-1} \in Z^2(V_\bullet,\units{S})$.

In order to show that $\transf([A])=\Phi(t)$, we check
that $\alpha^\lambda \alpha=d_0^*\beta^{-1}\cdot d_1^*\beta\cdot d_2^*\beta^{-1}$ in
$\units{S}(V_2)$.
For the computation, 
we shall make use of $d_0^*d_0^*=d_1^*d_0^*$, $d_0^*d_1^*=d_2^*d_0^*$,
$d_1^*d_1^*=d_2^*d_1^*$ and the fact that if $xyz$ is central in a group $G$,
then $xyz=zxy=yzx$.
\begin{align*}
	d_0^*\beta^{-1}\cdot d_1^*\beta\cdot d_2^*\beta^{-1}
	&=d_0^*\beta^{-1}(d_1^*b^{-\lambda\tr} \cdot d_1^*d_0^*h\cdot d_1^*b^{-1}\cdot d_1^*d_1^*h^{-1})\\
	&\phantom{=}\quad\cdot 
	(d_2^*d_1^*h\cdot d_2^*b\cdot d_2^*d_0^*h^{-1}\cdot d_2^*b^{\lambda\tr})\\
	&=d_1^*b^{-\lambda\tr} \cdot d_1^*d_0^*h\cdot d_1^*b^{-1}\cdot
	d_2^*b\cdot d_2^*d_0^*h^{-1}\cdot(d_0^*\beta^{-1})\cdot d_2^*b^{\lambda\tr}\\
	&=d_1^*b^{-\lambda\tr} \cdot d_1^*d_0^*h\cdot d_1^*b^{-1}\cdot
	d_2^*b\cdot d_2^*d_0^*h^{-1} \\
	&\phantom{=}\quad \cdot (d_0^*d_1^*h\cdot d_0^*b\cdot d_0^*d_0^*h^{-1}\cdot d_0^*b^{\lambda\tr})
	\cdot d_2^*b^{\lambda\tr}\\
	&=d_1^*b^{-\lambda\tr} \cdot d_1^*d_0^*h\cdot (d_1^*b^{-1}\cdot
	d_2^*b\cdot d_0^*b)\cdot d_0^*d_0^*h^{-1}\cdot d_0^*b^{\lambda\tr}
	\cdot d_2^*b^{\lambda\tr}\\
	&=d_1^*b^{-\lambda\tr} \cdot d_1^*d_0^*h\cdot \alpha \cdot d_0^*d_0^*h^{-1}\cdot d_0^*b^{\lambda\tr}
	\cdot d_2^*b^{\lambda\tr}\\
	&= (d_2^*b \cdot d_0^*b \cdot d_1^*b^{-1})^{\lambda\tr} \cdot \alpha \\
	&=\alpha^\lambda \alpha
\end{align*}
This completes the proof of the ``only if'' statement.

\medskip

Suppose now that $\transf([A])=\Phi(t)$.
Define $U\to *$, $U_\bullet$, $a$, $b$, $V_\bullet$ and $\alpha$ as before.
Using Lemma~\ref{LM:hypercover-refinement} twice, we can refine $V_\bullet$
to assume that $t$ lifts to some $\veps\in N(V_0)$
and there is $\beta\in \units{R}(V_1)$
such that
\begin{equation}\label{EQ:first-cond}
d_0^*\veps\cdot d_1^*\veps^{-1}=(\beta^{-1})^{\lambda}\beta 
\end{equation}
in $N(V_1)$.
As explained above, $\transf([A])$ is represented
by $\alpha^\lambda\alpha\in Z^2(V_\bullet,\units{S})$
and $ \Phi(t)$ is represented
by $d_0^*\beta^{-1}\cdot d_1^*\beta\cdot d_2^*\beta^{-1}$.
The assumption $\Phi(t)=\transf([A])$ therefore means that,
after refining $V_\bullet$,
there exists $\gamma\in \units{S}(V_1)$
such that 
\[
d_0^*\gamma\cdot d_1^*\gamma^{-1}\cdot d_2^*\gamma\cdot d_0^*\beta^{-1}\cdot d_1^*\beta\cdot d_2^*\beta^{-1}=\alpha^\lambda \alpha\ .
\]
We replace $\beta$ with $\beta\gamma^{-1}\in\units{R}(V_1)$,
which does not affect \eqref{EQ:first-cond}
and allows us to assume
\begin{equation}\label{EQ:second-cond} 
d_0^*\beta^{-1}\cdot d_1^*\beta\cdot d_2^*\beta^{-1}=\alpha^\lambda \alpha\ .
\end{equation}

Writing in block form, define the $2n\times 2n$ matrices
 \[ h=\begin{bmatrix} 0 & 1 \\ \veps & 0
 \end{bmatrix}\in \GL_{2n}(R)(V_0) \qquad \text{and} \qquad b'=\begin{bmatrix} b & 0 \\ 0 &
 \beta^{-1} b^{-\lambda\tr}
 \end{bmatrix}\in \GL_{2n}(R)(V_1)
 \] 
and let $\sigma : \nMat{R_{V_0}}{2n\times 2n}\to \nMat{R_{V_0}}{2n\times 2n}$
be the involution given by $x\mapsto (hxh^{-1})^{\lambda\tr} =h^{-\lambda\tr}x h^{\lambda\tr} $ on
sections. 
Also, let $a'$ be the image of $b'$ in $\PGL_{2n}(R)(V_1)$, namely, $a'\in \PGL_{2n}(R)(V_1)$
is the automorphism of $\nMat{R_{V_1}}{2n\times 2n}$ given
by $x\mapsto b' xb'^{-1}$ on sections.

We first observe that $a'\in Z^1(V_\bullet,\PGL_{2n}(R))$.
Indeed, working in $\GL_{2n}(R)(V_2)$ and using \eqref{EQ:alpha-definition}
and \eqref{EQ:second-cond},
we find that
\begin{equation}\label{EQ:why-a-prime-is-cocycle}
	d_2^*b'\cdot d_0^*b'\cdot d_1^*b'^{-1} =
	\begin{bmatrix} \alpha & 0 \\ 0 &
	( d_0^*\beta \cdot d_2^*\beta \cdot d_1^*\beta ^{-1})^{-1} \alpha^{-\lambda\tr}
	\end{bmatrix}
	=
	\begin{bmatrix} \alpha & 0 \\ 0 &
	\alpha
	\end{bmatrix}\in\units{R}(V_1)\ .
\end{equation}
Let $\tilde{V}_\bullet$ denote the \v{C}ech hypercovering associated to $V_0\to *$, see
Example~\ref{EX:Cech-hypercovering}. By Lemma~\ref{LM:Cech-hyp-vs-all-hyp}, $a'\in Z^1(V_\bullet,\PGL_{2n}(R))$ descends
uniquely to a cocycle $\tilde{a}'\in Z^1(\tilde{V}_\bullet,\PGL_{2n}(R))$.\uriyaf{ Maybe the introduction of $\tilde{a}'$ can
 be saved if we verify that morphisms descend along ``hyper'' descent data. This should be known, in fact. } The
\v{C}ech $1$-cocycle $\tilde{a}'$ defines descent data for $\nMat{R_{V_0}}{2n\times 2n}$ along $V_0\to *$, giving rise
to an Azumaya $R$-algebra $A'$ of degree $2n$ and an isomorphism $\psi:A'_{V_0}\to \nMat{R_{V_0}}{2n\times 2n}$ such
that $\tilde{a}'=\psi_1\circ \psi_0^{-1}$, where $\psi_i$ is the pullback of $\psi$ along
$d_i:\tilde{V}_1\to \tilde{V}_0=V_0$. Note that by
construction, $a'$ represents the class in $\Hoh^1(\PGL_{2n}(R))$ corresponding to $A'$, hence
\eqref{EQ:why-a-prime-is-cocycle} implies that $[A']=\alpha=[A]$.

We now claim that $\sigma$ descends to an involution $\tau:A'\to A'$.
Letting $\sigma_i$ denote the pullback of $\sigma$ along $d_i:V_1\to V_0$,
and noting that $(d_0,d_1):V_1\to V_0\times V_0$ is a covering, see Subsection~\ref{subsec:coh-of-ab-grps},
this amounts to showing that $\sigma_1a'=a'\sigma_0$. 
To see this, we first note that \eqref{EQ:first-cond} and $\veps^\lambda \veps=1$ imply
that
\[
b'^{-\lambda\tr} \cdot d_0^*h \cdot b'^{-1}\cdot
d_1^*h^{-1}=
\begin{bmatrix}
\beta & 0 \\ 0 & \beta^\lambda \cdot d_0^*\veps \cdot d_1^*\veps^{-1}
\end{bmatrix}
=
\begin{bmatrix}
\beta & 0 \\ 0 & \beta
\end{bmatrix},
\]
or equivalently,
\[
b'^{-1}\cdot d_1^*h^{-1}=
\beta \cdot d_0^*h^{-1} \cdot b'^{\lambda\tr}. 
\]
Using this, for any section $x$ of $\nMat{R_{V_1}}{2n\times 2n}$, we have
\begin{align*}
\sigma_1(a'(x))&= d_1^*h^{-\lambda\tr}(b'x b'^{-1})^{\lambda\tr} d_1^*h^{\lambda\tr}
=(b'^{-1} d_1^*h^{-1} )^{\lambda\tr} x( b'^{-1} d_1^*h^{-1})^{-\lambda\tr}\\
&=(\beta \cdot d_0^*h^{-1} \cdot b'^{\lambda\tr})^{\lambda\tr}x
(\beta \cdot d_0^*h^{-1} \cdot b'^{\lambda\tr})^{-\lambda\tr}
=a'(\sigma_0(x))\ ,
\end{align*}
which is what we want.

We finish by checking that $\ct(\tau)=t$.
To see this, apply Construction~\ref{CN:coarse-type}
to $(A',\tau)$ using $U:=V_0$, $\psi$, $\sigma$ and $h$ defined above
and note that $h^{-\lambda\tr}h=[\begin{smallmatrix} \veps & 0 \\ 0 & \veps
\end{smallmatrix}]$.
\end{proof}

We now specialize Theorem~\ref{TH:Saltman-ramified}
to Azumaya algebras over schemes and over topological spaces.

It is worth recalling at this point that in the situation of 
a good $C_2$-quotients of schemes $\pi:X\to Y$ such that $2$ is invertible on $Y$
(Example~\ref{EX:important-exact-quo-scheme}),
or a $C_2$-quotient of Hausdorff topological spaces $\pi:X\to Y$
(Examples~\ref{EX:important-exact-quo-top}),
the sheaf $T$ is isomorphic to $i_*\mu_{2,W}$,
where $i:W\to Y$ is the embedding of the branch locus of $\pi$ in $Y$.
Under this isomorphism, the coarse type of an involution $\tau:A\to A$
is the unique global section $f\in \Hoh^0(W,\mu_{2,W})=\cont(W,\{\pm 1\})$ 
such that $f(w)=1$ if $\tau_{k(\pi^{-1}(w))}:A_{k(\pi^{-1}(w))}\to A_{k(\pi^{-1}(w))}$
is orthogonal, and $f(w)=-1$ if $\tau_{k(\pi^{-1}(w))}:A_{k(\pi^{-1}(w))}\to A_{k(\pi^{-1}(w))}$
is symplectic, for all $w\in W$; see Subsection~\ref{subsec:quotient-scheme}.
Furthermore, in these situations, two $\lambda$-involutions of the same coarse type
have the same type, and they are locally isomorphic if the degrees of their underlying Azumaya
algebras agree; this follows from Theorem~\ref{TH:ct-determines-local-iso}
and Corollary~\ref{CR:coarse-type-determines-type}.

\begin{corollary}\label{CR:Saltman-ramified-for-schemes}
	Let $X$ be a scheme, let $\lambda:X\to X$ be an involution
	and let $\pi:X\to Y$ be a good quotient relative to $C_2:=\{1,\lambda\}$.
	Let $A$ be an Azumaya $\calO_X$-algebra
	and let $t\in \CTypes{\lambda}$ be a coarse type.
	Then there exists $A'\in [A]$ admitting a $\lambda$-involution of coarse type
	$t$ if and only if $\Phi(t)=\transf_{\lambda}([A])$ in $\Hoh^2_{\et}(Y,\units{\calO_Y})$.
	The algebra $A'$ can be chosen such that $\deg A'=2\deg A$.
\end{corollary}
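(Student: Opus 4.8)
The plan is to deduce Corollary~\ref{CR:Saltman-ramified-for-schemes} directly from Theorem~\ref{TH:Saltman-ramified} by passing to the associated \'etale ringed topoi. First I would set $(\bfX,\calO_\bfX):=(\Sh(X_\et),\calO_X)$ and $(\bfY,\calO_\bfY):=(\Sh(Y_\et),\calO_Y)$, and let $\lambda=(\Lambda,\nu,\lambda)$ denote the involution of $(\bfX,\calO_\bfX)$ induced by $\lambda:X\to X$ as in Example~\ref{ex:galois}. By Example~\ref{EX:important-exact-quo-scheme} together with Theorem~\ref{TH:important-exact-quotients}\ref{item:TH:important-exact-quotients:schemes}, the morphism of ringed topoi $\pi:(\bfX,\calO_\bfX)\to(\bfY,\calO_\bfY)$ induced by the good quotient $\pi:X\to Y$ is an exact quotient relative to this involution. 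Note that this step uses only that $\pi$ is a good quotient and requires no invertibility of $2$.

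Next I would record the dictionary between the two languages. Azumaya $\calO_\bfX$-algebras on $\cat X$ are precisely Azumaya algebras over the scheme $X$ (Example~\ref{EX:quasi-coh-sheaves}), and under this identification $\Br(\bfX,\calO_\bfX)=\Br(X)$ and $\Hoh^2(\bfY,\units{\calO_\bfY})=\Hoh^2_\et(Y,\units{\calO_Y})$. The sheaves $N$ and $T$, the group $\CTypes{\lambda}=\Hoh^0(T)$ of coarse types, the coarse type $\ct_\pi(A,\tau)$ of a $\lambda$-involution, the transfer map $\transf_\lambda\colon\Br(\bfX,\calO_\bfX)\to\Hoh^2(\bfY,\units{\calO_\bfY})$ of Definition~\ref{DEF:coho-cores}, and the map $\Phi\colon\CTypes{\lambda}\to\Hoh^2(\bfY,\units{\calO_\bfY})$ of Notation~\ref{NT:Phi} are then, by construction, exactly the objects named in the statement of the corollary, and a $\lambda$-involution of an Azumaya $\calO_X$-algebra $A$ in the sense of the corollary corresponds to a $\lambda$-involution of the associated $\calO_\bfX$-algebra, as explained in the example following Definition~\ref{DF:alg-with-a-lambda-inv}.

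There is one bookkeeping point. Theorem~\ref{TH:Saltman-ramified} carries the standing assumption of Remark~\ref{RM:lambda-stable-degree} that the degree of $A$ is fixed by $\Lambda$. If $A$ has locally constant degree $n\colon X\to\N$, then since $\lambda$ permutes the connected components of $X$ and $\pi$ identifies each $C_2$-orbit of components, one decomposes the problem over the $C_2$-orbits of components of $X$; on each orbit $A$ has a well-defined degree fixed by $\Lambda$, and $\transf_\lambda$ and $\Phi$ are compatible with this decomposition, the conclusion $\deg A'=2\deg A$ being read off orbit by orbit. Alternatively one replaces $A$ within $[A]$ by an algebra of $\Lambda$-fixed degree as in Remark~\ref{RM:lambda-stable-degree}, which changes neither $[A]$ nor the set of coarse types realised by $[A]$.

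With these identifications in place, the corollary is precisely the content of Theorem~\ref{TH:Saltman-ramified}: there exists $A'\in[A]$ admitting a $\lambda$-involution of coarse type $t$ if and only if $\transf_\lambda([A])=\Phi(t)$ in $\Hoh^2_\et(Y,\units{\calO_Y})$, and $A'$ can be taken of degree $2\deg A$. I do not anticipate a genuine obstacle here; the only thing needing care is checking that the comparison functor $\pi_*$ (equivalently, the equivalence of Example~\ref{EX:quasi-coh-sheaves}) intertwines all the relevant structure --- the Brauer group, the transfer map, the norm short exact sequences defining $N$ and $T$, and the coarse type construction of Construction~\ref{CN:coarse-type} --- and each of these compatibilities is immediate from the definitions once the exact quotient has been produced. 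Thus the \emph{substantive} work is entirely contained in the already-established Theorem~\ref{TH:Saltman-ramified} and its topos-theoretic prerequisites.
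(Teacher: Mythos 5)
Your proposal is correct and follows exactly the same route as the paper, namely invoking Example~\ref{EX:important-exact-quo-scheme} and Theorem~\ref{TH:important-exact-quotients} to see that the good quotient gives an exact quotient of \'etale ringed topoi, and then reading off the statement from Theorem~\ref{TH:Saltman-ramified}. The only difference is that you have spelled out the dictionary and the degree bookkeeping, which the paper leaves implicit.
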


\begin{proof}
	This is a special case of Theorem~\ref{TH:Saltman-ramified}, see
	Example~\ref{EX:important-exact-quo-scheme} and Theorem~\ref{TH:important-exact-quotients}.
\end{proof}

\begin{corollary}\label{CR:Saltman-regular-schemes}
	In the situation of Corollary~\ref{CR:Saltman-ramified-for-schemes},
	suppose that
	\begin{enumerate}[label=(\arabic*)]
	\item $\lambda=\id$, or 
	\item $\pi:X\to Y$ is quadratic \'etale, or 
	\item $Y$ is noetherian
	and regular, and $\pi$ is unramified at the generic points of $Y$.
	\end{enumerate}
	Then there exists $A'\in [A]$ admitting a $\lambda$-involution 
	if and only if $\transf_{\lambda}([A])=0$.
	In this case, $A'$ can be chosen to have a $\lambda$-involution of any prescribed coarse type
	(or any prescribed type, when $2$ is invertible on $Y$)
	and to satisfy $\deg A'=2\deg A$.
\end{corollary}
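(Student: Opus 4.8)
Corollary~\ref{CR:Saltman-regular-schemes} is a direct consequence of Corollary~\ref{CR:Saltman-ramified-for-schemes} combined with the vanishing results for $\Phi$ collected in Proposition~\ref{PR:Phi-trivial} and the comparison of the coarse type with the type furnished by Theorem~\ref{TH:ct-determines-local-iso} and Corollary~\ref{CR:coarse-type-determines-type}. First I would observe that in each of the three cases the hypotheses match a clause of Proposition~\ref{PR:Phi-trivial}: case (1) is part~\ref{pr:phitriv-1}, case (2) is part~\ref{pr:phitriv-2}, and case (3) is part~\ref{pr:phitriv-3}. Hence in all three situations $\Phi\colon\CTypes{\lambda}\to\Hoh^2_\et(Y,\units{\calO_Y})$ is the zero map.

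\smallskip

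With $\Phi=0$ in hand, Corollary~\ref{CR:Saltman-ramified-for-schemes} says precisely that there is $A'\in[A]$ admitting a $\lambda$-involution of coarse type $t$ if and only if $0=\Phi(t)=\transf_\lambda([A])$ in $\Hoh^2_\et(Y,\units{\calO_Y})$. In particular, the condition on $t$ has dropped out: either $\transf_\lambda([A])\neq 0$, in which case no $A'\in[A]$ carries a $\lambda$-involution at all (take $t=1$, or any $t$); or $\transf_\lambda([A])=0$, in which case for \emph{every} coarse type $t\in\CTypes{\lambda}$ there exists $A'\in[A]$ with a $\lambda$-involution of coarse type $t$, and moreover the construction in Theorem~\ref{TH:Saltman-ramified} produces such an $A'$ with $\deg A'=2\deg A$. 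This already gives the statement at the level of coarse types.

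\smallskip

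It remains to upgrade ``coarse type'' to ``type'' when $2$ is invertible on $Y$, i.e.\ to show that in that case $A'$ may be chosen with a $\lambda$-involution of any \emph{prescribed type}. Here I would appeal to Corollary~\ref{CR:coarse-type-determines-type}: under the hypothesis $2\in\units{S}$ (and using that $\units{S}$ has square roots locally, which holds for the \'etale ringed topos of a scheme on which $2$ is invertible by Example~\ref{EX:SHL_implies_SRL}), the natural map $\tp(\tau)\mapsto\ct(\tau)\colon\Types{\lambda}\to\CTypes{\lambda}$ is injective, and by Theorem~\ref{TH:ct-determines-local-iso} two $\lambda$-involutions of Azumaya algebras of the same degree with the same coarse type are in fact of the same type (indeed locally isomorphic). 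So a prescribed type $\tp$ determines a coarse type $t=\ct(\tp)$, and the $A'\in[A]$ produced above with a $\lambda$-involution of coarse type $t$ automatically carries a $\lambda$-involution of type $\tp$, again with $\deg A'=2\deg A$. I do not anticipate a genuine obstacle: the only mild point to be careful about is that the injectivity of $\Types{\lambda}\to\CTypes{\lambda}$ is needed to know that the type is \emph{recovered} from the coarse type of the constructed $A'$, but since $\deg A'=2\deg A=\deg$ of a matrix algebra over the prescribed model, Theorem~\ref{TH:ct-determines-local-iso}\ref{pr53}$\Rightarrow$\ref{pr52} applies verbatim. I would close by invoking Corollary~\ref{CR:Saltman-ramified-for-schemes} for the degree bound $\deg A' = 2\deg A$ and the existence, which is unconditional on $t$ once $\Phi=0$.
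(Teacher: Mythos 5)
Your proof is correct and takes essentially the same approach as the paper's, which simply cites Corollary~\ref{CR:Saltman-ramified-for-schemes} together with Propositions~\ref{PR:Phi-trivial} and~\ref{PR:ramification-in-schemes} (the latter being the minor ingredient you leave implicit: it identifies the geometric conditions ``quadratic \'etale'' and ``unramified at the generic points'' on $\pi\colon X\to Y$ with the topos-theoretic notion of ``unramified'' used in Proposition~\ref{PR:Phi-trivial}). Your use of Theorem~\ref{TH:ct-determines-local-iso}, Corollary~\ref{CR:coarse-type-determines-type}, and Example~\ref{EX:SHL_implies_SRL} to handle the ``any prescribed type'' parenthetical correctly makes explicit what the paper establishes in the discussion just preceding Corollary~\ref{CR:Saltman-ramified-for-schemes}.
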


\begin{proof}
	This follows from Corollary~\ref{CR:Saltman-ramified-for-schemes} 
	and Propositions~\ref{PR:Phi-trivial} and~\ref{PR:ramification-in-schemes}.
\end{proof}

\begin{corollary}\label{CR:Saltman-ramified-for-top}
	Let $X$ be a Hausdorff topological space, let $\lambda:X\to X$
	be a continuous involution, and let $\pi$ denote the quotient
	map $X\to Y:=X/\{1,\lambda\}$.
	Let $A$ be an Azumaya $\calO_X$-algebra
	and let $t\in \CTypes{\lambda}$ be a coarse type.
	Then there exists $A'\in [A]$ admitting a $\lambda$-involution of coarse type
	$t$ if and only if $\Phi(t)=\transf_{\lambda}([A])$ in $\Hoh^2_{\et}(Y,\units{\calO_Y})$.
	The algebra $A'$ can be chosen such that $\deg A'=2\deg A$.
\end{corollary}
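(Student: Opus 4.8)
The plan is to deduce this corollary from Theorem~\ref{TH:Saltman-ramified} in exactly the same way that Corollary~\ref{CR:Saltman-ramified-for-schemes} was obtained, invoking the topological instance of our machinery rather than the scheme-theoretic one. First I would recall that, by Example~\ref{EX:important-exact-quo-top}, the $C_2$-quotient map $\pi:X\to Y$ of Hausdorff topological spaces induces a $C_2$-equivariant morphism of locally ringed topoi $\pi:(\Sh(X),\cont(X,\C))\to(\Sh(Y),\cont(Y,\C))$, with the involution $\lambda=(\Lambda,\nu,\lambda)$ being the one induced by $\lambda:X\to X$ as in Example~\ref{ex:top}. By Theorem~\ref{TH:important-exact-quotients}\ref{item:TH:important-exact-quotients:top-spaces}, this morphism is an exact quotient relative to $\lambda$ in the sense of Definition~\ref{DF:exact-quotient}.

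With these identifications in place, I would simply apply Theorem~\ref{TH:Saltman-ramified} to $\bfX=\Sh(X)$, $\calO_\bfX=\cont(X,\C)$, $\bfY=\Sh(Y)$, and $\calO_\bfY=\cont(Y,\C)$. The theorem states that an Azumaya $\calO_\bfX$-algebra $A$ of degree $n$ is Brauer equivalent to an algebra $A'$ admitting a $\lambda$-involution of coarse type $t\in\CTypes{\lambda}=\Hoh^0(T)$ if and only if $\transf_\lambda([A])=\Phi(t)$ in $\Hoh^2(\bfY,\units{\calO_\bfY})$, and that $A'$ can be taken of degree $2n$. Since in the topological setting $\Hoh^2(\bfY,\units{\calO_\bfY})$ is exactly $\Hoh^2_{\et}(Y,\units{\calO_Y})$ in the notation of the statement (the \'etale subscript here being vestigial, denoting sheaf cohomology on $Y$ in the usual topology, cf.\ Notation~\ref{NT:types-in-specific-cases}), this is precisely the assertion of the corollary. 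There is essentially no obstacle: the only point requiring care is to confirm that $\bfX$ is connected or, more precisely, that we are in the situation covered by Remark~\ref{RM:lambda-stable-degree}, so that the degree-$n$ hypothesis is meaningful; this is automatic when $X$ is connected, and in general one argues component by component on $Y$, noting that $\Lambda$ permutes the connected components of $X$ and hence fixes the degree function on $Y$.

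\begin{proof}
	By Example~\ref{EX:important-exact-quo-top} and
	Theorem~\ref{TH:important-exact-quotients}\ref{item:TH:important-exact-quotients:top-spaces},
	the quotient map $\pi:X\to Y=X/\{1,\lambda\}$ induces an exact quotient
	$\pi:(\Sh(X),\cont(X,\C))\to (\Sh(Y),\cont(Y,\C))$ relative to the
	involution induced by $\lambda$. Applying Theorem~\ref{TH:Saltman-ramified}
	with $\bfX=\Sh(X)$, $\calO_\bfX=\cont(X,\C)$, $\bfY=\Sh(Y)$ and
	$\calO_\bfY=\cont(Y,\C)$ gives the result, since
	$\Hoh^2(\bfY,\units{\calO_\bfY})=\Hoh^2_{\et}(Y,\units{\calO_Y})$
	in the notation of the statement.
\end{proof}
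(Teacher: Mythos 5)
Your proof is correct and coincides with the paper's own argument: both deduce the corollary directly from Theorem~\ref{TH:Saltman-ramified}, using Example~\ref{EX:important-exact-quo-top} together with Theorem~\ref{TH:important-exact-quotients}\ref{item:TH:important-exact-quotients:top-spaces} to verify that $\pi:X\to Y$ yields an exact quotient. Your incidental observation that the subscript ``\'et'' in $\Hoh^2_{\et}(Y,\units{\calO_Y})$ is vestigial (sheaf cohomology is meant) is also correct, and the remark about Remark~\ref{RM:lambda-stable-degree} is a reasonable, if unnecessary, precaution already covered by the paper's standing conventions.
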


\begin{proof}
	This is a special case of Theorem~\ref{TH:Saltman-ramified}, see
	Examples~\ref{EX:important-exact-quo-top} and Theorem~\ref{TH:important-exact-quotients}.
\end{proof}

\begin{corollary}
	In the situation of Corollary~\ref{CR:Saltman-ramified-for-top},
	if
	$\lambda=\id$, or $\lambda$ acts freely on $X$, 
	then there exists $A'\in [A]$ admitting a $\lambda$-involution 
	if and only if $\transf_{\lambda}([A])=0$.
	In this case $A'$ can be chosen to have a $\lambda$-involution of any prescribed type
	and to satisfy $\deg A'=2\deg A$.
\end{corollary}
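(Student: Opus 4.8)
The plan is to deduce this corollary directly from Corollary~\ref{CR:Saltman-ramified-for-top} by showing that both hypotheses---$\lambda = \id$ and $\lambda$ acting freely on $X$---force $\Phi$ to vanish, so that the condition $\Phi(t) = \transf_\lambda([A])$ collapses to $\transf_\lambda([A]) = 0$ regardless of $t$. In the free case, $\pi$ is unramified by Proposition~\ref{PR:ramification-top}, and then $\Phi = 0$ by Proposition~\ref{PR:Phi-trivial}\ref{pr:phitriv-2} (or, equivalently, $T = 1$ and $\CTypes{\lambda} = \{1\}$ by Proposition~\ref{PR:T-structure-unramified-pi}). In the case $\lambda = \id$, the quotient $\pi$ is the trivial quotient in the sense of Example~\ref{EX:trivial-exact-quotient}, so $R = S$ and $\units{R}/\units{S}$ is trivial; hence $\Phi$ factors through $\Hoh^1(\units R/\units S) = 0$ and vanishes by Proposition~\ref{PR:Phi-trivial}\ref{pr:phitriv-1}.

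With $\Phi = 0$ established, Corollary~\ref{CR:Saltman-ramified-for-top} says that for any prescribed coarse type $t$ there exists $A' \in [A]$ carrying a $\lambda$-involution of coarse type $t$ if and only if $\transf_\lambda([A]) = 0$; in particular the existence of \emph{some} representative with a $\lambda$-involution is equivalent to $\transf_\lambda([A]) = 0$, and when this holds one may prescribe $t$ freely and also arrange $\deg A' = 2\deg A$. To upgrade ``prescribed coarse type'' to ``prescribed type'', I would invoke Theorem~\ref{TH:ct-determines-local-iso} together with Corollary~\ref{CR:coarse-type-determines-type}: for a topological $C_2$-quotient of Hausdorff spaces, $\units{S} = \units{\cont(Y,\C)}$ has square roots locally (Example~\ref{EX:SHL_implies_SRL}, using that $\cont(\cdot)$ satisfies Hensel's lemma, Appendix~\ref{AX:C_is_s_henselian}), and $2 \in \units S$, so the hypotheses of Theorem~\ref{TH:ct-determines-local-iso} are met. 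Consequently the map $\tp(\tau) \mapsto \ct(\tau) \colon \Types{\lambda} \to \CTypes{\lambda}$ is injective, so a type is determined by its coarse type; prescribing a coarse type achievable by Corollary~\ref{CR:Saltman-ramified-for-top} is therefore the same as prescribing the corresponding type. (When $\lambda$ acts freely there is only one type anyway, and when $\lambda = \id$ the types are orthogonal and symplectic, matching $\CTypes{\lambda} = \Hoh^0(\mu_{2,R})$ by Example~\ref{EX:T-tot-ramified} and Theorem~\ref{TH:basic-properties-of-types}\ref{item:TH:basic-properties-of-types:tot-ram}.)

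I do not anticipate a genuine obstacle here, since the statement is essentially a packaging of earlier results; the only point requiring care is the bookkeeping of which case ($\lambda = \id$ versus free action) invokes which part of Proposition~\ref{PR:Phi-trivial}, and confirming that ``prescribed type'' is meaningful---i.e.\ that $\Types{\lambda}$ embeds in $\CTypes{\lambda}$---which is exactly Corollary~\ref{CR:coarse-type-determines-type} under the standing topological hypotheses. The degree bound $\deg A' = 2\deg A$ is inherited verbatim from Corollary~\ref{CR:Saltman-ramified-for-top}. So the proof reduces to: (1) identify the quotient as unramified or trivial; (2) cite Proposition~\ref{PR:Phi-trivial} to get $\Phi = 0$; (3) cite Corollary~\ref{CR:Saltman-ramified-for-top} for the existence statement with prescribed coarse type and the degree bound; (4) cite Corollary~\ref{CR:coarse-type-determines-type} to pass from coarse type to type.

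\begin{proof}
	If $\lambda=\id$, then $\pi$ is the trivial quotient in the sense of Example~\ref{EX:trivial-exact-quotient}, so $R=S$ and $\units{R}/\units{S}=1$; hence $\Phi=0$ by Proposition~\ref{PR:Phi-trivial}\ref{pr:phitriv-1}. If $\lambda$ acts freely on $X$, then $\pi$ is unramified by Proposition~\ref{PR:ramification-top}, and $\Phi=0$ by Proposition~\ref{PR:Phi-trivial}\ref{pr:phitriv-2}. In either case, Corollary~\ref{CR:Saltman-ramified-for-top} shows that for any coarse type $t\in\CTypes{\lambda}$ there exists $A'\in[A]$ admitting a $\lambda$-involution of coarse type $t$ if and only if $\transf_\lambda([A])=\Phi(t)=0$, and that such an $A'$ can be chosen with $\deg A'=2\deg A$. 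In particular, a representative with a $\lambda$-involution exists if and only if $\transf_\lambda([A])=0$.

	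Finally, since $2\in\units{S}$ and $\units{S}=\units{\cont(Y,\C)}$ has square roots locally by Example~\ref{EX:SHL_implies_SRL}, Corollary~\ref{CR:coarse-type-determines-type} applies, so the map $\tp(\tau)\mapsto\ct(\tau):\Types{\lambda}\to\CTypes{\lambda}$ is injective. Therefore a $\lambda$-type is determined by its coarse type, and prescribing a type achievable in $\Types{\lambda}$ is equivalent to prescribing the corresponding coarse type. Combining this with the previous paragraph, when $\transf_\lambda([A])=0$ we may choose $A'\in[A]$ with $\deg A'=2\deg A$ carrying a $\lambda$-involution of any prescribed type.
\end{proof}
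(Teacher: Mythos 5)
Your proof is correct and follows the same route as the paper, which simply cites Corollary~\ref{CR:Saltman-ramified-for-top} together with Propositions~\ref{PR:Phi-trivial} and~\ref{PR:ramification-top}. You have merely expanded the citation into an explicit argument, correctly identifying the quotient as trivial (case $\lambda=\id$, giving $\Phi=0$ via Proposition~\ref{PR:Phi-trivial}\ref{pr:phitriv-1}) or unramified (free case, via Proposition~\ref{PR:ramification-top} and Proposition~\ref{PR:Phi-trivial}\ref{pr:phitriv-2}), and you properly justify the passage from ``prescribed coarse type'' to ``prescribed type'' via Corollary~\ref{CR:coarse-type-determines-type}, which the paper takes for granted in the surrounding commentary.
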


\begin{proof}
	This follows from Corollary~\ref{CR:Saltman-ramified-for-top} 
	and Propositions~\ref{PR:Phi-trivial} and~\ref{PR:ramification-top}.
\end{proof}

\begin{remark}
	Let $R$ be a connected semilocal ring, and let $\lambda:R\to R$
	be an involution with fixed ring 
	$S$.
	When $R=S$
	or $R$ is quadratic \'etale over $S$, it
	was observed by Saltman \cite[Th.~4.4]{saltman_azumaya_1978}
	that an Azumaya $R$-algebra $A$ that is Brauer equivalent to an
	algebra with 
	a $\lambda$-involution already possesses a $\lambda$-involution.
	Otherwise said, in this special situation, we can choose $A'=A$
	in Corollary~\ref{CR:Saltman-ramified-for-schemes}.
	
	We do not know whether this statement continues to hold if the assumption that $R=S$ or $R$ is quadratic \'etale
 over $S$ is dropped. In this case, the fact that $R$ is semilocal implies that two Azumaya algebras of the same
 degree are isomorphic \cite{ojanguren_71}. With this in hand, Corollary~\ref{CR:Saltman-ramified-for-schemes}
 implies that if $A$ is equivalent to an Azumaya $R$-algebra admitting a $\lambda$-involution, then
 $\nMat{A}{2\times 2}$ has a $\lambda$-involution. The problem therefore reduces to the question of whether the
 existence of a $\lambda$-involution on $\nMat{A}{2\times 2}$ implies the existence of a $\lambda$-involution on
 $A$. The same question was asked for arbitrary non-commutative semilocal rings $A$ in \cite[\S12]{first_15},
 where it was also shown that counterexamples, if any exist, are restricted. In particular, returning to the
 case of Azumaya algebras, it follows from \cite[Thm.~7.3]{first_15} that if $\deg A$ is even, then $A$ does
 posses a $\lambda$-involution when $\nMat{A}{2\times 2}$ has one.
\end{remark}

\subsection{The Kernel of the Transfer Map}
\label{subsec:ker-of-transfer}

We continue to use $R$, $S$, $N$, $T$ defined in Subsection~\ref{subsec:Saltman-ramified}.

Saltman's theorem can also be regarded as a result characterizing the kernel of the transfer map in terms of existence
of certain involutions. We now use Theorem~\ref{TH:Saltman-ramified} to generalize this particular aspect, namely,
describing the kernel of $\transf_\lambda:\Br(\bfX,\calO_\bfX)\to \Hoh^2(\bfY,\units{\calO_\bfY})$ in terms of the
involutions that the Brauer classes support. For that purpose, we introduce the following families of
$\lambda$-involutions.

\begin{definition}\label{DF:standard-inv}
 Let $A$ be an Azumaya $\calO_\bfX$-algebra. A $\lambda$-involution $\tau:A\to \Lambda A$ is called
 \emph{semiordinary} if there exists a split Azumaya $\calO_\bfX$-algebra $A'$ and a $\lambda$-involution
 $\tau':A'\to \Lambda A'$ such that $(\pi_*A,\pi_*\tau)$ and $(\pi_*A',\pi_*\tau')$ are locally isomorphic. If $A'$
 can moreover be chosen to be $\nMat{\calO_\bfX}{n\times n}$ and there is $a\in \Hoh^0(A')$ such that $\tau'$ is given
by $x\mapsto (axa^{-1})^{\lambda\tr}$ on sections, we say that $\tau$ is \emph{ordinary}.

When $\tau$ is not semiordinary, we shall say it is \emph{extraordinary}.
\end{definition}

\begin{theorem}\label{TH:standard-Azumaya}
	With notation as in Theorem~\ref{TH:Saltman-ramified},
	let $A$ be an Azumaya $\calO_\bfX$-algebra of degree $n$.
	Then the following conditions are equivalent:
	\begin{enumerate}[label=(\alph*)]
		\item \label{item:TH:standard-Azumaya:trnaf} $\transf_\lambda([A])=0$,
		\item \label{item:TH:standard-Azumaya:semi} 
		there exists $A'\in [A]$ admitting a semiordinary $\lambda$-involution,
		\item \label{item:TH:standard-Azumaya:standard}
		there exists $A'\in [A]$ admitting a ordinary $\lambda$-involution.
	\end{enumerate}
	In \ref{item:TH:standard-Azumaya:semi}, the algebra $A'$ can be chosen to satisfy $\deg A'=2\deg A$
	and to have
	a semiordinary involution of any prescribed coarse type in $\ker(\Phi:\Hoh^0(T)\to \Hoh^2(\units{S}))$.
	In \ref{item:TH:standard-Azumaya:standard}, 
	the algebra $A'$ can be chosen to satisfy $\deg A'=2\deg A$
	and to have
	an ordinary 
	involution of any prescribed coarse type in $\im( \Hoh^0(N)\to \Hoh^0(T))$.
\end{theorem}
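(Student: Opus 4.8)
The equivalence of the three conditions will be proved in the cycle \ref{item:TH:standard-Azumaya:standard} $\Rightarrow$ \ref{item:TH:standard-Azumaya:semi} $\Rightarrow$ \ref{item:TH:standard-Azumaya:trnaf} $\Rightarrow$ \ref{item:TH:standard-Azumaya:standard}, with the last implication being the substantive one and essentially a restatement of Theorem~\ref{TH:Saltman-ramified}. The implication \ref{item:TH:standard-Azumaya:standard} $\Rightarrow$ \ref{item:TH:standard-Azumaya:semi} is immediate from the definitions: a standard involution is in particular semistandard, since $\nMat{\calO_\bfX}{n\times n}$ is a split Azumaya algebra. For \ref{item:TH:standard-Azumaya:semi} $\Rightarrow$ \ref{item:TH:standard-Azumaya:trnaf}, suppose $A'\in[A]$ carries a semistandard $\lambda$-involution $\tau'$, so that $(\pi_*A',\pi_*\tau')$ is locally isomorphic to $(\pi_*B,\pi_*\sigma)$ for some split Azumaya $\calO_\bfX$-algebra $B$ with $\lambda$-involution $\sigma$. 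By Proposition~\ref{PR:type-determines-coarse-type}(iii), $\ct(\tau')=\ct(\sigma)$, and $\transf([A])=\transf([A'])$ since Brauer class is invariant, so it suffices to show that a split algebra with a $\lambda$-involution forces $\transf$ of its (trivial) Brauer class to equal $\Phi$ of its coarse type, and that $\transf(0)=0$. The latter is clear since $\transf_\lambda$ is a group homomorphism. For the former, apply Theorem~\ref{TH:Saltman-ramified} to $B$: since $B$ is split, $[B]=0$, and the theorem (applied in the direction we have already, i.e.\ the ``only if'' statement, which is proved unconditionally in the excerpt) gives $\transf([B]) = \Phi(\ct(\sigma))$; thus $\transf([A]) = \transf([B]) + \big(\transf([A])-\transf([B])\big)$, but actually the cleaner route is: $\transf([A]) = \transf([A']) = \Phi(\ct(\tau'))$ directly by the ``only if'' half of Theorem~\ref{TH:Saltman-ramified} applied to $(A',\tau')$, combined with the fact that semistandardness forces $\ct(\tau') \in \ker\Phi$. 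So I will first establish the sublemma that $\ct$ of any $\lambda$-involution on a \emph{split} Azumaya algebra lies in $\ker(\Phi)$ --- this follows because for $B' = \nMat{\calO_\bfX}{n\times n}$ with standard-type involution $x\mapsto(a^{-1}xa)^{\lambda\tr}$ one can run Construction~\ref{CN:coarse-type} with $U = *$, $\psi = \id$, $h = a$, and obtain $\veps = a^{-\lambda\tr}a$, whose image in $T$ is readily computed; more robustly, if $[B']=0$ then $\transf([B'])=0$, and by the ``only if'' direction of Theorem~\ref{TH:Saltman-ramified}, $\Phi(\ct(\tau'))=\transf([B'])=0$.

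\textbf{The main implication.} It remains to prove \ref{item:TH:standard-Azumaya:trnaf} $\Rightarrow$ \ref{item:TH:standard-Azumaya:standard}, together with the degree and coarse-type refinements. Assume $\transf_\lambda([A])=0$. Fix any coarse type $t \in \im(\Hoh^0(N)\to \Hoh^0(T))$; I must show such $t$ automatically lies in $\ker(\Phi)$. Indeed if $t$ is the image of a global section $\veps\in\Hoh^0(N)$, then $\delta^0(t)\in\Hoh^1(\units R/\units S)$ is represented, via the connecting map of $1\to \units R/\units S\to N\to T\to 1$, by a coboundary --- more precisely, since $\veps$ is already a \emph{global} lift of $t$ to $N$, the \v{C}ech description of $\delta^0$ shows $\delta^0(t)$ is represented by $d_0^*\veps\cdot d_1^*\veps^{-1}$ for the trivial covering, which is the identity; hence $\delta^0(t)=0$ and a fortiori $\Phi(t)=\delta^1\delta^0(t)=0$. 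Thus every $t$ in $\im(\Hoh^0(N)\to\Hoh^0(T))$ satisfies $\Phi(t)=0 = \transf_\lambda([A])$, so Theorem~\ref{TH:Saltman-ramified} produces $A'\in[A]$ with $\deg A' = 2n$ admitting a $\lambda$-involution $\tau$ of coarse type exactly $t$. I then claim that the $\lambda$-involution constructed in the ``if'' half of the proof of Theorem~\ref{TH:Saltman-ramified} is in fact \emph{standard}: inspecting that construction, $A'$ is built by \v{C}ech descent of $\nMat{R_{V_0}}{2n\times 2n}$ along $V_0\to *$, and the involution $\tau$ restricts on $V_0$ to $\sigma: x\mapsto (h^{-1}xh)^{\lambda\tr}$ with $h=\smallSMatII{0}{\veps}{1}{0}$ --- which is precisely the local shape required of a standard involution, with $a=h$. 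So $\tau$ is standard by definition. For the semistandard refinement with arbitrary $t\in\ker(\Phi)$, the same theorem directly yields $A'$ of degree $2n$ with a $\lambda$-involution of coarse type $t$, and by the construction this involution is again (even) standard, hence a fortiori semistandard; but since we only want to claim semistandardness for $t\in\ker\Phi$, and standardness for $t\in\im(\Hoh^0(N)\to\Hoh^0(T))$, and the latter set is contained in the former by the computation above, both refinements follow from the single application of Theorem~\ref{TH:Saltman-ramified}.

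\textbf{Main obstacle.} The principal subtlety is verifying that the involution produced in the proof of Theorem~\ref{TH:Saltman-ramified} genuinely meets the definition of \emph{standard}, not merely semistandard --- i.e.\ tracing through the block-matrix construction $h=\smallSMatII{0}{\veps}{1}{0}$, $b'=\smallSMatII{b}{0}{0}{\beta b^{-\lambda\tr}}$ there and confirming that after the descent the local isomorphism $\psi:A'_{V_0}\to \nMat{R_{V_0}}{2n\times 2n}$ intertwines $\tau$ with $x\mapsto (h^{-1}xh)^{\lambda\tr}$ on the nose, with $h$ a global section lift (this is exactly how $\sigma$ was defined in that proof, so the check is essentially bookkeeping). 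The secondary point requiring care is the inclusion $\im(\Hoh^0(N)\to\Hoh^0(T))\subseteq\ker(\Phi)$: this should be phrased using the exactness of the long exact sequence of Proposition~\ref{PR:non-ab-coh-basic-prop}, since $\delta^0:\Hoh^0(T)\to\Hoh^1(\units R/\units S)$ is, up to a sign, the connecting map whose kernel is exactly the image of $\Hoh^0(N)\to\Hoh^0(T)$; then $\Phi = \delta^1\circ\delta^0$ kills that image automatically. Everything else is formal manipulation of the results already in hand.
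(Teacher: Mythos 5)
Your overall structure — the cycle \ref{item:TH:standard-Azumaya:standard} $\Rightarrow$ \ref{item:TH:standard-Azumaya:semi} $\Rightarrow$ \ref{item:TH:standard-Azumaya:trnaf} $\Rightarrow$ \ref{item:TH:standard-Azumaya:standard}, the reduction to $R$-algebras via $\pi_*$, the use of Theorem~\ref{TH:Saltman-ramified} in both directions, and the recognition that standardness of the constructed involution hinges on taking $\veps$ to be a \emph{global} section of $N$ — all matches the paper's argument, and your verification that $\im(\Hoh^0(N)\to\Hoh^0(T))\subseteq\ker\Phi$ via exactness is correct.

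However, there is a genuine gap in your treatment of the semistandard refinement. You assert that for an arbitrary $t\in\ker\Phi$, the involution produced by the ``if'' half of Theorem~\ref{TH:Saltman-ramified} is ``again (even) standard, hence a fortiori semistandard,'' citing the containment $\im(\Hoh^0(N)\to\Hoh^0(T))\subseteq\ker\Phi$. This reasoning is backwards: that containment does not imply anything about $t$ lying in the smaller set. For $t\in\ker\Phi$ not in the image of $\Hoh^0(N)$, the lift $\veps$ lives only in $N(V_0)$ after refining the hypercovering, so $h=\smallSMatII{0}{\veps}{1}{0}$ is a section over $V_0$ but not a global section, $\sigma$ does \emph{not} descend to an involution on $\nMat{R}{2n\times 2n}$, and the constructed involution is not standard. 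Worse, semistandardness is also not immediate: the definition requires a \emph{globally defined} split Azumaya $R$-algebra $B$ with a $\lambda$-involution locally isomorphic to $(A',\tau)$. The fact that $(A'_{V_0},\tau_{V_0})$ is isomorphic to a split algebra with involution \emph{over} $V_0$ does not by itself furnish a global $B$. The paper closes this gap by running the construction a second time with $\nMat{R}{n\times n}$ in place of $A$, producing a split algebra $(A'_1,\tau_1)$ which, by construction, is locally isomorphic over $V$ to the same model $(\nMat{R_V}{2n\times 2n},\sigma)$ as $(A',\tau)$; transitivity of local isomorphism then yields semistandardness. You would need to supply some such argument — in particular, you need to produce the global split algebra explicitly, rather than inferring it from the local model.
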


We shall see below (Corollary~\ref{CR:all-invs-are-standard}) that in the situation of 
a scheme on which $2$ is invertible and a trivial involution, or a quadratic \'etale covering 
of schemes with its canonical involution, 
all $\lambda$-involutions are ordinary.
Thus,
Theorem~\ref{TH:standard-Azumaya} recovers 
Saltman's Theorem when $2$ is invertible. 

More generally, it will turn out that under mild assumptions,
all involutions are ordinary when 
$\pi:\bfX\to \bfY$
is unramified or everywhere ramified.

\begin{proof}
	As in the proof of Theorem~\ref{TH:Saltman-ramified}, we switch
	to Azumaya
	$R$-algebras by applying $\pi_*$.

	(c)$\implies$(b) is clear.
	
	(b)$\implies$(a): Suppose $A'\in [A]$ 
	admits a semiordinary involution $\tau$ and let $(B,\theta)$
	be a split Azumaya $R$-algebra with a $\lambda$-involution that is locally isomorphic
	to $(A',\tau)$.
	Then by
	Theorem~\ref{TH:Saltman-ramified} and Proposition~\ref{PR:type-determines-coarse-type},
	$\transf_\lambda([A])=\Phi(\ct(\tau))=\Phi(\ct(\theta))=
	\transf_\lambda([B])=0$.

	(a)$\implies$(c): Let $t\in \im( \Hoh^0(N)\to \Hoh^0(T))$. Then $t$ is the image of some $\veps\in \Hoh^0(N)$.
 We revisit the proof of the ``if'' part in Theorem~\ref{TH:Saltman-ramified} and apply it with our $t$ and
 $\veps$ to obtain an Azumaya $R$-algebra with involution $(A',\tau)$ such that $A'\in [A]$, $\deg A'=2n$,
 $\ct(\tau)=t$ and $(A'_{V_0},\tau_{V_0})$ is isomorphic to $(\nMat{R_{V_0}}{2n\times 2n},\sigma)$ with $\sigma$
 being given by $x\mapsto ([\begin{smallmatrix} 0 & 1 \\ \veps & 0 \end{smallmatrix}]x [\begin{smallmatrix}
 0 & 1 \\ \veps & 0 \end{smallmatrix}]^{-1})^{\lambda\tr}$ on sections. Since $\veps\in\Hoh^0(N)$, the involution
 $\sigma$ descends to an involution on $\nMat{R}{2n\times 2n}$, defined by the same formula as $\sigma$, hence
 $\tau$ is ordinary.
	
\smallskip	
	
It remains to show that we can choose $A'$ to have a semiordinary involution with a prescribed coarse type $t\in \ker
\Phi$. Let $V\to \pt$ be a covering such that $t$ lifts to some $\veps\in N(V)$. Again, we apply the proof of the
``if'' part of Theorem~\ref{TH:Saltman-ramified} with $t$, $\veps$ and $A$ to obtain an Azumaya $R$-algebra with
involution $(A',\tau )$ satisfying $A'\in [A]$ and $\ct(\tau)=t$. 
We then reapply the proof with
$\nMat{R}{n \times n}$ in place of $A$ to obtain another Azumaya $R$-algebra with involution $(A'_1,\tau_1)$ such that
$A'_1$ is split and $\ct(\tau_1)=t$. By construction, after suitable refinement of $V\to *$, both $(A'_V,\tau_V)$ and
$(A'_{1,V},\tau_{1,V})$ are isomorphic to $(\nMat{R_{V}}{2n\times 2n},\sigma)$, where is $\sigma$ given by $x\mapsto
([\begin{smallmatrix} 0 & 1 \\ \veps & 0 \end{smallmatrix}] x [\begin{smallmatrix} 0 & 1 \\ \veps &
 0 \end{smallmatrix}]^{-1})^{\lambda\tr}$ on sections. Consequently, $(A',\tau)$ and $(A'_1,\tau_1)$ are locally isomorphic
and therefore $\tau$ is semiordinary.
\end{proof}

We now shift our attention from the involutions $\tau$ to the coarse types $t$.

\begin{definition}
	Let $t\in \Hoh^0(T)$ be a coarse $\lambda$-type.
	We say that $t$ is \emph{realizable} if 
	there exists
	some Azumaya $\calO_\bfX$-algebra 
	$A$ and some $\lambda$-involution $\tau:A\to\Lambda A$ with coarse type $t$.
	We also say that $t$ is {realizable} in degree $n$ when $A$ can be chosen so that $n=\deg A$.
	When $\tau$ can be chosen to be ordinary, resp.\ semiordinary,
	we call $t$ \emph{ordinary}, resp.\ \emph{semiordinary}.
\end{definition}

The following theorem characterizes the realizable,
semiordinary, and ordinary coarse types in cohomological terms.

\begin{theorem}\label{TH:standard-types}
	With notation as in Theorem~\ref{TH:Saltman-ramified},
	let $t\in \Hoh^0(T)$ be coarse type and let 
	$\delta^0:\Hoh^0(T)\to\Hoh^1(\units{R}/\units{S})$,
	$\delta^1:\Hoh^1(\units{R}/\units{S})\to \Hoh^2(\units{S})$ and 
	$\Phi=\delta^1\circ\delta^0$ be as in Subsection~\ref{subsec:Saltman-ramified}. Then:
	\begin{enumerate}[label=(\roman*)]
		\item $t$ is realizable if and only if $\Phi(t)\in \im(\transf_\lambda:\Br(\bfX,\calO_\bfX)\to \Hoh^2(\bfY,\units{\calO_\bfY}))$.
		\item \label{item:TH:standard-types:semi} 
		$t$ is semiordinary if and only if $\Phi(t)=\delta^1\delta^0(t)=0$. 
		\item \label{item:TH:standard-types:stan} $t$ is ordinary if and only if $\delta^0(t)=0$,
		or equivalently $t\in\im(\Hoh^0(N)\to \Hoh^0(T))$.
	\end{enumerate}
	When (ii) or (iii) hold, $t$ is realizable in degree $2$, and hence in all even degrees.
\end{theorem}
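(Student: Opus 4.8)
\textbf{Proof plan for Theorem~\ref{TH:standard-types}.} The strategy is to unwind the definitions of \emph{realizable}, \emph{semistandard} and \emph{standard} coarse types in terms of the connecting maps $\delta^0$, $\delta^1$, and to read off the three equivalences from Theorem~\ref{TH:Saltman-ramified} and Theorem~\ref{TH:standard-Azumaya}. For part (i): by definition $t$ is realizable if and only if there exists an Azumaya $\calO_\bfX$-algebra $A$ with a $\lambda$-involution of coarse type $t$, which by Theorem~\ref{TH:Saltman-ramified} (applied with $[A]$ arbitrary) holds if and only if $\Phi(t)=\transf_\lambda([A])$ for some Brauer class $[A]$, i.e.\ $\Phi(t)\in\im(\transf_\lambda)$. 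For part (ii): the implication ``$t$ semistandard $\Rightarrow$ $\Phi(t)=0$'' follows from Theorem~\ref{TH:standard-Azumaya}, since a semistandard involution on $A'\in[A]$ forces $\transf_\lambda([A])=0$, and $\Phi(t)=\transf_\lambda([A])$ by Theorem~\ref{TH:Saltman-ramified}; conversely if $\Phi(t)=0$ then taking $A=\nMat{\calO_\bfX}{1\times 1}=\calO_\bfX$ (the split algebra), Theorem~\ref{TH:Saltman-ramified} produces $A'\in[\calO_\bfX]$ of degree $2$ with a $\lambda$-involution of coarse type $t$, and since $A'$ is Brauer-trivial hence split, this involution is by definition semistandard. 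For part (iii): ``$t$ standard $\Rightarrow$ $\delta^0(t)=0$'' follows because a standard involution on $\nMat{\calO_\bfX}{n\times n}$ has the form $x\mapsto(a^{-1}xa)^{\lambda\tr}$ with $a\in\Hoh^0(A')$ globally defined, so running Construction~\ref{CN:coarse-type} with $U=*$ produces a \emph{globally defined} $\veps=a^{-\lambda\tr}a\in\Hoh^0(N)$ lifting $t$; hence $t\in\im(\Hoh^0(N)\to\Hoh^0(T))$, and by exactness of $\Hoh^0(\units R/\units S)\to\Hoh^0(N)\to\Hoh^0(T)\xrightarrow{\delta^0}\Hoh^1(\units R/\units S)$ (the long exact sequence of $1\to\units R/\units S\to N\to T\to 1$) this is equivalent to $\delta^0(t)=0$; conversely if $\delta^0(t)=0$, lift $t$ to $\veps\in\Hoh^0(N)$ and feed $\veps$ into the ``if'' direction of the proof of Theorem~\ref{TH:Saltman-ramified} applied to the split algebra $\nMat{\calO_\bfX}{n\times n}$, where, exactly as in the proof of Theorem~\ref{TH:standard-Azumaya}(a)$\Rightarrow$(c), the local involution $\sigma$ given by $x\mapsto([\begin{smallmatrix}0&\veps\\1&0\end{smallmatrix}]^{-1}x[\begin{smallmatrix}0&\veps\\1&0\end{smallmatrix}])^{\lambda\tr}$ descends globally because $\veps$ is global, yielding a standard involution of coarse type $t$ on $\nMat{\calO_\bfX}{2n\times 2n}$.

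For the final sentence, observe that both constructions above, when $\Phi(t)=0$ (resp.\ $\delta^0(t)=0$), were run with $A=\calO_\bfX$, i.e.\ $\deg A=1$, so the resulting algebra $A'$ has degree $2\deg A=2$ by the degree bound in Theorem~\ref{TH:Saltman-ramified}; and any algebra with a $\lambda$-involution in degree $2$ yields one in degree $2m$ for all $m\geq 1$ by passing to $(\nMat{A'}{m\times m},\tau\tr)$, which has the same coarse type by Proposition~\ref{PR:type-determines-coarse-type}(i) and is still semistandard (resp.\ standard) since the defining property is preserved under this matrix construction---indeed if $(A',\tau)$ is locally isomorphic to a split $(B,\theta)$ then $(\nMat{A'}{m\times m},\tau\tr)$ is locally isomorphic to $(\nMat{B}{m\times m},\theta\tr)$, which is again split, and if $\theta$ is given by conjugation-then-$\lambda\tr$ by a global $a$ then $\theta\tr$ is given by conjugation-then-$\lambda\tr$ by $a\oplus\dots\oplus a$.

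The main obstacle I anticipate is bookkeeping the precise compatibility between the two parallel descriptions of the coarse type---the intrinsic one via Construction~\ref{CN:coarse-type} and the cohomological one via $\delta^0$, $\delta^1$ built into the ``if'' direction of Theorem~\ref{TH:Saltman-ramified}---so that the $\veps\in\Hoh^0(N)$ one lifts $t$ to is literally the same datum that appears in both places. This has, however, already been handled inside the proof of Theorem~\ref{TH:Saltman-ramified} (where $t=\ct(\tau)$ is the image of $\veps$ in $T(V_0)$ and the cocycle computation shows $\transf([A])=\Phi(t)$) and inside the proof of Theorem~\ref{TH:standard-Azumaya}, so the present proof mostly consists of citing those computations with $A$ taken to be split rather than arbitrary, and tracking which of $\delta^0(t)$ or $\delta^1\delta^0(t)$ is being forced to vanish. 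Care is also needed to ensure the equivalence ``$\delta^0(t)=0 \iff t\in\im(\Hoh^0(N)\to\Hoh^0(T))$'' is invoked only via the long exact sequence of Proposition~\ref{PR:non-ab-coh-basic-prop}\ref{item:PR:non-ab-coh-basic-prop:longer-ex-seq} for the central short exact sequence $1\to\units R/\units S\to N\to T\to 1$, which is legitimate since $\units R/\units S$ is central (indeed all three are abelian).
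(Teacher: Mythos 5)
Your proposal is correct and takes essentially the same route as the paper: part (i) is read off from Theorem~\ref{TH:Saltman-ramified}, and parts (ii)--(iii) together with the degree claim follow by applying Theorem~\ref{TH:standard-Azumaya} with $A=R$ (for the ``if'' directions) and Theorem~\ref{TH:Saltman-ramified} plus Construction~\ref{CN:coarse-type} with $U=*$ (for the ``only if'' directions). You essentially unroll the citation to Theorem~\ref{TH:standard-Azumaya} rather than invoking it directly, which replicates the comparison argument already carried out in that theorem's proof; the one step to flag is ``$A'$ is Brauer-trivial hence split,'' which is only correct under the reading of ``split'' as ``Brauer-trivial'' (i.e.\ $A'\cong\sEnd(E)$ for some locally free sheaf $E$), the intended reading of Definition~\ref{DF:standard-inv}, but this is not the reading that makes $A'\cong\nMat{\calO_\bfX}{n\times n}$.
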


\begin{proof}
\begin{enumerate}[label=(\roman*), align=left, leftmargin=0pt, itemindent=1ex, itemsep=0.3\baselineskip]
	\item 
	This follows form Theorem~\ref{TH:Saltman-ramified}.
	
	\item 
	The ``only if'' part follows from Theorems~\ref{TH:standard-Azumaya} and~\ref{TH:Saltman-ramified}.
	The ``if'' part and the assertion that $t$
	can be realized in degree $2$ follow by applying Theorem~\ref{TH:standard-Azumaya} with $A=R$.
	
	\item Suppose $t$ is ordinary, say
	$t=\ct(\nMat{R}{n},\tau)$ with $\tau$ given by $x\mapsto (hxh^{-1})^{\lambda\tr}$
	on sections. 
	Then we can apply Construction~\ref{CN:coarse-type}
	by with $U=*$, $\psi=\id$, and $h$ as above, resulting in $\veps\in \Hoh^0(N)$,
	which then maps onto $t\in \Hoh^0(T)$.
	
	The reverse implication follows by applying Theorem~\ref{TH:standard-Azumaya} with $A=R$.
	\qedhere
\end{enumerate}
\end{proof}

\begin{corollary}\label{CR:standard-inv-vs-standard-types}
 With the notation of Theorem~\ref{TH:Saltman-ramified},
 suppose $ \units{\calO_\bfY}$ has square roots locally, and assume further that $2\in\units{\calO_\bfY}$ or $\pi:\bfX\to \bfY$ is
 unramified. Let $(A,\tau)$ be an Azumaya $\calO_\bfX$-algebra with a $\lambda$-involution. Then $\tau$ is ordinary,
 resp.\ semiordinary, if and only if its coarse type is.
\end{corollary}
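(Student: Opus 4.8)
The statement asserts that under the hypotheses (namely $\units{\calO_\bfY}$ has square roots locally and either $2\in\units{\calO_\bfY}$ or $\pi$ unramified), a $\lambda$-involution $\tau$ of an Azumaya $\calO_\bfX$-algebra $A$ is standard if and only if its coarse type $\ct_\pi(\tau)$ is a standard coarse type, and likewise for semistandard. One direction is immediate in both cases: if $\tau$ itself is standard (resp.\ semistandard), then by definition its coarse type is realized by a standard (resp.\ semistandard) involution, namely $\tau$, so $\ct_\pi(\tau)$ is standard (resp.\ semistandard). So the content is the converse, and this is exactly where we must exploit the hypotheses to convert a statement about \emph{some} algebra realizing the coarse type into a statement about \emph{our} algebra $A$.

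First I would handle the semistandard case. Suppose $\ct_\pi(\tau)=t$ is a semistandard coarse type. By Theorem~\ref{TH:standard-types}\ref{item:TH:standard-types:semi}, this is equivalent to $\Phi(t)=0$. Now apply Theorem~\ref{TH:Saltman-ramified}: since $\transf_\lambda([A])=\Phi(\ct_\pi(\tau))=\Phi(t)=0$, the hypothesis $\transf([A])=\Phi(t)$ is satisfied, so there exists $A'\in[A]$ admitting a $\lambda$-involution $\tau'$ of coarse type $t$; moreover, by Theorem~\ref{TH:standard-Azumaya}, $A'$ can be taken with $\deg A'=2\deg A$ and with $\tau'$ semistandard. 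But I actually want to conclude that the \emph{given} $(A,\tau)$ is semistandard. Here is the point: semistandard means $(\pi_*A,\pi_*\tau)$ is locally isomorphic to $(\pi_*B,\pi_*\theta)$ for some split $B$ with $\lambda$-involution $\theta$. Since $\tau$ has coarse type $t$ and $t$ is semistandard, there is \emph{some} split $(B,\theta)$ with $\ct_\pi(\theta)=t=\ct_\pi(\tau)$. Under our hypotheses, Theorem~\ref{TH:ct-determines-local-iso} says that two $\lambda$-involutions with equal coarse type are of the same type, hence (after passing to matrix algebras to equalize degrees, as in Corollary~\ref{CR:coarse-type-determines-type}) $(A,\tau)$ becomes locally isomorphic, as algebras with involution, to a matrix algebra over $(B,\theta)$, which is again split with an induced $\lambda$-involution. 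This shows $\tau$ is semistandard. The same argument shows: if $t$ is semistandard and realized by $(B,\theta)$ with $B$ split, any $(A,\tau)$ with $\ct_\pi(\tau)=t$ is semistandard.

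The standard case is analogous but uses the finer invariant: $\ct_\pi(\tau)=t$ being a standard coarse type means, by Theorem~\ref{TH:standard-types}\ref{item:TH:standard-types:stan}, that $\delta^0(t)=0$, equivalently $t\in\im(\Hoh^0(N)\to\Hoh^0(T))$, i.e.\ $t$ lifts to a global section $\veps\in\Hoh^0(N)$. Given such a lift, form the split algebra $\nMat{R}{n\times n}$ with the \emph{standard} involution $\sigma_\veps: x\mapsto (a^{-1}xa)^{\lambda\tr}$ for a suitable global section $a$ (one checks $\veps\in\Hoh^0(N)$ guarantees such $\sigma_\veps$ descends to a global involution of $\nMat{R}{n\times n}$; in the $2\times 2$ case take $a=[\begin{smallmatrix}0&\veps\\1&0\end{smallmatrix}]$ as in the proof of Theorem~\ref{TH:standard-Azumaya}). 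Applying Construction~\ref{CN:coarse-type} with $U=*$, $\psi=\id$ and this $a$ shows $\ct_\pi(\sigma_\veps)=t$. Now by Theorem~\ref{TH:ct-determines-local-iso}, $(\pi_*A,\pi_*\tau)$ and $\nMat{R}{n\times n}$ with $\sigma_\veps$ are locally isomorphic as algebras with involution after equalizing degrees; since the latter is standard, $\tau$ is standard.

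The main obstacle, and the place that requires care, is making sure the degree-matching step does not destroy ``standard'' or ``split''. For the semistandard direction this is harmless: a matrix algebra over a split algebra is split, and Definition~\ref{DF:standard-inv} only demands local isomorphism as algebras with involution after applying $\pi_*$, so passing to $\nMat{A}{m\times m}$ with the transposed involution is permitted. For the standard direction, one must verify that a matrix algebra over $(\nMat{R}{n\times n},\sigma_\veps)$ is again of standard form---i.e.\ that $\sigma_\veps\tr$ on $\nMat{\nMat{R}{n\times n}}{m\times m}=\nMat{R}{nm\times nm}$ is conjugate-transpose by a global section of $\GL_{nm}(R)$---which is routine since $\sigma_\veps\tr$ corresponds to the block-diagonal hermitian matrix $a\oplus\cdots\oplus a$ (this is exactly the computation in the proof of Proposition~\ref{PR:type-determines-coarse-type}(i)). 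With these checks in place, and invoking Corollary~\ref{CR:coarse-type-determines-type} to know the coarse type determines the type, the corollary follows by combining Theorems~\ref{TH:ct-determines-local-iso}, \ref{TH:standard-Azumaya} and~\ref{TH:standard-types}.
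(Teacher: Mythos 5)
Your outline follows the same overall strategy as the paper — reduce the question, via Theorem~\ref{TH:ct-determines-local-iso}, to producing \emph{some} algebra of the right degree and coarse type with a standard (resp.\ semistandard) involution — but the ``degree-matching'' step, which you correctly flag as the place requiring care, is in fact where your argument has a genuine gap.

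Definition~\ref{DF:standard-inv} requires that $(\pi_*A,\pi_*\tau)$ \emph{itself} be locally isomorphic to $(\pi_*A',\pi_*\tau')$ with $A'$ split; local isomorphism of algebras with involution forces $\deg A' = \deg A$. Your argument produces a split $(B,\theta)$ of \emph{some} degree with $\ct_\pi(\theta)=t$ and then ``equalizes degrees by passing to matrix algebras.'' If you blow up $A$ to $\nMat{A}{k\times k}$ in order to match $\deg B$, you only prove that $\tau\tr$ on $\nMat{A}{k\times k}$ is semistandard, which is not the same as $\tau$ being semistandard — and there is no obvious implication from one to the other. Your closing claim that ``passing to $\nMat{A}{m\times m}$ with the transposed involution is permitted'' by the definition is incorrect: the definition does not permit replacing $A$ by a matrix algebra over it. To blow up only $B$, you would need $\deg B \mid \deg A$, which is not guaranteed by your choice of $(B,\theta)$. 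The same defect appears in your standard-case argument, where $\nMat{R}{n\times n}$ with $\sigma_\veps$ is constructed but then you again appeal to ``equalizing degrees'' without ensuring that $n=\deg A$ is compatible with the construction (and indeed the $2\times2$ model you cite has degree $2$, not $\deg A$).

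The paper closes exactly this gap: after reducing, component by component on $\bfY$, to a fixed degree $n = \deg A$, it treats $n$ odd separately (where $t=1$ by Theorem~\ref{TH:basic-properties-of-types}\ref{item:TH:basic-properties-of-types:odd}, and $(\nMat{R}{n\times n},\lambda\tr)$ works), and for $n=2m$ it applies Theorem~\ref{TH:standard-Azumaya} to $\nMat{R}{m\times m}$, which directly produces an algebra $A'$ of degree $2m=n$ — trivial in the Brauer group, hence split — with a standard or semistandard involution of coarse type $t$. With that in hand, Theorem~\ref{TH:ct-determines-local-iso} furnishes the local isomorphism at the correct degree, and the result follows. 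Your proposal would be repaired by substituting this step for the ``equalize degrees'' step; as written, it does not establish the corollary.
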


\begin{proof}
	The ``only if'' part is clear, so we turn to the ``if'' part.
	We replace $(A,\tau)$ with $(\pi_*A,\pi_*\tau)$, 
	see Theorem~\ref{TH:pi-star-equiv} and Corollary~\ref{CR:equiv-Az-with-inv},
	and write $t=\ct(\tau)$.
	In case $\bfY$ is not connected, we express
	$*_\bfY$ as $\bigsqcup_{n\in\N} Y_n$ such that $A_{Y_n}$ has degree 
	$n$, and work with each component separately. We may therefore assume
	that $n:=\deg A$ is constant.
	
	By Theorem~\ref{TH:ct-determines-local-iso}, it is enough to 
	find an Azumaya $R$-algebra $A'$ with an ordinary, resp.\ semiordinary,
	involution $\tau'$ such that $\deg A=\deg A'$ and $\ct(\tau)=\ct(\tau')$.
	If $n$ is odd, then $t=1$ by
	Theorem~\ref{TH:basic-properties-of-types}\ref{item:TH:basic-properties-of-types:odd},
	and we can take $(A',\tau')=(\nMat{R}{n\times n},\lambda\tr)$.
	Otherwise, $n=2m$, and applying Theorem~\ref{TH:standard-Azumaya}
	to $\nMat{R}{m\times m}$ yields an algebra with an ordinary, resp.\ semiordinary, involution
	$(A',\tau')$ such that $\ct(\tau')=\ct(\tau)$ and $\deg A'=\deg A$; 
	here we used parts (ii) and (iii) of Theorem~\ref{TH:standard-types}. 
\end{proof}

\begin{corollary}\label{CR:extreme-cases-standard-types}
	With the notation of Theorem~\ref{TH:Saltman-ramified}, suppose that 
	\begin{enumerate}[label=(\arabic*)] 
	\item $\pi:\bfX\to \bfY$ is a trivial quotient (Example~\ref{EX:trivial-exact-quotient}), or
	\item $\pi:\bfX\to \bfY$ is everywhere
	ramified, $2\in\units{\calO_\bfY}$ and $\units{\calO_\bfY}$ has square roots locally, or
	\item $\pi:\bfX\to \bfY$ is unramified.
	\end{enumerate}
	Then all coarse $\lambda$-types are realizable and ordinary.
\end{corollary}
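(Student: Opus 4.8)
The plan is to deduce Corollary~\ref{CR:extreme-cases-standard-types} from Theorem~\ref{TH:standard-types}, parts \ref{item:TH:standard-types:semi} and \ref{item:TH:standard-types:stan}, together with the computations of the map $\Phi$ in Proposition~\ref{PR:Phi-trivial} and the structure of $T$ in the various cases. Fix a coarse type $t\in\Hoh^0(T)$. By Theorem~\ref{TH:standard-types}\ref{item:TH:standard-types:stan}, $t$ is standard if and only if $\delta^0(t)=0$ in $\Hoh^1(\units{R}/\units{S})$; since standard implies semistandard implies realizable, it suffices to prove $\delta^0(t)=0$ in each of the three cases (which simultaneously gives realizability, via Theorem~\ref{TH:standard-types}, in degree $2$).

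In case (1), the quotient is trivial, so $R=S$ and the sheaf $\units{R}/\units{S}$ is the trivial group; hence $\Hoh^1(\units{R}/\units{S})=0$ and $\delta^0(t)=0$ automatically. In case (3), $\pi$ is unramified, so by Proposition~\ref{PR:T-structure-unramified-pi} we have $T=1$, whence $\Hoh^0(T)=\{1\}$ and $t=1$ is the trivial type, which is visibly standard (it is the type of $(R,\lambda)$, or of $(\nMat{R}{n\times n},\lambda\tr)$). So both of these are essentially formal. The substantive case is (2): here $\pi$ is totally ramified, $2\in\units{S}$, and $\units{S}$ has square roots locally. I would argue exactly as in the proof of Proposition~\ref{PR:Phi-trivial}\ref{pr:phitriv-4}: under these hypotheses the squaring map $x\mapsto x^2$ is an automorphism of $\units{R}/\units{S}$ (surjectivity because $\units{R}$ has square roots locally, which follows from $\units{S}$ having square roots locally together with Lemma~\ref{LM:one-plus-unitary}; injectivity because a section of the kernel would produce, via Lemma~\ref{LM:etale-criterion}, a quadratic \'etale subobject, contradicting total ramification). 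Consequently $\Hoh^1(\units{R}/\units{S})$ is $2$-divisible, in fact uniquely $2$-divisible. On the other hand $\Hoh^0(T)$ is a $2$-torsion group by Proposition~\ref{PR:T-is-two-torsion}, so $\delta^0$, being a group homomorphism from a $2$-torsion group to a uniquely $2$-divisible group, must vanish. Therefore $\delta^0(t)=0$, so $t$ is standard by Theorem~\ref{TH:standard-types}\ref{item:TH:standard-types:stan}.

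Assembling the three cases: in each case $\delta^0(t)=0$ for every $t\in\Hoh^0(T)$, hence every coarse $\lambda$-type is standard; since a standard type is in particular semistandard and therefore realizable (Theorem~\ref{TH:standard-types} guarantees realizability in degree $2$ whenever $\delta^0(t)=0$), the corollary follows. I do not expect any real obstacle here, since all the hard work has been done in Theorems~\ref{TH:standard-Azumaya} and~\ref{TH:standard-types} and in Proposition~\ref{PR:Phi-trivial}; the only point requiring care is to note that the arguments for $\Phi=0$ in Proposition~\ref{PR:Phi-trivial}\ref{pr:phitriv-1},~\ref{pr:phitriv-4},~\ref{pr:phitriv-2} actually establish the stronger statement $\delta^0=0$ (for (1) and (2)) or $\Hoh^0(T)=0$ (for (3)), which is what is needed for \emph{standardness} rather than merely \emph{semistandardness}. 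A clean write-up would be: ``By Theorem~\ref{TH:standard-types}\ref{item:TH:standard-types:stan} it suffices to show $\delta^0(t)=0$ for all $t\in\Hoh^0(T)$. In case (1) this is immediate since $\Hoh^1(\units R/\units S)=0$. In case (3), $T=1$ by Proposition~\ref{PR:T-structure-unramified-pi}. In case (2), the squaring endomorphism of $\units R/\units S$ is an automorphism by the argument in the proof of Proposition~\ref{PR:Phi-trivial}\ref{pr:phitriv-4}, so $\Hoh^1(\units R/\units S)$ is uniquely $2$-divisible, while $\Hoh^0(T)$ is $2$-torsion by Proposition~\ref{PR:T-is-two-torsion}; hence $\delta^0=0$. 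In all cases every coarse type is thus standard, and \emph{a fortiori} semistandard and realizable.''
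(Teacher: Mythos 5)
Your proof is correct and follows essentially the same route as the paper's: the paper's own proof of Corollary~\ref{CR:extreme-cases-standard-types} is a one-liner stating that the arguments in the proof of Proposition~\ref{PR:Phi-trivial}\ref{pr:phitriv-1}, \ref{pr:phitriv-4}, \ref{pr:phitriv-2} already show $\delta^0=0$ in each case, and then invoking Theorem~\ref{TH:standard-types}\ref{item:TH:standard-types:stan}. You have simply spelled out those arguments (triviality of $\units R/\units S$ in case (1), triviality of $T$ in case (3), and the squaring-automorphism argument in case (2)), which is exactly what the paper intends the reader to extract.
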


\begin{proof}
	It follows from the proof of Proposition~\ref{PR:Phi-trivial}
	that in all three cases, $\delta^0:\Hoh^0(T)\to\Hoh^0(\units{R}/\units{S})$
	is the $0$ map. Now apply Theorem~\ref{TH:standard-types}\ref{item:TH:standard-types:stan}.
\end{proof}

\begin{corollary}\label{CR:all-invs-are-standard}
	With the notation of Theorem~\ref{TH:Saltman-ramified},
	suppose that $\units{\calO_\bfY}$ has square roots locally and moreover
	\begin{enumerate}[label=(\arabic*)] 
	\item $\pi:\bfX\to\bfY$ is everywhere
	ramified and $2\in\units{\calO_\bfY}$, or
	\item $\pi:\bfX\to \bfY$ is unramified.
	\end{enumerate}
	Then all $\lambda$-involutions are ordinary.
\end{corollary}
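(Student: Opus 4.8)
The proof will proceed by reducing the assertion to the already-established Corollary~\ref{CR:standard-inv-vs-standard-types} together with a verification that, in the two cases listed, every coarse type is standard. Concretely, let $(A,\tau)$ be an Azumaya $\calO_\bfX$-algebra with a $\lambda$-involution, and write $t=\ct_\pi(\tau)$ for its coarse type. By Corollary~\ref{CR:standard-inv-vs-standard-types}, which applies since $\units{\calO_\bfY}$ has square roots locally and since in both cases either $2\in\units{\calO_\bfY}$ or $\pi$ is unramified, the involution $\tau$ is standard if and only if $t$ is standard. So it suffices to show that in cases (1) and (2) \emph{every} coarse $\lambda$-type is standard.

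This last point is exactly the content of Corollary~\ref{CR:extreme-cases-standard-types}: in the totally ramified case with $2\in\units{\calO_\bfY}$ and $\units{\calO_\bfY}$ having square roots locally, and in the unramified case, the connecting map $\delta^0\colon\Hoh^0(T)\to\Hoh^1(\units{R}/\units{S})$ vanishes (this was observed in the proof of Proposition~\ref{PR:Phi-trivial}, parts \ref{pr:phitriv-4} and \ref{pr:phitriv-2}), whence every $t\in\Hoh^0(T)$ lies in $\im(\Hoh^0(N)\to\Hoh^0(T))$ and is therefore standard by Theorem~\ref{TH:standard-types}\ref{item:TH:standard-types:stan}. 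Combining this with the equivalence from Corollary~\ref{CR:standard-inv-vs-standard-types} gives that $\tau$ itself is standard, which is what we wanted.

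I should emphasize that there is essentially no new obstacle here; the statement is a formal consequence of results already in hand, and the only thing to be careful about is matching hypotheses. In particular one must note that the hypothesis ``$\units{\calO_\bfY}$ has square roots locally'' is assumed in the statement of Corollary~\ref{CR:all-invs-are-standard}, so Corollary~\ref{CR:standard-inv-vs-standard-types} applies verbatim: case (1) there supplies the condition $2\in\units{\calO_\bfY}$, while case (2) supplies ``$\pi$ unramified''. The possible non-connectedness of $\bfY$ is handled exactly as in the proof of Corollary~\ref{CR:standard-inv-vs-standard-types}, by decomposing $*_\bfY$ into the open-closed pieces $Y_n$ on which $\deg A$ is the constant $n$ and arguing componentwise; this is routine and needs no elaboration.

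Thus the proof is simply: apply Corollary~\ref{CR:extreme-cases-standard-types} to see that $\ct_\pi(\tau)$ is standard, then apply the ``if'' direction of Corollary~\ref{CR:standard-inv-vs-standard-types} to conclude that $\tau$ is standard.
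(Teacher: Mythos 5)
Your proof is correct and is essentially identical to the paper's, which simply cites Corollaries~\ref{CR:standard-inv-vs-standard-types} and~\ref{CR:extreme-cases-standard-types}; you have just unpacked what those two references do. Nothing to add.
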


\begin{proof}
	This follows from Corollaries~\ref{CR:standard-inv-vs-standard-types}
	and~\ref{CR:extreme-cases-standard-types}.
\end{proof}

\begin{corollary}\label{CR:smooth-scheme-standard}
	Let $X$ be a scheme, let $\lambda:X\to X$ be an involution,
	and let $\pi:X\to Y$ be a good quotient relative to $\{1,\lambda\}$.
	Assume $Y$ is noetherian and regular, and $\pi$ is quadratic \'etale
	on the generic points of $Y$. 
	Then all coarse $\lambda$-types are realizable and semiordinary.
	If moreover $2$ is invertible on $Y$, then all $\lambda$-involutions are semiordinary.
\end{corollary}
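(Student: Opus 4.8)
The plan is to reduce the statement to an application of the previously developed machinery, chiefly Corollary~\ref{CR:standard-inv-vs-standard-types}, Theorem~\ref{TH:standard-types}, and Proposition~\ref{PR:Phi-trivial}\ref{pr:phitriv-3}. First I would observe that we are in the situation of Notation~\ref{NT:types-in-specific-cases} (taking the \'etale ringed topoi), so that the exact quotient $\pi\colon\bfX\to\bfY$ is the one of Example~\ref{EX:important-exact-quo-scheme}, and all the results of Sections~\ref{sec:types} and~\ref{sec:Saltman} apply. By Example~\ref{EX:SHL_implies_SRL}, $\units{\calO_\bfY}$ has square roots locally once $2$ is invertible on $Y$; but for the first assertion we do not want to assume that, so I would instead argue directly at the level of coarse types using Theorem~\ref{TH:standard-types}.

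The first claim --- that every coarse $\lambda$-type is realizable and semistandard --- reduces, by Theorem~\ref{TH:standard-types}\ref{item:TH:standard-types:semi} and part (i), to showing that $\Phi=\delta^1\circ\delta^0\colon\Hoh^0(T)\to\Hoh^2(\units{\calO_\bfY})$ is the zero map. This is exactly Proposition~\ref{PR:Phi-trivial}\ref{pr:phitriv-3}, whose hypotheses --- $Y$ noetherian and regular, $\pi$ unramified (equivalently quadratic \'etale, by Proposition~\ref{PR:ramification-in-schemes}) at the generic points of $Y$ --- are precisely what we have assumed. So the first sentence of the corollary follows immediately once these reductions are spelled out. (One should note that the statement is insensitive to decomposing $Y$ into its connected components, so the reduction to $Y$ connected, hence integral, in the proof of Proposition~\ref{PR:Phi-trivial}\ref{pr:phitriv-3} is harmless here.)

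For the second claim, suppose in addition $2\in\units{\calO_Y}$. Then $\units{\calO_\bfY}$ has square roots locally by Example~\ref{EX:SHL_implies_SRL}, so the hypotheses of Corollary~\ref{CR:standard-inv-vs-standard-types} are met in its first alternative ($2\in\units{\calO_\bfY}$). That corollary says a $\lambda$-involution $\tau$ is semistandard if and only if its coarse type $\ct(\tau)$ is. But by the first part of the present corollary, \emph{every} coarse type is semistandard; hence every $\lambda$-involution of an Azumaya $\calO_X$-algebra is semistandard. I would also remark, for emphasis, that this combined with Theorem~\ref{TH:ct-determines-local-iso}, Corollary~\ref{CR:coarse-type-determines-type}, and Theorem~\ref{TH:types-for-schemes} makes the picture completely explicit: the type of an involution is detected on the ramification locus, and every such type is supported by a Brauer-equivalent algebra of degree $2\deg A$ carrying a semistandard involution.

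I do not anticipate a genuine obstacle here: the corollary is essentially a bookkeeping consequence of Proposition~\ref{PR:Phi-trivial}\ref{pr:phitriv-3} and Corollary~\ref{CR:standard-inv-vs-standard-types}. The only point requiring a little care is the logical order --- one must establish that all coarse types are semistandard (via $\Phi=0$) \emph{before} invoking Corollary~\ref{CR:standard-inv-vs-standard-types} to upgrade from coarse types to involutions, and one should be explicit that ``realizable in degree $2$, hence in all even degrees'' from Theorem~\ref{TH:standard-types} gives the realizability clause. A secondary subtlety is simply confirming that ``quadratic \'etale on the generic points'' in the corollary's hypothesis matches ``unramified at the generic points'' in Proposition~\ref{PR:Phi-trivial}\ref{pr:phitriv-3}; this is immediate from Proposition~\ref{PR:ramification-in-schemes}, since $\pi$ is unramified along the generic point of a component exactly when it is quadratic \'etale there.
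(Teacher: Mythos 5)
Your proposal is correct and follows the same route as the paper's own proof: Proposition~\ref{PR:Phi-trivial}\ref{pr:phitriv-3} gives $\Phi=0$, Theorem~\ref{TH:standard-types}\ref{item:TH:standard-types:semi} (together with its realizability clause) upgrades this to the statement about coarse types, and Corollary~\ref{CR:standard-inv-vs-standard-types} handles the case where $2$ is invertible. The extra checks you make --- that Example~\ref{EX:SHL_implies_SRL} supplies square roots locally, and that ``quadratic \'etale at the generic points'' matches the hypothesis of Proposition~\ref{PR:Phi-trivial}\ref{pr:phitriv-3} via Proposition~\ref{PR:ramification-in-schemes} --- are correct and harmless elaborations of the paper's two-line proof.
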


\begin{proof}
	The first assertion follows from Proposition~\ref{PR:Phi-trivial}\ref{pr:phitriv-3}
	and Theorem~\ref{TH:standard-types}\ref{item:TH:standard-types:semi}.
	The second assertion then follows from Corollary~\ref{CR:standard-inv-vs-standard-types}.
\end{proof}

We conclude this section with two problems, both of which are open both in the context of varieties over fields of
characteristic different from $2$ with (ramified) involutions and in the context of topological spaces with (non-free)
$C_2$-actions.

\begin{problem}
 Is there an element $t \in \CTypes{\lambda}=\Hoh^0(T)$ that is not the coarse type of any Azumaya algebra with
 $\lambda$-involution?
\end{problem}

\begin{problem}
 Is there an Azumaya algebra $A$ with a $\lambda$-involution $ \tau $ that is extraordinary
 (i.e.\ not semiordinary)?
\end{problem}

	By Theorem~\ref{TH:types-for-schemes}, the first
	problem can be phrased as follows: 
	Suppse that
	$X$ is a scheme with involution $\lambda$ admitting
	a good quotient relative to $\{1,\lambda\}$,
	or $X$ is a Hausdorff topological space with involution $\lambda$.
	Let $Z$ be the locus of points where $\lambda$ ramifies
	and let $Z=Z_{-1}\sqcup Z_{1}$ be a partition of $Z$ into two closed
	subsets. Is it always possible to find an Azumaya algebra $A$ over $X$ admitting
	a $\lambda$-involution $\tau$ such that the specialization of $\tau$
	to $k(z)$ is orthogonal if $z\in Z_1$ and symplectic if $z\in Z_{-1}$? 

\section{Examples and Applications}
\label{sec:exapp}

\setcounter{subsection}{1}

\begin{example} \label{ex:split_model_example}
 Fix an exact quotient $\pi: (\mathbf X,\calO_\bfX)\to (\mathbf Y,\calO_\bfY)$ 
 and write $R = \pi_* \sh O_{\mathbf X}$,
 $S=\calO_\bfY$. 
 We assume that $\units S$ has square roots locally and $2 \in \units S$.
 This assumption allows us to drop the distinction between
 types and coarse types for the most part (Corollary~\ref{CR:coarse-type-determines-type}). 
 As in the previous section, we use the
 notation $N$ for 
 the kernel the $\lambda$-norm map ${x\mapsto x^\lambda x}:\units R\to \units S$, 
 and $T$ for the quotient of $N$ by the image of 
 the map $\units R \to N$
 given by $r \mapsto r^{-1} r^{\lambda}$. The coarse types are then $\Hoh^0(\mathbf Y, T)$.

 Suppose $t$ is an ordinary coarse type. By Theorem \ref{TH:standard-types}, this is equivalent to saying there exists
 some $\varepsilon$ in $\Hoh^0(\mathbf Y, N)$ mapping to $t$ under the map
 $\Hoh^0(\mathbf Y, N) \to \Hoh^0(\mathbf Y, T)$.
 Such an $\varepsilon$ can always be found if $\Hoh^1(\mathbf Y, \units R/ \units S)$ vanishes, for instance.

 Let $n$ be a natural number. Consider the matrix
\[ h=h_{2n}(\veps) =
\begin{bmatrix}
 0 & I_n \\ \varepsilon I_n & 0
\end{bmatrix}.
\]
It is immediate that $\veps h^{\lambda\tr} = h$. This equality implies that the map
$\tau_\veps:\Mat_{2n \times 2n}(R)\to \Mat_{2n \times 2n}(R)$ given on sections by
 \[ M \mapsto (h_{2n} (\varepsilon)\, M\, h_{2n}(\varepsilon)^{-1})^{\lambda \tr} . \]
 is a $\lambda$-involution.
 The (coarse) type of $\tau$ is easily seen to be $t$,
 the image of $\veps$ in $\Hoh^0(T)$. This follows
 from Construction~\ref{CN:coarse-type}.

 In this case, any algebra of degree $2n$ with involution of coarse type $t$ is locally isomorphic to $(\Mat_{2n \times 2n}(R), \tau)$, by
 Theorem~\ref{TH:ct-determines-local-iso}. Thanks
 to Corollary~\ref{CR:torsors-of-PU}, we may therefore place such 
 algebras in bijective correspondence with $G$-torsors on
 $\mathbf Y$ where 
 $G = \PU(\Mat_{2n \times 2n}(R),\tau_\veps)\cong \sAut_R( \Mat_{2n \times 2n}(R), \tau_\veps)$.
\end{example}

\begin{example} \label{ex:symplectic}
 As a special case of the previous example, we describe the Azumaya algebras with \textit{symplectic involution} on a
 scheme or topological space with trivial involution.
 See Theorem \ref{TH:important-exact-quotients} for the specific hypotheses on the underlying geometric object,
 and note that we assume $2$ is invertible. 
 
 In this case,
 $\pi = \id$, $R=S$, and $N = \mu_{2,R}$. 
 By Theorem
 \ref{TH:basic-properties-of-types}\ref{item:TH:basic-properties-of-types:tot-ram},
 the group of coarse types is $\Hoh^0(\mu_{2,R})$, which
 is just $\{1, -1\}$ when $X$ is connected.
 We consider the (coarse) type
 $-1$, called the \emph{symplectic} type. 

 Any Azumaya algebra with involution having this type must be of even degree, $2n$, by
 Theorem~\ref{TH:basic-properties-of-types}\ref{item:TH:basic-properties-of-types:odd}, and is locally isomorphic to the
 split degree-$2n$ algebra with symplectic involution 
 \[ \syp: M \mapsto (h_{2n}(-1) M h_{2n}(-1)^{-1})^{\tr}. \]
 
 The unitary group of $(\nMat{R}{2n\times 2n},\syp)$
 is the familiar symplectic group $\Sp_{2n}(R)$,
 and it follows from Lemma~\ref{LM:PU-isomorphism} that the automorphism
 group of $(\nMat{R}{2n\times 2n},\syp)$ is
 \[
 \PSp_{2n}(R):=\Sp_n(R)/\mu_{2,R}\ .
 \]
 In particular, as noted
 in Corollary~\ref{CR:torsors-of-PU}, 
 the set of isomorphism classes of degree-$2n$ Azumaya algebras 
 with symplectic involution is in canonical bijection with 
 \[ \Hoh^1 (\mathbf X, \PSp_{2n}(R) ). \]

	Since the symplectic type is ordinary,
 by Theorem \ref{TH:standard-Azumaya} and Example \ref{EX:mult_by_two},
 we derive the well known fact that an Azumaya
 algebra $A$ on $X$ is Brauer equivalent to one having a symplectic involution if and only if the Brauer class of $A$ is
 $2$-torsion.
\end{example}

\begin{example} \label{ex:general-trivial-example}
 Fix an exact quotient $\pi: \mathbf X \to \mathbf Y$ with ring objects $\sh O_{\mathbf X}$,
 and let $R = \pi_* \sh O_{\mathbf X}$ and
 $S=\sh O_{\bfY}$. Let $n$ be a natural number and assume that the hypotheses of 
 Theorem~\ref{TH:ct-determines-local-iso} hold, namely $\units{S}$ has
 square roots locally, and either $2 \in \units S$,
 or $\pi$ is unramified, or $n$ is odd. We consider the trivial type, $1$. This is the type of the involution
 \[ M \mapsto M^{\lambda \tr} \]
 on the split algebra $\Mat_{n \times n}(R)$.

 Any algebra with involution of the trivial type is locally isomorphic to this one, 
 and therefore, as summarized in Corollary~\ref{CR:torsors-of-PU}, 
 these are classified
 by $G$ torsors where $G=\sAut_R(\nMat{R}{n\times n},\lambda\tr)\cong \PU(\nMat{R}{n\times n},\lambda\tr)$.
 We write the latter group as 
 \[\PU_n(R, \lambda)\] 
 and call it the \textit{projective unitary group} of rank
 $n$ for the involution $\lambda$.
 In accordance with this notation,
 the unitary group of $(\nMat{R}{n\times n},\lambda\tr)$
 will be denoted $\U_n(R,\lambda)$.
\end{example}

\begin{example} \label{ex:orthogonal}
 Consider the case of a scheme or a topological space $X$ with trivial involution, as in the case of
 Example~\ref{ex:symplectic}. The theory of Azumaya algebras with involution of type $1$ can be established along the
 same lines as that of type $-1$. These algebras are called \textit{orthogonal}. The automorphism group 
 of $(\nMat{R}{2n\times 2n},\tr)$ is the quotient
 group $O_{2n}(R)/\mu_{2,R}$, which we denote by $\PO_{2n}(R)$, the \textit{projective orthogonal group}. This is
 special notation for the group $\PU_{2n}(R, \id)$ of Example \ref{ex:general-trivial-example}.

Again, by reference to Theorem \ref{TH:standard-Azumaya} and Example \ref{EX:mult_by_two}, an algebra is Brauer equivalent to one having involution of this type
 if and only if the Brauer class is $2$-torsion.
\end{example}

\begin{example} \label{ex:unitary-schemes} In this example we discuss \textit{unitary involutions}.
As a special case of Example \ref{ex:general-trivial-example}, we consider
 the case of an unramified double covering $\pi: X \to Y$ of schemes or topological spaces. Again, we refer to Theorem
 \ref{TH:important-exact-quotients} for the specific hypotheses on the underlying geometric object.

 In this case, the ring object $R$ is a quadratic \'etale extension of $S$, see
 Propositions~\ref{PR:ramification-in-schemes} and~\ref{PR:ramification-top}. Since $\pi$ is unramified,
 Theorem~\ref{TH:basic-properties-of-types}\ref{item:TH:basic-properties-of-types:unram} implies that there is only one
 type of involution on Azumaya algebras, the trivial one, which is called \emph{unitary} in this context. In
 particular, we are in a special case of Example~\ref{ex:general-trivial-example}.

 The structure of the groups $\U_n(R,\lambda)$ and $\PU_n(R, \lambda)=\U_n(R,\lambda)/N$ depends on the nature of
 $\lambda$, so a complete description in the abstract is not possible. We can, however, find an \'etale, resp.\ open,
 covering $U\to Y$ such that $R_U\cong S_U\times S_U$. After specializing to $U$, the algebra $\nMat{R}{n\times n}$
 becomes $\nMat{S}{n\times n}\times \nMat{S}{n\times n}$ and the involution $\lambda\tr$ is becomes the involution given
 sectionwise by $(x,y)\mapsto (y^{\tr},x^{\tr})$. From this one verifies that
 $\U_n(R,\lambda)_U\cong \GL_n(S)_U$ and $\PU_n(R,\lambda)_U\cong \PGL_n(S)_U$. Consequently, for a general degree-$n$
 Azumaya $R$-algebra with involution $(A,\tau)$, the groups $\U(A,\tau)$ and $\PU(A,\tau)$ are locally isomorphic to
 $\GL_n(S)$ and $\PGL_n(S)$, respectively.
 
 This agrees with the well established fact that projective unitary group schemes of unitary involutions are of type
 $A$.
\end{example}

\begin{example} \label{ex:mixed}
 In another instance of Example \ref{ex:split_model_example}, we can produce an example of an Azumaya algebra with an
 involution that mixes the various classical types. This
 example also featured in the introduction.
 
 We work with \'etale sheaves and \'etale cohomology throughout, see Example~\ref{EX:important-exact-quo-scheme}. Let
 $k$ be an algebraically closed field and let $X = \Spec k[x, x^{-1}]$ with the $k$-linear involution $\lambda$ sending
 $ x $ to $ x^{-1}$. The good quotient of $X$ by this involution exists, and is given by $Y= \Spec k[y]$ where
 $y = x + x^{-1}$.

 Here, the ring object $R$ is the ring $k[x,x^{-1}]$ viewed as a sheaf of rings on $Y$, 
 and the ring object $S$ is the
 structure sheaf of $Y$. The sheaf $N$ is the sheaf of norm-$1$ elements in $R$, where the norm map sends a
 Laurent polynomial $p(x)$ to $p(x)p(x^{-1})$. Both the Picard and the Brauer groups of $X$ and $Y$
 vanish, so that we can calculate $\Hoh^1(Y, \units R/ \units S) =0$. The following sequence is
 therefore exact
 \[ 1 \to \Hoh^0(Y, \units R/ \units S) \xrightarrow{\psi} \Hoh^0(Y, N) \to \Hoh^0(Y, T) \to 1.\]
 Explicitly, we calculate that $\Hoh^0(Y, \units R/ \units S)$ consists of classes of monomials $x^i$ for $i \in \ZZ$, and $\Hoh^0(Y,
 N)$ consists of monomials of the form $\pm x^i$ for $i \in \ZZ$, but $\psi$ maps
 the class of $x^i$ to
 $x^i/ x^{-i} = x^{2i}$. Therefore, the group of (coarse) types is isomorphic to the Klein $4$-group:
 $\Hoh^0(Y, T) = \{ \quo{1}, \quo{-1}, \quo{x}, \quo{-x}\}$.
 
 Since $\Hoh^1(Y, \units R/\units S) = 0$, we are in the circumstance of 
 Example~\ref{ex:split_model_example} which provides 
 models for each of the four types on even-degree split algebras. 
 For instance, on $\Mat_{{2}\times {2}}(R)$, we
 have the involution given by conjugating by $[\begin{smallmatrix} 0 & 1 \\ x & 0 \end{smallmatrix}]$
 and then applying $\lambda \tr$, namely
 \[ \begin{bmatrix} a(x) & b (x) \\ c(x) &d(x) \end{bmatrix}
 \mapsto \begin{bmatrix} d(x^{-1}) & x^{-1} b (x^{-1}) \\ x c(x^{-1}) &
 a(x^{-1}) \end{bmatrix}. \]
 Away from the fixed locus of $\lambda: X \to X$, namely, the points $x=1$ and $x=-1$, this involution is unitary,
 whereas at $x=1$ it specializes to be orthogonal and at $x=-1$ to be symplectic.
 
 More generally, it follows from Theorem~\ref{TH:types-for-schemes}
 that the type of any Azumaya $X$-algebra with involution $(A,\tau)$
 is determined by the types seen upon specializing to
 $x=1$ and $x=-1$. 
\end{example}

\begin{example} \label{ex:semistandard} We now demonstrate that there exist involutions that are not locally isomorphic
 to involutions of the form exhibited in Example~\ref{ex:split_model_example}. Specifically, we will show that there
 are involutions which are not ordinary in the sense of Definition~\ref{DF:standard-inv}.
	
 We consider a complex hyperelliptic curve $X$ of genus $g$ and a double covering $X\to Y=:{\mathbb{P}}^1_{\C}$.
 Explicitly: Let $a_1,\dots,a_{2g+1}$ be distinct complex numbers, and let
 \[X_0=\Spec \C[x,y]/(y^2-\prod_i(x-a_i)).\] We complete $X_0$ by gluing it to $X_1:=\Spec \C[u,v]/(v^2-u\prod_i(1-a_iu))$
 by mapping $(x,y)$ to $(u,v):=(\frac{1}{x},\frac{y}{x^{g+1}})$, and denote the resulting smooth complete curve by $X$.
 View $Y={\mathbb{P}}^1_{\C}$ as the gluing of $Y_0:=\Spec \C[x]$ to $Y_1:=\Spec \C[u]$ via
 $(x:1)\leftrightarrow (1:u^{-1})$. Projection onto the $x$ or $u$ coordinate
 induces a double covering $\pi:X\to Y$ with ramification at the points $(a_1\!:\!0),\dots,(a_{2g+1}\!:\!0)$ and
 $(0\!:\!1)$. The map $\lambda:X\to X$ given by $(x,y)\mapsto (x,-y)$, resp.\ $(u,v)\mapsto (u,-v)$, on the charts is an
 involution and $\pi$ is a good quotient relative to $C_2:=\{1,\lambda\}$. Indeed, working with the affine covering
 $Y=Y_0\cup Y_1$, we see that $\C[x]$ is the fixed ring of
 \[ \lambda^\#: \frac{\C[x,y]}{(y^2-\prod_i(x-a_i))} \to \frac{\C[x,y]}{(y^2-\prod_i(x-a_i))}, \qquad x \mapsto x, \quad
 y \mapsto -y, \] and similarly on the other chart.

 By Corollary \ref{CR:smooth-scheme-standard}, all coarse $\lambda$-types in $\Hoh^0(T)$ are realizable and semiordinary. Since
 the branch locus of $\pi$ consists of $2g+2$ points, it follows from Theorem~\ref{TH:types-for-schemes} that
 there are $2^{2g+2}$ $\lambda$-types. 
 Theorem~\ref{TH:standard-types} also says that the number of ordinary types is the cardinality of the image of the map
 $\Hoh^0(N)\to \Hoh^0(T)$, where $N$ is the sheaf of sections of $\lambda$-norm $1$ in $R:=\pi_*\calO_X$. Let
 $\veps\in\Hoh^0(N)$. Then $\veps\in \Hoh^0(Y, \pi_*\calO_X)=\Hoh^0(X, \calO_X)$. Since $X$ is a complete complex curve,
 the global sections of the structure sheaf are constant functions, meaning that $\veps\in\units{\C}$. Since
 $\veps^\lambda\veps=1$, it follows that $\Hoh^0(N)=\{\pm 1\}$. The images of $1,-1\in\Hoh^0(N)$ in $\Hoh^0(T)$ are
 therefore the ordinary types. Thus, of the $2^{2g+2}$ possible coarse types, only $2$ are ordinary, and the remaining
 $2^{2g+2}-2$ are merely semiordinary.
	
 We remark that we have reached the latter conclusion without actually constructing Azumaya algebras with involution
 realizing any of the non-ordinary types. A construction is given in the proof of
 Theorem~\ref{TH:standard-types}, and it can be made explicit in our setting with further work.
	
 This example can also be carried with the affine models $X_0$ and $Y_0$. One can check directly that
 $\Hoh^0(X_0,\units{\calO_{X_0}})=\units{\C}$ and thus it is still the case that $\Hoh^0(Y,N)=\{\pm 1\}$. Since the
 ramification point $(0:1)\in {\mathbb{P}}^1_{\C}=Y$ was removed, in this case, there are $2^{2g+1}$ coarse types, all
 semiordinary, of which only $2$ are ordinary.
\end{example}

\begin{example}
 A surprising source of examples comes from Clifford algebras of quadratic forms with simple degeneration. We refer
 the reader to \cite[\S1]{auel_fibrations_2014} or \cite[\S1]{auel_parimala_suresh_2015} for all relevant definitions.
	
 Let $Y$ be a scheme on which $2$ is invertible and let $(E,q,L)$ be a line-bundle-valued quadratic space of even rank
 $n$ over $Y$; when $L=\calO_Y$ and $Y=\Spec S$, these data merely amount to specifying a quadratic space of rank $n$
 over the ring $S$. According to \cite[Dfn.~1.9]{auel_parimala_suresh_2015}, $q$ is said to have \emph{simple
 degeneration} if for every $y\in Y$, the specialization of $q$ to $k(y)$ is a quadratic form whose radical has
 dimension at most $1$. In this case, it shown in \cite[Prp.~1.11]{auel_parimala_suresh_2015} that the even Clifford
 algebra $C_0(q)$, which is a sheaf of $\calO_Y$-algebras, is Azumaya over its centre $Z(q)$. Furthermore, the sheaf
 $Z(q)$ corresponds to a flat double covering $\pi:X\to Y$, which ramifies at the points $y\in Y$ where $q_{k(y)}$
 is degenerate. As such, $\pi$ is a good quotient relative to the involution $\lambda:X\to X$ induced by the involution
 of $Z(q)$ given by $x\mapsto \mathrm{Tr}_{Z(q)/\calO_Y}(x)-x$ on sections. Abusing the notation, we realize $C_0(q)$
 as an Azumaya algebra over $X$.
	
 Suppose $q$ has simple degeneration. We moreover assume that $Y$ is integral, regular and noetherian with generic
 point $\xi$ and that $q_{k(\xi)}$ is nondegenerate, although it is likely that these assumptions are unnecessary. The
 algebra $C_0(q)$ has a canonical involution $\tau_0$, see \cite[\S1.8]{auel_mpim_2011}, 
 and by applying
 \cite[Prp.~8.4]{knus_book_1998-1} to $C_0(q_{k(\xi)})$, we see that 
 $\tau_0$ is of the first kind when
 $n \equiv 0 \pmod 4 $ and a $\lambda$-involution when $n \equiv 2 \pmod 4 $. Furthermore, in the case
 $n \equiv 0 \pmod 4 $, we have $\transf_\lambda([C_0(q)])=0$ because $\transf_\lambda([C_0(q_{k(\xi)})])=0$ by
 \cite[Thm.~9.12]{knus_book_1998-1}, and $\Br(Y)\to \Br(k(\xi))$ is injective by
 \cite[Cor.~1.8]{grothendieck_groupe_1968} (or \cite[Thm.~7.2]{auslander_brauer_1960} in the affine case). It therefore
 follows from Theorem~\ref{TH:Saltman-ramified} that there exists $A'\in [C_0(q)]$ with $\deg A'=2\deg C_0(q)=2^{n}$
 that admits a $\lambda$-involution. We expect that the choice of $A'$ and its involution can be done canonically in
 $q$, and with no restrictions on $Y$.
	
 With the observations just made, it is possible that our work could facilitate the study of Clifford invariants of
 non-regular quadratic forms, e.g.\ in \cite{voight_2011}, \cite{auel_fibrations_2014},
 \cite{auel_parimala_suresh_2015}.
\end{example}

\section{Topology and Classifying Spaces}
\label{sec:topology}

The remainder of this paper is concerned with constructing a quadratic \'etale map of complex varieties $X\to Y$ and an
Azumaya algebra $A$ over over $X$ such that $A$ is Brauer equivalent to an algebra $A'$ with a $\lambda$-involution,
$\lambda$ being the non-trivial $Y$-automorphism of $X$, but such that the smallest degree of any such $A'$ is $2\deg
A$.

We recall that in this particular case, a Brauer equivalent algebra of degree $2\deg A$ admitting a $\lambda$-involution is guaranteed to exist
 by a theorem of Knus, Parimala and Srinivas \cite[Th.~4.2]{knus_azumaya_1990}; this has\benw{changed} been
generalized in Theorem~\ref{TH:Saltman-ramified}. An analogous example in which $\lambda:X\to X$ is the trivial
involution was exhibited in \cite{asher_auel_azumaya_2017}.

The example, which is constructed in Section~\ref{sec:the-example}, will be obtained by means of topological obstruction
theory, similarly to the methods of \cite{antieau_unramified_2014}, \cite{asher_auel_azumaya_2017} and related
works.
That is, the desired properties of $A$ above will be verified by
establishing them for the topological Azumaya algebra $A(\C)$ over the complexification $X(\C)$,
whereas the latter will be done by means of
certain homotopy invariants.

This section is foundational, describing in part an approach to topological Azumaya algebras with involution via
equivariant homotopy theory. The main points are that Azumaya algebras with involution correspond to principal
$\PGL_n(\CC)$-bundles with involution---a fact that is true even outside the topological context, but that we have
not emphasized until now---, that there are equivariant classifying spaces for such bundles, and that their theory is
tractable if one restricts to considering spaces $X$ on which the $C_2$-action is trivial or free.

\subsection{Preliminaries}

In this section and the next, all topological spaces will be tacitly assumed to have a number of desirable
properties. All spaces appearing will be assumed to be compactly generated, Hausdorff, paracompact and locally contractible.

Throughout, we work in the category of $C_2$-topological spaces and $C_2$-equivariant maps. There are two notions of homotopy one can consider for
maps in this setting, the \textit{fine}, in which homotopies are themselves required to be $C_2$-equivariant, and the
\textit{coarse}, where non-equivariant homotopies are allowed. These two notions each have model structures appropriate
to them, the fine and the coarse. In the fine model structure, the weak equivalences are the equivariant maps
$f: X \to Y$ inducing weak equivalences on fixed point sets $f: X^G \to Y^G$ where $G$ is either the group $C_2$ or the
subgroup $\{1\}$.
In the coarse structure,
it is required only that $f: X \to Y$ be a weak equivalence when the $C_2$-action is disregarded, that is, only the
subgroup $\{1\}$ is considered. The identity functor is a left Quillen functor from the coarse to the fine. This is a
synthesis of the theory of \cite{dwyer_singular_1984} with \cite{elmendorf_systems_1983}.

\begin{notation}
 The notation $[X,Y]$ is used to denote the set of maps between two (possibly unpointed) objects $X$ and $Y$ in a homotopy
 category. The notation $[X,Y]_{C_2}$ will be used to denote the set of maps between $X$ and $Y$ in the fine
 $C_2$-equivariant homotopy category, whereas $[X,Y]_{C_2\text{-coarse}}$ will be used for the coarse structure.
\end{notation}

\begin{remark} In the case of the coarse model structure, the cofibrant objects include the $C_2$-CW-complexes with free
$C_2$-action, and if $X$ is a $C_2$-CW-complex, then the construction $X \times EC_2 \to X$ furnishes a cofibrant
replacement of $X$.
\end{remark}

All spaces are fibrant in both the coarse and the fine model structures, which implies the following standard result.

\begin{proposition} \label{prop:equalityForFreeAction}
 If $X$ is a free $C_2$-CW-complex and $Y$ is a $C_2$-space, then there is a natural bijection
\[ [X, Y]_{C_2} \longleftrightarrow [ X, Y]_{C_2\text{-coarse}}. \]
\end{proposition}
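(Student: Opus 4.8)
\textbf{Proof plan for Proposition~\ref{prop:equalityForFreeAction}.}
The plan is to show that when $X$ is a free $C_2$-CW-complex, the identity functor from the coarse to the fine $C_2$-equivariant model structure induces a bijection on homotopy classes of maps out of $X$. First I would recall that the identity functor is a left Quillen functor from the coarse model structure to the fine model structure, as noted in the text; hence it sends cofibrant objects to cofibrant objects and preserves weak equivalences between them. Since $X$ is a free $C_2$-CW-complex, it is cofibrant in the coarse model structure, and by the left Quillen property its image --- namely $X$ itself --- is also cofibrant in the fine model structure. (Alternatively, one checks directly that a free $C_2$-CW-complex is built by attaching cells of the form $C_2 \times D^n$, which are exactly the generating cofibrations for the fine structure restricted to the free orbit type, so $X$ is fine-cofibrant.) On the other side, every space is fibrant in both the coarse and the fine model structures, so in particular $Y$ is fibrant in both.

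Next I would invoke the general principle that for a Quillen adjunction, when the source object is cofibrant and the target is fibrant in the respective model structures, the derived mapping sets are computed on the nose. Concretely: because $X$ is cofibrant and $Y$ is fibrant in the fine structure, $[X,Y]_{C_2}$ is the quotient of $\Hom_{C_2}(X,Y)$ by the fine homotopy relation; because $X$ is cofibrant and $Y$ is fibrant in the coarse structure, $[X,Y]_{C_2\text{-coarse}}$ is the quotient of the same set $\Hom_{C_2}(X,Y)$ by the coarse homotopy relation. So the content of the proposition is that these two equivalence relations on $\Hom_{C_2}(X,Y)$ coincide when $X$ is a free $C_2$-CW-complex. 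The identity functor being left Quillen already gives that fine-homotopic maps are coarse-homotopic, so one direction is immediate; the reverse direction is the crux.

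For the reverse direction I would argue that a coarse homotopy between two equivariant maps $f_0, f_1 : X \to Y$ can be rectified to a fine (equivariant) homotopy. A coarse homotopy is a (not necessarily equivariant) map $H : X \times I \to Y$ with $H|_{X\times\{i\}} = f_i$. Since $X$ has a free $C_2$-action, so does $X \times I$ (acting trivially on $I$), and $X \times I$ is again a free $C_2$-CW-complex, hence cofibrant. The pair $(X\times I,\, X\times\{0,1\})$ is a $C_2$-cofibration with free total space. Using that the two boundary restrictions $f_0, f_1$ are already equivariant, one builds an equivariant homotopy cell-by-cell over the free cells of $X\times I$ relative to the two ends: on a free cell $C_2\times D^n \times I$ the equivariant extension problem over a point of the orbit is an ordinary (nonequivariant) extension problem, which is solvable because $Y$ is fibrant (every space), and the $C_2$-action then propagates the choice to the other point of the orbit. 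Equivalently and more cleanly, one observes that for free $G$-spaces the fixed-point data is vacuous except at the trivial subgroup, so a map being an equivariant (co)fibration or (weak) equivalence in the fine sense reduces to the underlying nonequivariant statement --- this is precisely the content of Elmendorf's theorem applied to the free orbit type. I expect the main obstacle to be setting up this last reduction carefully: making precise that, on free $C_2$-CW-complexes, the fine model structure restricted to such objects agrees with the coarse one, which amounts to the statement that the orbit category restricted to the free orbit $C_2/\{1\}$ contributes only the underlying-space information. Once that reduction is in place, the bijection follows formally, and naturality in both $X$ and $Y$ is clear since all constructions used (the identity functor, the homotopy-class quotients) are natural.
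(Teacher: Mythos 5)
Your model-categorical setup --- cofibrancy of $X$ in both structures, fibrancy of $Y$ in both, hence the reduction to comparing the two homotopy relations on $\Hom_{C_2}(X,Y)$ --- is sound, and it is the framework the paper has in mind when it says the result follows from fibrancy of all spaces. The gap is in your identification of the coarse relation: you write ``a coarse homotopy is a (not necessarily equivariant) map $H:X\times I\to Y$.'' That is not what the coarse \emph{model structure} provides. In any model category, for cofibrant $X$ and fibrant $Y$, the left-homotopy relation is defined by maps out of a cylinder object for $X$, and a cylinder object is an object of the ambient category --- here a $C_2$-space --- so the homotopy is itself a genuine $C_2$-equivariant map; moreover the relation can be tested against any single cylinder object. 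Since $X\sqcup X\hookrightarrow X\times I\to X$ is a cylinder object for $X$ in \emph{both} structures (the first map is a relative free $C_2$-CW inclusion, hence a coarse, and therefore a fine, cofibration; the second is a nonequivariant homotopy equivalence), the two homotopy relations on $\Hom_{C_2}(X,Y)$ coincide outright: both say ``there is an equivariant $H:X\times I\to Y$.'' No rectification of a nonequivariant homotopy is required, because one never appears. Equivalently, simply invoke the standard fact that a Quillen adjunction $F\dashv G$ induces a bijection $[FA,B]\cong[A,GB]$ whenever $A$ is cofibrant in the source and $B$ is fibrant in the target, and specialize to $F=G=\id$.

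Separately, the rectification you propose does not work as written. At a free cell you reduce to extending a map $\partial(D^n\times I)\simeq S^n\to Y$ over $D^{n+1}$ and claim this is ``solvable because $Y$ is fibrant.'' It is not: the obstruction is a class in $\pi_n(Y)$, and fibrancy of $Y$ --- which holds for every space --- says nothing about its vanishing. To make an obstruction-theoretic argument of this form go through one would have to actually use the given nonequivariant homotopy to show each obstruction vanishes, and your sketch does not do this.
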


It is well known that $C_2$-equivariant homotopy theory in the coarse sense is equivalent to homotopy theory
carried out over the base space $B C_2$. We refer to \cite[Sec.~8]{shulman_parametrized_2008} for a sophisticated
general account of this equivalence. Specifically, the Borel construction $X \mapsto X \times^{C_2} E C_2$ and the
relative mapping space $Y \mapsto \Map_{B C_2}( E C_2, Y)$ form a Quillen equivalence between $C_2$-equivariant spaces
with the coarse structure, and spaces over $B C_2$, endowed with what \cite{shulman_parametrized_2008} calls the
``mixed'' structure on spaces over $B C_2$.\benw{I owe this reference to Bert Guillou.}
\begin{proposition} \label{PR:bundles-over-BCtwo}
Suppose $X$ and $Y$ are $C_2$-spaces with $X$ being a $C_2$-CW-complex. Then the Borel construction
$(\cdot )\times^{C_2} EC_2$ gives rise to a natural bijection
\[
\left\{
\begin{array}{c}
\text{coarse $C_2$-homotopy classes} \\
\text{of maps $X \to Y$}
\end{array}
\right\}
~\cong ~
\left\{
\begin{array}{c}
\text{homotopy classes of maps} \\
\text{$X\times^{C_2} EC_2
 \to Y \times^{C_2} EC_2$ over $B C_2$}
\end{array}
\right\} \ .
\]
\end{proposition}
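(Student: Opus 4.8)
The plan is to deduce Proposition~\ref{PR:bundles-over-BCtwo} from the Quillen equivalence between $C_2$-spaces with the coarse model structure and spaces over $BC_2$ with the mixed model structure, which was recalled immediately before the statement. First I would fix the left Quillen functor to be the Borel construction $F\colon X\mapsto X\times^{C_2}EC_2$, viewed as a functor to $\cat{Top}/BC_2$ via the canonical map $X\times^{C_2}EC_2\to EC_2/C_2=BC_2$, and its right adjoint $G\colon Y\mapsto \Map_{BC_2}(EC_2,Y)$. Since $F$ and $G$ form a Quillen equivalence, on homotopy categories the derived functors $\mathbf{L}F$ and $\mathbf{R}G$ are inverse equivalences, and in particular $\mathbf{L}F$ induces a bijection
\[
[X,Y']_{C_2\text{-coarse}}\;\cong\;[\mathbf{L}F(X),\mathbf{L}F(Y')]_{BC_2}
\]
for all $C_2$-spaces $X,Y'$, where the right-hand side is the hom-set in the homotopy category of spaces over $BC_2$.

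Next I would identify the two derived values with the strict Borel constructions. On the source, $X$ is assumed to be a $C_2$-CW-complex, hence cofibrant in the coarse structure, so $\mathbf{L}F(X)=F(X)=X\times^{C_2}EC_2$ with no need for a cofibrant replacement. On the target, $Y$ need not be cofibrant, but every space is fibrant in both structures, and the point is that $F$ already sends a $C_2$-CW-complex to a space over $BC_2$ that is a CW-complex, hence cofibrant; for a general $C_2$-space $Y$ one first replaces it by a coarse-weakly-equivalent $C_2$-CW-complex $\widetilde Y\to Y$ (coarse CW approximation), notes $F(\widetilde Y)=\widetilde Y\times^{C_2}EC_2\to Y\times^{C_2}EC_2$ is a weak equivalence of spaces over $BC_2$ because the Borel construction preserves (non-equivariant) weak equivalences, and thus $\mathbf{L}F(Y)\simeq Y\times^{C_2}EC_2$ in $\Ho(\cat{Top}/BC_2)$. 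Combining these identifications with the Quillen-equivalence bijection above gives the asserted natural bijection
\[
[X,Y]_{C_2\text{-coarse}}\;\cong\;[X\times^{C_2}EC_2,\;Y\times^{C_2}EC_2]_{BC_2}.
\]
Naturality in both variables follows because every step—the adjunction isomorphism, the cofibrant-replacement comparison, and the identification of $\mathbf{L}F$ with the point-set Borel construction on CW-objects—is natural in the relevant argument.

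The main obstacle I anticipate is not the formal adjunction bookkeeping but the care needed in matching model-categorical conventions: one must be certain that the ``mixed'' model structure on $\cat{Top}/BC_2$ used in \cite{shulman_parametrized_2008} has the property that the objects $X\times^{C_2}EC_2$ arising from $C_2$-CW-complexes are genuinely cofibrant (so that $\mathbf{L}F$ requires no correction on the source) and that its weak equivalences between such objects are exactly the maps inducing isomorphisms on ordinary homotopy groups of total spaces (so that the right-hand hom-set is the naive set of homotopy classes of maps over $BC_2$, with homotopies through maps over $BC_2$). Once these two points are pinned down—by citing the relevant statements in \cite[Sec.~8]{shulman_parametrized_2008} and using that all spaces are fibrant—the proof is a short diagram chase, and I would keep the write-up correspondingly brief, emphasizing the cofibrancy of $X$ as a $C_2$-CW-complex and the homotopy-invariance of the Borel construction as the two substantive inputs.
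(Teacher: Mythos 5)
Your strategy---deduce the proposition from the Quillen equivalence in \cite{shulman_parametrized_2008} between the coarse structure on $C_2$-spaces and the mixed structure on $\cat{Top}/BC_2$---is precisely the one the paper alludes to; the paper offers no independent proof, so this is the right route. However, your source-side identification contains a genuine error. You assert that $X$, being a $C_2$-CW-complex, is cofibrant in the coarse model structure and therefore that $\mathbf{L}F(X)=F(X)=X\times^{C_2}EC_2$ with no cofibrant replacement needed. This contradicts the Remark stated in the paper immediately before the proposition, which explains that the cofibrant objects in the coarse structure include the \emph{free} $C_2$-CW-complexes, and that $X\times EC_2\to X$ furnishes a cofibrant replacement of a general $C_2$-CW-complex $X$. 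Indeed, a $C_2$-space with a nonempty fixed locus cannot be cofibrant in the coarse structure: lifting an equivariant map from such an $X$ against a coarse acyclic fibration would force the total space of that fibration to have a fixed point, which need not exist. Consequently $\mathbf{L}F(X)$ is properly computed as $F(X\times EC_2)=(X\times EC_2)\times^{C_2}EC_2$, which is weakly equivalent to---but not equal to---$F(X)=X\times^{C_2}EC_2$.

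The error is repairable. You can either argue---exactly as you already do on the $Y$-side---that the Borel construction carries \emph{all} coarse weak equivalences to weak equivalences over $BC_2$, not only those between cofibrant objects, so that $F$ computes its own derived functor; or you can note that $(X\times EC_2)\times^{C_2}EC_2\to X\times^{C_2}EC_2$, induced by the projection $X\times EC_2\to X$, is a fiber homotopy equivalence over $BC_2$, hence an isomorphism in $\Ho(\cat{Top}/BC_2)$. Either way, the two points you flag at the end remain load-bearing and must actually be supplied: $X\times^{C_2}EC_2$ must be cofibrant over $BC_2$ in the mixed structure (this holds because, for a CW model of $EC_2$, the total space $X\times^{C_2}EC_2$ is a CW-complex, hence $q$-cofibrant over $BC_2$, and $q$-cofibrant implies mixed-cofibrant since the mixed structure has fewer acyclic fibrations and therefore more cofibrations); and $Y\times^{C_2}EC_2\to BC_2$ must be fibrant in the mixed structure (this holds because the Borel construction is a fiber bundle, hence a Hurewicz fibration). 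Once the cofibrancy claim for $X$ is corrected and these two facts are supplied, the argument is complete.
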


\subsection{Equivariant Bundles and Classifying Spaces}
\label{subsec:equivClass}

There is a general theory of equivariant bundles and classifying spaces, more general indeed than what is required in
this paper. All examples we consider are of the following form:

\begin{definition} \label{def:twistedGbundle}
 Suppose we are given a topological $C_2$-group $G$, or equivalently,
 a topological group $G$ equipped with an involutary automorphism $\tau:G\to G$. 
 A \textit{principal
 $G$-bundle with
 a $\tau$-involution}, or just \textit{principal $G$-bundle
 with involution}, on a $C_2$-space $X$ is a map $\pi: E \to X$ in $C_2$-spaces such that:
\begin{enumerate}
\item $\pi: E \to X$ is a principal $G$-bundle,
\item the actions of $C_2$ and of $G$ on $E$ are compatible, in the sense that if $c \in C_2$, $g \in G$ and $e \in E$,
then
\[ 
c \cdot (e \cdot g) = (c \cdot e) \cdot ( c \cdot g ). 
\]
\end{enumerate}
\end{definition}

\begin{remark}\label{RM:c-two-torsor-equiv-dfn}
 This concept admits an equivalent definition. Any $G$-bundle $E$, equivariant or not, may be pulled back along the
 involution $\lambda$ of $X$, in order to form $\lambda^* E \to X$. One may then twist the the $G$-action on
 $\lambda^* E$ by changing the structure group along $\tau: G \to G$, forming $E^*:=G \times_\tau \lambda^* E$. This
 may be identified with $\lambda^* E\cong E$ as a topological space over $X$, but with a different $G$-action. The definition
 of principal $G$-bundle with involution given above is equivalent to asking that $\pi: E \to X$ be a principal $G$-bundle 
 together with a $G$-bundles morphism $f$ of order $2$ from $\pi: E \to X$ to $\pi: E^* \to X$. On the underlying spaces, $f$
 must be an isomorphism of order $2$ of $E$ over $X$, which is equivalent to a $C_2$-action on $E$
 making $\pi: E \to X$ equivariant. The fact that $f: E \to E^*$ is 
 an isomorphism of principal $G$-bundles is exactly the
 relation 
 $c \cdot ( e \cdot g) = (c\cdot e) \cdot (c \cdot g)$
 above.
\end{remark}

Because the automorphism $\tau:G\to G$ is not assumed to be trivial, this notion is more general than the most basic notion of
`equivariant principal $G$-bundle', but at the same time, because the sequence
\[ 1 \to G \to G \rtimes C_2 \to C_2 \to 1 \] is split, it is less general than the most general case considered in
\cite{may_remarks_1990}.

One may construct a $C_2$-equivariant classifying space for $C_2$-equivariant principal $G$-bundles, as in
\cite{may_remarks_1990}*{Thm.~5}. We will take the time to explain the procedure, since some of the details will be
important later \footnote{We remark that in our case, the group called $\Gamma$ in \cite{may_remarks_1990} is a
semidirect product, so $EG \times EC_2$, with an appropriate $\Gamma$-action, is a model for $E\Gamma$. This allows us
to replace the space of sections of $EC_2 \to E\Gamma$ by the space of maps $EC_2 \to EG$, an argument that appears in
\cite{Guillou2017}*{Sec.~5, p.~21}.}

\begin{notation}
 The notation $EG \to B G$ will be used for a construction of the classifying space of a topological group $G$,
 functorial in $G$.
\end{notation}
 By functoriality, if $G$ admits a $C_2$-action, then $EG \to B G$ admits a $C_2$-action. While the ordinary homotopy
 type of $EG \to B G$ is well defined, irrespective of the model we choose, the $C_2$-equivariant type is not. The
 construction $E_{C_2} G \to B_{C_2}G$ outlined below is a specific choice of such a type.

 Start with a $EG \to B G$. Now consider the space of continuous functions $\mathcal{C}(EC_2 , EG)$. It is endowed with
 both a $G$-action, induced directly by the $G$-action on $EG$, and by a $C_2$-action given by conjugation of the
 map. The two actions together induce an action of ${G \rtimes C_2}$ on $\mathcal{C}(EC_2 , EG)$, which is contractible,
 and consequently a $C_2$-action on $\mathcal{C}(EC_2 , EG)/G$, which is a model for $B G$. The resulting map
\[ \mathcal{C}(EC_2, EG) \to \mathcal{C}(EC_2, EG)/G \] is a map of $C_2$-spaces, and will be denoted
\[ E_{C_2}G \to B_{C_2}G. \]
We remark that in \cite{may_remarks_1990} and other sources, May and coauthors denote these spaces $E(G; G \rtimes C_2)$
and $B (G; G \rtimes C_2)$.

Furthermore, the map $EC_2 \to \ast$ induces a map $EG = \mathcal{C}(\pt, EG) \to \mathcal{C}(EC_2, EG) =
E_{C_2}G$. This map is $G \rtimes C_2$-equivariant, and induces a $C_2$-equivariant commutative square
\begin{equation}
 \label{eq:9} \xymatrix{ EG \ar^{\sim}[r] \ar[d] & E_{C_2} G \ar[d] \\ B G \ar^\sim[r] & B_{C_2} G, }
\end{equation} in which the horizontal maps are coarse, but not necessarily fine, $C_2$-weak equivalences. The map
$E_{C_2}G \to B_{C_2}G$ is a classifying space for principal $G$-bundles with involution.

\begin{proposition} \label{prop:existUnivCGbundle}
 If $X$ is a $C_2$-CW-complex, then there is a natural bijection between $ [X, B_{C_2}G]_{C_2}$ and the set of isomorphism classes of principal $G$-bundles with involution on $X$.
\end{proposition}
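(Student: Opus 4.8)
The plan is to prove this as a standard classifying-space statement, using the explicit model $E_{C_2}G = \mathcal{C}(EC_2, EG)$ described above together with Proposition~\ref{PR:bundles-over-BCtwo}. First I would set up the comparison with the non-equivariant theory over $BC_2$. A principal $G$-bundle with involution on a $C_2$-CW-complex $X$ is, by the remark following Definition~\ref{def:twistedGbundle}, the same datum as a principal $G$-bundle on the Borel construction $X\times^{C_2}EC_2$ \emph{over} $BC_2 = EC_2\times^{C_2}\ast$: indeed, the $C_2$-action on $E$ compatible with the $G$-action is precisely a $C_2$-equivariant structure, so passing to Borel constructions gives a principal $G$-bundle $E\times^{C_2}EC_2 \to X\times^{C_2}EC_2$, and the map down to $BC_2$ records the underlying $C_2$-space. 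Conversely, a principal $G$-bundle on $X\times^{C_2}EC_2$ over $BC_2$ pulls back along $X\times EC_2 \to X\times^{C_2}EC_2$ to a free $C_2$-equivariant principal $G$-bundle on $X\times EC_2$, which descends (since $X$ is a $C_2$-CW-complex, so $X\times EC_2\to X$ is a cofibrant replacement and the two are coarsely $C_2$-equivalent) to a principal $G$-bundle with involution on $X$. I would check that these two constructions are mutually inverse on isomorphism classes; this is a routine descent argument using that isomorphism classes of principal $G$-bundles over a paracompact base are a homotopy invariant and that $X\times EC_2\to X$ is a $C_2$-homotopy equivalence in the coarse structure.

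Next I would identify both sides with homotopy classes of maps. On the bundle side, isomorphism classes of principal $G$-bundles on a paracompact space $Z$ are in natural bijection with $[Z, BG]$ via pullback of the universal bundle $EG\to BG$; applying this to $Z = X\times^{C_2}EC_2$ and working in the category of spaces over $BC_2$ (using the ``mixed'' model structure of \cite{shulman_parametrized_2008} that makes the Borel construction a Quillen equivalence), isomorphism classes of principal $G$-bundles on $X\times^{C_2}EC_2$ over $BC_2$ correspond to homotopy classes over $BC_2$ of maps $X\times^{C_2}EC_2 \to (EG\times^{C_2}EC_2) \to BC_2$; and $EG\times^{C_2}EC_2$ with its map to $BC_2$ is, up to fibrewise equivalence, the universal object. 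By Proposition~\ref{PR:bundles-over-BCtwo}, these homotopy classes over $BC_2$ are in bijection with coarse $C_2$-homotopy classes of maps $X \to E_{C_2}G$-something; more precisely, I would run the Borel/mapping-space adjunction to see that $[X\times^{C_2}EC_2, Y\times^{C_2}EC_2]_{BC_2} \cong [X,Y]_{C_2\text{-coarse}}$ for $Y$ a $C_2$-space, and then recognize that the relevant $Y$ here is exactly $B_{C_2}G = \mathcal{C}(EC_2,EG)/G$: the point is that $\Map_{BC_2}(EC_2, EG\times^{C_2}EC_2)$ is a model for $\mathcal{C}(EC_2,EG)/G$ as a $C_2$-space, which is precisely how $B_{C_2}G$ was constructed. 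Finally, since $B_{C_2}G$ built from the contractible space $\mathcal{C}(EC_2,EG)$ has the property that its fixed-point data is of the homotopy type dictated by the mapping spaces $\mathcal{C}((EC_2)^H, (EG)^H)$, the identity functor being left Quillen from coarse to fine makes the coarse homotopy classes into the fine ones in this case — this is the content of the classification theorem of \cite[Thm.~5]{may_remarks_1990}, which I would simply cite. Composing all these bijections gives the natural bijection $[X, B_{C_2}G]_{C_2}$ with isomorphism classes of principal $G$-bundles with involution on $X$, and naturality in $X$ is clear since every step is natural.

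I expect the main obstacle to be the bookkeeping in the passage between the coarse and fine $C_2$-homotopy categories: one must be careful that $B_{C_2}G$, as constructed from $\mathcal{C}(EC_2,EG)$, is genuinely the \emph{fine} equivariant classifying object and not merely a coarse one, and that Proposition~\ref{PR:bundles-over-BCtwo} — which is stated for the coarse structure — suffices. The resolution is that the split exact sequence $1\to G\to G\rtimes C_2\to C_2\to 1$ puts us in the situation of May's theorem, where $E_{C_2}G\to B_{C_2}G$ is a universal principal $(G;G\rtimes C_2)$-bundle in the fine sense, and the Borel construction sees exactly this because the only non-trivial isotropy appearing is all of $C_2$ acting on $X$ while $EC_2$ remains free; so the fixed-point spaces match up on the nose. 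Beyond that, everything is an assembly of standard facts: the classification of principal bundles by maps to $BG$ over paracompact bases, the Quillen equivalence between coarse $C_2$-spaces and spaces over $BC_2$, and Propositions~\ref{prop:equalityForFreeAction} and~\ref{PR:bundles-over-BCtwo} already recorded above.
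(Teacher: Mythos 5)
The paper proves this proposition by a one-line citation to May's theorem, \cite[Thm.~5]{may_remarks_1990}. Your proposal attempts to give a self-contained argument via the Borel construction and the Quillen equivalence with spaces over $BC_2$, but it contains a genuine gap at the descent step, and the way you attempt to repair the gap at the end reduces the argument back to citing May, making the Borel machinery superfluous.

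The gap is in the sentence claiming that a $C_2$-equivariant principal $G$-bundle on $X\times EC_2$ ``descends \ldots to a principal $G$-bundle with involution on $X$'' because $X\times EC_2\to X$ is ``coarsely $C_2$-equivalent.'' A coarse $C_2$-equivalence is not the kind of equivalence that controls equivariant bundles. Equivariant bundles are classified by fine $C_2$-homotopy classes into $B_{C_2}G$, and the map $X\times EC_2\to X$ is a fine $C_2$-equivalence only when the $C_2$-action on $X$ is already free. If the action on $X$ has fixed points, the map is not a fine equivalence and the descent need not exist nor be unique. Concretely, for $X=\ast$ with trivial action, your left-hand side is $\pi_0\bigl((B_{C_2}G)^{C_2}\bigr)$ (the honest fixed points, which by Proposition~\ref{pr:trivialActionTopology} sees the orthogonal--symplectic dichotomy), while the Borel route computes $\pi_0\bigl(\Map^{C_2}(EC_2,B_{C_2}G)\bigr)$, the homotopy fixed points. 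These agree only if the space $B_{C_2}G$ satisfies a strong fixed-point-versus-homotopy-fixed-point condition, and establishing exactly this for the explicit model $E_{C_2}G=\mathcal{C}(EC_2,EG)$ is precisely the content of May's theorem. You in fact identify this obstacle yourself in the second-to-last paragraph; but the ``resolution'' you propose, namely citing \cite[Thm.~5]{may_remarks_1990} to assert that coarse and fine classes agree and fixed points ``match up on the nose,'' is not an argument independent of May -- it is the statement being proved. Once you are quoting May's theorem to close the gap, the Borel-construction bookkeeping that precedes it is doing no mathematical work, and you should simply cite the theorem as the paper does.
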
 

We refer to \cite[Thm.~5]{may_remarks_1990} for the proof.

\begin{proposition} \label{prop:freeCase}
 If $X$ is a free $C_2$-CW-complex, then the following are naturally isomorphic
 \begin{enumerate}[label=(\alph*)]
 \item $[X, B_{C_2}G]_{C_2}$,
 \item $[X, B_{C_2}G]_{C_2\text{-coarse}}$,
 \item $[X, BG]_{C_2}$,
 \item $[X, BG]_{C_2\text{-coarse}}$,
 \item The set of isomorphism classes of principal $G$ bundles with involution on $X$.
 \end{enumerate}
\end{proposition}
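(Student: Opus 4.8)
The plan is to establish the five natural bijections by chaining together results already in hand, using the freeness of the $C_2$-action to collapse the distinction between the fine and coarse homotopy categories. The key inputs are Proposition~\ref{prop:equalityForFreeAction}, Proposition~\ref{prop:existUnivCGbundle}, and the coarse $C_2$-weak equivalence $BG \overset{\sim}{\to} B_{C_2}G$ from diagram~\eqref{eq:9}, together with the general fact that a cofibrant object in the coarse model structure (such as a free $C_2$-CW-complex) maps the same way to coarsely weakly equivalent targets.

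First I would identify (a) with (e): since $X$ is a $C_2$-CW-complex, Proposition~\ref{prop:existUnivCGbundle} gives a natural bijection between $[X,B_{C_2}G]_{C_2}$ and the set of isomorphism classes of principal $G$-bundles with involution on $X$. This requires no freeness hypothesis. Next I would identify (a) with (b): because $X$ is a \emph{free} $C_2$-CW-complex, hence cofibrant in the coarse model structure, Proposition~\ref{prop:equalityForFreeAction} applies with $Y = B_{C_2}G$ to give the natural bijection $[X,B_{C_2}G]_{C_2} \longleftrightarrow [X,B_{C_2}G]_{C_2\text{-coarse}}$. The same proposition, applied with $Y = BG$ (which carries its functorial $C_2$-action), yields the bijection between (c) and (d).

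It then remains to connect the $B_{C_2}G$-column to the $BG$-column, and here I would work in the coarse homotopy category, comparing (b) and (d). The horizontal map $BG \overset{\sim}{\to} B_{C_2}G$ in~\eqref{eq:9} is a coarse $C_2$-weak equivalence, i.e.\ a weak equivalence of the underlying spaces after forgetting the $C_2$-action; since $X$ is cofibrant in the coarse model structure (being a free $C_2$-CW-complex), postcomposition with this map induces a bijection $[X,BG]_{C_2\text{-coarse}} \to [X,B_{C_2}G]_{C_2\text{-coarse}}$. Naturality in $X$ is immediate from functoriality of all the constructions involved. Combining the four identifications (a)$\leftrightarrow$(e), (a)$\leftrightarrow$(b), (b)$\leftrightarrow$(d), (c)$\leftrightarrow$(d) produces the full chain of natural bijections asserted in the proposition.

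The main obstacle is the verification that postcomposition with the coarse weak equivalence $BG \to B_{C_2}G$ really does induce a bijection on coarse homotopy classes out of $X$: this is exactly the statement that a weak equivalence between fibrant objects induces a bijection on homotopy classes out of a cofibrant object, so one must be sure that $X$ is cofibrant in the coarse structure (true because it is a free $C_2$-CW-complex, as recorded in the remark following the coarse/fine discussion) and that both $BG$ and $B_{C_2}G$ are fibrant (true because all spaces are fibrant in both structures). Once these framework facts are cited the argument is purely formal, so I do not expect to grind through any calculation; the care is entirely in assembling the model-categorical bookkeeping correctly and checking that each bijection is natural in $X$.
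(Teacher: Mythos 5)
Your proposal is correct and takes essentially the same approach as the paper's: the paper cites exactly the three ingredients you use (Proposition~\ref{prop:equalityForFreeAction}, Proposition~\ref{prop:existUnivCGbundle}, and the coarse weak equivalence in diagram~\eqref{eq:9}) and leaves the assembly to the reader, which is precisely what you carry out.
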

\begin{proof}
 The equivalences all follow from Propositions \ref{prop:equalityForFreeAction}, \ref{prop:existUnivCGbundle} and Diagram
 \eqref{eq:9}.
\end{proof}

\begin{remark}
 Proposition~\ref{prop:freeCase} means that if one is willing to restrict one's attention to spaces with free $C_2$-action, then the
 construction of $E_{C_2}G \to B_{C_2}G$ from $EG \to BG$ is not necessary. The $C_2$-action given by the functoriality
 of the construction of $BG$ is sufficient.
\end{remark}

\begin{remark}\label{RM:two-copies-construction}
 Let $G$ be a topological group. One may give $G \times G$ the $C_2$-action which interchanges the two factors. Then the
 resulting classifying space $BG \times BG$ also admits this action. In this instance, the space $B_{C_2}(G \times G)$
 is $C_2$-equivalent 
 to $BG \times BG$ with the interchange action, which may be verified by testing on $C_2$-fixed
 points, for instance.
 
 The construction of taking a space $Y$ and producing $Y \times Y$ with the $C_2$-action interchanging the factors is
 right adjoint to the forgetful functor. Suppose $X$ is a $C_2$-space, then
 \begin{equation}
 \label{eq:4} [ X , BG ] \iso [X , BG \times BG]_{C_2}
 \end{equation} where the set on the left is the set of maps in the nonequivariant homotopy category.
\end{remark}

\subsection{The Case of $\PGL_n$-bundles}
\label{subsec:PGL-bundles}

For the rest of this section, we write $\GL_n$, $\PGL_n$ etc.~for the Lie group of complex points, $\GL_n(\C)$,
$\PGL_n(\C)$ and so on.

We now specify $C_2$-actions on groups that will appear in the sequel. There is a $C_2$-action on $\GL_n$ 
in which the non-trivial element acts via $A
\mapsto A^{-\tr}$, the transpose-inverse. This passes to certain subquotients of $\GL_n$, and we will use it as the
$C_2$-action on the groups $\mu_n$, $\C^\times$, $\SL_n$ and $\PGL_n$, all viewed either as subgroups or as quotients of
$\GL_n$. Specifically, we write $-\tr: \PGL_n \to \PGL_n$ for the outer automorphism $A \mapsto A^{-\tr}$.

There is also a $C_2$-action on $\GL_n \times \GL_n$ given by interchanging the factors and then applying the
transpose-inverse, so that the induced involution is
\[ (A,B) \mapsto (B^{-\tr}, A^{-\tr})\ . \]
This will be used for certain subquotients of this group, including $\mu_n \times \mu_n$, $\C^\times \times
\C^\times$, $\SL_n \times \SL_n$ and $\PGL_n \times \PGL_n$.

There is a diagonal inclusion $\GL_n \to \GL_n \times \GL_n$, given by $A \mapsto (A , A)$. It is $C_2$-equivariant,
and induces similar maps for the aforementioned subquotients of $\GL_n$.

One may form $C_2$-equivariant classifying spaces for the groups named above, as outlined in Subsection
\ref{subsec:equivClass}. Among the possibilities, two are particularly useful to us: $B_{C_2}\PGL_n$ and $B_{C_2}
(\PGL_n \times \PGL_n)$.

\begin{proposition} \label{pr:equivPGLn}
 Let $X$ be a $C_2$-CW-complex with corresponding involution $\lambda$, and let $n$ be a natural number. Then the following sets are in
 natural bijective correspondence:
 \begin{enumerate}[label=(\alph*)]
 \item \label{prep:1} Isomorphism classes of degree-$n$ topological Azumaya algebras with $\lambda$-involution on $X$,
 \item \label{prep:2} Isomorphism classes of principal $\PGL_n$-bundles with involution on $X$,
 \item \label{prep:3} $[X, B_{C_2} \PGL_n ]_{C_2}$.
 \end{enumerate}
\end{proposition}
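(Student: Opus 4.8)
The plan is to establish the two bijections \ref{prep:1}$\leftrightarrow$\ref{prep:2} and \ref{prep:2}$\leftrightarrow$\ref{prep:3} separately, since the equivalence \ref{prep:2}$\leftrightarrow$\ref{prep:3} is essentially Proposition~\ref{prop:existUnivCGbundle} applied to $G = \PGL_n$ equipped with the involutary automorphism $-\tr$, and so most of the work is in \ref{prep:1}$\leftrightarrow$\ref{prep:2}. The key point is that everything should be deduced from the non-equivariant statement Proposition~\ref{PR:equiv-Az-PGLnR} (for the topos $\bfX = \Sh(X)$ with local ring object $\calO_\bfX = \cont(X,\C)$, which is locally ringed by the appendix) together with an analysis of how the $C_2$-action interacts with that equivalence. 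I would first recall that a degree-$n$ topological Azumaya algebra with involution on the $C_2$-space $X$ is, by definition (Definition~\ref{DF:alg-with-a-lambda-inv} applied to Example~\ref{ex:top}), an $\calO_\bfX$-algebra $A$ locally isomorphic to $\nMat{\calO_\bfX}{n\times n}$ together with a morphism $\tau : A \to \Lambda A^\op$ satisfying $\Lambda\tau \circ \tau = \id$, where $\Lambda = \lambda_*$; unwinding Example~\ref{ex:top} this is exactly a sheaf of $\calO_X$-algebras $A$ on $X$, locally $\nMat{\calO_X}{n\times n}$, with an isomorphism $\tau : A \to \lambda_* A$ of $\calO_X$-algebras (using the identification of $\Lambda A^\op$ with $\lambda_* A^\op$ via $\lambda^\#$) such that $\lambda_*\tau \circ \tau = \id_A$, i.e.\ equivalently an $\calO_X$-semilinear anti-automorphism $\tilde\tau : A \to A$ covering $\lambda : X \to X$ and squaring to the identity.

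Next I would set up the correspondence with bundles. By Proposition~\ref{PR:equiv-Az-PGLnR}, forgetting the involution, a degree-$n$ Azumaya $\calO_X$-algebra $A$ corresponds to a principal $\PGL_n$-bundle $E = \iHom_{\text{$\calO_X$-alg}}(\nMat{\calO_X}{n\times n}, A)$ on $X$; I would then observe that the datum of an involution $\tau : A \to \lambda_*A$ with $\lambda_*\tau\circ\tau = \id$ translates, under the functor $\iHom_{\text{$\calO_X$-alg}}(\nMat{\calO_X}{n\times n}, -)$, into an isomorphism of $\PGL_n$-bundles $E \to \lambda^* E$ twisted by the automorphism $-\tr$ of $\PGL_n$, squaring to the identity. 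Here the appearance of $-\tr$ is the crucial computation: the standard involution $\lambda\tr$ on $\nMat{\calO_X}{n\times n}$ from Example~\ref{EX:lambda-tr}, i.e.\ transpose composed with the semilinear structure, is precisely what implements the identification, and conjugation behaves by $A \mapsto A^{-\tr}$ on $\PGL_n = \GL_n/\C^\times$ — this is exactly the content of Lemma~\ref{LM:lambda-tr-relation} transported to the topological setting. By the ``equivalent definition'' discussed in the Remark following Definition~\ref{def:twistedGbundle}, such a twisted self-isomorphism of order $2$ of $\pi : E \to X$ is the same thing as a $C_2$-action on the total space $E$ making $\pi$ equivariant and satisfying the compatibility $c\cdot(e\cdot g) = (c\cdot e)\cdot(c\cdot g)$ relative to the $C_2$-group structure $(-\tr)$ on $\PGL_n$, that is, a principal $\PGL_n$-bundle with involution in the sense of Definition~\ref{def:twistedGbundle}. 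Checking that morphisms of algebras with involution correspond to morphisms of bundles with involution, and that the construction is functorial and inverse-to-inverse, is routine, using once more Proposition~\ref{PR:equiv-Az-PGLnR}. Finally, the bijection with $[X, B_{C_2}\PGL_n]_{C_2}$ is Proposition~\ref{prop:existUnivCGbundle} with $G = (\PGL_n, -\tr)$.

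The main obstacle, and the step I would spend the most care on, is the translation in the previous paragraph: verifying rigorously that the equivalence of Proposition~\ref{PR:equiv-Az-PGLnR} is compatible with the $C_2$-structures, so that an algebra involution $\tau$ with $\lambda_*\tau\circ\tau=\id$ produces exactly an order-$2$ twisted self-isomorphism of $E$, with the twist given by $-\tr$ and not, say, by $+\tr$ or by the identity. Concretely one must chase through: (i) the canonical identification of $\sAut_{\text{$\calO_X$-alg}}(\nMat{\calO_X}{n\times n})$ with $\PGL_n$ (locally, via inner automorphisms, as in Lemma~\ref{LM:inner-automorphisms} / the discussion in Subsection~\ref{subsec:Az-algs}), (ii) how $\lambda\tr$ conjugates an inner automorphism $\mathrm{Int}(h)$ to $\mathrm{Int}(h^{-\tr})$ — the topological incarnation of Lemma~\ref{LM:lambda-tr-relation} — and (iii) the compatibility condition of Definition~\ref{def:twistedGbundle}(2), which should fall out of the identity $\lambda_*\tau\circ\tau = \id$ together with the cocycle behaviour. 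I would phrase this as a short lemma isolating the equivalence of categories between degree-$n$ topological Azumaya algebras with involution on $X$ and principal $\PGL_n$-bundles with involution on $X$ (valid for any paracompact $C_2$-space, independently of freeness of the action), and then deduce the Proposition by invoking Proposition~\ref{prop:existUnivCGbundle}; the passage to $[X,B_{C_2}\PGL_n]_{C_2}$ then requires no further work.
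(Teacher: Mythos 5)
Your proposal is correct and follows essentially the same route as the paper: both proofs reduce (b)$\leftrightarrow$(c) to Proposition~\ref{prop:existUnivCGbundle}, and both establish (a)$\leftrightarrow$(b) by identifying the twist as $-\tr$ via the conjugation computation $m \mapsto xmx^{-1} \Rightarrow m^\tr \mapsto x^{-\tr}m^\tr x^\tr$, which the paper carries out directly at the level of clutching functions and you extract from Lemma~\ref{LM:lambda-tr-relation}; the only difference is your choice to route the argument through the topos-theoretic apparatus (Proposition~\ref{PR:equiv-Az-PGLnR} and the $\iHom$-object formulation) rather than the more hands-on bundle language, but the underlying computation is identical.
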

\begin{proof}
 There is a well-known bijection between Azumaya algebras of degree $n$ on $X$ and principal $\PGL_n$-bundles, since
 $\PGL_n(\CC)$ is the automorphism group of $M:=\Mat_{n \times n}(\C)$ as a $\C$-algebra, see Subsection~\ref{subsec:Az-algs}.
 Let $A$ be an Azumaya algebra
 of degree $n$ on $X$ and $P$ the associated principal $\PGL_n$-bundle.

The functor of taking opposite algebras on Azumaya algebras corresponds to the functor of change of group along
 $-\tr: \PGL_n \to \PGL_n$ of principal $\PGL_n$-bundles; this can be seen at the level of clutching functions. Indeed,
 note that $m\mapsto m^{\tr}:\Mat_{n\times n}(\CC)\to \Mat_{n\times n}(\CC)^\op$
 is a $\CC$-algebra isomorphism.
 If one chooses coordinates for $A$ on two open sets of $X$ on which it trivializes, then the clutching function
 $f: \Mat_{n \times n}(\CC) \times (U_1 \cap U_2)\to \Mat_{n \times n}(\CC) \times (U_1\cap U_2)$ given by
 $m \mapsto xmx^{-1}$, for some $x : (U_1 \cap U_2) \to \PGL_n(\CC)$. For the same choice of coordinates over both
 $U_1$ and $U_2$, the clutching function $f^{\op}$ of the opposite algebra is given by
 $m^\tr \mapsto (xmx^{-1})^\tr = x^{-\tr} m^\tr x^\tr$.

 Therefore, the data of an isomorphism of $A \to A^\op$ of order $2$ over the involution $\lambda: X \to X$ is
 equivalent to an order-$2$ self-map of the associated principal $\PGL_n$-bundle, $P \to P^*$ over 
 $X$, where $P^*$
 denotes the principal $\PGL_n$-bundle 
 \[ P^* := \PGL_n \times_{-\tr} \lambda^* P.\]
 As explained in Remark~\ref{RM:c-two-torsor-equiv-dfn}, 
 this is equivalent to the definition of principal $\PGL_n$-bundle with involution in Definition
 \ref{def:twistedGbundle}; thus establishing the equivalence of \ref{prep:1} and \ref{prep:2}.

 The equivalence between \ref{prep:2} and \ref{prep:3} is an application of Proposition \ref{prop:existUnivCGbundle}.
\end{proof}

The space $B_{C_2}(\PGL_n \times \PGL_n)$, by similar methods, is seen to classify ordered pairs of $\PGL_n$-bundles on
a $C_2$-space $X$, such that the one is obtained
from the other by twisting relative to the involutions of $X$ and $\PGL_n$. But the category of such ordered pairs is identical
to the category of ordinary $\PGL_n$-bundles on the space $X$, forgetting the $C_2$-action.

This last fact also manifests itself algebraically via Remark~\ref{RM:two-copies-construction} in the following way: Suppose $G $ is a subgroup of $\GL_n$ closed under taking transposes,
or a quotient of $\GL_n$
by such a subgroup, let $(G\times G, \alpha)$ 
denote the product group with the involution $(A, B) \mapsto
(B^{-\tr}, A^{-\tr})$, and let $(G \times G, i)$ denote the product group with the involution exchanging $A$ and
$B$. Then $(A, B) \mapsto (A, B^{-\tr})$ is a $C_2$-equivariant isomorphism between these two groups with involution.

We will apply the classifying space theory developed above in the two extreme cases where the $C_2$-action on $X$ is
trivial and when it is free.

\subsection{Trivial Action}

Suppose $X$ is equipped with a trivial $C_2$-action. Then principal $\PGL_n$-bundles 
with
involution on $X$ are classified by $[X, B_{C_2}\PGL_n]_{C_2} = [X, (B_{C_2}\PGL_n)^{C_2}]$.

\begin{proposition} \label{pr:trivialActionTopology}
Let $n$ be a positive integer. Then the fixed locus $(B_{C_2}\PGL_n)^{C_2}$ is homeomorphic to
\begin{enumerate}[label=(\roman*)]
\item $B\PO_n\, \sqcup \, B\PSp_n$ if $n$ is even;
\item $B\PO_n$ if $n$ is odd.
\end{enumerate}
\end{proposition}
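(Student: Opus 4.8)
The plan is to identify the fixed locus $(B_{C_2}\PGL_n)^{C_2}$ with a classifying space by recognizing that a $C_2$-fixed point of $B_{C_2}\PGL_n$ corresponds to a principal $\PGL_n$-bundle with involution on a point, i.e.\ to a $\tau$-equivariant structure on the trivial bundle, where $\tau = -\tr$. By Proposition~\ref{prop:existUnivCGbundle}, more precisely by the analysis underlying \cite[Thm.~5]{may_remarks_1990}, the space $(B_{C_2}\PGL_n)^{C_2}$ is a disjoint union, indexed by conjugacy classes of homomorphisms $\rho: C_2 \to \PGL_n \rtimes C_2$ splitting the projection to $C_2$, of classifying spaces $B Z(\rho)$ of the centralizers. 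A splitting $\rho$ is the same as an element $g \in \PGL_n$ with $g \cdot g^{-\tr} = 1$, i.e.\ $g^\tr = g$ in $\PGL_n$, taken up to the twisted conjugation $g \mapsto h g h^{-\tr}$; its centralizer in $\PGL_n$ is the subgroup fixed by the involution $x \mapsto g x^{-\tr} g^{-1}$ followed by inversion, which is exactly the projective isometry group $\PU(\Mat_{n\times n}(\C), \tau_g)$ of the corresponding involution $\tau_g$ on $\Mat_{n\times n}(\C)$.

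First I would make this correspondence precise: lift $g \in \PGL_n$ to $\tilde g \in \GL_n$; the condition $g^\tr = g$ in $\PGL_n$ means $\tilde g^\tr = \varepsilon \tilde g$ for a scalar $\varepsilon \in \C^\times$, and rescaling forces $\varepsilon = \pm 1$, so $\tilde g$ is either symmetric or antisymmetric. The twisted-conjugacy orbits of such $g$ are then classified, via the standard theory of bilinear forms over $\C$ (every nondegenerate symmetric form is equivalent to the standard one; every nondegenerate antisymmetric form is equivalent to $h_n$, and exists only for $n$ even), by a single orbit in the symmetric case and, when $n$ is even, one additional orbit in the antisymmetric case. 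The centralizer of the symmetric $g$ is $\PO_n$ (the projective orthogonal group $O_n/\mu_n$, which is $\PU_n(\C, \id)$ in the notation of Example~\ref{ex:orthogonal}), and the centralizer of the antisymmetric $g$, when $n$ is even, is $\PSp_n$ (Example~\ref{ex:symplectic}). This yields the claimed decomposition $(B_{C_2}\PGL_n)^{C_2} \cong B\PO_n \sqcup B\PSp_n$ for $n$ even and $B\PO_n$ for $n$ odd.

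The main obstacle I expect is making the identification of $(B_{C_2}\PGL_n)^{C_2}$ with $\bigsqcup_\rho BZ(\rho)$ rigorous in our specific model $B_{C_2}\PGL_n = \mathcal{C}(EC_2, E\PGL_n)/\PGL_n$, rather than just invoking it. The clean way is: a point of $(B_{C_2}\PGL_n)^{C_2}$ is represented by a $C_2$-equivariant section of $E_{C_2}\PGL_n \to B_{C_2}\PGL_n$ over a point, equivalently a $C_2$-equivariant map $EC_2 \to E\PGL_n$ that is also $\PGL_n$-equivariant up to the twist, which by the usual Borel-construction bookkeeping (as in \cite{guillou_categorical_2012}*{Sec.~5}) is the same datum as a point of $E(\PGL_n \rtimes C_2) \times_{\PGL_n \rtimes C_2} (\PGL_n \rtimes C_2 / \Delta_\rho)$ for the various splittings $\rho$, whose components are $BZ(\rho)$. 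An alternative, which avoids delicate point-set arguments, is to use that $(\cdot)^{C_2}$ commutes with the relevant bar constructions: $(B_{C_2}G)^{C_2}$ is the classifying space of the topological groupoid whose objects are splittings $\rho$ and whose morphisms are twisted conjugations, and this groupoid is equivalent to $\bigsqcup_\rho \ast/\!\!/Z(\rho)$. Either route reduces the problem to the purely algebraic classification of symmetric/antisymmetric invertible matrices up to congruence, which is classical, together with the identification of the two relevant centralizers as $\PO_n$ and $\PSp_n$ — both already named and used in Section~\ref{sec:exapp}.
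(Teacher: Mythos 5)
Your proposal is correct and follows essentially the same route as the paper: it invokes the decomposition $(B_{C_2}\PGL_n)^{C_2} \simeq \bigsqcup_\rho BZ(\rho)$ over conjugacy classes of splittings $\rho : C_2 \to \PGL_n \rtimes C_2$ (the paper cites this directly as \cite[Thm.~7]{may_remarks_1990}, whereas you also sketch why it holds), then reduces to the classical classification of nondegenerate symmetric and alternating forms over $\C$ and the identification of the two centralizers as $\PO_n$ and $\PSp_n$. The only cosmetic quibbles are that the twisted conjugation should be $g \mapsto h g h^{\tr}$ rather than $h g h^{-\tr}$ (same orbits), and the phrase ``followed by inversion'' is superfluous since the inversion is already built into $-\tr$; neither affects the argument.
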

\begin{proof}
 We may calculate the fixed-point-sets of $B_{C_2}(\PGL_n)$ by means of \cite{may_remarks_1990}*{Thm.~7}. We explain
 the application of this theorem in the current case.

If $A \in \PGL_n$ is a matrix such that $A A^{-\tr} = I_n$, then $(A, -\tr) \in \Gamma := \PGL_n \rtimes C_2$ generates a
subgroup that maps isomorphically onto $C_2$ and intersects $\PGL_n$ trivially. Denote by $\PGL_n^{(A, -\tr)}$ the
commutant of $(A,-\tr)$ in $\PGL_n$, i.e.,
the
subgroup of $\PGL_n$ consisting of elements $X$ such that $X^{-\tr}=A^{-1}X A$. 
We write $A\sim A'$ if $(A,-\tr)$ and $(A',-\tr)$ are conjugate under $\PGL_n$, or equivalently,
if there exists $X\in\PGL_n$ such that $X AX^{\tr}=A'$.
Then the theorem asserts that
\[ (B_{C_2}\PGL_n)^{C_2}= \bigsqcup_{A} B( \PGL^{(A, -\tr)}) \] as 
$A$ runs over equivalence classes of elements $A\in \PGL_n$ satisfying $A^{-\tr}A=I_n$.

When $n$ is even, say $n=2m$, there are two such equivalence classes, namely the class of $I_n$ and the class of $h_{2m}(-1)$, in the
notation of Example \ref{ex:split_model_example}, as can be calculated directly. The fixed points under the action are
those matrices for which $B^{-\tr} = B$ in the first case and $B^{-\tr} =h_{2m}(-1) B h_{2m}(-1)^{-1}$ in the second,
which is to say, the subgroups of orthogonal and of symplectic matrices respectively. 
We therefore 
deduce
\[ (B_{C_2}\PGL_n)^{C_2}= B \PO_n \, \sqcup \, B \PSp_n . \]

When $n$ is odd, the argument is much the same, but only $B \PO_n $ occurs.
\end{proof}

\begin{remark}
 By Theorem \ref{TH:types-for-schemes}, we know that there are two types of involutions on Azumaya algebras over connected
 topological spaces with trivial action, the symplectic and orthogonal. By means of Examples \ref{ex:symplectic} and
 \ref{ex:orthogonal}, we know that the orthogonal and symplectic Azumaya algebras with involution are equivalent to principal
 bundles for the groups $\PO_n$ and $\PSp_n$, the latter when $n$ is even. Proposition \ref{pr:trivialActionTopology}
 has recovered these observations via equivariant homotopy theory.
\end{remark}

\subsection{Free Action}
\label{subsec:free-action}

Now we address the case where the action of $C_2$ on $X$ is free. 
In this case, the quotient map $X\to Y:=X/C_2$ is a two-sheeted covering space map.

\begin{proposition} \label{pr:freeClass}
 Let $X$ be a free
 $C_2$-CW-complex, with $C_2$ acting by the involution $\lambda$, and let $Y=X/C_2$. Consider $Y$ as a space over $B
 C_2$, or alternatively, as a space equipped with a distinguished class $\alpha \in \Hoh^1(Y, C_2)$. There are natural
 bijections between the following:
 \begin{enumerate}[label=(\alph*)]
 \item \label{item:pr:freeClass:a} Isomorphism classes of degree-$n$ 
 topological Azumaya algebras over $X$ equipped with 
 a $\lambda$-involution. 
 \item \label{item:pr:freeClass:b} 
 $[X, B\PGL_n]_{C_2}$.
 \item \label{item:pr:freeClass:c} 
 $[Y, B (\PGL_n \rtimes C_2) ]_{BC_2}$. 
 \item \label{item:pr:freeClass:d} Elements of the preimage of $\alpha$ under $\Hoh^1(Y, \PGL_n \rtimes C_2) \to \Hoh^1(Y, C_2)$.
 \end{enumerate}
\end{proposition}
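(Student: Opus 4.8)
The plan is to establish the four bijections in a cycle, or more efficiently by relating each to a common object. The cleanest route is to prove \ref{item:pr:freeClass:a}$\Leftrightarrow$\ref{item:pr:freeClass:b} directly, then \ref{item:pr:freeClass:b}$\Leftrightarrow$\ref{item:pr:freeClass:c}, then \ref{item:pr:freeClass:c}$\Leftrightarrow$\ref{item:pr:freeClass:d}, invoking freeness of the action wherever possible so that fine and coarse homotopy categories coincide (Proposition~\ref{prop:equalityForFreeAction}) and so that all the classifying-space constructions of Subsection~\ref{subsec:equivClass} become interchangeable (Proposition~\ref{prop:freeCase}).

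First I would prove \ref{item:pr:freeClass:a}$\Leftrightarrow$\ref{item:pr:freeClass:b}. By Proposition~\ref{pr:equivPGLn}, degree-$n$ topological Azumaya algebras with involution on $X$ are the same as principal $\PGL_n$-bundles with involution on $X$ in the sense of Definition~\ref{def:twistedGbundle}, where $\PGL_n$ carries the $-\tr$ involution; and these are classified by $[X,B_{C_2}\PGL_n]_{C_2}$ via Proposition~\ref{prop:existUnivCGbundle}. Since the $C_2$-action on $X$ is free, $X$ is a free $C_2$-CW-complex, so Proposition~\ref{prop:freeCase} identifies $[X,B_{C_2}\PGL_n]_{C_2}$ with $[X,B\PGL_n]_{C_2}$, where $B\PGL_n$ carries the $C_2$-action induced by functoriality from $-\tr$. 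This gives \ref{item:pr:freeClass:b}.

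Next, \ref{item:pr:freeClass:b}$\Leftrightarrow$\ref{item:pr:freeClass:c}. Again using freeness, Proposition~\ref{prop:equalityForFreeAction} lets us replace fine by coarse homotopy classes, so $[X,B\PGL_n]_{C_2}\cong[X,B\PGL_n]_{C_2\text{-coarse}}$. The Borel construction (Proposition~\ref{PR:bundles-over-BCtwo}) then identifies this with homotopy classes of maps $X\times^{C_2}EC_2\to B\PGL_n\times^{C_2}EC_2$ over $BC_2$. But the $C_2$-action on $X$ is free, so $X\times^{C_2}EC_2\simeq X/C_2=Y$ over $BC_2$, and $B\PGL_n\times^{C_2}EC_2$ is a model for $B(\PGL_n\rtimes C_2)$ over $BC_2$ (this is the standard identification of the Borel construction on $BG$ with the classifying space of the semidirect product, as used in \cite{may_remarks_1990} and \cite{guillou_categorical_2012}, and it respects the structure maps to $BC_2$). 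Hence $[X,B\PGL_n]_{C_2}\cong[Y,B(\PGL_n\rtimes C_2)]_{BC_2}$, which is \ref{item:pr:freeClass:c}.

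Finally, \ref{item:pr:freeClass:c}$\Leftrightarrow$\ref{item:pr:freeClass:d}. The split extension $1\to\PGL_n\to\PGL_n\rtimes C_2\to C_2\to 1$ induces a fibration $B\PGL_n\to B(\PGL_n\rtimes C_2)\to BC_2$, and homotopy classes of maps $Y\to B(\PGL_n\rtimes C_2)$ over $BC_2$ — i.e.\ lifting the fixed classifying map $Y\to BC_2$ for $\alpha\in\Hoh^1(Y,C_2)$ — are, by the usual dictionary between principal bundles and their classifying maps for the paracompact spaces under consideration, exactly the elements of $\Hoh^1(Y,\PGL_n\rtimes C_2)$ mapping to $\alpha$ under $\Hoh^1(Y,\PGL_n\rtimes C_2)\to\Hoh^1(Y,C_2)$; this is \ref{item:pr:freeClass:d}. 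Assembling the three steps gives the chain of natural bijections \ref{item:pr:freeClass:a}$\cong$\ref{item:pr:freeClass:b}$\cong$\ref{item:pr:freeClass:c}$\cong$\ref{item:pr:freeClass:d}. I expect the main obstacle to be the bookkeeping in step two: making the Borel-construction identifications genuinely natural and compatible with the maps to $BC_2$, and in particular checking that the equivalence $X\times^{C_2}EC_2\simeq Y$ and the identification of $B\PGL_n\times^{C_2}EC_2$ with $B(\PGL_n\rtimes C_2)$ are compatible in the over-$BC_2$ category in the sense required by Proposition~\ref{PR:bundles-over-BCtwo}; the passage \ref{item:pr:freeClass:c}$\leftrightarrow$\ref{item:pr:freeClass:d} is routine obstruction/classifying-space theory given our standing niceness hypotheses on spaces.
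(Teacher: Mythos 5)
Your proof is correct and follows the same route as the paper's (which is just a three-line citation of Propositions~\ref{prop:existUnivCGbundle}, \ref{PR:bundles-over-BCtwo}, and the remark that (c)$\leftrightarrow$(d) is standard). You fill in the intermediate steps the paper elides, in particular the use of Proposition~\ref{prop:freeCase} to pass from $B_{C_2}\PGL_n$ to $B\PGL_n$ and the explicit identification of $X\times^{C_2}EC_2$ with $Y$ and of $B\PGL_n\times^{C_2}EC_2$ with $B(\PGL_n\rtimes C_2)$ over $BC_2$, which is entirely in the spirit of the paper's intended argument.
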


\begin{proof}
	Propositions~\ref{pr:equivPGLn} and~\ref{prop:freeCase} give the bijection
	between \ref{item:pr:freeClass:a} and \ref{item:pr:freeClass:b},
	and Proposition~\ref{PR:bundles-over-BCtwo} gives a bijection between
	\ref{item:pr:freeClass:b} and \ref{item:pr:freeClass:c}.
	The one-to-one correspondence between \ref{item:pr:freeClass:c}
	and \ref{item:pr:freeClass:d} is standard.
\end{proof}

We continue to assume that $X$ is a free $C_2$-CW-complex and let $Y=X/C_2$.
We would like to have a classifying-space-level understanding of the cohomological transfer map
$\transf_{X/Y}:\Hoh^2(X,\Gm)\to \Hoh^2(Y,\Gm)$ considered 
in Subsection~\ref{subsec:coh-transfer}.

To that end, let $\mu$ denote the discrete group $\mu_n$ or the topological group $\units{\C}$. We endow $\mu$ with the
involution $a \mapsto a^{-1}$, give $\mu \times \mu$ the involution $(a,b) \mapsto (b^{-1}, a^{-1})$, and let
$\mu^{\text{triv}}$ denote $\mu$ with the trivial action.

The map $\mu \times \mu \to \mu^{\text{triv}}$ defined by $(a,b) \mapsto ab^{-1}$ is $C_2$-equivariant, and its kernel
consists of pairs of the form $(a,a)$, which is the image of the diagonal map $\mu \to \mu \times \mu$. That is, there
is a $C_2$-equivariant short exact sequence of $C_2$-groups
\[ 1 \to \mu \to \mu \times \mu \to \mu^{\text{triv}} \to 1 \] and therefore, a sequence of $C_2$-equivariant maps in
which any three consecutive terms form a homotopy fibre sequence:
\[ \mu \to \mu \times \mu \to \mu^{\text{triv}} \to B\mu \to B(\mu \times \mu) \to B \mu^{\text{triv}} \to B^2 \mu \to
\cdots .\]
Any such homotopy fibre sequence is a homotopy fibre sequence in the $C_2$-equivariant coarse structure. These
constructions are plainly natural with respect to inclusion of subgroups of $\C^\times$.

Now, if $X$ is a free $C_2$-CW-complex, then thanks to
Proposition~\ref{prop:freeCase}, one arrives at a long exact sequence of abelian groups
\begin{equation}
 \label{eq:6} \dots \to [X, B^i \mu ]_{C_2} \to [X, B^i \mu \times B^i \mu]_{C_2} \to [X , B^i \mu^{\text{triv}} ]_{C_2}
\to \cdots.
\end{equation} Since $\mu \times \mu$ with this action is isomorphic to $\mu \times \mu$ with the interchange action, it
follows that $[X, B^i \mu \times B^i \mu]_{C_2} \cong [X, B^i \mu] = \Hoh^i(X, \mu)$. Moreover, $ [X , B^i
\mu^{\text{triv}} ]_{C_2}$ is simply $[Y, B^i \mu] = \Hoh^i(Y, \mu)$.

Therefore, the sequence of \eqref{eq:6} reduces in this case to
\begin{equation}
 \label{eq:6a} \cdots \to [X, B^i \mu ]_{C_2} \to \Hoh^i(X, \mu) \xrightarrow{\transf}\Hoh^i(Y, \mu) \to \cdots.
\end{equation} 
When $\mu=\units{\C}$ and $i=2$, 
the map denoted $\transf$ agrees with the transfer map defined in
 Subsection~\ref{subsec:coh-transfer}. Indeed, we know that the transfer map in Section \ref{sec:Saltman} agrees with
 the ordinary transfer map for a $2$-sheeted covering in the case at hand, Example
 \ref{EX:topological-free-transfer}. It suffices therefore to show that the map $\transf$ in \eqref{eq:6a} is the usual
 transfer map for a $2$-sheeted cover. The trivial case $X = Y \times C_2$ is elementary. The general case where $\pi: X \to Y$ is
 merely locally trivial can be deduced from the trivial case by viewing $\Hoh^i(Y, \mu)$ and $\Hoh^i(X,\mu)$ as \v Cech
 cohomology groups and calculating each using covers $\sh U$ of $Y$ and $\pi^{-1} \sh U$ of $X$ where $\sh U$
 trivializes the double cover $\pi$.

\begin{proposition} \label{pr:whenCores0} Let $\mu$ be $\mu_n$ or $\C^\times$, given the involution
 $z \mapsto z^{-1}$. Let $X$ be a space with free $C_2$-action, let $Y=X/C_2$, and let $\xi: X \to B^i \mu$ be an equivariant map,
 representing a cohomology class $\xi \in \Hoh^i(X, \mu)$. Then $\transf_{X/Y} \xi = 0$.
\end{proposition}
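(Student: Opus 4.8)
The plan is to obtain the vanishing as an immediate consequence of the exactness of the long exact sequence \eqref{eq:6a} already set up above. First I would observe that, by hypothesis, the cohomology class $\xi\in\Hoh^i(X,\mu)$ is precisely the image of the $C_2$-equivariant map $\xi\colon X\to B^i\mu$ under the map $[X,B^i\mu]_{C_2}\to\Hoh^i(X,\mu)$ that occurs in \eqref{eq:6a}; that map is, by construction, induced by the diagonal $\mu\to\mu\times\mu$ followed by the identification $[X,B^i\mu\times B^i\mu]_{C_2}\cong\Hoh^i(X,\mu)$ coming from \eqref{eq:4} and Remark~\ref{RM:two-copies-construction}. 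A short check — tracing the $C_2$-equivariant isomorphism $(\mu\times\mu,\ (a,b)\mapsto(b^{-1},a^{-1}))\;\cong\;(\mu\times\mu,\ \mathrm{interchange})$ given by $(a,b)\mapsto(a,b^{-1})$ through the classifying space construction, together with the fact that the counit of the adjunction underlying \eqref{eq:4} is a projection $Z\times Z\to Z$ — shows this composite is the plain forgetful map. Hence $\xi$ lies in the image of $[X,B^i\mu]_{C_2}$ in $\Hoh^i(X,\mu)$.

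Next I would invoke the exactness of \eqref{eq:6a} at the term $\Hoh^i(X,\mu)$: the image of $[X,B^i\mu]_{C_2}$ there is exactly the kernel of $\transf\colon\Hoh^i(X,\mu)\to\Hoh^i(Y,\mu)$. Combined with the previous paragraph this gives $\transf\,\xi=0$. Finally, in the one case where there might be ambiguity about which map is meant — namely $\mu=\units{\C}$, $i=2$ — it has already been recorded above (Example~\ref{EX:topological-free-transfer} and the discussion following \eqref{eq:6a}) that the map labelled $\transf$ in \eqref{eq:6a} coincides both with $\transf_{X/Y}$ of Subsection~\ref{subsec:coh-transfer} and with the classical transfer of the $2$-sheeted cover $X\to Y$, so the identity $\transf_{X/Y}\,\xi=0$ follows verbatim.

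I do not expect a serious obstacle here: the only point needing any care is pinning down the map $[X,B^i\mu]_{C_2}\to\Hoh^i(X,\mu)$ of \eqref{eq:6a} well enough to see that $\xi$ is in its image. But even this is harmless, since $\ker(\transf_{X/Y})$ is a subgroup of $\Hoh^i(X,\mu)$ and the map in question differs from the forgetful map by at most the automorphism $z\mapsto z^{-1}$; hence its image contains $\pm\xi$, and either sign suffices. In short, the proposition is essentially a reformulation of the exactness of \eqref{eq:6a}, and the write-up will consist of the two short bookkeeping steps above.
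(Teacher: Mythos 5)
Your proposal is correct and takes essentially the same route as the paper's proof: observe that the equivariant map $\xi$ exhibits the cohomology class as lying in the image of $[X, B^i\mu]_{C_2}$ in $\Hoh^i(X,\mu)$, and then conclude by exactness of \eqref{eq:6a}. The paper's proof is terser and does not pause to pin down the exact form of the map $[X,B^i\mu]_{C_2}\to\Hoh^i(X,\mu)$; your closing observation — that any potential discrepancy by the inversion automorphism $z\mapsto z^{-1}$ is harmless because $\ker(\transf_{X/Y})$ is a subgroup — is a sensible piece of bookkeeping that the paper leaves implicit.
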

\begin{proof}
 Since $\xi : X \to B^n \mu$ is equivariant, and the action on $X$ is free, $\xi$ lies in the image of $[X,
 B^n\mu]_{C_2}$ in $[X, B^n\mu]= \Hoh^n(X,\mu)$. Thus, the result follows from the exact sequence 
 \eqref{eq:6a}.
\end{proof}

\section{An Azumaya Algebra with no Involution of the Second Kind}
\label{sec:the-example}

We finally construct the example promised
at the beginning of Section~\ref{sec:topology}.

\medskip 

Throughout, the notation $x\Z$ means a free cyclic group, written additively, with a named
generator $x$.
Recall that for a topological space $X$,
the sheaf cohomology group $ \Hoh^2(X,\units{\C}):=\Hoh^2(X,\cont(X,\units{\C}))$
is isomorphic to the singular cohomology group
$\Hoh^3(X,\Z)$, see \cite[\S2.1]{asher_auel_azumaya_2017}, for instance. We shall use the latter group for the most part.

\subsection{A Cohomological Obstruction}
\label{subsec:coh-counterex}

In all cases, the groups appearing in this subsection are the complex points of linear algebraic groups. In the interest of
brevity, the relevant linear algebraic group, e.g.~$\SL_n$, will be written in place of the group itself, e.g.~$\SL_n(\C)$.

Unless otherwise specified, groups appearing will be endowed with a $C_2$-action. For the groups $\SL_n$, the action is
that sending $A $ to $ A^{-\tr}$, which restricts to the action $r \mapsto r^{-1}$ on the central subgroup $\mu_n$. For
the groups $\SL_n \times \SL_n$, the action is that given by $(A, B) \mapsto (B^{-\tr},A^{-\tr})$, and similarly for
$\mu_n \times \mu_n$. The maps $\SL_n \to \SL_n \times \SL_n$ and $\mu_n \to \mu_n \times \mu_n$ are given by diagonal
inclusions.

Embed $\mu_n \hookrightarrow \SL_n \times \SL_n$ via $r \mapsto (r I_n, r I_n)$, and let $Q_n$ denote the group obtained as
the quotient of $\SL_n \times \SL_n$ by the image of $\mu_n$.

In the following diagram, the horizontal arrows of the first two rows are $C_2$-equi\-variant.
This induces $C_2$-actions on the groups in the third row so that
all arrows become $C_2$-equivariant. 
\begin{figure}[h]
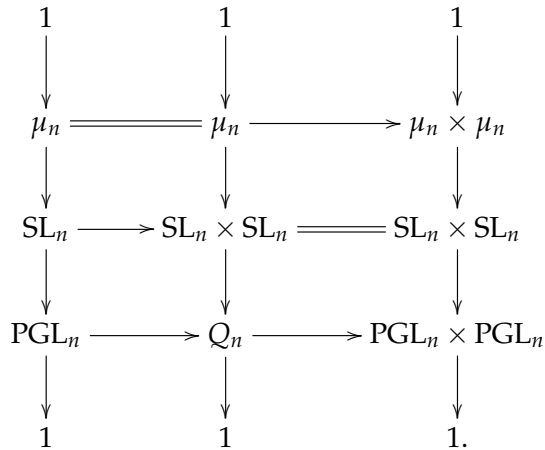

 \centering
 
 \caption{A Diagram of $C_2$ Groups}
 \label{fig:999}
\end{figure}

\begin{equation*} \xymatrix{ 1 \ar[d] & 1 \ar[d] & 1 \ar[d] \\ \mu_n \ar@{=}[r] \ar[d] & \mu_n \ar[r] \ar[d] & \mu_n
 \times \mu_n \ar[d] \\ \SL_n \ar[d] \ar[r] & \SL_n \times \SL_n \ar@{=}[r] \ar[d] & \SL_n\times \SL_n \ar[d] \\ \PGL_n
 \ar[r] \ar[d] & Q_n \ar[r] \ar[d] & \PGL_n \times \PGL_n \ar[d] \\ 1 & 1 & 1. }
 \end{equation*}

Each of the groups appearing above is equipped with a $C_2$-action, and consequently each may be extended to a
semidirect product with $C_2$, and equivariant classifying spaces of the form $B_{C_2}G$ may be constructed
as in Subsection~\ref{subsec:equivClass}. Since we
will consider equivariant maps with free $C_2$-action on the source, by Proposition \ref{prop:freeCase}, we may use any functorial model of $B G$ with its 
functorially-induced $C_2$-action instead.

\begin{proposition} \label{pr:C2cohoPGLn}
 The $C_2$-action on $\PGL_n$ induces an action on $\Hoh^*(B\PGL_n, \ZZ)$. In low degrees, this action is summarized
 by Table~\ref{tab:actions}.
 \begin{table}[h] \centering
 \begin{tabular}{c|c|c} $i$ & $\Hoh^i(B\PGL_n, \ZZ)$ & Action \\ \hline 0 & $\ZZ$ & trivial \\ 1 & 0 & - \\ 2 & 0 & -
 \\ 3 & $\alpha \ZZ/n$ & $\alpha \mapsto - \alpha$ \\ 4 & $ \tilde c_2 \ZZ $ & trivial
 \end{tabular}
 \caption{The $C_2$-action on $\Hoh^*(B\PGL_n, \ZZ)$.}
 \label{tab:actions}
 \end{table}
\end{proposition}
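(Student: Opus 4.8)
The plan is to compute $\Hoh^*(B\PGL_n,\ZZ)$ in degrees $\le 4$ together with the action induced by the outer automorphism $-\tr:\PGL_n\to\PGL_n$. First I would record the well-known low-degree cohomology of $B\PGL_n$. Since $\PGL_n$ is connected with $\pi_1(\PGL_n)\cong\ZZ/n$ (coming from $\mu_n\to\SL_n\to\PGL_n$), we get $\Hoh^1(B\PGL_n,\ZZ)=0$ and $\Hoh^2(B\PGL_n,\ZZ)=0$ (as $\Hoh^2(BG,\ZZ)\cong\Hom(\pi_1 G,\ZZ)$ for $G$ connected), while $\Hoh^3(B\PGL_n,\ZZ)\cong\Hoh^2(B\PGL_n,\ZZ/n)$'s Bockstein image is cyclic of order $n$, generated by a class $\alpha$ which is the obstruction to lifting a $\PGL_n$-bundle to a $\GL_n$-bundle (equivalently the image of the generator of $\Hoh^2(B\mu_n,\ZZ/n)$ under the connecting map for $1\to\mu_n\to\SL_n\to\PGL_n\to 1$). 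For degree $4$, I would use the fibration $B\mu_n\to B\SL_n\to B\PGL_n$ together with $\Hoh^*(B\SL_n,\ZZ)=\ZZ[c_2,\dots,c_n]$ ($\deg c_i=2i$) to identify $\Hoh^4(B\PGL_n,\ZZ)$ as infinite cyclic, generated by a class $\tilde c_2$ pulling back to $c_2$ on $B\SL_n$; a Serre spectral sequence computation (or the standard reference on the cohomology of $B\PGL_n$) gives this.

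Next I would compute the action of $-\tr$ on each group. The key observation is that $-\tr$ is homotopic, as a self-map of $B\PGL_n$, to the map induced by $A\mapsto A^\tr$ composed with the ``identity on the level of bundles'' twist $A\mapsto A^{-1}$; concretely, on classifying spaces, $A\mapsto A^\tr$ induces the identity on cohomology (conjugation/transpose of a bundle does not change its characteristic classes, since transpose is an anti-automorphism inducing an isomorphism $BG\to BG^{\mathrm{op}}\simeq BG$ that is the identity on cohomology with $\ZZ$ coefficients), whereas $A\mapsto A^{-1}$ corresponds to sending a bundle to its ``inverse''/opposite. On $\Hoh^2(BG,\ZZ/n)$-type classes and their Bocksteins, passing to the opposite/inverse bundle negates the class; this is exactly what negates $\alpha\in\Hoh^3$. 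For $\tilde c_2\in\Hoh^4$, I would argue that it pulls back from $c_2\in\Hoh^4(B\SL_n,\ZZ)$, and that $c_2$ is invariant under $A\mapsto A^{-\tr}$: indeed $c_1$ vanishes on $\SL_n$ and for a bundle $E$ with $c_1(E)=0$ one has $c_2(E^\vee)=c_2(E)$, while transpose again acts trivially on Chern classes; hence $-\tr$ fixes $\tilde c_2$. The degrees $0,1,2$ rows are immediate since the groups are $\ZZ$ (trivial action) and $0$.

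Finally I would assemble this into Table~\ref{tab:actions}: $\Hoh^0=\ZZ$ with trivial action; $\Hoh^1=\Hoh^2=0$; $\Hoh^3=\alpha\ZZ/n$ with $\alpha\mapsto-\alpha$; $\Hoh^4=\tilde c_2\ZZ$ with trivial action. I expect the main obstacle to be the careful justification that the outer automorphism $-\tr$ acts by $-1$ on $\Hoh^3$ rather than trivially: one must distinguish the genuinely outer part ($A\mapsto A^{-1}$, or equivalently $E\mapsto E^\vee$ at the level of vector/$\PGL_n$-bundles) from the inner-like transpose, and track that the outer part negates the obstruction class $\alpha$. This can be pinned down either by naturality of the connecting homomorphism for $1\to\mu_n\to\SL_n\to\PGL_n\to 1$ under the compatible $C_2$-actions (the action on $\mu_n$ being $r\mapsto r^{-1}$, which is $-1$ on $\Hoh^2(B\mu_n,\ZZ/n)$), or by identifying $\alpha$ with a Brauer-type class and noting that passing to opposite algebras negates Brauer classes.
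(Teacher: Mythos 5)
Your proposal reaches the stated conclusions and the core strategy — computing the $C_2$-action via an equivariant fibre sequence relating $B\mu_n$, $B\SL_n$, $B\PGL_n$, then tracking the action through the connecting maps — is the same as in the paper, but you take a different decomposition. The paper works with the Serre spectral sequence of $B\SL_n \to B\PGL_n \to B^2\mu_n$, which has the advantage that $B\PGL_n$ is the \emph{total} space, so its cohomology is assembled directly from the known $C_2$-equivariant cohomology of $B\SL_n$ and $B^2\mu_n$. You instead use $B\mu_n \to B\SL_n \to B\PGL_n$, where $B\PGL_n$ is the base; this works, but it is a priori more awkward because you are then reconstructing the base from the total space, and you compensate by quoting $\pi_1\PGL_n$ and ``the standard reference.'' For the $\Hoh^4$ row, you argue via $c_k(E^\vee)=(-1)^k c_k(E)$; the paper computes the same sign via the restriction $\Hoh^*(BT_n,\ZZ)=\ZZ[\theta_1,\dots,\theta_n]$ with $\theta_i\mapsto-\theta_i$ and the identification of $c_2$ with the second elementary symmetric polynomial. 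These are the same calculation in different clothes. For the $\Hoh^3$ row, your final fallback (naturality of the Bockstein/connecting map for $1\to\mu_n\to\SL_n\to\PGL_n\to 1$, with the sign action on $\mu_n$) is precisely what the paper uses.

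Two cautions. First, your intermediate heuristic that ``$-\tr$ factors as transpose composed with inversion, and transpose acts trivially on cohomology'' is not a well-formed argument: transpose is an anti-automorphism, not an automorphism, so it does not by itself induce a self-map of $B\PGL_n$, and the split into ``transpose'' and ``inverse'' does not factor the automorphism $A\mapsto A^{-\tr}$ through self-maps of $B\PGL_n$. You correctly flag at the end that the connecting-homomorphism/Bockstein argument (or the opposite-algebra interpretation of $\alpha$) is the rigorous route; that route should be taken from the start. Second, your claim that the generator of $\Hoh^4(B\PGL_n,\ZZ)$ pulls back to $c_2$ on $B\SL_n$ is incorrect: the restriction map $\Hoh^4(B\PGL_n,\ZZ)\to\Hoh^4(B\SL_n,\ZZ)$ is injective but not surjective, and its image is $\epsilon n\, c_2\ZZ$ where $\epsilon=\gcd(2,n)$ (this is what the surjectivity of $d_5$ in the paper's spectral sequence, cited from Antieau--Williams, provides). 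This does not affect the action computation — injectivity and $C_2$-equivariance of the restriction suffice — but the normalization should be stated correctly if one wants the class $\tilde c_2$ to match the paper's notation.
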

\begin{proof}
 The compatible $C_2$-actions on the terms of the exact sequence $1 \to \mu_n \to \SL_n \to \PGL_n \to 1$ induce an
 action on the fibre sequence $B \SL_n \to B \PGL_n \to B^2 \mu_n$, and therefore an action of $C_2$ on the associated
 Serre spectral sequence, which is illustrated in Figure~\ref{fig:SSS0}.
\begin{figure}[h]
 \centering
 \begin{equation*} \xymatrix@R=10pt{ \ZZ c_2 \ar^{d_5}[dddrrrrr] \\ 0 \\ 0 \\ \ZZ & 0 & 0 & \alpha \ZZ/n & 0 &
 \kappa \ZZ/(\epsilon n) }
 \end{equation*}
 \caption{A portion of the Serre spectral sequence in cohomology associated to $B\SL_n \to B\PGL_n \to B^2 \mu_n$.}
 \label{fig:SSS0}
 \end{figure} Here, $\epsilon$ is $2$ if $n$ is even and is $1$ otherwise.

 The action on $\alpha \ZZ/n$ is the same as the action of $C_2$ on $\mu_n = \ZZ/n$ itself, which is the sign
 action. The action on $\Hoh^4(\SL_n, \ZZ) = \ZZ c_2$ is calculated by identifying $\Hoh^*(\SL_n, \ZZ)$ as a subquotient
 of $\Hoh^*(B T_n, \ZZ)$, where $T_n$ is the maximal torus of diagonal matrices in $\GL_n$. Specifically, $\Hoh^*(B T_n,
 \ZZ) = \ZZ[\theta_1, \dots, \theta_n]$, where the $C_2$-action on $\theta_i$ is $\theta_i \mapsto - \theta_i$. Then the
 class $c_2$ in question may be identified with the image of the second elementary symmetric function in the $\theta_i$
 in $\Hoh^*(BT_n, \ZZ)/ (\sum_{i=1}^n \theta_i)$. It follows the action of $C_2$ on $c_2$ is trivial.

 We know from \cite[Proposition 4.4]{antieau_topological_2014} that the illustrated $d_5$ differential is
 surjective. Writing $\tilde c_2$ for $\epsilon n c_2$, it follows easily that the cohomology of $B\PGL_n$ takes the
 stated form, and carries the stated $C_2$-action.
\end{proof}

\begin{proposition}\label{pr:C2cohoQ}
Fix a natural number $n$, and let $\epsilon=\gcd(n,2)$. 
Let $S$ be the subgroup of $ c_2'\ZZ \oplus c_2''\ZZ$ consisting of terms $a c_2' + b c''_2$ where
$a + b \equiv 0 \pmod{ \epsilon n}$. The low-degree cohomology of $BQ_n$, along with its $C_2$-action, is summarized by Table~\ref{tab:actions2}.
\begin{table}[h]
 \centering
 \begin{tabular}{c|c|c} $i$ & $\Hoh^i(BQ_n, \ZZ)$ & Action \\ \hline 0 & $\ZZ$ & trivial \\ 1 & 0 & - \\ 2 & 0 & - \\ 3
 & $\alpha \ZZ/n$ & $\alpha \mapsto - \alpha$ \\ 4 & S & $ac'_2+bc''_2\mapsto bc'_2+ac''_2$
 \end{tabular}
 \caption{The $C_2$-action on $\Hoh^*(BQ_n, \ZZ)$.}
 \label{tab:actions2}
 \end{table} Moreover, the comparison map from $\Hoh^i(BQ_n, \ZZ)$ to $\Hoh^i(B\SL_n, \ZZ)$ is the evident
 identification map when $i \le 3$. When $i=4$, it is given by 
 $ac'_2+bc''_2 \mapsto \frac{a+b}{\epsilon n}\tilde{c}_2$.
\end{proposition}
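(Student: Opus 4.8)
\textbf{Proof plan for Proposition~\ref{pr:C2cohoQ}.}
The plan is to run the Serre spectral sequence for the fibration $B(\SL_n\times\SL_n)\to BQ_n\to B^2\mu_n$, which arises from the central extension $1\to\mu_n\to\SL_n\times\SL_n\to Q_n\to 1$ (the middle column of Figure~\ref{fig:999}), and to compare it with the spectral sequence of $B\SL_n\to B\PGL_n\to B^2\mu_n$ used in the proof of Proposition~\ref{pr:C2cohoPGLn}. First I would record the relevant cohomology of the fibre: $\Hoh^*(B(\SL_n\times\SL_n),\ZZ)$ in low degrees is $\ZZ$ in degree $0$, vanishes in degrees $1,2,3$, and in degree $4$ is $c_2'\ZZ\oplus c_2''\ZZ$, where $c_2'$, $c_2''$ are the second Chern classes pulled back from the two factors (this follows from the Künneth theorem together with $\Hoh^{\le 3}(B\SL_n)=\ZZ$ concentrated in degree $0$ and $\Hoh^4(B\SL_n)=\ZZ c_2$, as in Proposition~\ref{pr:C2cohoPGLn}). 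The base $B^2\mu_n$ has cohomology $\ZZ$, $0$, $\ZZ/n$ in degrees $0,1,2$, and in degrees $3,4$ one has $0$ and $\ZZ/n$ again (up to the $\epsilon$-ambiguity handled exactly as in Figure~\ref{fig:SSS0}). The only subtle differential is $d_5$ on the fibre class in degree $4$; I would identify this $d_5$ by naturality along the map of central extensions sending $\SL_n\times\SL_n\to Q_n$ to $\SL_n\to\PGL_n$ via either projection. Each projection $Q_n\to\PGL_n$ (obtained by collapsing one $\SL_n$ factor, which is legitimate because $\mu_n$ is embedded diagonally) gives a compatible map of fibrations; since $d_5(c_2)=\epsilon n\,(\text{generator})$ on $B\PGL_n$ by \cite[Proposition~4.4]{antieau_topological_2014}, naturality forces $d_5(c_2')=d_5(c_2'')=\epsilon n\,\kappa$ for a single generator $\kappa$ of the relevant $E_5^{5,0}$-term. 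Hence the kernel of $d_5$ on $E_5^{0,4}=c_2'\ZZ\oplus c_2''\ZZ$ is precisely $S=\{ac_2'+bc_2'':a+b\equiv 0\pmod{\epsilon n}\}$, and this together with the (unchanged) contributions from the base in total degrees $\le 3$ gives the stated table of abelian groups, with $\Hoh^3(BQ_n)=\alpha\ZZ/n$ surviving from $E_2^{0,?}$ exactly as for $B\PGL_n$.

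Next I would compute the $C_2$-action. The action on all terms is induced by the $C_2$-action on the fibration, namely the involution $(A,B)\mapsto(B^{-\tr},A^{-\tr})$ on $\SL_n\times\SL_n$ and the corresponding involution on $Q_n$, $\PGL_n\times\PGL_n$, etc. On $\Hoh^3(BQ_n)=\alpha\ZZ/n$ the action is read off from the action on $\mu_n\times\mu_n\subset\SL_n\times\SL_n$: the involution sends $(r,s)\mapsto(s^{-1},r^{-1})$, but $\alpha$ is pulled back from the common quotient $\mu_n$, on which this induces $r\mapsto r^{-1}$, i.e.\ $\alpha\mapsto-\alpha$, exactly as in Proposition~\ref{pr:C2cohoPGLn}. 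On degree $4$: the involution interchanges the two $\SL_n$ factors and then applies the transpose-inverse to each; interchanging the factors sends $c_2'\leftrightarrow c_2''$, while the transpose-inverse acts trivially on each $c_2$ (this is precisely the computation in Proposition~\ref{pr:C2cohoPGLn}, via the maximal torus: $c_2$ is the image of the second elementary symmetric polynomial in the $\theta_i$, which is invariant under $\theta_i\mapsto-\theta_i$). Therefore the action on $S$ is $ac_2'+bc_2''\mapsto bc_2'+ac_2''$, as claimed. One should note that $S$ is stable under this swap, so the action is well-defined on $S$.

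Finally, the comparison map $\Hoh^i(BQ_n,\ZZ)\to\Hoh^i(B\SL_n,\ZZ)$ is induced by the diagonal inclusion $\SL_n\to\SL_n\times\SL_n$ followed by $\SL_n\times\SL_n\to Q_n$; equivalently it is the map of fibre sequences covering the identity on $B^2\mu_n$. For $i\le 3$ this is the identity on the respective $E_2^{0,\bullet}$ and $E_2^{\bullet,0}$ contributions, hence the ``evident identification map.'' In degree $4$, the diagonal $\SL_n\to\SL_n\times\SL_n$ pulls back both $c_2'$ and $c_2''$ to $c_2\in\Hoh^4(B\SL_n)$, so $ac_2'+bc_2''\mapsto(a+b)c_2$; but on $B\SL_n$ the surviving class in degree $4$ is $\tilde c_2=\epsilon n\,c_2$, and the condition $a+b\equiv 0\pmod{\epsilon n}$ on $S$ lets us write $(a+b)c_2=\frac{a+b}{\epsilon n}\tilde c_2$, giving the stated formula.

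The main obstacle I anticipate is pinning down the $d_5$ differential with the correct normalization: one must verify that the two projections $Q_n\to\PGL_n$ induce the \emph{same} generator $\kappa$ of $E_5^{5,0}$ (so that the kernel is cut out by the single congruence $a+b\equiv 0$ rather than two independent ones), and that the $\epsilon=\gcd(n,2)$ factor enters in exactly the way dictated by \cite[Proposition~4.4]{antieau_topological_2014} and the computation behind Figure~\ref{fig:SSS0}. This is a naturality-of-transgression argument, straightforward in principle but requiring care with the identification $E_5^{5,0}\cong\Hoh^5(B^2\mu_n)$ and its image in $\Hoh^5(BQ_n)$; everything else is Künneth, the comparison of spectral sequences, and the torus computation already carried out for $B\PGL_n$.
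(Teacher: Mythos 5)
Your proposal is correct and follows essentially the same route as the paper: both compute via the Serre spectral sequence of $B(\SL_n\times\SL_n)\to BQ_n\to B^2\mu_n$ and pin down the $d_5$ differential by naturality with the $B\PGL_n$ spectral sequence, then read off the $C_2$-action from the interchange-plus-transpose-inverse involution (the paper uses the $C_2$-equivariant diagonal $\PGL_n\to Q_n$ for the comparison, while you use the two projections $Q_n\to\PGL_n$, a cosmetic difference). The worry you flag at the end is resolved immediately: each projection restricts to the identity on the diagonally embedded $\mu_n$, hence covers the identity on $B^2\mu_n$, so both induce the identity on $E_2^{5,0}=\Hoh^5(B^2\mu_n,\ZZ)$ and therefore hit the same generator $\kappa$.
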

\begin{proof}
 There is a fibre sequence $B(\SL_n \times \SL_n) \to BQ_n \to B^2 \mu_n$.
 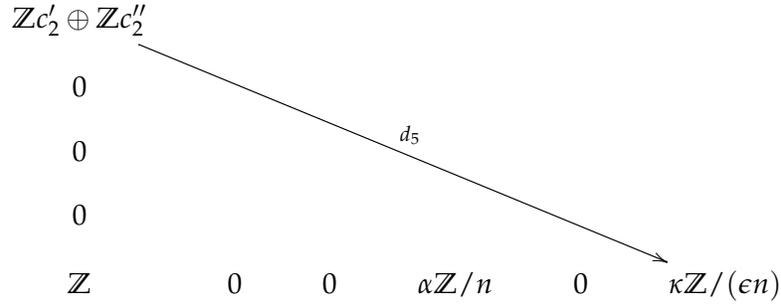
\begin{figure}[h] \centering
 \begin{equation*} \xymatrix@R=10pt{ \ZZ c_2' \oplus \ZZ c_2'' \ar^{d_5}[ddddrrrrr] \\ 0 \\ 0 \\ 0 \\ \ZZ & 0 & 0 & \alpha \ZZ/n
 & 0 & \kappa \ZZ/(\epsilon n) }
 \end{equation*}
 \caption{A portion of the Serre spectral sequence in cohomology associated to $B(\SL_n \times \SL_n) \to BQ_n \to
 B^2 \mu_n$.}
 \label{fig:SSS1}
 \end{figure} A portion of the associated Serre spectral sequence is shown in Figure~\ref{fig:SSS1}. 
 There is a comparison map of spectral sequences from this one to that of
 Figure~\ref{fig:SSS0}. The map identifies the bottom row of the two $\Eoh_2$-pages, and sends $c_2', c_2''$ both to $c_2$. It
 is compatible with the $C_2$-actions. The claimed results except
 the $C_2$-action on $\Hoh^4(B Q_n,\Z)$ all follow from the comparison map and the values in Table
 \ref{tab:actions}.
 As for the action on $\Hoh^4(B Q_n,\Z)$, as in the proof
 of Proposition~\ref{pr:C2cohoPGLn}, this can be deduced
 from the action on $\Hoh^*(B (T_n\times T_n),\Z)=\Z[\theta'_1,\dots,\theta'_n,\theta''_1,\dots,
 \theta''_n]$, which is given by $\theta'_i\mapsto -\theta''_i$ and $\theta''_i\mapsto -\theta'_i$.
\end{proof}

\begin{remark}\label{RM:generators-for-H-two}
 From Figures \ref{fig:SSS0} and \ref{fig:SSS1}, we deduce that the maps $B \PGL_n \to B^2 \mu_n$ and $B Q_n \to B^2
 \mu_n$ both represent generators of the groups $\Hoh^2( B\PGL_n, \ZZ/n) \iso \ZZ/n$ and $\Hoh^2(B Q_n, \ZZ/n)\iso \ZZ/n$, respectively. 
 Moreover, the image of the former class under the 
 Bockstein map is a generator of $\Hoh^3( B\PGL_n, \ZZ)$,
 which is nothing but the tautological Brauer class $\alpha$ of $ B\PGL_n$. That is, if $r:X \to B\PGL_n$ is the classifying map for a $\PGL_n$-bundle, or equivalently, a degree-$n$ topological Azumaya algebra, then the Brauer class of that algebra is $r^*(\alpha)$.
\end{remark}

Our purpose in introducing the group $Q_n$ is to construct a group 
which is as close to $\PGL_n \times \PGL_n$ (with the
interchange action) as possible, but for which the transfer of all classes in $\Hoh^2(BQ_n, \ZZ/n)$ vanish.

\begin{proposition} \label{pr:coresQ0} Let $Q_n$ be as constructed above. Give the space $B Q_n \times EC_2$ the diagonal
 $C_2$-action. Then the transfer map, $\Hoh^2(B Q_n \times EC_2, \ZZ/n)
 \to \Hoh^2(B Q_n \times^{C_2} EC_2, \ZZ/n)$, considered at the end of Subsection \ref{subsec:free-action}, vanishes.
\end{proposition}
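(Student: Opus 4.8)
The plan is to exploit the construction of $Q_n$ itself, specifically the fact that $\mu_n$ was embedded \emph{diagonally} in $\mu_n\times\mu_n$ before passing to the quotient, which is precisely what kills the relevant transfer. First I would identify the class to be transferred. By Proposition~\ref{pr:C2cohoQ} and the universal coefficient theorem, $\Hoh^2(BQ_n,\ZZ/n)\cong \ZZ/n$, generated by the class represented by the map $BQ_n\to B^2\mu_n$ appearing in the fibre sequence $B(\SL_n\times\SL_n)\to BQ_n\to B^2\mu_n$; see Remark~\ref{RM:generators-for-H-two}. Since $BQ_n\times EC_2\simeq BQ_n$ equivariantly has source with free $C_2$-action (because $EC_2$ does), and the transfer is additive, it suffices to show the transfer of this one generator vanishes.

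The key observation is that this generator lifts to an equivariant map. The short exact sequence of $C_2$-groups $1\to \mu_n \to \mu_n\times\mu_n \to \mu_n^{\mathrm{triv}}\to 1$ from Subsection~\ref{subsec:free-action}, applied with the diagonal embedding, shows that the $C_2$-equivariant map $B^2(\mu_n\times\mu_n)\to B^2\mu_n^{\mathrm{triv}}$ has $C_2$-equivariant homotopy fibre $B^2\mu_n$ (with the sign action). Now the map $BQ_n \to B^2\mu_n$ defining our generator is, by construction of $Q_n$ as $(\SL_n\times\SL_n)/\mu_n$ with $\mu_n$ \emph{diagonal}, $C_2$-equivariant with respect to the sign $C_2$-action on $B^2\mu_n$: the quotient $\SL_n\times\SL_n \to Q_n$ is $C_2$-equivariant for the interchange-transpose-inverse action, and the connecting map lands in the diagonal-quotient copy of $B^2\mu_n$, on which $C_2$ acts by the sign. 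Hence our generator is represented by a $C_2$-equivariant map $BQ_n\times EC_2 \to B^2\mu_n$, i.e.\ it lies in the image of $[BQ_n\times EC_2, B^2\mu_n]_{C_2}$ in $\Hoh^2(BQ_n\times EC_2,\ZZ/n) = [BQ_n\times EC_2, B^2\mu_n]$.

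With the generator exhibited as coming from an equivariant map on a free $C_2$-space, the vanishing of its transfer is exactly Proposition~\ref{pr:whenCores0} applied with $\mu=\mu_n$, $X=BQ_n\times EC_2$, $Y = BQ_n\times^{C_2}EC_2$, and $i=2$: any equivariant map $X\to B^2\mu_n$ has $\transf_{X/Y}$ of its class equal to zero, by the long exact sequence \eqref{eq:6a}. By additivity of the transfer over $\Hoh^2(BQ_n\times EC_2,\ZZ/n)\cong\ZZ/n$, the transfer map vanishes identically on all of $\Hoh^2(BQ_n\times EC_2,\ZZ/n)$, as claimed.

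The main obstacle I anticipate is verifying carefully that the $C_2$-action on $B^2\mu_n$ that makes $BQ_n\to B^2\mu_n$ equivariant is indeed the \emph{sign} action and not some other action — this is where the choice of \emph{diagonal} embedding $\mu_n\hookrightarrow \mu_n\times\mu_n$ versus, say, an anti-diagonal embedding genuinely matters, and it must be checked against the $C_2$-actions fixed in Figure~\ref{fig:999} and the paragraph preceding Proposition~\ref{pr:C2cohoPGLn}. Concretely, one traces the connecting map for $1\to\mu_n\to\SL_n\times\SL_n\to Q_n\to 1$ through the classifying-space construction and confirms $C_2$-equivariance; the compatible $C_2$-actions on this short exact sequence (induced, as stated before Proposition~\ref{pr:C2cohoPGLn}, from the horizontal rows of Figure~\ref{fig:999}) restrict on the kernel $\mu_n$ to $r\mapsto r^{-1}$, which is the sign action. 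Once this compatibility is in hand, the argument is purely formal from the results already established.
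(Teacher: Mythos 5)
Your proof is correct and follows essentially the same route as the paper's: identify the generator of $\Hoh^2(BQ_n,\ZZ/n)\cong\ZZ/n$ as the map $BQ_n\to B^2\mu_n$ arising from the short exact sequence defining $Q_n$, note that this map is $C_2$-equivariant for the sign action on $\mu_n$, and apply Proposition~\ref{pr:whenCores0}. The extra care you take in verifying which $C_2$-action on $B^2\mu_n$ makes the map equivariant is a useful elaboration of the paper's terser statement, not a departure from it.
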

The space $BQ_n \times EC_2$ is weakly equivalent to $BQ_n$, but carries a free $C_2$-action.

\begin{proof}
 The action of $C_2$ on $B Q_n\times EC_2$ is free. 
	As noted in Remark~\ref{RM:generators-for-H-two},
 one generator $\alpha$ of $\Hoh^2(Q_n \times EC_2; \ZZ/n) \iso
 \Hoh^2(Q_n; \ZZ/n)$ is given by the map $BQ_n \to B^2 \mu_n$
 arising from the short exact sequence defining $Q_n$.
 This map is $C_2$-equivariant when $\mu_n$, and therefore $B^2
 \mu_n$, is given the standard involution, and therefore the result follows from Proposition \ref{pr:whenCores0}.
\end{proof}

\begin{proposition} \label{pr:Obstruction}
 Let $n$ be an even integer, and let $a$ be an odd integer. Suppose $f: X \to BQ_n$ is a $C_2$-equivariant map and a $6$-equivalence. Then there is no
 $C_2$-equivariant map $g:X \to B\PGL_{an}$ inducing a surjection on $\Hoh^3(\,\cdot\,, \ZZ)$.
\end{proposition}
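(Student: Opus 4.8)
The plan is to argue by contradiction, using the $C_2$-equivariant cohomology of the relevant classifying spaces together with the transfer vanishing result of Proposition~\ref{pr:coresQ0}. Suppose, for contradiction, that $g : X \to B\PGL_{an}$ is a $C_2$-equivariant map inducing a surjection on $\Hoh^3(\,\cdot\,,\ZZ)$. First I would replace all spaces by their free models: since $X$ carries a free $C_2$-action (being $6$-equivalent to $BQ_n$, which does not, we instead work with $X \times EC_2$, or simply observe that in the statement $X$ should already be taken to be the free cofibrant replacement $BQ_n \times EC_2$ — in any case the $C_2$-action on the source is free), we may by Proposition~\ref{prop:freeCase} and the discussion of Subsection~\ref{subsec:free-action} pass freely between equivariant maps and maps over $BC_2$, and use any functorial model of the classifying spaces with their functorial $C_2$-action. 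The point of the hypothesis that $f$ is a $6$-equivalence is that it induces an isomorphism on $\Hoh^i(\,\cdot\,,\ZZ)$ for $i \le 5$, compatibly with the $C_2$-action, so we may compute the relevant cohomology of $X$ from Table~\ref{tab:actions2}.

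The key computation is the following. The map $g$ classifies a degree-$an$ topological Azumaya algebra over $X$ (viewed equivariantly), hence by Remark~\ref{RM:generators-for-H-two}, $g^*(\alpha_{an}) \in \Hoh^3(X,\ZZ)$ is its Brauer class, where $\alpha_{an}$ generates $\Hoh^3(B\PGL_{an},\ZZ) \iso \ZZ/(an)$. By Proposition~\ref{pr:C2cohoPGLn}, the $C_2$-action sends $\alpha_{an} \mapsto -\alpha_{an}$; since $g$ is equivariant, $g^*(\alpha_{an})$ is an anti-invariant class in $\Hoh^3(X,\ZZ)$. But by the $6$-equivalence and Table~\ref{tab:actions2}, $\Hoh^3(X,\ZZ) \iso \alpha\ZZ/n$ with the action $\alpha \mapsto -\alpha$, so \emph{every} class is anti-invariant and this imposes no constraint directly. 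The real constraint comes from the reduction mod $n$: the class $\alpha_{an}$ has a mod-$n$ reduction (or rather, $n\cdot\alpha_{an}$ reduces the order), and more to the point the tautological class $\bar\alpha_{an} \in \Hoh^2(B\PGL_{an},\ZZ/(an))$ pulls back under $g$ to a class in $\Hoh^2(X,\ZZ/n)$; since $a$ is odd and $n$ is even, I would track how $g^*$ interacts with the comparison maps to $B^2\mu_{an}$ and $B^2\mu_n$. The surjectivity of $g$ on $\Hoh^3(\,\cdot\,,\ZZ)$ forces $g^*$ to hit a generator of $\alpha\ZZ/n \subseteq \Hoh^3(X,\ZZ)$, and transporting this to $X \iso_{\le 5} BQ_n$, the Brauer class is a generator of $\Hoh^3(BQ_n,\ZZ) \iso \ZZ/n$, hence its mod-$n$ lift $\Hoh^2(BQ_n,\ZZ/n) \iso \ZZ/n$ is also a generator.

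Now I invoke the transfer. The transfer map $\transf : \Hoh^2(X,\ZZ/n) \to \Hoh^2(X/C_2,\ZZ/n)$ applied to $g^*(\bar\alpha_{an})$ must vanish: indeed, $\bar\alpha_{an}$ is represented by the $C_2$-equivariant map $B\PGL_{an} \to B^2\mu_{an}$ arising from the defining short exact sequence (with $\mu_{an}$ carrying the standard involution $z \mapsto z^{-1}$), so $g^*(\bar\alpha_{an})$ lies in the image of $[X, B^2\mu_n]_{C_2} \to \Hoh^2(X,\ZZ/n)$ — using that the order-$n$ part is what survives — and Proposition~\ref{pr:whenCores0} gives $\transf(g^*(\bar\alpha_{an})) = 0$. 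On the other hand, transporting along the $6$-equivalence $f$, the same class corresponds to a generator $\beta$ of $\Hoh^2(BQ_n,\ZZ/n) \iso \ZZ/n$ pulled back to $X$, and its transfer is $0$ by Proposition~\ref{pr:coresQ0} — but this is consistent, so the contradiction must instead come from comparing the \emph{$\SL$-side} of the diagram. The hard part, and the step I expect to be the main obstacle, is pinning down precisely which characteristic-class identity is violated: one must use the fourth-cohomology data of Tables~\ref{tab:actions} and~\ref{tab:actions2}, together with the comparison $\Hoh^4(BQ_n,\ZZ) \to \Hoh^4(B\SL_n,\ZZ)$ sending $ac_2' + bc_2'' \mapsto \frac{a+b}{\epsilon n}\tilde c_2$, to show that a lift of the generating Brauer class of $BQ_n$ to a degree-$an$ $\PGL$-bundle would force $\tilde c_2$ (a non-torsion class in $\Hoh^4$) to satisfy an equivariance relation incompatible with the action $ac_2' + bc_2'' \mapsto bc_2' + ac_2''$ being the $C_2$-action on $\Hoh^4(BQ_n,\ZZ)$, once one notes that $a$ odd and $n$ even makes $\frac{a+b}{\epsilon n}$ fail the necessary parity to be $C_2$-equivariantly realized by a single $\PGL_{an}$. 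Making this last divisibility-and-parity bookkeeping precise — and checking it is genuinely obstructed rather than merely constrained — is where the proof will require care; everything else is formal manipulation of the two Serre spectral sequence comparisons already set up in Propositions~\ref{pr:C2cohoPGLn} and~\ref{pr:C2cohoQ}.
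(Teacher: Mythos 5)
You have correctly located the obstruction in degree $4$ and correctly intuited that it involves the interchange action on $c_2', c_2''$ combined with the parity of $n$, but you have not actually produced the contradiction, and you acknowledge as much in your final sentence. The paper's argument at this point is not merely ``divisibility bookkeeping'': it requires a specific geometric set-up that is missing from your sketch. Concretely, the proof first packages $g$ into a homotopy-commutative square
\[
\xymatrix{ X \ar[r]\ar[d]_g & B^2\mu_n \ar[d]^s \\ B\PGL_{an} \ar[r] & B^2\mu_{an}}
\]
(where $s$ encodes both the inclusion $\mu_n\hookrightarrow\mu_{an}$ and a twist by $t$-th power, $t$ being the integer relating the two degree-$3$ classes), extends it to a map of $C_2$-equivariant fibre sequences with fibres $F\simeq_{\le 5}B(\SL_n\times\SL_n)$ and $B\SL_{an}$, and then compares the two Serre spectral sequences. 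Your proposal never builds this diagram, so there is no mechanism for comparing $d_5$-differentials.

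The actual contradiction then runs as follows and is \emph{not} the parity argument you gesture toward. Because $g$ is $C_2$-equivariant and $C_2$ acts trivially on $\tilde c_2\ZZ=\Hoh^4(B\SL_{an},\ZZ)$ while it swaps $c_2'$ and $c_2''$ in $\Hoh^4(B\SL_n\times B\SL_n,\ZZ)$, the induced map on $\Eoh_2^{0,4}$ must land in the $C_2$-fixed subgroup, i.e.\ $g^*(c_2)=m c_2' + m c_2''$ for some integer $m$. Since $d_5(c_2')=d_5(c_2'')$ is a generator of $\kappa\ZZ/(2n)$, one gets $d_5(g^*(c_2))=2m\cdot(\text{generator})$, which is never a generator of $\ZZ/(2n)$ when $n$ is even. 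On the other hand, $g^*(d_5(c_2))$ is a generator of $\ZZ/(2n)$ because $d_5$ for $B\PGL_{an}$ surjects onto $\kappa\ZZ/(2an)$ and the horizontal comparison $\ZZ/(2an)\to\ZZ/(2n)$ is onto. Equivariance of the spectral sequence map forces these two to coincide, which is the contradiction. The symmetrization step $g^*(c_2)=m(c_2'+c_2'')$ and the observation that $d_5$ then multiplies by an even integer are the heart of the proof, and both are absent from your write-up.

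Separately, the detour through Proposition~\ref{pr:coresQ0} and the transfer map is not part of this proposition's proof (those results are invoked later, in the proof of Theorem~\ref{th:mainCounterexample}); it adds length without advancing the argument, and you correctly notice mid-paragraph that it produces no contradiction. I would excise it entirely and redirect that effort toward constructing the fibre-sequence comparison and the $d_5$-computation above.
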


\begin{proof}
	For the sake of contradiction, suppose that $g$ exists.
	
	By Remark~\ref{RM:generators-for-H-two},
	the comoposition 
	\[ X\to BQ_n\to B^2\mu_n\to B^2\units{\C}=B^3\Z,\]
	induced by $f$ and the inclusion $\mu_n \to \units{\C}$,
	represents a generator $\xi$ of $\Hoh^3(X,\Z)=\Hoh^3(B Q_n,\Z)=\Z/n$.
	As a result, there is $t\in\Z$ such that the composition
	\[X	\xrightarrow{g} B\PGL_{an}\to B^2\mu_{an} \to B^2\units{\C}=B^3\Z\]
	represents $t\cdot \xi$. Consequently,	
	the map $X\to B Q_n\to B^2\mu_n$ fits into 
 a homotopy-commutative square 
 \[ \xymatrix{ X \ar[r] \ar[d]^g & B^2 \mu_n \ar[d]^s \\ B\PGL_{an} \ar[r]
 & B^2 \mu_{an} } \]
 in which $s$ is the composition of $B^2\mu_n\to B^2\mu_{an}$ and
 the map $B^2\mu_{an}\to B^2\mu_{an}$ induced by $x\mapsto x^t:\mu_{an}\to \mu_{an}$.
 We extend this square into a homotopy commutative diagram 
\[ \xymatrix{ F \ar[r] \ar@{-->}[d] & X \ar[r] \ar[d]^g & B^2 \mu_n \ar[d]^s \\ B\SL_{an} \ar[r] & B\PGL_{an} \ar[r]
 & B^2 \mu_{an} } \] where both rows are homotopy fibre sequences, so $F$ is the homotopy fibre of $X \to B^2
\mu_n$. Strictly speaking, we carry this out in the (fine) $C_2$-equivariant model structure on topological spaces, using the dual
of \cite[Prop.~6.3.5]{hovey_model_1999} to deduce the existence of the dashed arrow in that category, so that it may be assumed to be
$C_2$-equivariant. Moreover, the space $F$ appearing in this argument has the appropriate non-equivariant homotopy type, since the
functor forgetting the $C_2$-action is a right Quillen functor, and therefore preserves fibre sequences.

Each of the two fibre sequences is associated to a Serre spectral sequence in cohomology. In the case of the lower row,
the $\Eoh_2$-page is represented in Figure \ref{fig:SSS0}, whereas in the case of the upper row, since $X$ is
$6$-equivalent to $B Q_n$, it is isomorphic on the $\Eoh_2$-page to the spectral sequence represented in Figure
\ref{fig:SSS1}. There is an induced map between these spectral sequences, and this map restricts to the following on the
$\Eoh_2^{\ast, 0}$-line:
\[ \xymatrix{ \ZZ \ar@{=}[d] & 0 & 0 & \ZZ/{an} \ar@{->>}[d] & 0 & \kappa\ZZ/(2an) \ar@{-->>}[d] & \dots \\ \ZZ & 0 & 0 & \ZZ/n
 & 0 & \kappa\ZZ/(2n) & \dots }. \]
We know the map on $\Hoh^5(\,\cdot\,, \ZZ)$ is surjective, because in each case, the group is generated by a class $\kappa$
for which $2 \kappa$ is $\beta( \iota^2)$, obtained by taking the canonical class $\iota$ in $\Hoh^2(B^2 \mu_n, \ZZ/n)$, resp.\
$\Hoh^2(B^2 \mu_{an}, \ZZ/an)$, squaring it, and applying the Bockstein map with image $\Hoh^5(\cdot,
\ZZ)$.
This may be deduced from \cite{cartan_determination_1954}, or from the Serre spectral sequence associated to the
path-loop fibration $B\mu_n \to \ast \to B^2 \mu_n$.

Since the map $g^*$ of spectral sequences is compatible with the $C_2$-action, 
it induces the following commutative square
\[
\xymatrix{
c_2 \Z \ar[r]^{d_5} \ar[d]^{g^*} &
\kappa \Z/(2an) \ar[d]^{g^*} \\
c'_2\Z\oplus c''_2\Z \ar[r]^{d_5} &
\kappa \Z/(2n).}
\] 
in which all arrows are $C_2$-equivariant.
Furthermore, the proofs of Propositions~\ref{pr:C2cohoPGLn} and~\ref{pr:C2cohoQ}
imply that both horizontal maps are surjective, that $C_2$
acts trivially on $c_2\Z$, $\kappa \Z/(2an)$ and $\kappa \Z/(2n)$,
and that
the non-trivial
element of $C_2$ interchanges $c'_2$ and $c''_2$. 
Now, $g^*(c_2)$ lies in
the $C_2$-fixed subgroup of $ c_2'\ZZ \oplus c_2''\ZZ$, which is to say $g^*(c_2) = m c_2'+m c_2''$ for some integer 
$m$. Then $d_5(g^*(c_2))$ is $2m$ times a generator of $\kappa\ZZ/(2n)$, and
hence not a generator of $\kappa \ZZ/(2n)$. On the other hand, $g^*(d_5(c_2))$ is a generator of
$\ZZ/(2n)$ by the previous paragraph, a contradiction. 
\end{proof}

\subsection{An Algebraic Counterexample}

\label{sec:algcoex}

In this section, we consider complex algebraic varieties.
In particular, all algebraic groups are complex algebraic groups.
Cohomology is understood to be \'etale cohomology in the context of varieties
and singular cohomology in the context of topological spaces.

A $C_2$-action on a variety $X$ will be called \emph{free} if there exists
a $C_2$-torsor $X\to Y$. In this case, $Y$ coincides with categorical quotient $X/C_2$
in the category
of varieties. Furthermore, if $C_2$ acts freely on $X$, then it also acts freely on $X(\C)$.
The converse holds when $X$ is affine or projective, see
Example~\ref{EX:important-exact-quo-scheme} and Proposition~\ref{PR:ramification-in-schemes}, 
but not in general.

\medskip

Fix an even positive integer $n$. We define the complex algebraic group $Q_n$ by means of the short exact sequence
\[ 1 \to \mu_n \xrightarrow{x\mapsto (x,x)} \SL_n \times \SL_n \to Q_n \to 1 \] 
so that $Q_n(\C)$ is the group $Q_n$ considered in the previous subsection.
There is a natural map $\Hoh^1 (\,\cdot\,, Q_n) \to
\Hoh^2(\,\cdot\,,\mu_n)$. Composing with the map $\Hoh^2(\,\cdot\,,\mu_n)\to \Hoh^2(\,\cdot\,,\Gm)$
induced by the inclusion $\mu_n\to\Gm$ allows us
to associate with every $Q_n$-torsor $P\to X$ an $n$-torsion class in
$\Hoh^2(X,\Gm)$. This association is natural, and is, in particular,
compatible with complex realization.

The first projection $\pi_1:\SL_n\times \SL_n\to \SL_n$ induces
a group homomorphism $\pi_1:Q_n\to \PGL_n$ (it is not $C_2$-equivariant). Using
this map,
we associate to every $Q_n$-torsor $P$ a $\PGL_n$-torsor, namely, $P\times^{Q_n} \PGL_n$.

\begin{lemma}
	With the previous notation, let $P\to X$ be a $Q_n$-torsor,
	let $\alpha$ be its associated class in $\Hoh^2(X,\Gm)$, and let
	$S\to X$ be its associated $\PGL_n$-torsor.
	Then $\alpha$ is the image of $S$ under the canonical map $\Hoh^1(X,\PGL_n)\to \Hoh^2(X,\Gm)$.
	In particular, $\alpha\in \Br(X)$.
\end{lemma}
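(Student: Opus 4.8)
<br>

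The plan is to trace the two maps $\Hoh^1(X,Q_n)\to\Hoh^2(X,\Gm)$ through a diagram of central extensions and identify them. First I would set up the commutative diagram of groups with central $\mu_n$ (or $\Gm$) on top. On one side, we have the extension $1\to\mu_n\to\SL_n\times\SL_n\to Q_n\to 1$ which defines $\alpha$ as the image of $P$ under the associated connecting map $\delta_Q:\Hoh^1(X,Q_n)\to\Hoh^2(X,\mu_n)$, followed by $\Hoh^2(X,\mu_n)\to\Hoh^2(X,\Gm)$ induced by $\mu_n\into\Gm$. On the other side, the $\PGL_n$-torsor $S=P\times^{Q_n}\PGL_n$ is classified by $(\pi_1)_*P\in\Hoh^1(X,\PGL_n)$, and its image in $\Hoh^2(X,\Gm)$ is the connecting map $\delta_{\PGL}$ associated to $1\to\Gm\to\GL_n\to\PGL_n\to 1$.

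The key step is to produce a morphism of central extensions linking these. The relevant middle column is the pushout of $1\to\mu_n\to\SL_n\times\SL_n\to Q_n\to 1$ along $\mu_n\into\Gm$, call it $1\to\Gm\to E\to Q_n\to 1$; by naturality of connecting maps, the composite $\Hoh^1(X,Q_n)\xrightarrow{\delta_Q}\Hoh^2(X,\mu_n)\to\Hoh^2(X,\Gm)$ equals the connecting map $\delta_E$ for this pushed-out extension. So it remains to exhibit a map of central $\Gm$-extensions from $1\to\Gm\to E\to Q_n\to 1$ to $1\to\Gm\to\GL_n\to\PGL_n\to 1$ covering $\pi_1:Q_n\to\PGL_n$ and the identity on $\Gm$. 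Concretely, I would define $E\to\GL_n$ using the first factor: an element of $E$ is represented by a pair $(A,B)\in\SL_n\times\SL_n$ together with a $\Gm$-coordinate, modulo the diagonal $\mu_n$; map it to a scalar multiple of $A$ in $\GL_n$. One checks this is well-defined, compatible with the $\mu_n\into\Gm$ inclusions (i.e.\ restricts to the identity on $\Gm$), and covers $\pi_1$ on $Q_n$. Once this morphism of extensions is in place, naturality of the connecting homomorphism gives $\delta_E(P)=\delta_{\PGL}((\pi_1)_*P)$, i.e.\ $\alpha$ is the image of $S$ under $\Hoh^1(X,\PGL_n)\to\Hoh^2(X,\Gm)$.

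Finally, since $S$ is the $\PGL_n$-torsor associated to the Azumaya algebra $P\times^{Q_n}\Mat_{n\times n}(\sh O_X)$ (via Proposition~\ref{PR:equiv-Az-PGLnR}), its image in $\Hoh^2(X,\Gm)$ is by definition the Brauer class of that algebra, which lies in $\Br(X)$ by the discussion in Subsection~\ref{subsec:Az-algs}; combined with the previous paragraph this gives $\alpha\in\Br(X)$. I expect the main obstacle to be the careful bookkeeping in constructing the morphism of central extensions $E\to\GL_n$ — verifying it is a well-defined group homomorphism on the quotient $E=(\SL_n\times\SL_n)\times^{\mu_n}\Gm$ and that all squares commute — rather than anything conceptually deep; everything else is formal naturality of connecting maps in non-abelian cohomology, for which Proposition~\ref{PR:non-ab-coh-basic-prop}\ref{item:PR:non-ab-coh-basic-prop:longer-ex-seq} already provides the needed functoriality.
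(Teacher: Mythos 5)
Your proof is correct and follows essentially the same route as the paper's. The paper simply writes down the morphism of short exact sequences
\[
\xymatrix{
\mu_n \ar[r]\ar@{^(->}[d] &
\SL_n\times \SL_n \ar[r]\ar[d]^{\pi_1} &
Q_n \ar[d]^{\pi_1} \\
\Gm \ar[r] &
\GL_n \ar[r] &
\PGL_n
}
\]
directly, with the middle vertical arrow being the first projection $\SL_n\times\SL_n\to\SL_n\hookrightarrow\GL_n$, and then invokes the functoriality of the long exact sequences from Proposition~\ref{PR:non-ab-coh-basic-prop}. Your version factors this morphism through the pushout extension $E=(\SL_n\times\SL_n)\times^{\mu_n}\Gm$ before mapping to $\GL_n$; this is a slightly longer but equivalent bookkeeping of the same construction (and your verification that $(A,B,t)\mapsto tA$ is well-defined, a homomorphism, and covers $\pi_1$ is correct). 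The intermediate pushout is not needed — there is no obstacle in passing directly from $\SL_n\times\SL_n$ to $\GL_n$ — but it does no harm.
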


\begin{proof}
	This follows by considering the following 
	morphism of short exact sequences
	and the induced morphism between the associated cohomology
	exact sequences.
	\[
	\xymatrix{
	\mu_n \ar[r]\ar@{^(->}[d] &
	\SL_n\times \SL_n \ar[r]\ar[d]^{\pi_1} &
	Q_n \ar[d]^{\pi_1} \\
	\Gm \ar[r] &
	\GL_n \ar[r] &
	\PGL_n
	} 
	\]
	Note that the vertical maps are not necessarily $C_2$-equivariant.
\end{proof}

\begin{proposition}\label{PR:Q-n-torsor}
 Maintaining the previous notation, there exists a smooth affine complex variety $X$ with free $C_2$-action,
	a $Q_n$-torsor $P\to X$ and a map $f:X(\CC)\to B Q_n(\C)$
	such that the following hold:
	\begin{enumerate}[label=(\roman*)]
		\item \label{item:PR:P-f-equivariant} The map $f:X(\CC)\to B Q_n(\C)$ is $C_2$-equivariant and a $6$-equivalence.
		\item \label{item:PR:P-f-corresponds} The homotopy class of $f$ corresponds to the principal $Q_n(\C)$-bundle
		$P(\CC)\to X(\CC)$.
		\item \label{item:PR:P-f-transfer} The Brauer class $\alpha\in \Br(X)\subseteq \Hoh^2(X,\Gm)$ associated with
		$P\to X$ has trivial image under 
		$\transf:\Hoh^2(X,\Gm)\to \Hoh^2(X/C_2,\Gm)$.
	\end{enumerate}	
\end{proposition}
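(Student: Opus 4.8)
The construction is an equivariant Godeaux--Serre type approximation. The plan is to build $X$ as a level of an equivariant approximation to $E Q_n(\C) / Q_n(\C)$ carried out inside algebraic geometry, equipped with a free $C_2$-action. First I would fix a faithful complex representation $Q_n \hookrightarrow \GL(V)$ and let $C_2$ act on $Q_n$ via the involution of the previous subsection. I would choose a large-dimensional complex vector space $W$ with a $C_2$-action that is a sum of many copies of the regular representation, and let $U \subseteq W$ be the $C_2$-invariant Zariski-open locus on which both $Q_n$ and $C_2$ act freely (removing the locus of non-free $Q_n$-orbits, which has high codimension because $W$ contains many copies of $V$, and intersecting with the $C_2$-free locus, which is the complement of the fixed subspace and is also high-codimension). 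Then $X := U/Q_n$ is a smooth quasi-projective complex variety, $P := U \to X$ is a $Q_n$-torsor, and the residual $C_2$-action on $U$ descends to a free $C_2$-action on $X$ (freeness of the $C_2$-action on $X$ follows because the $C_2$-action on $U$ commutes with $Q_n$ and is free, and because $Q_n$ is connected so the two actions generate a free action of $Q_n \rtimes C_2$). One can arrange $X$ to be affine by a standard trick: replace $U$ by an affine $C_2$- and $Q_n$-invariant open subscheme whose complement still has large codimension, or invoke Jouanolou's device equivariantly; the codimension bounds guarantee the approximation degree is as large as we like.

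Next I would verify \ref{item:PR:P-f-equivariant} and \ref{item:PR:P-f-corresponds}. The complex points $X(\C) = U(\C)/Q_n(\C)$ form, as the dimension of $W$ grows, a finite-dimensional approximation to the Borel-type construction for $Q_n(\C)$; by the usual comparison (the complement removed from $W(\C)$ has real codimension growing with $\dim W$), $X(\C)$ is $N$-connected-to-$BQ_n(\C)$ with $N$ arbitrarily large, in particular a $6$-equivalence once $\dim W$ is chosen large enough. Moreover the classifying map $f : X(\C) \to BQ_n(\C)$ for the principal $Q_n(\C)$-bundle $P(\C) \to X(\C)$ can be taken $C_2$-equivariant: the $C_2$-action on $U(\C)$ and on $Q_n(\C)$ are compatible in the sense of Definition~\ref{def:twistedGbundle}, so $P(\C) \to X(\C)$ is a principal $Q_n(\C)$-bundle with involution in that sense, and by Proposition~\ref{prop:existUnivCGbundle} (or, since the $C_2$-action on $X(\C)$ is free, by Proposition~\ref{prop:freeCase}) its classifying map into $B_{C_2}Q_n(\C) \simeq BQ_n(\C)$ is $C_2$-equivariant up to equivariant homotopy. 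This gives \ref{item:PR:P-f-corresponds}, and combined with the connectivity estimate, \ref{item:PR:P-f-equivariant}.

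Finally, for \ref{item:PR:P-f-transfer}, recall that the Brauer class $\alpha \in \Br(X)$ associated to $P \to X$ is, by the preceding lemma, the image under $\Hoh^1(X, \PGL_n) \to \Hoh^2(X, \Gm)$ of the $\PGL_n$-torsor $P \times^{Q_n} \PGL_n$, and equivalently it is the $n$-torsion class obtained by pushing the tautological $\mu_n$-gerbe of the sequence $1 \to \mu_n \to \SL_n \times \SL_n \to Q_n \to 1$ into $\Gm$. Under complex realization this class corresponds to the composite $X(\C) \xrightarrow{f} BQ_n(\C) \to B^2\mu_n \to B^2\C^\times = B^3\Z$, i.e.\ to the image of a generator of $\Hoh^2(BQ_n(\C), \Z/n)$, exactly as in Remark~\ref{RM:generators-for-H-two}. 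Since the comparison of \'etale and singular cohomology (for the smooth complex variety $X$, with $\mu_n$ and $\Gm$ coefficients) intertwines $\transf_{X/(X/C_2)}$ on \'etale cohomology with the topological transfer for the double cover $X(\C) \to X(\C)/C_2$ --- this is the content of Example~\ref{EX:transfer-unramified-schemes} together with the identification of the topological transfer in Example~\ref{EX:topological-free-transfer} and equation \eqref{eq:6a} --- it suffices to show the topological transfer of $f^*(\text{generator})$ vanishes. But the generating class $BQ_n(\C) \to B^2\mu_n$ is $C_2$-equivariant for the standard involution on $\mu_n$ and hence on $B^2\mu_n$, so $f^*$ of it lies in the image of $[X(\C), B^2\mu_n]_{C_2} \to \Hoh^2(X(\C), \mu_n)$; since the $C_2$-action on $X(\C)$ is free, Proposition~\ref{pr:whenCores0} (equivalently Proposition~\ref{pr:coresQ0}) gives that its transfer is $0$ in $\Hoh^2(X(\C)/C_2, \mu_n)$, hence in $\Hoh^2(X(\C)/C_2, \C^\times)$, hence $\transf(\alpha) = 0$ in $\Hoh^2(X/C_2, \Gm)$.

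\textbf{Main obstacle.} The technically delicate point is arranging all of the following simultaneously for the \emph{same} variety $X$: that $X$ be affine and smooth, that the $C_2$-action on $X$ be free in the scheme-theoretic sense (so that $X \to X/C_2$ is an honest $C_2$-torsor and Example~\ref{EX:important-exact-quo-scheme} applies), and that $X(\C) \to BQ_n(\C)$ be a $6$-equivalence with the $C_2$-equivariance built in from the start rather than only up to non-equivariant homotopy. I expect the affineness together with the high-codimension requirement to be the fussiest part, handled either by an explicit choice of invariant affine open in a representation variety with control on the codimension of the removed locus, or by an equivariant version of Jouanolou's trick applied to a projective Godeaux--Serre approximation; the compatibility of the algebraically-defined transfer with the topological one is routine given Examples~\ref{EX:transfer-unramified-schemes} and~\ref{EX:topological-free-transfer} but should be stated carefully.
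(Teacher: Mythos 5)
Your overall strategy---a Totaro-style algebraic approximation to $BQ_n$, equivariant Jouanolou to arrange affineness, and the transfer computation via complex realization combined with Proposition~\ref{pr:coresQ0}---is the same as the paper's. Parts \ref{item:PR:P-f-equivariant}, \ref{item:PR:P-f-corresponds} and \ref{item:PR:P-f-transfer} are handled essentially as in the paper. However, there is a genuine gap in your construction of $X$ with free $C_2$-action.

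You propose to take $U \subseteq W$ to be the locus where $Q_n$ acts freely, intersected with the complement of the $\sigma$-fixed subspace (where $\sigma$ is the nontrivial element of $C_2$), and then claim that ``$Q_n$ is connected so the two actions generate a free action of $Q_n \rtimes C_2$.'' This does not follow. Writing $\Gamma = Q_n \rtimes C_2$, the $C_2$-action on $X = U/Q_n$ is free precisely when no element of the coset $Q_n\sigma \subset \Gamma$ has a fixed point in $U$: if $q\sigma \cdot u = u$ for some $q \in Q_n$, then $\sigma$ fixes the image of $u$ in $X$. Removing the $Q_n$-non-free locus and the $\sigma$-fixed locus only controls the fixed loci of elements of $Q_n$ and of $\sigma$ itself; it says nothing about $q\sigma$ for $q \neq 1$, and indeed such elements can (and generically will) have positive-dimensional fixed subspaces even when the two individual actions are free. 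Connectedness of $Q_n$ is of no help here. (As a side remark, the assertion that ``the $C_2$-action on $U$ commutes with $Q_n$'' is also incorrect: $C_2$ normalizes $Q_n$ inside $\Gamma$ via the nontrivial outer involution; it does not centralize it.)

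The clean fix is the one the paper uses: observe that $\Gamma = Q_n \rtimes C_2$ is itself an affine algebraic group (Molnar), and apply Totaro's construction directly to $\Gamma$ to obtain an affine space $V$ with a $\Gamma$-action and a $\Gamma$-invariant closed locus $Z$ of arbitrarily large codimension such that $V - Z$ is a $\Gamma$-torsor over $(V-Z)/\Gamma$. Then $P := V - Z \to X := (V-Z)/Q_n$ is a $Q_n$-torsor, and the residual $C_2$-action on $X$ is automatically free because $\Gamma$ acts freely on $V - Z$. This replaces your two separate codimension requirements with a single one that genuinely controls the entire coset $Q_n\sigma$. After that, the equivariant Jouanolou device (\cite{hoyois_six_2017}) makes $X$ smooth affine, exactly as you anticipate, and your verification of \ref{item:PR:P-f-equivariant}--\ref{item:PR:P-f-transfer} goes through unchanged.
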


	For later reference, and in keeping with the previous parts of this paper, we denote $X/C_2$ by $Y$.

\begin{proof}
As in Subsection~\ref{subsec:coh-counterex}, the group $Q_n$ is an affine algebraic group equipped with an algebraic $C_2$-action. Consequently, the split exact
extension $\Gamma$ in
\[ 1 \to Q_n \to \Gamma \to C_2 \to 1 \] is also an affine algebraic group, \cite{molnar_semi-direct_1977}*{Ex.~2.15
(c)}.

Therefore it is possible to follow \cite{totaro_chow_1999} and construct affine spaces $V$ on
which $\Gamma$ acts and such that 
$V$ becomes a $\Gamma$-torsor
after removing a locus, $Z$, of arbitrarily large codimension. 
In particular, $V-Z$ is a $Q_n$-torsor. 
Choose $V$ so that $Z$ has (complex) codimension at
least $4$.

Let $P=(V-Z)$ and $X=(V-Z)/Q_n$, and note
that both $P$ and $X$ carry $C_2$-actions.
The $C_2$-action on $X$ is free since
$V-Z$ is a $\Gamma$-torsor.
Moreover, one checks directly that $P(\C)\to X(\C)$
is a principal $Q_n(\C)$-bundle with involution, see Definition~\ref{def:twistedGbundle}.
Since $C_2$ acts freely on $X(\C)$, 
Proposition~\ref{prop:freeCase} implies that
 this principal bundle is represented by a 
map $f:X(\C)\to B Q_n(\C)$, which satisfies
conditions \ref{item:PR:P-f-equivariant} and \ref{item:PR:P-f-corresponds}.

By means of the equivariant Jouanolou device, \cite{hoyois_six_2017}*{Prop.~2.20}, we may assume that $X$ is a smooth affine
variety with these properties.\uriyaf{Can we say something about the dimension using Lefschez's theorem
despite the $C_2$-action?}

Let $\alpha\in \Br(X)$ denote the Brauer class associated with $P\to X$.
It remains to show that $\transf_{X/Y}(\alpha)=0$ in $\Br(Y)$, where $Y=X/C_2$.

To that end, let $\xi$ be the image of $P$ under $\Hoh^1(X,
Q) \to \Hoh^2(X, \mu_n)$, and similarly define $\xi(\C)$ as the image of $P(\C)$ under the analogous map in singular
cohomology. It is enough to check that $\transf(\xi)\in \Hoh^2(Y,\mu_n)$ vanishes.
There is a commutative diagram
\begin{equation*}
 \xymatrix@R10pt@C15pt{ & \Hoh_\et^2(X, \mu_n) \ar^{\transf}[rr] \ar'[d][dd] \ar_-{\iso}[dl] && \Hoh_\et(Y, \mu_n) \ar[dd]
\ar_-{\iso}[dl] \\ \Hoh^2(X(\CC), \ZZ/n) \ar[rr] \ar[dd] && \Hoh^2(Y(\C), \ZZ/n) \ar[dd] & \\ & \Hoh_\et^2(X, \Gm)
\ar'[r][rr] \ar[ld] && \Hoh_\et^2(Y,\Gm) \ar[ld]\\ \Hoh^3(X(\CC), \ZZ) \ar[rr] && \Hoh^3(Y(\CC),\ZZ) & }
\end{equation*} where each map from left to right is a transfer map, each map from back to front is a
complex-realization map, and the maps from top to bottom are induced by the inclusion $\mu_n \to \Gm$. The two indicated
maps are isomorphism by Artin's theorem. We also remark that $\Hoh^2(\,\cdot\,, \CC^\times) \cong \Hoh^3(\,\cdot\,, \ZZ)$, where
the first group is understood as sheaf cohomology with coefficients in the sheaf of nonvanishing continuous
complex-valued functions.
Now, the transfer of $\xi(\C) \in \Hoh^2(X(\C), \ZZ/n)$ is easily seen to be $0$ by comparison with $\Hoh^2(B Q_n \times
EC_2, \ZZ/n)$, where it is known to vanish by Proposition \ref{pr:coresQ0}.
This completes the proof.
\end{proof}

\begin{theorem} \label{th:mainCounterexample} 
For any even integer $n$, there exists a quadratic \'etale map $X \to Y $ of smooth affine complex varieties and an Azumaya algebra $A$ of degree $n$ over $X$ such that:
\begin{enumerate}[label=(\roman*)]
	\item The period and index of $\alpha=[A]$ are both $n$.
	\item $ \transf_{X/Y}(\alpha)=0$ in $\Br(Y)$.
	\item The degree of any Azumaya algebra Brauer equivalent to $A$ and admitting a $\lambda$-involution is
 divisible by $2n$---here $\lambda$ denotes the non-trivial involution of $X$ over $Y$.
\end{enumerate} 
\end{theorem}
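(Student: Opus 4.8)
The plan is to produce the required example by applying Proposition~\ref{PR:Q-n-torsor} together with the obstruction result Proposition~\ref{pr:Obstruction}, and then realizing everything algebraically. First I would invoke Proposition~\ref{PR:Q-n-torsor} to obtain a smooth affine complex variety $X$ with free $C_2$-action, a $Q_n$-torsor $P\to X$, and a $C_2$-equivariant $6$-equivalence $f:X(\CC)\to BQ_n(\C)$, together with the fact that the associated Brauer class $\alpha\in\Br(X)$ has $\transf_{X/Y}(\alpha)=0$, where $Y=X/C_2$. Since the $C_2$-action on $X$ is free and $X$ is affine, the quotient map $\pi:X\to Y$ is a $C_2$-torsor, hence quadratic \'etale, and $\lambda$ is the nontrivial $Y$-involution of $X$. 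Define $A:=P\times^{Q_n}\nMat{\calO_X}{n\times n}$, the Azumaya $\calO_X$-algebra of degree $n$ associated to the $\PGL_n$-torsor $P\times^{Q_n}\PGL_n$ via $\pi_1:Q_n\to\PGL_n$; by the lemma preceding Proposition~\ref{PR:Q-n-torsor}, its Brauer class is exactly $\alpha$. Properties (i) and (ii) are then essentially immediate: (ii) is part of Proposition~\ref{PR:Q-n-torsor}\ref{item:PR:P-f-transfer}, and for (i) one computes period and index of $\alpha$ using the topological invariants via complex realization. Specifically, by Remark~\ref{RM:generators-for-H-two} and the calculation of $\Hoh^3(BQ_n,\ZZ)\cong\ZZ/n$ in Proposition~\ref{pr:C2cohoQ}, the realization $\alpha(\CC)\in\Hoh^3(X(\CC),\ZZ)$ is a generator of a $\ZZ/n$, so the period of $\alpha$ is exactly $n$; since $\alpha$ is represented by a degree-$n$ Azumaya algebra its index divides $n$, and the index is divisible by the period, so the index is also $n$.

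The heart of the argument is (iii). Suppose, for contradiction, that $A'$ is an Azumaya $\calO_X$-algebra Brauer equivalent to $A$, of degree $m$, admitting a $\lambda$-involution $\tau$; I want to show $2n\mid m$. Since $\pi$ is unramified, Theorem~\ref{TH:basic-properties-of-types}\ref{item:TH:basic-properties-of-types:unram} shows the involution is unitary, and by Proposition~\ref{pr:freeClass} the pair $(A',\tau)$ corresponds to a $C_2$-equivariant map $g:X(\CC)\to B\PGL_m(\C)$ (using that the $C_2$-action on $X(\CC)$ is free, so the fine and coarse equivariant homotopy categories agree, and that a topological Azumaya algebra with involution over a free $C_2$-space is the same as a principal $\PGL_m$-bundle with involution). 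The Brauer class of $A'$ equals $\alpha$, and by Remark~\ref{RM:generators-for-H-two} the composite $X(\CC)\xrightarrow{g}B\PGL_m\to B^2\mu_m\to B^3\ZZ$ represents $\alpha(\CC)$, a generator of $\Hoh^3(X(\CC),\ZZ)\cong\ZZ/n$. In particular $g^*:\Hoh^3(B\PGL_m,\ZZ)\to\Hoh^3(X(\CC),\ZZ)$ must hit a generator, i.e.\ $g$ induces a surjection on $\Hoh^3(\,\cdot\,,\ZZ)$. Writing $m=an\cdot(\text{stuff})$ — more precisely, since $n$ is the period of $\alpha$ we have $n\mid m$, so $m=an$ for some positive integer $a$ — I then wish to apply Proposition~\ref{pr:Obstruction}, which forbids exactly such a $C_2$-equivariant $g:X(\CC)\to B\PGL_{an}$ when $n$ is even and $a$ is odd, given the $6$-equivalence $f:X(\CC)\to BQ_n(\C)$.

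The remaining point, and the one I expect to be the main obstacle to state cleanly, is reducing the general $m$ to the case handled by Proposition~\ref{pr:Obstruction}. We know $n\mid m$, say $m=an$. If $a$ is odd we get an immediate contradiction from Proposition~\ref{pr:Obstruction}. If $a$ is even then $2n\mid m$ and there is nothing to prove. So the dichotomy is: either $2n\mid m$, or $m=an$ with $a$ odd, and the latter is impossible; hence $2n\mid m$, which is (iii). I should double-check that the map $g$ really lands in $B\PGL_{an}$ with $a$ the correct parity — the subtlety is that $m$ is the degree of $A'$, $\alpha$ has period $n$, index $n$, so $n\mid m$ automatically, but I must make sure no extra factor sneaks in; this is handled by the period/index computation above. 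A secondary point to verify carefully is that complex realization is compatible with the equivariant structures throughout — that the algebraic $C_2$-action on $X$ realizes to the topological one and that $g$, a priori only an algebraic/topological datum over $X$, realizes to a genuine $C_2$-equivariant map — but this is exactly the content of Example~\ref{ex:complexRealization} and Proposition~\ref{pr:freeClass}, so it should go through routinely. Finally I would assemble (i), (ii), (iii) into the statement of Theorem~\ref{th:mainCounterexample}, noting that the lower bound $2n$ is attained by Theorem~\ref{TH:Saltman-ramified} (or \cite[Th.~4.2]{knus_azumaya_1990}), so $2n$ is sharp.
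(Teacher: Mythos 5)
Your proof is correct and follows essentially the same route as the paper: use Proposition~\ref{PR:Q-n-torsor} to construct $X$, $P$, $f$, and $\alpha$; take $A$ to be the degree-$n$ Azumaya algebra associated via $\pi_1: Q_n \to \PGL_n$; read off (i) and (ii) from the topological computation and Proposition~\ref{PR:Q-n-torsor}\ref{item:PR:P-f-transfer}; and for (iii), translate a hypothetical $\lambda$-involution on a Brauer-equivalent algebra of degree $an$ ($a$ odd) into a $C_2$-equivariant map $X(\CC)\to B\PGL_{an}(\CC)$ surjective on $\Hoh^3(\,\cdot\,,\ZZ)$, contradicting Proposition~\ref{pr:Obstruction}. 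The one place you are slightly more explicit than the paper is the observation that the period $n$ divides the degree $m$ of any representative (so $m=an$), which the paper leaves tacit; this is correct and worth spelling out.
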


In particular, we see that the minimal degree of an Azumaya algebra Brauer equivalent to $A$ and supporting a
$\lambda$-involution is at least $2n$. This bound is sharp by Theorem~\ref{TH:Saltman-ramified}.

\begin{proof}
 Construct $P\to X$ as in Proposition~\ref{PR:Q-n-torsor}, and let $A$ be the Azumaya algebra corresponding to the
 $\PGL_n$-torsor associated to $P$ by means of $\pi_1:Q_n \to \PGL_n$. The Azumaya algebra $A$ has degree $n$, and 
 the complex reaization of its Brauer class is a generator of
 $\Hoh^3(X(\C), \ZZ) \iso \ZZ/n$, since the map $BQ_n(\C) \to B \PGL_n(\C)$ induces an isomorphism on $\Hoh^3(\cdot, \ZZ)$,
 see Remark~\ref{RM:generators-for-H-two} and the proof of Proposition~\ref{pr:C2cohoQ}.
 In particular, the period and index of $\alpha$ must both be $n$.

 Let $a$ be an odd integer and suppose $A$ were equivalent to an Azumaya algebra of degree $an$ carrying a
 $\lambda$-involution. Then, by Proposition~\ref{pr:equivPGLn}, the complex realization of this algebra would
 correspond to a topological $\PGL_{an}(\C)$-bundle with involution. By Proposition \ref{prop:freeCase}, there would
 be a $C_2$-equivariant map $g: X(\CC) \to B\PGL_{an}(\CC)$ such that $\alpha \in \Br(X(\CC))$ was the image of the
 canonical Brauer class in $\Hoh^3( B\PGL_{an}(\CC),\Z)$, and therefore $g$ would induce a surjection in $\Hoh^3(\cdot,
 \ZZ)$, since the image of $g^*$ would contain $\alpha$.
 This is forbidden by Proposition~\ref{pr:Obstruction}.
\end{proof}

\begin{question}
	Does Theorem~\ref{th:mainCounterexample} hold when $n$ is odd?
\end{question}

\appendix
\section{The Stalks of The Ring of Continuous Complex Functions}

\label{AX:C_is_s_henselian}
Let $X$ be a topological space; we work throughout on the small site of $X$. Let $\sh O$ denote the sheaf of continuous
$\C$-valued functions on $X$.

Let $p$ be a point of $X$ and consider $p^* \sh O$. It is a local ring with maximal ideal denoted $\m$. An element $f \in p^*\sh
O$ is
the germ of a continuous $\C$-valued function at $p$, and the class $\bar f \in p^*\sh O/ \m \iso \C$ is the complex number
$f(p)$.

\begin{proposition}
 The local ring $p^* \sh O$ is strictly henselian.
\end{proposition}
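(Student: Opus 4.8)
The plan is to verify the two defining properties of a strictly henselian local ring: that $p^*\sh O$ is henselian, i.e.\ satisfies Hensel's lemma, and that its residue field is separably closed. The second is immediate: the residue field is $\C$, which is algebraically closed, hence a fortiori separably closed. So the whole content is establishing that $p^*\sh O$ is henselian.

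For the henselian property, I would use the formulation that given a monic polynomial $f \in p^*\sh O[T]$ whose reduction $\bar f \in \C[T]$ has a simple root $\eta \in \C$, there exists a root $y \in p^*\sh O$ of $f$ lifting $\eta$. First I would unwind the colimit: an element of $p^*\sh O$ is the germ at $p$ of a continuous $\C$-valued function defined on some open neighbourhood, so $f$ is represented by a monic polynomial $F \in \sh O(U)[T] = \cont(U,\C)[T]$ for some open $U \ni p$, with continuous coefficient functions $a_0,\dots,a_{n-1}: U \to \C$. The condition that $\bar f$ has $\eta$ as a simple root says $F_p(T) := T^n + \sum a_i(p) T^i$ has $\eta$ as a simple root, equivalently $F_p(\eta) = 0$ and $F_p'(\eta) \neq 0$. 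The key step is then an application of the classical implicit function theorem (or, even more elementarily, the holomorphic implicit function theorem / argument principle) in the variable $T$: since the polynomial map $(\text{coefficients}, T) \mapsto \text{value}$ has nonvanishing $T$-derivative at $(a_\bullet(p), \eta)$, there is an open neighbourhood $V \subseteq U$ of $p$ and a continuous function $y : V \to \C$ with $y(p) = \eta$ and $F(y) = 0$ identically on $V$. Taking the germ of $y$ at $p$ gives the desired root in $p^*\sh O$ lifting $\eta$. I would note that continuity of $y$ follows from the continuity of the coefficients together with the simple-root hypothesis guaranteeing local uniqueness, so one can define $y$ pointwise near $p$ (as the unique nearby root) and check continuity directly, or simply invoke continuity of roots of polynomials as functions of their coefficients away from multiple roots.

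The main obstacle — really the only place requiring care — is producing the continuous lift $y$ with the honest control that it is defined on a whole neighbourhood and equals $\eta$ at $p$. The cleanest route is: fix a small closed disc $D$ around $\eta$ in $\C$ on whose boundary $F_p$ is nonvanishing and inside which $\eta$ is the only root of $F_p$; by continuity of the coefficients $a_i$, shrink $U$ to a neighbourhood $V$ of $p$ so that for every $q \in V$ the polynomial $F_q$ still has no zeros on $\partial D$ and, by the argument principle (the winding number of $F_q$ on $\partial D$ being locally constant, hence equal to $1$), exactly one simple zero $y(q)$ inside $D$. Then $y(q) = \frac{1}{2\pi i}\int_{\partial D} \zeta \frac{F_q'(\zeta)}{F_q(\zeta)}\,d\zeta$ exhibits $y$ as a continuous (indeed the integrand depends continuously on $q$) function of $q \in V$, with $y(p) = \eta$, and $F_q(y(q)) = 0$ for all $q \in V$. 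This gives the germ $y \in p^*\sh O$ with $f(y) = 0$ and $\bar y = \eta$, completing the verification. The remainder of the proof is then a formal citation of the definitions: $p^*\sh O$ is local with residue field $\C = \overline{\C}$ and satisfies Hensel's lemma, hence is strictly henselian, as used repeatedly in the body of the paper (e.g.\ in the treatment of topological spaces alongside étale sites of schemes).
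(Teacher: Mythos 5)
Your proof is correct; the strategy of reducing to Hensel's lemma and then producing a continuous local lift of a simple root is exactly the right idea, and the details are sound. However, the route you take to establish the continuous lift differs from the paper's. The paper works purely point-set-topologically: it considers the space $R_n \cong \C^n$ of ordered root tuples of degree-$n$ monic polynomials, the quotients $R_n/\Sigma_{n-1}$ (polynomials with a distinguished root) and $R_n/\Sigma_n \cong \C^n$ (the coefficient space), removes the closed locus $\Gamma$ where the distinguished root is repeated, and observes that since quotient maps by finite group actions are open, the projection $\pi : (\C \times R_{n-1}/\Sigma_{n-1}) - \Gamma \to R_n/\Sigma_n$ is an open map that is injective on a suitable product of small balls about any point, hence a homeomorphism onto its image there. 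Lifting a simple root then amounts to inverting $\pi$ over a neighbourhood of $h(p)$. By contrast, you go analytic: you isolate the simple root in a small disc $D$, use the argument principle to show the zero count of $F_q$ inside $D$ remains $1$ for $q$ near $p$, and extract the unique root via the contour integral $y(q) = \tfrac{1}{2\pi i}\int_{\partial D} \zeta\, F_q'(\zeta)/F_q(\zeta)\, d\zeta$, whose continuity in $q$ is manifest. Both approaches encode the same underlying fact --- continuity of simple roots of monic polynomials in their coefficients --- and either is complete. The paper's argument has the mild virtue of using only point-set topology of quotient spaces; yours has the virtue of being shorter and giving a closed formula for the lifted root.
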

\begin{proof}
 It suffices to prove the ring is a henselian ring as the residue field is $\C$.

Consider $R_n:=\C^n $ as an ordered set of roots of a degree-$n$ monic polynomial. There is a permutation action of the
symmetric group $\Sigma_n$ on $R_n$, and there is a homeomorphism $R_n / \Sigma_n \to \C^n$, where the map takes
$(\alpha_1, \dots , \alpha_n)$ to the coefficients of the polynomial $\prod_{i=1}^n (t- \alpha_i)$,
\cite{bhatia_space_1983}.

We embed $\Sigma_{n-1} \subset \Sigma_n$ as the permutations fixing the first element, and form $R_n/ \Sigma_{n-1}$.

The space $R_n/\Sigma_{n-1} = \C \times R_{n-1}/\Sigma_{n-1}$ represents monic polynomials of degree $n$ and a
distinguished `first' root, $(t-\alpha_1) q(t)$. There is a closed subset $\Gamma \subset R_n/\Sigma_{n-1}$, the locus where $q(\alpha_1) =
0$. Then $R_n/\Sigma_{n-1} - \Gamma \subset R_n/ \Sigma_{n-1}$ is an open subset representing the set of monic,
degree-$n$ polynomials having a distinguished `first' root which is not repeated. Since quotient maps of spaces given by
finite group actions are open maps, in the following diagram, every map appearing is an open map:
\begin{equation*}
 \xymatrix{ \C \times R_{n-1}/\Sigma_{n-1} - \Gamma \ar[d] \ar@/_5em/_{\pi}[dd] \\ \C \times R_{n-1}/\Sigma_{n-1} \ar[d]
 & R_n \ar[l] \ar[dl] \\ R_n/\Sigma_n. }
\end{equation*} We denote the composite map $\C \times R_{n-1}/\Sigma_{n-1} - \Gamma \to R_n/ \Sigma_n$ by $\pi$. It
sends a pair $(\alpha_1, q(t))$, for which $q(\alpha_1) \neq 0$, to $(t-\alpha_1)q(t)$.

Let $(\alpha_1, q(t)) \in \C \times R_{n-1}/\Sigma_{n-1} -\Gamma $ be such a pair. We claim that there exists an open
neighbourhood $V$ of $(\alpha_1, q(t))$ such that $\pi|_V : V \to R_n/\Sigma_n$ is a homeomorphism onto its image. Choose
an open neighbourhood of $(\alpha_1, q(t)) \in \C \times R_{n-1}/\Sigma_{n-1}$ of the form $V = B(\alpha_1; \epsilon)
\times B(q(t);\epsilon)$, being the product of an $\epsilon$-ball around $\alpha_1$ and around $q(t)$, where $\epsilon$ is
sufficiently small that none of the polynomials in $B(q(t);\epsilon)$ has any of the complex numbers in $B(\alpha_1;
\epsilon)$ as roots. It is immediate that $\pi$ is injective when restricted to this open set in $ \C \times
R_{n-1}/\Sigma_{n-1} -\Gamma$. Since $\pi$ is an open map, $\pi|_V$ is a homeomorphism onto its image, establishing the claim.

\smallskip
Suppose we are given a polynomial $h(t) \in p^*\sh O[t]$. Suppose
further that a non-repeated root, $\bar \alpha_1$, of $\bar h(t)$ is given, where $\bar h(t)$ is the reduction of $h(t)$
to $(p^*\calO/\frakm)[t]=\CC[t]$. We can write $\bar q(t) =
\bar h(t) / (t-\bar \alpha_1)$ in $\CC[t]$. Note that we do not yet assert that $\bar q(t)$ and $\bar \alpha_1$ are the reductions
of any specific elements in $p^*\sh O [t]$ or $p^*\sh O$. To prove that the ring is Henselian, we must find an element
$\alpha_1$ lifting $\bar \alpha_1$ and satisfying $h(\alpha_1)=0$.

The germ $h(t)$ has an extension to an open neighbourhood $U \ni p$. 

We have the data of a diagram
\begin{equation*}
 \xymatrix{ p \ar[d] \ar^(0.2){(\bar \alpha_1 \bar ,q)}[r] & \C \times R_{n-1}/\Sigma_{n-1} - \Gamma \ar^\pi[d] \\ U \ar^h[r]
 & R/\Sigma_n. }
\end{equation*} Around the image of $(\bar \alpha_1,\bar q)$ in $ \C \times R_{n-1}/\Sigma_{n-1}- \Gamma$ we can find an
open set $V$ such that $\pi|_V: V \to R/\Sigma_n$ is a homeomorphism onto the image, $W$. Then $W$ is an open set in
$R/\Sigma_n$ containing $\bar \alpha_1 \bar q = \bar h$. Since $h(p) = \bar h$, the preimage $h^{-1}(W)$ is an open subset of $U$ containing $p$. Since
$\pi_V : V \to W$ is a homeomorphism, we may lift the map
\begin{equation*}
 \xymatrix{ p \ar[d] \ar^(0.23){(\alpha_1,q)}[r] & \C \times R_{n-1}/\Sigma_{n-1}- \Gamma \ar^\pi[d] \\ h^{-1}(W)
 \ar@{-->}[ur] \ar^h[r] & R/\Sigma_n. }
\end{equation*} as indicated.

That is to say, there is a neighbourhood, $h^{-1}(W)$, of $p$ such that the factorization $\bar h(t) =( t- \bar \alpha_1)\bar q(t)$
can be extended on $h^{-1}(W)$ to a factorization $h(t) = (t-\alpha_1)q(t)$. In particular, the class of $\alpha_1$ in
$p^* \sh O$ is a root of the polynomial $h(t)$ extending $\bar \alpha_1$. This proves Hensel's lemma for $p^* \sh O$.
\end{proof}

\bibliography{Involutions,Standard_BibTex}

 \end{document}
